\numberwithin{equation}{section}
\theoremstyle{definition}
\newtheorem{definition}{Definition}%\theoremstyle{definition}
\newtheorem{proposition}{Proposition}
\newtheorem{theorem}{Theorem}
\newtheorem{lemma}{Lemma}
\newtheorem{corollary}{Corollary}
\newtheorem{remark}{Remark}
\newtheorem{example}{Example}
\newtheorem{question}{Question}
\newtheorem{conjecture}{Conjecture}
\newtheorem*{corollary*}{Corollary}
\newtheorem*{lemma*}{Lemma}
\newtheorem*{theorem*}{Theorem}
\newtheorem*{proposition*}{Proposition}
\newtheorem*{problem*}{Problem}
\newcommand{\w}{\underline{w}}
\newcommand{\p}{\mathbf{p}}
\newcommand{\boundellipse}[3]% center, xdim, ydim
{(#1) ellipse (#2 and #3)
}
\newcommand{\X}{\mathcal{X}}
\newcommand{\G}{\mathcal{G}}
\newcommand{\A}{\mathcal{A}}
\newcommand{\ff}{\mathcal{F}}
\newcommand{\bff}{\mathbf{f}}
\newcommand{\bm}{\mathbf{m}}
\newcommand{\bn}{\mathbf{n}}
\newcommand{\bfff}{\mathbf{f}}
\newcommand{\bk}{\mathbf{k}}
\newcommand{\bw}{\mathbf{w}}
\newcommand{\bv}{\mathbf{v}}
\newcommand{\bu}{\mathbf{u}}
\newcommand{\area}{\mathsf{A}}
\newcommand{\pa}{\mathsf{pa}}
\newcommand{\PP}{\mathbb{P}}
\newcommand{\ZZ}{\mathbb{Z}}
\DeclareMathOperator{\trop}{trop}
\DeclareMathOperator{\mult}{mult}
\DeclareMathOperator{\Proj}{Proj}
\DeclareMathOperator{\res}{res}
\DeclareMathOperator{\es}{\mathsf{es}}
\DeclareMathOperator{\ord}{ord}
\DeclareMathOperator{\ind}{ind}
\newcommand{\Hom}{\operatorname*{Hom}}
\newcommand{\Lie}{\operatorname*{Lie}}
\newcommand{\Spec}{\operatorname*{Spec}}
\newcommand{\ddeg}{{\operatorname*{deg}}}
\newcommand{\gr}{{\operatorname*{gr}}}
\newcommand{\lie}{\mathfrak}
\newcommand{\Gr}{{\operatorname*{Gr}}}
\newcommand{\mut}{{\operatorname*{mut}}}
\newcommand{\ba}{\mathbf{a}}
\newcommand{\bb}{\mathbf{b}}
\newcommand{\bc}{\mathbf{c}}
\newcommand{\bd}{\mathbf{d}}
\newcommand{\be}{\mathbf{e}}
\newcommand{\val}{\mathfrak{v}}
\DeclareMathOperator{\Flag}{\mathcal{F}\hspace{-1.6pt}\ell}
\DeclareMathOperator{\init}{in}
\DeclareMathOperator{\wei}{wt}
\DeclareMathOperator{\sgn}{sgn}
\DeclareMathOperator{\height}{ht}
\DeclareMathOperator{\conv}{conv}
\DeclareMathOperator{\cone}{cone}
\DeclareMathOperator{\rank}{rank}
\title{Toric degenerations: a bridge between representation theory, tropical geometry and cluster algebras}
\author{Lara Bossinger}
\date{May 2018}
\begin{document}

\maketitle
\thispagestyle{empty}

\newpage

\thispagestyle{empty}

\emph{Kurzzusammenfassung:}
In dieser Arbeit untersuchen wir torische Degenerierungen projektiver Varietäten. Wir interessieren uns für Konstruktionen solcher aus der Darstellungstheorie, der tropischen Geometrie und der Theorie von Cluster Algebren.
Ziel ist es, durch analysieren bestimmter Spezialfälle die Zusammenhänge der verschiedenen Theorien besser zu verstehen.

Im Fokus sind deshalb Varietäten, auf die eine Vielzahl von Methoden angewandt werden können: Grassmannsche, Fahnenvarietäten und Schubertvarietäten.

Wir vergleichen als ersten Schritt die torischen Varietäten, die als Degenrierungen erhalten werden. Vor allem interessiert uns ob isomorphe torische Varietäten von verschiedenen Kontruktionen erhalten werden.
Dies ist häufig der Fall, z.B. für die Grassmannsche von Geraden im $\mathbb C^n$ können alle torischen Varietäten, die man mit Methoden der tropischen Geometrie erhält (bis auf Isomorphie) auch mit Hilfe der Darstellungstheorie konstruiert werden.

Ein erstes allgemeines Resultat (für projektive Varietäten) lässt auf weitere tiefere Zusammenhänge hoffen: torische Degenerierungen, die mit Hilfe einer Bewertung und der Theorie von Newton-Okounkov Körpern erzeugt werden lassen sich (unter gewissen Bedingungen) mit Hilfe der tropischen Geometrie realisieren.

\newpage

\thispagestyle{empty}

\emph{Abstract:} In this thesis we study toric degenerations of projective varieties. 
We compare different constructions to understand how and why they are related. 
In focus are toric degenerations obtained from representation theory, tropical geometry or cluster algebras.
Often those rely on valuations and the theory of Newton-Okounkov bodies.
Toric degenerations can be seen as a combinatorial shadow of the original objects. 
The goal is therefore to understand why the different theories are so closely related, by understanding the toric degenerations they yield first.
We choose Grassmannians, flag varieties and Schubert varieties as starting point as here many different constructions are applicable. 
One of our main results gives a sufficient condition for when toric degenerations obtained using full-rank valuations, independent of how these are constructed, can be realized using tropical geometry.

\tableofcontents

\chapter{Introduction}\label{chap:intro}

\emph{Toric varieties} are popular objects in algebraic geometry due to a dictionary between their geometric properties (e.g. dimension, degree)  and properties of associated combinatorial objects (e.g. fans, polytopes). This dictionary can be extended from toric varieties to varieties admitting a \emph{toric degeneration}. A toric degeneration is a (flat) family of varieties that share many properties with each other.
We mostly consider $1$-parameter toric degenerations of certain projective varieties $X$. 
These are flat families $\varphi:\mathcal F\to \mathbb {A}^{1}$, where the fiber over zero (also called \emph{special} fiber) is a toric variety and all other fibers are isomorphic to $X$. 
Once we have such a degeneration, some of the algebraic invariants of $X$ are the same for all fibers (e.g. dimension, degree, Hilbert-polynomial), hence the computation can be done on the toric fiber. 
In the case of a toric variety such invariants are easier to compute than in the case of a general variety due to a nice combinatorial description. 
%Moreover, toric degenerations connect different areas of mathematics, such as symplectic geometry, representation theory, and algebraic geometry.  
%This family contains the variety we are interested in, a toric variety, so properties of our variety can be read from the combinatorics of the toric variety.
\begin{example}\label{exp:Gr(2,4)}
Consider the Grassmannian $\Gr(2,\mathbb C^4)$ of $2$-dimensional subspaces of $\mathbb C^4$. It is given by the vanishing of the ideal $I=(p_{12}p_{34}-p_{13}p_{24}+p_{14}p_{23})$ in the polynomial ring on Pl\"ucker variables. 
A toric degeneration of $\Gr(2,\mathbb C^4)$ is given by the family $I_t=(p_{12}p_{34}-p_{13}p_{24}+tp_{14}p_{23})$ for $t\in \mathbb C$.
Setting $t=1$ we obtain $\Gr(2,\mathbb C^4)$, and setting $t=0$ we get the toric variety defined by the vanishing of $I_0=(p_{12}p_{34}-p_{13}p_{24})$.
\end{example}
The study of toric degenerations has various applications in pure and applied mathematics, for example in mirror symmetry and statistics.
Tailored to the variety of interest, it is a great challenge to decide which toric degeneration has the desired properties. The task is therefore to study and compare all possible constructions. In this context, varieties from representation theory can be thought of as a fertile ground to develop different techniques and test for their fitness. Three main fields intersect here: \emph{representation theory}, \emph{tropical geometry} and the \emph{theory of cluster algebras}. All three can be applied to these varieties and yield toric degenerations with the associated combinatorial data encoding geometric properties. 

One can think about the combinatorics appearing in this setting (e.g. cones, polytopes, semi-groups) as a shadow of a deeper connection between the theories. The aim is to understand this connection and develop a global framework into which all three settings can be embedded. In the process of doing so, this thesis is concerned with understanding first special cases to obtain an intuition for the global picture.

\section{Background and Motivation}

We explain in more detail the constructions of toric degenerations that are in focus. The first is the framework of \emph{birational sequences} by Fang, Fourier and Littelmann introduced in \cite{FFL15}, see \S\ref{subsec:birat}. This work has its origin in representation theory or Lie theory. Second, we consider toric degenerations arising in tropical geometry by \emph{tropicalizing} projective varieties, summarized in \S\ref{subsec:trop}. For background on this topic we refer to the textbook by Maclagan and Sturmfels \cite{M-S}. The third context in which toric degenerations arise that is of great interest to us is the theory of cluster algebras introduced by Fomin and Zelevinsky \cite{FZ02} with constructions of toric degenerations due to Gross, Hacking, Keel and Kontsevich in \cite{GHKK14}. We summarize it briefly in \S\ref{subsec:cluster}.

In all three settings the notion of \emph{Newton-Okounkov body} appears. Let $X$ be a projective variety and $\mathbb{C}[X]=:A$ its homogeneous coordinate ring. 
For a valuation $\val:A\setminus \{0\}\to \mathbb Z^N$ (see Definition~\ref{def: valuation}), by $S(A,\val)$ we denote the value semi-group (the image of the valuation). 
The \emph{Newton-Okounkov cone} is the convex hull of $S(A,\val)\cup\{0\}$. After intersecting this cone with a particular hyperplane one obtains a convex body, called \emph{Newton-Okounkov body}. 
These objects have been introduced in a series of papers by different authors (\cite{Oko98},\cite{LM09},\cite{KK12},\cite{An13}) as a far generalization of Newton polytopes to study the asymptotics of line bundles on $X$. 
If $S(A,\val)$ is finitely generated, the Newton-Okounkov body is a polytope and there exists a flat degeneration of $X$ into a toric variety $Y$. 
The Newton-Okounkov body in this setting is the polytope associated to the normalization of $Y$.

%%%%%%%% Birational sequences %%%%%%%
\subsection{Birational sequences}\label{subsec:birat}
In \cite{FFL15} Fang, Fourier and Littelmann introduce the notion of a \emph{birational sequence}. They work with partial flag varieties and spherical varieties associated to a connected complex reductive algebraic group $G$. For simplicity we explain the case of flag varieties here.
Fixing Borel subgroup $B\subset G$ and a maximal torus $T\subset B$, let $R^+$ be the set of positive roots for $G$ and $N$ the cardinality of $R^+$. 
The main idea is to use the representation theory of $G$ to obtain coordinates on $G/B$ such that $\mathbb C(G/B)\cong \mathbb C(x_1,\dots,x_N)$.
On the right hand side by choosing a monomial order (resp. a total order on $\mathbb Z^N$) one can define \emph{lowest-term valuations} in a straight forward way (more details in \S\ref{sec:pre_birat}).
This idea is used frequently, we encounter it again when considering valuations constructed using cluster algebra structures.

To every positive root $\beta\in R^+$ there exists a one-parameter root subgroup $U_{-\beta}\subset U^-$, where $U^-$ is the unipotent radical of the opposite Borel subgroup $B^-\subset G$. 

\begin{definition}\label{def:birat seq}
A \emph{birational sequence} is a sequence of positive roots $S=(\beta_1,\dots,\beta_N)$ such that the multiplication map 
$U_{-\beta_1}\times\dots\times U_{-\beta_N}\to U^-$
is a birational morphism. 
\end{definition}

In particular, as $U_{-\beta_1}\times\dots\times U_{-\beta_N}\cong \mathbb A^N$, a birational sequence yields $\mathbb C(G/B)\cong \mathbb C(U^-)\cong \mathbb C(x_1,\dots,x_N)$.
Fixing a total order $\prec$ on $\mathbb Z^N$, they construct a valuation $\val_S$ on the ring of $U$-invariant functions on $G$ that restricts to the homogeneous coordinate ring of the flag variety $G/B$. Further, they study the associated Newton-Okounkov body. 
Their methods generalize constructions in representation theory (Lie theory) and use ideas from \emph{PBW-filtrations}. For example, consider $S_n$ the Weyl group of $SL_n$ and a reduced expression $\w_0$ of the longest word $w_0\in S_n$.
Choosing the birational sequence consisting of the simple roots associated to $\w_0$ and the reverse lexicographic order on $\mathbb Z^N$, they recover the toric degeneration of $SL_n/B$ by Gonciulea and Lakshmibai \cite{GL96} and the Gelfand-Tsetlin polytope \cite{GT50}. 
This degeneration was further studied by Kogan and Miller in \cite{KM05}. Generalizing to arbitrary flag varieties and arbitrary reduced expressions of $w_0$, the degenerations by Caldero \cite{Cal02} and Alexeev-Brion \cite{AB04} are recovered. 
They give degenerations of flag varieties to toric varieties associated to string polytopes introduced by Littelmann \cite{Lit98} and Berenstein-Zelevinsky \cite{BZ01}. 
The string polytopes parametrize elements of Lusztig's dual canonical basis. 
In \cite{Kav15} Kaveh showed how string polytopes are realized as Newton-Okounkov bodies.
Another well-studied basis for $G$-representations (and so for the homogeneous coordinate rings of flag varieties) was introduced in a series of papers by Feigin, Fourier and Littelmann (\cite{FFL11} and \cite{FeFL16}), generalizing Feigin's work \cite{Fe12}. 
This basis is parametrized by the \emph{Feigin-Fourier-Littelmann-Vinberg polytope}. The polytope exists in types A and C, and its lattice points parametrize the above mentioned basis of $G$-representations. 
Analogously to the case of string polytopes, FFLV-polytopes are realizable as Newton-Okounkov bodies as showed by Kiritschenko \cite{Kir15}. 
The FFLV-degeneration can also be recovered in the framework of \cite{FFL15}, by choosing the birational sequence to consist of all postive roots in a particular \emph{good ordering} (see \cite{FFL15}).

One starting point for this thesis was to answer the following question:
\begin{question}\label{Q:how gen birat}
Does the framework of birational sequences extend beyond known toric degenerations in representation theory?
\end{question}
To make this question more precise let us briefly introduce how toric degenrations arise from tropical geometry and cluster algebras.

%understand how far beyond known polytopes and constructions (in representation theory) the setting of birational sequences can go. Before presenting the results regarding this matter it is necessary to briefly introduce how toric degenerations arise in tropical geometry and in the framework of  cluster algebras. 

%%%%%%%%%% Tropical geometry %%%%%%%%%%%%%%%%%%%%%%%%
\subsection{Tropical Geometry}\label{subsec:trop}
Tropical geometry is a relatively new field at the intersection of algebraic geometry and polyhedral geometry.
We are mostly interested in the \emph{tropicalization} of complex projective varieties, which essentially means studying the algebraic variety over the \emph{tropical semiring} instead of over $\mathbb C$.
The tropical semiring is $\mathbb R\cup\{-\infty\}$ with multiplication in $\mathbb R$ being replaced by addition and addition in $\mathbb R$ being replaced by taking the minimum. 

In this sense, the tropicalization of a projective variety $X=V(I)\subset \mathbb P^{n-1}$, denoted $\trop(X)\subset\mathbb R^n$, is the support of a rational polyhedral complex of dimension $\dim X$ (see \cite[Theorem 3.3.5]{M-S}).
It can be interpreted as a combinatorial shadow of its algebraic counterpart $X$. 
This computational approach to tropical geometry is closely related to commuative algebra and Gr\"obner theory. 
In fact, $\trop(X)$ is contained in the Gr\"obner fan associated to $X=V(I)$, i.e. every cone $C\subset\trop(X)$ has an associated (monomial-free) \emph{initial ideal} $\init_C(I)$, a deformation of the ideal $I$ defining $X$.
Using Gr\"obner theory, every cone $C\subset\trop(X)$ yields a flat degeneration of $X$ into $V(\init_C(I))$. 
In particular, if $\init_C(I)$ is a binomial prime ideal this yields a toric degeneration of $X$. In this case $C$ is called a \emph{maximal prime} cone of $\trop(X)$.

The tropicalization of $\Gr(2,\mathbb C^n)$ has been studied in \cite{SS04} by Speyer and Sturmfels. 
They show that $\trop(\Gr(2,\mathbb C^n))$ is parametrized by trivalent trees with $n$ leaves, i.e. to every maximal cone they associate a trivalent tree that encodes the initial ideal corresponding to the cone. 
Further, they prove that every inital ideal coming from a maximal cone in $\trop(\Gr(2,\mathbb C^n))$ is prime, hence yields a toric degeneration of $\Gr(2,\mathbb C^n)$. This enables us to formulate a more precise version of Question~\ref{Q:how gen birat} in this case:

\begin{question}\label{Q:trop from birat}
Can we find a birational sequence for $\Gr(2,\mathbb C^n)$ corresponding to every maximal cone of $\trop(\Gr(2,\mathbb C^n))$?
\end{question}

Besides $\Gr(2,\mathbb C^n)$ very little is known about the tropicalization of (partial) flag varieties and more generally spherical varieties. The cases of $\Gr(3,\mathbb C^6)$ and $\Gr(3,\mathbb C^7)$ were computed in \cite{HJJS} (see also \cite{SS04}) and more conceptually Mohammadi and Shaw study $\trop(\Gr(3,\mathbb C^n))$ in \cite{MoSh}.

The tropicalization of a variety $X$ is closely related to valuations on its homogeneous coordinate ring $\mathbb C[X]$ as studied in \cite{KM16} by Kaveh and Manon. They associate a full-rank valuation to every maximal prime cone $C$ in $\trop(X)$.
They further introduce the notion of \emph{Khovanskii basis}, a set of algebra generators for $\mathbb C[X]$, whose images under the valuation generate the value semi-group. 
Kaveh and Manon show, that the existance of a maximal prime cone in the tropicalization is equivalent to the existance of a finite Khovanksii basis for the associated valuation.
This further implies that the corresponding Newton-Okounkov body is a polytope and they show how it can be computed from $C$.

From a representation theoretic point of view the full flag variety $\Flag_n$ is generally an object of great interest. Inspired by \cite{KM16} and keen on applying their methods this lead us to the question:

\begin{question}\label{Q:trop flag}
What is the tropicalization of $\Flag_n$? Can it be computed in small cases, can we find finite Khovanskii bases from it, and if so are the associated Newton-Okounkov bodies related to those from birational sequences?
\end{question}

%One motivation for this thesis was to compute the tropicalization of flag varieties and study the associated toric degenerations and their relation the the degenerations obtain in the representation theoretic setting of \cite{FFL15}.

%%%%%%%%%%%%%%%%%% CLUSTER
\subsection{Cluster algebras}\label{subsec:cluster}
Cluster algebras were introduced in \cite{FZ02} by Fomin and Zelevisnky and quickly grew to become a research area on their own. They are commutative rings endowed with \emph{seeds} (maximal sets of algebraically independent generators) related by \emph{mutation} (local transformations exchanging one seed by another). At their origin they are closely related to the representation theory of finite dimensional algebras, but also many objects related to algebraic groups have a cluster structure. For example, the homogeneous coordinate ring of Grassmannians (see \cite{FZ02}, \cite{Sco06}), double Bruhat cells (see \cite{BFZ05}), (partial) flag varieties (see \cite{GLS13}) or Richardson varieties (see \cite{Lec16}). 

A geometric appraoch to cluster algebras was introduced by Fock and Goncharov in \cite{FG06}. In this setting they work with \emph{cluster varieties}, schemes glued from algebraic tori (one for every seed) with gluing given by the birational transformations induced by mutation. They come in two flavours, $\mathcal A$- and $\mathcal X$-cluster varieties, one being the \emph{mirror dual} to the other as developed by Gross, Hacking, Keel and Kontsevich in \cite{GHKK14}. Among other things, they define \emph{$\vartheta$-bases} for cluster algebras and toric degenerations of (partial compactifications of) cluster varieties. 
The $\mathcal X$-cluster variety comes endowed with a Laurent polynomial, the \emph{superpotential}, whose tropicalization gives a polyhedral cone and a polytope as a slice of this cone. The superpotential polytope is the polytope associated to the special fibre of the toric degeneration.

A similar approach for Grassmannians can be found in recent work of Rietsch and Williams \cite{RW17}. They consider the $\mathcal A$- and $\mathcal X$-cluster varieties contained in the Grassmannian and combine ideas from Newton-Okounkov bodies with cluster duality and mirror symmetry. 
Using $\X$-cluster coordinates as coordinates for the Grassmannian they construct lowest term valuations on the homogeneous coordinate ring for every seed.
On the $\mathcal A$-cluster variety they consider a potential function that was defined by Marsh and Rietsch in \cite{MR04}. Its tropicalization yields a polytope. 
%On the other side of the mirror they use the $\mathcal X$-cluster structure to define a valuation on the homogeneous coordinate ring of the Grassmannian. 
They show that the Newton-Okounkov body associated with the valuation is the polytope given by the potential. 
In particular, they obtain explicit inequalities describing the Newton-Okounkov polytope. 

\begin{question}\label{Q:cluster}
Can the toric degenerations of $\Flag_n$ (resp. $\Gr(k,\mathbb C^n)$) arising from tropicalizing a superpotential be recovered as toric degenerations from the tropicalization of $\Flag_n$ (resp. $\Gr(k,\mathbb C^n)$) or birational sequences?
\end{question}

A first hint towards a positive answer to this question for $\Flag_n$ was given by Magee in \cite{Mag15}. He recovers the Gelfand-Tsetlin polytope as a superpotential polytope in a particular seed.
Further results in this direction are obtained by Genz-Koshevoy-Schumann in \cite{GKS} and \cite{GKS2}, who generalize Magee's result to flag varieties of simple, simply connected, simply laced algebraic groups. They recover the classical string and Lusztig parametrizations from the superpotential.

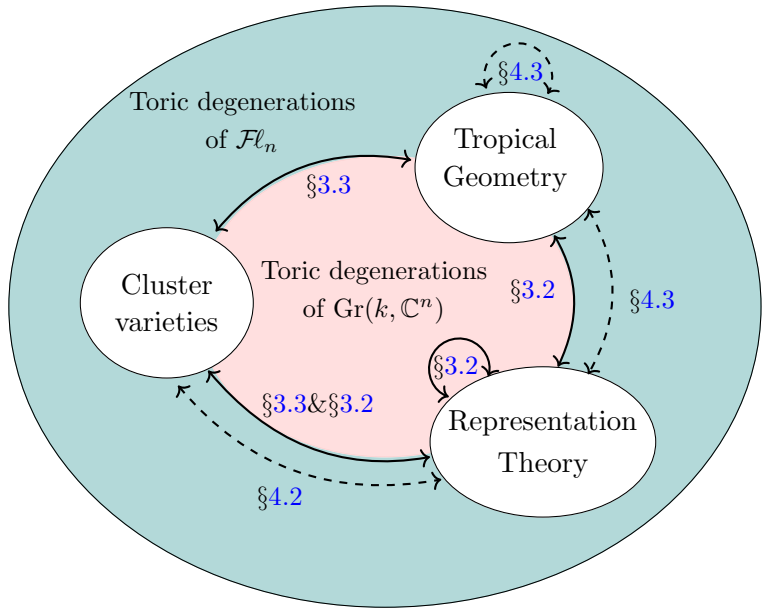
\begin{figure}
\centering
\begin{tikzpicture}

\draw [fill, teal, opacity=.3] \boundellipse{1.9,-.2}{5}{4};
\draw \boundellipse{1.9,-.2}{5}{4};
\draw[white, fill] \boundellipse{1.9,-.2}{2.5}{2};
\draw[pink, fill, opacity=.5] \boundellipse{1.9,-.2}{2.5}{2};
\draw[fill, white] \boundellipse{3.55,1.65}{1.25}{1};
\draw \boundellipse{3.55,1.65}{1.25}{1};
%\node at (3.5,2.25) {\S\ref{sec:trop}};
\node at (3.5,2) {Tropical};
\node at (3.5,1.5) {Geometry};

\draw[fill, white] \boundellipse{4,-2}{1.5}{1};
\draw \boundellipse{4,-2}{1.5}{1};
%\node at (4,-1.5) {\S\ref{sec:rep}};
\node at (4, -1.75) {Representation};
\node at (4,-2.3) {Theory};
%\node at (4,-2.5) {\small{\S\ref{sec:Bos}}}

\draw[fill, white] \boundellipse{-1,-.15}{1.15}{1};
\draw \boundellipse{-1,-.15}{1.15}{1};
%\node at (-1,.5) {\S\ref{sec:cluster}};
\node at (-1,0.1) {Cluster};
\node at (-1,-.4) {varieties};

%%%%% Gr toric degen
\node at (1.75,.25) {\small{Toric degenerations}};
\node at (1.75,-.25) {\small{of $\Gr(k,\mathbb C^n)$}};

%%%%% Flag toric degen
\node at (0,2.5) {\small{Toric degenerations}};
\node at (0,2) {\small{of $\Flag_n$}};

%cluster to trop
\draw [<->, thick] (-0.35,.8) to [out=50,in=170] (2.25,1.75);
    \node at (1.15,1.4) {\small{\S\ref{sec:BFFHL}}};
%trop to trop
\draw [<->, thick, dashed] (3.25,2.7) arc (200:-30:.45cm);
\node at (3.7,2.9) {\small{\S\ref{sec:BLMM}}}; %tropical flag

%trop to rep
\draw [<->, thick, dashed] (4.6,1.1) to [out=300, in=60] (4.6,-1.1);
    \node at (5.45,-0.125) {\small{\S\ref{sec:BLMM}}}; % BLMM flag
\draw [<->, thick] (4.15,.75) to [out=300, in=65] (4.2,-1);
    \node[left] at (4.3,0) {\small{\S\ref{sec:Bos}}};

%rep to rep
\draw [<->, thick] (2.75,-1.4) arc (250:-15:.4cm);
\node[above right] at (2.4,-1.3) {\small{\S\ref{sec:Bos}}}; %birat seq grassmannians

%rep to cluster
\draw [<->, thick] (-.45,-1.05) to [out=310,in=190] (2.5,-2.2);
    \node[above] at (1,-1.8) {\small{\S\ref{sec:BFFHL}\&\S\ref{sec:Bos}}}; %BFFHL and Bos
\draw [<->, thick, dashed] (-.85,-1.25) to [out=310,in=190] (2.65,-2.5);
    \node at (.5,-2.75) {\small{\S\ref{sec:BF}}}; %BF

%cluster to cluster
%\draw [<->, thick, dashed] (-1.75,0.75) to [out=210, in=110] (-2,-.5);
%\node at (-2.45,.125) {\small{(4)}}; % Grass GHKK to RW
%GHKK to GHKK
%\draw[thick, <->, dashed] (-2.4,-1) arc (45:365:.45cm);
%\node at (-2.7,-1.35) {\small{(5)}};

\end{tikzpicture}
\caption{The landscape of toric degenerations subject in this thesis.}
\label{fig:overview}
\end{figure}

\section{Results}

We summarize below the results in this thesis and explain which of the above questions could be answered in which generality.
In Chapter~\ref{chap:prep} we recall the necessary general background on the representation theory of $SL_n(\mathbb C)$, tropical geometry, valuations and cluster algebras.
We explain quasi-valuations with weighting matrices and prove a general result relating arbitrary full-rank valuations with such in \S\ref{sec:val and quasival}.
Chapter~\ref{chap:Grass} studies different constructions of toric degenerations of the Grassmannians. 
More specifically, in \S\ref{sec:Bos} the class of \emph{iterated} birational sequences is defined. As an application one obtains for $\Gr(2,\mathbb C^n)$ a precise relation between birational sequences and the tropical Grassmannian. In \S\ref{sec:BFFHL} the connection between the tropical Grassmannian and the cluster combinatorics given by plabic graphs is studied for $\Gr(2,\mathbb C^n)$ and $\Gr(3,\mathbb C^6)$.
The last part of the thesis is Chapter~\ref{chap:flag}, which focusses on flag and Schubert varieties in type $A$. 
In \S\ref{sec:BF} we show how string polytopes arise from the \cite{GHKK14}-superpotential for flag and Schubert varieties in type $A$.
Then in \S\ref{sec:BLMM} we compute the tropical flag varieties $\trop(\Flag_4)$ and $\trop(\Flag_5)$ together with the Newton-Okounkov bodies obtained from the Kaveh-Manon construction. The resulting toric degenerations are compared with those from string polytopes and the FFLV polytope. 

%%%%%%%%%%%%%%%%%%%%%%%%%%%

\subsection{Valuations and their weighting matrices}

In \S\ref{sec:val and quasival} we study (quasi-)valuations with weighting matrices (see Definition~\ref{def: quasi val from wt matrix}) as introduced in \cite{KM16}.
Our leading example is $A=\mathbb C[X]$ the homogeneous coordinate ring of a projective variety.
An embedding of $X\hookrightarrow \mathbb P^{n-1}$ yields a presentation $A=\mathbb C[x_1,\dots,x_n]/I$.
Given a full-rank valuation $\val:A\setminus\{0\}\to \mathbb Z^d$ we define the weighting matrix $M_\val\in \mathbb Z^{n\times d}$ of $\val$ (see Definiton~\ref{def: wt matrix from valuation}).
Given some additional technical assumptions (that are fulfilled when dealing with the homogeneous coordinate ring of a projective variety) we obtain the following key-theorem:

\begin{theorem}\label{thm:blabla}
Under the assumptions described above, if $\init_{M_\val}(I)$ (see Definiton~\ref{def: init wrt M}) is prime, then the value semigroup $S(A,\val)$ is generated by the images $\val(\bar x_i)$ for $\bar x_i\in A$. Moreover, the Newton-Okounkov body is the convex hull of these images and the $\bar x_i$ form a Khovanskii basis.
\end{theorem}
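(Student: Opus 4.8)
The plan is to route everything through the quasi-valuation attached to the weighting matrix $M_\val$ and to show that primality of $\init_{M_\val}(I)$ upgrades it to an honest valuation that coincides with $\val$. Write $R=\mathbb{C}[x_1,\dots,x_n]$, so $A=R/I$, and let $\mathfrak{w}$ be the quasi-valuation on $A$ with weighting matrix $M_\val$ (Definition~\ref{def: quasi val from wt matrix}). By the general comparison result of \S\ref{sec:val and quasival} one has $\mathfrak{w}(\bar x_i)=\val(\bar x_i)$ for all $i$, the pointwise inequality $\mathfrak{w}\le\val$, and a canonical identification $\gr_{\mathfrak{w}}(A)\cong R/\init_{M_\val}(I)$ (Definition~\ref{def: init wrt M}). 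The first observation is that the hypothesis makes $R/\init_{M_\val}(I)$ a domain, hence $\mathfrak{w}$ is multiplicative, i.e.\ a genuine valuation; moreover its value semigroup equals the subsemigroup $S'\subseteq S(A,\val)$ generated by the $\val(\bar x_i)$.

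The core of the argument compares three graded algebras: $R/\init_{M_\val}(I)\cong\gr_{\mathfrak{w}}(A)$, the semigroup algebra $\mathbb{C}[S']$, and $\gr_\val(A)$, which is isomorphic to $\mathbb{C}[S(A,\val)]$ because $\val$ has full rank. There is a natural graded surjection $R/\init_{M_\val}(I)\twoheadrightarrow\mathbb{C}[S']$ sending $x_i$ to the class of $\bar x_i$ in $\gr_\val(A)$ (these classes generate the subalgebra $\mathbb{C}[S']$): it annihilates $\init_{M_\val}(I)$ because, for $f\in I$, the $M_\val$-initial form of $f$ maps to the symbol of the (vanishing) $\val$-leading part of $\bar f$. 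I would then run a dimension count: the passage to the $M_\val$-initial ideal is a flat degeneration, so $R/\init_{M_\val}(I)$ has the Hilbert function of $A$, and in particular Krull dimension $\dim A$; under the standing assumptions $\rank(M_\val)=\dim A$, so the group generated by $S'$ has the same rank and $\dim\mathbb{C}[S']=\dim A$ as well. A surjection between finitely generated domains over $\mathbb{C}$ of equal Krull dimension is an isomorphism (its kernel is a prime of height zero, hence zero), so $R/\init_{M_\val}(I)\cong\mathbb{C}[S']$. Consequently $\mathbb{C}[S']$ has the Hilbert function of $A$, hence of $\gr_\val(A)\cong\mathbb{C}[S(A,\val)]$; together with the containment $S'\subseteq S(A,\val)$, which forces $\mathbb{C}[S']_k\subseteq\mathbb{C}[S(A,\val)]_k$ in every degree $k$, equality of dimensions in each degree yields $\mathbb{C}[S']=\mathbb{C}[S(A,\val)]$, that is, $S'=S(A,\val)$.

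The remaining claims are then immediate. Since $S(A,\val)$ is generated by the $\val(\bar x_i)$, the $\bar x_i$ form a Khovanskii basis by definition (and, as a byproduct, $\mathfrak{w}=\val$, since an inequality of valuations inducing an isomorphism on associated graded rings is an equality). The Newton--Okounkov cone is the closed convex hull of $S(A,\val)\cup\{0\}=S'\cup\{0\}$, which equals $\cone(\val(\bar x_1),\dots,\val(\bar x_n))$; because the $\bar x_i$ are homogeneous of degree one, intersecting with the distinguished hyperplane cuts this cone in the polytope $\conv(\val(\bar x_1),\dots,\val(\bar x_n))$, which is therefore the Newton--Okounkov body.

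The main obstacle I anticipate is exactly the passage from the a priori weak data — the inequality $\mathfrak{w}\le\val$ and a surjection onto the potentially proper subalgebra $\mathbb{C}[S']$ — to the isomorphisms above; this is precisely where primality is indispensable, since it is what makes $R/\init_{M_\val}(I)$ an integral domain of the correct dimension, so that, combined with $\rank(M_\val)=\dim A$, the kernel of the surjection onto $\mathbb{C}[S']$ must vanish. Two supporting facts also carry real content and must be handled with the weighting-matrix machinery rather than taken for granted: the identification $\gr_{\mathfrak{w}}(A)\cong R/\init_{M_\val}(I)$ and the flatness (hence Hilbert-function) statement both rest on the $\mathbb{Z}^d$-graded refinement of Gr\"obner degeneration encoded in $\init_{M_\val}$, and throughout one uses the technical hypotheses of the setup (homogeneity of $\val$, generators in degree one, $\rank(M_\val)=\dim A$, and finiteness of the extremal terms defining $\mathfrak{w}$).
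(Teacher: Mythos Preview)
Your strategy is sound under the hypotheses you list, and it is a genuinely different route from the paper's. After establishing (as you do) that $\mathfrak{w}=\val_{M_\val}$ is a valuation with $\mathfrak{w}(\bar x_i)=\val(\bar x_i)$, the paper never touches Hilbert functions or Krull dimension. Instead it fixes a standard monomial basis $\mathbb{B}$ adapted to $\mathfrak{w}$ (available since $M_\val$ lies in the Gr\"obner region of $I$) and argues elementwise: every $b_\alpha\in\mathbb{B}$ is the image of a monomial $x^\alpha$, so $\val(b_\alpha)=M_\val\alpha=\mathfrak{w}(b_\alpha)$ because both maps are multiplicative and agree on the $\bar x_i$; by one-dimensional leaves of $\mathfrak{w}$ these values are pairwise distinct, whence for any $g=\sum a_ib_{\alpha_i}$ the valuation axiom forces $\val(g)=\min_i\val(b_{\alpha_i})=\mathfrak{w}(g)$. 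Thus $\val=\mathfrak{w}$ and $S(A,\val)=S'$ with no further input. The trade-off: your Hilbert-function comparison is more conceptual, but the step ``$\gr_\val(A)$ has the Hilbert function of $A$'' is exactly where you need $\val$ to be homogeneous for the ambient $\mathbb{Z}^s$-grading---an assumption the paper's adapted-basis argument avoids entirely, so the paper's version is slightly stronger. Two smaller remarks: the pointwise inequality $\mathfrak{w}\preceq\val$ you invoke is true but is not actually stated or used in \S\ref{sec:val and quasival}; and your Krull-dimension argument for $R/\init_{M_\val}(I)\cong\mathbb{C}[S']$ is redundant, since once $\mathfrak{w}$ has one-dimensional leaves one has $\gr_{\mathfrak{w}}(A)\cong\mathbb{C}[S(A,\mathfrak{w})]=\mathbb{C}[S']$ immediately.
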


A precise formulation is Theorem~\ref{thm: val and quasi val with wt matrix} in \S\ref{sec:val and quasival}. A direct consequence is that in case the Theorem applies, the toric degeneration of $X$ obtained from the valuation $\val$ can be associated with a prime cone in $\trop(X)$.
It is in fact a very powerful tool: many of the following results are applications or direct consequences of Theorem~\ref{thm:blabla}.
The key idea is to use higher-dimensional Gröbner theory and methods of Kaveh-Manon in \cite{KM16} for arbitrary full-rank valuations.
In particular, this links any toric degeneration induced by a valuation (independent from how the valuation is obtained, e.g. using birational sequences or cluster algebras) to those from tropical geometry.

%%%%%%%%%%%%%%%%%%%%%%%%%%%%%%
%%%%%%%%%%%%% GRASSMANNIAN

%%%% BOS
\subsection{Toric degenerations of Grassmannians}

Consider the Grassmannian $\Gr(k,\mathbb C^n)$ embedded in the projective space $\mathbb{P}(\bigwedge^{k} \mathbb{C}^n)$ via the Pl{\"u}cker embedding. 
We define in Definition~\ref{def:itseq} a new class of birational sequences for Grassmannians called \emph{iterated sequences}.
%The tropical Grassmannian $\trop(Gr(k,n))$ parametrizes initial ideals of the Pl{\"u}cker ideal $I_{k,n}$ that are monomial free.
%More precisely, it is the subfan of the Gr{\"o}bner fan consisting of weight vectors $\mathbf{w}$ such that the initial ideal of the Pl{\"u}cker ideal with respect to $\mathbf{w}$ does not contain a monomial. 
%Speyer and Sturmfels  \cite{SS04} uncovered a beautiful combinatorial structure of the tropical Grassmannian $\trop(\Gr(2,\mathbb C^n))$.
%In particular, they show that the maximal cones of $\trop(\Gr(2,\mathbb C^n))$ are parametrized by labelled trivalent trees on $n$ leaves. A similar such description is not known for $\trop(\Gr(k,\mathbb C^n))$ in general.
More specifically, for $\Gr(2,\mathbb C^n)$, in Algorithm~\ref{alg:tree assoc to seq} we reveal their close connection to labelled trivalent trees parametrizing maximal prime cones of $\trop(\Gr(2,\mathbb C^n))$.
Let $S$ be an iterated sequence for $\Gr(2,\mathbb C^n)$ and $T_S$ the trivalent tree that is the output of the algorithm.
We consider a valuation $\val_S$ (see Definition~\ref{def:valseq}) associated with $S$ and the weighting matrix it defines.
In a key-proposition (Proposition~\ref{prop: init M_S= init C_S}) we show that the initial ideal of the Plücker ideal $I_{2,n}$ with respect to the weighting matrix coincides with the initial ideal with respect to the cone defined by $T_S$.
As the latter is prime (see \cite{SS04}) this enables us to apply Theorem~\ref{thm:blabla} to obtain:

\begin{theorem}\label{thm:intro birat}
\begin{enumerate}[(i)]
    \item For every iterated sequence $S$ for $\Gr(2,\mathbb C^n)$ the value semigroup of the associated valuation is generated by the images of Plücker coordinates. That is, the Plücker coordinates form a Khovanskii basis.
    \item For every iterated sequence $S$ for $\Gr(2,\mathbb C^n)$ there exists a maximal prime cone $C_S$ in $\trop(\Gr(2,\mathbb C^n))$ such that the associated toric degenerations of $\Gr(2,\mathbb C^n)$ are isomorphic.
    \item For every maximal prime cone $C$ in $\trop(\Gr(2,\mathbb C^n))$ there exists an iterated sequence $S_C$, such that the induced toric degenerations of $\Gr(2,\mathbb C^n)$ are isomorphic. 
\end{enumerate}
\end{theorem}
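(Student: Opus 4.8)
The plan is to derive all three statements from Theorem~\ref{thm:blabla}, using the combinatorial bridge supplied by Algorithm~\ref{alg:tree assoc to seq}, the key-proposition on initial ideals, and the Speyer--Sturmfels description of $\trop(\Gr(2,\mathbb C^n))$ from \cite{SS04}.

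\emph{Part (i).} Given an iterated sequence $S$, I would form the valuation $\val_S$ of Definition~\ref{def:valseq}, its weighting matrix $M_{\val_S}$, and the trivalent tree $T_S$ output by Algorithm~\ref{alg:tree assoc to seq}; let $C_S$ be the corresponding maximal cone of $\trop(\Gr(2,\mathbb C^n))$. By Proposition~\ref{prop: init M_S= init C_S} one has $\init_{M_{\val_S}}(I_{2,n})=\init_{C_S}(I_{2,n})$, and by \cite{SS04} every maximal cone of $\trop(\Gr(2,\mathbb C^n))$ is prime, so this initial ideal is prime. The hypotheses of Theorem~\ref{thm:blabla} are therefore met, and it yields that $S(A,\val_S)$ is generated by the images $\val_S(\overline{p}_{ij})$ of the Plücker coordinates, that the Newton--Okounkov body is their convex hull, and that the Plücker coordinates form a Khovanskii basis.

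\emph{Part (ii).} Once the Plücker coordinates are a finite Khovanskii basis for $\val_S$, the toric degeneration attached to $\val_S$ (in the sense of \cite{KM16}) has special fibre $\Proj\,\mathbb C[S(A,\val_S)]$, which under the weighting-matrix description is $V(\init_{M_{\val_S}}(I_{2,n}))$. By Proposition~\ref{prop: init M_S= init C_S} this equals $V(\init_{C_S}(I_{2,n}))$, i.e. the special fibre of the Gröbner degeneration of $\Gr(2,\mathbb C^n)$ along $C_S$. Thus both degenerations are one-parameter flat families over $\mathbb A^1$ with generic fibre $\Gr(2,\mathbb C^n)\subset\mathbb P(\bigwedge^2\mathbb C^n)$ and with the same special fibre as a subscheme of that projective space; hence they are isomorphic, and $C_S$ is the cone claimed.

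\emph{Part (iii).} It remains to check that the assignment $S\mapsto T_S$ hits every trivalent tree, equivalently that every maximal prime cone of $\trop(\Gr(2,\mathbb C^n))$ is of the form $C_S$. Under the Speyer--Sturmfels bijection a maximal cone $C$ corresponds to a trivalent tree $T$ with $n$ labelled leaves, and I would construct an iterated sequence $S_C$ by inverting Algorithm~\ref{alg:tree assoc to seq}: reading off the nesting structure of $T$ — peeling cherries one at a time and recording the corresponding ``iteration'' data in the sense of Definition~\ref{def:itseq} — and then verifying by induction on $n$ that $S_C$ is a genuine iterated sequence and that running the algorithm on $S_C$ recovers $T$, so that $C_{S_C}=C$. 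Applying part (ii) to $S_C$ then gives the isomorphism of toric degenerations. The main obstacle is precisely this last step: showing the tree-to-sequence recipe is well defined and inverts the algorithm for \emph{every} trivalent tree, i.e. that the class of iterated sequences is large enough to realize all maximal cones; the comparison of initial ideals in Proposition~\ref{prop: init M_S= init C_S} is the other substantial ingredient, but as it is established beforehand, the remaining work is this combinatorial surjectivity argument together with the routine identification of special fibres coming from Khovanskii bases.
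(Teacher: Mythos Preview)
Your argument for parts (i) and (ii) is essentially the paper's: Proposition~\ref{prop: init M_S= init C_S} together with the Speyer--Sturmfels primeness result feeds Theorem~\ref{thm:blabla}, yielding the Khovanskii basis statement and the identification $\gr_{\val_S}(A_{2,n})\cong\mathbb C[p_{ij}]_{ij}/\init_{C_S}(I_{2,n})$, hence the isomorphism of degenerations.

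Part (iii), however, has a genuine gap. You aim to invert Algorithm~\ref{alg:tree assoc to seq} so that $T_{S_C}=T$ as \emph{labelled} trees, and hence $C_{S_C}=C$. This cannot work in general: the algorithm always attaches leaf $k$ at step $k$ as a cherry next to leaf $i_k$, so in particular leaf $n$ is in a cherry of $T_S$ for every iterated $S$. But there are labelled trivalent trees in which leaf $n$ is not in a cherry (e.g.\ the caterpillar on five leaves with cherries $\{1,2\}$ and $\{3,4\}$ and leaf $5$ at the middle vertex). Such a $T$ is never of the form $T_S$, so the map $S\mapsto T_S$ is not surjective onto labelled trees and you cannot arrange $C_{S_C}=C$.

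The paper sidesteps this by only matching the \emph{unlabelled} tree. Given $C$ with tree $T_C$, one finds (via Algorithm~\ref{alg:tree assoc to seq}, or equivalently Corollary~\ref{cor:treegraph}) an iterated $S$ with $T_S$ of the same shape $\mathtt T_C$; this is easy since any shape is reached by peeling cherries in some order, and one is free to relabel along the way. Then one invokes the $S_n$-action on Pl\"ucker coordinates, which sends $\init_{T_S}(I_{2,n})$ to $\init_{T_C}(I_{2,n})$ by a ring automorphism of $\mathbb C[p_{ij}]_{ij}$, giving an isomorphism $V(\init_{C_S}(I_{2,n}))\cong V(\init_C(I_{2,n}))$ of the special fibres. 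Combined with part (ii) this yields the claimed isomorphism of toric degenerations without ever requiring $C_{S_C}=C$. Your ``peeling cherries'' idea is the right intuition; it just produces the unlabelled tree, and the $S_n$-symmetry is the missing ingredient.
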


A more precise formulation that implies all of the above results can be found in Theorem~\ref{thm:main birat}.
Note that this gives an answer to Question~\ref{Q:trop from birat} for $\Gr(2,\mathbb C^n)$.

\medskip

Having in mind Question~\ref{Q:cluster} we would like to combine techniques from the tropical Grassmannian with the cluster algebra structure on $\mathbb C[\Gr(k,\mathbb C^n)]$. Similar ideas are discussed in \cite{SW05}, where they show how the two settings are related combinatorially. 

In \S\ref{sec:BFFHL} we apply Theorem~\ref{thm:blabla} to the valuations defined by Rietsch-Williams \cite{RW17} for every seed of the cluster algebra $\mathbb C[\Gr(k,\mathbb C^n)]$ mentioned above.
More precisely, we focus on seeds that are encoded by \emph{plabic graphs} (introduced by Postnikov \cite{Pos06}). Here the $\A$-cluster variables consist of only Plücker coordinates.
Let $\val_\G$ be the valuation associated with a plabic graph $\G$ and $M_\G$ the weighting matrix of $\val_\G$.
By $I_{k,n}$ we denote the Plücker ideal describing the Grassmannians with respect to the Plücker embedding (see \S\ref{sec:pre-grass}).
Summarizing Proposition~\ref{prop: plabic lin form}, Theorem~\ref{thm: RW val and wt vect} and Corollary~\ref{cor: integral NO vs prime} we obtain:

\begin{theorem}\label{thm: wt matrix BFFHL intro}
If the initial ideal $\init_{M_\G}(I_{k,n})$ is prime, then the toric degeneration obtained from the valuation $\val_\G$ can be realized as a degeneration from the tropicalization of $\Gr(k,\mathbb C^n)$. In this case, the associated Newton-Okounkov body is the convex hull of the valuation images of Plücker coordinates and the Plücker coordinates form a Khovanskii basis.

Moreover, if the Newton-Okounkov body of $\val_\G$ is not integral (see \cite[\S8]{RW17}), then the initial ideal $\init_{M_\G}(I_{k,n})$ is not prime.

\end{theorem}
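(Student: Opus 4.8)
The plan is to derive the first paragraph of the statement as a direct application of Theorem~\ref{thm:blabla} (in its precise form Theorem~\ref{thm: val and quasi val with wt matrix}), and to obtain the second paragraph as an immediate logical consequence of it. We work with $A=\mathbb C[\Gr(k,\mathbb C^n)]=\mathbb C[x_1,\dots,x_N]/I_{k,n}$, the homogeneous coordinate ring for the Plücker embedding, where $N=\binom{n}{k}$ and the $x_i$ are the Plücker variables; by \cite{RW17} the map $\val_\G$ is a full-rank valuation on $A\setminus\{0\}$ with values in some $\mathbb Z^d$.

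First I would verify that $\val_\G$ fits the framework of \S\ref{sec:val and quasival}. The technical assumptions on $A$ required by Theorem~\ref{thm: val and quasi val with wt matrix} hold since $A$ is the homogeneous coordinate ring of a projective variety and the images $\bar x_i$ of the Plücker variables generate $A$ as an algebra. The one substantive point is that $M_\G$ is genuinely the weighting matrix $M_{\val_\G}$ of $\val_\G$ in the sense of Definition~\ref{def: wt matrix from valuation}; this is exactly what Proposition~\ref{prop: plabic lin form} (producing the linear form recording the Plücker degree, so that $\val_\G$ is homogeneous) and Theorem~\ref{thm: RW val and wt vect} (identifying the $i$-th row of $M_\G$ with $\val_\G(\bar x_i)$) are set up to supply. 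Granting this, $\init_{M_\G}(I_{k,n})=\init_{M_{\val_\G}}(I_{k,n})$, so when this ideal is prime Theorem~\ref{thm:blabla} applies and gives: $S(A,\val_\G)$ is generated by the $\val_\G(\bar x_i)$, the Plücker coordinates form a Khovanskii basis, and the Newton-Okounkov body equals $\conv\{\val_\G(\bar x_i)\}$. As noted right after Theorem~\ref{thm:blabla}, when the theorem applies the associated toric degeneration of $\Gr(k,\mathbb C^n)$ can be attached to a maximal prime cone of $\trop(\Gr(k,\mathbb C^n))$; making this precise is the role of Corollary~\ref{cor: integral NO vs prime}, which produces a cone $C\subseteq\trop(\Gr(k,\mathbb C^n))$ whose Gröbner degeneration is isomorphic to the degeneration given by $M_\G$. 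This proves the first paragraph.

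For the second paragraph I would argue by contraposition. If $\init_{M_\G}(I_{k,n})$ were prime, then by the first paragraph the Newton-Okounkov body of $\val_\G$ is the convex hull of the finitely many lattice points $\val_\G(\bar x_i)\in\mathbb Z^d$, hence a lattice polytope, and in particular integral in the sense of \cite[\S8]{RW17}; equivalently, a non-integral Newton-Okounkov body forces $\init_{M_\G}(I_{k,n})$ to be non-prime. The main obstacle here is not the invocation of Theorem~\ref{thm:blabla}, which is formal, but the earlier verification that the Rietsch-Williams valuation $\val_\G$ is homogeneous and that the combinatorially defined matrix $M_\G$ is its weighting matrix $M_{\val_\G}$ -- that is, fitting the construction of \cite{RW17} exactly into the hypotheses of \S\ref{sec:val and quasival}. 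This is where Proposition~\ref{prop: plabic lin form} and Theorem~\ref{thm: RW val and wt vect} do the real work, after which both claims of the theorem follow.
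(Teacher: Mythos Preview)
Your overall strategy---apply Theorem~\ref{thm: val and quasi val with wt matrix} to $\val_\G$, then take the contrapositive for the second paragraph---is exactly what the paper does. However, you have misread what each of the three cited results actually establishes, and this garbles the logical structure of the argument.

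First, $M_\G=M_{\val_\G}$ is not something to be verified: in the paper $M_\G$ is \emph{defined} to be the weighting matrix of $\val_\G$ (its columns, not rows, are the $\val_\G(\bar p_J)$). So the ``substantive point'' you flag does not exist. Second, Proposition~\ref{prop: plabic lin form} does not show $\val_\G$ is homogeneous; it shows, via Lemma~\ref{lem: init wM vs init M}, that $\init_{M_\G}(I_{k,n})=\init_{\mathbf w_\G}(I_{k,n})$ for the single plabic weight vector $\mathbf w_\G$. This is what links the hypothesis ``$\init_{M_\G}(I_{k,n})$ prime'' to a point of $\trop(\Gr(k,\mathbb C^n))$: a prime binomial initial ideal is monomial-free, so $\mathbf w_\G$ lies in the tropicalization and the Gr\"obner degeneration along that cone realizes the toric degeneration. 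Third, Theorem~\ref{thm: RW val and wt vect} is not a preparatory lemma identifying entries of $M_\G$; it \emph{is} the application of Theorem~\ref{thm: val and quasi val with wt matrix} that yields the Khovanskii basis and the description $\Delta(A_{k,n},\val_\G)=\conv(\val_\G(\bar p_J))$. Finally, Corollary~\ref{cor: integral NO vs prime} does not produce a tropical cone; it \emph{is} precisely the contrapositive you write out for the second paragraph. In short: your mathematics is correct and coincides with the paper's, but the roles you assign to Proposition~\ref{prop: plabic lin form}, Theorem~\ref{thm: RW val and wt vect}, and Corollary~\ref{cor: integral NO vs prime} are scrambled.
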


We analyze $\Gr(3,\mathbb C^6)$ computationally and study in more detail $\Gr(2,\mathbb C^n)$ in \S\ref{sec:case gr2n}\footnote{Based on joint work with Xin Fang, Ghislain Fourier, Milena Hering and Martina Lanini in \cite{BFFHL}.}.
In the latter case, we define a weight vector $\bw_\G\in \mathbb R^{\binom{n}{2}}$ for every plabic graph $\G$ and show that it lies in the relative interior of a maximal prime cone of $\trop(\Gr(2,\mathbb C^n))$.
We use the bijection of labelled triangulations of a disk with $n$ marked points with plabic graphs for $\Gr(2,\mathbb C^n)$ (see \cite[Algorithm 12.1]{KW14}).
Let $T$ be the labelled trivalent tree that is the dual graph to the triangulation, which is mapped to $\G$ under the bijection. 
Consider a weight vector ${\bf w}_T$ in the relative interior of the maximal cone $C\subset\trop(\Gr(2,\mathbb C^n))$ with associated tree $T$. 
The following theorem shows how the combinatorial bijections in this case have in fact a deeper meaning and lead us to an answer of Question~\ref{Q:cluster} for $\Gr(2,\mathbb C^n)$.

\begin{theorem}\label{thm: BFFHL intro}
Let $\G$ be a plabic graph for $\Gr(2,\mathbb C^n)$ and $T$ the corresponding labelled trivalent tree. Then the associated initial ideals $\init_{{\bw}_\G}(I_{2,n})$ and  $\init_{{\bf w}_T}(I_{2,n})$ are equal.
\end{theorem}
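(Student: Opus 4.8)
The plan is to reduce the equality of the two initial ideals, quadruple by quadruple, to a comparison of the quartet topologies read off from $\bw_\G$ and from $T$, and then to verify this comparison using the combinatorics of triangulations. By Speyer--Sturmfels \cite{SS04} the three-term Plücker relations $g_{ijkl}=p_{ij}p_{kl}-p_{ik}p_{jl}+p_{il}p_{jk}$ ($1\le i<j<k<l\le n$) form a tropical basis of $I_{2,n}$, so for any $\bw\in\trop(\Gr(2,\mathbb C^n))$ the ideal $\init_\bw(I_{2,n})$ is generated by the initial forms $\init_\bw(g_{ijkl})$. Now $\bw_\G$ lies in the relative interior of a maximal --- hence prime --- cone of $\trop(\Gr(2,\mathbb C^n))$ by Theorem~\ref{thm: RW val and wt vect}, and $\bw_T$ does so by construction; for such weight vectors each $\init_\bw(g_{ijkl})$ is a single binomial, namely the difference of the two monomials among $p_{ij}p_{kl}$, $p_{ik}p_{jl}$, $p_{il}p_{jk}$ that tie for the minimal $\bw$-value. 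Hence it is enough to show that for every quadruple $\{i,j,k,l\}$ the same pair of these three monomials attains the minimum under $\bw_\G$ and under $\bw_T$; equivalently, that $\bw_\G$ and $\bw_T$ lie in the same maximal cone.

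For $\bw_T$ the tying pair is classical. Identifying the Speyer--Sturmfels cone of $T$ with tree metrics supported on $T$, the four-point condition shows that for $\{i,j,k,l\}$ the minimum is attained exactly at the two pairings that cross the unique internal edge of the $4$-leaf subtree of $T$ induced by these leaves: if this subtree has quartet topology $\{a,b\}\,|\,\{c,d\}$, the tying monomials are $p_{ac}p_{bd}$ and $p_{ad}p_{bc}$. Thus $\init_{\bw_T}(I_{2,n})$ is precisely the binomial ideal determined by the quartets of $T$.

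The core of the argument is the plabic side. Under \cite[Algorithm 12.1]{KW14} the plabic graph $\G$ corresponds to a labelled triangulation $\Delta$ of the disk with $n$ marked points whose dual trivalent tree is $T$; the $\A$-cluster variables of $\G$ are the Plücker coordinates indexed by the sides and diagonals of $\Delta$, and $\bw_\G$ is extracted in \S\ref{sec:case gr2n} from the Rietsch--Williams $\X$-valuation $\val_\G$ attached to this seed. Using this description I would, for each quadruple $i<j<k<l$, compute which pair of $p_{ij}p_{kl}$, $p_{ik}p_{jl}$, $p_{il}p_{jk}$ has minimal $\bw_\G$-value and verify that it is exactly the pair prescribed by the quartet of $T$ on $\{i,j,k,l\}$, i.e. by the way $\Delta$ separates these four marked points. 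I expect to carry this out by induction on $n$: deleting an ear of $\Delta$ contracts the corresponding leaf of $T$, restricts $\bw_\G$ compatibly, and leaves unchanged all quartets not involving that leaf, so only the quadruples meeting the deleted ear need a direct check, on top of a small base case. An alternative, possibly cleaner route is to show directly that $\bw_\G-\bw_T$ lies in the lineality space $\{(a_i+a_j)_{i<j}:a\in\mathbb R^n\}$ of $\trop(\Gr(2,\mathbb C^n))$, which together with $\bw_\G\in\trop(\Gr(2,\mathbb C^n))$ forces $\bw_\G$ into the relative interior of the same cone as $\bw_T$.

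Once the quartet data of $\bw_\G$ is matched with that of $T$ on all $\binom n4$ quadruples, the binomial generators of $\init_{\bw_\G}(I_{2,n})$ and $\init_{\bw_T}(I_{2,n})$ coincide and the theorem follows. The main obstacle is exactly this last step: extracting from the Rietsch--Williams valuation at the seed $\G$ a description of $\bw_\G$ on pairs $\{i,j\}$ explicit enough to make the quartet comparison routine, while keeping careful track of the labelling conventions in the plabic-graph/triangulation bijection so that leaf $i$ of $T$ is matched with the Plücker index $i$.
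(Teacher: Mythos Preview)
Your overall strategy—reduce to the three-term Pl\"ucker relations and compare quartet data—is the paper's strategy too. But two steps fail.

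First, invoking Theorem~\ref{thm: RW val and wt vect} for the claim that $\bw_\G$ lies in the relative interior of a maximal prime cone is circular: that theorem has primality of $\init_{\bw_\G}(I_{k,n})$ as a \emph{hypothesis}, not a conclusion. That $\bw_\G$ lands in $\trop(\Gr(2,\mathbb C^n))$ at all, let alone in a prime cone, is Corollary~\ref{cor: deg for plabic}, which is derived \emph{from} the statement you are proving. So you cannot assume a priori that each $\init_{\bw_\G}(g_{ijkl})$ is a binomial; this must be established directly.

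Second, your alternative route—showing $\bw_\G-\bw_T$ lies in the lineality space—is simply false. For $n=5$ and the triangulation with $\Delta_d=\{(2,4),(2,5)\}$ (Example~\ref{exp:caterpillar2}), set $s_{ij}=\deg_\G(p_{ij})+\deg_T(p_{ij})$; one computes $s_{12}=0$ and $s_{13}=s_{23}=s_{14}=s_{34}=2$. If $s_{ij}=a_i+a_j$ then $2a_1=s_{12}+s_{13}-s_{23}=0$ but also $2a_1=s_{13}+s_{14}-s_{34}=2$, a contradiction. The two weight vectors do lie in the same maximal cone, but not for this reason.

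The paper closes the gap with \emph{connection numbers} $C^{s,t}_{p,q}$, counting diagonals of $\Delta$ joining the cyclic intervals $[p,q]$ and $[s,t]$. The tree degree is rewritten as an $A$-degree $a_{ij}=C^{j,i-1}_{i,j-1}$ (Proposition~\ref{Prop:treeA}), and a closed-form $X$-degree $x_{ij}$ in the same connection numbers is proposed for the plabic degree. The hard step is Theorem~\ref{Thm:mainDeg}, proving $\deg_{\G_\Delta}(p_{ij})=x_{ij}$ by an induction on $n$ that removes a carefully chosen boundary vertex and works through thirteen cases. Once both degrees are in connection-number form, Propositions~\ref{a-prop} and~\ref{x-prop} show that for every $i<j<k<l$ the tie among the three monomial sums is governed by the dichotomy $C^{l,i-1}_{j,k-1}=0$ versus $C^{k,l-1}_{i,j-1}=0$, and the \emph{same} pair ties for the $A$- and $X$-degrees (with opposite inequalities, matching the sign convention difference between $\bw_T$ and $\bw_\G$). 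This simultaneously proves that $\init_{\bw_\G}(g_{ijkl})$ is a binomial and that it equals $\init_{\bw_T}(g_{ijkl})$. Your ear-deletion induction is the right shape for Theorem~\ref{Thm:mainDeg}, but without a closed formula like the $X$-degree the case analysis cannot get off the ground.
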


In combination with Theorem~\ref{thm: wt matrix BFFHL intro} this proves the expectation of Kaveh-Manon (see \cite[page 6]{KM16}) that the construction of Rietsch-Williams (based on the cluster structure) agrees essentially with theirs (based on the tropicalization).
Further, in combination with Theorem~\ref{thm:main birat} we now have a complete picture for toric degenerations of $\Gr(2,\mathbb C^n)$: (up to isomorphism) the constructions using birational sequences, the tropical Grassmannian, and the cluster structure yield the same toric varieties as flat degenerations of $\Gr(2,\mathbb C^n)$.

%%%%%%%%%%%%%% BOS and BFFHL
%The labelled trivalent trees that correspond to plabic graphs for $\Gr(2,\mathbb C^n)$ are \emph{planar}, i.e. the labels of there leaves (following our convention) are counterclockwise from $1$ to $n$. 
%These are of course not all labelled trivalent trees, but for every shape of tree $\mathtt T$ there exists a planar labelling. 
%Hence, up to isomorphism all toric degenerations of $\Gr(2,\mathbb C^n)$ from $\trop(\Gr(2,\mathbb C^n))$ can be recovered using plabic graphs.
%In particular, this implies the following Corollary of Theorem~\ref{thm:1-1}.

%\begin{corollary}
%For every plabic graph $P$ for $\Gr(2,\mathbb C^n)$ there exists a birational sequence $S$ such that if Conjecture~\ref{conj:fin-gen} holds for $S$, then
%$
%\Proj(\gr_S(A))\cong V(\init_{{\bf w}_P}(I_{2,n}))
%$.
%\end{corollary}

%%%%%%%%%%%%%%%%%%%%%%%%%%%%%%%%
%%%%%%%%%%%%%%%%%%%%%%%%%%%%%%% FLAGS
\subsection{Toric degenerations of flag and Schubert varieties}

%%%%%%%%%%%%%%%%%% BLMM
In \S\ref{sec:BLMM} and \S\ref{sec:BF} we consider the variety $\Flag_n$ of full flags $\{0\}= V_0\subset V_1\subset\cdots\subset V_{n-1}\subset V_n=\mathbb C^n$ of vector subspaces of $\mathbb C^n$ with ${\rm dim}_{\mathbb C}(V_i) = i$. 
In view of Question~\ref{Q:cluster} we want to understand if Newton-Okounkov bodies from birational sequences in \cite{FFL15} are related to the cluster structure on flag varieties in \S\ref{sec:BF}\footnote{Based on joint work with Ghislain Fourier.}. 
As a starting point we decided to study the special case of string polytopes.
In \cite{GP00} Gleizer and Postnikov use \emph{pseudoline arrangements} (see \S\ref{sec:BF}~Defintion~\ref{def:pseusoline arr}) associated to reduced expressions of $w_0\in S_n$ and \emph{rigorous paths} in these to paramatrize the inequalities for string cones $C_{\w_0}\subset \mathbb R^N$. 
We extend their result by adding weight inequalities encoded combinatorially in the pseaudoline arrangement and obtain the \emph{weighted string cones} $\mathcal C_{\w_0}\subset \mathbb R^{N+n-1}$ as defined in \cite{Lit98}. 
Intersecting $\mathcal C_{\w_0}$ with the preimage of a weight $\lambda\in \mathbb R^{n-1}$ of an appropriate projection $\pi:\mathbb R^{N+n-1}\to \mathbb R^{n-1}$ yields the string polytope $\pi^{-1}(\lambda)\cap\mathcal C_{\w_0}$. 
Generalizing to arbitrary $w\in S_n$ and following Caldero \cite{Cal03} we obtain similarly the string cone, weighted string cone and string polytope for the Schubert variety $X(w)\subset \Flag_n$.

We introduce a second polyhedral cone $S_{\w_0}\subset\mathbb R^N$ associated to a pseudoline arrangement in a dual way: the variables are associated to the faces of the diagram as opposed to the vertices in case of the string cone.
From additional weight inequalities and a second projection $\tau:\mathbb R^{N+n-1}\to\mathbb R^{n-1}$ we get a weighted cone $\mathcal S_{\w_0}$ and polytopes $\tau^{-1}(\lambda)\cap\mathcal S_{\w_0}$ for $\lambda\in \mathbb R^{n-1}$.
As in the case of string cones, we obtain these also for arbitrary $w\in S_n$. For simplicity we denote for now the corresponding projection also by $\pi$ and $\tau$. 
The first combinatorial result of our study is the following (see Theorem~\ref{thm:unimod}).

\begin{theorem}\label{thm:dual cones intro}
For every $\w\in S_n$, the two cones $\mathcal{C}_{\w}$ and $\mathcal{S}_{\w}$ are unimodularly equivalent and the lattice-preserving linear map is given by the duality of faces and vertices in the pseudoline arrangement. 
Moreover, this linear map restricts to linear bijections between the polytopes
$\pi^{-1}(\lambda) \cap \mathcal{C}_{\w} \cong \tau^{-1}(\lambda) \cap \mathcal{S}_{\w}$ and the cones $S_{\w} \cong C_{\w}$.
\end{theorem}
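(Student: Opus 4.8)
The plan is to deduce the entire statement from one explicitly described linear isomorphism $\Phi$ of $\RR^{N+n-1}$, namely the map induced by the face--vertex duality of the pseudoline arrangement (wiring diagram) $D$ attached to $\w$. First I would fix the bookkeeping. Sweeping a vertical line across $D$ from left to right, the $n$ strands initially bound $n-1$ regions, and each of the $N=\ell(\w)$ crossings, as the sweep passes it, closes one region and opens exactly one new one; this gives a canonical bijection between the $N$ crossings and $N$ of the $N+n-1$ bounded regions, the remaining $n-1$ being the "initial" regions that carry the weight coordinates. The coordinates of $C_\w\subset\RR^N$ (resp. $\mathcal C_\w\subset\RR^{N+n-1}$) are indexed by the crossings (resp. the crossings together with the $n-1$ weight directions), those of $S_\w$ and $\mathcal S_\w$ by the dual list of regions, and $\Phi$ is the map that expresses each region coordinate as the partial sum of crossing coordinates prescribed by which crossings lie "behind" that region in the sweep, together with the fixed identification of the $n-1$ initial regions with the weight directions.

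Next I would check $\Phi\in\mathrm{GL}_{N+n-1}(\ZZ)$. Ordering the crossings left to right as $v_1,\dots,v_N$ and matching each $v_k$ to the region $R'_k$ it opens, the $R'_k$-coordinate of $\Phi$ equals the $v_k$-coordinate plus an integer combination of coordinates of strictly earlier regions (the region $v_k$ closes, and the initial regions); together with the identity block on the weight directions this makes the matrix of $\Phi$ unitriangular up to signs in this order, so $\det\Phi=\pm1$ and $\Phi^{-1}$ is integral. This already yields the unimodular equivalence of the two ambient lattices; the substance of the theorem is that $\Phi$ carries $\mathcal C_\w$ onto $\mathcal S_\w$.

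For the cones I would use the Gleizer--Postnikov description extended by the weight inequalities as in \S\ref{sec:BF}: $\mathcal C_\w$ is cut out by the rigorous-path inequalities in $D$ together with the weight inequalities, and $\mathcal S_\w$ by the analogous system read off the \emph{faces} of $D$. The key lemma would be that face--vertex duality sets up a bijection between rigorous paths in $D$ and the defining inequalities of $\mathcal S_\w$ under which the linear functional of a rigorous path is transported by $\Phi^{\top}$ exactly to the corresponding dual functional; the "rigorous" turning rule at each crossing must be shown to translate into the admissibility rule on the dual side, with redundant inequalities corresponding to redundant ones, so that the facet descriptions match. The weight inequalities match because $\Phi$ was built to intertwine the initial-region data, and from this one extracts both $\tau\circ\Phi=\pi$ and the compatibility of $\Phi$ with the coordinate forgetful maps onto $\RR^N$. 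Granting these, $\Phi\colon\mathcal C_\w\to\mathcal S_\w$ restricts for each $\lambda$ to a linear bijection $\pi^{-1}(\lambda)\cap\mathcal C_\w\to\tau^{-1}(\lambda)\cap\mathcal S_\w$ and descends to $C_\w\cong S_\w$. The case of a general $\w$ then follows by restriction: a reduced word for $w$ yields a sub-wiring-diagram of the one for $w_0$, and both the inequality systems and $\Phi$ restrict compatibly to the associated coordinate subspaces.

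The step I expect to be the main obstacle is this inequality-matching lemma. Rigorous paths obey a delicate non-backtracking/turning condition at each crossing, and one must verify that passing to the dual arrangement converts this condition precisely into the one defining the facets of $\mathcal S_\w$ — neither gaining nor losing inequalities, and respecting which of them are facets so that the two \emph{facet} presentations correspond, not merely the two cones. A secondary technical point is normalizing the weight coordinates and the two projections $\pi,\tau$ in agreement with Littelmann's conventions so that $\tau\circ\Phi=\pi$ holds exactly, rather than only up to a further unimodular twist of $\RR^{n-1}$.
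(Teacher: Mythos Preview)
Your overall architecture---build one explicit lattice automorphism, verify unitriangularity, then show it carries the defining inequalities of one cone to those of the other---is exactly the paper's approach. But several of the details you sketch are off in ways that matter.

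First, the map. Your description of $\Phi$ as ``each region coordinate equals a partial sum of crossing coordinates behind it in the sweep'' is not the map that works. The paper's $\Psi_{\w}$ sends $-e_F$, for a face $F$ bounded on the left by a crossing at level $i$, to the sum of the two level-$i$ crossings bounding $F$ on the left and right, \emph{minus} the level-$(i\pm1)$ crossings bounding $F$ above and below. This is a local signed-incidence (discrete Laplacian–type) map, closely related to the Chamber Ansatz, not a cumulative partial sum. Your unitriangularity argument survives, but the map you would discover by following your recipe is not the right one.

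Second, and more importantly, you have misidentified what the inequality-matching lemma has to say. Both $\mathcal C_{\w}$ and $\mathcal S_{\w}$ are defined using the \emph{same} set $\mathcal P_{\w}$ of Gleizer--Postnikov paths; the difference is only in how one extracts a linear functional from a path $\p$ (either $c_{\p}$, recording signed line-changes at crossings, or $e_{\p}$, recording the faces enclosed by $\p$). There is no separate ``admissibility rule on the dual side'' and no nontrivial bijection to construct---the bijection is the identity on $\mathcal P_{\w}$. The entire content is the identity $\Psi_{\w}(e_{\p})=c_{\p}$, and the paper proves it by a short case analysis (six local pictures) of how $\p$ meets each crossing: whether the crossing lies in the interior of the enclosed area, on the boundary with or without a line-change, etc. You should replace your bijection-of-path-systems plan with this direct local computation. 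Relatedly, your worry about matching \emph{facets} rather than defining inequalities is unnecessary: since the two cones are cut out by inequality systems indexed by the same set and $\Psi_{\w}$ carries one system bijectively to the other, equality of the cones follows regardless of which inequalities are redundant.

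Finally, the weight inequalities do not match as simply as you expect. One finds $\Psi_{\w}(e_{[i:k]})=c_{[i:k+1]}$ with a cyclic shift modulo $n_i+1$, in particular $\Psi_{\w}(e_{[i:n_i]})=c_i$. This still gives $\tau_{\w}\circ\Psi_{\w}^{-1}=\pi$ on the nose for the polytope-defining hyperplanes (since those are $e_{[i:n_i]}$ on one side and $c_i=c_{[i:0]}$ on the other), so your conclusion about the polytopes goes through, but the intermediate weight inequalities do not correspond under the identity indexing.
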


The cone $\mathcal S_{\w_0}$ appears in the framework of mirror symmetry for cluster varieties \cite{GHKK14}. 
Recall that $\Flag_n=SL_n/B$ for the Borel subgroup of upper triangular matrices $B$. Denote by $B^-\subset SL_n$ the Borel subgroup of lower triangular matrices and by $U\subset B$ (resp. $U^-\subset B^-$) the unipotent radical with all diagonal entries being $1$. 
The double Bruhat cell $G^{e,w_0}= B^- \cap Bw_0B$ is an $\mathcal A$-cluster variety (see \cite{BFZ05}) and can be identified with an open subset of $Bw_0B/U$.
By \cite{Lit98} the weighted string cone parametrizes a basis of $\mathbb C[Bw_0B/U]$.
Let $\mathcal{X}$ be the mirror dual of the $\mathcal{A}$-cluster variety $G^{e,w_0}$ and let $s_0=s_{\hat\w_0}$ be the seed of the cluster algebra $\mathbb{C}[G^{e,w_0}]$ corresponding to the reduced expression $\hat\w_0 = s_1 \, s_2s_1 \, \cdots s_{n-1}\cdots s_2s_1$. 
Let $W$ be the superpotential defined by the sum of the $\vartheta$-functions for every frozen variable in $s_0$ as introduced in \cite{GHKK14}. 
Then $W^{\trop}$ denotes the tropicalization of the superpotential. Magee has shown in \cite{Mag15} (see also Goncharov-Shen in \cite{GS15}) that 
\[
\mathcal{S}_{\hat\w_0} = \{ x \in \mathbb{R}^{N+n-1} \mid W^{\trop}\vert_{\mathcal{X}_{s_{0}}}(x)\geq 0 \} =: \Xi_{s_0}.
\]
We show that the mutation of the pseudoline arrangement and hence of the cone $\mathcal{S}_{\w_0}$, is compatible with the mutation of the superpotential \cite{GHK15} by introducing mutation of the rigorous paths defining the cone. 
We obtain the following (see also \cite{GKS}, where Genz-Koshevoy-Schumann obtain a similar result in the context of crystal graphs):
\begin{theorem}
Let $\w_0$ be an arbitrary reduced expression of $w_0 \in S_{n}$ and $s_{\w_0}$ be the seed corresponding to the pseudoline arrangement, $\mathcal{X}_{s_{\w_0}}$ the toric chart of the seed $s_{\w_0}$. Then 
\[
\mathcal{S}_{\w_0} =  \{ x \in \mathbb{R}^{N+n-1} \mid W^{\trop}\vert_{\mathcal{X}_{s_{\w_0}}}(x)\geq 0 \} =: \Xi_{s_{\w_0}},
\]
the polyhedral cone defined by the tropicalization of $W$ expressed in the seed $s_{\w_0}$.
\end{theorem}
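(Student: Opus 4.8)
The plan is to bootstrap from Magee's identity $\mathcal{S}_{\hat\w_0} = \Xi_{s_0}$ for the distinguished reduced word $\hat\w_0 = s_1\,s_2s_1\cdots s_{n-1}\cdots s_2s_1$ \cite{Mag15} (see also \cite{GS15}) and to propagate the equality $\mathcal{S}_{\w_0} = \Xi_{s_{\w_0}}$ to an arbitrary reduced expression by following braid moves. By the Tits--Matsumoto theorem, any two reduced words for $w_0 \in S_n$ are linked by a finite chain of commutation moves ($s_is_j \leftrightarrow s_js_i$ for $|i-j|\geq 2$) and braid moves ($s_is_{i+1}s_i \leftrightarrow s_{i+1}s_is_{i+1}$), so it suffices to show that if the equality holds for $\w_0$ and $\w_0'$ is obtained from $\w_0$ by one such move, then it holds for $\w_0'$; an induction on the length of a chain connecting $\w_0$ to $\hat\w_0$ then finishes the argument.

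For a single move I would track the three objects attached to a reduced word. On the cluster-algebra side, the seed $s_{\w_0}$ of $\mathbb{C}[G^{e,w_0}]$ determined by a reduced word transforms under a commutation move by a relabeling and under a braid move by a single mutation $\mu_k$ at the vertex of the middle letter \cite{BFZ05}; I would first check that this matches the corresponding local transformation of the quiver read off from the pseudoline arrangement, so that $\mathcal{X}_{s_{\w_0'}}$ is glued to $\mathcal{X}_{s_{\w_0}}$ by $\mu_k$ (or a relabeling). Since the superpotential $W$ --- the sum over frozen variables of $\vartheta$-functions --- is an intrinsic function on the mirror, independent of the chosen chart, its restrictions to the two toric charts are related by pullback along $\mu_k$ \cite{GHK15, GHKK14}, so $W^{\trop}\vert_{\mathcal{X}_{s_{\w_0'}}} = W^{\trop}\vert_{\mathcal{X}_{s_{\w_0}}}\circ \mu_k^{\trop}$ with $\mu_k^{\trop}$ the (involutive) tropicalized $\mathcal{X}$-mutation; consequently $\Xi_{s_{\w_0'}} = \mu_k^{\trop}(\Xi_{s_{\w_0}})$.

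On the pseudoline side, a commutation or braid move is a local move of the arrangement, and the coordinates on $\mathbb{R}^{N+n-1}$ --- indexed by the faces of the diagram together with the $n-1$ weight directions --- change by an explicit piecewise-linear bijection that I would encode through a \emph{mutation of the rigorous paths} defining $\mathcal{S}_{\w_0}$: each rigorous path is rerouted across the moved crossing, and the resulting change of the defining inequalities carries $\mathcal{S}_{\w_0}$ to $\mathcal{S}_{\w_0'}$. The heart of the argument is to verify that this map equals $\mu_k^{\trop}$ from the previous step: away from the faces touched by the move both maps are the identity, while on the few faces around the moved crossing one compares the rerouting rule for rigorous paths with the tropical $\mathcal{X}$-mutation formulas ($x_k \mapsto -x_k$, and $x_j \mapsto x_j$ perturbed by a $\min$-term governed by the exchange-matrix entry $\epsilon_{jk}$) in the pseudoline quiver. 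Granting this compatibility, the induction goes through: at each step $\mathcal{S}_{\w_0'} = \mu_k^{\trop}(\mathcal{S}_{\w_0}) = \mu_k^{\trop}(\Xi_{s_{\w_0}}) = \Xi_{s_{\w_0'}}$, starting from $\mathcal{S}_{\hat\w_0} = \Xi_{s_0}$.

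The step I expect to be the main obstacle is exactly this compatibility: matching the rerouting of rigorous paths across a braid move of the pseudoline arrangement with the tropicalized $\mathcal{X}$-mutation of $W$, with due care for the frozen directions, for the extra $n-1$ weight coordinates, and for the sign conventions relating the quiver of the pseudoline arrangement to the Berenstein--Fomin--Zelevinsky quiver of $G^{e,w_0}$. Commutation moves should reduce to bookkeeping (a relabeling of faces and of the seed), so the real work concentrates in a single generic braid move, after which the general statement follows formally; compare \cite{GKS}, where the analogous compatibility is carried out on the level of crystal graphs.
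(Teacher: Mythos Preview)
Your proposal is correct and follows essentially the same route as the paper: start from Magee's identity at the initial seed, then propagate along braid moves by showing that the rerouting of Gleizer--Postnikov paths across the mutable face is compatible with the pullback under $\mathcal{X}$-mutation, handling the weight inequalities separately; commutation moves are indeed pure bookkeeping. The paper carries out the ``main obstacle'' you flag by a case-by-case analysis organized via the \emph{$F$-local type} of a path (how many incoming/outgoing faces of $F$ lie in $\area_\p$ and whether $F$ itself does), working at the level of the detropicalized Laurent monomials $z^{e_\p}$ rather than the cones --- this yields the slightly stronger statement $W|_{\mathcal{X}_{s_{\w_0}}} = W_{\mathcal{S}_{\w_0}}$ as functions, from which the equality of cones follows by tropicalizing.
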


Consider $w \in S_{n}$ arbitrary and $\w$ a reduced expression of $w$. Let $W$ be as above and consider its restriction $\res_{\w}(W\vert_{\mathcal X_{s_{\w_0}}})$ to the mirror dual of the $\mathcal{A}$-cluster variety $G^{e,w}$.  
Let $s_{\w}$ be the corresponding seed in the cluster algebra (see Definition~\ref{def:quiver pa}). 
Then the tropicalization of the restriction yields again a cone $\Xi_{s_{\w}}$. The last result of this section establishes an answer to Question~\ref{Q:cluster} for Schubert varieties.

\begin{theorem}
Let $\w \in S_{n}$, and fix $\w_0=\w s_{i_{\ell(w)+1}}\dots s_{i_N}$ a reduced expression of $w_0\in S_n$. 
Let $s_{\w}$ resp. $s_{\w_0}$ be the corresponding seeds. Then $\mathcal{S}_{\w}$ is the cone $\Xi_{s_{\w}}$ defined by the tropicalization of the restricted superpotential $\res_{\w}(W\vert_{\mathcal X_{s_{\w_0}}})$. 
\end{theorem}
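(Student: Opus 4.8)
The plan is to bootstrap from the full-flag case already established, namely the equality $\mathcal{S}_{\w_0}=\Xi_{s_{\w_0}}$ for an arbitrary reduced word $\w_0$, by showing that $\mathcal{S}_{\w}$ and $\Xi_{s_{\w}}$ are obtained from their $\w_0$-counterparts by \emph{the same} truncation: forgetting the data attached to the tail letters $s_{i_{\ell(w)+1}},\dots,s_{i_N}$ of the fixed reduced word $\w_0=\w\,s_{i_{\ell(w)+1}}\cdots s_{i_N}$. Concretely, I would first assemble a dictionary identifying the $\w$-objects as sub-objects of the $\w_0$-objects: (i) the pseudoline arrangement of $\w$ is the initial sub-diagram of that of $\w_0$ keeping only the first $\ell(w)$ crossings, so its faces (the variables of $\mathcal{S}_{\w}$) form a distinguished subset of the faces of the $\w_0$-arrangement; (ii) the seed $s_{\w}$ of $\mathbb{C}[G^{e,w}]$ is obtained from $s_{\w_0}$ of $\mathbb{C}[G^{e,w_0}]$ by deleting the vertices indexed by the tail letters (and re-freezing those head vertices whose last occurrence of an index has moved into the tail) --- the standard compatibility of the cluster structures on double Bruhat cells under $G^{e,w}\hookrightarrow G^{e,w_0}$; and (iii) dually, $\mathcal{X}_{s_{\w}}$ sits inside $\mathcal{X}_{s_{\w_0}}$ and $\res_{\w}(W\vert_{\mathcal{X}_{s_{\w_0}}})$ is, by definition of the restriction, obtained from $W\vert_{\mathcal{X}_{s_{\w_0}}}$ by discarding the monomials involving the tail $\mathcal{X}$-coordinates and specializing the remaining ones.

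\textbf{The cluster side.} Granting (iii), tropicalization intertwines restriction with passing to the coordinate subspace $L=\{x\in\mathbb{R}^{N+n-1}: x_j=0 \text{ for every tail vertex } j\}$: viewing $W^{\trop}\vert_{\mathcal{X}_{s_{\w_0}}}$ as a minimum of $\mathbb{Z}$-linear forms, its restriction to $L$ (read on the remaining $\ell(w)+n-1$ coordinates) is exactly $\res_{\w}(W\vert_{\mathcal{X}_{s_{\w_0}}})^{\trop}$, so that $\Xi_{s_{\w}}=\rho(\Xi_{s_{\w_0}}\cap L)$ where $\rho:\mathbb{R}^{N+n-1}\to\mathbb{R}^{\ell(w)+n-1}$ forgets the tail coordinates. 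It therefore remains to prove $\mathcal{S}_{\w}=\rho(\mathcal{S}_{\w_0}\cap L)$, after which the preceding theorem finishes the argument.

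\textbf{The polyhedral side.} For this I would use the rigorous-path description of $\mathcal{S}_{\w_0}$ together with the weight inequalities (passing through $\mathcal{C}$ via Theorem~\ref{thm:dual cones intro} if convenient). The key combinatorial lemma to establish is: a rigorous path in the pseudoline arrangement of $\w_0$ that never enters the tail region is precisely a rigorous path in the arrangement of $\w$, and conversely; hence the defining inequalities of $\mathcal{S}_{\w}$ are exactly the ``head'' inequalities of $\mathcal{S}_{\w_0}$, now read on $L$. To conclude equality of the two cones one must additionally show that every inequality of $\mathcal{S}_{\w_0}$ coming from a rigorous path that does straddle the head/tail boundary becomes redundant on $L$, i.e.\ is implied by the head inequalities or is satisfied trivially there. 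One could equally run the entire argument by induction on $N-\ell(w)$, peeling one letter of $\w_0$ at a time, which reduces all of (i)--(iii) and the combinatorial lemma to single-vertex statements.

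\textbf{Main obstacle.} The crux is controlling the objects that cross the head/tail boundary: the rigorous paths (equivalently, the superpotential monomials) involving both head and tail faces, and proving that their tropical inequalities impose nothing new once restricted to $L$. On the cluster side this amounts to checking that the terms of $W$ not belonging to the restricted superpotential tropicalize to redundant constraints on $\mathcal{X}_{s_{\w}}$, and that no cancellation occurs under $\res_{\w}$ that would delete a genuinely needed inequality; this should follow from the explicit shape of the $\vartheta$-functions attached to the frozen variables, but is delicate because the set of frozen variables changes when one passes from $G^{e,w_0}$ to $G^{e,w}$. A secondary, more technical obstacle is to pin down (iii) precisely --- that the restriction of \cite{GHKK14} really is the expected operation on $\mathcal{X}_{s_{\w_0}}$ and is the mirror of the inclusion $G^{e,w}\subset G^{e,w_0}$ --- since functoriality of the superpotential construction under this kind of sub-cluster variety may have to be set up by hand for the seeds coming from pseudoline arrangements.
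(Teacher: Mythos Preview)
Your overall strategy---reduce to the established equality $\mathcal S_{\w_0}=\Xi_{s_{\w_0}}$ by showing both sides restrict compatibly from $\w_0$ to $\w$---is correct and is exactly what the paper does. The paper, however, runs the argument on the potential itself rather than on its tropicalization: it proves directly (Proposition~\ref{prop: res superpt}) that
\[
\res_{\w}\bigl(W\vert_{\X_{s_{\w_0}}}\bigr)\;=\;W_{\mathcal S_{\w}}\;:=\;\sum_{\p\in\mathcal P_{\w}}z^{e_{\p}}+\sum_{i,k}z^{e_{[i:k]}},
\]
the detropicalization of $\mathcal S_{\w}$, and tropicalizes once at the end. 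This collapses your two separate steps (the ``cluster side'' and the ``polyhedral side'') into one computation. Your secondary obstacle, pinning down (iii) via GHKK functoriality, is a red herring: $\res_{\w}$ is nothing more than the surjection of Laurent polynomial rings coming from the coordinate subtorus $\X_{\w}\subset\X_{\w_0}$, and no cluster machinery beyond that is needed.

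Your ``key combinatorial lemma'' is misstated, and this is where the real work sits. A nontrivial GP-path for $\w_0$ has its source and sink at the \emph{right} boundary of $\pa(\w_0)$, so it must traverse the tail region; there is no useful class of $\w_0$-paths that ``never enter the tail'', and hence no bijection of the kind you describe. The correct statement is $\res_{\w}(\mathcal P_{\w_0})=\mathcal P_{\w}$, where $\res_{\w}$ \emph{cuts} a $\w_0$-path at the head/tail boundary and keeps the head pieces (Definition~\ref{def: res path}). That each head piece is a genuine $\w$-path is easy (Proposition~\ref{prop:respath}); the converse---that every $\w$-path arises as such a head piece---requires an explicit extension procedure threading the given $\w$-path rigorously through the tail (Algorithm~\ref{alg:induced path}, Proposition~\ref{prop:restrict}). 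Once this equality of path-sets is in hand, your redundancy concern evaporates: on the $\mathcal C$-side, the restriction to $L$ of a straddling path's inequality is the \emph{sum} of the inequalities of its head components (hence implied by them), and conversely every single $\w$-inequality is literally the restriction of the inequality for its induced $\w_0$-path. So the obstacle you flag is real, but its resolution is this cut-and-extend mechanism on paths, not a separate redundancy argument for a class of ``straddling'' paths distinct from the ``head-only'' ones.
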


%The paper is organized as follows: We recall briefly the combinatorics of pseudoline arrangements in Section~\ref{pa-and-gp}. Then in Section~\ref{two-cones} we introduce the two weighted cones and show their unimodular equivalence. Section~\ref{sec:string} connects to the well-known string cones and Section~\ref{sec:super} to cluster varieties and superpotentials.

\medskip

In view of Question~\ref{Q:trop flag} we study in \S\ref{sec:BLMM}\footnote{Based on joint work with Sara Lamboglia, Kalina Mincheva and Fatemeh Mohammdi in \cite{BLMM}.} the tropicalization of the flag variety. Consider therefore the natural embeddeding of $\Flag_n$ in a product of Grassmannians using the Pl\"ucker coordinates. 
We denote by $I_n$ the defining ideal of $\Flag_n$ with respect to this embedding. 
We produce toric degenerations of $\Flag_n$ as Gr\"obner degenerations coming from the initial ideals associated to the maximal cones of $\trop(\Flag_n)$. 
For the case of maximal cones with non-prime associated initial ideal we suggest a procedure (see \S\ref{sec:BLMM}~Procedure~\ref{alg:Kh_Basis}) of how to recover prime cones from reembedding the variety. We successfully apply it to $\Flag_4$.

The following is our main results on the tropicalization of the flag varieties $\Flag_4$ and $\Flag_5$. More detailed formulations can be found in \S\ref{sec:BLMM}
Theorem~\ref{flag4}, Theorem~\ref{flag5}, and Proposition~\ref{prop:output}.
%We will call a maximal cone $C$ of $\trop(X)$ \emph{prime} if $\init_C(I):=\init_{\bf{w}}(I)$ is prime, with $\bf{w}$ a vector in the relative interior of $C$.

\begin{theorem}\label{thm: intro trop Flag}
The tropical variety $\trop(\Flag_4)\subset \mathbb R^{14}/\mathbb R^3$ is a 6-dimensional fan with $78$ maximal cones. 
From prime cones we obtain four non-isomorphic toric degenerations.
After applying Procedure~\ref{alg:Kh_Basis} we obtain at least two additional non-isomorphic toric degenerations from non-prime cones.

The tropical variety $\trop(\Flag_5)\subset \mathbb R^{30}/\mathbb {R}^4$ is a 10-dimensional fan with $69780$ maximal cones. From prime cones we obtain 180 non-isomorphic toric degenerations.
\end{theorem}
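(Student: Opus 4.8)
This statement is computational: it records the outcome of an explicit calculation of $\trop(\Flag_n)$ for $n=4,5$ together with an analysis of the associated initial ideals and the toric degenerations they produce. The plan is to carry out this computation, using the symmetries of the problem to keep the $n=5$ case feasible, and then to apply Theorem~\ref{thm:blabla} and Procedure~\ref{alg:Kh_Basis} to pass from cones to (non-isomorphic) toric degenerations.

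First I would fix the embedding $\Flag_n\hookrightarrow\prod_{k=1}^{n-1}\mathbb P(\bigwedge^k\mathbb C^n)$ and write the defining ideal $I_n$ explicitly in the polynomial ring on Plücker variables $p_J$ indexed by proper nonempty subsets $J\subset\{1,\dots,n\}$; it is generated by the incidence Plücker relations. A Gröbner basis of $I_n$ is then the input for a tropicalization routine (a computer algebra system with Gröbner and tropical-geometry functionality), which computes the subfan of the Gröbner fan of $I_n$ on which the initial ideal is monomial-free, i.e. $\trop(\Flag_n)$. Since the torus rescalings of the $n-1$ Grassmannian factors contribute an $(n-1)$-dimensional lineality space, one works in the quotients $\mathbb R^{14}/\mathbb R^3$ for $n=4$ and $\mathbb R^{30}/\mathbb R^4$ for $n=5$, and the output is a pure fan of dimension $\dim\Flag_n$ ($=6$, resp. $10$). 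The essential device for $\Flag_5$ is that the combinatorial automorphisms of the setup — the permutations of the $p_J$ coming from the $S_n$-action together with the duality $k\leftrightarrow n-k$ — act on this fan, so it suffices to enumerate maximal cones up to this symmetry and then propagate; without such a reduction a traversal of $69780$ maximal cones in a $30$-dimensional space is hopeless.

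Next, for each maximal cone $C$ I would pick a weight vector $\bw_C$ in its relative interior, compute $\init_{\bw_C}(I_n)$, and test it for primality (and, when prime, record whether it is binomial, hence defines a toric variety). This splits the maximal cones into prime and non-prime ones. To count \emph{non-isomorphic} toric degenerations among the prime cones, one must compare the resulting toric varieties rather than merely the cones: by Theorem~\ref{thm:blabla} (applied, via the Kaveh--Manon construction, to the full-rank valuation attached to $C$) each prime cone has an associated Newton--Okounkov polytope, and two prime cones give isomorphic degenerations when these polytopes agree up to unimodular equivalence. So I would compute these polytopes, sort them by a complete list of lattice invariants (normalized volume, $f$-vector, lattice-point data), and break ties by an explicit search for a unimodular isomorphism; this yields the stated counts ($4$ classes for $\Flag_4$, $180$ for $\Flag_5$). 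Finally, for the non-prime maximal cones of $\trop(\Flag_4)$ I would run Procedure~\ref{alg:Kh_Basis}: reembed $\Flag_4$ by adjoining the extra functions needed to form a Khovanskii basis, recompute the relevant initial ideal in the enlarged coordinate ring, verify that it becomes prime, and compare the new toric degenerations with the previous four, confirming that at least two of them are genuinely new.

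The main obstacle is the scale of the $\Flag_5$ computation, so the argument really rests on organizing the symmetry reduction correctly and on having primality and unimodular-equivalence tests fast enough to run on the orbit representatives; verifying that the symmetry-propagated cones indeed reassemble into a fan is a further bookkeeping point, though it follows from Gröbner-fan theory once the orbit data is consistent. A secondary subtlety is conceptual rather than computational: ``non-isomorphic'' refers to the toric \emph{varieties} (equivalently, their Newton--Okounkov polytopes up to unimodular equivalence), not to combinatorial types of cones in the tropical fan, so the isomorphism-testing step must be carried out carefully and cannot be replaced by a comparison of superficial invariants alone.
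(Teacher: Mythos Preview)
Your proposal is correct and matches the paper's approach: compute $\trop(\Flag_n)$ with Gfan using the $S_n\rtimes\mathbb Z_2$ symmetry, test initial ideals of orbit representatives for primality, compute the polytope of the normalization for each prime orbit, and distinguish the toric varieties via their polytopes; then run Procedure~\ref{alg:Kh_Basis} on a non-prime cone of $\trop(\Flag_4)$. Two small points where the paper is lighter than your outline: it does not invoke Theorem~\ref{thm:blabla} or the Newton--Okounkov formalism here but reads the polytope directly from the matrix $W_C$ (degree-$(1,\dots,1)$ slice of the semigroup $\mathbb Z_{\ge0}\mathcal B_C$), and it never needs an explicit unimodular-isomorphism search because the polytopes already fail to be \emph{combinatorially} equivalent (checked in polymake), which immediately rules out unimodular equivalence.
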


In view of Question~\ref{Q:trop flag} and following \cite{KM16}, we further compute the Newton-Okounkov polytopes associated to maximal prime cones. These are the polytopes associated to the normalizations of the toric varieties we obtain. 
We compare these with certain Newton-Okounkov polytopes arising in the setting of \cite{FFL15}, more precisely to string polytopes and the FFLV polytope.

\begin{theorem}\label{ts-comparison} 
For $\Flag_4$ there is at least one new toric degeneration arising from prime cones of $\trop(\Flag_4)$ in comparison to those obtained from string polytopes and the FFLV polytope.

For $\Flag_5$ there are at least 168 new toric degenerations arising from prime cones of $\trop(\Flag_5)$ in comparison to those obtained from string polytopes and the FFLV polytope.
\end{theorem}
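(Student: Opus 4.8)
The proof is computational and combines two ingredients already established: the explicit description of $\trop(\Flag_n)$ together with its list of maximal prime cones and associated initial ideals from Theorem~\ref{thm: intro trop Flag} (with the detailed data in Theorem~\ref{flag4}, Theorem~\ref{flag5} and Proposition~\ref{prop:output}), and Theorem~\ref{thm:blabla}, which lets us attach a Newton--Okounkov polytope to each such cone. Concretely, for a maximal prime cone $C$ of $\trop(\Flag_n)$ we pick a weight vector $\bw$ in the relative interior of $C$, extend it to a full-rank valuation by generic tie-breaking, and apply Theorem~\ref{thm:blabla}: since $\init_C(I_n)$ is prime the Plücker coordinates form a Khovanskii basis and the Newton--Okounkov polytope $\Delta_C$ is the convex hull of their valuation images. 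The plan then has four steps: (1) for every maximal prime cone $C$ compute $\init_C(I_n)$ and the polytope $\Delta_C$; (2) sort the toric degenerations $V(\init_C(I_n))$ into isomorphism classes, recovering the counts $4$ for $\Flag_4$ and $180$ for $\Flag_5$ of Theorem~\ref{thm: intro trop Flag}; (3) produce the list of toric degenerations of $\Flag_n$ coming from string polytopes and from the FFLV polytope; (4) determine how many of the classes from step (2) contain no member of the list from step (3), and read off the lower bounds.

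For step (3) I would enumerate the reduced words of the longest element $w_0\in S_n$ (there are $16$ for $n=4$ and $768$ for $n=5$) and, for each, read off the defining inequalities of the associated weighted string cone from its pseudoline arrangement, using Theorem~\ref{thm:dual cones intro} together with the rigorous-path description of \cite{GP00}; slicing by a regular dominant weight gives the string polytope, and the associated projective toric variety is the corresponding degeneration of $\Flag_n$. The FFLV polytope in types $A_3$ and $A_4$ is taken from its known inequality presentation \cite{FFL11}, and it is realized as a Newton--Okounkov body by \cite{Kir15}. All of these polytopes are then reduced modulo unimodular equivalence (equivalently, isomorphism of the polarized toric varieties, keeping track of the grading induced by the weight $\lambda$). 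Distinguishing classes, both within step (2) and across steps (2) and (3), is done by computing a battery of isomorphism invariants of the polarized toric varieties --- the $f$-vector and number of lattice points of the polytope, its normalized volume, the Hilbert series of the semigroup algebra --- and, whenever all coarse invariants coincide, by an explicit search for a unimodular map identifying the polytopes, equivalently a monomial change of coordinates identifying the binomial initial ideals. A tropical degeneration is certified \emph{new} once the coarse invariants already separate it from every string and FFLV candidate.

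The main obstacle is twofold. First, there is a genuine subtlety in what ``the same toric degeneration'' should mean: the special fibres $V(\init_C(I_n))$ need not be normal, so matching Newton--Okounkov polytopes only certifies that the normalizations agree; to get an honest statement about the degenerations themselves one must either compare the (non-normal) toric varieties directly --- i.e.\ compare the binomial initial ideals up to coordinate permutation --- or check that normality does not affect the claimed lower bound, which suffices here since we only need a count of non-matching classes. Second, $\trop(\Flag_5)$ has $69780$ maximal cones, so even restricting to the prime ones the enumeration in step (1) and the pairwise comparisons in steps (2) and (4) are only feasible after quotienting by the symmetries of the problem: the $S_5$-action on Plücker coordinates, the diagram automorphism of $\Flag_5$, and the action on reduced words of $w_0$. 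Organizing this bookkeeping so that the final tallies --- at least one new degeneration for $\Flag_4$ and at least $168$ for $\Flag_5$ --- are rigorously certified, rather than merely observed on a sample, is where the bulk of the effort goes.
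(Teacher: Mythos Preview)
Your proposal is correct and follows essentially the same computational strategy as the paper: compute the polytopes attached to the prime cones, compute the string polytopes and the FFLV polytope for $\rho$, and count the tropical polytopes that fail to match any of the latter. The paper is slightly leaner in two respects. First, it constructs the polytope for a prime cone $C$ directly from the matrix $W_C$ of ray generators (taking the convex hull of degree $(1,\dots,1)$ lattice points in the semigroup $\mathbb Z_{\ge 0}\mathcal B_C$), rather than by passing through a valuation and Theorem~\ref{thm:blabla}; this avoids the tie-breaking step. Second, since only a lower bound is claimed, the paper never searches for unimodular equivalences: it simply checks \emph{combinatorial} equivalence of polytopes (in practice, compares $f$-vectors via \emph{polymake}), and any tropical polytope not combinatorially equivalent to a string or FFLV polytope is automatically not unimodularly equivalent, hence certifies a new degeneration.
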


Applying Theorem~\ref{thm:blabla} to valuations for string polytopes (this is a particular case of valuations using birational sequences) we further obtain a surprising connection to the \emph{Minkowski property} of string polytopes (see Defintion~\ref{def:mp}) in Theorem~\ref{thm: quasival for string} and Corollary~\ref{cor: prime implies MP}.

\newpage

%%%%%%%%%%%%%%%%%%%%%%%%%%%+
%%%%%%%%%%%%%%% ACKNOWLEDGEMENTS
\newpage

\textbf{Acknowledgements.} Firstly, I would like to thank my family, Olaf, Britta and Berit, and my friends. 
They stood with me through stressful periods and endured my mood swings and social incompetence that were the side effects of preparing a PhD-thesis.

I would like to thank my collaborators on the projects that are part of this thesis. Without them, obviously, most of the results would not be in this thesis today. They are 

    $\bullet$ Xin Fang, Ghislain Fourier, Milena Hering and Martina Lanini for \S\ref{sec:case gr2n},
    
    $\bullet$ Ghislain Fourier for \S\ref{sec:BF}, and
    
    $\bullet$ Sara Lamboglia, Kalina Mincheva and Fatemeh Mohammadi for \S\ref{sec:BLMM}.

\noindent
I am deeply grateful to Bernd Sturmfels and Ghislain Fourier for their support and advice throughout my PhD.  
Further, I would like to thank Bernd for introducing me to Sara, Kalina and Fatemeh and suggesting the problems solved in \S\ref{sec:BLMM}. 

I was incredibly lucky to be welcomed in the (extended) working group of Peter Littelmann in Cologne, my math-family consisting of (among others): Xin, Bea Schumann, Jacinta Torres, Ghislain, Michael Ehrig, Valentin Rappel, Christian Steinert, Oksana Yakimova and, of course, Peter.

As this thesis covers a range of topics (including some quite far from my mathematical background) I am grateful to my math-friends who introduced me to these topics and broadend my mathematical horizon.
These include Fatemeh and Sara, who explained to me tropical geometry, Gr\"obner theory, and Macaulay2, which are now among my favourite topics.
Further, Alfredo N\'ajera Ch\'avez and Timothy Magee: 
without their patience in explaining the theory of cluster algebras to me over and over again from different points of view I would have probably given up on the topic, which I enjoy working on a lot by now.

I would like to thank the organizers of the following conferences, which in hindsight were key events in the developement of this thesis:

$\bullet$ Mini-Workshop ``PBW-structures in Representation Theory", Oberwolfach, March 2016;

$\bullet$ Workshop ``Hot topics: Cluster algebras and wall-crossing", Berkeley, March 2016;

$\bullet$ Summer School ``The Geometry of Valuations", Frankfurt, July 2016;

\noindent
and the hospitality of the following institutions:

$\bullet$ MPI MiS Leipzig, Germany;

$\bullet$ Università degli Studi di Roma ``Tor Vergata", Italy\footnote{supported by QM${}^2$ through the Institutional Strategy of the University of Cologne (ZUK 81/1)};

$\bullet$ UNAM Oaxaca, Mexico.

\noindent
Further, I would like to thank the following people for their support, helpful discussions or insightful explanations:  Kiumars Kaveh, Christopher Manon, Man Wai Cheung, Diane Maclagan, Arkady Berenstein, Alex Küronya, Silvia Sabatini, Markus Reineke, Lauren Williams, Gleb Koshevoy.

Lastly and most importantly, I want to thank deeply my advisor Peter: without you none of this would have been possible. I am extremely grateful for all the opportunities and support that you offered me; for being there when I needed advice and also giving me the freedom to travel to conferences and choose my own projects.

\chapter{General Theory}\label{chap:prep}

\section{Representation Theory of \texorpdfstring{$SL_n(\mathbb C)$}{}}\label{sec:pre rep theory}
In this section we recall basic notions of the representation theory of $SL_n(\mathbb C)$ (or $SL_n$ for short) that we need throughout this thesis.

We fix as Borel subgroup the upper triangular matrices $B\subset SL_n$ and diagonal matrices as maximal torus $T\subset B$. 
We denote the Borel subgroup of lower triangulat matrices $B^-$ (it is also called the \emph{opposite} Borel subgroup of $B$). 
Inside of $B$ (resp. $B^-$) we have the subgroup of unipotent matrices $U$ (resp. $U^-$) with all diagonal entries being $1$. They are the \emph{unipotent radical} of $B$ (resp. $B^-$).

Consider the Lie algebra $\Lie(SL_n)=\lie{sl}_n=\{n\times n$-matrices with trace zero$\}$.
The Lie bracket $[\cdot,\cdot]:\mathfrak{sl}_n\times \mathfrak{sl}_n\to \mathfrak{sl}_n$ is given by the commutator
\[
[A,B]:=AB-BA.
\]
We fix the Cartan decomposition $\lie{sl}_n=\lie n^-\oplus \lie h\oplus \lie n^+$ with $\lie h$ diagonal matrices as Cartan (maximal abelian Lie-subalgebra) and $\lie n^+$ (resp. $\lie n^-$) upper (resp. lower) triangular matrices in $\lie{sl}_n$. 
Note that with these choices we have $\Lie(B)=\lie h\oplus \lie n^+=\lie b$, $\Lie(T)=\lie h$ and $\lie n^-=\Lie(U^-)$.
Let us denote the \emph{root system} of $SL_n$ by $R\subset \mathbb R^{n}$. It is the root system of type $\mathtt A_{n-1}$.
Denoting the standard basis of $\mathbb R^n$ by $\{\epsilon_i\}_{i=1,\dots,n}$ we fix the the simple roots of $R$ to be $\alpha_i=\epsilon_i-\epsilon_{i+1}$ for $i=1,\dots,n-1$. They generate the \emph{root lattice}. 
The positive roots are denoted $R^+=\{\beta\in R\mid \beta>0\}$. They are of form $\alpha_{i,j}:=\alpha_i+\dots+\alpha_j$ for $i\le j<n$. 
With our choice of simple roots we have $\alpha_{i,j}=\epsilon_i-\epsilon_{j+1}$. The number of positive roots is denoted by $N=\frac{n(n-1)}{2}$. 

For a positive root $\beta=\alpha_{i,j}$ let $f_{\beta}\in \lie n^-$ be the root vector of weight $-\beta$. 
In other words, $f_\beta$ is the lower triangular $n\times n$-matrix with all entries being zero besides the $(i+1,j)$'th entry, which is $1$. 
Similarly we have $e_\beta\in\mathfrak n^+$ a root vector for $\beta$ of weight $\beta$.
With our choice of $\lie b$ it is the transpose of $f_\beta$.
We define a third element in $\mathfrak{sl}_n$ associated to $\beta\in R^+$, namely $h_\beta=[e_\beta,f_\beta]\in\lie h$.

For the weight lattice we choose the notation $\Lambda$ with generators the fundamental weights being $\omega_1,\dots,\omega_{n-1}$. 

Let $\Lambda^+$ denote the dominant integral weights in $\Lambda$, i.e. those $\lambda=\sum_{i=1}^{n-1}a_i\omega_i$ with $a_i\in\mathbb Z_{\ge 0}$. 
Dominant integral weights are the lattice points in the \emph{dominant Weyl chamber}, the positive span of the fundamental weights.
By $\Lambda^{++}$ we denote the set of \emph{regular dominant weights}, i.e. those $\lambda=\sum_{i=1}^{n-1}a_i\omega_i$ with $a_i\in\mathbb Z_{>0}$.

The roots and weights live in the same space $\mathbb R^n$, to which we have associated the basis $\{\epsilon_i\}_i$. We can express $\omega_i$ as follows in this basis
\[
n\omega_i=\sum_{k=1}^{i}(n-i+1)\epsilon_k-2i\epsilon_{i+1}-\sum_{k=i+2}^ni\epsilon_k.
\]
Then $\frac{1}{2}(\alpha_1+\dots+\alpha_{n-1})=(\omega_1+\dots+\omega_{n-1})=:\rho\in\lambda^{++}$ is the smallest regular dominant weight.

For every $\lambda\in\Lambda^+$ there is a (finite-dimensional) irreducible representation of $\lie{sl}_n$ of highest weight $\lambda$, denote it by $V(\lambda)$. It is cyclically generated by a highest weight vector $v_\lambda\in V(\lambda)$ (unique up to scaling) over $U(\lie n^-)$, the \emph{universal enveloping algebra} of $\lie n^-$ defined as follows.

For $\lie g$ any Lie algebra, $U(\lie g)$ is a quotient of the tensor algebra $T(\lie g)=\bigoplus_{k\ge 0}\lie g^{\otimes k}$.
The ideal by which we quotient by is generated by relations induced by the Lie bracket, i.e. relations of form $w\otimes v-v\otimes w-[w,v]$ for $v,w\in\lie g$.
The \emph{PBW-basis-Theorem} states the following: let $\{v_1,\dots,v_d\}$ be a ordered basis of $\lie g$, then as a vector space $U(\lie g)$ is generated by monomials of the form
\[
v_1^{a_1}v_2^{a_2}\cdots v_{d-1}^{a_{d-1}}v_d^{a_d}, \ a_i\in \mathbb Z_{\ge 0}.
\]

As the irreducible highest weight representation $V(\lambda)$ for $\lambda\in P^+$ 
are cyclically generated by $v_\lambda$ over $U(\lie n^-)$, we are particularly interested in a PBW-basis for $U(\lie n^-)$. This is given, for example, by fixing an order on all positive roots, e.g. $\beta_1,\dots,\beta_N$. Then for a chosen highest weight vector $v_\lambda\in V(\lambda)$ we have
\begin{eqnarray}\label{eq: irr mod cyc gen}
V(\lambda)=U(\lie n^-)\cdot v_\lambda=\langle f_{\beta_1}^{m_1}\cdots f_{\beta_N}^{m_N}\cdot v_\lambda\mid m_i\in\mathbb Z_{\ge 0} \rangle_{\mathbb C}.
\end{eqnarray}

\begin{example}\label{exp: fund reps 1 and 2}
We have $V(\omega_1)=\mathbb C^n$ and $V(\omega_k)=\bigwedge^k\mathbb C^n$. The root operators $f_{\alpha_{i,j}}=f_{i,j}\in\lie n^-$ act on $\mathbb C^n$ with standard basis $\{e_i\}_{i=1,\dots,n}$ by $f_{i,j} \cdot e_l=\delta_{i,l}e_{j+1}$. The highest weight vector $v_{\omega_1}$ can be chosen as $e_1$. For $V(\omega_2)$ fix the basis $\{e_k\wedge e_l\mid 1\le k<l\le n\}$. Then the action of $\lie n^-$ is given by
\[
f_{i,j}\cdot (e_k\wedge e_l)=f_{i,j}\cdot e_k\wedge e_l + e_k\wedge f_{i,j}\cdot e_l=\left\{\begin{matrix} 
e_{j+1}\wedge e_l, & \text{ if } k=i,\\
e_k\wedge e_{j+1}, & \text{ if } l=i,\\
0, & \text{ otherwise.} 
\end{matrix}\right.
\]
We can chose $e_1\wedge e_2$ as the highest weight vector $v_{\omega_2}$.
\end{example}

The Weyl group of $SL_n$ is the symmetric group $S_n$ generated by the simple transpositions $s_i=(i,i+1)$ for $1\le i<n$. 
By $w_0$ we denote the longest element in $S_n$.
For every $w \in S_{n}$, we denote by $\ell(w)$ the minimal length of $w$ as a word in the generators $s_i$. Further, $\underline{w}$ denotes a reduced expression  
\[
\underline{w} = s_{i_1} \cdots s_{i_{\ell(w)}}.
\]
Such an expression is not unique. For any two reduced expressions of $w$ there is a sequence of local transformations leading from one to the other. These local transformations are either swapping orthogonal reflections $s_i s_j = s_j s_i$ if $|i - j | > 1$ or exchanging consecutive $s_is_{i+1}s_i = s_{i+1}s_is_{i+1}$.

The symmetric group acts on the weight lattice as follows. 
Consider $\lambda\in\Lambda$ and $s_i\in S_n$. 
Then $s_i(\lambda)\in\Lambda$ is obtained from $\lambda$ by reflection on the hyperplane $H_{\alpha_i}$ perpendicular to the simple root $\alpha_i=\epsilon_i-\epsilon_{i+1}$.

Fix $w\in S_n$ and $\lambda\in\Lambda^+$. 
Then the weight space of weight $w(\lambda)$ in $V(\lambda)$, denoted $V(\lambda)_{w(\lambda)}$, is called \emph{extremal} and it is one-dimensional.

\begin{definition}\label{def: Demazure module}
For $w\in S_n$ and $\lambda\in \Lambda^+$ we fix a generator $v_{w\lambda}\in V(\lambda)_{w\lambda}$ we consider
$U(\mathfrak b)\cdot v_{w\lambda}=:V_w(\lambda)$.
This is a $\mathfrak b$-module called the \emph{Demazure module}. 
\end{definition}

Note that though $V_w(\lambda)$ is a $\mathfrak b$-submodule of $V(\lambda)$, it is not an $\mathfrak{sl}_n$-module. 
Let $\w = s_{i_1} \cdots s_{i_{\ell(w)}}$ be a reduced expression of $w$. Then for any $\lambda \in \Lambda^+$ similarly to the PBW-basis theorem we have that
\begin{eqnarray}\label{eq: PBW for demazure}
\{ f_{\alpha_{i_1}}^{m_{i_1}} \cdots f_{\alpha_{i_{\ell(w)}}}^{m_{i_{\ell(w)}}}\cdot v_\lambda
\in V(\lambda) \mid m_{i_j} \geq 0 \}
\end{eqnarray}
forms a spanning set of $V_w(\lambda)$ as a vector space. In particular, if $w=w_0$ then   $V_{w_0}(\lambda)=V(\lambda)$.
For a Demazure module $V_w(\lambda)$ we denote by $V_w(\lambda)^{\perp}$ its orthogonal complement in $V(\lambda)^*$.

%%%%%%%%%%%%%%%%%%%%%%%%%%%%%%%%%%%%% TROPICAL 
\section{Tropical Geometry}\label{sec:pre trop}

In this section we recall basic notions of tropical geometry that we assume throughout the rest of the thesis.
Tropical geometry comes in many flavours, our approach follows closely the book \cite{M-S} by Maclagan-Sturmfels and we invite the reader to have a look there for a more detailed introduction.
This approach to tropical geometry is closely related to Gröbner theory.

\begin{definition}\label{def:initial form and ideal}
Let $f=\sum a_{\bf u}x^{\bf u}$ with ${\bf u}\in \mathbb Z^{n},a_{\bf u}\in \mathbb C$ be a polynomial in $\mathbb C[x_1^{\pm 1},\dots, x_n^{\pm 1}]$, where $x^{\bf u}$ denotes the monomial $x_1^{u_1}\dots x_{n}^{u_n}$. The \emph{initial form} of $f$ with respect to a fixed weight vector ${\bf w}\in \mathbb R^n$ is given by
\begin{eqnarray}\label{eq: def initial form}
\init_{\bf w}(f):=\sum_{\begin{smallmatrix}{\bf w}^T\cdot{\bf u} \text{ is minimal},\\
a_{\bf u}\not=0\end{smallmatrix}}a_{\bf u}x^{\bf u}.
\end{eqnarray}
This definition can be extended to ideals. For an ideal $I\subset \mathbb C[x_1^{\pm 1},\dots,x_n^{\pm 1}]$ we have \emph{initial ideal} with respect to ${\bf w}\in \mathbb R^n$ 
\begin{eqnarray}\label{eq: def initial ideal}
\init_{\bf w}(I):=\langle \init_{\bf w}(f)\mid f\in I\rangle. 
\end{eqnarray}
\end{definition}

\begin{example}\label{exp: initial form and ideal}
Consider the ideal $I=\langle x_1^2+x_2,x_1-x_2 \rangle\subset \mathbb C[x_1,x_2]$ and ${\bf w}=(1,0)$. Then $\init_{\bf w}(x_1^2+x_2)=x_2$ and $\init_{\bf w}(x_1-x_2)=-x_2$. In particular, $\langle \init_{\bf w}(x_1^2+x_2),\init_{\bf w}(x_1-x_2) \rangle=\langle x_2\rangle\subset \mathbb C[x_1,x_2]$. But we also have $x_1^2+x_1=(x_1^2+x_2)+(x_1-x_2)\in I$, so by definition $\init_{\bf w}(x_1^2+x_1)=x_1\in \init_{\bf w}(I)$. We deduce
\begin{eqnarray}\label{eq: gen ideal not gen initial ideal}
I=\langle f_1,\dots,f_s\rangle \not\Rightarrow \init_{\bf w}(I)=\langle \init_{\bf w}(f_1),\dots, \init_{\bf w}(f_s)\rangle.
\end{eqnarray}
\end{example}

By \cite[Theorem 15.17]{Eisenbud},
there exists a flat family over $\mathbb{A}^1$ whose fiber over $t\neq 0$ is isomorphic
to $V(I)$ and whose fiber over $t=0$ is isomorphic to $V(\init_\mathbf{w}(I))$. For $t$ the coordinate in $\mathbb A^1$ it is given by the following family of ideals
\begin{eqnarray}\label{eq: groebner family}
\tilde{I}_t:=\left\langle t^{-\min_{{\textbf u}}\{{\textbf w}\cdot {\textbf u}\}} f(t^{w_1}x_1,\ldots,t^{w_n}x_n)\left\vert f=\sum a_{{\textbf u}}x^{{\textbf u}}\ \in I   \right.\right\rangle\subset \mathbb C[t,x_1^{\pm 1},\dots,x_n^{\pm 1}].
\end{eqnarray}

More precisely, for a projective variety $X=V(I)\subset \mathbb P^{n-1}$, where $I\subset \mathbb C[x_1,\dots,x_n]$ is a homogeneous ideal, there is a flat degeneration over $\mathbb A^1$ with generic fibre (i.e. fibre over $t\not =0$) isomorphic to $V(I)$ and special fibre (i.e. fibre over $t=0$) $V(\init_{\bf w}(I))$. 
Let $I_s$ denote the ideal $\tilde I_t\vert_{t=s}$. For $s\not=0$ the isomorphism $V(I_s)\cong V(I_1)=V(I)$ is given by a ring automorphism of $\mathbb C[x_1,\dots,x_n]$ sending $I_s$ to $I$.
If $\init_{\bf w}(I)$ is \emph{toric}, i.e. a binomial prime ideal, then $V(\init_{\bf w}(I))$ is a toric variety (see e.g. \cite[Lemma 2.4.14]{M-S}).

In order to look for these toric degenerations we study the tropicalization of $V(I)$. 

\begin{definition}\label{def: trop fct}
Let $f=\sum a_{\textbf u}x^{\textbf u}\in \mathbb C[x_1^{\pm1},\dots,x_n^{\pm1}]$. The \emph{tropicalization} of $f$ is the function $f^{\trop}:\mathbb R^{n}\to \mathbb R$ given by 
\[
f^{\trop}(\textbf w):=\min\{\textbf w\cdot \textbf u \mid \textbf u\in \mathbb Z^{n} \text{ and } a_{\textbf u}\neq 0\}.
\]
\end{definition}

%Let  $f=\sum a_{\textbf u}x^{\textbf u}$ be a homogeneous polynomial in $\mathbb C[x_1^{\pm1},\dots,x_n^{\pm1}]$. 
If $\textbf w-\textbf {v}=m\cdot\textbf 1$, for some $\textbf{v},\textbf{w}\in\mathbb R^{n}$,  ${\bf 1}=(1,\ldots,1)\in\mathbb R^{n}$ and $m\in \mathbb R$, we have that the minimum in $f^{\trop}({\bf w})$ and $f^{\trop}({\bf v})$
is achieved for the same ${\bf u}\in \mathbb Z^{n}$ with $ a_{\textbf u}\neq 0$.

\begin{definition}(\cite[Definition~3.1.1 and Definition~3.2.1]{M-S})\label{def:trop hypersurf}
Let $f=\sum a_{\textbf u}x^{\textbf u}\in \mathbb C[x_1^{\pm1},\dots,x_n^{\pm1}]$ and $V(f)$ the associated hypersurface in the algebraic torus $T^n=(\mathbb C^*)^n$. Then the \emph {tropical hypersurface} of $f$ is
\[
\trop(V(f)):=\left\{ \textbf w\in \mathbb R^{n} \left| 
\begin{matrix}
\text{the minimum in }f^{\trop}(\textbf w)\\
\text{is achieved at least twice}
\end{matrix}\right.\right\} .
\]
Let $I$ be an ideal in $\mathbb C[x_1^{\pm1},\dots,x_n^{\pm1}]$. The \textit{tropicalization} of the variety $V(I)\subset T^n$ is defined as
\[
\trop(V(I)):=\bigcap_{f\in I}\trop(V(f))\subset \mathbb R^n.
\]
\end{definition}
For a projective variety $V(I)\subset \mathbb P^{n-1}$ with $I$ a homogeneous ideal in $\mathbb C[x_1,\dots,x_n]$ we consider the ideal $\hat I:=I\mathbb C[x_1^{\pm1},\dots,x_n^{\pm1}]$.
Then $V(\hat I)=V(I)\cap T^n$. 
We consider the tropicalization of projective varieties  defined as $\trop(V(I)):=\trop(V(\hat I))$.

By the \emph{Fundamental Theorem of Tropical Geometry} \cite[Theorem~3.2.3]{M-S} we have
\[
\trop(V(I))=
\left\{
    \mathbf w\in\mathbb R^n 
        %\left
        \mid     \init_{\mathbf w}(I) \text{  is monomial-free} 
        %\right.
\right\}.
\]
Further, the \emph{Structure Theorem}\cite[Theorem~3.3.5]{M-S} tells us that if $X\subset T^n$ is an irreducible $d$-dimensional variety, then $\trop(X)$ is the support of a pure rational $d$-dimensional polyhedral complex, connected in codimension $1$. 
We do not recall notions from polyhedral geometry but refer the interested reader to \cite[\S2.3]{M-S}. To us, most importantly, the structure theorem implies that we can associate a fan-structure with $\trop(V(I))$. 
We choose the fan structure in such a way that $\trop(V(I))$ is a subfan of the Gröbner fan of $I$.
If $\bw,\bv$ lie in the relative interior of a cone $C$ (also denoted $C^\circ$) in the Gröbner fan, then $\init_{\bw}(I)=\init_{\bv}(I)$.
Adopting this fan structure for $\trop(V(I))$ we therefore use the notation $\init_C(I):=\init_{\bw}(I)$ for some $\bw\in C^\circ$.

For an ideal $I\subset \mathbb C[x_1^{\pm1},\dots,x_n^{\pm1}]$ there may exist some $\mathbf w\in\mathbb R^n$ with $\init_{\mathbf w}(I)=I$.
For example, if $I$ is homogeneous this is always the case for $\mathbf 1:=(1,\dots,1)\in\mathbb R^n$.
The linear subspace $L_I:=\{\mathbf w\in \mathbb R^n\mid \init_{\bw}(I)=I\}\subset\trop(V(I))$ is called the \emph{lineality space} of $I$.

In \S\ref{sec:BLMM} we tropicalize the \emph{flag variety} (see \S\ref{sec:pre flag}). Although the flag variety is a projective variety and hence, by the above we have a recipe to tropicalize it, for computational convenience we choose an embedding into a product of projective spaces (instead of just one projective space).
The procedure of tropicalization can also be done in this setting, replacing $\mathbb C[x_1^{\pm1},\dots,x_n^{\pm1}]$ by S, the \textit{total coordinate ring} (see \cite[page 207]{CLS11} for a definition) of
$\mathbb P^{k_1}\times\cdots\times \mathbb P^{k_s}$.
Then $S$ has a $\mathbb{Z}^s$-grading given by $\deg:\mathbb Z^{n}\to \mathbb Z^s$, where $k_1+\dots+k_s=n-1$.
An ideal $I\subset S$ defining an irreducible subvariety $V(I)$ of $\mathbb P^{k_1}\times\cdots\times \mathbb P^{k_s}$ is homogeneous with respect to this grading. 
The tropicalization of $V(I)$ is contained in $\mathbb R^{k_1+\ldots+k_s+s}/H$, where $H$ is an $s$-dimensional linear space spanned by the rows of a matrix $D$ defining $\deg$. 
Similarly to the projective case, if $V(I)$ is a $d$-dimensional irreducible subvariety of $\mathbb P^{k_1}\times\cdots\times \mathbb P^{k_s}$, then $\trop(V(I))$  is the support of a fan,  which is the quotient by $H$ of a rational $(d+s)$-dimensional subfan $F$ of the Gr\"obner fan of $I$. 
Here the Krull dimension of $S/I$ is $d+s$.

In the following we always consider $\trop(V(I))$ with the fan structure defined above. 

\begin{remark}
A detailed definition of the tropicalization of a  general  toric variety $X_{\Sigma}$ and of its subvarieties can be found in \cite[Chapter 6]{M-S}. Note that we only consider the  tropicalization of the intersection of $V(I)$ with the torus of $X_{\Sigma}$ while in \cite[Chapter 6]{M-S} they introduce a generalized version  of $\trop(V(I))$ which includes the tropicalization of  the intersection of $V(I)$ with each orbit of  $X_{\Sigma}$.
\end{remark}

Another property of $\trop(V(I))$ is that any fan structure on it can be \emph{balanced} assigning a positive integer weight to every maximal cell. We do not explain the notion of balancing in detail and we consider an adapted version of the multiplicity defined in \cite[Definition 3.4.3]{M-S}.

\begin{definition}\label{def:mult cone}
Let $I\subset S$ be a homogeneous ideal and $\Sigma$ a fan with support $|\Sigma|=|\trop(V(I))|$, such that for every cone $C\subset\Sigma$ the ideal  $\init_{\bw}(I)$ is constant for $\textbf{w}\in C^\circ$. For a maximal dimensional cone $C\subset \Sigma$ we define the \emph{multiplicity} as 
\[
\mult(C):=\sum_P \mult(P,\init_{C}(I)).
\]
Here the sum is taken over the minimal associated primes $P$ of $\init_{C}(I)$ that do not contain monomials (see \cite[\S 3]{Eisenbud} or \cite[\S 4.7]{Cox}).
\end{definition}

As we have seen, each cone of $\trop(V(I))$ corresponds to an initial ideal which contains no monomials. We now explain why \emph{good candidates} for toric degenerations are the initial ideals corresponding to the relative interior of  maximal cones in $\trop(V(I))$. We say a maximal cone is \emph{prime} if the corresponding initial ideal $\init_C(I)$ is a prime ideal.

\begin{lemma}\label{lem:multiplicity}
Let $I\subset S$ be a homogeneous ideal and  $C$ a maximal cone of $\trop(V(I))$.
If $\init_{C}(I)$ is toric then $C$ has multiplicity one.
Moreover, if $C$ has multiplicity one then $\init_{C}(I)$ has a unique toric ideal in its primary decomposition.
\end{lemma}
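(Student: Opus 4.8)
The plan is to exploit the relationship between the multiplicity of a maximal cone and the degrees of the primary components of the initial ideal, together with the fact that toric (binomial prime) ideals contribute multiplicity one. First I would recall from Definition~\ref{def:mult cone} that $\mult(C) = \sum_P \mult(P, \init_C(I))$, the sum running over the minimal primes $P$ of $\init_C(I)$ that contain no monomial. By the Structure Theorem and the balancing property, every minimal prime appearing in this sum has the same Krull dimension as $\init_C(I)$ itself (this is what makes $\trop(V(I))$ a pure-dimensional balanced complex), so each term $\mult(P, \init_C(I))$ is a positive integer. Hence $\mult(C) \geq 1$ always, with equality if and only if there is exactly one such minimal prime $P$ and $\mult(P, \init_C(I)) = 1$, i.e. the component is multiplicity-free.

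For the first assertion, suppose $\init_C(I)$ is toric, i.e. a binomial prime ideal. Then $\init_C(I)$ is itself prime and monomial-free, so it is its own unique minimal associated prime $P$, and $\mult(P, \init_C(I)) = 1$ because the localization of $S/\init_C(I)$ at the generic point of $V(\init_C(I))$ is already a field (or more simply, the multiplicity of a reduced ideal along its own minimal prime is $1$). Therefore $\mult(C) = 1$.

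For the second assertion, suppose $\mult(C) = 1$. By the dichotomy above this forces the sum defining $\mult(C)$ to have a single summand: there is exactly one minimal prime $P$ of $\init_C(I)$ which is monomial-free, and $\mult(P, \init_C(I)) = 1$. I would then argue that this monomial-free minimal prime $P$ is in fact a toric ideal, using a standard fact about initial ideals with respect to weight vectors in the tropical variety: since $\mathbf w \in C^\circ \subset \trop(V(I))$ and $V(I)$ is irreducible of dimension $d$ (so that $S/I$ has Krull dimension $d+s$), the initial ideal $\init_C(I)$ is homogeneous with respect to the same multigrading and its unique monomial-free minimal prime, being the defining ideal of a torus-invariant subvariety of the torus of the same dimension, is prime and lattice-ideal; combined with primeness this makes it a toric ideal. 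The multiplicity-one condition guarantees uniqueness: any other associated prime in the primary decomposition either contains a monomial (hence is not toric) or would contribute an extra positive term to $\mult(C)$, contradicting $\mult(C)=1$. So $\init_C(I)$ has precisely one toric ideal in its primary decomposition, namely the primary component corresponding to $P$.

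\textbf{Main obstacle.} The delicate point is the claim that a monomial-free minimal prime $P$ of $\init_C(I)$, appearing with multiplicity one, is actually \emph{toric} rather than merely prime. This requires knowing that $\init_C(I)$ is stable under the torus action induced by $\mathbf w$ (equivalently, that it is ``$\mathbf w$-homogeneous''), and that a prime ideal invariant under a torus acting with a dense orbit on the corresponding subvariety is a toric ideal — a statement in the spirit of \cite[Lemma 2.4.14]{M-S} but applied to an associated prime of a possibly non-prime initial ideal. I would handle this by noting that $\init_{\mathbf w}(I)$ for any $\mathbf w$ is always homogeneous with respect to the grading by $\ker(\mathbf w \cdot -)$, so its minimal primes are as well, and then invoking the characterization of toric ideals as the prime lattice ideals. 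The bookkeeping that the dimension of $P$ equals $d+s$ (so it genuinely corresponds to a maximal cone and not a lower stratum) comes from purity of $\trop(V(I))$ in the Structure Theorem.
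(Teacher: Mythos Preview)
Your argument is essentially correct and runs parallel to the paper's, but the paper takes a cleaner shortcut. Both arguments ultimately rest on the same mechanism: the maximal cone $C$ has full dimension $d+s$ (the Krull dimension of $S/I$), so $\init_C(I)$ is homogeneous with respect to a $(d+s)$-dimensional family of gradings, giving a $(d+s)$-dimensional torus acting on $V(\init_C(I))$; since the variety has the same dimension, each monomial-free irreducible component is a single torus-orbit closure and hence cut out by a binomial prime. The paper packages this as a one-line citation to \cite[Remark~3.4.4]{M-S}, which states directly that the multiplicity of $C$ equals the number of top-dimensional torus orbits in $V(\init_C(I)\cdot\mathbb{C}[x^{\pm1}])$. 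With that in hand, both directions are immediate: toric implies one orbit implies multiplicity one; multiplicity one implies one orbit implies its ideal is the unique monomial-free (hence toric) associated prime.

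Your ``main obstacle'' paragraph correctly locates the nontrivial step, but the proposed fix is imprecise in a way that would not survive as written. Saying $\init_{\mathbf w}(I)$ is homogeneous ``with respect to the grading by $\ker(\mathbf w\cdot -)$'' is garbled, and in any case homogeneity for a \emph{single} $\mathbf w$ gives only a $\mathbb{Z}$-grading and a one-dimensional torus action, which is far from enough to force a prime to be binomial. What you need is that $\init_C(I)$ is simultaneously homogeneous for \emph{every} $\mathbf w$ in the linear span of $C$; since $C$ is maximal this span has dimension $d+s$, and that is what forces each monomial-free minimal prime (necessarily of the same dimension $d+s$) to be a torus-orbit closure. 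Once you state it this way your approach goes through, and it is exactly the content behind the remark the paper cites.
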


\begin{proof}
We first prove the lemma for $s=1$, i.e. $S$ the homogeneous coordinate ring of $\mathbb P^{n-1}$. Let $I'=\init_{C}(I) \mathbb C[x_1^{\pm 1},\ldots,x_n^{\pm 1}]$ and consider $V(I')\subset T^n$. 
Then by \cite[Remark 3.4.4]{M-S} the multiplicity of a maximal cone $C$ is counting the number of $d$-dimensional torus orbits whose union is $V(I')$. 
If $\init_{C}(I)$ is toric, then $V(I')$ is an irreducible toric variety, hence it has a unique $d$-dimensional torus orbit. So $C$ has multiplicity one.

Suppose now $C$ has multiplicity one. 
Then $\init_C(I)$ contains one associated prime $J$, not containing any monomials. 
The ideal $J$ is further binomial since it is the ideal of the unique $d$-dimensional torus orbit contained in $V(I')$.

When $s>1$ and so $S$ is the total coordinate ring of $\mathbb P^{k_1}\times\cdots\times \mathbb P^{k_s}$, the torus is given by $T^{k_1}\times \cdots \times T^{k_s}\cong T^{k_1+\cdots +k_s}$. 
We may assume that for each $i$, 
\[
T^{k_i}=\{[1:a_2:\ldots:a_{k_i}]\in \mathbb P^{k_i}\mid  a_j\neq 0\text{\ for\ all\ } j\}.
\]
The variables for $\mathbb P^{k_i}$ are denoted by  $x_{i,0},\dots,x_{i,k_i}$ for each $i$. We fix  the  Laurent polynomial ring 
\[
S'=\mathbb C[{x^{\pm 1}_{1,0}},\dots,x^{\pm 1}_{1,k_1},x^{\pm 1}_{2,0},\dots,x^{\pm 1}_{2,k_2},\dots,x^{\pm 1}_{s,0},\dots,x^{\pm 1}_{s,k_s}].
\]
Then consider the ideal $I'=\init_{C}(I)S'\subset S'$ and the variety $V(I')\subset T^{k_1+\ldots +k_s}$ and the proof proceeds as before.
\end{proof}

\begin{remark}\label{rem:from lemma1}
From Lemma~\ref{lem:multiplicity} we conclude the multiplicity being one is a necessary but not sufficient condition for toric initial ideals. A cone can have multiplicity one but its associated initial ideal might be neither prime nor binomial. 
There may be associated primes that contain monomials in the decomposition of $\init_{\textbf w}(I)$ and these do not contribute to the multiplicity. 
We list examples of such cones in $\trop(\Flag_5)$ (for more details see Theorem~\ref{flag5}).
\end{remark}

Let $I$ be a homogeneous ideal in $S$ such that the Krull dimension of $S/I$ is $d$. Consider $\trop(V(I))\subset\mathbb R^{n}/H$ and the $d$-dimensional subfan  $F\subset \mathbb R^{n}$ of the Gr\"obner fan of $I$ with $F/H\cong \trop(V(I))$. 
When $V(I)\subset \mathbb P^{k_1-1}\times\cdots\times \mathbb P^{k_s-1}$ the linear space $H$ is spanned by the rows of the matrix $D$. 
In particular, when $V(I)\subset \mathbb P^{n-1}$ we have that $H$ is equal to the span of $(1,\ldots,1)$.
We now describe some properties of the toric initial ideals corresponding to maximal cones of $\trop(V(I))$.
Let $C$ be a cone in $\trop(V(I))$ and $\{{\bf w}_1,\ldots,{\bf w}_d\}$ be $d$ linearly independent vectors in $F$ generating the maximal cone $C'$, such that $C'/H\cong C$.
We can assume that the ${\bf w}_i$'s have integer entries since $F$ is a rational fan. 
We define a matrix associated to $C$ by
\begin{equation}\label{def:W}
W_C:=[{\bf w}_1,\ldots,{\bf w}_d]^t.
\end{equation}

Consider a sublattice $L$ of $\mathbb Z^{n}$ and the standard basis $e_1,\dots, e_{n}$ of $\mathbb Z^{n}$. Given $\ell=(\ell_1,\dots,\ell_{n+1})\in L$ we set $\ell^+=\sum_{\ell_i>0}\ell_i e_i$ and $\ell^-=-\sum_{\ell_j<0}\ell_j e_j$. 
Note that $\ell=\ell^+-\ell^-$ and $\ell^+,\ell^-\in \mathbb N^{n+1}$. 
We use the same notation as in \cite[page 15]{CLS11}.

\begin{lemma}\label{toric:lemma}
Let $I$ be a homogeneous ideal in $S$ and $C$ a maximal cone in $\trop(V(I))$. If $\init_{C}(I)$ is toric, then there exists a sublattice $L$ of $\mathbb Z^{n}$ and constants $c_\ell\in\mathbb C^*$ with $\ell \in L$ such that 
\begin{eqnarray}\label{eq: def I(W_C)}
\init_{C}(I)=I(W_C):=\langle x^{\ell^+}-c_{\ell} x^{\ell^-}\mid \ell\in L \rangle. 
\end{eqnarray}
In particular, $L$  is  the kernel of the map $f:\mathbb Z^{n}\to \mathbb Z^{d}$ defined by the matrix $W_C$. 
If  $C$ has multiplicity one and  $\init_{C}(I)$ is not toric, then  the unique toric ideal in the primary decomposition of  $\init_{C}(I)$ is of the form $ I(W_C)$.
\end{lemma}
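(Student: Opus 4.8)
The plan is to reduce everything to the torus, exactly as in the proof of Lemma~\ref{lem:multiplicity}, and then invoke the standard dictionary between toric ideals, lattices, and maximal cones of the Gröbner fan. First I would treat the case $s=1$, where $S$ is the homogeneous coordinate ring of $\mathbb P^{n-1}$; the general case follows by passing to the Laurent ring $S'$ and the torus $T^{k_1+\cdots+k_s}$ verbatim. Assume $\init_C(I)$ is toric, i.e. a binomial prime. By \cite[Lemma 2.4.14 or \S1.2]{CLS11} (the structure theorem for toric ideals), a binomial prime ideal in $\mathbb C[x_1,\dots,x_n]$ is a \emph{lattice ideal}: there is a sublattice $L\subset\mathbb Z^n$ and constants $c_\ell\in\mathbb C^*$ with $\init_C(I)=\langle x^{\ell^+}-c_\ell x^{\ell^-}\mid \ell\in L\rangle$. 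This already gives the first assertion; what remains is to identify $L$ with $\ker(W_C)$.

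For that identification I would argue on dimensions and inclusions. On the one hand, every binomial $x^{\ell^+}-c_\ell x^{\ell^-}$ in $\init_C(I)$ must have $\init_C$ equal to itself, so for $\bw\in C'^\circ$ we need $\bw\cdot\ell^+ = \bw\cdot\ell^-$, i.e. $\bw\cdot\ell = 0$; since this holds for all $d$ generators $\bw_1,\dots,\bw_d$ of $C'$, we get $L\subseteq\ker(W_C)$. On the other hand, $V(\init_C(I))\subset T^n$ is an irreducible toric variety whose dimension equals $\dim V(I) = d$ (flatness of the Gröbner degeneration, or directly: $\init_C(I)$ is monomial-free of the same Krull dimension). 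The toric variety defined by the lattice ideal of $L$ has dimension $n - \rank L$ inside $T^n$, so $\rank L = n - d = \rank\ker(W_C)$, since the $\bw_i$ are linearly independent. A sublattice contained in $\ker(W_C)$ of the same rank is a finite-index subgroup; but $\init_C(I) = I(L)$ being prime forces $L$ to be \emph{saturated} in $\mathbb Z^n$ (a lattice ideal is prime iff its lattice is saturated, \cite[Exercise 1.2.x]{CLS11} or \cite[Ch.~7]{Eisenbud}), and a saturated sublattice of $\ker(W_C)$ with full rank equals $\ker(W_C)$. This yields $L = \ker(W_C)$, hence $\init_C(I) = I(W_C)$.

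For the last sentence, suppose $C$ has multiplicity one and $\init_C(I)$ is not toric. By Lemma~\ref{lem:multiplicity} there is a unique monomial-free associated prime $J$ of $\init_C(I)$, and it is binomial; moreover $J$ is the ideal of the unique $d$-dimensional torus orbit in $V(\init_C(I)\,S')$. Running the previous argument with $J$ in place of $\init_C(I)$: $J$ is a saturated lattice ideal, $\dim V(J) = d$ forces the rank of its lattice to be $n-d$, and $J\subseteq\init_C(I)$ (after passing to $T^n$, $\init_C(I)$ and $J$ cut out the same reduced support, so every binomial of $J$ lies in the radical; but here the cleanest route is to observe that the unique $d$-torus orbit forces $\bw\cdot\ell=0$ on $C'$ for all $\ell$ in the lattice of $J$). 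Hence the lattice of $J$ is contained in $\ker(W_C)$, has full rank $n-d$, and is saturated, so it equals $\ker(W_C)$ and $J = I(W_C)$.

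\textbf{Main obstacle.} The delicate point is the rank/dimension bookkeeping: one must be sure that $\init_C(I)$ (resp.\ its unique monomial-free component $J$) genuinely has Krull dimension equal to $d+s$ in $S$, equivalently cuts out a $d$-dimensional subvariety of the torus, so that the lattice it determines has corank exactly $d$ and therefore coincides with $\ker(W_C)$ rather than a proper finite-index sublattice. This is where one uses both flatness of the Gröbner degeneration (preserving Hilbert polynomial, hence dimension) and primeness/saturation to upgrade the \emph{a priori} inclusion $L\subseteq\ker(W_C)$ to an equality. The passage from $s=1$ to general $s$ is purely notational and should be dispatched in a sentence, pointing back to the last paragraph of the proof of Lemma~\ref{lem:multiplicity}.
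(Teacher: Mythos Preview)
Your approach is correct in substance and genuinely different from the paper's. The paper does not argue via rank and saturation; instead it observes directly that $\init_C(I)$ is $W_C$-homogeneous (since every $\mathbf w$ in $C'^\circ$ gives the same initial ideal) and then invokes \cite[Lemma~10.12]{Stu96}, which says that a $W_C$-homogeneous prime binomial ideal is, up to a diagonal automorphism $x_i\mapsto\lambda_i x_i$, the lattice ideal $I_L$ with $L=\ker(W_C)$. That single citation replaces your entire rank/saturation argument. What your route buys is self-containment: you never need the specific Sturmfels lemma, only the standard facts that binomial primes are lattice ideals, that primeness is equivalent to saturation of the lattice, and that Gr\"obner degenerations preserve dimension. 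What the paper's route buys is brevity and a cleaner identification of the constants $c_\ell$ via the explicit scaling automorphism.

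Two small corrections. First, your justification ``must have $\init_C$ equal to itself'' is not literally true for arbitrary elements of $\init_C(I)$; the correct statement is that $\init_C(I)$ is $\mathbf w$-homogeneous for each $\mathbf w\in C'$, so any binomial in it satisfies $\mathbf w\cdot\ell=0$ (equivalently: if not, its initial form would be a monomial in the monomial-free ideal $\init_C(I)$). Second, in the multiplicity-one case you wrote $J\subseteq\init_C(I)$, but the inclusion goes the other way: $J$ is an associated prime, so $\init_C(I)\subseteq J$. Your parenthetical ``cleanest route'' is nonetheless right: in the Laurent ring $\init_C(I)$ and $J$ extend to the same ideal (all other primary components contain monomials and become the unit ideal), so $J$ inherits $W_C$-homogeneity, and then your rank/saturation argument applies verbatim to $J$. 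This is exactly how the paper handles the second part as well.
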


\begin{proof}
Let  $\init_{C}(I)\subset  S$ be a toric initial ideal and let $C'$ be the corresponding cone in $F$.
The fan structure is defined on  $\trop(V(I))$ so that for every $\textbf{w}',\textbf{w}$ in the relative interior of $ C'$ we have $\init_{\textbf{w}'}(I)=\init_{C}(I)=\init_{\textbf{w}}(I)$. 
This implies $\init_{C}(I)$ is $W_C$-homogeneous with respect to the $\mathbb {Z}^d$-grading on $S$ given by the matrix $W_C$. By \cite[Lemma~10.12]{Stu96} there exists an automorphism $\phi$ of $S$ sending $x_i$ to $\lambda_i x_i$ for some $\lambda_i\in \mathbb C$, such that the ideal $\init_{C}(I)$ is isomorphic to an ideal
\[
I_{L}:=\langle x^{\ell^+}-x^{\ell^-}\mid \ell\in L \rangle. 
\]
Here $L$ is the sublattice of $\mathbb Z^{n+1}$ given by the  kernel of the map $f:\mathbb Z^{n+1}\to \mathbb Z^{d}$. 
Applying $\phi^{-1}$ to $\init_{C}(I)$  we can write each toric initial ideal as
\[
\langle x^{\ell^+}-c_{\ell}x^{\ell^-}\mid \ell\in L \rangle=I(W_C), 
\]
for some $ c_\ell\in\mathbb C^*$, $L$ and $W_C$ as defined above. 

Let $C$ be a cone of multiplicity one and suppose $\init_{C}(I)$ is not prime. Then by Lemma~\ref{lem:multiplicity} there exists a unique toric ideal $J$ in the primary decomposition of $\init_{C}(I)$. This toric ideal $J$  contains $\init_{C}(I)$ and we show that it can be expressed as $I(W_C)$.  The variety $V(I)$ is considered as a subvariety of $\mathbb P^{n-1}$. As in Lemma~\ref{lem:multiplicity}, the case $V(I)\subset \mathbb P^{k_1}\times\cdots\times \mathbb P^{k_s}$ can be treated similarly.

The tropical variety depends only on the intersection of $V(I)$ with the torus,  and $J$ is equal to $\init_{C}(I)\mathbb C[x_1^{\pm 1},\ldots ,x_n^{\pm 1}]$. Hence, $J$ is a prime ideal that is homogeneous with respect to $W_C$ so we can proceed as above to show $J$ can be written as $\langle x^{\ell^+}-c_{\ell} x^{\ell^-}\mid \ell\in L \rangle=I(W_C)$.  
\end{proof}

\begin{remark}
Note that the lattice $L$ and the ideal $I(W_C)$ only depend on the linear space spanned by the rays of the cone $C'$. Hence they are the same for every set of $d$ independent vectors in $C'$ chosen to define $W_C$.
\end{remark}

\section{Valuations}\label{sec:pre val}

Another construction of toric degenerations can be obtained from valuations as we explain in this section. 
We recall basic notions of the theory of Newton-Okounkov bodies as presented in \cite{KK12}.

We fix a linear order $\prec$ on the additive abelian group $\mathbb Q^r$, where $r\le d$. 

\begin{definition}\label{def: valuation}
A map $\val: A\setminus\{0\}\to (\mathbb Q^r,\prec)$ is a \emph{valuation}, if it satisfies for all $f,g\in A\setminus\{0\}$ and $c\in \mathbb C^*$ 
\begin{enumerate}[(i)]
\item $\val(f+g)\succeq \min \{\val(f),\val(g)\}$, 
\item $\val(fg)=\val(f)+\val(g)$ and  
\item $\val(cf)=\val(f)$.
\end{enumerate}
If we replace (ii) by $\val(fg)\succeq \val(f)+\val(g)$ then $\val$ is called a \emph{quasi-valuation} (also called \emph{loose valuation} in \cite{T03}).
\end{definition}

It is not hard to show, that in (i) if $\val(f)\not=\val(g)$ then $\val(f+g)=\min_{\prec}\{\val(f),\val(g)\}$.

\begin{example}\label{exp: val Piusseux series}
Consider $\mathbb C\{\{t\}\}$ the field of \emph{Piusseux series}. Elements are formal power series $c(t)=c_1t^{a_1}+c_2t^{a_2}+\dots$, with $c_i\in\mathbb C$, $a_i\in\mathbb Q$ sharing a common denominator and increasingly ordered $a_1<a_2<\dots$. It is the algebraic closure of the field of Laurent series $\mathbb C((t))$ (see e.g. \cite[Theorem~2.1.5]{M-S}). Moreover, we have
\[
\mathbb C\{\{t\}\}=\bigcup_{n\in\mathbb Z_{>0}} \mathbb C((t^{\frac{1}{n}})).
\]
It comes with a natural valuation $\text{val}:\mathbb C\{\{t\}\}\setminus\{0\}\to\mathbb Q$ sending an element $0\not=c(t)\in\mathbb C\{\{t\}\}$ to the lowest exponent $a_1$ in the series expansion.
As the field of rational functions $\mathbb C(t)\subset\mathbb C\{\{t\}\}$ is a subfield, we can consider the restriction $\text{val}:\mathbb C(t)\setminus\{0\}\to\mathbb Q$. 
For $0\not=q(t)\in\mathbb C(t)$, the valuation $\text{val}(q(t))$ is the order of the zero (resp. pole) $q(t)$ has at $t=0$.
\end{example}

Let $\val:A\setminus\{0\}\to(\mathbb Z^r,\prec)$ be a (quasi-)valuation, where we replace $\mathbb Q^r$ by $\mathbb Z^r$ for simplicity. 
One naturally defines a $\mathbb Z^r$-filtration on $A$ by $F_{\val \succeq a}:=\{f\in A\setminus\{0\}\vert \val(f)\succeq a\}\cup \{0\}$ (and similarly $F_{\val\succ a}$). The associated graded algebra is defined as
\begin{eqnarray}\label{eq:def ass graded}
\gr_\val(A):=\bigoplus_{a\in \mathbb Z^r}F_{\val\succeq a}/F_{\val \succ a}.
\end{eqnarray}

For $f\in A\setminus\{0\}$ denote by $\overline f$ its image in the quotient $F_{\val\succeq \val(f)}/F_{\val\succ \val(f)}$, hence $\overline f\in\gr_\val(A)$. 
If the filtered components $F_{\val \succeq a}/F_{\val \succ a}$ are at most one-dimensional for all $a\in\mathbb Z^r$, we say $\val$ has \emph{one-dimensional leaves}.

The filtration induced by a valuation allows to define the following property of vector space bases for $A$. 
As stated it can be found in \cite{KM16}, but bases with this property are also studied, for example in \cite{FFL15} where they are called \emph{essential bases}. More details on essential bases can be found in \S\ref{sec:Bos}.

\begin{definition}\label{def:adapted basis}
A vector space basis $\mathbb B\subset A$ is called \emph{adapted} to a valuation $\val:A\setminus\{0\}\to \mathbb Z^r$, if for every $a\in\mathbb Z^r$ $F_{\val\succeq a}\cap\mathbb B$ is a vector space basis for $F_{\val\succeq a}$.
\end{definition}

If a valuation $\val$ has one-dimensional leaves, an adapted basis $\mathbb B$ is particularly useful as by \cite[Remark~2.19]{KM16} there is a bijection between $\mathbb B$ and the set of values $\val(\mathbb B)$ given by $b\mapsto \val(b)$. 
We use this fact in the proof of Theorem~\ref{thm: val and quasi val with wt matrix}.

The image $\{\val(f)\mid f\in A\setminus\{0\}\}\subset \mathbb Z^r$ forms by the definition an additive semi-group, that is a subsemi-group of $\mathbb Z^r$. 
We denote it by $S(A,\val)$ and refer to it as the \emph{value semi-group}. 
The \emph{rank} of the valuation is the rank of the sublattice generated by $S(A,\val)$ in $\mathbb Z^r$.
If $\rank(\val)=d$, we say $\val$ is of \emph{full rank}.
By \cite[Theorem~2.3]{KM16} the one-dimensional leaves property holds for valuations of full rank.
The value semi-group is of great interest because of the following Lemma that can be found, for example, in \cite[Remark~4.13]{BG09}.

\begin{lemma*}(\cite[Remark~4.13]{BG09})\label{lem: semi-group alg}
If $\val$ has one-dimensional leaves, then $\gr_\val(A)$ is isomorphic to the semi-group algebra $\mathbb C[S(A,\val)]$.
\end{lemma*}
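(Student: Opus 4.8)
The statement to prove is the Lemma attributed to \cite[Remark~4.13]{BG09}: if $\val$ has one-dimensional leaves, then $\gr_\val(A)\cong\mathbb C[S(A,\val)]$ as $\mathbb C$-algebras. The plan is to write down the natural candidate map and check it is a bijective algebra homomorphism, using the one-dimensional leaves hypothesis at each step.

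First I would fix notation: for $a\in S(A,\val)$, the graded piece $G_a:=F_{\val\succeq a}/F_{\val\succ a}$ is nonzero, and by the one-dimensional leaves hypothesis it is exactly one-dimensional; for $a\notin S(A,\val)$ (but in $\mathbb Z^r$) it is zero, since any $f$ with $\val(f)=a$ would witness $a\in S(A,\val)$. Thus $\gr_\val(A)=\bigoplus_{a\in S(A,\val)}G_a$, a direct sum of one-dimensional spaces indexed by $S(A,\val)$. On the semigroup-algebra side, $\mathbb C[S(A,\val)]=\bigoplus_{a\in S(A,\val)}\mathbb C\,\chi^a$ with $\chi^a\chi^{b}=\chi^{a+b}$. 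Choosing for each $a$ a generator $\overline{f_a}$ of $G_a$ (e.g. pick $f_a\in A$ with $\val(f_a)=a$ and take its image), I would define the linear map $\Phi:\mathbb C[S(A,\val)]\to\gr_\val(A)$ by $\chi^a\mapsto\overline{f_a}$. This is a linear isomorphism essentially by construction: it sends a basis to a basis.

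The content is that $\Phi$ is an algebra homomorphism, i.e. that multiplication in $\gr_\val(A)$ matches the semigroup law. Here I would recall how the product on $\gr_\val(A)$ is defined: for $\overline f\in G_a$ and $\overline g\in G_b$, one sets $\overline f\cdot\overline g:=\overline{fg}\in G_{a+b}$, where $\overline{fg}$ is the image of $fg$ in $F_{\val\succeq a+b}/F_{\val\succ a+b}$; one must note this is well-defined (independence of the lifts $f,g$, which follows from $\val(fg)=\val(f)+\val(g)$ — using that $\val$ is an honest valuation, not merely a quasi-valuation, so that products of lower-filtration elements land in strictly higher filtration) and that it is nonzero (again because $\val(fg)=a+b$, so $fg\notin F_{\val\succ a+b}$). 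Consequently $\overline{f_a}\cdot\overline{f_b}$ is a nonzero element of the one-dimensional space $G_{a+b}$, hence a nonzero scalar multiple of $\overline{f_{a+b}}$. Writing $\overline{f_a}\cdot\overline{f_b}=c_{a,b}\,\overline{f_{a+b}}$ with $c_{a,b}\in\mathbb C^*$, the map $\Phi$ is then an isomorphism onto $\gr_\val(A)$ with a possibly twisted multiplication; but one can absorb the cocycle $(c_{a,b})$ by rescaling the generators $\overline{f_a}$, since $H^2$ of a (finitely generated, or just any) commutative semigroup with values in $\mathbb C^*$ here is trivial for this purpose — concretely, if $S(A,\val)$ is generated by elements $a_1,\dots,a_k$ one rescales inductively so that the structure constants become $1$, and in general one checks directly that a consistent rescaling exists because the $c_{a,b}$ satisfy the $2$-cocycle identity coming from associativity. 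After rescaling, $\Phi(\chi^a\chi^b)=\Phi(\chi^{a+b})=\overline{f_{a+b}}=\overline{f_a}\cdot\overline{f_b}=\Phi(\chi^a)\Phi(\chi^b)$, and $\Phi$ sends $1=\chi^0$ to $\overline{1}$, so $\Phi$ is the desired algebra isomorphism.

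The main obstacle is the last point: verifying that the twisting cocycle $(c_{a,b})$ can be trivialized, i.e. that $\gr_\val(A)$ is the \emph{untwisted} semigroup algebra rather than a twisted one. For a full-rank valuation this is clean because one can use a set of semigroup generators and rescale; in the generality stated one should argue that commutativity and associativity of the product force $c_{a,b}=c_{b,a}$ and the cocycle condition, and that any such $\mathbb C^*$-valued symmetric $2$-cocycle on a submonoid of $\mathbb Z^r$ is a coboundary (one can, for instance, extend to a group and use that $\mathbb C^*$ is divisible / that $\mathrm{Ext}$ and $H^2$ of free abelian groups vanish appropriately). All other steps — well-definedness and nonvanishing of the graded product, the dimension count identifying the index set with $S(A,\val)$ — are routine consequences of the valuation axioms and the one-dimensional leaves hypothesis, so I would state them briefly and spend the bulk of the argument on the rescaling.
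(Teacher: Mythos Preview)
The paper does not prove this lemma; it is stated with a citation to \cite[Remark~4.13]{BG09} and used as a black box, so there is no in-paper argument to compare against. Your outline is the standard one and is correct up to the last step: the one-dimensional leaves hypothesis gives $\gr_\val(A)=\bigoplus_{a\in S(A,\val)}G_a$ with each $G_a$ one-dimensional, and the valuation axiom $\val(fg)=\val(f)+\val(g)$ makes the graded product nonzero, so $\gr_\val(A)$ is a twisted semigroup algebra $\mathbb C_c[S(A,\val)]$ for a symmetric $2$-cocycle $c$.

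The gap is in trivializing $c$. Neither of your two suggestions quite closes it: the cocycle identity from associativity is necessary but not sufficient (nontrivial twisted monoid algebras exist), and ``extend to $G(S)$ and use $\mathrm{Ext}^1(G(S),\mathbb C^*)=0$'' skips the nonobvious step of extending $c$ from the monoid $S$ to its group completion. A clean fix in your framework: since $\gr_\val(A)$ is a domain, pass to its graded field of fractions $K$. The degree-$0$ part of $K$ is $\mathbb C$ (a ratio of two nonzero elements in a one-dimensional graded piece is a scalar), so $K$ is $G(S)$-graded with one-dimensional pieces over $\mathbb C$. Choose a $\mathbb Z$-basis $e_1,\dots,e_k$ of the free abelian group $G(S)$ and nonzero $u_i\in K$ of degree $e_i$; then for $a=\sum m_ie_i\in S$ there is a unique $\lambda(a)\in\mathbb C^*$ with $\overline{f_a}=\lambda(a)\prod u_i^{m_i}$, and multiplying out gives $c(a,b)=\lambda(a)\lambda(b)\lambda(a+b)^{-1}$. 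This exhibits $c$ as a coboundary and finishes your argument.
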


The following defintion introduced by Kaveh and Manon in \cite{KM16} is closely related. It generalizes the notion of SAGBI basis (a Gröbner basis analogue for subalgebras of polynomial algebras).

\begin{definition}\label{def: Khovanskii basis}
A set of algebra generators $\mathcal B \subset A$ is called a \emph{Khovanskii basis} for $(A,\val)$ if the image of $\mathcal B$ in $\gr_\val(A)$ forms a set of algebra generators.
\end{definition}

If $\mathcal B$ is a Khovanskii basis for $(A,\val)$ then (independent of the one-dimensional leaves property) by \cite[Lemma~2.10]{KM16} the image $\val(\mathcal B)$ generates $S(A,\val)$. 

Assume for now that $\gr_\val(A)$ is finitely generated and that $\val$ has one-dimensional leaves. 
Hence, $\gr_\val(A)\cong  \mathbb C[S(A,\val)]$ by the above lemma.
Further, the value semigroup $S(A,\val)$ is generated by $\{\val(b_1),\dots,\val(b_n)\}$, for some $\{b_1,\dots,b_n\}$ forming a Khovanskii basis for $(A,\val)$.
In this case $\Proj(\gr_\val(A))=\Proj(\mathbb C[S(A,v)])$ is a toric variety.
In fact, $\Proj(\gr_\val(A))$ is a flat degeneratipon of $\Proj(A)$.
To describe the corresponding family, we use the following Lemma due to Caldero.
%%%%%%%%%%%%%%%%%%% CALDERO LEMMA

\begin{lemma*}(\cite[Lemma~3.2]{Cal02})
Let $S$ be a finite subset of $\mathbb Z^r$. Then there exists a linear form $e:\mathbb Z^r\to \mathbb Z_{\ge 0}$ such that
\[
\bm \prec \bn \Rightarrow e(\bn)<e(\bm).
\]
\end{lemma*}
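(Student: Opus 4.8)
The statement to prove is: for any finite subset $S \subset \mathbb{Z}^r$ (with a fixed linear order $\prec$ on $\mathbb{Z}^r$), there exists a linear form $e: \mathbb{Z}^r \to \mathbb{Z}_{\ge 0}$ that \emph{reverses} the order on $S$, i.e. $\bm \prec \bn$ implies $e(\bn) < e(\bm)$. The plan is to first handle the order-theoretic structure on the finite set $S$, then realize it by a rational (hence, after clearing denominators, integral) linear functional, and finally arrange nonnegativity by a harmless shift.

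First I would recall the basic fact that a linear order $\prec$ on a finitely generated free abelian group $\mathbb{Z}^r$ is always representable: there is a group embedding $\lambda: \mathbb{Z}^r \hookrightarrow (\mathbb{R}, <)$ respecting the order. (One standard route: extend $\prec$ to a total order on $\mathbb{Q}^r$, observe any such order is determined by a flag of subspaces together with sign data, and in particular on the finitely generated group $\mathbb{Z}^r$ it is induced by comparing the first nonzero coordinate after applying a suitable real linear map; alternatively, since $S$ is finite, only finitely many comparisons matter, so one only needs $\lambda$ defined on the subgroup generated by differences $\bm - \bn$ for $\bm, \bn \in S$, which is finite rank.) Since only the finitely many values $\lambda(\bs)$ for $\bs \in S$ are relevant, I can perturb $\lambda$ slightly to one with \emph{rational} coefficients without changing any of the strict inequalities among the $\lambda(\bs)$; clearing denominators gives a group homomorphism $\mu: \mathbb{Z}^r \to \mathbb{Z}$ with $\bm \prec \bn \iff \mu(\bm) < \mu(\bn)$ for all $\bm, \bn \in S$.

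Now set $e_0 := -\mu$, which is again a $\mathbb{Z}$-valued linear form and satisfies $\bm \prec \bn \implies e_0(\bn) < e_0(\bm)$ on $S$. The only remaining issue is nonnegativity: $e_0$ need not map into $\mathbb{Z}_{\ge 0}$. But $S$ is finite, so $c := \min_{\bs \in S} e_0(\bs)$ exists; however $e_0 - c$ is not linear (it has a constant term). Instead I would pick any linear form $h: \mathbb{Z}^r \to \mathbb{Z}_{>0}$ that is strictly positive on $S$ — for instance, if $S$ is a set of valuation images lying in a half-space, take $h$ cutting out that half-space; in the fully general statement one may first translate so that $S \subset \mathbb{Z}_{\ge 1}^r$ and take $h(\bx) = x_1 + \dots + x_r$, noting that translating $S$ does not affect the claim since it only concerns comparisons within $S$. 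Then for a sufficiently large integer $K$, the form $e := e_0 + K h$ is still order-reversing on $S$ (the finitely many strict inequalities $e_0(\bn) < e_0(\bm)$ are preserved once $K$ dominates the bounded quantity $\max_{\bm,\bn \in S}\frac{e_0(\bn)-e_0(\bm)}{h(\bm)-h(\bn)}$ over pairs where this is relevant, and in any case we may iterate), and $e$ takes values in $\mathbb{Z}_{\ge 0}$ on $S$ for $K$ large. Extending the definition of $e$ by the same linear formula to all of $\mathbb{Z}^r$, we may finally compose with a shift or simply note we only need $e$ nonnegative \emph{on the relevant semigroup}, which in the intended application is generated by $S$; enlarging $K$ once more makes $e$ nonnegative on that whole semigroup.

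The main obstacle is the representability step: producing the order-preserving homomorphism $\mathbb{Z}^r \to \mathbb{Z}$ from the abstract linear order $\prec$. The point to be careful about is that a general linear order on $\mathbb{Z}^r$ need not be Archimedean, so one cannot naively expect a $\mathbb{Z}$-valued representation on all of $\mathbb{Z}^r$ at once — but the finiteness of $S$ rescues us, because only finitely many elements and finitely many comparisons are constrained, and on the finite-rank subgroup they generate (after passing to $\mathbb{Q}$ and choosing an appropriate rational linear functional refining the lexicographic-type description of $\prec$) a single integral linear form suffices. Everything after that — negating to reverse the order, and adding a large positive multiple of a positive form to force nonnegativity — is elementary and uses only that $S$ is finite.
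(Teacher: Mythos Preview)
The paper does not prove this lemma; it is quoted from Caldero with the single remark that Caldero's version has $S\subset\mathbb{N}^r$, and one reduces to that by translating $S$ by a large multiple of $(1,\dots,1)$. Your own translation remark is exactly this reduction, so on that point you and the paper agree.

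Your from-scratch argument has a real gap in the representability step. The claimed order-preserving group embedding $(\mathbb{Z}^r,\prec)\hookrightarrow(\mathbb{R},<)$ does not exist in general: lexicographic order on $\mathbb{Z}^2$ already fails, since $(0,1)\succ 0$ but $n(0,1)\prec(1,0)$ for every $n$, which is impossible inside an Archimedean group like $\mathbb{R}$. You acknowledge the non-Archimedean obstruction but never actually produce the single integer linear form $\mu$. The clean route uses finiteness directly: the differences $\bn-\bm$ for $\bm\prec\bn$ in $S$ all lie in the positive cone $\{x:x\succ 0\}$, which is closed under positive integer combinations, so no nontrivial nonnegative rational (hence real) combination of these finitely many vectors is zero; by Gordan's theorem some linear form is then strictly positive on all of them, and one may take it rational and clear denominators. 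Equivalently, from the Robbiano description you mention (with $\prec$ lexicographic in real forms $\ell_1,\dots,\ell_r$), take $\mu=N^{r-1}\ell_1+\cdots+\ell_r$ for $N$ large enough to separate the finite set $S$; this is the missing step between ``flag of subspaces'' and ``single $\mu$''.

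A smaller issue: your nonnegativity fix $e=e_0+Kh$ for \emph{large} $K$ is backwards --- for large $K$ the comparison on $S$ is governed by $h$, not $e_0$, so unless $h$ itself reverses $\prec$ on $S$ you destroy the very inequalities you want. In any case the codomain $\mathbb{Z}_{\ge 0}$ in the statement must be read loosely (no nonzero linear form $\mathbb{Z}^r\to\mathbb{Z}_{\ge 0}$ exists); what is actually needed in the applications is nonnegativity on the relevant sub-semigroup of valuation images, and the translation trick you mention already takes care of that.
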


In \cite{Cal02} the lemma is stated with $\mathbb N^r$ in place of $\mathbb Z^r$.
By adding a large multiple of $(1,\dots,1)$ to every element in $S$ we obtain the lemma as stated above.
Examples of such a linear forms can be found throughout the thesis, in particular in \S\ref{sec:string&FFLV}.
%%%%%%%%%%%%%%%%%%%

Consider a linear form $e:\mathbb Z^r\to \mathbb Z$ as in \cite[Lemma~3.2]{Cal02} for $S:=\{\val(b_1),\dots,\val(b_n))\}$.
We construct a $\mathbb Z_{> 0}$-filtration on $A$ by $\mathcal F_{\le m}:=\mathcal F^{\val,e}_{\le m}=\{f\in A\setminus\{0\}\mid e(\val(f))\le m\}\cup\{0\}$ for $m\in \mathbb Z_{>0}$.
The filtration $\{\mathcal F_{\le m}\}_m$ has the property that $\bigoplus_{m\ge 0}\mathcal F_{\le m}/\mathcal F_{<m}\cong \gr_\val(A)$ and we obtain a family of $\mathbb C$-algebras (see e.g. \cite[Proposition~5.1]{An13}) that can be defined as follows.

\begin{definition}\label{def: Rees algebra}
The \emph{Rees algebra} associated with the valuation $\val$ and the filtration $\{\mathcal F_{\le m}\}_m$ is the flat $\mathbb C[t]$-subalgebra of $A[t]$ defined as
\begin{eqnarray}\label{eq: def Rees}
R_{\val,e}:=\bigoplus_{m\ge 0} (\mathcal F_{\le m})t^m.
\end{eqnarray}
It has the properties that $R_{\val,e}/tR_{\val,e}\cong \gr_\val(A)$ and $R_{\val,e}/(1-t)R_{\val,e}\cong A$. 
In particular, it defines a flat family over $\mathbb A^1$ (the coordinate on $\mathbb A^1$ given by $t$) with generic fibre isomorphic to $\Proj(A)=X$ and special fibre the toric variety $\Proj(\gr_\val(A))$.
\end{definition}

More details on Rees algebras can be found in \cite[\S5]{An13}, \cite[\S2]{T03}, and \cite[\S7]{KM16}.

Introduced by Lazarsfeld-Musta\c{t}\v{a} \cite{LM09} and Kaveh-Khovanskii \cite{KK12} we recall the definition of Newton-Okounkov body. The way we present it follows closely \cite{KK12}.

\begin{definition}\label{def: val sg NO}
Let $\val:A\setminus \{0\}\to (\mathbb Z^r,\prec)$ be a valuation. The \emph{Newton-Okounkov cone} is
\begin{eqnarray}\label{eq: def NO cone}
C(A,\val):=\conv(S(A,\val)\cup\{0\})\subset \mathbb R^r. 
\end{eqnarray}
One defines the corresponding \emph{Newton-Okounkov body} as
\begin{eqnarray}\label{eq: def NO body}
\Delta(A,\val):=\overline{\bigcup_{i> 0}\{\val(f)/i \mid 0\not =f\in A_i \}}
\end{eqnarray}
\end{definition}

We are mostly interested in projective varieties of subvarieties of a product of projective spaces as seen in the last section.
Let $X$ be such a variety of dimension $d$ and $A$ its homogeneous coordinate ring.
Recall that the total coordinate ring $S$ of $\mathbb P^{k_1}\times \dots\times\mathbb P^{k_s}$ is of form  $S=\mathbb C[x_{1,0},\dots,x_{1,k_1},x_{2,0},\dots,x_{2,k_2},\dots,x_{s,k_s}]$. 
On coordinates the degree is given by $\deg x_{i,j}:= \varepsilon_i\in\mathbb Z^s$ (see e.g. \cite[Example~5.2.2]{CLS11}) for all $i\in[s],j\in[k_i]$, where $\{\varepsilon_i\}_{i\in[s]}$ denotes the standard basis on $\mathbb Z^s$.
For $f=\sum a_{\bu}x^{\bu} \in S$ we choose the lexicographic order on $\mathbb Z^s$ and set $\deg f:=\max_{\text{lex}}\{\deg x^{\bu}\mid a_{\bu}\not=0\}$.
The $\mathbb Z_{\ge 0}^s$-grading on the  induces a $\mathbb Z^s_{\ge 0}$-grading on the homogeneous coordinate ring $A$ of $X$, which we denote $A=\bigoplus_{\bm\in\mathbb Z^s_{\ge 0}}A_\bm$. 

It is sometimes desirable to have a valuation that encodes the grading of $A$, i.e.
\begin{eqnarray}\label{eq: def extended val}
\hat{\val}:A\setminus\{0\}\to\mathbb (Z^s_{\ge 0}\times \mathbb Z^{r-s},\prec) 
\ \text{ of form }\ \hat{\val}(f)=(\deg f,\cdot), \ \forall f\in A\setminus\{0\}.
\end{eqnarray}
%The order on $\mathbb Z_{\ge 0}^s\times \mathbb Z^r$ is given by $\Hat\prec$ defined as
%\[
%(\bm_1,\bm_2)\Hat\prec (\bn_1,\bn_2)\ :\Leftrightarrow \ \bm_1 <_{\text{lex}} \bn_1, \text{ or } \bm_1=\bn_1 \text{ and } \bm_2\prec \bn_2.
%\]
Examples of valuations that have this form can be found in \S\ref{sec:BLMM} where we consider valuations constructed from maximal prime cones in $\trop(\Flag_n)$ (as in \cite{KM16}).
In this case the Newton-Okounkov cone $C(A,\hat\val)$ is contained in $\mathbb R^s_{\ge 0}\times \mathbb R^{r-s}$.
The Newton-Okounkov body $\Delta(A,\hat\val)$ can be defined as the intersection of $C(A,\hat\val)$ with the hyperplane $\{(1,\dots,1)\}\times\mathbb R^{r-s}$.
More generally let $P_\val(\lambda):=C(A,\hat\val)\cap\{\lambda\}\times\mathbb R^{r-s}$ for $\lambda\in\mathbb R_{\ge 0}^s$.

Dealing with polytopes throughout the thesis we need the notion of \emph{Minkowski sum}. For two polytopes $A,B\subset \mathbb R^r$ it is defined as
\begin{eqnarray}\label{eq:Mink sum}
A+B:=\{a+b\mid  a\in A,b\in B\}.
\end{eqnarray}
For example, if $A$ is $\mathbb Z_{\ge0}^s$-graded for $s>1$ and $\val$ is of form $\hat\val$ as in \eqref{eq: def extended val} it is an interesting question if
\[
P_\val(\varepsilon_1)+\dots +P_\val(\varepsilon_s)=\Delta(A,\val),
\]
where $\varepsilon_i$ are standard basis vectors in $\mathbb R^s$. We investigate this question in the case where $A$ is the homogeneous cooridnate ring of the flag variety in \S\ref{sec:string&FFLV}.
\smallskip

The main result and reason for the popularity of Newton-Okounkov bodies is the following Theorem. 
This version is closest to the one in \cite{KK12}, but the same result in varying generalities was obtained, for example, in \cite{An13} and \cite{LM09}.

\begin{theorem*}(\cite{KK12})
Let $A$ be the homogeneous coordinate ring of a projective variety $X$ of dimension $d$ and $\val$ a valuation with one-dimensional leaves on $A$. Then $\Delta(A,\val)$ is a convex body.
Moreover, if $S(A,\val)$ is finitely generated, then $\Delta(A,\val)$ is a rational polytope whose volume $\text{Vol}(\Delta(A,\val))$ (up to rescaling by $d!$) equals the degree of $X$. 
In this case, the normalization of the (not necessarily normal) toric variety $Y=\Proj(\mathbb C[S(A,\val)])$ is the toric variety associated to $\Delta(A,\val)$.
\end{theorem*}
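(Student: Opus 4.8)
The plan is to reduce the statement to a combinatorial fact about graded subsemigroups of $\mathbb{Z}_{\geq 0}\times\mathbb{Z}^{r}$ and then feed in the geometric input that $\dim_{\mathbb{C}}A_{i}$ is eventually the Hilbert polynomial of $X$. First I would attach to $\val$ the graded semigroup
\[
\Gamma:=\{\,(i,\val(f))\mid 0\neq f\in A_{i}\,\}\subseteq\mathbb{Z}_{\geq 0}\times\mathbb{Z}^{r},
\]
which is a semigroup because the grading of $A$ is multiplicative and $\val(fg)=\val(f)+\val(g)$; write $\Gamma_{i}$ for its slice over $i$. The one-dimensional leaves hypothesis passes to each finite-dimensional piece $A_{i}$: the filtration $\{f\in A_{i}\mid\val(f)\succeq a\}$ of $A_{i}$ has all successive quotients of dimension at most one, since each injects into $F_{\val\succeq a}/F_{\val\succ a}$; hence $\dim_{\mathbb{C}}A_{i}=\#\Gamma_{i}$. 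By the definition of the Newton--Okounkov body, $\Delta(A,\val)=\overline{\bigcup_{i>0}\Gamma_{i}/i}$, which is the slice at first coordinate $1$ of the closed convex cone $\cone(\Gamma)\subseteq\mathbb{R}_{\geq0}\times\mathbb{R}^{r}$ generated by $\Gamma$ (when $\val$ encodes the grading of $A$, this cone is the Newton--Okounkov cone $C(A,\val)$). Convexity of $\Delta(A,\val)$ is therefore immediate; concretely, $(i+j,a+b)\in\Gamma$ realizes, at level one, the convex combination of $(1,a/i)$ and $(1,b/j)$ with weights $i/(i+j)$ and $j/(i+j)$, and closure takes care of the rest.

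It remains, for the first assertion, to show $\Delta(A,\val)$ is bounded and of dimension $d$. Here I would invoke the growth theorem for semigroups used in \cite{KK12} (due to Khovanskii, after Okounkov): for a semigroup $\Gamma$ of this type, $\Delta(\Gamma)$ is bounded if and only if $\#\Gamma_{i}$ is bounded above by a polynomial in $i$, and in that case $\#\Gamma_{i}$ is asymptotically a polynomial in $i$ of degree $\dim\Delta(\Gamma)$ whose leading coefficient equals $\operatorname{vol}(\Delta(\Gamma))$. Since $A$ is the homogeneous coordinate ring of the projective variety $X$ of dimension $d$, for $i\gg 0$ the number $\#\Gamma_{i}=\dim_{\mathbb{C}}A_{i}$ equals the Hilbert polynomial of $X$, a polynomial of degree $d$ with leading coefficient $\deg(X)/d!$. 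Comparing the two forces $\Delta(A,\val)$ to be bounded with $\dim\Delta(A,\val)=d$ (hence a convex body), and simultaneously gives $d!\operatorname{vol}(\Delta(A,\val))=\deg(X)$.

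For the second part, assume $S(A,\val)$ (equivalently $\Gamma$) is finitely generated. Then $\cone(\Gamma)$ is a rational polyhedral cone, so its slice $\Delta(A,\val)$ is a rational polytope, and the volume formula is already in hand; alternatively, $\mathbb{C}[S(A,\val)]\cong\gr_{\val}(A)$ by the semigroup-algebra lemma \cite[Remark~4.13]{BG09} is a finitely generated graded algebra with $\dim_{\mathbb{C}}\gr_{\val}(A)_{i}=\#\Gamma_{i}=\dim_{\mathbb{C}}A_{i}$, hence has the same Hilbert polynomial as $A$, so $\deg Y=\deg X$, while for a projective toric variety $\deg Y=d!\operatorname{vol}(\Delta)$. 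Finally, $Y=\Proj(\mathbb{C}[S(A,\val)])$ is a projective toric variety; its normalization is $\Proj$ of the integral closure of $\mathbb{C}[S(A,\val)]$ in its fraction field, which for a torsion-free semigroup algebra is $\mathbb{C}[\operatorname{sat}(\Gamma)]$, where $\operatorname{sat}(\Gamma)=\cone(\Gamma)\cap G(\Gamma)$ is the saturation of $\Gamma$ inside the subgroup $G(\Gamma)$ it generates. Since $\operatorname{sat}(\Gamma)$ is precisely the set of $G(\Gamma)$-lattice points of the cone over $\Delta(A,\val)$, the normalization of $Y$ is the toric variety associated to the rational polytope $\Delta(A,\val)$.

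The technical heart, and the step I expect to be the main obstacle, is the semigroup growth theorem used in the second paragraph: extracting from the purely asymptotic information ``$\#\Gamma_{i}$ is eventually a polynomial of degree $d$'' the geometric conclusions that $\Delta(\Gamma)$ is bounded, has dimension exactly $d$, and has normalized volume equal to the leading coefficient. This is not formal: it is proved by sandwiching the (a priori infinitely generated) semigroup $\Gamma$ between finitely generated sub- and super-semigroups and controlling the discrepancy between a finitely generated semigroup and its saturation by an Ehrhart-type estimate. A smaller but essential point is the precise role of the one-dimensional leaves hypothesis: it is exactly what upgrades the automatic inequality $\#\Gamma_{i}\leq\dim_{\mathbb{C}}A_{i}$ to an equality, without which the comparison with the Hilbert polynomial would only yield inequalities.
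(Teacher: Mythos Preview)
The paper does not prove this statement: it is quoted verbatim as a result of Kaveh--Khovanskii \cite{KK12} (and in varying generality \cite{An13}, \cite{LM09}) and used as a black box, so there is no ``paper's own proof'' to compare against. Your sketch is, however, a faithful outline of the argument in \cite{KK12} itself: pass to the graded value semigroup $\Gamma\subset\mathbb{Z}_{\ge 0}\times\mathbb{Z}^r$, use one-dimensional leaves to get $\#\Gamma_i=\dim_{\mathbb C}A_i$, identify $\Delta(A,\val)$ with the level-one slice of $\cone(\Gamma)$, and then feed the Hilbert polynomial into the Khovanskii--Okounkov growth theorem to extract boundedness, dimension, and volume. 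You have also correctly flagged that the growth theorem is the non-formal step and that one-dimensional leaves is precisely what turns $\#\Gamma_i\le\dim A_i$ into an equality. The normalization statement via saturation of the semigroup is likewise the standard toric argument.
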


% Note that $S(A,\val)$ being finitely generated is equivalent to $(A,\val)$ having a finite Khovanskii basis.

%Taking $k$-times Minkomski sum of a polytope $A$ with itself, we will denote as $kA$.

%%%%%%%%%%%%%%%%% WEIGHTING MATRICES AND QUASIVAL

\section{Quasi-valuations with weighting matrices}\label{sec:val and quasival}

We briefly recall some background on higher-dimensional Gröbner theory  and quasi-valuations with weighting matrices as in \cite[\S3.1\&4.1]{KM16}. 
Then we define for a given valuation an associated quasi-valuation with weighting matrix.
This enables us to use the Kaveh-Manon's machinery for more general valuations.
For example, we can test whether a given valuation has a finitely genarated value semi-group and if so, compute the associated Newton-Okounkov body.
A central result of this thesis is Theorem~\ref{thm: val and quasi val with wt matrix}.
It is proved in full generality here, but appears in more specialized formulations in the following chapters.
Most of our results are in fact implications of this theorem.

The notions of initial form and initial ideal with respect to a weight vector (as seen in \S\ref{sec:pre trop}) can be generalized to \emph{weighting matrices} as follows.

\begin{definition}\label{def: init wrt M}
Let $f=\sum a_{\bf u} x^{\bf u} \in\mathbb C[x_1,\dots,x_n]$ with ${\bf u}\in\mathbb Z^n$, where $x^{\bf u}=x_1^{u_1}\cdots x_n^{u_n}$. For $M\in\mathbb Q^{r\times n}$ and a linear order on $\prec$ on $\mathbb Z^r$ we define
\begin{eqnarray}\label{eq: def init form M }
\init_M(f):=\sum_{\begin{smallmatrix}
M\bm =\min_{\prec}\{M{\bf u}\mid a_{\bf u}\not=0\}
\end{smallmatrix}} a_{\bm} x^{\bm}.
\end{eqnarray}
Similar to the case of weight vectors we extend this definition to ideals $I\subset\mathbb C[x_1,\dots,x_n]$ by 
\begin{eqnarray}\label{eq: def init ideal M }
\init_M(I):=\langle \init_M(f) \mid f\in I \rangle.
\end{eqnarray}
\end{definition}

A weighting matrix $M\in\mathbb Q^{r\times n}$ lies in the \emph{Gröbner region} GR$^r(I)$ of an ideal $I\subset \mathbb C[x_1,\dots,x_n]$, if there exists a monomial order $<$ on $\mathbb C[x_1,\dots,x_n]$ such that 
\[
\init_{<}(\init_{M}(I))=\init_<(I).
\]
By a \emph{positive grading} we mean a $\mathbb Z^s_{\ge 0}$-grading for $s\ge 1$ as in the case of the total coordinate ring $S$ of a product of projective spaces (see below Remark~\ref{rem:from lemma1}) or the usual polynomial ring.
If an ideal $I$ is (multi-)homogeneous with respect to a positive grading
then the lineality space $L_I$ of $I$ contains $(1,\dots,1)\in\mathbb  R^n$.
Kaveh-Manon show (see \cite[Lemma~3.7]{KM16}) that in this case $\mathbb Q^{r\times n}$ is entirely contained in GR$^r(I)$. 

To a given matrix $M\in\mathbb Q^{r\times n}$ in \cite{KM16} they associate a quasi-valuation as follows. As above, fix a group ordering $\prec$ on $\mathbb Q^r$.

\begin{definition}\label{def: quasi val from wt matrix}
Let $\tilde f=\sum a_{\bu}x^{\bu}\in\mathbb C[x_1,\dots,x_n]$ and define $\tilde\val_M:\mathbb C[x_1,\dots, x_n]\setminus\{0\}\to (\mathbb Q^r,\prec) $ by
\[
\tilde \val_M(\tilde f):=\min{}_{\prec}\{M\bu\mid a_{\bu}\not=0\}.
\]
Let $A=\mathbb C[x_1,\dots,x_n]/I$ with $I$ the kernel of $\pi:\mathbb C[x_1,\dots,x_n]\to A$. Then by \cite[Lemma~4.2]{KM16} there exists a quasi-valuation $\val_M:A\setminus\{0\}\to (\mathbb Q^r,\prec)$ given for $f\in A$ by
\[
\val_M(f):=\max{}_{\prec}\{\tilde\val_M(\tilde f)\mid \tilde f\in\mathbb C[x_1,\dots,x_n], \pi(\tilde f)=f\}.
\]
It is called the \emph{quasi-valuation with weighting matrix} $M$.
\end{definition}

From the definition, it is usually hard to explicitly compute the values of a quasi-valuation $\val_M$.
The following proposition makes it more computable, given that $M$ lies in the Gröbner region of $I$.

\begin{proposition*}(\cite[Proposition~4.3]{KM16})
Let $M\in\text{GR}^r(I)$ and $\mathbb B\subset A$ be a standard monomial basis for the corresponding monomial order $<$ on $\mathbb C[x_1,\dots,x_n]$. Then $\mathbb B$ is adapted to $\val_M$. Moreover, for every element $f\in A$ written as $f=\sum b_\alpha a_\alpha$ with $\pi(x^\alpha)=b_\alpha\in \mathbb B$ and $a_\alpha\in \mathbb C$ we have
\[
\val_M(f)=\min{}_{\prec} \{ M\alpha\mid a_\alpha\not=0\}.
\]
\end{proposition*}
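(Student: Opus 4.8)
The plan is to prove the Kaveh--Manon Proposition~4.3 by carefully unwinding the definition of the quasi-valuation $\val_M$ and exploiting the standard-monomial combinatorics associated with the monomial order $<$. The key observation is that every element $f \in A = \mathbb{C}[x_1,\dots,x_n]/I$ has a \emph{unique} expression $f = \sum_\alpha a_\alpha b_\alpha$ with $b_\alpha = \pi(x^\alpha) \in \mathbb{B}$ a standard monomial and $a_\alpha \in \mathbb{C}^*$; this is the statement that $\mathbb{B}$ is a $\mathbb{C}$-vector-space basis of $A$, which follows from Gr\"obner basis theory once $M \in \mathrm{GR}^r(I)$. I would fix a reduced Gr\"obner basis $G$ of $I$ with respect to $<$ and let $\tilde f_0 = \sum_\alpha a_\alpha x^\alpha$ denote the canonical lift of $f$ supported on standard monomials (the normal form of any lift of $f$ modulo $G$).

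First I would establish the upper bound $\val_M(f) \preceq \min_\prec\{M\alpha \mid a_\alpha \neq 0\}$. By definition $\val_M(f) = \max_\prec\{\tilde\val_M(\tilde f) \mid \pi(\tilde f) = f\}$, so it suffices to show that for \emph{every} lift $\tilde f$ of $f$ one has $\tilde\val_M(\tilde f) \preceq \min_\prec\{M\alpha : a_\alpha \neq 0\} =: \mu$. The idea here is a reduction argument: given an arbitrary lift $\tilde f$, perform the division algorithm with respect to $G$, tracking at each step how $\tilde\val_M$ changes. Each reduction step replaces a monomial $x^\gamma$ by $x^{\gamma - \gamma'}\cdot(\text{tail of a Gr\"obner basis element})$; since $M \in \mathrm{GR}^r(I)$ means $\init_<(\init_M(I)) = \init_<(I)$, the leading $M$-weight of the Gr\"obner basis element equals the $M$-weight of its $<$-leading term, hence reduction does not decrease $\tilde\val_M$ (the $M$-minimal part is preserved or pushed up). Carrying this through, $\tilde\val_M(\tilde f) \preceq \tilde\val_M(\tilde f_0) = \mu$. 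Since $\tilde f_0$ is itself a lift achieving $\mu$, the max is attained and equality holds; this simultaneously shows $\val_M(f) = \mu$ and that the normal-form lift is $\tilde\val_M$-optimal.

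Next, for the \textbf{adapted basis} claim, I would show that for each $a \in \mathbb{Q}^r$, the set $F_{\val_M \succeq a} \cap \mathbb{B}$ spans $F_{\val_M \succeq a}$. By the formula just proved, for $f = \sum_\alpha a_\alpha b_\alpha$ in normal form we have $\val_M(f) \succeq a$ if and only if $M\alpha \succeq a$ for every $\alpha$ with $a_\alpha \neq 0$ --- that is, $f$ is a $\mathbb{C}$-linear combination of those standard monomials $b_\alpha$ with $M\alpha \succeq a$. Thus $F_{\val_M \succeq a}$ is precisely the span of $\{b \in \mathbb{B} \mid \val_M(b) \succeq a\}$, which is exactly what ``adapted'' requires. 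The filtration-compatibility of $\succeq$ (i.e. that $F_{\val_M \succeq a}$ is a subspace) is immediate from the valuation axioms (i) and (iii).

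The main obstacle I anticipate is the reduction/division-algorithm argument in the upper-bound step: one must argue cleanly that reducing modulo the Gr\"obner basis $G$ never strictly decreases $\tilde\val_M$, and this is exactly where the hypothesis $M \in \mathrm{GR}^r(I)$ is used in an essential way. The subtlety is that a single monomial $x^\gamma$ being reduced might have $M$-weight strictly larger than $\mu$, so one cannot argue term-by-term naively; instead one should track the \emph{full} $M$-minimal component of the working polynomial and show it maps, under $\pi$, to the same element of $\gr_{\val_M}(A)$, using that $\init_M(I)$ contains no ``new'' $<$-leading terms beyond those of $\init_<(I)$. An alternative, perhaps cleaner route is to invoke \cite[Lemma~4.2]{KM16} (already cited in the statement) for the existence and basic properties of $\val_M$, and then derive the explicit formula purely from the characterization of $\mathrm{GR}^r(I)$ together with the fact --- provable directly --- that $\init_M(I)$ is itself generated by $\{\init_M(g) : g \in G\}$ when $G$ is chosen appropriately; I would pursue whichever of these the ambient conventions of the chapter make shortest.
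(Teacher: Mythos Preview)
The paper does not supply its own proof of this statement: it is quoted verbatim as \cite[Proposition~4.3]{KM16} inside a starred \texttt{proposition*} environment and is used as a black box in the proof of Theorem~\ref{thm: val and quasi val with wt matrix}. So there is no argument in the paper to compare yours against.

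That said, your proposed argument is sound. The reduction claim ``division by the reduced Gr\"obner basis $G$ never decreases $\tilde\val_M$'' follows, as you indicate, from the fact that for each $g\in G$ the $<$-leading monomial of $g$ lies in $\init_M(g)$ --- which is forced by $\init_<(\init_M(I))=\init_<(I)$ together with reducedness of $G$ (every non-leading term of $g$ is standard, hence cannot be the $<$-leading term of an element of $\init_M(I)$). From this, a single reduction step $\tilde f\mapsto \tilde f - c\,x^\delta g$ satisfies $\tilde\val_M(c\,x^\delta g)=M(\delta+\beta)\succeq \tilde\val_M(\tilde f)$, so $\tilde\val_M$ can only go up; iterating to the normal form $\tilde f_0$ gives $\tilde\val_M(\tilde f)\preceq \tilde\val_M(\tilde f_0)=\mu$ for every lift, hence $\val_M(f)=\mu$. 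Your adapted-basis paragraph is then an immediate consequence, exactly as you wrote.

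The alternative you sketch at the end --- avoid the division algorithm and argue by contradiction that a lift $\tilde f$ with $\tilde\val_M(\tilde f)\succ\mu$ would force a nonzero linear combination of standard monomials to lie in $\init_M(I)$, whence its $<$-leading term lies in $\init_<(\init_M(I))=\init_<(I)$, contradicting ``standard'' --- is shorter and is closer to how Kaveh--Manon actually argue in \cite{KM16}. Either route is fine; if you want the cleanest write-up, take the second one.
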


Recall from above how to associate a filtration to a (quasi-)valuation. We denote the associated graded algebra of a quasi-valuation $\val_M$ by $\gr_M(A)$.

From our point of view, quasi-valuations with weighting matrices are not the primary object of interest. In most cases we are given a valuation $\val:A\setminus\{0\}\to (\mathbb Q^r,\prec)$ whose properties we would like to know.
In particular, we are interested in the generators of the value semi-group and if there are only finitely many. 
The next definition establishes a connection between a given valuations and weighting matrices. It allows us later to apply techniques from Kaveh-Manon for quasi-valuations with weighting matrices to other valuations of our interest.

From now on let $A$ be a finitely generated algebra and domain with presentation a fixed $\pi:\mathbb C[x_1,\dots,x_n]\to A$, such that $A=\mathbb C[x_1,\dots,x_n]/\ker(\pi)$. Let $I:=\ker(\pi)$ and $\pi(x_i)=:b_i$ for $i\in[n]$.
The polynomial ring may be replaced by $S$ the total coordinate ring of the product of projective spaces, but for simplicity we just write $\mathbb C[x_1,\dots,x_n]$.

\begin{definition}\label{def: wt matrix from valuation}
Given a valuation $\val:A\setminus\{0\}\to (\mathbb Q^r,\prec)$. 
We define the \emph{weighting matrix of $\val$} by
\[
M_\val:=(\val(b_1),\dots,\val(b_n))\in \mathbb Q^{r\times n}.
\]
That is, the columns of $M_\val$ are given by the images $\val(b_i)$ for $i\in[n]$.
\end{definition}

Assume that the ideal $I$ is homogeneous with respect to a positive grading. 
We need the following key-lemma from \cite{KM16}.

\begin{lemma*}(\cite[Lemma~4.4]{KM16})
The associated graded algebra of the quasi-valuation with weighting matrix $M$ satisfies 
\begin{eqnarray}\label{eq: ass graded wt matrix val}
\gr_M(A)\cong \mathbb C[x_1,\dots,x_n]/\init_M(I).
\end{eqnarray}
\end{lemma*}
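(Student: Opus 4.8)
Since $I$ is homogeneous for a positive grading, we have $M\in\mathrm{GR}^r(I)$ (by \cite[Lemma~3.7]{KM16}), so \cite[Proposition~4.3]{KM16} applies: fix a monomial order $<$ with $\init_<(\init_M(I))=\init_<(I)$ and let $\mathbb B$ be the corresponding standard monomial basis. The strategy is to build a $\mathbb Q^r$-graded algebra isomorphism $\Phi$ between the two sides and verify it carries a basis to a basis in each degree. First I would give $\mathbb C[x_1,\dots,x_n]$ the $\mathbb Q^r$-grading in which $x^\bu$ has degree $M\bu$; each generator $\init_M(f)$ of $\init_M(I)$ is homogeneous of degree $\tilde\val_M(f)$ for this grading, so $\init_M(I)$ is a homogeneous ideal and $R:=\mathbb C[x_1,\dots,x_n]/\init_M(I)$ decomposes as $\bigoplus_{a}R_a$. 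Then I would define $\Phi\colon R\to\gr_M(A)$ on the degree-$a$ piece by sending the class of an $M$-homogeneous $g$ of degree $a$ to the image of $\pi(g)$ in $F_{\val_M\succeq a}/F_{\val_M\succ a}$; this lands where claimed because $\val_M(\pi(g))\succeq\tilde\val_M(g)=a$.

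The one substantive point—and the step I expect to be the main obstacle—is well-definedness of $\Phi$, i.e. that $\pi(h)\in F_{\val_M\succ a}$ whenever $h\in(\init_M(I))_a$. Since $\init_M(I)$ is the ideal generated by the $M$-homogeneous elements $\init_M(f)$, its degree-$a$ part is spanned by elements $x^\bv\init_M(f)$ with $f\in I$ and $M\bv+\tilde\val_M(f)=a$, so it suffices to treat one such. Writing $\init_M(f)=f-r$ with $\tilde\val_M(r)\succ\tilde\val_M(f)$ and using $\pi(f)=0$ gives $\pi(x^\bv\init_M(f))=-\pi(x^\bv r)$; the representative $x^\bv r$ has $\tilde\val_M(x^\bv r)=M\bv+\tilde\val_M(r)\succ a$, hence $\val_M(\pi(x^\bv\init_M(f)))\succeq\tilde\val_M(x^\bv r)\succ a$ as needed. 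This is precisely the fact that initial forms of elements of $I$ are killed by $\pi$ one filtration level too low; everything else is bookkeeping. Granting this, $\Phi$ is an algebra homomorphism because $\pi(g)\pi(g')=\pi(gg')$ and the product on $\gr_M(A)$ is $[\pi(g)]_a\cdot[\pi(g')]_{a'}=[\pi(gg')]_{a+a'}$, and $\Phi(\overline 1)=\overline 1$.

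Finally I would check bijectivity degree by degree. On the source: because $\init_<(\init_M(I))=\init_<(I)$, Macaulay's theorem shows the images of the standard monomials $\mathbb B$ form a $\mathbb C$-basis of $R$, and this basis is graded, so $\{\overline{x^\alpha}\mid x^\alpha\in\mathbb B,\ M\alpha=a\}$ is a basis of $R_a$. On the target: by \cite[Proposition~4.3]{KM16}, $\mathbb B$ is adapted to $\val_M$ and $\val_M(\pi(x^\alpha))=M\alpha$ for each $x^\alpha\in\mathbb B$; therefore $\{\pi(x^\alpha)\mid M\alpha\succeq a\}$ and $\{\pi(x^\alpha)\mid M\alpha\succ a\}$ are bases of $F_{\val_M\succeq a}$ and $F_{\val_M\succ a}$, so the classes of $\{\pi(x^\alpha)\mid x^\alpha\in\mathbb B,\ M\alpha=a\}$ form a basis of $\gr_M(A)_a$. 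Since $\Phi(\overline{x^\alpha})$ is exactly the class of $\pi(x^\alpha)$ in degree $M\alpha$, $\Phi$ sends a basis to a basis in each graded component, hence is an isomorphism of $\mathbb Q^r$-graded algebras.
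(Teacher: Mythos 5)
Your proof is correct. Note, though, that the paper does not prove this statement at all: it quotes \cite[Lemma~4.4]{KM16} verbatim and relies on it as imported machinery, so there is no ``paper's proof'' to compare against. What you have written is a self-contained verification, and as far as I can tell it follows the same strategy one would reconstruct from the ingredients the paper does cite around the lemma (namely \cite[Lemma~3.7]{KM16} to put $M$ in the Gr\"obner region and \cite[Proposition~4.3]{KM16} for the adapted standard monomial basis): grade $\mathbb C[x_1,\dots,x_n]$ by $M$, show $\init_M(I)$ is homogeneous, define the homomorphism degreewise, and count dimensions in each graded piece via the common standard monomial basis for $\mathbb C[x]/\init_M(I)$ and for $A$. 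The well-definedness step — that $\pi(x^{\bv}\init_M(f)) = -\pi(x^{\bv}(f-\init_M(f)))$ climbs strictly above level $a$ because the tail of $f$ has higher $\tilde\val_M$ — is the substantive point, and you have it right (it uses that $\prec$ is a group order so adding $M\bv$ is monotone, and that $\val_M$, as a max over representatives, dominates $\tilde\val_M$ of any single representative). The bijectivity argument is also sound: adaptedness gives that $F_{\val_M\succ a}$ is spanned by the basis monomials strictly above $a$, so each graded piece of source and target has the same finite basis $\{x^\alpha\in\mathbb B : M\alpha = a\}$. No gap.
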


By a similar argument as in the proof of \cite[Proposition~5.2]{KM16} we obtain the following corollary with assumptions being as above.

\begin{corollary}\label{cor: NO body val_M}
Let $M\in\mathbb Q^{d\times n}$ with $d$ the Krull-dimension of $A$ (i.e. $\val_M$ has full rank).
If $\init_{M}(I)$ is prime, then $\val_M$ is a valuation whose value semi-group $S(A,\val_M)$ is generated by $\{\val_M(b_i)\}_{i\in[n]}$. 
In particular, the associated Newton-Okounkov body is given by
\[
\Delta(A,\val_M)=\conv(\val_M(b_i)\mid i\in [n]).
\]
\end{corollary}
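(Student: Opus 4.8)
The plan is to deduce Corollary~\ref{cor: NO body val_M} from the results Kaveh--Manon establish for quasi-valuations with weighting matrices, together with the characterization of the Newton-Okounkov body via finitely generated value semigroups. First I would observe that by hypothesis $M = M_\val \in \mathbb{Q}^{d\times n}$ with $d = \dim A$, so $\val_M$ is a full-rank quasi-valuation; by \cite[Theorem~2.3]{KM16} (quoted above) it therefore has one-dimensional leaves. The key input is the lemma $\gr_M(A) \cong \mathbb{C}[x_1,\dots,x_n]/\init_M(I)$ from \cite[Lemma~4.4]{KM16}. When $\init_M(I)$ is prime, the right-hand side is a domain, so $\gr_M(A)$ is a domain.

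Next I would upgrade $\val_M$ from a quasi-valuation to an honest valuation. The point is that $\val_M$ fails to be a valuation only if $\val_M(fg) \succ \val_M(f) + \val_M(g)$ for some $f,g$, which forces $\bar f \cdot \bar g = 0$ in $\gr_M(A)$ with $\bar f, \bar g \ne 0$; since $\gr_M(A)$ is a domain this cannot happen, so property (ii) of Definition~\ref{def: valuation} holds and $\val_M$ is a valuation. Now, since $\gr_M(A) \cong \mathbb{C}[x_1,\dots,x_n]/\init_M(I)$ is generated as an algebra by the images of $x_1,\dots,x_n$, and these images are precisely the classes $\overline{b_i}$ of the generators $b_i$ in $\gr_M(A)$, the set $\{b_1,\dots,b_n\}$ is a Khovanskii basis for $(A,\val_M)$ in the sense of Definition~\ref{def: Khovanskii basis}. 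By \cite[Lemma~2.10]{KM16} (cited in the paragraph after Definition~\ref{def: Khovanskii basis}), $\val_M(\{b_1,\dots,b_n\})$ generates $S(A,\val_M)$, giving the first assertion.

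For the statement about the Newton-Okounkov body I would argue as follows. Since $S(A,\val_M)$ is finitely generated (by the $\val_M(b_i)$), the theorem of Kaveh--Khovanskii \cite{KK12} quoted above applies: $\Delta(A,\val_M)$ is a rational polytope. To identify it explicitly, I use that $A$ is generated in degree one (as the homogeneous coordinate ring of a projective variety, or more precisely by the $b_i$ which lie in degree one after the chosen embedding), so every value $\val_M(f)/i$ appearing in the defining union \eqref{eq: def NO body} of $\Delta(A,\val_M)$ is a convex combination of the values $\val_M(b_j)$: indeed for $f \in A_i$ one writes $f$ in terms of degree-$i$ monomials in the $b_j$, and the Khovanskii/valuation property forces $\val_M(f)$ to be a nonnegative integer combination of $i$ of the $\val_M(b_j)$'s (counted with multiplicity). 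Dividing by $i$ lands in $\conv(\val_M(b_j) \mid j \in [n])$, and conversely each $\val_M(b_j)$ itself is attained. Hence $\Delta(A,\val_M) = \conv(\val_M(b_i) \mid i\in[n])$. This is essentially the argument of \cite[Proposition~5.2]{KM16}, as the corollary statement indicates.

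The main obstacle I anticipate is the bookkeeping around gradings and degree-one generation: one must be careful that the presentation $\pi:\mathbb{C}[x_1,\dots,x_n]\to A$ is compatible with the positive grading so that the $b_i$ all sit in degree one (or at least in a single graded piece), which is what makes the passage from $S(A,\val_M)$ finitely generated to $\Delta(A,\val_M) = \conv(\val_M(b_i))$ clean rather than merely giving $\Delta$ as a translate/section of the Newton-Okounkov cone. Granting the standing assumptions of the section (which are tailored precisely to the homogeneous coordinate ring of a projective variety or a subvariety of a product of projective spaces), this reduces to invoking \cite[Lemma~4.4]{KM16}, primality, and the argument pattern of \cite[Proposition~5.2]{KM16}, so no genuinely new difficulty arises beyond assembling these pieces.
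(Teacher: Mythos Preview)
Your proposal is correct and follows essentially the same approach as the paper: both use \cite[Lemma~4.4]{KM16} to identify $\gr_M(A)$ with $\mathbb{C}[x_1,\dots,x_n]/\init_M(I)$, deduce from primality that this is a domain, and then argue by contradiction that $\val_M$ must be an honest valuation (a nontrivial product vanishing in $\gr_M(A)$ is impossible). Your treatment of the Newton--Okounkov body is in fact more explicit than the paper's, which simply writes ``The rest of the claim follows'' after noting that $\gr_M(A)$ is generated by the $\overline{b_i}$ and invoking one-dimensional leaves; your convex-combination argument and your caveat about degree-one generation are appropriate elaborations of what the paper leaves implicit.
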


\begin{proof}
As $\init_M(I)$ is prime, we have by \cite[Lemma~4.4]{KM16} $\gr_M(A)\cong\mathbb C[x_1,\dots,x_n]/\init_M(I)$ is a domain.
Assume $\val_{M}$ is not a valuation, i.e. there exist $f,g\in A\setminus\{0\}$ with $\val_{M}(fg)\succ\val_{M}(f)+\val_{M}(g)$. 
If $\val_{M}(f)=a$ and $\val_{M}(g)=b$, then $\overline{f}\in F_{\succeq a}/F_{\succ  a}$ and $\overline{g}\in F_{\succeq b}/F_{\succ  b}$, where $F_{\succ a}$ denotes a filtered piece of the filtration $\mathcal F_{\val_M}$ on $A$. 
Then $\overline{fg}=\overline{0}\in\gr_{M}(A)$ as by the grading we have $\overline{fg}\in F_{\succeq a+b}/F_{\succ a+b}$ but $\val_{M}(fg)\succ a+b$, a contradiction to being a domain.

In particular, $\gr_M(A)$ is generated by $\overline{b_i}=\overline{\pi(x_i)}$ for $i\in[n]$.
As by \cite[Theorem~2.3]{KM16} $\val_M$ has one-dimensional leaves, then by \cite[Proposition~2.4]{KM16} we have $\gr_M(A)\cong\mathbb C[S(A,\val_M)]$.
The rest of the claim follows.
\end{proof}

As mentioned before, we want to use the results on (quasi-)valuations with weighting matrices to analyze arbitrary given valuations. 
The next lemma makes a first connection between the (quasi-)valuation with weighting matrix $M_\val$ as in Definition~\ref{def: wt matrix from valuation} and the valuation $\val$ defining it.

Denote by $\{\varepsilon_i\}_{i\in[s]}$ the standard basis of $\mathbb R^s$. We consider the (partial) order $>$ on $\mathbb Z^s$:
\[
(m_1,\dots,m_s)>(n_1,\dots,n_s) \ :\Leftrightarrow \ \sum_{i=1}^s m_i>\sum_{i=1}^s n_i.
\]

\begin{lemma}\label{lem: val vs val_Mval}
Let $I$ be a homogeneous ideal with respect to a $\mathbb Z_{\ge 0}^s$-grading for $s\ge 1$ generated by elements $f\in I$ with $\deg f>\varepsilon_i$ for all $i\in[s]$. Then
 $\val_{M_\val}(b_i)=\val(b_i)$.
\end{lemma}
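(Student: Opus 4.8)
The plan is to prove $\val_{M_\val}(b_i)=\val(b_i)$ by establishing the two inequalities $\val_{M_\val}(b_i)\succeq\val(b_i)$ and $\val_{M_\val}(b_i)\preceq\val(b_i)$ separately, the second one as the $f=b_i$ instance of a pointwise comparison $\val_{M_\val}\preceq\val$ on all of $A\setminus\{0\}$. First I would record what the hypotheses buy: since $I$ is homogeneous with respect to a positive ($\mathbb Z^s_{\ge 0}$-)grading, its lineality space contains $(1,\dots,1)$, so by \cite[Lemma~3.7]{KM16} the Gröbner region $\mathrm{GR}^r(I)$ is all of $\mathbb Q^{r\times n}$; in particular $M_\val\in\mathrm{GR}^r(I)$, so the quasi-valuation $\val_{M_\val}$ is defined (Definition~\ref{def: quasi val from wt matrix}, via \cite[Lemma~4.2]{KM16}). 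Moreover, since $I$ is generated by elements of total degree at least $2$ (i.e. $\deg f>\varepsilon_i$ for all $i$), the graded ideal $I$ contains no element of total degree $1$; hence $x_i\notin I$ and $b_i=\pi(x_i)\ne0$ for all $i$, so $\val(b_i)$ and the weighting matrix $M_\val$ of Definition~\ref{def: wt matrix from valuation} are genuinely defined.

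For the inequality $\val_{M_\val}(b_i)\succeq\val(b_i)$ I would simply use the monomial $x_i$ as a preimage of $b_i$ under $\pi$: then $\tilde\val_{M_\val}(x_i)=M_\val e_i$ is the $i$-th column of $M_\val$, which by definition of $M_\val$ is precisely $\val(b_i)$. Since $\val_{M_\val}(b_i)$ is the $\prec$-maximum of $\tilde\val_{M_\val}(\tilde f)$ over all polynomial preimages $\tilde f$ of $b_i$, it is $\succeq\val(b_i)$.

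For the reverse inequality I would prove $\val_{M_\val}(f)\preceq\val(f)$ for every $f\in A\setminus\{0\}$ and then specialize to $f=b_i$. Given any polynomial preimage $\tilde f=\sum_{\bu}a_{\bu}x^{\bu}$ of $f$, applying $\pi$ writes $f=\sum_{a_{\bu}\ne0}a_{\bu}\pi(x^{\bu})$ in $A$; because $A$ is a domain and every $b_j\ne0$, each $\pi(x^{\bu})$ is nonzero, and valuation axioms (ii) and (iii) give $\val(a_{\bu}\pi(x^{\bu}))=\sum_j u_j\val(b_j)=M_\val\bu$. Applying axiom (i) inductively to this (possibly cancelling) finite sum yields $\val(f)\succeq\min_{\prec}\{M_\val\bu\mid a_{\bu}\ne0\}=\tilde\val_{M_\val}(\tilde f)$, and taking the maximum over all preimages $\tilde f$ gives $\val(f)\succeq\val_{M_\val}(f)$. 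Combining this with the previous step at $f=b_i$ gives $\val(b_i)\preceq\val_{M_\val}(b_i)\preceq\val(b_i)$, hence equality.

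The computation is essentially routine; the only point requiring a little care is the inductive use of axiom (i) on a sum whose intermediate partial sums may vanish (so that $\val$ is a priori undefined on them), which one handles by peeling off one summand at a time and treating the vanishing case separately. The conceptual point I would emphasize is that the homogeneity and total-degree hypotheses enter only to make $M_\val$ and $\val_{M_\val}$ well-defined — the two inequalities themselves hold for an arbitrary valuation together with its weighting matrix.
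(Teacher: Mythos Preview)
Your proof is correct. Both inequalities are established cleanly, and your care with the inductive application of axiom~(i) when partial sums may vanish is warranted and handled correctly (peeling off one summand and treating the vanishing case by $\val(f)=\val(f_k)$).

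Your route differs from the paper's. For the direction $\val_{M_\val}(b_i)\preceq\val(b_i)$, the paper argues directly on preimages of $b_i$: every preimage has the form $x_i+f$ with $f\in I$, and the degree hypothesis guarantees that no monomial of $f$ equals $x_i$, so $\tilde\val_{M_\val}(x_i+f)=\min_\prec\{M_\val e_i,\tilde\val_{M_\val}(f)\}\preceq M_\val e_i=\val(b_i)$, with equality at $f=0$. You instead prove the universal inequality $\val_{M_\val}(f)\preceq\val(f)$ for all nonzero $f\in A$, using multiplicativity of $\val$ on the monomials $\pi(x^{\bu})$ (which are nonzero because $A$ is a domain and each $b_j\ne 0$). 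This is a genuinely stronger intermediate result: it is exactly one half of the Claim appearing later in the proof of Theorem~\ref{thm: val and quasi val with wt matrix}, obtained here without any primality assumption on $\init_{M_\val}(I)$. The paper's argument is shorter and uses the degree hypothesis in an essential structural way (to prevent cancellation of $x_i$), whereas in your approach the hypothesis enters only to guarantee that $b_i\ne 0$ so that $M_\val$ and $\val_{M_\val}$ are defined, as you correctly observe.
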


\begin{proof}
Denote by $\{e_i\}_{i\in[n]}$ the standard basis of $\mathbb R^n$.
Recall that $b_i=\pi(x_i)$ for all $i\in[n]$. Using the assumption that $I$ is homogeneous, we have by definition of $\val_{M_{\val}}$ 
\begin{eqnarray*}
\val_{M_\val}(b_i)&=& \max{}_{\prec}\{ \tilde \val_{M_\val}(x_i+f)\mid f\in I \}\\
&\stackrel{\tiny \deg f>\varepsilon_i\forall i}{=}& \max{}_{\prec}\{ \min{}_{\prec}\{M_\val e_{i},\tilde \val_{M}(f)\} \mid f\in I \}.
\end{eqnarray*}
As $\min_{\prec}\{M_\val e_{i},\tilde \val_{M}(f)\}\preceq M_\val e_{i}=\val(b_i)$ we deduce $\val_{M_\val}(b_i)=\val(b_i)$.
\end{proof}

The following theorem relating a given valuation $\val$ with the (quasi-)valuation with weighting matrix $M_\val$ is our main result on Newton-Okounkov bodies.
In the rest of the thesis we use it to prove our main results.
We apply it to valuations from birational sequences (\cite{FFL15}) for $\Gr(2,\mathbb C^n)$ in \S\ref{sec:Bos} and to Rietsch-Williams \cite{RW17} valuation in \S\ref{sec:BFFHL}. 
In \S\ref{sec:BLMM} we use it to make a connection between string valuations on the homogeneous coordinate ring of the flag variety and the topical flag variety.

For simplicity we assume the image of our valuation lies in $\mathbb Z^d$ instead of $\mathbb Q^d$. This is the case for all valuations we are interested in. 

\begin{theorem}\label{thm: val and quasi val with wt matrix}
Assume $I$ is homogeneous with respect to the $\mathbb Z_{\ge 0}^s$-grading and generated by elements $f\in I$ with $\deg f>\varepsilon_i$ for all $i\in[s]$.
Let $\val:A\setminus\{0\}\to (\mathbb Z^d,\prec)$ be a full-rank valuation
with $M_\val\in\mathbb Z^{d\times n}$ the weighting matrix of $\val$ and assume $\init_{M_\val}(I)$ is prime.

Then $S(A,\val)$ is generated by $\{\val(b_i)\}_{i\in[n]}$, where $b_i=\pi(x_i)$. In particular,
\[
\Delta(A,\val)=\conv(\val(b_i)\mid i\in[n]),
\]
and $\{b_1,\dots,b_n\}$ is a Khovanskii basis for $(A,\val)$.
\end{theorem}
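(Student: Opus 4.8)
The plan is to compare the valuation $\val$ with the quasi-valuation $\val_{M_\val}$ and use the primeness of $\init_{M_\val}(I)$ to force the two to coincide and to have finitely generated value semi-group. First I would observe that by Corollary~\ref{cor: NO body val_M} (applied to $M=M_\val\in\mathbb Z^{d\times n}$, which has full rank since $\val$ does and since the columns $\val(b_i)$ span a rank-$d$ sublattice), primeness of $\init_{M_\val}(I)$ implies that $\val_{M_\val}$ is in fact a genuine valuation, that its value semi-group $S(A,\val_{M_\val})$ is generated by $\{\val_{M_\val}(b_i)\}_{i\in[n]}$, and that $\gr_{M_\val}(A)\cong\mathbb C[S(A,\val_{M_\val})]$ is a domain with one-dimensional leaves. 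By Lemma~\ref{lem: val vs val_Mval}, the homogeneity hypothesis on $I$ gives $\val_{M_\val}(b_i)=\val(b_i)$ for all $i$, so $S(A,\val_{M_\val})$ is generated precisely by the images $\val(b_i)$.

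The heart of the argument is then to show $\val=\val_{M_\val}$ as maps on $A\setminus\{0\}$. The inequality $\val(f)\preceq\val_{M_\val}(f)$ should hold essentially by construction: lifting $f$ to any preimage $\tilde f=\sum a_\bu x^\bu$ and using properties (i)–(iii) of a valuation one gets $\val(f)\succeq\min_\prec\{\val(b^{\bu})\}=\min_\prec\{M_\val\bu\}=\tilde\val_{M_\val}(\tilde f)$, wait — one wants the reverse — so more carefully: $f=\pi(\tilde f)$ gives $\val(f)=\val(\sum a_\bu b^\bu)\succeq \min_\prec\{M_\val \bu \mid a_\bu\neq 0\}$, and taking the maximum over all lifts $\tilde f$ yields $\val(f)\succeq \val_{M_\val}(f)$. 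For the other direction I would use that both graded algebras are semi-group algebras on the same generating set: since $\gr_{M_\val}(A)\cong\mathbb C[S(A,\val_{M_\val})]$ is generated by the $\overline{b_i}$, and $S(A,\val)$ contains $S(A,\val_{M_\val})$ (by the inequality just shown, every value $\val(f)$ dominates some $\val_{M_\val}(g)$, and conversely each $\val(b_i)=\val_{M_\val}(b_i)\in S(A,\val_{M_\val})$ generates), a dimension/rank count forces equality of the value semi-groups and hence of the valuations. Concretely: the filtration $F_{\val\succeq a}$ refines $F_{\val_{M_\val}\succeq a}$, both have one-dimensional leaves (the former because $\val$ has full rank, by \cite[Theorem~2.3]{KM16}), and an adapted basis argument — using the standard monomial basis $\mathbb B$, which is adapted to $\val_{M_\val}$ by \cite[Proposition~4.3]{KM16} — shows the leaves occur at the same values, whence $\val(b)=\val_{M_\val}(b)$ for every basis element $b\in\mathbb B$ and then for all of $A$ by multiplicativity and the non-degeneracy in (i).

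Once $\val=\val_{M_\val}$ is established, the conclusions transfer directly: $S(A,\val)=S(A,\val_{M_\val})$ is generated by $\{\val(b_i)\}_{i\in[n]}$; the Newton-Okounkov body $\Delta(A,\val)=\conv(\val(b_i)\mid i\in[n])$ follows from Corollary~\ref{cor: NO body val_M} together with the theorem on Newton-Okounkov bodies recalled from \cite{KK12} (the value semi-group being finitely generated makes $\Delta$ a rational polytope, and it is the convex hull of the degree-one slice of the generators); and $\{b_1,\dots,b_n\}$ is a Khovanskii basis because by \cite[Lemma~4.4]{KM16} $\gr_\val(A)=\gr_{M_\val}(A)\cong\mathbb C[x_1,\dots,x_n]/\init_{M_\val}(I)$ is generated by the images of the $x_i$, i.e. by the images $\overline{b_i}$. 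I expect the main obstacle to be the careful bookkeeping in showing $\val=\val_{M_\val}$ rather than merely $S(A,\val_{M_\val})\subseteq S(A,\val)$: one must rule out that $\val$ has strictly finer leaves or strictly larger value semi-group, and the clean way to do this is to exploit that a full-rank valuation with a finitely generated, one-dimensional-leaves value semi-group equal to that of $\val_{M_\val}$ is rigid — the adapted-basis bijection $b\mapsto\val(b)$ from \cite[Remark~2.19]{KM16} pins down $\val$ on $\mathbb B$, and homogeneity plus Lemma~\ref{lem: val vs val_Mval} pin down those values to be the $\val_{M_\val}(b)$.
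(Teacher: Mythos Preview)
Your proposal is correct and follows essentially the same route as the paper: invoke Corollary~\ref{cor: NO body val_M} to make $\val_{M_\val}$ a genuine full-rank valuation with value semigroup generated by the $\val_{M_\val}(b_i)$, use Lemma~\ref{lem: val vs val_Mval} to identify $\val(b_i)=\val_{M_\val}(b_i)$, and then argue via the standard monomial basis $\mathbb B$ adapted to $\val_{M_\val}$ that $\val=\val_{M_\val}$ on all of $A$.

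One streamlining compared to your write-up: the paper does not separate into two inequalities or invoke a dimension/rank count. Once you know both $\val$ and $\val_{M_\val}$ are valuations agreeing on the $b_i$, they automatically agree on every monomial $b^\alpha$, in particular on every element of $\mathbb B$. Since $\mathbb B$ is adapted to $\val_{M_\val}$ and $\val_{M_\val}$ has one-dimensional leaves, the values $\val_{M_\val}(b_{\alpha_i})=\val(b_{\alpha_i})$ are pairwise distinct; then for $g=\sum a_i b_{\alpha_i}$ the equality case of property~(i) forces $\val(g)=\min_\prec\{\val(b_{\alpha_i})\}$, which equals $\val_{M_\val}(g)$ by \cite[Proposition~4.3]{KM16}. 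So the one-sided inequality $\val\succeq\val_{M_\val}$ you derive is correct but unnecessary, and the ``rank count'' paragraph can be dropped entirely. Your closing sentence already captures this cleaner mechanism; just make it the whole argument.
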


\begin{proof}
As $\val$ and $\val_{M_\val}$ (by Corollary~\ref{cor: NO body val_M}) are full-rank valuations, they have one-dimensional leaves by \cite[Theorem~2.3]{KM16}.
We apply \cite[Proposition~2.4]{KM16} and obtain $\gr_{\val}(A)\cong\mathbb C[S(A,\val)]$ and $\gr_{M_{\val}}(A)\cong \mathbb C[S(A,\val_{M_{\val}})]$.
\smallskip

\emph{Claim:} For all $g\in A$ we have $\val(g)=\val_{M_\val}(g)$.
\smallskip

\noindent
That is, $S(A,\val)=S(A,\val_{M_\val})$.
The latter is generated by $\{\val(b_i)\}_{i\in[n]}$ by Corollary~\ref{cor: NO body val_M} and Lemma~\ref{lem: val vs val_Mval}. 
All other statements of the theorem are direct consequences.
\smallskip

\emph{Proof of claim:}
By Corollary~\ref{cor: NO body val_M} $\val_{M_\val}$ is a valuation.
In particular, then by Lemma~\ref{lem: val vs val_Mval} we have  $\val(b_{\bu})=\val_{M_\val}(b_{\bu})$ for monomials $b_{\bu}=b_1^{u_1}\cdots b_n^{u_n}\in A$.

As $I$ is homogeneous with respect to a positive grading, $M_\val$ lies in the Gröbner region of $I$. 
Let $\mathbb B\subset A$ be the standard monomial basis adapted to $\val_{M_\val}$ as in \cite[Proposition~4.3]{KM16} restated above.
Then we can write every $g\in A$ as $g=\sum_{i=1}^k b_{\alpha_i}a_i$ for $b_{\alpha_i}=\pi(x^{\alpha_i})\in \mathbb B$ and $a_i\in\mathbb C$.
We compute
\begin{eqnarray*}
\val_{M_\val}(g) &=& \val_{M_\val}\left( \sum_{i=1}^k b_{\alpha_i}a_i \right)\\
& \succeq & \min{}_\prec\{\val_{M_\val}(b_{\alpha_i})\mid a_i\not=0\} \\
&\stackrel{\text{Lemma~\ref{lem: val vs val_Mval}}}{=} & \min{}_\prec\{ \val(b_{\alpha_i})\mid a_i\not=0 \}\\
&\stackrel{\text{Def. }M_\val }{=}& \min{}_\prec \{M_\val \alpha_i\mid a_i\not=0\}\\
&\stackrel{\text{\cite[Proposition~4.3]{KM16}}}{=}& \val_{M_\val}(g).
\end{eqnarray*}
As $\val_{M_\val}$ has one-dimensional leaves $b\mapsto \val_{M_{\val}}(b)$ for $b\in\mathbb B$ (adapted to $\val_{M_\val}$) defines a bijection between $\mathbb B$ and the set of values of $\val_{M_\val}(\mathbb B)$ by \cite[Remark~2.29]{KM16}.
In particular, we have
$
\val(b_{\alpha_i})=\val_{M_\val}(b_{\alpha_i})\not= \val_{M_{\val}}(b_{\alpha_i}) =\val(b_{\alpha_j}) \text{ for all } i\not=j. 
$
This implies
\begin{eqnarray*}
\val(g) = \min{}_{\prec}\{\val(b_{\alpha_i})\mid a_i\not=0\} = \val_{M_\val}(g).
\end{eqnarray*}
\end{proof}

\subsubsection*{From weighting matrix to weight vector}

The assumption $\init_{M_\val}(I)$ being prime is quite strong, as this is in general hard to verify. However, taking the initial ideal with respect to a weighting matrix is closely related to taking the initial ideal with respect to a weight vector, which makes the computation easier.
For example, \cite[Proposition~3.10]{KM16} says that for every $M\in\mathbb Q^{r\times n}$ there exists $\bw\in\mathbb Q^n$ such that $\init_{M}(I)=\init_{\bw}(I)$.
We want to make this more explicit using \cite[Lemma~3.2]{Cal02} restated in \S\ref{sec:pre val}.
The lemma allows us to associate a weight vector to a weighting matrix as follows.

\begin{definition}\label{def: wt vector for wt matrix}
Let $M\in\mathbb Z^{r\times n}$ and choose $e:\mathbb Z^r\to \mathbb Z$ as in \cite[Lemma~3.2]{Cal02} for $S=\{M_1,\dots,M_n\}$ the set of columns of $M$.
We define the \emph{weight vector associated to $M$} as
\[
e(M):=(e(M_1),\dots,e(M_n))\in\mathbb Z^n.
\]
\end{definition}

The following lemma shows that for the initial ideal, $e(M)$ is independent of the choice of $e$. It is applied throughout the thesis whenever we apply Theorem~\ref{thm: val and quasi val with wt matrix}.

\begin{lemma}\label{lem: init wM vs init M}
Let $f\in \mathbb C[x_1,\dots,x_n]$. Then $\init_M(f)=\init_{e(M)}(f)$. In particular, for every ideal $I\subset\mathbb C[x_1,\dots,x_n]$ we have $\init_M(I)=\init_{e(M)}(I)$.
\end{lemma}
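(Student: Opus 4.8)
The plan is to reduce everything to a single polynomial $f=\sum_{\bu}a_{\bu}x^{\bu}$, since the statement for ideals follows immediately once it is established for polynomials: if $\init_M(f)=\init_{e(M)}(f)$ for every $f$, then the generating sets $\{\init_M(f)\mid f\in I\}$ and $\{\init_{e(M)}(f)\mid f\in I\}$ literally coincide, hence so do the ideals they generate. So I will concentrate on the polynomial case.

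First I would unwind the two definitions. By Definition~\ref{def: init wrt M}, $\init_M(f)$ collects the monomials $x^{\bm}$ (with $a_{\bm}\neq 0$) for which the vector $M\bm\in\mathbb{Z}^r$ is $\prec$-minimal among all $M\bu$ with $a_{\bu}\neq 0$. By Definition~\ref{def:initial form and ideal}, $\init_{e(M)}(f)$ collects those $x^{\bm}$ for which the number $e(M)\cdot\bm\in\mathbb{Z}$ is minimal among all $e(M)\cdot\bu$. Now observe the key identity: since $e:\mathbb{Z}^r\to\mathbb{Z}$ is linear and the columns of $M$ are $M_1,\dots,M_n$, we have $e(M)\cdot\bu = \sum_i e(M_i)u_i = e\bigl(\sum_i u_i M_i\bigr) = e(M\bu)$ for every $\bu\in\mathbb{Z}^n$. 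Therefore the scalar $e(M)\cdot\bu$ is exactly $e$ applied to the vector $M\bu$.

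The crux is then the defining property of $e$ from Caldero's lemma (\cite[Lemma~3.2]{Cal02}), applied to the finite set $S=\{M_1,\dots,M_n\}$ of columns — though I must be slightly careful here, because what I actually need is the order-reversing property on the finite set of vectors $\{M\bu : a_{\bu}\neq 0\}$, not just on the columns $M_i$ themselves. The cleanest fix is to invoke the lemma for the finite set $S'=\{M\bu : a_{\bu}\neq 0\}\subset\mathbb{Z}^r$ (for a fixed $f$) rather than for the columns; alternatively, one notes that $e$ is linear and its behaviour on sums of columns is controlled, but the honest route is to observe that Definition~\ref{def: wt vector for wt matrix} only fixes $e$ on the columns, so to make the lemma true as stated one should read ``$e$ as in \cite[Lemma~3.2]{Cal02}'' as: any linear form $e:\mathbb{Z}^r\to\mathbb{Z}$ that is strictly order-reversing on the relevant finite set. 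Granting this, the property $\bm\prec\bn\Rightarrow e(\bn)<e(\bm)$ — equivalently, $e$ restricted to $S'$ reflects the $\prec$-order — gives: $M\bm$ is $\prec$-minimal over $\{M\bu:a_{\bu}\neq0\}$ if and only if $e(M\bm)$ is minimal over $\{e(M\bu):a_{\bu}\neq0\}$, which by the identity above reads: $e(M)\cdot\bm$ is minimal. Hence exactly the same monomials survive in $\init_M(f)$ and in $\init_{e(M)}(f)$, and since the coefficients $a_{\bm}$ are unchanged, the two initial forms are equal.

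The main obstacle, as flagged above, is the mild mismatch between ``$e$ chosen for the finite set of columns'' (Definition~\ref{def: wt vector for wt matrix}) and ``$e$ order-reversing on the finitely many values $M\bu$ occurring in a given $f$.'' In an ideal, infinitely many monomials $\bu$ occur across all $f\in I$, so one cannot fix a single $e$ that is strictly order-reversing on all of $\{M\bu\}$. The resolution is that order-reversal is only ever needed locally: for each fixed polynomial $f$, only finitely many $\bu$ matter, and Caldero's lemma supplies an $e$ working on any prescribed finite set; moreover the resulting $\init_{e(M)}(f)$ does not depend on which such $e$ is chosen (two valid choices pick out the same minimal monomials), so $e(M)$ is well-defined up to this ambiguity and the equality $\init_M(I)=\init_{e(M)}(I)$ holds regardless. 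I would state this well-definedness remark explicitly at the start of the proof and then carry out the polynomial-level computation above.
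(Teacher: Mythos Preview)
Your core argument is exactly the paper's: establish the linearity identity $e(M)\cdot\bu=e(M\bu)$ and then invoke Caldero's lemma to transfer $\prec$-minimality to ordinary minimality. The paper's proof writes this out in three displayed lines and does not say more.

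You are right to flag the mismatch between ``$e$ chosen for the finite set of columns $\{M_1,\dots,M_n\}$'' and ``$e$ strictly order-reversing on the set $\{M\bu:a_{\bu}\neq0\}$'' --- the paper simply writes ``by \cite[Lemma~3.2]{Cal02}'' at that step and does not address it. Your proposed fix, however, does not close the gap for the ideal statement: if you let $e$ vary with $f$, then $e(M)$ is not a single weight vector, and the sentence ``$\init_{e(M)}(f)$ does not depend on which such $e$ is chosen'' only compares two $e$'s that both work for that particular $f$; it says nothing about a fixed $e(M)$ working simultaneously for all $f\in I$. Indeed, with $M=\mathrm{id}_2$, lex order, and $e(x,y)=2x+y$ (which correctly separates the columns), the polynomial $f=x_2^3+x_1^2x_3$ in a three-variable setting with columns $(0,0),(0,1),(1,0)$ already gives $\init_M(f)\neq\init_{e(M)}(f)$. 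So the polynomial-level statement, read literally with $e$ fixed as in Definition~\ref{def: wt vector for wt matrix}, is too strong; what the applications in the paper actually need is the ideal-level equality $\init_M(I)=\init_{e(M)}(I)$ for a \emph{sufficiently refined} $e$ (this is the content of \cite[Proposition~3.10]{KM16}), and that requires either choosing $e$ for a larger finite set tied to a Gr\"obner basis of $I$, or a Gr\"obner-fan argument. Your instinct that something needs to be said here is correct; the resolution just needs to be pitched at the level of the ideal rather than polynomial by polynomial.
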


\begin{proof}
Let $M=(m_{i,j})_{i\in[r],j\in[n]}\in\mathbb Z^{r\times n}$ and $e:\mathbb Z^{r}\to\mathbb Z$ given by $e(x_1,\dots,x_r)=\sum_{i=1^r}x_ic_i$ for $c_i\in\mathbb Z_{>0}$.
Let $\cdot$ denote the usual dot-product in $\mathbb R^n$.
We compute for $\bu=(u_1,\dots, u_n)\in \mathbb Z_{\ge 0}^n$
\[
e(M\bu)=\sum_{i=1}^r\sum_{j=1}^n m_{ij}u_jc_i = e(M)\cdot \bu. 
\]
Now for $f=\sum a_{\bu}x^{\bu}\in\mathbb C[x_1,\dots,x_n]$ by defintion we have
\begin{eqnarray*}
\init_M(f) &=& \sum_{\bm:\ M\bm=\min_{\prec}\{M\bu\mid a_{\bu}\not=0\}} a_{\bm}x^{\bm}\\
&\stackrel{\text{\cite[Lemma~3.2]{Cal02}}}{=}& \sum_{\bm: \ e(M\bm)=\min\{e(M\bu)\mid a_{\bu}\not=0 \}} a_{\bm}x^{\bm} \\
&\stackrel{e(M)\cdot \bu=e(M\bu)}{=}& \sum_{\bm:\ e(M)\cdot\bm=\min\{e(M)\cdot \bu\mid a_{\bu}\not =0\}} a_{\bm}x^{\bm} \\
&=& \init_{e(M)}(f).
\end{eqnarray*}
\end{proof}

With assumptions as in the Lemma
let $S_M:=\{e(M)\mid e \text{ as in \cite[Lemma~3.2]{Cal02} }\}\cup\{0\}\subset \mathbb Q^n$. We define a polyhedral cone given the set $S_M$ and the lineality space $L_I\subset\mathbb R^n$ of the ideal $I\subset \mathbb C[x_1,\dots,x_n]$
by
\[
C_M:=\cone(S_M) +L_I\subset \mathbb R^n. 
\]
Then the following corollary is a reformulation of Lemma~\ref{lem: init wM vs init M}.
\begin{corollary}
There exists a cone $C$ in the Gröbner fan of $I$ with $C_M\subseteq C$. Moreover, if $\init_M(I)$ is monomial-free, $C_M\subset \trop(V(I))$.
\end{corollary}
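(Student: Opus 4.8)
The corollary follows directly from Lemma~\ref{lem: init wM vs init M}, so the plan is to unwind the definitions and transport the standard facts about the Gröbner fan through the map $M\mapsto e(M)$.

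\textbf{Setup.} Recall that $C_M=\cone(S_M)+L_I$ where $S_M=\{e(M)\mid e\text{ as in \cite[Lemma~3.2]{Cal02}}\}\cup\{0\}$ and $L_I$ is the lineality space of $I$. The key input is Lemma~\ref{lem: init wM vs init M}, which says $\init_{e(M)}(I)=\init_M(I)$ for \emph{every} admissible choice of $e$; hence all weight vectors in $S_M$ produce the same initial ideal, namely $\init_M(I)$. Adding any element of the lineality space $L_I$ does not change the initial ideal either, by the definition of $L_I$ (if $\bw\in S_M$ and $\bv\in L_I$, then $\init_{\bw+\bv}(I)=\init_\bw(\init_\bv(I))=\init_\bw(I)$, using $\init_\bv(I)=I$). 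So $\init_\bw(I)$ is constant as $\bw$ ranges over $\cone(S_M)+L_I$, at least over the rational points; since the Gröbner fan is a rational fan and each cone is the closure of the set of weights giving a fixed initial ideal, this forces $C_M$ to be contained in the closure of a single Gröbner cone $C$.

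\textbf{The two assertions.} For the first statement I would argue: pick $\bw$ in the relative interior of $C_M$ (which exists since $C_M$ is a nonempty rational polyhedral cone — $0\in S_M$ and $L_I$ a subspace), let $C$ be the unique cone of the Gröbner fan of $I$ whose relative interior contains $\bw$. Then for any $\bv\in C_M$, the segment from $\bw$ towards $\bv$ stays in $C_M$, and $\init$ is constant along it and equal to $\init_\bw(I)=\init_C(I)$; by the description of the Gröbner fan this places the whole segment, hence $\bv$, in $\overline{C}=C$. Thus $C_M\subseteq C$. For the second statement, recall the Fundamental Theorem of Tropical Geometry \cite[Theorem~3.2.3]{M-S}: $\trop(V(I))=\{\bw\mid \init_\bw(I)\text{ is monomial-free}\}$. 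If $\init_M(I)$ is monomial-free, then by Lemma~\ref{lem: init wM vs init M} so is $\init_\bw(I)=\init_M(I)$ for every $\bw\in S_M$, and by the lineality-space remark above the same holds for every $\bw\in C_M$. Hence every point of $C_M$ lies in $\trop(V(I))$, i.e. $C_M\subseteq\trop(V(I))$.

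\textbf{Main obstacle.} There is no deep obstacle here — this is a bookkeeping corollary. The only point requiring a little care is the transition from ``$\init_\bw(I)$ is constant on the rational points of $C_M$'' to ``$C_M$ is contained in a single \emph{closed} Gröbner cone'': one must invoke that the Gröbner fan is a rational polyhedral fan whose cones are exactly the closures of the $\init$-constancy loci, so that a convex rational set on which $\init$ is constant cannot meet the relative interiors of two different maximal cones. It is also worth noting explicitly that $C_M$ need not itself be a cone of the Gröbner fan (it could be a proper subcone or fail to be closed in pathological normalizations), which is precisely why the statement is phrased as an inclusion $C_M\subseteq C$ rather than an equality.
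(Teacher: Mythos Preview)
Your approach—deriving the corollary directly from Lemma~\ref{lem: init wM vs init M} together with basic Gr\"obner-fan facts—is exactly what the paper intends (the paper gives no proof and simply calls this a ``reformulation'' of that lemma). However, you lean repeatedly on a claim that is false as stated: that $\init_\bw(I)=\init_M(I)$ for every $\bw\in C_M$. This already fails at $\bw=0$, which lies in $S_M\subset C_M$ by definition but gives $\init_0(I)=I$; more generally any $\bw\in L_I$ gives $\init_\bw(I)=I$, not $\init_M(I)$. So $\init$ is \emph{not} constant on $C_M$, and your segment argument (``$\init$ is constant along it'') breaks at such endpoints. The same overreach appears in your second assertion, where you claim $\init_\bw(I)=\init_M(I)$ for every $\bw\in C_M$ to conclude monomial-freeness pointwise.

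The clean fix avoids constancy altogether. By the lemma, each $e(M)$ lies in the relative interior of the closed Gr\"obner cone $C$ whose initial ideal is $\init_M(I)$. Since $C$ is a closed convex cone containing the lineality space $L_I$ (the lineality space is the minimal cone of the Gr\"obner fan, hence a face of every cone) and $0$, it contains all of $S_M\cup L_I$ and therefore $C_M=\cone(S_M)+L_I\subset C$. For the second assertion, if $\init_M(I)=\init_C(I)$ is monomial-free then $C$ itself is a cone of $\trop(V(I))$, and $C_M\subset C\subset\trop(V(I))$ follows without ever computing $\init_\bw(I)$ at an arbitrary point of $C_M$. Your ``Main obstacle'' paragraph correctly identifies the inclusion-versus-equality subtlety, but the resolution is this direct containment argument rather than a constancy claim.
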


%%%%%%%%%%%%%%%

%%%%%%%%%%%%%%%%%%%%%%%%%%%%%%%%%%%%%%%%%%%% CLUSTER
\section{Cluster algebras}\label{sec: prep cluster}

We recall here the basic notions and definitions from cluster theory. 
This section follows \cite[\S2]{Wil14} for quivers and quiver mutation and \cite[\S2]{GHK15} for the review of $\mathcal A$- and $\mathcal X$-cluster varieties.

A \emph{quiver} $Q$ is a tupel $(Q_0,Q_1)$ with $Q_0$ a finite set of vertices and $Q_1$ a finite set of arrows between the vertices in $Q_0$. A \emph{loop} is an arrow whose source and target vertex coincide, a \emph{2-cycle} is an oriented cycle consisting of two arrows. We consider quivers with neither loops nor 2-cycles.

\begin{definition}\label{def:quiver mutation}
Let $Q$ be a finite quiver without loops and 2-cycles and $k\in Q_0$. Then we define $\mu_k(Q)$ to be the quiver obtained from $Q$ by the following recipe called \emph{mutation at vertex $k$}:

    \emph{Step 1:} for every configuration of arrows $i\to k\to j$ add a new arrow $i\to j$;
    
    \emph{Step 2:} reverse all arrows incident to $k$;
    
    \emph{Step 3:} delete a maximal set of 2-cycles that may have appeared as a result of  Steps~1\&2.
\end{definition}
It is a fact that mutation defines an involution and we have $\mu_k(\mu_k(Q))=Q$. For an example see Figure~\ref{fig:exp quiver mut}.

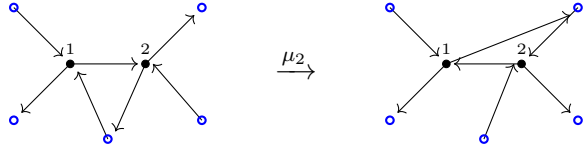
\begin{figure}[h]
\centering
\begin{tikzpicture}
    \draw[->] (-.75,.75) -- (-.1,.1);
    \draw[->] (0,0) -- (.9,0);
    \draw[->] (1,0) -- (1.65,.65);
    \draw[->] (1.75,-.75) -- (1.1,0);
    \draw[->] (1,0) -- (.6,-.9);
    \draw[->] (.5,-1) -- (.1,-.1);
    \draw[->] (0,0) -- (-.65,-.65);
    
    \draw[thick, blue] (-.75,.75) circle [radius=0.05];
    \draw[fill] (0,0) circle [radius=0.05];
    \draw[fill] (1,0) circle [radius=0.05];
    \draw[thick, blue] (1.75,.75) circle [radius=0.05];
    \draw[thick, blue] (1.75,-.75) circle [radius=0.05];
    \draw[thick, blue] (.5,-1) circle [radius=0.05];
    \draw[thick, blue] (-.75,-.75) circle [radius=0.05];
    
    \node[above] at (0,0) {\tiny 1};
    \node[above] at (1,0) {\tiny 2};
    \node at (3,0) {$\xrightarrow{\mu_2}$};
    
    \begin{scope}[xshift=5cm]
    \draw[->] (-.75,.75) -- (-.1,.1); %ok
    \draw[<-] (0.1,0) -- (1,0); % 1-2
    \draw[<-] (1.1,0.1) -- (1.75,.75); % ok
    \draw[<-] (1.65,-.65) -- (1,0); %ok
    \draw[<-] (.9,0) -- (.5,-1); %ok
  %  \draw[->] (.5,-1) -- (.1,-.1);
    \draw[->] (0,0) -- (-.65,-.65);
    \draw[->] (0,0) -- (1.65,.65);
     
    \draw[thick, blue] (-.75,.75) circle [radius=0.05];
    \draw[fill] (0,0) circle [radius=0.05];
    \draw[fill] (1,0) circle [radius=0.05];
    \draw[thick, blue] (1.75,.75) circle [radius=0.05];
    \draw[thick, blue] (1.75,-.75) circle [radius=0.05];
    \draw[thick, blue] (.5,-1) circle [radius=0.05];
    \draw[thick, blue] (-.75,-.75) circle [radius=0.05];
    
    \node[above] at (0,0) {\tiny 1};
    \node[above] at (1,0) {\tiny 2};
    \end{scope}
\end{tikzpicture}
    \caption{An example of quiver mutation at the vertex $2$.}
    \label{fig:exp quiver mut}
\end{figure}

We divide the vertex set $Q_0=\{1,\dots,m\}$, into two parts $\{1,\dots n\}$ and $\{n+1,\dots, m\}$ for $n\le m$. We call $\{1,\dots, n\}$ \emph{mutable} vertices and $\{n+1,\dots, m\}$ \emph{frozen} vertices. From now on we only allow mutation at mutable vertices. Further, we ignore arrows between frozen vertices as they are irrelevant for the mutation.
To $Q$ we associate its incidence matrix $(\epsilon_{ik})_{i,k\in Q_0, k \text{ mutable}}\in M_{m\times n}$ given by
\begin{eqnarray}\label{eq: matrix for quiver}
\epsilon_{ik}:=\#\{\text{arrows } i\to k \in Q_1 \}- \#\{\text{arrows }k\to i\in Q_1\}.
\end{eqnarray}

We fix $\mathcal F$ as our ambient field of rational functions in $n$ variables defined over the field $\mathbb Q(A_{n+1},\dots,A_{m})$.

\begin{definition}\label{def:seed}
A \emph{labelled seed} in $\mathcal F$ is a pair $s:=({\bf A}_s,Q_s)$, where ${\bf A}_s:=(A_{1,s},\dots,A_{m,s})$ is a free generating set for $\mathcal F$ and $Q_s$ a quiver with mutable vertices $\{1,\dots, n\}$ and frozen vertices $\{n+1,\dots,m\}$.
We call ${\bf A}_s$ an \emph{extended cluster} with \emph{cluster variables} $\{A_{1,s},\dots,A_{n,s}\}$ and \emph{frozen variables} $\{A_{n+1,s},\dots,A_{m,s}\}$.
\end{definition}

\begin{definition}\label{def: A mutation}
Let $s=({\bf A}_s,Q_s)$ be a labelled seed in $\mathcal F$ and $k\in\{1,\dots,n\}$. We define the \emph{seed mutation} (also called $\A$-mutation) in direction $k$ to be the operation that takes $s$ to $s'=({\bf A}_{s'},Q_{s'})$, where $Q_{s'}=\mu_k(Q_s)$ and ${\bf A}_{s'}=(A_{1,s'},\dots,A_{m,s'})$ is given by
\begin{eqnarray}\label{eq: A mutation}
A_{k,s'}A_{k,s}:=\prod_{i\to k\in Q} A_{i,s} + \prod_{k\to j\in Q} A_{j,s}.
\end{eqnarray}
\end{definition}

Note that when $s'$ is obtained from $s$ by mutation at $k$, then also $s$ is obtained from $s'$ by mutation at $k$. 
That is mutation is an involution on seeds. 
Observe that frozen variables are not affected by mutation. For any two seeds $s$ and $s'$ we have $A_{k,s}=A_{k,s'}$ for all $k\in[n+1,m]$. 
We therefore drop the index of the seed from frozen variables and have ${\bf A}_s=(A_{1,s},\dots,A_{n,s},A_{n+1},\dots,A_m).$
If it is clear from the context which seed we are considering we also drop the $s$ completely in our notation.

Consider the n-regular infinite tree $\mathbb T_n$ whose edges at every vertex are labelled by $1,\dots,n$. 
An assignment of a seed $s_t$ to every vertex $t\in\mathbb T_n$ is called a \emph{seed pattern}, if two seeds $s_t,s_{t'}$ associated to adjacent vertices $t\stackrel{k}{--} t'$ in $\mathbb T_n$ are obtained from each other by mutation at $k$. 
Let $\mathcal V:=\bigcup_{t\in \mathbb T_n}\{A_{1,s_t},\dots,A_{n,s_t}\}$ be the union of all cluster variables for all seeds in the seed pattern. Note that allthough the tree has infinitely many vertices $\mathcal V$ might be a finite set as through repetition some seeds might coincide. 
For example, in Figure~\ref{fig: seed pattern T_2} there is a seed pattern for $\mathbb T_2$ and we observe that the cluster variables for $s_{t_{i-2}}$ coincide with those for $s_{t_{i+3}}$.

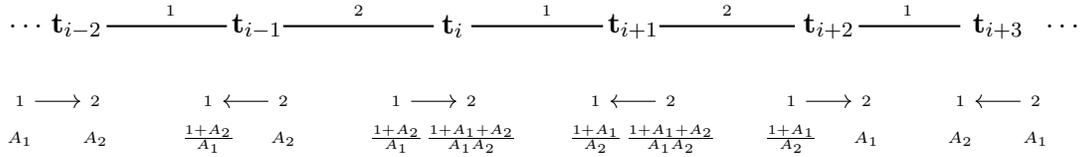
\begin{figure}[h]
    \centering
\begin{tikzpicture}[scale=2]
   % \draw (0,0) -- (.9,0);
    \draw[thick] (1.325,0) -- (2.125,0);
    \node[above] at (1.75,0) {\tiny 1};
    \draw[thick] (2.5,0) -- (3.5,0);
    \node[above] at (3,0) {\tiny 2};
    \draw[thick] (3.75,0) -- (4.625,0);
    \node[above] at (4.25,0) {\tiny 1};
    \draw[thick] (5,0) -- (5.9,0);
    \node[above] at (5.45,0) {\tiny 2};
    \draw[thick] (6.325,0) -- (7,0);
    \node[above] at (6.65,0) {\tiny 1};
 %   \draw (7.5,0) -- (8,0);
    
    \node[left] at (1,0) {$\dots$};
    \node at (1.125,0) {$\mathbf t_{i-2}$};
        \begin{scope}[yshift=-1.25cm]
        \node at (.75,.75) {\tiny 1};
        \draw[->] (.85,.75) -- (1.15,.75);
        \node at (1.25,.75) {\tiny 2};
        \node at (.75,.5) {\tiny $A_1$};
        \node at (1.25,.5) {\tiny $A_2$};
        \end{scope}
    \node at (2.325,0) {$\mathbf t_{i-1}$};
        \begin{scope}[xshift=1.25cm,yshift=-1.25cm]
        \node at (.75,.75) {\tiny 1};
        \draw[<-] (.85,.75) -- (1.15,.75);
        \node at (1.25,.75) {\tiny 2};
        \node at (.75,.5) {\tiny $\frac{1+A_2}{A_1}$};
        \node at (1.25,.5) {\tiny $A_2$};        
        \end{scope}
    \node at (3.625,0) { $\mathbf t_i$};
        \begin{scope}[xshift=2.5cm,yshift=-1.25cm]
        \node at (.75,.75) {\tiny 1};
        \draw[->] (.85,.75) -- (1.15,.75);
        \node at (1.25,.75) {\tiny 2};
        \node at (.75,.5) {\tiny $\frac{1+A_2}{A_1}$};
        \node at (1.25,.5) {\tiny $\frac{1+A_1+A_2}{A_1A_2}$};        
        \end{scope}
    \node at (4.825,0) {$\mathbf t_{i+1}$};
        \begin{scope}[xshift=3.825cm,yshift=-1.25cm]
        \node at (.75,.75) {\tiny 1};
        \draw[<-] (.85,.75) -- (1.15,.75);
        \node at (1.25,.75) {\tiny 2};
        \node at (.75,.5) {\tiny $\frac{1+A_1}{A_2}$};
        \node at (1.25,.5) {\tiny $\frac{1+A_1+A_2}{A_1A_2}$};        
        \end{scope}
    \node at (6.125,0) {$\mathbf t_{i+2}$};
        \begin{scope}[xshift=5.125cm,yshift=-1.25cm]
        \node at (.75,.75) {\tiny 1};
        \draw[->] (.85,.75) -- (1.15,.75);
        \node at (1.25,.75) {\tiny 2};
        \node at (.75,.5) {\tiny $\frac{1+A_1}{A_2}$};
        \node at (1.25,.5) {\tiny $A_1$};        
        \end{scope}  
    \node at (7.25,0) {$\mathbf t_{i+3}$};    
        \begin{scope}[xshift=6.25cm,yshift=-1.25cm]
        \node at (.75,.75) {\tiny 1};
        \draw[<-] (.85,.75) -- (1.15,.75);
        \node at (1.25,.75) {\tiny 2};
        \node at (.75,.5) {\tiny $A_2$};
        \node at (1.25,.5) {\tiny $A_1$};        
        \end{scope}        
    \node[right] at (7.5,0) {$\dots$};
\end{tikzpicture}
    \caption{A seed pattern for $\mathbb T_2$.}
    \label{fig: seed pattern T_2}
\end{figure}

\begin{definition}\label{def:cluster algebra}
The \emph{($\A$-)cluster algebra} associated with a given seed pattern is the algebra
\begin{eqnarray}\label{eq: def cluster algebra}
\mathcal Y({\bf A},Q):=\mathbb Z[A_{n+1},\dots,A_m][\mathcal V],
\end{eqnarray}
where $({\bf A},Q)$ is any seed in the given seed pattern. We say it has \emph{rank} $n$, as every cluster contains $n$ cluster variables. It is called a \emph{skew-symmetric cluster algebra of geometric type}.
We also define the \emph{upper cluster algebra} following \cite[Definition~1.6]{BFZ05} as the $\mathcal F$-subalgebra of all Laurent polynomials in the variables of any seed in the given seed pattern. We denote it by $\overline{\mathcal Y}(\mathbf A,Q)$.
\end{definition}

\begin{example}\label{exp: cluster Gr(2,4)}
Consider $\mathbb C[\Gr(2,4)]$ the homogeneous coordinate ring of the Grassmannian $\Gr(2,4)$. Recall (or see \S\ref{sec:pre-grass}) that
\[
\mathbb C[\Gr(2,4)]=\mathbb C[p_{12},p_{13},p_{23},p_{14},p_{24},p_{34}]/\langle p_{12}p_{34}-p_{13}p_{24}+p_{14}p_{23}\rangle.
\]
Then $\{p_{12},p_{13},p_{23},p_{14},p_{34}\}$ is a set of algebraically independent generators as
\[
p_{24}p_{13}^{-1}=p_{12}p_{34}+p_{14}p_{23}.
\]
Observe that this relation is strikingly reminiscent with the mutation formula in \eqref{eq: A mutation}.
In fact, considering the quiver $Q=(Q_0,Q_1)$ with $Q_0=\{1,\dots,5\}, Q_1=\varnothing$ and $1$ being the only mutable vertex we obtain a cluster algebra $\mathcal Y$ with $\mathcal Y\otimes_{\mathbb Z}\mathbb C\cong \mathbb C[\Gr(2,4)]$. The quiver $Q$ is of type $\mathtt A_1$.
A more general statement holds due to Scott \cite{Sco06} and Fomin-Zelevinsky \cite{FZ02}: $\mathbb C[\Gr(2,n)]$ has the structure of a cluster algebra of \emph{type} $\mathtt A_{n-3}$: i.e. among all mutation equivalent quivers defining the cluster algebra, there exists one whose full subquiver on all mutable vertices is an orientation of an $\mathtt A_{n-3}$-Dynkin diagram. For example, in Figure~\ref{fig:exp quiver mut} there are two quivers of type $\mathtt A_2$ for $\mathbb C[\Gr(2,5)]$.
\end{example}

Very important results in the theory of cluster algebras are the \emph{Laurent phenomenon} \cite[Theorem~3.1]{FZ02} and the \emph{Positivity of the Laurent phenomenon} \cite[Corollary~0.4]{GHKK14}. We state the latter below.

\begin{theorem*}(\cite[Corollary~0.4]{GHKK14})\label{thm:laurent phenom}
Each cluster variable of an $\A$-cluster algebra is a Laurent polynomial with nonnegative integer coefficients in the cluster variables of any given seed.
\end{theorem*}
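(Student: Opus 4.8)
The plan is not to attempt an independent argument: this is the resolution of the positivity conjecture of Fomin and Zelevinsky, and I would follow the proof of \cite{GHKK14}, which proceeds through scattering diagrams and theta functions. As a first reduction, by the separation-of-additions formulas of Fomin–Zelevinsky it suffices to establish positivity for a cluster algebra with principal coefficients; so I would fix the initial seed $s$, encode the quiver $Q_s$ in fixed data $\Gamma$ with a lattice $N$, its dual $M$, and the skew form recording $(\epsilon_{ik})$, and work throughout in $M_{\mathbb{R}}$.

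Second, I would construct the consistent scattering diagram $\mathfrak{D}_s$ attached to $s$: one starts from the incoming walls $\bigl(e_i^{\perp},\, 1 + z^{e_i}\bigr)$ for $i \in \{1,\dots,n\}$ and completes this finite initial data to a scattering diagram all of whose consistency relations hold (path-ordered products of wall-crossing automorphisms around small loops are trivial), invoking the Kontsevich–Soibelman / Gross–Siebert existence-and-uniqueness theorem. The point is that every wall function produced this way has the form $1 + (\text{monomials with positive coefficients})$; this is the source of all positivity that follows.

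Third, for a generic basepoint $Q$ and each $m$ in the relevant sublattice I would define the theta function $\vartheta_{Q,m} = \sum_{\gamma} \mathrm{Mono}(\gamma)$, the sum over broken lines $\gamma$ of asymptotic direction $m$ ending at $Q$. Because wall functions have positive integer coefficients, each broken-line monomial is counted with a positive integer coefficient, so every theta function is \emph{by construction} a formal sum of monomials with nonnegative integer coefficients; when $Q$ lies in the chamber of the initial cluster, $\vartheta_{Q,m}$ is moreover an honest Laurent polynomial in $A_{1,s},\dots,A_{n,s}$ (times frozen variables). Fourth, and this is the technical heart, I would show that the cluster complex embeds in $\mathfrak{D}_s$: the $g$-vector cones of the seed pattern are chambers of $\mathfrak{D}_s$, and under this identification the cluster variable $A_{k;t}$ attached to any $t \in \mathbb{T}_n$ equals $\vartheta_{Q, g_{k;t}}$ for $Q$ in the initial chamber. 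This step uses the mutation-invariance of scattering diagrams (changing the initial seed changes $\mathfrak{D}$ by a piecewise-linear transformation compatible with $g$-vector mutation) together with the wall-crossing transformation law for theta functions, which matches the exchange relation \eqref{eq: A mutation} with the broken-line recursion. Granting this, positivity is immediate: $A_{k;t} = \vartheta_{Q, g_{k;t}}$ is a nonnegative-integer combination of Laurent monomials in the cluster variables of the initial seed, and by mutation-invariance the same holds with respect to an arbitrary seed.

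The main obstacle is precisely the fourth step: proving that $\mathfrak{D}_s$ is consistent and that its chamber structure reproduces the cluster complex with cluster monomials realized as theta functions — equivalently, that the structure constants of the theta basis are nonnegative integers. The reductions, the formal definition of theta functions via broken lines, and the bookkeeping of wall-crossings are comparatively routine once that structural result is available.
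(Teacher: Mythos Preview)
The paper does not prove this statement at all: it is quoted verbatim as a result from \cite{GHKK14} (specifically Corollary~0.4 there) with no accompanying proof, serving only as background in the preliminaries on cluster algebras. So there is no ``paper's own proof'' to compare against.

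That said, your outline is a faithful high-level summary of the actual argument in \cite{GHKK14}: reduction to principal coefficients, construction of the consistent scattering diagram from the initial walls, positivity of wall functions, definition of theta functions via broken lines, and identification of cluster variables with theta functions indexed by $g$-vectors. Your assessment of where the real work lies (embedding the cluster complex in the scattering diagram and matching cluster mutation with wall-crossing) is accurate. Since the thesis merely cites this result rather than reproving it, an appropriate response here would simply be to invoke \cite{GHKK14} rather than to reproduce their machinery.
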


In order to define cluster varieties we slightly change our perspective from this algebraic point of view to a more geometric one. To a seed $s$ we associate a lattice $N=\mathbb Z^m$ with basis $\{e_{1,s},\dots,e_{n,s},e_{n+1},\dots, e_m\}$.
We sometimes write $N_s$ to refer to $N$ with the associated basis.
It comes equipped with a (global) bilinear form on $N$. For a fixed seed $s$ we have
\begin{eqnarray}\label{eq: bilin form on lattice}
\{\cdot,\cdot\}_s: N\times N \to \mathbb Z,
\end{eqnarray}
is (locally) induced by the exchange matrix of $Q_s$ (for details see \cite[\S2]{GHK15}).
Let $M=\Hom(N,\mathbb Z)$ be the dual lattice with dual basis $\{f_{1,s},\dots,f_{n,s},f_{n+1},\dots, f_m\}$. 
To each lattice we associate a torus $T_N\cong (\mathbb C^*)^m\cong T_M$ by
\begin{eqnarray}\label{def: seed tori}
\X_s:=T_M=\Spec(\mathbb C[N]) \text{ and } \A_s:=T_N=\Spec(\mathbb C[M]).
\end{eqnarray}

We denote the coordinates on $\X_s$ by $X_{1,s},\dots,X_{m,s}$. Corresponding to the basis of the lattice we have $X_{i,s}:=z^{e_{i,s}}$. When the seed we are working in is clear we drop it from the notation. We define \emph{mutation} at $k$ on the basis $\{e_{i,s}\}$ of the lattice $N$ for seed $s$ by
\begin{eqnarray}\label{eq: def mutation lattice basis}
e_{i,s'}:=\left\{
    \begin{matrix}
    e_{i,s}+\max\{\epsilon_{ik},0\}e_{k,s}, & \text{ for } i\not =k,\\
    -e_{k,s},& \text{ for } i=k.
    \end{matrix}
\right.
\end{eqnarray}
Then $\{e_{1,s'},\dots,e_{n,s'},e_{n+1},\dots,e_m\}$ forms again a basis for $N$ associated with the seed $s'=\mu_k(s)$. The dual basis for $M$ transforms as
\[
f_{i,s'}:=\left\{
    \begin{matrix}
        -f_{i,s},& \text{ for } i\not=k, \\
    f_{k,s}+\sum_{j}\max\{-\epsilon_{kj},0\}f_{j,s}, & \text{ for } i =k.
    \end{matrix}
\right.
\]
Then $\{f_{1,s'},\dots,f_{n,s'},f_{n+1},\dots,f_{m}\}$ is the dual basis for $M$ associated with $s'=\mu_k(s)$. Mutation induces birational maps between the tori
\[
\mu_k:\X_s\to \X_{\mu_k(s)} \text{ and } \mu_k:\A_s\to \A_{\mu_k(s)}.
\]
defined by the pullback of functions. We have for $\X$-tori
\begin{eqnarray}\label{eq: def pullback X-mut}
\mu_k^*(z^n):=z^n(1+z^{e_{k,s}})^{-{\{n,e_{k,s}\}_s}}, \text{ for } n\in N. 
\end{eqnarray}
For the $\A$-tori the birational map is induced from the seed mutation defined in \eqref{eq: A mutation}, we recover
\[
\mu_k^*(A_{k,s'})= \left\{\begin{matrix}
    A_{i,s}, &\text{ for } i\not=k, \\
    \frac{\prod_{i\to k\in Q_s} A_{i,s} + \prod_{k\to j\in Q_s} A_{j,s}}{A_{k,s}}, &\text{ for } i=k.
    \end{matrix}\right.
\]
To be consistent with the $\X$-notation, we set $A_{i,s}=z^{f_{i,s}}$ for $1\le i\le n$ and $A_l=z^{f_l}$ for $n+1\le l\le m$ coordinates for $\A_s$. Consider again a given seed pattern, then  by \cite[Proposition~2.4]{GHK15} we can give the following definition.

\begin{definition}\label{def:cluster variety}
Given a seed pattern the $\X$- (resp. $\A$-) cluster variety is defined as the scheme
\begin{eqnarray}\label{def: A and X variety}
\X := \bigcup_{t\in \mathbb T_n}\X_{s_t} \ (\text{resp.  } \A:=\bigcup_{t\in\mathbb T_n} \A_{s_t})
\end{eqnarray}
obtained by glueing the tori $\X_{s_t}$ (resp. $\A_{s_t}$) along the birational maps induced by mutation.
\end{definition}

Sometimes $\X$ is called the \emph{Fock-Goncharov dual} to the cluster variety $\A$.
The relation to cluster algebras is the following. The global sections of the structure sheaf on $\A$ are related to the upper cluster algebra associated to the given seed pattern by 
\[
H^0(\A,\mathcal O(\A))=\overline{\mathcal{Y}}(s_t)\otimes_{\mathbb Z}\mathbb C.
\]

A natural (partial) compactification $\bar \A$ of $\A$ (an $\A$-cluster variety) is given by allowing the frozen variables $A_{n+1},\dots,A_m$ to vanish. 
We denote the resulting \emph{boudary disivor} in $\bar \A$ by
\begin{eqnarray}\label{eq: def boundary divisor}
D:=\sum_{f=n+1}^m D_f, \text{ where }D_{f}:=\{A_f=0\}\subset \bar \A.
\end{eqnarray}

\begin{example}\label{exp: bdy div Grass}
Recall Example~\ref{exp: cluster Gr(2,4)}. The $\A$-cluster variety for this type $\mathtt A_1$-cluster algebra with four frozen vertices is given by glueing two tori
\[
T_{p_{13},p_{12},p_{14},p_{23},p_{34}}\cup_{\mu} T_{p_{24},p_{12},p_{14},p_{23},p_{34}}
\]
along the birational map $\mu$ induced by mutation. The irreducible components of the boundary divisor are
\[
\{p_{12}=0\},\{p_{14}=0\},\{p_{23}=0\},\{p_{34}=0\}.
\]
One can show that up to codimension two $\bar\A$ is $\Gr(2,4)$. As the Picard group of $\Gr(2,4)$ has rank one, all four divisors are linearly equivalent and the boundary divisor $D$ is in fact the anticanonical divisor for $\Gr(2,4)$.
We recall later (in \S\ref{sec:pre flag}) how to associate very ample line bundles $L_\lambda$ on $SL_n/B$ to weights $\lambda\in\Lambda^{++}$.
The same construction works for $\Gr(2,4)$ and one obatians $\mathcal O(L_{4\omega_2})=D$ (up to linear equivalence).
\end{example}

Every component $D_f$ of the boundary divisor induces a (rank 1) valuation $\ord_{D_f}:\mathbb C[\A]\to\mathbb Z$ by sending a function $g\in\mathbb C[\A]$ to its order of vanishing along $D_f$. 
If $g$ has a pole along $D_f$, then $\ord_{D_f}(g)<0$ is the order of the pole.
These valuations are called \emph{divisorial discrete valuations} in \cite{GHKK14}.

A main result of \cite{GHKK14} is the definition and parametrization of the $\vartheta$-basis for $\mathbb C[\A]$. One central question is:
%The superpotential plays a crucial role in answering the question: 
\emph{When is a basis element of $\mathbb C[\A]$ also a basis element for $\mathbb C[\bar \A]$?}

The \emph{full Fock-Goncharov conjecture} (see \cite[Definition~0.6]{GHKK14}) suggests that basis elements for $\mathbb C[\A]$ are parametrized by \emph{tropical points} in $\X^{\trop}(\mathbb Z)$ (see \cite[\S2]{GHKK14}). 
We don't go into detail about this tropical space due to the following fact: 
fixing a seed $s$ we have an isomorphism 
\[
\X^{\trop}(\mathbb Z)\vert_{s}\cong N_s\cong \mathbb Z^{m}.
\]
For the purpose of this thesis we always work in a fixed seed and therefore have an identification of lattice points in $N_s$ with basis elements for $\mathbb C[\A]$.
From now on we assume that the cluster variety $\A$ satisfies the full FG-conjecture, as this is the case for the cluster varieties we are interested in. 
For example, Magee showed in \cite{Mag15} that this is the case for the cluster variety inside $SL_n/U$ which are of interest in \S\ref{sec:BF}.
A number of criteria for the full Fock-Goncharov conjecture to hold are discussed in \cite[\S8.4]{GHKK14} and we refer the interested reader there for more details.

Associated to each component of the boundary divisor there exists a function $\vartheta_f$ on the dual cluster variety $\X$.
Assuming the full FG-conjecture we can compute and expression for $\vartheta_f$ in $\X_{s_0}$ ($s_0$ being a fixed initial seed) as described by the Algorithm~\ref{alg:superpot via opt seeds}, which we consider as definition.

\begin{definition}\label{def:superpotential}
Let $\A$ be a cluster variety associated to an $\A$-cluster algebra $\mathcal Y({\bf A},Q)$ satisfying the full Fock-Goncharov conjecture. 
Then we define the \emph{superpotential} $W:\X\to \mathbb C$ on the dual cluster variety $X$ as
\[
W:=\sum_{f \text{ frozen vertex in }Q }\vartheta_f.
\]
\end{definition}

\begin{algorithm}[h]
\SetAlgorithmName{Algorithm}{} 
\KwIn{\medskip {\bf Input:\ }  A cluster variety $\A$ with initial seed $s_0$ satisfying the full FG-conjecture.}
\BlankLine
\For{every frozen vertex $f\in Q_{s_0}$}{find a sequence of mutations $\overline{\mu}$ from $s_0$ to a seed $s_f$ where $f$ is a sink.\\
    \If{$s_f=s_0$}{{\bf Output:\ } $\vartheta_f\vert_{\X_{s_0}}=z^{-e_{f,s_0}}$.}
    \Else{apply the pullback of the reverse mutation sequence to $z^{-e_{f,s_f}}$. \\
    {\bf Output:\ } $\vartheta_f\vert_{\X_{s_0}}=(\overline{\mu})^*(z^{-e_{f,s_f}})$.}}
\BlankLine
{\bf Output:\ } The superpotential $W\vert_{\X_{s_0}}=\sum_{f\text{ frozen in }Q_{s_0}} \vartheta_f\vert_{\X_{s_0}}$. 
\label{alg:superpot via opt seeds}
\caption{Computing an expression for the superpotential in a given inital seed.}
\end{algorithm}

A seed $s_f$ for which a frozen vertex $f$ is a sink (as in the first step of Algorithm~\ref{alg:superpot via opt seeds}) is called \emph{optimized} for $f$.

\begin{remark}
Finding an optimized seed for a frozen vertex is in general a hard problem as there might be infinitely many seeds. Further, doing these computations by hand is already after a few mutation quite frustrating due to the recursive formulas. An excellent tool for such computations is provided by Keller's \emph{quiver mutation applet} \cite{QuiverApp}.
\end{remark}

Coming back to $\mathbb C[\A]$, note that a basis element $\vartheta\in\mathbb C[\A]$ gives an element in $\mathbb C[\bar \A]$ if $\ord_{D_f}(\vartheta)\ge 0$ for every component $D_f$ of the boundary divisor.
In particular,
\[
\vartheta \in \mathbb C[\bar \A] \ \text{ if and only if } \ \min_{f \text{ frozen}}\{\ord_{D_f}(\vartheta)\}\ge 0.
\]
Let $g_\vartheta\in N_s$ be the lattice point associated to $\vartheta$ for a fixed seed $s$. Then using the fact that $\vartheta_f^{\trop}(g_{\vartheta})=\ord_{D_f}(\vartheta)$, this translates to
\begin{eqnarray}\label{eq:pts in superpot cone}
\vartheta \in \mathbb C[\bar{\A}]\  \text{ if and only if } \ g_\vartheta\in \{\mathbf x\in \mathbb R^{m}\mid W\vert_{\X_s}^{\trop}(\mathbf x)\ge 0\}\cap N_s.
\end{eqnarray}
In particular, the lattice points in $\{\mathbf x\in \mathbb R^{m}\mid W\vert_{\X_s}^{\trop}(\mathbf x)\ge 0\}$ parametrize a basis for $\mathbb C[\bar \A]$.

\chapter{Grassmannians}\label{chap:Grass}

\section{Preliminary notions}\label{sec:pre-grass}

The Grassmannian $\Gr(k,\mathbb C^n)$ for integers $k\le n$ is the space of $k$-dimensional subspaces of $\mathbb C^n$. 
It has the structure of a projective variety given by the Pl\"ucker embedding $\Gr(k,\mathbb C^n)\hookrightarrow \mathbb P(\bigwedge^k\mathbb C^n)$ sending the generators $v_1,\dots,v_k\in \mathbb C^n$ of a $k$-dimensional vector subspace $V\subset \mathbb C^n$ to $[v_1\wedge \dots\wedge v_k]\in \mathbb P(\bigwedge^k\mathbb C^n)$. 
In many cases it is useful to describe $\Gr(k,\mathbb C^n)$ as a vanishing set $V(I_{k,n})$. We denote the standard basis of $\mathbb C^n$ by $\{e_1,\dots, e_n\}$ and choose a subset $I=\{i_1,\dots,i_k\}$ of $\{1,\dots,n\}=:[n]$. 

\begin{definition}\label{def: Pluecker coord}
The \emph{Pl\"ucker coordinate} $\bar p_I$ is the basis element in $(\bigwedge^k\mathbb C^n)^*$ dual to $e_{i_1}\wedge\dots\wedge e_{i_k}$. 
\end{definition}
Plücker coordinates generate the homogeneous coordinate ring of $\Gr(k,\mathbb C^n)$ satisfying certain relations.
%(see for example, \cite[Definition 5.2.2]{LB15} for details), which generate the ideal $I_{k,n}$ (see for example \cite[\S4.3]{SS04}).
We want to express $\mathbb C[\Gr(k,\mathbb C^n)]=:A_{k,n}$ as a quotient of the polynomial ring $\mathbb C[p_J\mid J\in\binom{[n]}{k}]$ by a prime ideal encoding these relations.
We define for $K\in\binom{[n]}{k-1}$ and $L\in\binom{[n]}{k+1}$ the sign $\sgn(j;K,L):=(-1)^{\#\{l\in L\mid j<l \}+\#\{k\in K\mid k>j\}}$. 
The following definition can be found for example in \cite[p. 170]{M-S}. 
\begin{definition}
The \emph{Plücker relation} $R_{K,L}\in\mathbb C[p_J\mid J\in\binom{[n]}{k}]$ for $K\in\binom{[n]}{k-1}$ and $L\in\binom{[n]}{k+1}$ is
\begin{eqnarray}\label{eq: def plucker rel}
R_{K,L}:=\sum_{j\in L} \sgn(j;K,L)p_{K\cup \{j\}}p_{L\setminus\{j\}}.
\end{eqnarray}
The \emph{Plücker ideal} $I_{k,n}\subset \mathbb C[p_J\mid J\in\binom{[n]}{k}]$ is generated by $R_{K,L}$ for all $K\in\binom{[n]}{k-1}$ and $L\in\binom{[n]}{k+1}$ and $\mathbb C[\Gr(k,\mathbb C^n)]=\mathbb C[p_J\mid J\in\binom{[n]}{k}]/I_{k,n}$.
\end{definition}

In the special case of $k=2$, Plücker relations are of a particularly nice form. We simplify the notation in this case to $R_{\{i\},\{j,k,l\}}=:R_{i,j,k,l}\in\mathbb C[p_{I}\mid I\in\binom{[n]}{2}]$, where for $1\le i<j<k<l\le n$ we have
\[
R_{i,j,k,l}=p_{ij}p_{kl}-p_{ik}p_{jl}+p_{il}p_{jk}\in I_{2,n}.
\]
By setting $p_{ij}=-p_{ji}$ we see that it is enough to consider $R_{i,j,k,l}$ with $1\le i<j<k<l\le n$ as generators for the ideal $I_{2,n}$ as up to sign these are all relations.
We denote the polynomial ring $\mathbb C[p_{I}\mid I\in\binom{[n]}{2}]$ by $\mathbb C[p_{ij}]_{ij}$ for short, if it clear which $n$ we are considering. 
To distinguish between the polynomial generators $p_{ij}$ (also called \emph{Plücker variables}) and the Plücker coordinates in $\mathbb C[\Gr(2,\mathbb C^n)]=\mathbb C[p_{ij}]_{ij}/I_{2,n}$ we denote the Plücker coordinate by $\bar p_{ij}\in \mathbb C[\Gr(2,\mathbb C^n)]=A_{2,n}$.
When there is no risk of confusion we drop this distinction.

The Grassmannian $\Gr(k,\mathbb C^n)$ can be realized as a quotient of the algebraic group $SL_n$ over $\mathbb C$. Recall the basic notations from \S\ref{sec:pre rep theory}.

Consider $P_k\subset SL_n$ the parabolic subgroup of block upper traingular matrices with blocks of size $k\times k$ and $(n-k)\times (n-k)$ along the diagonal. 
Naturally, it contains $B$. 
Set $I_k:=[n-1]\setminus k$ and consider the subgroup $W_{I_k}:=\langle s_i\mid i\in I_k\rangle$ of $S_n$. 
We choose a representative $w_k$ in the coset of $w_0$ in the quotient $S_n/W_{I_k}$. 
Then by the identification $S_n=N_{SL_n}(T)/T$ (here $N_{SL_n}(T)$ is the normalizer of $T$ in $SL_n$) we have $P_k=\overline{Bw_kB}$.
The Grassmannian is then the quotient
\[
SL_n/P_k=\Gr(k,\mathbb C^n).
\]
Similarly to $R^+$, let $R^+_{k}=\{\beta\in R^+\mid w_k(\beta)<0\}$ be the set of positive roots for $SL_n/P_k$.
In fact, we have $R_k^+=\{\alpha_{i,j}\in R^+\mid i\le k\le j\}$.
We also have $\lie n_k^-=\langle f_\beta\mid \beta\in R_k^+\rangle\subset \lie n^-$ a Lie subalgebra and denote by $U_k^-\subset B^-$ the corresponding subgroup with $\Lie U_k^-=\lie n_k^-$. It consists of lower triangular matrices with $1$s on the diagonal and non-zero entries only in positions $(i,j)$ with $k\le i\le n$ and $1\le j\le k$. 

\begin{example}
For $\Gr(2,\mathbb C^4)$ we have $I_2=\{1,3\}\subset[3]$ and consider $S_n/\langle s_1,s_3 \rangle$. As representative of $w_0$ in the quotient we can chose $w_2$ with reduced expression $s_2s_1s_3s_2$. Then we compute $R_2^+=\{\alpha_2,\alpha_{1,2},\alpha_{2,3},\alpha_{1,3}\}=\{\epsilon_i-\epsilon_j\vert 1\le i\le 2<j\le n\}$. The corresponding subgroups of $SL_4$ are
\[
P_2=\left\{\left(
\begin{smallmatrix}
x_{1,1} & x_{1,2} & x_{1,3} & x_{1,4} \\
x_{2,1} & x_{2,2} & x_{2,3} & x_{2,4} \\
0 & 0 & x_{3,3} & x_{3,4} \\
0 & 0 & x_{4,3} & x_{4,4} \\
\end{smallmatrix}
\right)
\right\} \text{ and }
U_2^-=\left\{\left(
\begin{smallmatrix}
1 & 0 & 0 & 0 \\
0 & 1 & 0 & 0 \\
x_{3,1} & x_{3,1} & 1 & 0 \\
x_{4,1} & x_{4,2} & 0 & 1 \\
\end{smallmatrix}
\right)
\right\}.
\]
\end{example}

Note that $U_k^-$ is open and dense in $\Gr(k,\mathbb C^n)$ and we have an isomorphism of fields of rational functions $\mathbb C(\Gr(k,\mathbb C^n))\cong \mathbb C(U_k^-)$.
We see in \S\ref{sec:pre flag} that $\mathbb C[SL_n/B]=\bigoplus_{r\ge 1}V(r\lambda)^*$ for every $\lambda\in\Lambda^{++}$.
Having $\Gr(k,\mathbb C^n)=SL_n/P_k$ similary we have for the homogeneous coordinate ring of the Grassmannian
\begin{equation}\label{eq:hom coord ring Gr}
    \mathbb C[\Gr(k,\mathbb C^n)]=\bigoplus_{r\ge 1}V(r\omega_k)^*,
\end{equation}
where $\omega_k\in\Lambda^+$ is the $k$th fundamental weight (see \S\ref{sec:pre rep theory}).

\subsection{The tropical Grassmannian}\label{sec:pre_trop}\label{grassmann}

In this section we recall results on the tropical Grassmannian due to Speyer and Sturmfels in \cite{SS04} and \cite[\S4.3]{M-S}. For computations in small cases we rely on \emph{Macaulay2}\cite{M2} and \emph{gfan}\cite{Gfan}. 

\begin{definition}
The \emph{tropical Grassmannian}, denoted $\trop(\Gr(k,\mathbb C^n))\subset \mathbb R^{\tbinom{n}{k}}$ is the tropical variety of the Plücker ideal $I_{k,n}$.
By \cite[Corollary~3.1]{SS04} it is a $k(n-k)+1$-dimensional polyhedral fan whose maximal cones are all of this dimension.
\end{definition}

By what we have seen in \S\ref{sec:pre trop} $\trop(\Gr(k,\mathbb C^n))$ is the subfan of the Gr\"obner fan of $I_{k,n}$ consisting of those ${\bf w}$, such that $\init_{\bf w}(I_{k,n})$ is monomial-free. 
Recall that for a fixed cone $C$ of $\trop(\Gr(k,\mathbb C^n))$ each two points ${\bf v,w}$ in its relative interior yield the same initial ideal, i.e. $\init_{\bf w}(I_{k,n})=\init_{\bf v}(I_{k,n})$ and we use the notation $\init_C(I_{k,n})$.
Recall that a maximal cone $C$ of $\trop(\Gr(k,\mathbb C^n))$ by definition is prime, if $\init_C(I_{k,n})$ is a prime ideal. 

We mainly focus on the tropicalization of $\Gr(2,\mathbb C^n)$ which has a very nice properties.

\begin{corollary*}(\cite[Corollary~4.4]{SS04})\label{cor: SS all cones prime}
Every initial ideal $\init_C(I_{2,n})$ associated to a maximal cone $C$ in $\trop(\Gr(2,\mathbb C^n))$ is prime.
\end{corollary*}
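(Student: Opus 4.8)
The plan is to prove the statement of Speyer--Sturmfels that every initial ideal $\init_C(I_{2,n})$ attached to a maximal cone $C\subset\trop(\Gr(2,\mathbb C^n))$ is prime by first obtaining an explicit combinatorial description of the maximal cones. The starting point is the observation that $\trop(\Gr(2,\mathbb C^n))$ is the \emph{space of phylogenetic trees}: after quotienting by the lineality space, it is isomorphic to the space of tropical metrics on the $n$ leaves satisfying the four-point condition. Concretely, a point $\bw=(w_{ij})\in\mathbb R^{\binom n2}$ lies in $\trop(\Gr(2,\mathbb C^n))$ if and only if for every $4$-subset $\{i,j,k,l\}$ the minimum among $w_{ij}+w_{kl}$, $w_{ik}+w_{jl}$, $w_{il}+w_{jk}$ is attained at least twice (this is exactly the condition that $\init_{\bw}(R_{i,j,k,l})$ is not a monomial, applied to the quadratic generators, together with the fact that the quadratic Pl\"ucker relations tropically cut out the whole tropical variety for $k=2$). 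First I would recall that each such metric is realized as the tree metric of a unique weighted tree $T$ with $n$ labelled leaves, and that the maximal cones correspond precisely to the \emph{trivalent} (binary) trees $T$, each such cone $C_T$ being simplicial of dimension $2n-3$ and carrying the fan structure where the rays are indexed by the internal edges of $T$ (splits of $[n]$) together with the $n$ "pendant" directions.

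Next I would identify $\init_{C_T}(I_{2,n})$ explicitly. For a weight vector $\bw$ in the relative interior of $C_T$ corresponding to a trivalent tree $T$, the initial form of each Pl\"ucker relation $R_{i,j,k,l}=p_{ij}p_{kl}-p_{ik}p_{jl}+p_{il}p_{jk}$ is a binomial: it is the difference of the two monomials $p_{ab}p_{cd}$ for which the pairings $\{a,b\},\{c,d\}$ correspond to the two non-tree quartet topologies realizing the minimum (equivalently, the two pairs of "crossing" chords in the quartet), while the third monomial has strictly larger weight and is dropped. Thus $\init_{C_T}(I_{2,n})$ is generated by a set of binomials. The key claim is then that this binomial ideal is prime, and I would prove it by exhibiting a monomial parametrization: I would construct a ring homomorphism $\varphi_T\colon \mathbb C[p_{ij}]\to \mathbb C[x_e : e\in E(T)]$ sending $p_{ij}$ to the product of the indeterminates $x_e$ over the edges $e$ on the unique path in $T$ joining leaves $i$ and $j$, and show that $\ker\varphi_T=\init_{C_T}(I_{2,n})$. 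That $\init_{C_T}(I_{2,n})\subseteq\ker\varphi_T$ is a direct check on the binomial generators (the two monomials in $\init R_{i,j,k,l}$ have the same image because the multiset of path-edges $\mathrm{path}(a,b)\sqcup\mathrm{path}(c,d)$ is independent of the quartet topology — it always equals $\mathrm{path}(i,j)\sqcup\mathrm{path}(k,l)\sqcup 2\cdot(\text{middle edge})$ or similar, and one just matches them up). Since $\ker\varphi_T$ is a toric (lattice) ideal it is prime; so it suffices to check the reverse inclusion, or equivalently that the two ideals have the same dimension (both equal $2n-3$, the first by the Structure Theorem for tropical varieties, the second by computing the rank of the relevant integer matrix / the combinatorics of $T$), and that $\init_{C_T}(I_{2,n})$ is itself prime \emph{or} that the containment of a prime ideal of the same dimension forces equality. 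Because $\init_{C_T}(I_{2,n})$ is monomial-free (it lies in $\trop$), it has no embedded or minimal prime containing a variable, so any minimal prime $P\supseteq\init_{C_T}(I_{2,n})$ of the right dimension with $P$ monomial-free will do, and $\ker\varphi_T$ is exactly such a prime; combined with the dimension count, $\init_{C_T}(I_{2,n})=\ker\varphi_T$, which is prime.

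The main obstacle, and where I would spend the most care, is the dimension/saturation bookkeeping in the last step: showing that the explicit binomial ideal generated by the initial Pl\"ucker relations is already prime and not merely has $\ker\varphi_T$ as one of several components. One clean way around this is to produce a \emph{Gr\"obner basis} argument: order the internal edges of $T$ and choose a term order refining $\bw$; the initial binomials, suitably oriented, form a square-free Gr\"obner basis whose leading terms generate a radical monomial ideal corresponding to the simplicial complex of "non-crossing" partial matchings, whose Stanley--Reisner ring has the right dimension and is Cohen--Macaulay — this pins down $\dim\init_{C_T}(I_{2,n})=2n-3$ and, together with the toric parametrization $\varphi_T$, forces primality. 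Alternatively one can invoke that $I_{2,n}$ has a square-free initial ideal with respect to any weight in $C_T$ and use that a homogeneous ideal whose initial ideal is square-free has all initial degenerations reduced, then cut down to irreducibility via the tree structure; but the parametrization route above is the most self-contained. Finally, since the maximal cones of $\trop(\Gr(2,\mathbb C^n))$ are exactly the $C_T$ for $T$ trivalent, establishing primality of $\init_{C_T}(I_{2,n})$ for all such $T$ proves the corollary.
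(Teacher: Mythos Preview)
The paper does not prove this statement itself; it is quoted verbatim from \cite[Corollary~4.4]{SS04} and used as input. So there is no ``paper's own proof'' to compare against beyond the original Speyer--Sturmfels argument. Your outline is essentially that argument: identify maximal cones with trivalent trees $T$, define the monomial map $\varphi_T\colon p_{ij}\mapsto\prod_{e\in\mathrm{path}_T(i,j)}x_e$, and show $\init_{C_T}(I_{2,n})=\ker\varphi_T$.

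There is, however, a genuine logical gap in your middle paragraph. You claim that from $\init_{C_T}(I_{2,n})\subseteq\ker\varphi_T$, equality of dimensions, and monomial-freeness of $\init_{C_T}(I_{2,n})$, one can conclude equality. This is false: a monomial-free binomial ideal $J$ can have a prime $P\supseteq J$ with $\dim P=\dim J$ as one of several minimal primes without $J=P$. The paper itself exhibits exactly this phenomenon for $\trop(\Flag_4)$, where non-prime maximal cones have monomial-free binomial initial ideals strictly contained in their toric component $I(W_C)$. Your sentence ``containment of a prime ideal of the same dimension forces equality'' is simply wrong, and ``$\init_{C_T}(I_{2,n})$ is itself prime'' is what you are trying to prove.

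The Gr\"obner-basis route you sketch at the end is the correct fix and is what \cite{SS04} actually does: one shows that the quadratic Pl\"ucker relations form a Gr\"obner basis for $I_{2,n}$ with respect to any weight in $C_T^\circ$ (so $\init_{C_T}(I_{2,n})$ is \emph{generated} by the initial binomials), and separately that $\ker\varphi_T$ is generated by those same binomials. Both steps require work you have not carried out. Your dimension/multiplicity heuristics are not a substitute.
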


Recall that $\trop(V(I))\subset\mathbb R^n$ for $I$ a homogeneous ideal in $\mathbb C[x_1,\dots,x_n]$ contains a linear subspace $L_I$ called \emph{lineality space}. The elements $l\in L_I$ have the property that $\init_l(I)=I$. In particular, $\mathbb R(1,\dots,1)\subset\mathbb R^n$ is contained in $L_I$.

\begin{theorem*}(\cite[Theorem~3.4]{SS04})\label{thm: SS space of trees is trop Gr} 
The quotient $\trop(\Gr(2,\mathbb C^n))/L_{I_{2,n}}\subset \mathbb R^{\binom{n}{2}}/\mathbb R^{n-3}$ intersected with the unit sphere is, up to sign, the \emph{space of phylogenetic trees} \cite{BHV01}. 
\end{theorem*}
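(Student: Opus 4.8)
The plan is to reduce the statement to the classical characterisation of tree metrics by Buneman's four-point condition, with the sign flip accounting for the phrase ``up to sign''. Throughout, recall that $\trop(\Gr(2,\mathbb C^n)) = \trop(V(I_{2,n}))$ and that, by the Fundamental Theorem \cite[Theorem~3.2.3]{M-S}, this set equals $\{\bw \in \mathbb R^{\binom n2} \mid \init_{\bw}(I_{2,n}) \text{ is monomial-free}\}$. The first step is to replace the infinitely many initial forms by finitely many explicit ones: I would show that the three-term Plücker relations $R_{i,j,k,l}$, for $1 \le i<j<k<l \le n$, form a \emph{tropical basis} of $I_{2,n}$. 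Granting this, $\bw \in \trop(\Gr(2,\mathbb C^n))$ if and only if for every quadruple the minimum of the three numbers $w_{ij}+w_{kl}$, $w_{ik}+w_{jl}$, $w_{il}+w_{jk}$ is attained at least twice, i.e.\ $\init_{\bw}(R_{i,j,k,l})$ is never a monomial. This tropical-basis fact is the one genuinely ideal-theoretic ingredient; it can be obtained by a Gröbner argument showing that any monomial in $\init_{\bw}(I_{2,n})$ already forces a monomial among the $\init_{\bw}(R_{i,j,k,l})$, or else cited from \cite[Theorem~3.4 \& Corollary~4.4]{SS04}.

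Next I would perform the sign flip: set $d_{ij} := -w_{ij}$. The displayed condition becomes exactly the four-point condition: for all $i<j<k<l$ the maximum of $d_{ij}+d_{kl}$, $d_{ik}+d_{jl}$, $d_{il}+d_{jk}$ is attained at least twice. Hence $-\trop(\Gr(2,\mathbb C^n))$ coincides with the set $\mathcal M_n \subseteq \mathbb R^{\binom n2}$ of dissimilarity vectors satisfying the four-point condition. The classical input is then the tree-metric theorem (Buneman; Simões-Pereira; Dress): $d \in \mathcal M_n$ if and only if $d = d_T^{\ell}$ for some tree $T$ with leaf set $[n]$ and an edge-weighting $\ell$, where $d_T^{\ell}(i,j) = \sum_{e \in P_{ij}} \ell(e)$ is the path sum, and where we allow $\ell$ to take negative values on interior edges since we want the whole fan rather than only honest metrics. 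I would prove this by induction on $n$: contract a cherry, reconstruct a tree for the smaller leaf set, and use the four-point condition to determine where the two removed leaves reattach and with what edge lengths. The same argument shows the interior-edge data of $T$ is uniquely determined by $d$, while the $n$ pendant-edge weights are unconstrained; the corresponding free $\mathbb R^n$ is precisely the lineality space $L_{I_{2,n}}$ (it is the image of $(a_i)_i \mapsto (a_i + a_j)_{ij}$, spanned by the star vectors $\sum_{j \ne i} e_{ij}$, and one checks directly that each such vector fixes $I_{2,n}$ because it gives all three terms of every Plücker relation equal weight).

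Finally I would assemble the fan structure. For each trivalent tree $T$ on leaf set $[n]$ the cone $\sigma_T := \{ d_T^{\ell} \mid \ell \ge 0 \text{ on the interior edges}\}$ has dimension $(n-3) + n = 2n-3$, its faces are the cones $\sigma_{T'}$ for $T'$ obtained from $T$ by edge contraction, and $\sigma_T \cap \sigma_{T'}$ is the face obtained by contracting the edges not common to $T$ and $T'$ (because a tree metric determines its tree). Thus $\mathcal M_n = \bigcup_T \sigma_T$ is a polyhedral fan of pure dimension $2n-3$, matching $\dim \trop(\Gr(2,\mathbb C^n)) = 2(n-2)+1$. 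Quotienting out the common lineality space $L_{I_{2,n}}$ leaves a pointed fan of dimension $n-3$ whose maximal cones are indexed by trivalent trees and glued along contractions --- this is, by definition, the space of phylogenetic trees \cite{BHV01} --- and intersecting with the unit sphere yields the statement. The ``up to sign'' is exactly the substitution $d_{ij} = -w_{ij}$. The main obstacle is the very first step, proving that the three-term Plücker relations form a tropical basis of $I_{2,n}$; once the defining inequalities of $\trop(\Gr(2,\mathbb C^n))$ are identified with the four-point condition, everything downstream is the (classical) combinatorics of tree metrics.
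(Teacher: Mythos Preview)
The paper does not contain its own proof of this statement: it is stated as a black-box citation of \cite[Theorem~3.4]{SS04} and then only its consequences are discussed. So there is nothing in the paper to compare your proposal against. Your outline is in fact the standard Speyer--Sturmfels argument (three-term Pl\"ucker relations form a tropical basis, the sign flip turns the min-twice condition into the four-point condition, and the tree-metric theorem identifies the result with the space of phylogenetic trees), and it is correct as a sketch. One small remark: the lineality space $L_{I_{2,n}}$ has dimension $n$, not $n-3$; the $n-3$ in the displayed statement refers to the dimension of the resulting fan after quotienting, and the ambient quotient should really read $\mathbb R^{\binom n2}/\mathbb R^{n}$. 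Your proposal handles this correctly.
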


We explain the implications of the theorem in more detail. In particular, it implies that every maximal prime cone $C$ can be associated with a \emph{labelled trivalent tree} with $n$ leaves. The set of all labels trivalent trees with $n$ leaves is denoted by $\mathcal T_n$.
A \emph{trivalent tree} is a graph with internal vertices of valency three and no loops or cycles of any kind. 
Non-internal vertices are called \emph{leaves} and the word \emph{labelled} refers to labelling the leaves by $1,\dots, n$.
We call an edge \emph{internal}, if it connects two internal vertices.

We label the standard basis of $\mathbb R^{\tbinom{n}{2}}$ by pairs $(i,j)$ with $1\le i<j\le n$ corresponding to Plücker coordinates. The following definition shows how we can get a point in the relative interior of a maximal cone in $\trop(\Gr(2,\mathbb C^n))$ from a labelled trivalent tree. It follows from \cite[Theorem~3.4]{SS04}.

\begin{definition}\label{def:treedeg}
Let $T$ be a labelled trivalent tree with $n$ leaves.  Then the $(i,j)$'th entry of the weight vector ${\bf w}_T\in \trop(\Gr(2,\mathbb C^n))$ is 
\[
-\#\{\text{internal edges on path from leaf } i \text{ to leaf } j \text{ in }T\}.
\]
For notational convenience we set
$\init_T(I_{2,n}):=\init_{{\bf w}_T}(I_{2,n})$. The corresponding maximal cone in $\trop(\Gr(2,\mathbb C^n))$ is denoted $C_T$. 
\end{definition}

Later in \S\ref{sec:BFFHL} we refer to the entries of $-{\bf w}_T$ (note the sign change) as \emph{tree degrees}, we denote $\deg_T p_{i,j}=(-{\bf w}_T)_{(i,j)}$.
Combining the above, we conclude that every trivalent labelled tree induces a toric degeneration of $\Gr(2,\mathbb C^n)$ with flat family given as in \eqref{eq: groebner family}.

The symmetric group $S_n$ acts on $\mathcal T_n$ by permuting the labels of the leaves of trees.
We also have a $S_n$-action on Plücker coordinates given by
\[
\sigma(p_{ij})={\text{sgn}(\sigma)}p_{\sigma^{-1}(i),\sigma^{-1}(j)} \text{ for }\sigma\in S_n.
\]
This action induces a ring automorphism of $\mathbb C[p_{i,j}]_{ij}$ for every $\sigma\in S_n$ that sends $\init_T(I_{2,n})$ to $\init_{\sigma(T)}(I_{2,n})$ for every trivalent labelled tree $T$.
Denote by $\mathtt T$ the equivalence class of $T\in \mathcal T_n$. It is uniquely determined by the underlying \emph{(unlabelled) trivalent tree} with $n$ leaves, see for example Figure~\ref{fig:gr(2,4)}. We denote the set of trivalent tree by $\mathcal T_n/S_n$

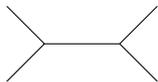
\begin{figure}[ht]
\begin{center}
\begin{tikzpicture}[scale=.5]

\draw (0,0) -- (1,1) -- (0,2);
\draw (1,1) -- (3,1) -- (4,2);
\draw (3,1) -- (4,0);

\end{tikzpicture}
\end{center}\caption{A trivalent tree with $4$ leaves.}\label{fig:gr(2,4)}
\end{figure}

Consider a trivalent tree $\mathtt T\in\mathcal T_n/S_n$. If there are two non-internal edges connected to the same internal vertex $c$, then we say $\mathtt T$ has a \emph{cherry} at vertex $c$.

\begin{lemma}\label{lem:cherry}
Every trivalent tree with $n\ge4$ leaves has a cherry.
\end{lemma}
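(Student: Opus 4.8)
The statement is the combinatorial fact that every trivalent tree with $n \geq 4$ leaves has a cherry, i.e.\ an internal vertex adjacent to two leaves. The plan is to argue by a counting/extremality argument on the tree structure. First I would recall the basic numerology of a trivalent tree $\mathtt{T}$ with $n$ leaves: if $V$ denotes the set of internal vertices and $E$ the set of (all) edges, then since $\mathtt{T}$ is a tree we have $|E| = |V| + n - 1$, and since each internal vertex has degree exactly $3$ while each leaf has degree $1$, summing degrees gives $2|E| = 3|V| + n$. Combining these yields $|V| = n - 2$ and $|E| = 2n-3$, so there are exactly $n-3$ internal edges.

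The cleanest route to the existence of a cherry is to consider a \emph{longest path} in $\mathtt{T}$ (or, equivalently, to root the tree and look at a deepest internal vertex). Take a path $P = (v_0, v_1, \dots, v_\ell)$ in $\mathtt{T}$ of maximal length among all paths between two leaves; then $v_0$ and $v_\ell$ are leaves. Since $n \geq 4$, the tree is not a single edge, so $\ell \geq 2$ and $v_1$ is an internal vertex. I claim $v_1$ is a cherry vertex: it has degree $3$, one edge goes to the leaf $v_0$, one edge goes along $P$ to $v_2$, and the third edge goes to some neighbour $u \neq v_0, v_2$. If $u$ were internal, it would have a neighbour other than $v_1$, and following any path from $u$ away from $v_1$ until reaching a leaf would produce a leaf-to-leaf path through $v_0, v_1, u, \dots$ strictly longer than $P$ (or at least as long but with $u$ replacing a shorter branch) — contradicting maximality. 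Hence $u$ is a leaf, and $v_1$ is adjacent to the two leaves $v_0$ and $u$, i.e.\ $\mathtt{T}$ has a cherry at $v_1$.

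There is a subtlety to handle carefully: one must make sure the "third neighbour" $u$ of $v_1$ really is distinct from $v_0$ and $v_2$ (true because $\mathtt{T}$ has no loops or multi-edges, being a tree) and that the longest-path argument is set up so that replacing the $v_0$-branch at $v_1$ by a longer branch through $u$ genuinely contradicts maximality. The expected main obstacle — really the only place care is needed — is phrasing the extremality argument so it is airtight: if $u$ is internal, pick a leaf $w$ reachable from $u$ by a path avoiding $v_1$ (exists because $\mathtt{T}$ is a finite tree and $u \neq $ leaf forces another edge out of $u$); then the concatenation $v_0 v_1 u \cdots w$ is a path between two leaves of length $\geq \ell+1$, contradicting the choice of $P$. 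Everything else is routine once the numerology and the longest-path setup are in place. An alternative, equally short approach that avoids longest paths: root $\mathtt{T}$ at any leaf and take an internal vertex $v$ of maximal distance from the root; its two children (in the rooted tree) cannot be internal, hence are leaves, so $v$ is a cherry. I would probably present whichever of these the surrounding text makes most natural, likely the rooted/deepest-vertex version since it is the most economical.
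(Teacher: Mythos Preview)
Your overall approach is sound and the rooted-tree argument you sketch at the end is fully correct, but it differs from the paper's proof: the paper argues by induction on $n$, removing a leaf edge (and suppressing the resulting degree-$2$ vertex) to obtain a trivalent tree with $n$ leaves, invoking the inductive hypothesis to find a cherry there, and then observing that re-inserting the removed leaf either leaves that cherry intact (if inserted on an internal edge) or creates a new cherry (if inserted on a leaf edge). Your extremality argument is a legitimate and arguably more direct alternative; it also immediately shows that the tree has at least two cherries (one at each end of the longest path), which the inductive proof only gets for $n=4$ explicitly.

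There is, however, a genuine slip in your longest-path version that you should fix. The path you write down, $v_0\, v_1\, u\, \cdots\, w$, has length $2 + d(u,w)$, and nothing you have said forces this to exceed $\ell$; you have only established $d(u,w) \geq 1$. The contradiction you want comes from the path $v_\ell\, v_{\ell-1}\, \cdots\, v_2\, v_1\, u\, \cdots\, w$, which has length $(\ell-1) + 1 + d(u,w) \geq \ell + 1$. The point is that you must retain the long tail $v_1 v_2 \cdots v_\ell$ of $P$ and replace only the short stub $v_0 v_1$ by the branch through $u$. With this correction the longest-path argument goes through; alternatively, just present the rooted-tree version, which is already airtight as you stated it.
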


\begin{proof}
We use induction on $n$. For $n=4$ Figure~\ref{fig:gr(2,4)} displays the only trivalent tree in $\mathcal T_4/S_4$ and we see, it has two cherries. Now consider a trivalent tree $\mathtt T'\in\mathcal T_{n+1}/S_{n+1}$. We remove one edge connected to a leaf and obtain a tree $\mathtt T\in\mathcal T_n/S_n$. By induction, $\mathtt T$ has a cherry at some vertex $c$. Adding the removed edge back there are two possibilities: either we add it to an internal edge, then the cherry also exists in $\mathtt T'$. Or we add it at an edge with a leaf, hence create a new cherry. 
\end{proof}

\subsection{Cluster structure on \texorpdfstring{$\mathbb C[\Gr(2,\mathbb C^n)]$}{}}\label{subsec: cluster gr2n}

We have seen in Example~\ref{exp: cluster Gr(2,4)} the cluster structure on $\mathbb C[\Gr(2,\mathbb C^4)]$. In this subsection we want to recall the cluster structure on $\mathbb C[\Gr(2,\mathbb C^n)]$ following \cite{FZ02} and \cite{Sco06}.

Let $D_{n}$ be a disk with $n$ marked points on its boundary $\partial D_n$ labelled by $[n]$ in counterclockwise order. We define an \emph{arc} in $D_n$ as a line connecting two marked points. A \emph{triangulation} $\Delta$ of $D_n$ is a maximal collection of non-crossing arcs. We call arcs that intersect $D_n^\circ:=D_n\setminus\partial D_n$ \emph{internal arc} and those along $\partial D_n$ \emph{boundary arc}.
Note that every triangulation consists of $n$ boundary arcs and $n-3$ internal arcs.
A collection of three arcs $\{d_1,d_2,d_3\}$ in $\Delta$ is a \emph{triangle} if pairwise they have one adjacent marked point in common.

\begin{algorithm}[h]
\SetAlgorithmName{Algorithm}{} 
\KwIn{\medskip {\bf Input:\ }  A triangulation $\Delta$ of $D_n$.}
\BlankLine
\For{every internal arc $d$ in $\Delta$}{create a mutable vertex $v_d\in Q_0$;}
\For{every boundary arc $b$ in $\Delta$}{create a frozen vertex $v_b\in Q_0$;}  
\For{every triangle $\{d_1,d_2,d_3\}$ in $\Delta$}{draw three arrows in $Q_1$ between the vertices $v_{d_1},v_{d_2},v_{d_3}\in Q_0$ creating a counterclockwise oriented $3$-cycle in $Q$;}
\BlankLine
{\bf Output:\ } The quiver $Q_{\Delta}:=(Q_0,Q_1)$. 
\label{alg:quiver assoc to triang}
\caption{Associating a quiver with a triangulation of $D_n$.}
\end{algorithm}

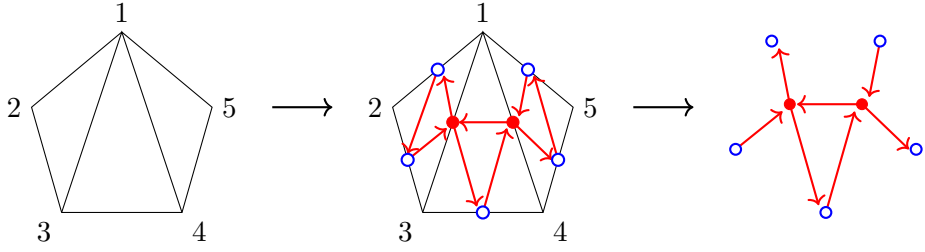
\begin{figure}
    \centering
\begin{tikzpicture}[scale=.8]
\draw (0,0) -- (2,0) -- (2.5,1.75) -- (1,3) -- (-.5,1.75) -- (0,0);
\draw (0,0) -- (1,3) -- (2,0);

\node[above] at (1,3) {1};
\node[left] at (-.5,1.75) {2};
\node[right] at (2.5,1.75) {5};
\node[below right] at (2,0) {4};
\node[below left] at (0,0) {3};

\draw[thick,->] (3.5,1.75)-- (4.5,1.75);

\begin{scope}[xshift=6cm]
\draw (0,0) -- (2,0) -- (2.5,1.75) -- (1,3) -- (-.5,1.75) -- (0,0);
\draw (0,0) -- (1,3) -- (2,0);

\node[above] at (1,3) {1};
\node[left] at (-.5,1.75) {2};
\node[right] at (2.5,1.75) {5};
\node[below right] at (2,0) {4};
\node[below left] at (0,0) {3};

%%%% ARROWS QUIVER
\draw[->, thick, red] (1,0) -- (1.4,1.4); % 34 -> 14
\draw[->, thick, red] (1.5,1.5) -- (.6,1.5); % 14 -> 13
\draw[->, thick, red] (.5,1.5) -- (.9,.1); % 13 -> 34

\draw[thick, red,->] (1.5,1.5) -- (2.15,.875); % 14 -> 45
\draw[thick, red,->] (2.25,.875) -- (1.85,2.275); % 45 -> 15
\draw[thick, red,->] (1.75,2.375) -- (1.6,1.6); % 15 -> 14

\draw[thick, red,->] (.25,2.375) -- (-.25,.975); % 12 -> 23
\draw[thick, red,->] (-.25,.875) -- (.4,1.4); % 23 -> 13
\draw[thick, red,->] (.5,1.5) -- (.35,2.275); % 13 -> 12

%%%%%% VERTICES QUIVER
\draw[fill, white] (1,0) circle [radius=.1]; % 34
\draw[thick, blue] (1,0) circle [radius=.1];
\draw[fill, white] (2.25,.875) circle [radius=.1]; % 45
\draw[thick, blue] (2.25,.875) circle [radius=.1];
\draw[fill, white] (-.25,.875) circle [radius=.1]; % 23
\draw[thick, blue] (-.25,.875) circle [radius=.1];
\draw[fill, white] (1.75,2.375) circle [radius=.1]; % 15
\draw[thick, blue] (1.75,2.375) circle [radius=.1];
\draw[fill, white] (.25,2.375) circle [radius=.1]; % 12
\draw[thick, blue] (.25,2.375) circle [radius=.1];
\draw[fill, red] (.5,1.5) circle [radius=.1]; % 13 
%\draw[thick, red] (.5,1.5) circle [radius=.1];
\draw[fill, red] (1.5,1.5) circle [radius=.1]; % 14
%\draw[thick, red] (1.5,1.5) circle [radius=.1];

\draw[thick,->] (3.5,1.75)-- (4.5,1.75);

\begin{scope}[xshift=5.5cm, scale=1.2]
%%%% ARROWS QUIVER
\draw[->, thick,red] (1,0) -- (1.4,1.4); % 34 -> 14
\draw[->, thick,red] (1.5,1.5) -- (.6,1.5); % 14 -> 13
\draw[->, thick,red] (.5,1.5) -- (.9,.1); % 13 -> 34

\draw[thick,->,red] (1.5,1.5) -- (2.15,.875); % 14 -> 45
%\draw[thick,->] (2.25,.875) -- (1.85,2.275); % 45 -> 15
\draw[thick,->,red] (1.75,2.375) -- (1.6,1.6); % 15 -> 14

%\draw[thick,->] (.25,2.375) -- (-.25,.975); % 12 -> 23
\draw[thick,->,red] (-.25,.875) -- (.4,1.4); % 23 -> 13
\draw[thick,->,red] (.5,1.5) -- (.35,2.275); % 13 -> 12

%%%%%% VERTICES QUIVER
\draw[fill, white] (1,0) circle [radius=.075]; % 34
\draw[thick, blue] (1,0) circle [radius=.075];
\draw[fill, white] (2.25,.875) circle [radius=.075]; % 45
\draw[thick, blue] (2.25,.875) circle [radius=.075];
\draw[fill, white] (-.25,.875) circle [radius=.075]; % 23
\draw[thick, blue] (-.25,.875) circle [radius=.075];
\draw[fill, white] (1.75,2.375) circle [radius=.075]; % 15
\draw[thick, blue] (1.75,2.375) circle [radius=.075];
\draw[fill, white] (.25,2.375) circle [radius=.075]; % 12
\draw[thick, blue] (.25,2.375) circle [radius=.075];
\draw[fill,red] (.5,1.5) circle [radius=.075]; % 13 
%\draw[thick] (.5,1.5) circle [radius=.075];
\draw[fill,red] (1.5,1.5) circle [radius=.075]; % 14
%\draw[thick] (1.5,1.5) circle [radius=.075];
\end{scope}

\end{scope}
\end{tikzpicture}
    \caption{Visualizing Algorithm~\ref{alg:quiver assoc to triang} for a triangulation of $D_5$.}
    \label{fig: quiver from triang}
\end{figure}

\begin{definition}
To a triangulation $\Delta$ of $D_n$ we associate the quiver $Q_\Delta$ that is the output of Algorithm~\ref{alg:quiver assoc to triang} and set ${\bf A}_\Delta:=(A_{1,\Delta},\dots,A_{n-3,\Delta},A_{n-2},\dots,A_{2n-3})$. 
Then $\Delta$ determines the cluster algebra $\mathcal Y_\Delta:=\mathcal{Y}({\bf A}_\Delta,Q_\Delta)$.
\end{definition}

Given a triangulation $\Delta$ of $D_n$ we create a new triangulation $\Delta'$ by flipping a diagonal. 
More precisely, consider two adjacent triangles in $\Delta$ forming a quadrilateral with vertices the marked point $i,j,k,l$ in circular order along $\partial D_n$ and diagonal $d=[i,k]$. 
Then \emph{flipping} $d$ refers to replacing it with $d'=[j,l]$ (see Figure~\ref{fig: flip}).
The outcome is a new triangulation $\Delta'$ which only differs from $\Delta$ by $d$.
Given this definition the next proposition has a straightforward proof.

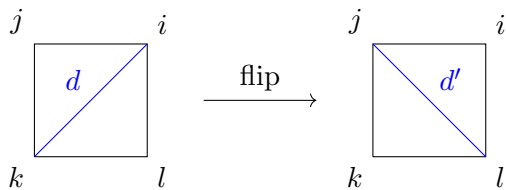
\begin{figure}
    \centering
\begin{tikzpicture}[scale=1.5]
\draw (0,0) -- (1,0) -- (1,1) -- (0,1) -- (0,0);
\draw[blue] (0,0) -- (1,1);
\node[above right] at (1,1) {$i$};
\node[above left] at (0,1) {$j$};
\node[below left] at (0,0) {$k$};
\node[below right] at (1,0) {$l$};
\node[above left,blue] at (.5,.5) {$d$};

\draw[->] (1.5,.5)--(2.5,.5);
\node[above] at (2,.5) {flip};

\begin{scope}[xshift=3cm]
\draw (0,0) -- (1,0) -- (1,1) -- (0,1) -- (0,0);
\draw[blue] (0,1) -- (1,0);
\node[above right] at (1,1) {$i$};
\node[above left] at (0,1) {$j$};
\node[below left] at (0,0) {$k$};
\node[below right] at (1,0) {$l$};
\node[above right,blue] at (.5,.5) {$d'$};
\end{scope}
\end{tikzpicture}
    \caption{Flipping an internal arc.}
    \label{fig: flip}
\end{figure}

\begin{proposition}
Let $\Delta$ and $\Delta'$ be two triangulations of $D_n$ related to each other by flipping the (internal) arc $d\in\Delta$. Then the quivers $Q_\Delta$ and $Q_{\Delta'}$ are related to each other by quiver mutation (see Definition~\ref{def:quiver mutation}).
Moreover, the cluster algebras $\mathcal Y_\Delta$ and $\mathcal{Y}_\Delta'$ are isomoprhic.
\end{proposition}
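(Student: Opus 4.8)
The statement is a local one: two triangulations $\Delta$ and $\Delta'$ differing by a single flip of an internal arc $d$ give quivers related by mutation at $v_d$, and the cluster algebras are isomorphic. The plan is to verify the quiver claim by a direct local analysis around the quadrilateral involved in the flip, and then deduce the algebra isomorphism formally from Definition~\ref{def:cluster algebra}, since mutation-equivalent seeds generate the same cluster algebra.

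First I would reduce to a local picture. Away from the quadrilateral $ijkl$ (with diagonal $d=[i,k]$) the two triangulations $\Delta$ and $\Delta'$ coincide arc-by-arc, so the induced vertices of $Q_\Delta$ and $Q_{\Delta'}$ agree, and any $3$-cycle of $Q_\Delta$ not involving $v_d$ comes from a triangle of $\Delta$ that is untouched by the flip, hence also appears in $Q_{\Delta'}$. Thus the only possible discrepancies between $Q_\Delta$ and $Q_{\Delta'}$ are among arrows incident to $v_d$ (resp. $v_{d'}$) and arrows between the four vertices $v_{[ij]}, v_{[jk]}, v_{[kl]}, v_{[li]}$ corresponding to the four sides of the quadrilateral (some of which may be frozen if they lie on $\partial D_n$). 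Here I must be careful that two of the four "sides'' could actually be the same arc, or that a side could coincide with another internal arc of $\Delta$; in the generic case all four are distinct, and the boundary/degenerate cases are handled by the same bookkeeping with some arrows simply absent because one endpoint is frozen (and arrows between frozen vertices are ignored, as noted after Definition~\ref{def:quiver mutation}).

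Next I would carry out the local computation. In $Q_\Delta$ the two triangles $(d,[ij],[jk])$ and $(d,[kl],[li])$ each contribute a counterclockwise $3$-cycle through $v_d$; reading off orientations one gets arrows, say, $v_{[ij]}\to v_d\to v_{[jk]}\to v_{[ij]}$ and $v_{[kl]}\to v_d\to v_{[li]}\to v_{[kl]}$ (up to the global orientation convention, which is consistent). Now apply $\mu_{v_d}$ as in Definition~\ref{def:quiver mutation}: Step~1 creates the new arrows $v_{[ij]}\to v_{[jk]}$ and $v_{[kl]}\to v_{[li]}$ coming from the paths through $v_d$; Step~2 reverses all six arrows at $v_d$; Step~3 cancels any resulting $2$-cycles. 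Comparing this with $Q_{\Delta'}$, whose triangles are $(d',[jk],[kl])$ and $(d',[li],[ij])$, one checks the arrows at $v_{d'}$ and the added "chords'' match exactly, possibly after cancelling a $2$-cycle that Step~3 removes. This is the routine but slightly fiddly heart of the argument; it is essentially a finite check of one picture plus its degenerate variants.

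Finally, for the algebra statement: by construction $\Delta'$ is related to $\Delta$ by the flip at $d$, and I have just shown $Q_{\Delta'}=\mu_{v_d}(Q_\Delta)$; moreover the extended cluster $\mathbf{A}_{\Delta'}$ is obtained from $\mathbf{A}_\Delta$ by the $\A$-mutation formula~\eqref{eq: A mutation} in direction $v_d$ (this is exactly how the exchange relations among Plücker coordinates are set up, cf. Example~\ref{exp: cluster Gr(2,4)}). Hence the seeds $s_\Delta=(\mathbf{A}_\Delta,Q_\Delta)$ and $s_{\Delta'}=(\mathbf{A}_{\Delta'},Q_{\Delta'})$ lie in a common seed pattern, so by Definition~\ref{def:cluster algebra} the set $\mathcal V$ of all cluster variables — and the frozen variables, which are unaffected by mutation — is the same for both, giving $\mathcal Y_\Delta = \mathcal Y_{\Delta'}$ as subalgebras of $\mathcal F$; in particular they are isomorphic. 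I expect the main obstacle to be nothing deep but rather the careful treatment of the degenerate configurations of the quadrilateral (shared or boundary sides) and pinning down the orientation conventions so that Step~3 of quiver mutation removes precisely the right $2$-cycles.
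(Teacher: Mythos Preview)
Your approach is correct and is precisely the ``straightforward'' local verification the paper alludes to without writing out. One small bookkeeping slip: in Step~1 of $\mu_{v_d}$ there are four length-two paths through $v_d$, not two, so four arrows are created; the two you named, $v_{[ij]}\to v_{[jk]}$ and $v_{[kl]}\to v_{[li]}$, are exactly the ones that form $2$-cycles with the existing arrows $v_{[jk]}\to v_{[ij]}$ and $v_{[li]}\to v_{[kl]}$ and hence get deleted in Step~3, while the surviving new arrows are the ``cross'' ones $v_{[ij]}\to v_{[li]}$ and $v_{[kl]}\to v_{[jk]}$, which together with the reversed arrows at $v_d$ give the two $3$-cycles of $Q_{\Delta'}$ for the triangles $(d',[li],[ij])$ and $(d',[jk],[kl])$.
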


The main result is then the following.

\begin{proposition*}(\cite{FZ02},\cite[Proposition~2]{Sco06})
For $n\ge 5$ the homogeneous coordinate ring $\mathbb C[\Gr(2,\mathbb C^n)]$ is isomorphic to $\mathcal{Y}_\Delta\otimes_{\mathbb Z}\mathbb C$ for any triangulation $\Delta$ of $D_n$.
\end{proposition*}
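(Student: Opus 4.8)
The plan is to build an explicit isomorphism from one convenient triangulation and then transport it to an arbitrary one. Fix the \emph{fan triangulation} $\Delta_0$ of $D_n$ whose internal arcs are $[1,3],[1,4],\dots,[1,n-1]$; its $2n-3$ arcs (the $n-3$ internal ones together with the $n$ boundary arcs $[i,i+1]$, indices mod $n$) correspond to the Plücker coordinates $\bar p_{13},\dots,\bar p_{1,n-1}$ and $\bar p_{12},\bar p_{23},\dots,\bar p_{n-1,n},\bar p_{1n}$. The first step is to check that these $2n-3$ elements of $A_{2,n}$ are algebraically independent and that $\mathbb C(\Gr(2,\mathbb C^n))=\operatorname{Frac}(A_{2,n})$ is the purely transcendental extension they generate. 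Since $A_{2,n}$ has Krull dimension $2n-3$, it suffices to show the fan Plücker coordinates rationally generate; this follows by row-reducing a generic $2\times n$ matrix against its first two columns, which exhibits every Plücker coordinate as a polynomial in the $\bar p_{1j}$ divided by a power of $\bar p_{12}$ (equivalently, one solves repeatedly for the remaining $\bar p_{ij}$ using three-term Plücker relations \eqref{eq: def plucker rel}). Consequently the assignment sending each cluster/frozen variable of the initial seed $({\bf A}_{\Delta_0},Q_{\Delta_0})$ to the Plücker coordinate of the corresponding arc extends to an \emph{injective} $\mathbb C$-algebra homomorphism $\varphi\colon\mathcal Y_{\Delta_0}\otimes_{\mathbb Z}\mathbb C\hookrightarrow\mathbb C(\Gr(2,\mathbb C^n))$: injectivity holds because the images of the free generating set ${\bf A}_{\Delta_0}$ are algebraically independent and the fraction field of $\mathcal Y_{\Delta_0}\otimes_{\mathbb Z}\mathbb C$ is purely transcendental of transcendence degree $2n-3$ over $\mathbb C$, matching $\mathbb C(\Gr(2,\mathbb C^n))$.

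The second step is to identify the image of $\varphi$ and see that it is $A_{2,n}$. Here the key point is that mutation of a triangulation-seed is nothing but a flip: every internal arc $d$ of a triangulation $\Delta$ is the diagonal of the quadrilateral formed by the two triangles adjacent to $d$, the vertex $v_d$ of $Q_\Delta$ carries exactly two incoming and two outgoing arrows (one incoming and one outgoing from each adjacent triangle, via the $3$-cycles of $Q_\Delta$), and mutation $\mu_{v_d}$ yields $Q_{\Delta'}$ for $\Delta'$ the flip of $\Delta$ at $d$ — this is the Proposition preceding the statement together with a direct inspection of the arrows. For the quadrilateral with circularly ordered vertices $i,j,k,l$ and diagonal $d=[i,k]$, the $\mathcal A$-mutation relation \eqref{eq: A mutation} reads $A_{v_d}A_{v_d}'=A_{[ij]}A_{[kl]}+A_{[jk]}A_{[il]}$, which under $\varphi$ is exactly the Plücker relation $\bar p_{ik}\bar p_{jl}=\bar p_{ij}\bar p_{kl}+\bar p_{jk}\bar p_{il}$ (compare \eqref{eq: def plucker rel} and Example~\ref{exp: cluster Gr(2,4)}), so $\varphi$ sends the new cluster variable to $\bar p_{jl}$, the Plücker coordinate of the new arc $[j,l]$. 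Since the flip graph of triangulations of $D_n$ is connected — a classical fact, provable by induction on $n$ by flipping an arbitrary triangulation toward $\Delta_0$ — induction on the length of a mutation sequence shows $\varphi(\mathcal V)$ consists of Plücker coordinates, and the frozen variables map onto the boundary Plücker coordinates. Conversely, every arc of $D_n$ lies in some triangulation, which is reachable from $\Delta_0$ by flips, so every $\bar p_{ij}$ lies in the image of $\varphi$. Hence $\varphi(\mathcal Y_{\Delta_0}\otimes_{\mathbb Z}\mathbb C)$ is the subalgebra of $\mathbb C(\Gr(2,\mathbb C^n))$ generated by all Plücker coordinates, namely $A_{2,n}=\mathbb C[\Gr(2,\mathbb C^n)]$, and $\varphi$ is the desired isomorphism.

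For a general triangulation $\Delta$: by connectivity of the flip graph and the Proposition above (flips correspond to quiver mutations and give isomorphic cluster algebras), the seed $({\bf A}_\Delta,Q_\Delta)$ lies in the same seed pattern as $({\bf A}_{\Delta_0},Q_{\Delta_0})$, so $\mathcal Y_\Delta\cong\mathcal Y_{\Delta_0}$ and therefore $\mathbb C[\Gr(2,\mathbb C^n)]\cong\mathcal Y_\Delta\otimes_{\mathbb Z}\mathbb C$; equivalently, the two previous steps go through verbatim with $\Delta$ in place of $\Delta_0$, using that the $\Delta$-Plücker coordinates are again algebraically independent and rationally generating (which itself follows from flip-connectivity and the Plücker relations).

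I expect the main obstacle to be excluding \emph{spurious} cluster variables: a priori an iterated mutation could produce a rational function that is not a Plücker coordinate, and the argument collapses unless the mutation class of seeds is exactly the set of triangulations of $D_n$. This is precisely what the identification ``mutation $=$ flip'' — for every mutable vertex of every triangulation-quiver — buys us, so establishing that compatibility carefully, including the bookkeeping on the $3$-cycles of $Q_\Delta$ under $\mu_{v_d}$, is the technical heart. The only other nontrivial input is the concrete linear-algebra computation underlying the algebraic independence and rational generation of the fan Plücker coordinates in the first step.
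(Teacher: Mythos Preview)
The paper does not prove this proposition; it is stated as a cited result from \cite{FZ02} and \cite[Proposition~2]{Sco06}, with no argument given beyond the preceding Proposition (that flips correspond to quiver mutations) and the identification of cluster variables with Pl\"ucker coordinates in the paragraph that follows. So there is nothing in the paper to compare your argument against line by line.

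That said, your proposal is essentially the standard proof and is correct. The logical structure---pick the fan triangulation, verify that its $2n-3$ Pl\"ucker coordinates are algebraically independent and rationally generate, match the exchange relation \eqref{eq: A mutation} with a three-term Pl\"ucker relation, and use connectivity of the flip graph---is exactly how \cite{FZ02} and \cite{Sco06} proceed in this case. Two small comments. First, your phrasing ``it suffices to show the fan Pl\"ucker coordinates rationally generate'' for algebraic independence is slightly elliptical: the point is that $2n-3$ elements rationally generating a field of transcendence degree $2n-3$ must be algebraically independent; you use this implicitly but it is worth saying. Second, your worry about ``spurious'' cluster variables is well placed, and your resolution is the right one: because \emph{every} mutable vertex of \emph{every} triangulation quiver $Q_\Delta$ corresponds to a flippable arc (an internal arc of a polygon always bounds two triangles forming an honest quadrilateral with four distinct vertices, for $n\ge 4$), one step of mutation never leaves the class of triangulation seeds. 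This is what closes the induction and guarantees $\varphi(\mathcal V)$ consists only of Pl\"ucker coordinates; the paper's preceding Proposition asserts exactly this compatibility but does not spell out the induction, so your write-up is in fact more explicit than the thesis on this point.
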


Let $\Delta$ be a triangulation of $D_n$ with extended cluster ${\bf A}_\Delta$.
Then the cluster variables $A_{1,\Delta},\dots A_{n-3,\Delta}$ correspond to internal arcs of $\Delta$, each connecting two marked points. 
If $A_{k,\Delta}$ corresponds to the arc connecting $i$ and $j$, the isomorphism in \cite[Proposition~2]{Sco06} identifies $A_{k,\Delta}$ with the Plücker coordinate $\bar p_{ij}\in\mathbb C[\Gr(2,\mathbb C^n)]$.
The frozen variables $A_{n-2},\dots,A_{2n-3}$ correspond to arcs connecting successive marked points $i$ and $i+1  \mod n$. 
They are identified with the corresponding Plücker coordinates $\bar p_{i,i+1}$.

\newpage
\section{Birational sequences for Grassmannians and \texorpdfstring{$\trop(\Gr(2,\mathbb C^{n}))$}{}}\label{sec:Bos}

\noindent
We study birational sequences due to Fang, Fourier, and Littelmann \cite{FFL15} for Grassmannians and introduce the class of iterated birational sequences. We show that toric degenerations of $\Gr(2,\mathbb C^{n})$ constructed using the tropical Grassmannian $\trop(\Gr(2,\mathbb C^{n}))$ due to Speyer and Sturmfels \cite{SS04} can also be obtained using (iterated) biratinal sequences.

\subsection{Birational sequences}\label{sec:pre_birat}

We start the section by recalling some results due to Fang, Fourier, and Littelmann in \cite{FFL15} regarding birational sequences and associated valuations. 
After proving Lemma~\ref{lem:birat},which is central in this section we define a new class of birational sequences caled \emph{iterated} in Definition~\ref{def:itseq}.
\smallskip

Consider a positive root $\beta\in R^+$, then the \emph{root subgroup} corresponding to $\beta$ is given by 
\[
U_{-\beta}:=\{\exp(zf_{\beta})\mid z\in \mathbb C\}\subset U^-.
\]

\begin{definition}(\cite{FFL15})\label{def:birat}
Let $S=(\beta_1,\dots,\beta_{k(n-k)})$ be a sequence of positive roots. Then $S$ is called a \emph{birational sequence} for $\Gr(k,\mathbb C^n)$ if the product map induced by multiplication is birational:
\[
U_{-\beta_1}\times\dots\times U_{-\beta_{k(n-k)}}\to U_k^-.
\]
\end{definition}

\begin{example}\label{exp: birat seq, PBW and string}
The following are two first (and motivating) examples of birational sequences that we encounter again later in \S\ref{sec:BLMM}.
\begin{enumerate}
    \item The product map $\pi:\prod_{\beta \in R_k^+} U_{-\beta}\to U_k^-$ is birational, which makes any sequence containing all roots in $R_k^+$ (in arbitrary order) a birational sequence called \emph{PBW-sequence} (see \cite[Example~1 and page 131]{FFL15}). We distinguish between PBW-sequences $S$ and $S'$ when the roots in both appear in different order.
    \item Another example is given by a reduced decomposition $\w_k=s_{i_1}\dots s_{i_{k(n-k)}}$ of $w_k$, a coset representative of $w_0$ in $S_n/W_{I_k}$ (see \S\ref{sec:pre-grass}). 
    Let $S=(\alpha_{i_1},\dots,\alpha_{i_{k(n-k)}})$ be the corresponding sequence of simple roots. Then $S$ is a birational sequence called \emph{the reduced decomposition case} (see \cite[Example~2]{FFL15}). 
\end{enumerate}
\end{example}
\noindent
The second example shows that repetitions of positive roots may occur in birational sequences. 
Our aim is to shed some light on sequences that are \emph{neither} PBW \emph{nor} associated to reduced decompositions for Grassmannians.
The following lemma allows us to construct such sequences for $\Gr(k,\mathbb C^{n+1})$ from sequences for $\Gr(k,\mathbb C^n)$. 

\begin{lemma}\label{lem:birat}
Let $S=(\beta_{1},\dots,\beta_{k(n-k)})$ be a birational sequence for $\Gr(k,\mathbb C^n)$. Then extending it to the left by $\alpha_{i_1,n},\dots, \alpha_{i_k,n}$ for distinct $i_1,\dots,i_k\le n$ yields the following birational sequence for $\Gr(k,\mathbb C^{n+1})$
\begin{eqnarray}\label{seq}
S'=(\alpha_{i_1,n},\dots,\alpha_{i_k,n},\beta_{1},\dots,\beta_{k(n-k)}).
\end{eqnarray}
\end{lemma}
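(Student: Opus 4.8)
The claim is that extending a birational sequence $S$ for $\Gr(k,\mathbb C^n)$ on the left by the roots $\alpha_{i_1,n},\dots,\alpha_{i_k,n}$ (with $i_1,\dots,i_k\le n$ distinct) gives a birational sequence $S'$ for $\Gr(k,\mathbb C^{n+1})$. Recall that being a birational sequence means the multiplication map $U_{-\alpha_{i_1,n}}\times\dots\times U_{-\alpha_{i_k,n}}\times U_{-\beta_1}\times\dots\times U_{-\beta_{k(n-k)}}\to U_k^-$ (the group $U_k^-\subset SL_{n+1}$ now) is a birational morphism. Since $\dim U_k^- = k(n+1-k) = k + k(n-k)$ matches the number of factors in $S'$, it suffices to show the map is dominant and generically injective (equivalently, an isomorphism onto a dense open subset); by hypothesis the analogous statement holds for the sub-product coming from $S$ mapping onto $U_k^-\subset SL_n$.

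\textbf{Key steps.} First I would set up coordinates: inside $SL_{n+1}$, the group $U_k^-$ consists of lower-triangular matrices with $1$'s on the diagonal whose only possibly-nonzero off-diagonal entries sit in the ``block'' of positions $(a,b)$ with $k\le a\le n+1$, $1\le b\le k$. The positive roots $\alpha_{i,n}=\epsilon_i-\epsilon_{n+1}$ for $i\le k$ correspond to the last row of this block (entries in positions $(n+1,1),\dots,(n+1,k)$), while the roots $\beta_1,\dots,\beta_{k(n-k)}$ from $S$ only involve positions $(a,b)$ with $a\le n$ — i.e., they live in the ``upper part'' which is exactly (a copy of) the group $U_k^-\subset SL_n$. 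Second, I would observe that $U_{-\alpha_{i,n}}=\{\exp(zf_{i,n})\}$ where $f_{i,n}=E_{n+1,i}$ (matrix unit), and since for $i_1,\dots,i_k$ distinct all these $f_{i,n}$ lie in the abelian subalgebra spanned by the last row of the block and $[f_{i_a,n},f_{i_b,n}]=0$, the product $U_{-\alpha_{i_1,n}}\cdots U_{-\alpha_{i_k,n}}$ is precisely the abelian unipotent subgroup $V$ of matrices $I + \sum_a z_a E_{n+1,i_a}$, which is a $k$-dimensional affine space mapped isomorphically (as a variety) by the multiplication map. Third — the crux — I would show $U_k^-\subset SL_{n+1}$ factors, as a variety, as a product $V \cdot \iota(U_k^-(SL_n))$ where $\iota$ embeds the $SL_n$-version as the subgroup fixing the last row: concretely, an element $g\in U_k^-(SL_{n+1})$ is written uniquely as $g = v\cdot g'$ where $v\in V$ records the last row (positions $(n+1,1),\dots,(n+1,k)$ of $g$) and $g'$ is obtained from $g$ by left-multiplying by $v^{-1}$, which clears that last row without touching the upper block (since $v^{-1}=I-\sum z_a E_{n+1,i_a}$ and $E_{n+1,i_a}g$ only affects row $n+1$). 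This gives a morphism-with-inverse decomposition $U_k^-(SL_{n+1})\cong V\times U_k^-(SL_n)$.

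\textbf{Finishing and the main obstacle.} Granting that decomposition, the multiplication map for $S'$ is the composite $V\times\big(U_{-\beta_1}\times\dots\times U_{-\beta_{k(n-k)}}\big)\to V\times U_k^-(SL_n)\cong U_k^-(SL_{n+1})$, where the map on the first factor is an isomorphism and the map on the second factor is birational by the hypothesis on $S$; a product of a birational map with an isomorphism is birational, so $S'$ is a birational sequence. I expect the main technical obstacle to be making the factorization $U_k^-(SL_{n+1})\cong V\times U_k^-(SL_n)$ genuinely clean at the scheme/variety level — one must check both that the multiplication $V\times U_k^-(SL_n)\to U_k^-(SL_{n+1})$ is a morphism (clear) and that the inverse $g\mapsto (v(g), g')$ is a morphism (this needs the observation that extracting the last row and then clearing it are polynomial operations, which is where distinctness of $i_1,\dots,i_k$ and the specific shape of $U_k^-$ are used). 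A secondary subtlety: one should double-check that $\alpha_{i,n}=\epsilon_i-\epsilon_{n+1}$ is indeed a positive root of $SL_{n+1}$ with $w_{k}(\alpha_{i,n})<0$ for $i\le k$, i.e. that these roots genuinely belong to $R_k^+$ for the Grassmannian $\Gr(k,\mathbb C^{n+1})$ — which follows from the description $R_k^+=\{\alpha_{i,j}\mid i\le k\le j\}$ recalled in \S\ref{sec:pre-grass}, since here $i\le k\le n = j$ in the index convention $\alpha_{i,j}=\epsilon_i-\epsilon_{j+1}$.
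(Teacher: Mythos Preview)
Your group-theoretic factorization $U_k^-(SL_{n+1})\cong V\times U_k^-(SL_n)$ is clean and correct when $\{i_1,\dots,i_k\}=\{1,\dots,k\}$, but the lemma allows arbitrary distinct $i_1,\dots,i_k\in[n]$, and this generality is essential: the iterated sequences for $\Gr(2,\mathbb C^n)$ that the lemma is used to construct (Definition~\ref{def:itseq}, Example~\ref{exp:treeseq}) routinely take $i_j>k$. When some $i_j>k$ the root subgroup $U_{-\alpha_{i_j,n}}=\{I+zE_{n+1,i_j}\}$ has its nonzero entry at position $(n+1,i_j)$, which lies \emph{outside} the block defining $U_k^-(SL_{n+1})$; so $V\not\subset U_k^-$, your factorization does not even make sense as stated, and in fact the image of the multiplication map for $S'$ is not contained in $U_k^-$ at all. (Your ``secondary subtlety''---checking $\alpha_{i,n}\in R_k^+$---is exactly the constraint $i\le k$, but membership in $R_k^+$ is not required of roots in a birational sequence, as the reduced-decomposition case in \S\ref{sec:pre_birat} already shows.)

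The paper handles the general case by working on $\Gr(k,\mathbb C^{n+1})$ via Pl\"ucker coordinates rather than inside $U_k^-$. Writing a general product element as $A=\bigl(I+\sum_j y_jE_{n+1,i_j}\bigr)B$ with $B$ the $S$-factor, the last row of $A$ (in the first $k$ columns) is $\sum_j y_j\cdot(\text{row }i_j\text{ of }B)$, so for $J'\ni n+1$ one computes $p_{J'}(A)=\sum_j y_j\,p_{J'\setminus\{n+1\}\cup\{i_j\}}(B)$. Specializing to $J'=I\setminus\{i_j\}\cup\{n+1\}$ with $I=\{i_1,\dots,i_k\}$ kills all but one term and yields the explicit rational inverse $y_j=\bar p_{I\setminus\{i_j\}\cup\{n+1\}}/\bar p_I$; distinctness of the $i_j$ enters here to guarantee $|I|=k$ so that $\bar p_I$ is a genuine Pl\"ucker coordinate. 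Your approach can be repaired by showing instead that $V$ acts generically freely and transitively on the fibres of the projection from $\{W:e_{n+1}\notin W\}\subset\Gr(k,\mathbb C^{n+1})$ onto $\Gr(k,\mathbb C^n)$, but verifying this amounts to the same Pl\"ucker identity.
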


\begin{proof}
The sequence $S'$ yields the product of root subgroups $\mathcal G'=U_{-\alpha_{i_1,n}}\times\dots\times U_{-\alpha_{i_k,n}}\times U_{-\beta_{1}}\times \dots \times U_{-\beta_{k(n-k)}}\subset SL_{n+1}$, where for $y_1,\dots,y_k,z_1,\dots,z_{k(n-1)}\in\mathbb C$ elements are of form
\begin{eqnarray*}
\exp(y_{1}f_{\alpha_{i_1,n}})\cdots \exp(y_{k}f_{\alpha_{i_k,n}})\exp(z_1f_{\beta_1})\cdots \exp(z_{k(n-k)}f_{\beta_{k(n-k)}})\\
=\begin{pmatrix}
1 & 0 &0 && \dots& & 0&0\\[-1.5ex]
a^1_2 & 1 & 0 && \dots& &0&0\\[-1.5ex]
a^1_3 & a^2_3 & 1 && \dots& &0&0\\[-1.5ex]
\vdots & \vdots & \vdots &\ddots& &&0&0\\[-1.5ex]
a^1_{k+1}&a^2_{k+1}&\dots &a_{k+1}^k&1&\dots& 0&0\\[-1.5ex]
\vdots & \vdots & &\vdots&&\ddots  &\vdots&\vdots\\[-1.5ex]
a^1_n & a^2_n & \dots&a^k_n &*&\dots &1&0\\[-1.5ex]
a^1_{n+1}&a^2_{n+1} &\dots &a^k_{n+1} &*&\dots &*&1
\end{pmatrix}\in \mathcal G'.
\end{eqnarray*}
Here $a^j_{n+1}=y_{1}a^j_{i_1}+\dots + y_{k}a^j_{i_k}$ for $1\le j\le k$. Denote the $i$-th row of a fixed element $A\in \mathcal G'$ by $a_i=(a^1_i,a^2_i,\dots,a_i^k,*,\dots,*)\in\mathbb C^{n+1}$. Set $a^j_j=1$ for $1\le j\le n$ and $a^j_i=0$ for $i<j$. The coefficient of $e_{j_1}\wedge \dots \wedge e_{j_k}$ in $A(e_1\wedge \dots \wedge e_k)$ is the minor
$\bar p_J(A)=\det \bigg( \begin{smallmatrix}
a_{j_1}\\[-1ex]
\vdots\\[-1ex]
a_{j_k}
\end{smallmatrix} \bigg)$ with $J=\{j_1,\dots,j_k\}\subset [n+1]$. Now assume $J'=\{j_1,\dots,j_{k-1},n+1\}$. As $a_{n+1}=y_{1}a_{i_1}+\dots +y_{k}a_{i_k}$ then
\[
p_{J'}=\det \begin{pmatrix}
a_{j_1}\\[-1.5ex]
\vdots\\[-1.5ex]
a_{j_{k-1}}\\[-1.5ex]
a_{n+1}
\end{pmatrix} =y_{1}\det \begin{pmatrix}
a_{j_1}\\[-1.5ex]
\vdots\\[-1.5ex]
a_{j_{k-1}}\\[-1.5ex]
a_{i_1}
\end{pmatrix} +\dots + y_{k}\det  \begin{pmatrix}
a_{j_1}\\[-1.5ex]
\vdots\\[-1.5ex]
a_{j_{k-1}}\\[-1.5ex]
a_{i_k}
\end{pmatrix}. 
\]

We define the map $\varphi':\mathbb C(\mathbb A^{k(n-k+1)})\to \mathbb C(\Gr(k,\mathbb C^{n+1}))\cong\mathbb C(U_k^-)$ as extension of the birational map induced by $S$ on the function fields, which we denote by $\varphi:\mathbb C(\mathbb A^{k(n-k)})\to \mathbb C(\Gr(k,\mathbb C^n))$. 
%As $\Gr(k,\mathbb C^n)$ is birationally equivalent to $U^-_{k}\subset SL_n$, we can consider $\mathbb C(\Gr(k,\mathbb C^n))$ instead of $\mathbb C(U^-_{k})$ and similarly for $\mathbb C(\Gr(k,\mathbb C^{n+1}))$.
For $I=\{i_1,\dots,i_k\}\subset[n]$ and for $1\le j\le k$ we define
\[
\varphi'(y_{j}):=\frac{\bar p_{I\setminus \{i_j\} \cup\{n+1\}}}{\bar p_I}.
\]
In order to prove that $S'$ is birational it suffices to find a map $\psi':\mathbb C(\Gr(k,\mathbb C^{n+1}))\to \mathbb C(\mathbb A^{k(n-k+1)})$ that is inverse to $\varphi'$. 
Let $\psi:\mathbb C(\Gr(k,\mathbb C^n))\to \mathbb C(\mathbb A^{k(n-k)})$ be the inverse of $\varphi$. We define $\psi'$ to as the extension of $\psi$ given by
\[
\psi'(p_{J'})=y_{1}\psi(\bar p_{J'\setminus\{n+1\}\cup\{i_1\}})+\dots+y_{k}\psi(\bar p_{J'\setminus\{n+1\}\cup\{i_k\}}).
\]
A straightforward computation then reveals that $\psi'$ and $\varphi'$ are indeed inverse to each other.
Therefore $S'$ is a birational sequence for $\Gr(k,\mathbb C^{n+1})$.
\end{proof}

\begin{definition}\label{def:itseq}
For $k<n$ consider a birational sequence for $\Gr(k,\mathbb C^{k+1})$. Now extend it as in (\ref{seq}) to a birational sequence for $\Gr(k,\mathbb C^{k+2})$. Repeat this process until the outcome is a birational sequence for $\Gr(k,\mathbb C^n)$. Birational sequences of this form are called \emph{iterated}.
\end{definition}

We explain how to obtain a valuation from a fixed birational sequence $S=(\beta_1,\dots,\beta_{d})$ for $\Gr(k,\mathbb C^n)$ as constructed in \cite{FFL15}. 
Let $d:=k(n-k)$ and define the \emph{height function} $\height:R^+\to \mathbb Z_{\ge 0}$ by sending a positive root to the number of its simple summands, i.e. $\height(\alpha_{i,j})=j-i+1$. 
Then the \emph{height weighted function} $\Psi: \mathbb Z^{d}\to \mathbb Z$ is given by 
\[
\Psi(m_1,\dots,m_{d}):=\sum_{i=1}^{d}m_i\height(\beta_i).
\] 
Let $<_{lex}$ be the lexicographic order on $\mathbb Z^{d}$. 
Then we define the \emph{$\Psi$-weighted reverse lexicographic order} $\prec_\Psi$ on $\mathbb Z^d$  by setting for ${\bf m,m'}\in \mathbb Z^{d}$
\begin{eqnarray}\label{eq: def psi wt order}
{\bf m}\prec_{\Psi}{\bf m'} :\Leftrightarrow \Psi({\bf m})<\Psi({\bf m'}) \text{ or } \Psi({\bf m})=\Psi({\bf m'}) \text{ and } {\bf m}>_{lex}{\bf m'}.
\end{eqnarray}

\begin{definition}[\cite{FFL15}]\label{def:valseq}
Let $f=\sum a_{\bf u}x^{\bf u}$ with $\bu\in\mathbb Z^d_{\ge0}$ be a non-zero polynomial in $\mathbb C[x_1,\dots,x_{d}]$. 
The valuation $\val_S:\mathbb C[x_1,\dots,x_{d}]\setminus \{0\}\to(\mathbb Z_{\ge 0}^{d},\prec_{\Psi})$ associated to $S$ is defined as
\begin{eqnarray}\label{eq:valseq}
\val_S(f):=\min{}_{\prec_\Psi}\{{\bf u}\in\mathbb Z_{\ge 0}^{k(n-k)}\mid a_{\bf u}\ne 0\}.
\end{eqnarray}
We extend $\val_S$ to a valuation on $\mathbb C(x_1,\dots,x_{d})\setminus\{0\}$ by setting for $h=\frac{f}{g}$ a rational function $\val_S(h):=\val_S(f)-\val_S(g)$.
\end{definition}

Valuations of form \eqref{eq:valseq} are usually called \emph{lowest term valuations}.
As $S$ is a birational sequence, for every element in $\mathbb C(\Gr(k,\mathbb C^n))$ there exists a unique element $f\in\mathbb C(x_1,\dots,x_{d})$ associated to it by the isomorphism $\psi:\mathbb C(\mathbb A^{d})\to \mathbb C(\Gr(k,\mathbb C^n))$. 
Hence, we have a valuation on $\mathbb C(\Gr(k,\mathbb C^n))\setminus\{0\}$. 
Further, as $\mathbb C[\Gr(k,\mathbb C^n)]\setminus\{0\}\subset \mathbb C(\Gr(k,\mathbb C^n))\setminus\{0\}$ we can restrict to obtain
\[
\val_S:\mathbb C[\Gr(k,\mathbb C^n)]\setminus\{0\}\to (\mathbb Z^d_{\ge 0},\prec_\Psi).
\]
We denote as in \S\ref{sec:pre val} by $S(\mathbb C[\Gr(k,\mathbb C^n)],\val_S)$ the associated value semi-group and the associated graded algebra by $\gr_S(\mathbb C[\Gr(k,\mathbb C^n)])$.
For the images of Plücker coordinates $\bar p_J\in\mathbb C[\Gr(k,\mathbb C^n)]$ we chose as before the notation $\overline{p_J}\in\gr_S(\mathbb C[\Gr(k,\mathbb C^n)])$ for $J\in\binom{[n]}{k}$.

\medskip

We are interested in toric degenerations of $\Gr(k,\mathbb C^n)$ from the above defined valuations using the Rees algebra construction \eqref{eq: def Rees}.
We would like to apply Theorem~\cite{KK12} stated in \S\ref{sec:pre val} and therefore need to show that the value semi-group $S(\mathbb C[\Gr(k,\mathbb C^n)],\val_S)$ is finitely generated. 
The following representation theoretic point of view on the valuation $\val_S$ from \cite[\S8 and \S9]{FFL15} is useful to do so for $\Gr(2,\mathbb C^n)$ in \S\ref{sec:proof} below.

\smallskip

A birational sequence $S=(\beta_1,\dots,\beta_d)$ for $\Gr(k,\mathbb C^n)$ together with the total order $\prec_{\Psi}$ on $\mathbb Z^{d}$ induces a filtration on the universal enveloping algebra $U(\lie n_k^-)$ for $0\not=\bm\in \mathbb Z^{d}_{\ge 0}$ by
\begin{eqnarray}\label{eq: def ind filtr univ env}
U(\lie n_k^-)_{\preceq_{\Psi}\bm}:=\langle \mathbf f^{\bk}=f_{\beta_1}^{k_1}\cdots f_{\beta_d}^{k_d}\mid \bk\in\mathbb Z_{\ge 0},\bk\preceq_{\Psi}\bm \rangle.
\end{eqnarray}
We define similarly  $U(\lie n_k^-)_{\prec_{\Psi}\bm}$.
Recall that the highest weight module $V(\lambda)$ for $\lambda=r\omega_k$ with $r\ge1$ is cyclically generated by a highest weight vector $v_{\lambda}\in V(\lambda)$ over $U(\lie n_k^-)$.
We therefore have an induced filtration for $0\not=\bm\in \mathbb Z^{d}_{\ge 0}$ defined by 
\begin{eqnarray}\label{eq: def ind filtr rep}
V(\lambda)_{\preceq_{\Psi}}:=U(\lie n_k^-)_{\preceq_{\Psi}\bm}\cdot v_{\lambda},
\end{eqnarray}
Similarly we define $V(\lambda)_{\prec_{\Psi}}$. 
Then $V^{\gr}(\lambda):=\bigoplus_{0\not=\bm\in\mathbb Z^d_{\ge 0}}V(\lambda)_{\preceq_{\Psi}}/V(\lambda)_{\prec_{\Psi}}$ is the associated graded vector space.
This leads to the following definition of \emph{essential sets} (see \cite[Definition~7]{FFL15})
\begin{eqnarray}\label{eq: def ess set}
\es_S(\lambda):= \{\bm\in\mathbb Z^{d}_{\ge 0}\mid V(\lambda)_{\preceq_{\Psi}}/V(\lambda)_{\prec_{\Psi}}\not=0\}.
\end{eqnarray}
These sets are of particular importance as $\{\bff^{\bm}\cdot v_\lambda\mid \bm\in\es_S(\lambda)\}$ forms a basis for $V(\lambda)$ and hence $\bigcup_{r\ge 1}\{\bff^{\bm}\cdot v_{r\omega_k} \mid\bm \in\es_S(r\omega_k)\}$ a basis for $\mathbb C[\Gr(k,\mathbb C^n)]$.

The connection between the valuation $\val_S$ introduced above and the essential sets is the following.

\begin{proposition*}(\cite[Proposition~2]{FFL15})
For every birational sequence $S$ for $\Gr(k,\mathbb C^n)$ we have $\bigcup_{r\ge 1}\es_S(r\omega_k)=S(A_{k,n},\val_S)$. 
\end{proposition*}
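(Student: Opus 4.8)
The plan is to unwind the definitions so that both sides of the claimed equality $\bigcup_{r\ge 1}\es_S(r\omega_k)=S(A_{k,n},\val_S)$ become statements about which exponent vectors $\bm$ can arise as the $\prec_\Psi$-lowest term of a section, and then to use the basis property of the essential set to match them up.

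First I would set up the dictionary between the two gradings. Recall from \eqref{eq:hom coord ring Gr} that $A_{k,n}=\mathbb C[\Gr(k,\mathbb C^n)]=\bigoplus_{r\ge 1}V(r\omega_k)^*$, and that the birational sequence $S$ identifies $\mathbb C(\Gr(k,\mathbb C^n))$ with $\mathbb C(x_1,\dots,x_d)$ via $\psi$, so that each Plücker-type function becomes a polynomial in the $x_i$ and $\val_S$ is the lowest-term valuation $\prec_\Psi$. The key observation I would isolate is that, under this identification, the coordinate functions $x_1,\dots,x_d$ correspond (up to the $U^-$-action) to the PBW monomials $\bff^{\bm}=f_{\beta_1}^{m_1}\cdots f_{\beta_d}^{m_d}$ acting on highest weight vectors: evaluating a section $g\in V(r\omega_k)^*$ against $\bff^{\bm}\cdot v_{r\omega_k}$ reads off the coefficient of $x^{\bm}$ in the polynomial expression of $g$. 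This is precisely the content of \S8--9 of \cite{FFL15}, which I am allowed to cite; the point is that the filtration \eqref{eq: def ind filtr rep} on $V(\lambda)$ is dual to the filtration on $V(\lambda)^*$ induced by $\val_S$ restricted to the degree-$r$ piece.

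Next I would argue the two inclusions. For $\subseteq$: if $\bm\in\es_S(r\omega_k)$, then by \eqref{eq: def ess set} the graded piece $V(r\omega_k)_{\preceq_\Psi\bm}/V(r\omega_k)_{\prec_\Psi\bm}$ is nonzero, so there is a vector $\bff^{\bm}\cdot v_{r\omega_k}$ that is not in the span of PBW monomials strictly $\prec_\Psi$-below it; dualizing, there is a section $g\in V(r\omega_k)^*\subset A_{k,n}$ whose polynomial expression in the $x_i$ has $x^{\bm}$ as its $\prec_\Psi$-lowest monomial, i.e. $\val_S(g)=\bm$, so $\bm\in S(A_{k,n},\val_S)$. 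For $\supseteq$: given $\bm=\val_S(g)$ for some homogeneous $0\ne g\in A_{k,n}$ of degree $r$, the lowest monomial $x^{\bm}$ pairs nontrivially against $\bff^{\bm}\cdot v_{r\omega_k}$ while all $\prec_\Psi$-smaller PBW monomials are killed (by definition of the valuation as the $\prec_\Psi$-minimum over nonzero coefficients), which forces $\bff^{\bm}\cdot v_{r\omega_k}\notin V(r\omega_k)_{\prec_\Psi\bm}$, hence $\bm\in\es_S(r\omega_k)$. Here I would lean on the fact, recalled just before the statement, that $\{\bff^{\bm}\cdot v_\lambda\mid \bm\in\es_S(\lambda)\}$ is a \emph{basis} of $V(\lambda)$, which makes the pairing between $\es_S(r\omega_k)$ and the lowest-term monomials of degree-$r$ sections a perfect one.

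The main obstacle I anticipate is making the duality between the filtration on $V(\lambda)$ and the valuation on $V(\lambda)^*$ completely precise — in particular checking that the total order $\prec_\Psi$ and the PBW straightening interact correctly so that ``lowest term of $g$'' really corresponds to the \emph{top} nonvanishing graded piece against which $g$ pairs, and that this is compatible across all degrees $r$ simultaneously (so that the semigroup structure on the right matches the union on the left). Once that bookkeeping is in place — and it is essentially carried out in \cite[\S8--9]{FFL15}, so I would cite it rather than redo it — the equality of the two sets is a formal consequence. I would also remark that the semigroup property of $S(A_{k,n},\val_S)$ combined with $\es_S(r\omega_k)+\es_S(r'\omega_k)\subseteq \es_S((r+r')\omega_k)$ (which follows from $v_{r\omega_k}\otimes v_{r'\omega_k}\mapsto v_{(r+r')\omega_k}$ under the Cartan multiplication) gives an internal consistency check on the identification.
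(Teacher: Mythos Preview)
The paper does not prove this proposition; it is simply quoted from \cite[Proposition~2]{FFL15} and used as a black box. Your outline is a faithful reconstruction of the argument in \cite[\S8--9]{FFL15}: the duality you describe between the PBW filtration \eqref{eq: def ind filtr rep} on $V(r\omega_k)$ and the lowest-term filtration on $V(r\omega_k)^*$ via the pairing $\langle g,\bff^{\bm}\cdot v_{r\omega_k}\rangle$ is exactly the mechanism used there, and your two inclusions are the natural way to read off the equality of sets once that duality is in place.

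One small point worth tightening in your $\supseteq$ direction: you take $g$ homogeneous of degree $r$, but $S(A_{k,n},\val_S)$ is by definition the image of \emph{all} nonzero elements. The reduction is easy --- write $g=\sum_r g_r$ with $g_r\in V(r\omega_k)^*$; since the $g_r$ lie in distinct weight components the polynomial expressions in the $x_i$ have disjoint supports, so $\val_S(g)=\min_{\prec_\Psi}\{\val_S(g_r):g_r\ne 0\}$ is already attained by some homogeneous piece --- but it should be said explicitly.
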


The proposition (resp. its proof) implies that $\es_S(\omega_k)=\{\val_S(\bar p_{J})\mid J\in\binom{[n]}{k}\}$ by the counting argument 
\[
\vert\es_S(\omega_k)\vert = \dim_{\mathbb C}V(\omega_k)=\binom{n}{k}= 
\left\vert
    \left\{\val_S(\bar p_J)
        \left\vert J\in\tiny{\binom{[n]}{k}}
        \right.
    \right\}
\right\vert.
\]
Consider for $r\ge 1$ the set $r\es_S(\lambda):=\{\sum_{j=1}^r\bm_j\mid \bm_j\in\es_S(\lambda)\forall j\}$. Then by construction we have
\[
r\es_S(\omega_k)\subseteq \es_S(r\omega_k).
\]
If equality holds for all $r\ge 1$ by \cite[Proposition~2]{FFL15} the value semi-group $S(\mathbb C[\Gr(k,\mathbb C^n)],\val_S)$ is generated by $\{\val_S(\bar p_{J})\mid J\in\binom{[n]}{k}\}$. Hence, the Plücker coordinates form a Khovanskii basis for $\val_S$ and we can apply Theorem~\cite{KK12} to get a toric degenration of $\Gr(k,\mathbb C^n)$. 
Our aim is to show that this is the case when $S$ is an iterated sequences for $\Gr(2,\mathbb C^n)$.

\subsection{Iterated sequences for \texorpdfstring{$\Gr(2,\mathbb C^n)$}{}}\label{sec:proof}
In this subsection we prove Theorem~\ref{thm:main birat} stated in the introduction. 
After proving Proposition~\ref{prop: init M_S= init C_S} it follows from Theorem~\ref{thm: val and quasi val with wt matrix} stated in \S\ref{sec:val and quasival}.
We focus on iterated sequences for $\Gr(2,\mathbb C^n)$ and start by making the above definitions precise.

Let $S=(\beta_1,\dots,\beta_d)$ be a birational sequence for $\Gr(2,\mathbb C^n)$.
With notation as in the previous subsection we have $I_k=I_2=[n-1]\setminus 2$ and $\ell(w_2)=2(n-2)=d$. 
For $\lie n^-_{2}=\Lie(U_2^-)$, by \cite[Lemma~2]{FFL15} $U(\lie n^-_2)$ is generated by monomials of form $f_{\beta_1}^{m_1}\dots f_{\beta_d}^{m_{d}}$.
We consider the irreducible highest weight representation $V(\omega_2)=\bigwedge^2\mathbb C^n$ of highest weight $\omega_2\in\Lambda^+$. 
It is cyclically generated over $U(\lie n^-_2)$ by a highest weight vector $v_{\omega_2}$, which we chose to be $e_1\wedge e_2$ as in Example~\ref{exp: fund reps 1 and 2}. 
The Pl\"ucker coordinate $\bar p_{ij}$ is the dual basis vector to $e_i\wedge e_j$ for $1\le i<j\le n$ in $(\bigwedge^2\mathbb C^n)^*$. 
There exists at least one monomial of form ${\bf f^m}=f_{\beta_1}^{m_1}\dots f_{\beta_{d}}^{m_{d}}$ with the property ${\bf f^m}(e_1\wedge e_2)=e_i\wedge e_j$ for all $i,j\in[n]$. 
Then by \cite[Proposition~2]{FFL15} we have
\begin{eqnarray}\label{eq: val seq on plucker}
\val_s(\bar p_{ij})=\min{}_{\prec_{\Psi}}\{{\bf m}\in \mathbb Z^{d}_{\ge 0}\mid {\bf f^m}(e_1\wedge e_2)=e_i\wedge e_j\}.
\end{eqnarray}

\begin{table}
\begin{center}
\begin{tabular}{ |c|c|c|}
\hline
Pl\"ucker & $\val_S$ & $\val_{S'}$\\%$f_{\alpha_{1,3}}f_{\alpha_{2,3}}f_{\alpha_{1,2}}f_{\alpha_2}$\\
 \hline
$\bar p_{12}$ & $(0,0,0,0)$ & $(0,0,0,0)$\\[-1.5ex]
$\bar p_{13}$ & $(0,0,0,1)$ & $(0,0,0,1)$\\[-1.5ex]
$\bar p_{23}$ & $(0,0,1,0)$ & $(0,0,1,0)$\\[-1.5ex]
$\bar p_{14}$ & $(0,1,0,0)$ & $(1,0,0,1)$\\[-1.5ex]
$\bar p_{24}$ & $(1,0,0,0)$ & $(1,0,1,0)$\\[-1.5ex]
$\bar p_{34}$ & $(1,0,0,1)$ & $(0,1,1,0)$\\
\hline
\end{tabular}
\end{center}\caption{Images of Pl\"ucker coordinates under the valuations $\val_S,\val_{S'}$ associated to $S=(\alpha_{1,3},\alpha_{2,3},\alpha_{1,2},\alpha_2)$ and  $S'=(\alpha_3,\alpha_{2,3},\alpha_{1,2},\alpha_2)$ for $\Gr(2,\mathbb C^4)$.}\label{tab:val(2,4)}
\end{table}

\begin{example}\label{exp:seq}
Consider $\Gr(2,\mathbb C^4)$ with iterated sequences $S=(\alpha_{1,3},\alpha_{2,3},\alpha_{1,2},\alpha_2)$ and $S'=(\alpha_3,\alpha_{2,3},\alpha_{1,2},\alpha_2)$. 
They are birational by Lemma~\ref{lem:birat}, as $(\alpha_{1,2},\alpha_2)$ is of PBW type for $\Gr(2,\mathbb C^{3})$. 
We compute the valuation $\val_S$ on Pl\"ucker coordinates. 
There are two monomials sending $e_1\wedge e_2$ to $e_3\wedge e_4$, namely 
\[
{\bf f}^{(1,0,0,1)}\cdot e_1\wedge e_2={\bf f}^{(0,1,1,0)}\cdot e_1\wedge e_2=e_1\wedge e_4.
\]
We have $\Psi(1,0,0,1)=\Psi(0,1,1,0)=4$, but $(1,0,0,1)>_{lex}(0,1,1,0)$. Hence, $\val_S(\bar p_{34})=(1,0,0,1)$. 

For $\val_{S'}$ we compute ${\bf f}^{(1,0,0,1)}\cdots e_1\wedge e_2= {\bf f}^{(0,1,0,0)}\cdots e_1\wedge e_2= e_1\wedge e_4$
Again, we have $\Psi(1,0,0,1)=\Psi(0,1,0,0)=2$, but as $(1,0,0,1)>_{lex} (0,1,0,0)$ it follows $\val_{S'}(\bar p_{14})=(1,0,0,1)$.
In Table~\ref{tab:val(2,4)} you can find the images of all Pl\"ucker coordinates under $\val_S$ and $\val_{S'}$.
\end{example}

\begin{figure}
\begin{center}
\begin{tikzpicture}[scale=.25]
\node at (-5,-0.5) {$T_3=$};

\draw (0,0) -- (0,-3);
\draw (0,0) -- (3,2);
\draw (0,0) -- (-3,2);

\node at (0,-3.6) {\tiny $3$};
\node at (3.5,2.25) {\tiny $2$};
\node at (-3.5,2.25) {\tiny $1$};
\end{tikzpicture}
\end{center}
\caption{Labelled trivalent tree with three leaves.}\label{fig:3leaves}
\end{figure}
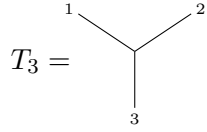

From now on we consider an iterated birational sequence $S=((i_n,n),(j_n,n),\dots,(i_3,3),(j_3,3))$ for $\Gr(2,\mathbb C^n)$, where $(i_k,k)$ represents the positive root $\alpha_{i_k,k-1}=\epsilon_{i_k}-\epsilon_k$.
We chose this notation as it easily encodes the action of $f_{\alpha_{i_k,k-1}}\in\lie n^-$ on $\mathbb C^n$ (see Example~\ref{exp: fund reps 1 and 2}), which we need to compute $\val_S$ on Plücker coordinates.
The following algorithm associates to $S$ a trivalent tree $T_S$ with $n$ leaves labelled by $[n]$.

\begin{algorithm}[h]
\SetAlgorithmName{Algorithm}{} 
\KwIn{\medskip {\bf Input:\ }  An iterated birational sequence $S=((i_n,n),(j_n,n),\dots,(i_3,3),(j_3,3))$, the trivalent tree $T_3$ as in Figure~\ref{fig:3leaves}.}
\BlankLine
{\bf Initialization:} Set $k=4$, $T^S_{3}:=T_3$.\\
\For{k}{Construct a tree $T_k^S$ from $T^S_{k-1}$ by replacing the edge with leaf $i_k$ in $T_{k-1}^S$ by three edges forming a cherry with leaves labelled by $i_k$ and $k$.\\
    \If{k=n}
    {{\bf Output:\ } The tree $T^S_n$.}
    \Else{Replace $k$ by $k+1$, $T^S_{k-1}$ by $T^S_k$ and start over.}}
\BlankLine
{\bf Output:\ } The tree $T_S:=T^S_n$ and the sequence $\mathbb T_S:=(T_n^S,\dots,T_3^S)$ of trees.
\label{alg:tree assoc to seq}
\caption{Associating a trivalent tree $T_S$ with an iterated sequence $S$.}
\end{algorithm}

\begin{definition}\label{def: T_S and C_S}
To an iterated sequence $S$ we associate the trivalent tree $T_S$ and the sequence of trees $\mathbb T_S=(T^S_n,\dots,T^S_3)$ that are the output of Algorithm~\ref{alg:tree assoc to seq}. 
Denote by $C_S$ the maximal cone in $\trop(\Gr(2,\mathbb C^n))$ corresponding to the tree $T_S$ by \cite[Theorem~3.4]{SS04} restated in \S\ref{sec:pre_trop}.
\end{definition}

\begin{example}\label{exp:treeseq}
Consider $S=((4,6),(5,6),(2,5),(3,5),(2,4),(3,4),(1,3),(2,3))$, an iterated sequence for $\Gr(2,\mathbb C^6)$ .
We construct the trees $\mathbb T_S=(T^S_{3},T_4^S,T_5^S,T^S_6)$ by Algorithm~\ref{alg:tree assoc to seq}. 
Figure~\ref{fig:treeseq} shows the obtained sequence of trees.

\begin{figure}[ht]
\begin{center}
\begin{tikzpicture}[scale=.25]
\draw (0,0) -- (0,-2.5);
\draw (0,0) -- (2.5,2);
\draw (0,0) -- (-2.5,2);
\node at (0,-3.1) {\tiny $1$};
\node at (-3,2.5) {\tiny $2$};
\node at (3,2.5) {\tiny $3$};
\draw[->,thick] (3.5,0) -- (6,0);

\begin{scope}[xshift=10cm] %4 leaves
\draw (-3,2)-- (-1.5,0) -- (1.5,0) -- (3,2);
\draw (-3,-2) -- (-1.5,0);
\draw (1.5,0) -- (3,-2);
\node at (3.5,-2.5) {\tiny $1$};
\node at (-3.5,2.5) {\tiny $2$};
\node at (3.5,2.5) {\tiny $3$};
\node at (-3.5,-2.5) {\tiny $4$};
\draw[->,thick] (5,0) -- (7.5,0);

\begin{scope}[xshift=12.5cm]%5 leaves
\draw (-3,2)-- (-1.5,0) -- (4.5,0) -- (6,2);
\draw (-3,-2) -- (-1.5,0);
\draw (4.5,0) -- (6,-2);
\draw (1.5,0) -- (1.5,-2);
\node at (6.5,-2.5) {\tiny $1$};
\node at (-3.5,2.5) {\tiny $2$};
\node at (6.5,2.5) {\tiny $3$};
\node at (1.5,-2.6) {\tiny $4$};
\node at (-3.5,-2.5) {\tiny $5$};
\draw[->,thick] (8,0) -- (10.5,0);

\begin{scope}[xshift=15cm]%6 leaves
\draw (-3,2)-- (-1.5,0) -- (4.5,0) -- (6,2);
\draw (-3,-2) -- (-1.5,0);
\draw (4.5,0) -- (6,-2);
\draw (1.5,0) -- (1.5,-2) -- (0,-4);
\draw (1.5,-2) -- (3,-4);
\node at (6.5,-2.5) {\tiny $1$};
\node at (-3.5,2.5) {\tiny $2$};
\node at (6.5,2.5) {\tiny $3$};
\node at (-.5,-4.5) {\tiny $4$};
\node at (-3.5,-2.5) {\tiny $5$};
\node at (3.5,-4.5) {\tiny $6$};
\end{scope}
\end{scope}
\end{scope}
\end{tikzpicture}
\end{center}
\caption{The sequence $\mathbb T_S$ for $S$ as in Example~\protect{\ref{exp:treeseq}}.}\label{fig:treeseq}
\end{figure}
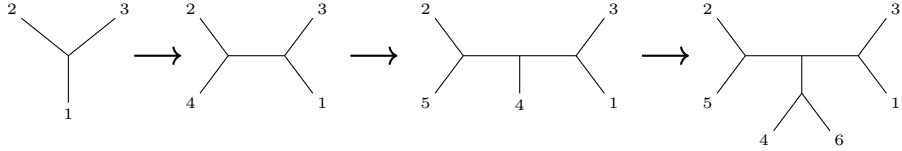
\end{example}

\begin{definition}\label{def: M_S}
We define the \emph{weighting matrix} $M_S\in\mathbb Z^{d\times \binom{n}{2}}$ associated to $S$ as the matrix whose columns are $\val_S(\bar p_{ij})$ for $\{i,j\}\in\binom{[n]}{2}$.
\end{definition}

Following \cite[\S3.1]{KM16} we want to compute $\init_{M_S}(I_{2,n})$ to apply Theorem~\ref{thm: val and quasi val with wt matrix}. Recall the Definition~\ref{def: init wrt M} from \S\ref{sec:val and quasival}.

\begin{proposition}\label{prop: init M_S= init C_S}
For every interated sequence $S$ we have $\init_{M_S}(I_{2,n})=\init_{C_S}(I_{2,n})$.
\end{proposition}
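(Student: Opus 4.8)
The plan is to prove the statement by induction on $n$, paralleling the inductive structure of Algorithm~\ref{alg:tree assoc to seq} and Lemma~\ref{lem:birat}. For $n=3$ there is nothing to show since $I_{2,3}=0$. For the inductive step, write $S=((i_n,n),(j_n,n),S'')$ where $S''$ is the iterated sequence for $\Gr(2,\mathbb{C}^{n-1})$ obtained by deleting the first two roots, and let $T^S_{n-1}$ be the tree produced for $S''$ so that $T_S=T^S_n$ is obtained from $T^S_{n-1}$ by attaching a cherry with leaves $i_n$ and $n$ at the edge carrying leaf $i_n$. By induction, $\init_{M_{S''}}(I_{2,n-1})=\init_{C_{S''}}(I_{2,n-1})$.

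The first main step is to compute the columns of $M_S$ explicitly via \eqref{eq: val seq on plucker}: for each pair $\{a,b\}$ I would determine the $\prec_\Psi$-minimal exponent vector $\bf m$ with ${\bf f^m}(e_1\wedge e_2)=e_a\wedge e_b$, using the explicit description of how $f_{\alpha_{i_k,k-1}}$ acts (from Example~\ref{exp: fund reps 1 and 2}) together with the iterated structure. The key is that the two new coordinates $f_{\alpha_{i_n,n-1}},f_{\alpha_{j_n,n-1}}$ each raise an index to $n$, and every Plücker coordinate involving $n$, say $\bar p_{a,n}$, has valuation obtained from $\bar p_{a,i_n}$ or $\bar p_{a,j_n}$ by adding one unit in the appropriate new slot (this is exactly the relation $\varphi'(y_j)=\bar p_{I\setminus\{i_j\}\cup\{n+1\}}/\bar p_I$ from the proof of Lemma~\ref{lem:birat}, reindexed). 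So $M_S$ has a block structure: the submatrix on coordinates not involving $n$ equals $M_{S''}$ padded with two zero rows, and the remaining columns are determined by the two new roots.

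The second main step is to pass from weighting matrices to weight vectors via Lemma~\ref{lem: init wM vs init M}, so that it suffices to compare $\init_{e(M_S)}(I_{2,n})$ with $\init_{{\bf w}_{T_S}}(I_{2,n})$, where ${\bf w}_{T_S}$ is the tree weight vector of Definition~\ref{def:treedeg}. Here I would compute $({\bf w}_{T_S})_{(a,b)}=-\#\{\text{internal edges on the path from }a\text{ to }b\text{ in }T_S\}$ and check that, after applying a suitable linear form $e$ and possibly adding an element of the lineality space $L_{I_{2,n}}$ (which does not change the initial ideal), $e(M_S)$ and ${\bf w}_{T_S}$ induce the same initial ideal. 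The cleanest route is: for each Plücker relation $R_{a,b,c,d}=p_{ab}p_{cd}-p_{ac}p_{bd}+p_{ad}p_{bc}$, the initial form with respect to ${\bf w}_{T_S}$ picks out the two monomials whose index pairs are the two ``non-crossing'' pairs determined by the tree (the classical four-point condition on trees), and I would verify that $e(M_S)$ selects the same two monomials. The relations split into those entirely supported on $[n-1]$ (handled by the induction hypothesis and the block structure, since adding constant rows/columns does not affect which monomials are minimal) and those involving the index $n$, for which I would use the explicit valuation formula from the first step together with the fact that attaching a cherry at leaf $i_n$ changes the path-length between $n$ (or $i_n$) and any other leaf in exactly the way that matches the $+1$ shifts in $M_S$.

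The main obstacle I anticipate is the bookkeeping for the relations involving the new index $n$: one must show that the tie-breaking built into $\prec_\Psi$ (the height-weighting $\Psi$, then reverse-lex) selects, for $\bar p_{a,n}$, precisely the representative coming through $i_n$ versus $j_n$ that is consistent with the cherry placement, and then that this choice makes every relation $R_{a,b,c,n}$ have the ``correct'' (tree-compatible) initial binomial. This requires a careful case analysis according to the relative tree-positions of $a,b,c$ with respect to the edge where the cherry is attached — essentially re-deriving the four-point condition from the valuation side. Everything else (the block structure of $M_S$, the reduction to weight vectors, the induction) should be routine once that calculation is in place.
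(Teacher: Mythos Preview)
Your proposal is essentially correct and follows the same inductive strategy as the paper: induction on $n$ via the iterated structure of $S$, exploiting the block form $\val_S(\bar p_{ij})=(0,0,\val_{S''}(\hat p_{ij}))$ for $i,j<n$ to reduce to the inductive hypothesis, and handling the relations $R_{a,b,c,n}$ by explicit computation of $\val_S(\bar p_{rn})$ in terms of whether $r=i_n$ or not, matched against the cherry $(i_n,n)$ in $T_S$.

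Two small points where the paper is more direct. First, the paper does not pass through a linear form $e$ and a weight vector $e(M_S)$: it compares $\init_{M_S}(R_{i,j,k,l})$ with $\init_{C_S}(R_{i,j,k,l})$ directly, using that $M_S(e_{ij}+e_{kl})=\val_S(\bar p_{ij})+\val_S(\bar p_{kl})=\val_S(\bar p_{ij}\bar p_{kl})$, so the $\prec_\Psi$-minimum among the three monomials is read off immediately from the valuation. Your detour through $e(M_S)$ and lineality-space adjustments is unnecessary (and the lineality remark is a red herring: you only need the initial ideals to agree, not the vectors themselves). Second, you should make explicit the reduction step that the paper invokes from \cite[Proof of Theorem~3.4]{SS04}: the Pl\"ucker relations form a Gr\"obner basis for $I_{2,n}$ with respect to $\mathbf w_{T_S}$, i.e.\ $\init_{C_S}(I_{2,n})=\langle \init_{C_S}(R_{i,j,k,l})\rangle$. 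Without this, agreement of initial forms on generators does not by itself give agreement of initial ideals (cf.\ Example~\ref{exp: initial form and ideal}); once you have it, one inclusion is immediate and the other follows by equality of Hilbert functions.
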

\begin{proof}
Recall that $\init_{C_S}(I_{2,n})=\langle \init_{C_S}(R_{i,j,k,l})\mid i,j,k,l\in[n] \rangle$ by \cite[Proof of Theorem~3.4]{SS04}. 
This implies that it is enough to prove the following claim.

\emph{Claim:} For every Plücker relation $R_{i,j,k,l}$ with $i,j,k,l\in[n]$ we have $\init_{C_S}(R_{i,j,k,l})=\init_{M_S}(R_{i,j,k,l})$.

Let $\{e_{ij}\}_{\{i,j\}\in\binom{[n]}{2}}$ be the stansard basis for $\mathbb R^{\binom{n}{2}}$. Adopting the notation for monomials in the polynomial ring $\mathbb C[p_{ij}]_{ij}$ we have
\[
R_{i,j,k,l}=p_{ij}p_{kl}-p_{ik}p_{jl}+p_{il}p_{jk}=\p^{e_{ij}+e_{kl}}-\p^{e_{ik}+e_{jl}}+\p^{e_{il}+e_{jk}}. 
\]
In particular, $M_S(e_{ij}+e_{kl})=\val_S(\bar p_{ij})+\val_S(\bar p_{kl})=\val_S(\bar p_{ij}\bar p_{kl})$ and $\init_{M_S}(R_{i,j,k,l})$ is the sum of those monomials in $R_{i,j,k,l}$ for which the valuation $\val_S$ of the corresponding monomials in $A_{2,n}$ is minimal with respect to $\prec_{\Psi}$.

\emph{Proof of claim:} We proceed by induction. For $n=4$ let $S=((i,4),(j,4),(i_3,3),(j_3,3))$, i.e. the tree $T_S$ has a cherry labelled by $i$ and $4$. 
Consider the Plücker relation $R_{i,j,k,4}=p_{ij}p_{k4}-p_{ik}p_{j4}+p_{i4}p_{jk}$ with $\{i,j,k\}=[3]$. Then
\[
\init_{C_S}(R_{i,j,k,4})=p_{ij}p_{k4}-p_{ik}p_{j4}.
\]
Let $S'=((i_3,3),(j_3,3))$ be the sequence for $\Gr(2,\mathbb C^{3})$ and denote by $\hat p_{rs}$ with $r,s\in[3]$ the Plücker coordinates in $A_3$. 
For $\bm\in\mathbb Z^{d-2}$ and $m_d,m_{d-1}\in\mathbb Z$ write $(m_d,m_{d-1},\bm):=(m_d,m_{d-1},m_{d-2},\dots,m_1)$
We compute
\[
\val_S(\bar p_{i4})=(0,1,\val_{S'}(\hat p_{ij})),\ \
\val_S(\bar p_{j4})=(1,0,\val_{S'}(\hat p_{ij})),\ \text{  and  } \ 
\val_S(\bar p_{k4})=(1,0,\val_{S'}(\hat p_{ik})).
\]
This implies $\val_S(\bar p_{i4}\bar p_{jk})\succ_{\Psi}\val_S(\bar p_{ij}\bar p_{k4})=\val_S(p_{ik}p_{j4})$, and hence
$\init_{M_S}(R_{i,j,k,4})=\init_{C_S}(R_{i,j,k,4}).$

Assume the claim is true for $n-1$ and let $S=((i_n,n),(j_n,n),\dots,(i_3,3),(j_3,3))$ be an iterated sequence for $\Gr(2,\mathbb C^n)$.  
Then $S'=((i_{n-1},n-1),(j_{n-1},n-1),\dots,(i_3,3),(j_3,3))$ is an iterated sequence for $\Gr(2,\mathbb C^{n-1})$.
Denote by $\hat p_{ij}$ with $i,j\in[n-1]$ the Plücker coordinates in $A_{n-1}$.
As $\val_S(\bar p_{ij})=(0,0,\val_{S'}(\hat p_{ij}))$ for $i,j<n$ we deduce $\init_{C_S}(R_{i,j,k,l})=\init_{M_S}(R_{i,j,k,l})$ with $i,j,k,l<n$ by induction.
Consider the Plücker relation $R_{i,j,k,n}$. Then
\[
\val_S(\bar p_{rn})=\left\{\begin{matrix}
(1,0,\val_{S'}(\hat p_{ri_n})), & \text{ if } r\not=i_n\\
(0,1,\val_{S'}(\hat p_{i_nj_n})),& \text{ if } r=i_n.
\end{matrix}\right.
\]
As $(i_n,n)$ is a cherry in $T_S$ we observe that the associated weight vector $\bw_{T_S}\in C_S^\circ\subset\trop(\Gr(2,\mathbb C^n))$ satisfies $(\bw_{T_S})_{rn}=(\bw_{T_S})_{ri_n}=(\bw_{T_{S'}})_{ri_n}-1$.
In particular, for $i,j,k\not=i_n$ we deduce by induction $\init_{M_S}(R_{i,j,k,n})=\init_{C_S}(R_{i,j,k,n})$.
The only relations left to consider are of form $R_{i_n,j,k,n}$  for $j,k\in[n-1]\setminus\{i_n\}$. For $M_S$ we compute by the above
\[
\val_S(\bar p_{i_nj}\bar p_{kn})=\val_S(\bar p_{i_nk}\bar p_{jn})\succ_{\Psi} \val_S(\bar p_{i_nn}\bar p_{jk}).
\]
Hence, $\init_{M_S}(R_{i_n,j,k,n})=p_{i_nj}p_{kn}-p_{i_nk}p_{jn}$. As $(i_n,n)$ is a cherry in $T_S$ we obtain $\init_{C_S}(R_{i_n,j,k,n})=\init_{M_S}(R_{i_n,j,k,n})$.
\end{proof}

As $\init_{C_S}(I_{2,n})$ is prime, Proposition~\ref{prop: init M_S= init C_S} allows us to apply Theorem~\ref{thm: val and quasi val with wt matrix} from \S\ref{sec:val and quasival}.
Let us have a look at the other necessary assumptions before we formulate the statements from \S\ref{sec:val and quasival} in the context of valuations from iterated sequences for $\Gr(2,\mathbb C^n)$ below. 
For completeness we also include the proofs in this case, although this would be not necessary given that we can apply the general theorem. 
They just serve as an example to obtain a better understanding of the general theory.

We consider the algebra $A_{2,n}$, the homogeneous coordinate ring of $\Gr(2,\mathbb C^n)$.
We have fixed the Plücker embedding, that yields a presentation $\pi: \mathbb C[p_{ij}]_{ij}\to A_{2,n}$ with $A_{2,n}=\mathbb C[p_{ij}]_{ij}/\ker(\pi)$.
More precisely, $\ker(\pi)=I_{2,n}$ is the Plücker ideal. The candidate for a Khovanksii basis is therefore $\{\pi(p_{ij})=\bar p_{ij}\}_{ij}\subset A_{2,n}$.
As $\mathbb C[p_{ij}]_{ij}$ is postivily graded by $\mathbb Z_{\ge 0}$ (we have $\deg p_{ij}=1$) and $I_{2,n}$ is homogeneous with respect to this grading generated by Plücker relations of degree 2, by Lemma~\ref{lem: val vs val_Mval} we have $\val_S(\bar p_{ij})=\val_{M_S}(\bar p_{ij})$. 
Here $\val_S:A_{2,n}\setminus\{0\}\to \mathbb Z^d$ is the valuation induced by the iterated sequence $S$ and $\val_{M_S}:A_{2,n}\setminus\{0\}\to \mathbb Z^d$ the (quasi-)valuation defined by the weighting matrix $M_S$ of $\val_S$ as above (see Definition~\ref{def: quasi val from wt matrix}).
 
%We define as in \cite{KM16} a \emph{quasi-valuation} (see Definition~\ref{def: valuation}) associated with the weighting matrix $M_S$.
%Let $\pi_n:\mathbb C[p_{ij}]_{ij}\to A_{2,n}=\mathbb C[p_{ij}]_{ij}/I_{2,n}$ be the natural projection. 
%We define $\tilde \val_{M_S}:\mathbb C[p_{ij}]_{ij}\setminus\{0\}\to (\mathbb Z^{d},\prec_\Psi)$ by setting for $\tilde f=\sum a_{\bu}\p^{\bu}$ for $a_{\bu}\in\mathbb C$ and $\bu\in\mathbb Z_{\ge 0}^{d}$ by
%\[
%\tilde \val_{M_S}(\tilde f):=\min{}_{\prec_{\Psi}}\{M_S\bu\mid a_{\bu}\not=0\}.
%\]
%By \cite[Lemma~4.2]{KM16} the following defines the quasi-valuation $\val_{M_S}:A_{2,n}\setminus\{0\}\to(\mathbb Z^d,\prec_{\Psi})$
%\begin{eqnarray}\label{eq: def val_M_S}
%\val_{M_S}(f):=\max{}_{\prec_{\Psi}}\{ \tilde\val_{M_S}(\tilde f)\mid \tilde f\in\mathbb C[p_{ij}]_{ij}, \pi_n(\tilde f)=f \} \text{ for }f\in A_{2,n}\setminus\{0\}.
%\end{eqnarray}
%As for valuations (seen in \S\ref{sec:pre val}) we obtain an induced filtration $\mathcal F_{\val_{M_S}}=:\mathcal F_{M_S}$ on $A_{2,n}$. Denote its associated graded by $\gr_{M_S}(A_{2,n})$.
Inspired by the proof of \cite[Proposition~5.2]{KM16} we obtain the next proposition.
The proof is analogous to the one of Corollary~\ref{cor: NO body val_M} in \S\ref{sec:val and quasival}.

\begin{proposition}\label{prop: ass gr M_S}
For every iterated sequence $S$ the associated quasi-valuation $\val_{M_S}$ with weighting matrix $M_S$ satisfies $\gr_{M_S}(A_{2,n})\cong \mathbb C[p_{ij}]_{ij}/\init_{C_S}(I_{2,n})$.
Moreover, $\val_{M_S}$ is a valuation with value semi-group $S(A_{2,n},\val_{M_S})$ generated by $\val_{M_S}(\bar p_{ij})$ for $1\le i<j\le n$.
\end{proposition}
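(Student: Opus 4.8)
The plan is to deduce Proposition~\ref{prop: ass gr M_S} from the general machinery of \S\ref{sec:val and quasival}, using Proposition~\ref{prop: init M_S= init C_S} to identify the relevant initial ideal. First I would record the input data: the presentation $\pi\colon \mathbb C[p_{ij}]_{ij}\to A_{2,n}$ with $\ker(\pi)=I_{2,n}$, the positive $\mathbb Z_{\ge0}$-grading with $\deg p_{ij}=1$, and the fact that $I_{2,n}$ is generated by the Plücker relations $R_{i,j,k,l}$, each of degree $2>1=\varepsilon_1$. Thus the hypotheses of Lemma~\ref{lem: val vs val_Mval} are met, giving $\val_{M_S}(\bar p_{ij})=\val_S(\bar p_{ij})$, and more importantly the hypotheses of Corollary~\ref{cor: NO body val_M} and Theorem~\ref{thm: val and quasi val with wt matrix} are in place provided the relevant initial ideal is prime.

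Next I would invoke the key-lemma \cite[Lemma~4.4]{KM16}, restated in \S\ref{sec:val and quasival}, which gives $\gr_{M_S}(A_{2,n})\cong \mathbb C[p_{ij}]_{ij}/\init_{M_S}(I_{2,n})$ for any weighting matrix. Then by Proposition~\ref{prop: init M_S= init C_S} we have $\init_{M_S}(I_{2,n})=\init_{C_S}(I_{2,n})$, so
\[
\gr_{M_S}(A_{2,n})\cong \mathbb C[p_{ij}]_{ij}/\init_{C_S}(I_{2,n}),
\]
which is the first assertion. For the second assertion, since $C_S$ is a maximal cone of $\trop(\Gr(2,\mathbb C^n))$, the Speyer--Sturmfels result \cite[Corollary~4.4]{SS04} (restated in \S\ref{sec:pre_trop}) guarantees that $\init_{C_S}(I_{2,n})$ is prime; hence $\init_{M_S}(I_{2,n})$ is prime. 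The weighting matrix $M_S$ lies in $\mathbb Z^{d\times\binom n2}$ and $\val_S$ is a full-rank valuation (its value semigroup spans $\mathbb Z^d$, as can be seen from the images of Plücker coordinates together with the essential-set description), so $d$ equals the Krull dimension of $A_{2,n}$ in the relevant sense; then Corollary~\ref{cor: NO body val_M} applies directly and tells us $\val_{M_S}$ is a valuation whose value semigroup $S(A_{2,n},\val_{M_S})$ is generated by $\{\val_{M_S}(\bar p_{ij})\}_{1\le i<j\le n}$.

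Alternatively, and in the spirit of the remark preceding the proposition that the argument should mirror the proof of Corollary~\ref{cor: NO body val_M}, one can argue directly: since $\init_{C_S}(I_{2,n})$ is prime, $\gr_{M_S}(A_{2,n})$ is a domain; if $\val_{M_S}$ failed to be a valuation there would be $f,g\in A_{2,n}\setminus\{0\}$ with $\val_{M_S}(fg)\succ\val_{M_S}(f)+\val_{M_S}(g)$, forcing $\overline{fg}=\overline 0$ in $\gr_{M_S}(A_{2,n})$ with $\overline f,\overline g\neq\overline 0$, contradicting the domain property; then $\gr_{M_S}(A_{2,n})$ is generated by the images $\overline{b_i}=\overline{\pi(p_{ij})}$, and since $\val_{M_S}$ has one-dimensional leaves by \cite[Theorem~2.3]{KM16} we get $\gr_{M_S}(A_{2,n})\cong\mathbb C[S(A_{2,n},\val_{M_S})]$ by \cite[Proposition~2.4]{KM16}, whence the value semigroup is generated by the valuations of the Plücker coordinates. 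The main (really, the only) obstacle is bookkeeping: checking carefully that the dimension/full-rank hypothesis of Corollary~\ref{cor: NO body val_M} genuinely holds here (i.e.\ that $d=\dim A_{2,n}$), which follows since $d=2(n-2)=k(n-k)$ equals $\dim\Gr(2,\mathbb C^n)$ and hence the Krull dimension of the homogeneous coordinate ring modulo the irrelevant adjustment, combined with full-rankness of $\val_S$ as an honest valuation on the function field. Everything else is a direct citation of results already established in the excerpt.
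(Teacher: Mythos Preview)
Your proposal is correct and follows essentially the same route as the paper: apply \cite[Lemma~4.4]{KM16} to get $\gr_{M_S}(A_{2,n})\cong\mathbb C[p_{ij}]_{ij}/\init_{M_S}(I_{2,n})$, identify this with $\init_{C_S}(I_{2,n})$ via Proposition~\ref{prop: init M_S= init C_S}, and then argue exactly as in the proof of Corollary~\ref{cor: NO body val_M} using the primeness of $\init_{C_S}(I_{2,n})$ from \cite{SS04}. One small bookkeeping slip: $d=2(n-2)$ equals $\dim\Gr(2,\mathbb C^n)$, which is one less than the Krull dimension of $A_{2,n}$, so Corollary~\ref{cor: NO body val_M} does not apply verbatim as a black box; but your alternative direct argument (domain $\Rightarrow$ valuation, then one-dimensional leaves via full rank of $\val_S$ on the function field, then $\gr\cong\mathbb C[S]$) is precisely what the paper does and is unaffected by this.
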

\begin{proof}
As $I_{2,n}$ is homogeneous with respect to a positive grading, we have $M_S\in\text{GR}^d(I_{2,n})$ and hence, can apply \cite[Lemma~4.4]{KM16} to get $\gr_{M_S}(A_{2,n})\cong \mathbb C[p_{ij}]_{ij}/\init_{M_S}(I_{2,n})$. 
Then the first part of the claim follows from Proposition~\ref{prop: init M_S= init C_S}.

As $\init_{C_S}(I_{2,n})$ is prime, $\gr_{M_S}(A_{2,n})\cong \mathbb C[p_{ij}]_{ij}/\init_{C_S}(I_{2,n})$ is a domain.
The rest of the proof is exactly the same as the proof of Corollary~\ref{cor: NO body val_M} in \S\ref{sec:val and quasival}.
%Assume $\val_{M_S}$ is not a valuation, i.e. there exist $f,g\in A_{2,n}\setminus\{0\}$ with $\val_{M_S}(fg)\succ_{\Psi}\val_{M_S}(f)+\val_{M_S}(g)$. 
%If $\val_{M_S}(f)=a$ and $\val_{M_S}(g)=b$, then $\overline{f}\in F_{\succeq_{\Psi}a}/F_{\succ_{\Psi} a}$ and $\overline{g}\in F_{\succeq_{\Psi}b}/F_{\succ_\Psi b}$, where $F_{\succ_{\Psi}a}$ denotes a filtered piece of the filtartion $\mathcal F_{M_S}$ on $A_{2,n}$. 
%Then $\overline{fg}=\overline{0}\in\gr_{M_S}(A_{2,n})$ as by the grading we have $\overline{fg}\in F_{\succeq_{\Psi}a+b}/F_{\succ_{\psi}a+b}$ but $\val_{M_S}(fg)\succ_{\Psi}a+b$, a contradiction to being a domain.
\end{proof}

\begin{corollary}\label{cor: NO body M_S}
For every iterated sequence $S$ the Newton-Okounkov body associated with the weight valuation $\val_{M_S}$ is given by
\[
\Delta(A_{2,n},\val_{M_S})=\conv(\val_{S}(\bar p_{ij})\mid 1\le i<j\le n).
\]
\end{corollary}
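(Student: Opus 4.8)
The statement to prove is Corollary~\ref{cor: NO body M_S}: for every iterated sequence $S$, $\Delta(A_{2,n},\val_{M_S}) = \conv(\val_S(\bar p_{ij}) \mid 1 \le i < j \le n)$.

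\medskip

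The plan is to deduce this directly from Proposition~\ref{prop: ass gr M_S}, which has already established that $\val_{M_S}$ is a genuine valuation whose value semigroup $S(A_{2,n},\val_{M_S})$ is generated by the finite set $\{\val_{M_S}(\bar p_{ij}) \mid 1 \le i < j \le n\}$. First I would recall that, because $\val_{M_S}$ is a full-rank valuation (its weighting matrix $M_S \in \mathbb Z^{d \times \binom n2}$ with $d = 2(n-2)$ the Krull dimension of $A_{2,n}$ minus one, or rather $\val_{M_S}$ has rank $d$ as noted just before the proposition), it has one-dimensional leaves by \cite[Theorem~2.3]{KM16}, and $A_{2,n}$ is the homogeneous coordinate ring of the projective variety $\Gr(2,\mathbb C^n)$. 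Hence the hypotheses of Theorem~\cite{KK12} (the Kaveh–Khovanskii theorem stated in \S\ref{sec:pre val}) are satisfied: $\Delta(A_{2,n},\val_{M_S})$ is a convex body, and since $S(A_{2,n},\val_{M_S})$ is finitely generated, it is in fact a rational polytope.

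\medskip

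The core computation is then to identify this polytope with the convex hull of the generators. By Proposition~\ref{prop: ass gr M_S} the value semigroup is generated by the elements $\val_{M_S}(\bar p_{ij})$, each of which lies in degree $1$ of $A_{2,n}$ (the Plücker coordinates are the degree-one generators of the homogeneous coordinate ring). I would invoke the extended/graded structure: since $A_{2,n}$ is generated in degree one by the $\bar p_{ij}$, every element of $(A_{2,n})_r$ is a polynomial of degree $r$ in the Plücker coordinates, so its value is a sum of $r$ of the $\val_{M_S}(\bar p_{ij})$ (using that $\val_{M_S}$ is a valuation, hence additive on products, together with the ultrametric inequality to control sums). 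Therefore $S(A_{2,n},\val_{M_S}) \cap (\text{degree } r) \subseteq r \cdot \{\val_{M_S}(\bar p_{ij})\}_{ij} + (\text{semigroup})$, and dividing by $r$ and taking the closure of the union over all $r$ as in Definition~\ref{def: val sg NO} yields $\Delta(A_{2,n},\val_{M_S}) \subseteq \conv(\val_{M_S}(\bar p_{ij}) \mid i < j)$. The reverse inclusion is immediate: each $\val_{M_S}(\bar p_{ij})$ is itself a value of a degree-one element, so it lies in $\Delta(A_{2,n},\val_{M_S})$, and $\Delta$ is convex. Finally, by Lemma~\ref{lem: val vs val_Mval} (applicable since $I_{2,n}$ is homogeneous with respect to the standard $\mathbb Z_{\ge 0}$-grading and generated by the quadratic Plücker relations $R_{i,j,k,l}$ with $\deg R_{i,j,k,l} = 2 > 1 = \varepsilon_1$), we have $\val_{M_S}(\bar p_{ij}) = \val_S(\bar p_{ij})$, so the convex hull of the $\val_{M_S}(\bar p_{ij})$ equals the convex hull of the $\val_S(\bar p_{ij})$, giving the claimed formula.

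\medskip

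The main obstacle — which is really only a matter of bookkeeping rather than a genuine difficulty — is making the degree-grading argument for the inclusion $\Delta \subseteq \conv(\val_S(\bar p_{ij}))$ precise: one must check that the normalization to the hyperplane defining $\Delta$ is compatible with writing each homogeneous element as a polynomial in the $\bar p_{ij}$ and extracting its value as a sum of generator-values. This is exactly the standard argument that for a graded algebra generated in degree one with a finitely generated value semigroup generated by the images of those degree-one generators, the Newton–Okounkov body is their convex hull; alternatively, one can simply cite that $\Delta(A_{2,n},\val_{M_S})$ is the polytope of the (normalization of the) toric variety $\Proj(\mathbb C[S(A_{2,n},\val_{M_S})])$ by Theorem~\cite{KK12}, and this toric variety is cut out by $\init_{C_S}(I_{2,n})$ via Proposition~\ref{prop: ass gr M_S} together with Lemma~\ref{toric:lemma}, whose associated lattice polytope is precisely $\conv(\val_S(\bar p_{ij}))$. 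Either route closes the proof in a few lines.
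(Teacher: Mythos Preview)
Your proposal is correct and follows essentially the same approach as the paper: both deduce from Proposition~\ref{prop: ass gr M_S} (itself a specialization of Corollary~\ref{cor: NO body val_M}) that the Newton--Okounkov body is the convex hull of the $\val_{M_S}(\bar p_{ij})$, and then invoke Lemma~\ref{lem: val vs val_Mval} to replace $\val_{M_S}(\bar p_{ij})$ by $\val_S(\bar p_{ij})$. Your version spells out the degree-one convex-hull argument that the paper absorbs into the reference to Proposition~\ref{prop: ass gr M_S}.
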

\begin{proof}
By Proposition~\ref{prop: ass gr M_S} we have $\Delta(A_{2,n},\val_{M_S})=\conv(\val_{M_S}(\bar p_{ij})\mid 1\le i<j\le n)$. Therefore it remains to show $\val_{M_S}(\bar p_{ij}))=\val_S(\bar p_{ij})$ for all $i,j\in[n]$. This follows exactly by the argument in the proof of Lemma~\ref{lem: val vs val_Mval}.
%By \eqref{eq: def val_M_S} we have
%\begin{eqnarray*}
%\val_{M_S}(\bar p_{ij})&=& \max{}_{\prec_{\Psi}}\{ \tilde \val_{M_S}(p_{ij}+f)\mid f\in I_{2,n} \}\\
%&\stackrel{\tiny \deg f\ge2}{=}& \max{}_{\prec_{\Psi}}\{ \min{}_{\prec_{\Psi}}\{M_Se_{ij},\tilde \val_{M_S}(f)\} \mid f\in I_{2,n} \}.
%\end{eqnarray*}
%As $\min_{\prec_{\Psi}}\{M_Se_{ij},\tilde \val_{M_S}(f)\}\preceq_{\Psi}M_Se_{ij}=\val_S(\bar p_{ij})$ we deduce $\val_{M_S}(\bar p_{ij})=\val_S(\bar p_{ij})$.
\end{proof}

\begin{theorem}\label{thm:main birat}
For every iterated sequence $S$ we have $\gr_S(A_{2,n})\cong \mathbb C[p_{ij}]_{ij}/\init_{C_S}(I_{2,n})$.
Moreover, for every maximal prime cone $C$ of $\trop(\Gr(2,\mathbb C^n))$ there exists a birational sequence $S$, such that $\mathbb C[p_{ij}]_{ij}/\init_C(I_{2,n})\cong \gr_S(A_{2,n})$.
\end{theorem}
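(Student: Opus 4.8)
The plan is to deduce Theorem~\ref{thm:main birat} from the pieces already assembled in this section together with the general machinery of \S\ref{sec:val and quasival}. For the first assertion, fix an iterated sequence $S$ for $\Gr(2,\mathbb C^n)$ and recall from Definition~\ref{def: M_S} the weighting matrix $M_S$ whose columns are the values $\val_S(\bar p_{ij})$. The homogeneous coordinate ring $A_{2,n}=\mathbb C[p_{ij}]_{ij}/I_{2,n}$ is positively $\mathbb Z_{\ge 0}$-graded with $\deg p_{ij}=1$, and $I_{2,n}$ is generated by the degree-$2$ Pl\"ucker relations, so the hypotheses of Theorem~\ref{thm: val and quasi val with wt matrix} on the grading are satisfied. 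By Proposition~\ref{prop: init M_S= init C_S} we have $\init_{M_S}(I_{2,n})=\init_{C_S}(I_{2,n})$, and by the Speyer--Sturmfels Corollary~\ref{cor: SS all cones prime} every such initial ideal coming from a maximal cone of $\trop(\Gr(2,\mathbb C^n))$ is prime. Since $\val_S$ is a full-rank valuation (its image lies in $\mathbb Z^d$ with $d=2(n-2)=\dim\Gr(2,\mathbb C^n)-1+1$, matching the Krull dimension of $A_{2,n}$, and the essential-set counting in \S\ref{sec:pre_birat} shows the values span a full-rank sublattice), Theorem~\ref{thm: val and quasi val with wt matrix} applies and yields $S(A_{2,n},\val_S)=S(A_{2,n},\val_{M_S})$, that the Pl\"ucker coordinates form a Khovanskii basis for $\val_S$, and that $\gr_S(A_{2,n})\cong\gr_{M_S}(A_{2,n})$. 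Combining this last isomorphism with Proposition~\ref{prop: ass gr M_S}, which identifies $\gr_{M_S}(A_{2,n})\cong\mathbb C[p_{ij}]_{ij}/\init_{C_S}(I_{2,n})$, gives the first claim $\gr_S(A_{2,n})\cong\mathbb C[p_{ij}]_{ij}/\init_{C_S}(I_{2,n})$.

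For the second assertion I would argue that Algorithm~\ref{alg:tree assoc to seq} realizes \emph{every} labelled trivalent tree with $n$ leaves as $T_S$ for a suitable iterated sequence $S$. The key observation is that the algorithm builds a trivalent tree by repeatedly attaching a new cherry: starting from $T_3$, at step $k$ one replaces the pendant edge carrying leaf $i_k$ by a cherry with leaves $i_k$ and $k$. Conversely, given a maximal prime cone $C$ of $\trop(\Gr(2,\mathbb C^n))$, Theorem~\ref{thm: SS space of trees is trop Gr} (restated in \S\ref{sec:pre_trop}) associates to $C$ a labelled trivalent tree $T$. By Lemma~\ref{lem:cherry} $T$ has a cherry; removing one of its two pendant edges (say the one with the larger label, leaf $n$, after relabelling if necessary) produces a labelled trivalent tree $T'$ with $n-1$ leaves, and iterating this deletion recovers $T_3$. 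Reading the sequence of cherry-creations backwards determines the data $(i_n,n),(j_n,n),\dots,(i_3,3),(j_3,3)$, hence an iterated sequence $S_C$ with $T_{S_C}=T$ and therefore $C_{S_C}=C$ by Definition~\ref{def: T_S and C_S}. Applying the first part of the theorem to $S_C$ gives $\gr_{S_C}(A_{2,n})\cong\mathbb C[p_{ij}]_{ij}/\init_{C_{S_C}}(I_{2,n})=\mathbb C[p_{ij}]_{ij}/\init_C(I_{2,n})$, which is the desired statement.

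The one point that needs care — and which I expect to be the main obstacle — is the surjectivity of the tree-to-sequence correspondence, i.e.\ checking that \emph{every} labelled trivalent tree (equivalently every maximal cone of $\trop(\Gr(2,\mathbb C^n))$, since by Speyer--Sturmfels these are in bijection with labelled trivalent trees) arises from Algorithm~\ref{alg:tree assoc to seq}. This requires verifying that the inductive ``peel off a cherry leaf'' procedure can always be arranged so that the leaf being removed at stage $k$ is exactly the leaf with label $k$; this is where one uses the freedom in choosing the coset representative and the fact that the $S_n$-action on $\mathcal T_n$ (permuting leaf labels) together with the induced ring automorphisms sends $\init_T(I_{2,n})$ to $\init_{\sigma(T)}(I_{2,n})$, so that it suffices to realize one tree in each $S_n$-orbit and then transport by relabelling. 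A careful but routine induction on $n$, parallel to the proof of Lemma~\ref{lem:cherry}, settles this: the base case $n=4$ is the unique tree of Figure~\ref{fig:gr(2,4)}, and the inductive step attaches leaf $n+1$ at whichever pendant edge of $T$ (with $n$ leaves) makes the result the target tree, which is always possible since adding a pendant edge at a leaf creates a cherry. Once this combinatorial surjectivity is in hand, the rest is a direct invocation of Proposition~\ref{prop: init M_S= init C_S}, Proposition~\ref{prop: ass gr M_S}, and Theorem~\ref{thm: val and quasi val with wt matrix}.
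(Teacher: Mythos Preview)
Your argument for the first assertion is correct and is exactly the primary route the paper takes: invoke Proposition~\ref{prop: init M_S= init C_S} to identify $\init_{M_S}(I_{2,n})=\init_{C_S}(I_{2,n})$, use Speyer--Sturmfels to conclude this is prime, and then apply Theorem~\ref{thm: val and quasi val with wt matrix} together with Proposition~\ref{prop: ass gr M_S}. (The paper also records an alternative proof via the essential sets $\es_S(k\omega_2)=k\,\es_S(\omega_2)$, but it explicitly notes this is optional.)

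For the second assertion your write-up contains a genuine overclaim that should be dropped. It is \emph{not} true that Algorithm~\ref{alg:tree assoc to seq} realises every \emph{labelled} trivalent tree: at the final step the algorithm attaches leaf $n$ as one half of a cherry, so in any $T_S$ the leaf labelled $n$ must lie in a cherry. For instance, for $n=5$ the caterpillar with cherries $\{1,2\}$ and $\{3,4\}$ and the lone leaf $5$ on the middle internal vertex is never $T_S$ for any iterated $S$. Consequently your claim ``$T_{S_C}=T$ and therefore $C_{S_C}=C$'' is too strong and cannot be established by the cherry-peeling induction you sketch; the obstacle you flagged is real and is not overcome.

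What does go through---and what the paper actually does---is the weaker statement that every \emph{shape} (unlabelled trivalent tree) arises as some $\mathtt T_S$. This is immediate from the cherry-peeling argument together with Lemma~\ref{lem:cherry}, since for an unlabelled tree there is no constraint on which leaf gets label $n$. One then uses that the $S_n$-action by relabelling induces ring automorphisms sending $\init_{T_S}(I_{2,n})$ to $\init_{\sigma(T_S)}(I_{2,n})$, so for any maximal cone $C$ with tree $T_C$ one obtains an \emph{isomorphism} $\mathbb C[p_{ij}]_{ij}/\init_{C}(I_{2,n})\cong \mathbb C[p_{ij}]_{ij}/\init_{C_S}(I_{2,n})$ for any $S$ with $\mathtt T_S=\mathtt T_C$. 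This matches the ``$\cong$'' in the theorem statement. You already mention this $S_n$-action fix in passing; the correction is simply to make it the \emph{only} argument for the second part and to delete the claim that every labelled tree is a $T_S$.
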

\begin{proof}
The first part of the claim follows from Theorem~\ref{thm: val and quasi val with wt matrix}. We give an alternative proof here, using the essential sets to illustrate another point of view on the general theorem.

We show that $S(A_{2,n},\val_S)$ is generated by $\val_S(p_{ij})$ for $1\le i<j\le n$. This implies $S(A_{2,n},\val_S)=S(A_{2,n},\val_{M_S})$. As $\val_S$ and $\val_{M_S}$ are full rank and hence have one-dimensional leaves by \cite[Remark~4.13]{BG09} (see also \S\ref{sec:pre val}) we have $\gr_S(A_{2,n})\cong \mathbb C [S(A_{2,n},\val_S)]$ and $\gr_{M_s}(A_{2,n})\cong \mathbb C [S(A_{2,n},\val_{M_S})]$. Therefore,  $\gr_{M_s}(A_{2,n})\cong \gr_{M_S}(A_{2,n})$. Then the first claim follows by Proposition~\ref{prop: ass gr M_S}.
In order to do so, we use \cite[Proposition~2]{FFL15} restated above and show $\es_S(k\omega_2)=k\es_S(\omega_2)$ for all $k\ge 1$.

%\textcolor{red}{ACHTUNG!!!}
%Recall from \S\ref{sec:pre val} the valuation 
%\[
%\hat\val_{M_S}:A_{2,n}\setminus\{0\}\to\mathbb Z_{\ge 0}\times (\mathbb Z^d,\prec_{\Psi})\text{
%with }\hat\val_{M_S}(f)=(\deg f,\val_{M_S}(f))
%\]
%and the corresponding Newton-Okounkov cone $C(A_{2,n},\hat\val_{M_S})=\cone(S(A_{2,n},\hat \val_{M_S})\cup\{0\})$.
As $\Delta(A_{2,n},\val_{M_S})$ is integral by Corollary~\ref{cor: NO body M_S}, all lattice points in the $k$th dilation $k\Delta(A_{2,n},\val_{M_S})$ are sums of $k$ lattice points in $\Delta(A_{2,n},\val_{M_S})$.
%Moreover, these are by definition exactly $\dim_{\mathbb C}V(k\omega_2)^*$ many.
We have $\es_S(\omega_2)=\{\val_S(\bar p_{ij})\mid 1\le i<j\le n\}$ by \cite[Proposition~2]{FFL15}, which are the lattice points in $\Delta(A_{2,n},\val_{M_S})$.
Then by Corollary~\ref{cor: NO body M_S} for $f\in A_{2,n}$ with $\deg f=k\ge1$ there exist $\bm_1,\dots,\bm_k\in\es_S(\omega_2)$ such that $\val_{M_S}(f)=\sum_{j=1}^k \bm_j$.
In particular, $\val_{M_S}(f)\in k\es_S(\omega_2)$.
Hence, we count
\[
\vert k\es_S(\omega_2)\vert =\dim_{\mathbb C} V(k\omega_2)=\vert \es_S(k\omega_2)\vert
\]
and the claim follows.

For the second part, note that by Algorithm~\ref{alg:tree assoc to seq} for every shape of tree we can find an iterated sequence, such that the output has the desired shape (ignoring the labelling for now).
Therefore, for a given maximal prime cone $C\subset \trop(\Gr(2,\mathbb C^n))$ consider the corresponding tree $T_C$ and its shape $\mathtt T_C$.
Then find S with $T_S$ of shape $\mathtt T_C$ (see also Corollary~\ref{cor:treegraph} below).
The action of the symmetric group induces an isomorphism $\mathbb C[p_{ij}]_{ij}/\init_{T_S}(I_{2,n})\cong \mathbb C[p_{ij}]_{ij}/\init_{T_C}(I_{2,n})$.
The rest follows then by the first part.
\end{proof}

\begin{remark}
Note that the essential basis for $A_{2,n}$ (see \cite[Remark~5]{FFL15}) induced by $\bigcup_{k\ge 1}\es(k\omega_2)$ is an adapted basis for the valuation $\val_S$ and therefore also for $\val_{M_S}$. Having the notion of essential sets in this context allowed us to use this (more concrete) basis instead of the (more abstract) standard monomial basis for $\val_{M_S}$ (that exists as $M_S$ lies in the Gröbner region) used in the proof of Theorem~\ref{thm: val and quasi val with wt matrix}.
\end{remark}

%\textcolor{green}{END FINAL APPROACH}

%%%%%%%%%%%%%%%%%%%%%%%%%%%%%%%%%%%%
%%% NOT SURE YET IF THIS STAYS

For an iterated sequence $S$ for $\Gr(2,\mathbb C^n)$ denote by $\mathtt T_i^S$ the (non-labelled) trivalent tree underlying the labelled trivalent tree $T^S_i$ with $i$ leaves in the tree sequence $\mathbb T_S$.
The Algorithm~\ref{alg:tree assoc to seq} provides a tool for comparing whether two iterated sequences induce isomorphic flat toric degenerations. 
Construct $\mathbb T_{S_1},\mathbb T_{S_2}$ for two such sequences $S_1,S_2$ and consider $\mathtt T^{S_1}_n$ and $\mathtt T^{S_2}_n$. If $\mathtt T^{S_1}_n$ and $\mathtt T^{S_2}_n$ coincide then 
\[
\init_{T_{n}^{S_1}}(I_{2,n})\cong\init_{T_n^{S_2}}(I_{2,n}).
\]
The following definition allows us to interpret iterated sequences for $\Gr(2,\mathbb C^n)$ in a combinatorial way in Corollary~\ref{cor:treegraph} below.

\begin{figure}[h]
\begin{center}
\begin{tikzpicture}[scale=.14]

\draw (0,0) -- (0,-2.5);
\draw (0,0) -- (2.5,2);
\draw (0,0) -- (-2.5,2);

\draw[thick,->] (0,-3.5) -- (0,-6);

\begin{scope}[yshift=-8cm] % 4 leaves
\draw (-3,2)-- (-1.5,0) -- (1.5,0) -- (3,2);
\draw (-3,-2) -- (-1.5,0);
\draw (1.5,0) -- (3,-2);

\draw[thick,->] (0,-3.5) -- (0,-6);
\end{scope}

\begin{scope}[yshift=-16cm,xshift=-1.5cm] % 5 leaves
\draw (-3,2)-- (-1.5,0) -- (4.5,0) -- (6,2);
\draw (-3,-2) -- (-1.5,0);
\draw (4.5,0) -- (6,-2);
\draw (1.5,0) -- (1.5,-2);

\draw[thick,->] (7,-3.5) -- (8.25,-6);
\draw[thick,->] (-4.25,-3.5) -- (-5.75,-6);
\end{scope}

\begin{scope}[yshift=-24cm,xshift=-10cm] % 6 leaves catapillar
\draw (-3,2)-- (-1.5,0) -- (6.5,0) -- (8,2);
\draw (-3,-2) -- (-1.5,0);
\draw (6.5,0) -- (8,-2);
\draw (1,0) -- (1,-2);
\draw (4,0) -- (4,-2);
\draw[thick,->] (1.5,-3) -- (-.5,-6);
\draw[thick, ->] (9.5,-3) -- (12,-6);
\begin{scope}[xshift=16cm] %snowflake 6 leaves
\draw (-3,2)-- (-1.5,0) -- (4.5,0) -- (6,2);
\draw (-3,-2) -- (-1.5,0);
\draw (4.5,0) -- (6,-2);
\draw (1.5,0) -- (1.5,-2) -- (0,-4);
\draw (1.5,-2) -- (3,-4);
\draw[thick,->] (4.5,-3.5) -- (6,-6);
%\draw[thick,->] (-4.5,-3) -- (-7,-5);
\end{scope}
\end{scope}

\begin{scope}[yshift=-33cm,xshift=-13cm]%catapillar 7 leaves
\draw (-3,2)-- (-1.5,0) -- (6.5,0) -- (8,2);
\draw (-3,-2) -- (-1.5,0);
\draw (6.5,0) -- (8,-2);
\draw (.5,0) -- (.5,-2);
\draw (2.5,0) -- (2.5,-2);
\draw (4.5,0) -- (4.5,-2);
\draw[thick,->] (-4.5,-3) -- (-7,-6);
\draw[thick,->] (2.5,-3) -- (3.5,-6);
\draw[thick, ->] (9.5,-2.5) -- (17,-6);
\begin{scope}[xshift=20cm] %snowflake 7 leaves
\draw (-3,2)-- (-1.5,0) -- (6.5,0) -- (8,2);
\draw (-3,-2) -- (-1.5,0);
\draw (6.5,0) -- (8,-2);
\draw (1,0) -- (1,-2);
\draw (4,0) -- (4,-2);
\draw (4,-2) -- (2.5,-4);
\draw (4,-2) -- (5.5,-4);
\draw[thick,->] (.75,-3.5) -- (-.5,-6.5);
\draw[thick,->] (-3.5,-2.5) -- (-10.5,-6);
\draw[thick, ->] (9.5,-3) -- (12,-6);
\end{scope}
\end{scope}

\begin{scope}[yshift=-42cm,xshift=-23cm]%catapillar 8 leaves
\draw (-3,2)-- (-1.5,0) -- (6.5,0) -- (8,2);
\draw (-3,-2) -- (-1.5,0);
\draw (6.5,0) -- (8,-2);
\draw (0.1,0) -- (0.1,-2);
\draw (1.7,0) -- (1.7,-2);
\draw (3.2,0) -- (3.2,-2);
\draw (4.9,0) -- (4.9,-2);
\node at (2.5,-4) {$\vdots$};
\begin{scope}[xshift=14cm] %almost catapillar 8 leaves
\draw (-3,2)-- (-1.5,0) -- (6.5,0) -- (8,2);
\draw (-3,-2) -- (-1.5,0);
\draw (6.5,0) -- (8,-2);
\draw (.5,0) -- (.5,-2);
\draw (2.5,0) -- (2.5,-2);
\draw (4.5,0) -- (4.5,-2);
\draw (4.5,-2) -- (3,-4);
\draw (4.5,-2) -- (6,-4);
\node at (1.5,-4) {$\vdots$};
\end{scope}

\begin{scope}[xshift=27cm]%almost snowflake 8 leaves
\draw (-3,2)-- (-1.5,0) -- (6.5,0) -- (8,2);
\draw (-3,-2) -- (-1.5,0);
\draw (6.5,0) -- (8,-2);
\draw (.5,0) -- (.5,-2);
\draw (2.5,0) -- (2.5,-2);
\draw (2.5,-2) -- (1,-4);
\draw (2.5,-2) -- (4,-4);
\draw (4.5,0) -- (4.5,-2);
\node at (2.5,-4.5) {$\vdots$};
\begin{scope}[xshift=14cm] %snowflake 8 leaves
\draw (-3,2)-- (-1.5,0) -- (6.5,0) -- (8,2);
\draw (-3,-2) -- (-1.5,0);
\draw (6.5,0) -- (8,-2);
\draw (.5,0) -- (.5,-2);
\draw (.5,-2) -- (-1,-4);
\draw (.5,-2) -- (2,-4);
\draw (4.5,0) -- (4.5,-2);
\draw (4.5,-2) -- (3,-4);
\draw (4.5,-2) -- (6,-4);
\node at (2.5,-4.5) {$\vdots$};
\end{scope}
\end{scope}
\end{scope}

\end{tikzpicture}
\end{center}
\caption{The tree graph $\mathcal T$ from level ($\#$of leaves) 3 to 8.}\label{fig:treegraph}
\end{figure}
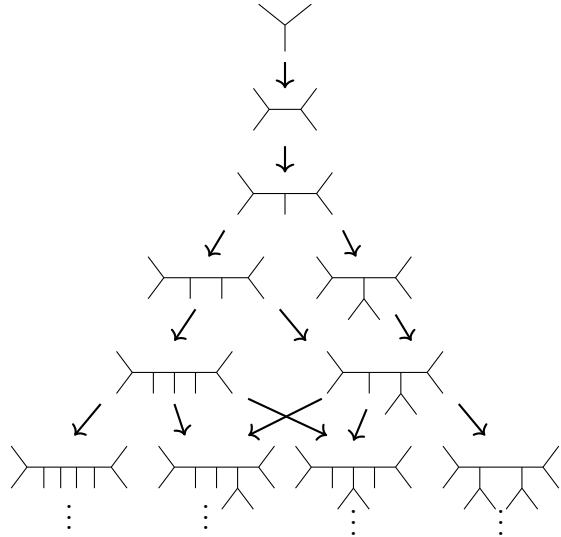

\begin{definition}
The \emph{tree graph} $\mathcal T$ is an infinite graph whose vertices at level $i\ge 3$ correspond to trivalent trees with $i$ leaves. There is an arrow $\mathtt T\to \mathtt T'$, if $\mathtt T$ has $i$ leaves, $\mathtt T'$ has $i+1$ leaves and $\mathtt T'$ can be obtained from $\mathtt T$ by attaching a new boundary edge in the middle of some edge of $\mathtt T$. There is a unique source $\mathtt T_3$ at level $3$. See Figure~\ref{fig:treegraph}.
\end{definition}

\begin{corollary}\label{cor:treegraph}
Every iterated sequence $S$ for $\Gr(2,\mathbb C^n)$ corresponds to a path from $\mathtt T_3$ to $\mathtt T^S_n$ in the tree graph $\mathcal T$.
\end{corollary}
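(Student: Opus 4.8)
The plan is to unwind the two constructions involved — Algorithm~\ref{alg:tree assoc to seq}, which produces the tree sequence $\mathbb T_S=(T_n^S,\dots,T_3^S)$ from an iterated sequence $S$, and the tree graph $\mathcal T$ of Definition before Corollary~\ref{cor:treegraph} — and to check that one step of the algorithm is exactly one arrow of $\mathcal T$. Concretely, for $S=((i_n,n),(j_n,n),\dots,(i_3,3),(j_3,3))$ I would write $\mathtt T_i^S$ for the unlabelled trivalent tree underlying the labelled tree $T_i^S$ (as in the paragraph preceding the statement), and aim to show that $\mathtt T_3^S=\mathtt T_3$ and that $\mathtt T_{k-1}^S\to\mathtt T_k^S$ is an arrow of $\mathcal T$ for every $k=4,\dots,n$.

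First I would record the elementary bookkeeping needed for the algorithm to make sense at every stage. Since in the notation of \S\ref{sec:proof} the pair $(i_k,k)$ encodes the positive root $\alpha_{i_k,k-1}=\epsilon_{i_k}-\epsilon_k$, one has $i_k\in[k-1]$ for all $k$. Moreover, by the initialization $T_3^S:=T_3$ (Figure~\ref{fig:3leaves}) the tree $T_3^S$ has leaves labelled $1,2,3$, and since step $k$ of Algorithm~\ref{alg:tree assoc to seq} adjoins precisely the new leaf $k$, an immediate induction shows that $T_{k-1}^S$ has leaf set $[k-1]$. Hence leaf $i_k$ genuinely occurs in $T_{k-1}^S$ and the $k$-th step is well defined; this is the one place where a little care is required, and it is the closest thing to an obstacle in the argument.

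Next I would compare the passage from $T_{k-1}^S$ to $T_k^S$ with the move defining $\mathcal T$. By construction $T_k^S$ is obtained from $T_{k-1}^S$ by taking the boundary edge ending in leaf $i_k$, subdividing it by a new internal vertex $w$, re-attaching leaf $i_k$ to $w$, and attaching a further new boundary edge to $w$ ending in the new leaf $k$ — these being the ``three edges forming a cherry with leaves labelled by $i_k$ and $k$''. Forgetting labels, this is exactly the operation in the definition of $\mathcal T$: a new boundary edge (the $k$-edge) is attached in the middle of some edge (the former $i_k$-edge) of the trivalent tree with $k-1$ leaves, producing a trivalent tree with $k$ leaves. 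Therefore $\mathtt T_{k-1}^S\to\mathtt T_k^S$ is an arrow of $\mathcal T$.

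Finally I would assemble the path. The tree $\mathtt T_3^S=\mathtt T_3$ is the unique source of $\mathcal T$ at level $3$, as there is only one trivalent tree with three leaves. Chaining the arrows above then yields a directed path
\[
\mathtt T_3=\mathtt T_3^S\ \to\ \mathtt T_4^S\ \to\ \cdots\ \to\ \mathtt T_n^S=\mathtt T_S
\]
in $\mathcal T$, which is the assertion of Corollary~\ref{cor:treegraph}. No deeper input (e.g. from Theorem~\ref{thm:main birat} or from \cite{SS04}) is needed; the statement is a translation of Algorithm~\ref{alg:tree assoc to seq} into the language of the tree graph, and the only substantive checks are the leaf-labelling induction and the identification of the cherry-insertion step with the edge-insertion move of $\mathcal T$.
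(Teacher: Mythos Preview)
Your proposal is correct and follows exactly the same approach as the paper: the paper's proof is the single sentence ``The underlying unlabelled trees in the sequence $\mathbb T_S=(T_3,T_4^S,\dots,T_n^S)$ associated to $S$ define the path $\mathtt T_3\to\mathtt T_4^S\to\dots\to\mathtt T_n^S$ in $\mathcal T$,'' and you have simply unpacked this by verifying that each step of Algorithm~\ref{alg:tree assoc to seq} is an instance of the edge-insertion move defining arrows of $\mathcal T$. The extra bookkeeping you include (that $i_k\in[k-1]$ and hence the algorithm is well defined) is a welcome sanity check the paper leaves implicit.
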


\begin{proof}
The underlying unlabelled trees in the sequence $\mathbb T_S=(T_3,T_4^S,\dots,T_n^S)$ associated to $S$ define the path $\mathtt T_3\to\mathtt T_4^S\to\dots\to\mathtt T_n^S$ in $\mathcal T$.
\end{proof}

%%%%%%%%%%%%%%%%%%%%%%%%%%%%%%%%%%%%%

\newpage
\section{Toric degenerations via plabic graphs}\label{sec:BFFHL}

\noindent
In this section we apply Theorem~\ref{thm: val and quasi val with wt matrix} from \S\ref{sec:val and quasival} to the valuation defined by Rietsch-Williams in \cite{RW17} on $\mathbb C[\Gr(k,\mathbb C^n)]$ using the cluster structer.
Theorem~\ref{thm: RW val and wt vect} specifies when their toric degeneration can be realized as a Gröbner toric degeneration having a Khovanskii basis in terms of Plücker coordinates.
Further, Corollary~\ref{cor: integral NO vs prime} establishes a connection between the integrality of their associated Newton-Okounkov body and the weighting matrix of the valuation (as in Definition~\ref{def: wt matrix from valuation}).

Moreover, we show that the weight vector defined for plabic graphs in joint work with Fang, Fourier, Hering, and Lanini in \cite{BFFHL} is closely realted to the weighting matrix.
The subsection \S\ref{sec:case gr2n} is based on this joint work, where
we establish an explicit bijection between the toric degenerations of the Grassmannian $\Gr(2,\mathbb C^{n})$ arising from maximal cones in tropical Grassmannians and the ones coming from plabic graphs corresponding to $\Gr(2,\mathbb C^{n})$. 
%\footnote{\S\ref{sec:case gr2n} is based on joint work with Xin Fang, Ghislain Fourier, Milena Hering, and Martina Lanini in \cite{BFFHL}.}

\begin{figure}[h]
\centering
\begin{center}
\begin{tikzpicture}[scale=.7]
    \draw (-1,1.5) -- (-.5,1) -- (.5,1) -- (1,1.5);
    \draw (-.5,1) -- (-1,0) -- (-2,-.5);
    \draw (-1,0) -- (-.5,-1) -- (-.5,-1) -- (-.5,-2);
    \draw (-.5,-1) -- (1,-.5) -- (1.75,-.5);
    \draw (1,-.5) -- (0,0) -- (-1,0);
    \draw (0,0) -- (.5,1);

    \draw (-1,1.5) to [out=25,in=155] (1,1.5)
    to [out=-45,in=95] (1.75,-0.5) to [out=-95,in=0] (-.5,-2) to [out=180,in=-80] (-2,-.5) to [out=90,in=-145] (-1,1.5);

    \node[above] at (-1,1.5) {4};
    \node[above, right] at (1,1.5) {3};
    \node[right] at (1.75,-.5) {2};
    \node[below] at (-.5,-2) {1};
    \node[left] at (-2,-.5) {5};

    \draw[fill] (-.5,1) circle [radius=0.1];
    \draw[fill] (0,0) circle [radius=0.1];
    \draw[fill] (-.5,-1) circle [radius=0.1];
    \draw[fill, white] (.5,1) circle [radius=.115];
        \draw (.5,1) circle [radius=.115];
    \draw[fill, white] (1,-.5) circle [radius=.115];
        \draw (1,-.5) circle [radius=.115];
    \draw[fill, white] (-1,0) circle [radius=.115];
        \draw (-1,0) circle [radius=.115];
\end{tikzpicture}
\end{center}
\label{ExamplePlabic}
\caption{A plabic graph.}
\end{figure}
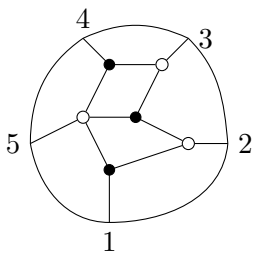

\subsection{Plabic graphs}\label{subsec: plabic graphs}

We review the definition of plabic graphs due to Postnikov \cite{Pos06}. This section is closely oriented towards Rietsch and Williams \cite{RW17}.

\begin{definition}\label{def: plabic graph}
A \textit{plabic graph} $\mathcal{G}$ is a planar bicolored graph embedded in a disk. It has $n$ boundary vertices numbered $1,\dots, n$ in a counterclockwise order. Boundary vertices lie on the boundary of the disk and are not colored. Additionally there are internal vertices colored black or white. Each boundary vertex is adjacent to a single internal vertex.
\end{definition}

For our purposes we assume that plabic graphs are connected and that every leaf of a plabic graph is a boundary vertex. We first recall the four local moves on plabic graphs.

\begin{center}
    \begin{figure}[h]
    \centering
    \begin{tikzpicture}[scale=0.7]
    \draw (0,0) -- (1,1) -- (2,1) -- (3,0);
    \draw (0,3) -- (1,2) -- (2,2) -- (3,3);
    \draw (1,2) -- (1,1);
    \draw (2,2) -- (2,1);

    \draw[->] (3.5,1.5) -- (4.5,1.5);
    \draw[->] (4.5,1.5) -- (3.5,1.5);

    \draw (5,0) -- (6,1) -- (7,1) -- (8,0);
    \draw (5,3) -- (6,2) -- (7,2) -- (8,3);
    \draw (6,2) -- (6,1);
    \draw (7,2) -- (7,1);

    \draw[fill] (1,1) circle [radius=0.1];
    \draw[fill] (2,2) circle [radius=0.1];
    \draw[fill, white] (1,2) circle [radius=0.1];
    \draw[fill, white] (2,1) circle [radius=0.1];
    \draw (1,2) circle [radius=0.1];
    \draw (2,1) circle [radius=0.1];

    \draw[fill] (6,2) circle [radius=0.1];
    \draw[fill] (7,1) circle [radius=0.1];
    \draw[fill, white] (6,1) circle [radius=0.1];
    \draw[fill, white] (7,2) circle [radius=0.1];
    \draw (6,1) circle [radius=0.1];
    \draw (7,2) circle [radius=0.1];
    \end{tikzpicture}
    \label{fig: plabic move M1}
    \caption{Square move (M1)}
    \end{figure}
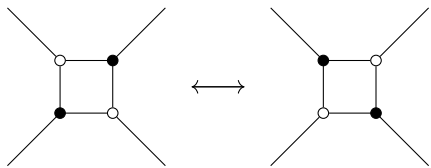
    \end{center}

\begin{itemize}
\item[(M1)] If a plabic graph contains a square of four internal vertices with alternating colors, each of which is trivalent, then the colors can be swapped. So every black vertex in the square becomes white and every white vertex becomes black (see Figure~\ref{fig: plabic move M1}).

    \begin{center}
    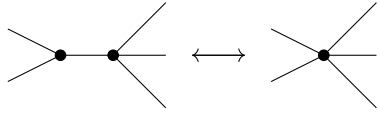
\begin{figure}[h]
    \centering
    \begin{tikzpicture}[scale=0.7]
    \draw (0,0.5) -- (1,1) -- (0,1.5);
    \draw (1,1) -- (2,1) -- (3,2);
    \draw (3,1) -- (2,1) -- (3,0);

    \draw[->] (3.5,1) -- (4.5,1);
    \draw[->] (4.5,1) -- (3.5,1);

    \draw (5,0.5) -- (6,1) -- (5,1.5);
    \draw (7,2) -- (6,1) -- (7,1);
    \draw (6,1) -- (7,0);

    \draw[fill] (6,1) circle [radius=0.1];
    \draw[fill] (1,1) circle [radius=0.1];
    \draw[fill] (2,1) circle [radius=0.1];
    \end{tikzpicture}
    \label{fig: plabic move M2}
    \caption{Merge vertices of same color (M2)}
    \end{figure}
    \end{center}

\item[(M2)] If two internal vertices of the same color are connected by an edge, the edge can be contracted and the two vertices can be merged. Conversely, any internal black or white vertex can be split into two adjacent vertices of the same color (see Figure~\ref{fig: plabic move M2}).

    \begin{center}
    \begin{figure}[h]
    \centering
    \begin{tikzpicture}[scale=0.7]
    \draw (0,0) -- (1,0) -- (2,0);

    \draw[->] (2.5,0) -- (3.5,0);
    \draw[->] (3.5,0) -- (2.5,0);

    \draw (4,0) -- (6,0);

    \draw[fill, white] (1,0) circle [radius=0.1];
    \draw (1,0) circle [radius=0.1];
    \end{tikzpicture}
    \label{fig: plabic move M3}
    \caption{Insert/remove degree two vertex (M3)}
    \end{figure}
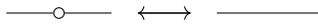
    \end{center}

\item[(M3)] If a plabic graph contains an internal vertex of degree $2$, it can be removed. Equivalently, an internal black or white vertex can be inserted in the middle of any edge (see Figure~\ref{fig: plabic move M3}).

    \begin{center}
    \begin{figure}[h]
    \centering
    \begin{tikzpicture}[scale=0.7]
    \draw (0,0) -- (1,0);
    \draw (2,0) -- (3,0);

    \draw (1,0.06) -- (2,0.06);
    \draw (1,-.06) -- (2,-.06);

    \draw[fill] (1,0) circle [radius=0.1];
    \draw[fill, white] (2,0) circle [radius=0.1];
    \draw (2,0) circle [radius=0.1];

    \draw[->] (3.5,0) -- (4.5,0);

    \draw (5,0) -- (7,0);
    \end{tikzpicture}
    \label{fig: plabic move R}
    \caption{Reducing parallel edges (R)}
    \end{figure}
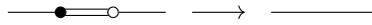
    \end{center}

\item[(R)] If two internal vertices of opposite color are connected by two parallel edges, they can be reduced to only one edge. This can not be done conversely (see Figure~\ref{fig: plabic move R}).
\end{itemize}

The \textit{equivalence class} of a plabic graph $\mathcal G$ is defined as the set of all plabic graphs that can be obtained from $\mathcal G$ by applying (M1)-(M3). 
If in the equivalence class there is no graph to which (R) can be applied, we say $\mathcal G$ is \textit{reduced}. 
From now on we only consider reduced plabic graphs.

\begin{definition}\label{def: trip permutation}
Let $\mathcal G$ be a reduced plabic graph with boundary vertices $v_1,\dots, v_n$ labelled in a counterclockwise order. We define the \textit{trip permutation} $\pi_{\mathcal G}$ as follows. We start at a boundary vertex $v_i$ and form a path along the edges of $\mathcal G$ by turning maximally right at an internal black vertex and maximally left at an internal white vertex. We end up at a boundary vertex $v_{\pi(i)}$ and define $\pi_{\mathcal G}=[\pi(1),\dots,\pi(n)]\in S_n$.
\end{definition}

It is a fact that plabic graphs in one equivalence class have the same trip permutation. Further, it was proven by Postnikov in \cite[Theorem 13.4]{Pos06} that plabic graphs with the same trip permutation are connected by moves (M1)-(M3) and are therefore equivalent.
Let $\pi_{k,n}=(n-k+1,n-k+2,\dots,n,1,2,\dots,n-k)$. From now on we focus on plabic graphs $\mathcal G$ with trip permutation $\pi_{\mathcal G}=\pi_{k,n}$. Each path $v_i$ to $v_{\pi_{k,n}(i)}$ defined above, divides the disk into two regions. 
We label every face in the region to the left of the path by $i$. After repeating this for every $1\le i\le n$, all faces have a labelling by an $(n-k)$-element subset of $[n]$. 
We denote by $\mathcal{P_G}$ the set of all such subsets for a fixed plabic graph $\mathcal G$.

A face of a plabic graph is called \emph{internal}, if it does not intersect with the boundary of the disk. Other faces are called \emph{boundary faces}.
Following \cite{RW17} we define an orientation on a plabic graph. 
This is the first step in establishing the \emph{flow model} introduced by Postnikov, which we use to define plabic degrees on the Pl\"ucker coordinates.

\begin{definition}\label{def: perfect orientation}
An orientation $\mathcal O$ of a plabic graph $\mathcal G$ is called \emph{perfect}, if every internal white vertex has exactly one incoming arrow and every internal black vertex has exactly one outgoing arrow. 
The set of boundary vertices that are sources is called the \emph{source set} and is denoted by $I_{\mathcal O}$.
\end{definition}

Postnikov showed in \cite{Pos06} that every reduced plabic graph with trip permutation $\pi_{k,n}$ has a perfect orientation with source set of order $k$. See Figure \ref{fig:ExamplePerfectOrientation} for a plabic graph with trip permutation $\pi_{2,5}$.

\begin{center}
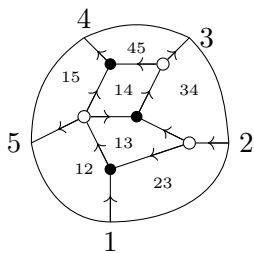
\begin{figure}[h]
\centering
\begin{tikzpicture}[scale=.7]
    \draw (-1,1.5) -- (-.5,1) -- (.5,1) -- (1,1.5);
    \draw (-.5,1) -- (-1,0) -- (-2,-.5);
    \draw (-1,0) -- (-.5,-1) -- (-.5,-1) -- (-.5,-2);
    \draw (-.5,-1) -- (1,-.5) -- (1.75,-.5);
    \draw (1,-.5) -- (0,0) -- (-1,0);
    \draw (0,0) -- (.5,1);

    \draw[->] (-.5,1) -- (-.75,1.25);
    \draw[->] (-1,0) --(-1.5,-.25);
    \draw[->] (-1,0) -- (-.75,.5);
    \draw[->] (-1,0) -- (-.5,0);
    \draw[->] (-.5,-1) -- (-.75,-0.5);
    \draw[->] (-.5,-2) -- (-.5,-1.5);
    \draw[->] (1,-.5) -- (.25,-0.75);
    \draw[->] (1.75,-0.5) -- (1.35,-0.5);
    \draw[->] (1,-0.5) -- (.5,-0.25);
    \draw[->] (0,0) -- (.25,0.5);
    \draw[->] (.5,1) -- (.75,1.25);
    \draw[->] (.5,1) -- (0,1);

    \draw (-1,1.5) to [out=25,in=155] (1,1.5)
    to [out=-45,in=95] (1.75,-0.5) to [out=-95,in=0] (-.5,-2) to [out=180,in=-80] (-2,-.5) to [out=90,in=-145] (-1,1.5);

    \node[above] at (-1,1.5) {4};
    \node[above, right] at (1,1.5) {3};
    \node[right] at (1.75,-.5) {2};
    \node[below] at (-.5,-2) {1};
    \node[left] at (-2,-.5) {5};

    \node at (0,1.3) {\tiny{$45$}};
    \node at (-.25,.5) {\tiny{$14$}};
    \node at (1,.5) {\tiny{$34$}};
    \node at (0.5,-1.25) {\tiny{$23$}};
    \node at (-.25,-.5) {\tiny{$13$}};
    \node at (-1,-1) {\tiny{$12$}};
    \node at (-1.25,.75) {\tiny{$15$}};

    \draw[fill] (-.5,1) circle [radius=0.1];
    \draw[fill] (0,0) circle [radius=0.1];
    \draw[fill] (-.5,-1) circle [radius=0.1];
    \draw[fill, white] (.5,1) circle [radius=.115];
        \draw (.5,1) circle [radius=.115];
    \draw[fill, white] (1,-.5) circle [radius=.115];
        \draw (1,-.5) circle [radius=.115];
    \draw[fill, white] (-1,0) circle [radius=.115];
        \draw (-1,0) circle [radius=.115];
\end{tikzpicture}
\caption{A plabic graph with a perfect orientation and source set $\{1,2\}$.}
\label{fig:ExamplePerfectOrientation}
\end{figure}
\end{center}

Index the standard basis of $\mathbb Z^{d+1}$ by the faces of the plabic graph $\mathcal G$, where $d=k(n-k)$.
Given a perfect orientation $\mathcal O$ on $\mathcal G$, every directed path $\p$ from a boundary vertex in the source set to a boundary vertex that is a sink, divides the disk in two parts. 
The \emph{weight} $\wei(\p)\in\mathbb Z_{\ge 0}^{d+1}$ has entry $1$ in the position corresponding to a face $F$ of $\mathcal G$, if $F$ is to the left of $\p$ with respect to the orientation.
The \emph{degree} $\deg_{\mathcal G}(\p)$ is defined the number of internal faces to the left of the path. 

For a set of boundary vertices $J$ with $\vert J\vert =\vert I_{\mathcal O}\vert$, we define a \emph{$J$-flow} as a collection of self-avoiding, vertex disjoint directed paths with sources $I_{\mathcal O}-(J\cap I_{\mathcal O})$ and sinks $J-(J\cap I_{\mathcal O})$. 
Let $I_{\mathcal O}-(J\cap I_{\mathcal O})=\{j_1,\dots,j_r\}$ and $\bfff=\{\p_{j_1},\dots,\p_{j_r}\}$ be a flow where each path $\p_{j_i}$ has sink $j_i$.
Then the \emph{weight of the flow} is $\wei(\bfff):=\wei(\p_{j_1})+\dots+\wei(\p_{j_r})$.
Similarly, we define the \emph{degree of the flow} as $\deg_\G(\bfff)=\deg_\G(\p_{j_1})+\dots +\deg_\G(\p_{j_r})$.
By $\mathcal{F}_J$ we denote the set of all $J$-flows in $\mathcal G$ with respect to $\mathcal O$.

\subsubsection*{Valuation and plabic degree}

In \cite{RW17} Rietsch-Williams use the cluster structure on $\Gr(k,\mathbb C^n)$ (due to Scott, see \cite{Sco06}) to define a valuation on $\mathbb C(\Gr(k,\mathbb C^n))\setminus \{0\}$ for every seed.
In fact, a plabic graph $\G$ defines a seed in the corresponding cluster algebra. 
A combinatorial algorithm associated a quiver with $\G$ (see e.g. \cite[Definition~3.8]{RW17}).
The corresponding $\mathcal A$-cluster is given in terms of Plücker coordinates $p_J$, where $J$ is a face label in $\G$ as described above.

As we are only interested in the values on Plücker coordinates of this valuation, we do not recall it in detail hear but refer the reader to \cite[\S7]{RW17}.
The main idea to construct the valuation is to use $\mathcal X$-cluster variables as coordinates for $\mathbb C[\Gr(k,\mathbb C^n)]$ and send a Laurent polynomial in those to its lexicographically minimal term.
This is another instance of a lowest term valuation that we have encountered already in \S\ref{sec:Bos} in the context of birational sequences.

Let $A_{k,n}:=\mathbb C[\Gr(k,n)]$, then the Rietsch-Williams valuation in \cite[Definition~7.1]{RW17} can be restricted to a valuation $\val_\G:A_{k,n}\setminus \{0\}\to (\mathbb Z^{d+1},\prec)$. 
The total order $\prec$ on $\mathbb Z^{d+1}$ is the lexicographic order with respect to a fixed order on the coordinates (see \cite[Definition~7.1]{RW17}).
For $J\in\binom{[n]}{k}$ let $\bfff_J\in\ff_J$ be such that $\deg_G(\bfff_J)=\min\{\deg_\G(\bfff)\mid \bfff\in\ff_J\}$.
Then on a Plücker coordinate $\bar p_J\in A_{k,n}$ the valuation $\val_\G$ is given by
\[
\val_\G(\bar p_J)=\wei(\bfff_J).
\]

\begin{remark}
This is for us, given the notion of degree, the most convenient way to write it and in fact equivalent to how it is described in \cite{RW17}.
\end{remark}

We define closely related to the valuation the following notion of degree for Plücker variables in $\mathbb C[p_J]_{J}$ and associate a weight vector in $\mathbb R^{\binom{n}{k}}$.

\begin{definition}\label{def: plabic deg}
For $J\in\binom{[n]}{k}$ and a plabic graph $\mathcal{G}$, the \emph{plabic degree} of the Pl\"ucker variable $p_J$ is defined as
\[
\deg_{\mathcal G}(p_J)=\min\{\deg_{\mathcal G}(\bfff)\mid \bfff\in \mathcal F_J\}.
\]
It gives rise to a weight vector ${\bf w}_{\mathcal G}\in\mathbb R^{\binom{n}{k}}$ by setting $({\bf w}_{\mathcal G})_J=\deg_{\mathcal G}(J)$.
\end{definition}

By \cite[Lemma~3.2]{PSW09} and its proof, the plabic degree is independent of the choice of the perfect orientation. We therefore fix the perfect orientation by choosing the source set $I_{\mathcal O}=[k]$. 
The following proposition guarantees that the degree (and the valuation) are well-defined.
It is a reformulation of the original statement adapted to our notion degree.

\begin{proposition*}(\cite[Corollary~11.4]{RW17})\label{prop: unique min flow}
There is a unique $J$-flow in $\mathcal G$ with respect to $\mathcal O$ with degree equal to $\deg_{\mathcal{G}}( p_J)$.
\end{proposition*}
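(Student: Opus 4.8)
The plan is to deduce this from \cite[Corollary~11.4]{RW17} by translating the flow combinatorics of that paper into the degree language of Definition~\ref{def: plabic deg}. First I would recall Postnikov's flow model \cite{Pos06} (see also \cite[\S11]{RW17}): with the perfect orientation $\mathcal O$ fixed so that $I_{\mathcal O}=[k]$, the Plücker coordinate $\bar p_J$, written in the $\X$-cluster coordinates attached to $\G$ (one coordinate per face of $\G$), is --- up to an overall monomial --- a sum $\sum_{\bfff\in\ff_J}z^{\wei(\bfff)}$ indexed by the $J$-flows, and by construction the Rietsch--Williams valuation $\val_\G$ of \cite[Definition~7.1]{RW17} sends $\bar p_J$ to the $\prec$-minimal exponent occurring, i.e.\ to $\wei(\bfff)$ for the $\prec$-minimal flow $\bfff\in\ff_J$. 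So the statement reduces to showing (a) that this $\prec$-minimal flow is also $\deg_\G$-minimal, and (b) that no other $J$-flow attains the minimal degree.

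For (a) I would inspect the order $\prec$ of \cite[Definition~7.1]{RW17}: it is chosen so that, on weight vectors of flows, a smaller value of $\deg_\G(\bfff)=\sum_{F\ \mathrm{internal}}\wei(\bfff)_F$ forces a $\prec$-smaller weight vector --- equivalently, the linear functional ``sum of internal-face coordinates'' is the leading block of $\prec$; by \cite[Lemma~3.2]{Cal02} this is exactly what is needed to realise $\deg_\G$ as the first coordinate of a term order refining $\prec$. Combined with the uniqueness of the $\prec$-minimal flow from \cite[Corollary~11.4]{RW17}, this gives that a $\deg_\G$-minimal flow exists and that the $\prec$-minimal one is among them. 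I would also record here that, by \cite[Lemma~3.2]{PSW09}, $\deg_\G(p_J)$ does not depend on the choice of perfect orientation, so the claim is intrinsic to $\G$.

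For (b) I would argue as follows. Suppose $\bfff\neq\bfff'$ are two $J$-flows both of degree $\deg_\G(p_J)$. Since they share sources and sinks, the formal difference of their edge sets decomposes into alternating closed walks; using that $\mathcal O$ is a perfect orientation of a \emph{reduced} plabic graph --- so that the one-in-at-each-white, one-out-at-each-black constraint is rigid \cite{Pos06} --- switching $\bfff$ along any one of these walks again produces a $J$-flow, whose degree differs from $\deg_\G(\bfff)$ by the signed count of internal faces enclosed by the walk. Minimality of $\deg_\G(p_J)$ forces every such signed count to vanish, and a planarity-and-connectivity argument should then rule out a nontrivial alternating walk enclosing internal faces with zero net count, giving $\bfff=\bfff'$. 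Equivalently, this is the assertion that $\{\wei(\bfff):\bfff\in\ff_J\}$ has a coordinatewise-least element in its internal-face coordinates, which is the lattice-theoretic form in which \cite[Corollary~11.4]{RW17} is proved. I expect step (b) --- the alternating-cycle analysis, or equivalently the lattice structure of $\ff_J$ --- to be the only real obstacle; step (a) and the passage through the flow model are essentially bookkeeping to match our conventions to those of \cite{RW17}.
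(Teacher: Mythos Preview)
The paper gives no proof of this statement: it is simply cited as \cite[Corollary~11.4]{RW17}, with the one-line justification that it is ``a reformulation of the original statement adapted to our notion of degree.'' There is therefore no argument in the paper to compare your proposal against.

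That said, your step (a) --- showing that the $\prec$-minimal flow coincides with the $\deg_\G$-minimal one --- is exactly what would be needed to justify the reformulation rigorously. The weak point is your assertion that the lexicographic order $\prec$ of \cite[Definition~7.1]{RW17} has the internal-face-coordinate sum as its leading block: this must be read off from the explicit coordinate order fixed in \cite{RW17}, and you have not actually done so. The appeal to \cite[Lemma~3.2]{Cal02} is misplaced: that lemma constructs a linear form compatible with a \emph{given} total order, which is the reverse direction from what you need here.

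Your step (b) is both unnecessary and incomplete. It is unnecessary because once (a) holds, uniqueness of the $\prec$-minimal flow from \cite{RW17} transfers directly to uniqueness of the $\deg_\G$-minimal flow. It is incomplete because the alternating-cycle sketch stops at ``a planarity-and-connectivity argument should then rule out'' a nontrivial cycle with zero net internal-face count --- but that is precisely the substance of the claim, and you have not supplied it. If you intend (b) as an independent proof bypassing \cite{RW17}, you would essentially be reproving the lattice structure on $\ff_J$ that underlies their Corollary~11.4, and that requires genuine work not present in your outline.
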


%\begin{remark}
%Our plabic degree can be described in terms of the \emph{flow polynomials} introduced in \cite[(5.3)]{RW17} and the associated valuation $\text{val}_{\mathcal G}:\mathbb C(\Gr(k,\mathbb C^{n}))\setminus\{0\}\to\mathbb Z^{k(n-k)}$ defined in \cite[Definition~7.1]{RW17}.
%For $\text{val}_{\mathcal G}$ the coordinates of $\mathbb Z^{k(n-k)}$ are in corredpondence with the faces of the plabic graph $\mathcal G$ excluding the face (in our convention) adjavent to the boundary vertices $k$ and $k+1$.
%Our plabic degree $\deg_{\mathcal G}(p_J)$ is (up to the differences due to different convention) the sum of the entries in $\text{val}_{\mathcal G}(p_J)$ corresponding to the interior faces of $\mathcal G$.
%\end{remark}

%By the remark we can therefore use \cite[Corollary~11.4]{RW17}, which in our setting translates to the following proposition.

\begin{example}
Consider the plabic graph $\mathcal{G}$ with perfect orientation from Figure \ref{fig:ExamplePerfectOrientation} and source set is $I_{\mathcal O}=[2]$. 
We compute $\deg_\G(p_J)$ and $\val_\G(\bar p_J)$ for all $J\in\binom{[5]}{2}$.

Order the faces of $\G$ by
\[
F_{23},F_{34},F_{45},F_{15},F_{12}, F_{13}, F_{14}.
\]
For example, consider $J=\{1,4\}$.
There are two flows, $\bfff_1$ and $\bfff_2$ from $I_{\mathcal O}$ to $J=\{1,4\}$. Both consist of only one path from $2$ to $4$. 
One of them, say $\bfff_1$, has faces labelled by $\{2,3\}$, $\{1,5\}$ and $\{1,2\}$ to the left, and $\bfff_2$ has faces $\{2,3\}$, $\{1,5\}$, $\{1,2\}$ and $\{1,3\}$ to the left. 
Then with respect to the order of coordinates on $\mathbb Z^7$ we have
\[
\wei(\bfff_1)=(1,0,0,1,1,0,0) \ \text{ and } \ \wei(\bfff_2)=(1,0,0,1,1,1,0). 
\]
As $\deg_\G(\bfff_1)=0$ and $\deg_\G(\bfff_2)=1$, we have $\val_\G(\bar p_{14})=(1,0,0,1,1,0,0)$ and $\deg_\G(p_{14})=0$.
All other $\val_\G(\bar p_J)$ and $\deg_\G(p_J)$ can be found in Table~\ref{tab:exp Rw val and deg}.
\end{example}

\begin{table}[]
    \centering
    \begin{tabular}{c|c c c c c c c| c | c}
        $p_J$ & $F_{23}$ & $F_{34}$ & $F_{45}$ & $F_{15}$ & $F_{12}$ & $F_{13}$ & $F_{14}$ & $\deg_\G(p_J)$ & $e(M_\G)$ \\ \hline
        $p_{12}$ & $0$ & $0$ & $0$ & $0$ & $0$ & $0$ & $0$ & $0$ & $0$ \\[-1ex]
        $p_{13}$ & $1$ & $0$ & $1$ & $1$ & $1$ & $0$ & $0$ & $0$ & $13$ \\[-1ex]
        $p_{14}$ & $1$ & $0$ & $0$ & $1$ & $1$ & $0$ & $0$ & $0$ & $10$ \\[-1ex]
        $p_{15}$ & $1$ & $0$ & $0$ & $0$ & $1$ & $0$ & $0$ & $0$ & $6$ \\[-1ex]
        $p_{23}$ & $0$ & $0$ & $1$ & $1$ & $1$ & $0$ & $0$ & $0$ & $12$ \\[-1ex]
        $p_{24}$ & $0$ & $0$ & $0$ & $1$ & $1$ & $0$ & $0$ & $0$ & $9$\\[-1ex]
        $p_{25}$ & $0$ & $0$ & $0$ & $0$ & $1$ & $0$ & $0$ & $0$ & $5$\\[-1ex]
        $p_{34}$ & $1$ & $0$ & $1$ & $2$ & $2$ & $1$ & $1$ & $2$ & $22$\\[-1ex]
        $p_{35}$ & $1$ & $0$ & $1$ & $1$ & $2$ & $1$ & $1$ & $2$ & $18$\\[-1ex]
        $p_{45}$ & $1$ & $0$ & $0$ & $1$ & $2$ & $1$ & $1$ & $2$ & $15$
    \end{tabular}
    \caption{The valuation $\val_\G$ for $\G$ as in Figure~\protect{\ref{fig:ExamplePerfectOrientation}} on Plücker coordinates, the plabic degrees and an example for a weight vector $e(M_\G)$ as in Proposition~\ref{prop: plabic lin form} (the multiplicities in the proof of Proposiiton~\ref{prop: plabic lin form} are chosen as $r_i=i$ and $q=1$, the columns are ordered as below).}
    \label{tab:exp Rw val and deg}
\end{table}

\subsection{The valuation \texorpdfstring{$\val_\G$}{} and the weighting matrix \texorpdfstring{$M_{\G}$}{}}

In this section we apply Theorem~\ref{thm: val and quasi val with wt matrix} from \S\ref{sec:val and quasival} to the valuation $\val_\G$ by Rietsch-Williams as seen in the last section.
We show that the weight vector defined by the plabic degree on Plücker coordinates is closely related to the valuation: in fact, taking the initial ideal of the Plücker ideal with respect to the weighting matrix of $\val_\G$ coincides with the initial ideal with respect to the plabic weight vector (see Proposition~\ref{prop: plabic lin form}).
In particular, we obtain that the associated graded algebra for $\val_\G$ is the quotient of the polynomial ring in Plücker coordinates by the initial ideal, given it is prime.
Moreover, in Corollary~\ref{cor: integral NO vs prime} we relate the property of the initial ideal being prime to integrality of the Newton-Okounkov body $\Delta(A_{k,n},\val_\G)$ studied in \cite[\S8]{RW17}.
We exhibit the case of $\Gr(3,\mathbb C^6)$ in detail below and dedicate \S\ref{sec:case gr2n} to analyzing the case of $\Gr(2,\mathbb C^n)$.

\medskip

Let $\G$ be a plabic graph for $\Gr(k,\mathbb C^n)$ with perfect orientation chosen such that $[k]$ is the source set as above.
Consider the weighting matrix  $M_{\mathcal G}:=M_{\val_{\mathcal G}}$ of $\val_{\mathcal G}$ as in Definition~\ref{def: wt matrix from valuation}.
That is, the columns of $M_{\mathcal G}$ are $\val_\G(p_{J})$ for $J\in\binom{[n]}{k}$ and the rows $M_1,\dots,M_{d+1}$ are indexed by the faces of the plabic graph $\mathcal G$, where $d=k(n-k)$. 
%Note that we are already omitting the face $F_{\varnothing}$ that never appears to the left of a path.
Denote the boundary faces of $\G$ by $F_1,\dots,F_n$, where $F_i$ is adjacent to the boundary vertices $i$ and $i+1$.
Hence, $F_k=F_{\varnothing}$ and $M_k=(0,\dots,0)$.
Order the rows of $M_\G$ such that $M_i$ is the row corresponding to the face $F_i$ in $\G$.

\begin{lemma}\label{lem: col corresp to bdy face}
Let $r\in[n]$ and $J=\{j_1,\dots,j_k\}\in\binom{[n]}{k}$ with $j_1<\dots j_s\le k<j_{s+1}<\dots <j_k$. Set $[k]\setminus\{j_1,\dots,j_s\}=\{i_1,\dots,i_{k-s}\}$ with $i_1<\dots<i_{k-s}$.
Then
\[
(M_r)_J=\#\{l\mid r\in[j_l,i_{k-l+1}]\},
\]
where $[j_l,i_{k-l+1}]$ is the cyclic interval ($j_l>i_{k-l+1}$).
\end{lemma}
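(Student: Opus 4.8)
The statement computes, for a plabic graph $\G$ for $\Gr(k,\mathbb C^n)$ with source set $[k]$, the entry $(M_r)_J$ of the weighting matrix $M_\G$ corresponding to the boundary face $F_r$ and the Pl\"ucker coordinate $p_J$. Recall that by construction $(M_r)_J = \wei(\bfff_J)_{F_r}$, where $\bfff_J$ is the unique $J$-flow of minimal degree (unique by \cite[Corollary~11.4]{RW17}), and $\wei(\bfff_J)_{F_r} = 1$ iff $F_r$ lies to the left of $\bfff_J$. So the claim is really a statement about which boundary faces lie to the left of the minimal-degree $J$-flow, and the plan is to identify that flow explicitly and then count.

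First I would recall the flow model: with source set $I_{\mathcal O}=[k]$ and a target $J=\{j_1,\dots,j_k\}$, a $J$-flow is a collection of vertex-disjoint directed paths whose sources are $[k]\setminus J = \{i_1,\dots,i_{k-s}\}$ and whose sinks are $J\setminus [k] = \{j_{s+1},\dots,j_k\}$. The key combinatorial fact I would invoke (this is standard for plabic graphs with trip permutation $\pi_{k,n}$, and is implicit in \cite{RW17} and \cite{PSW09}) is that in the minimal-degree flow the paths are non-crossing and pair up the sources and sinks in the unique planar (nested/cyclic) matching: the source $i_{k-l+1}$ is connected to the sink $j_l$ for the appropriate indices, so that path $\p_l$ runs from $i_{k-l+1}$ to $j_l$. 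I would establish this either by a direct argument using that any crossing of two paths can be uncrossed to strictly decrease (or not increase) the number of internal faces to the left, combined with uniqueness, or by citing the non-crossing structure of minimal flows. Once the pairing is fixed, the set of boundary faces to the left of a single directed path $\p_l$ from $i_{k-l+1}$ to $j_l$ is exactly the set of $F_r$ with $r$ in the cyclic boundary interval $[j_l, i_{k-l+1}]$ (traversed counterclockwise from $j_l$ to $i_{k-l+1}$); this is because the path, together with the two boundary arcs, cuts the disk so that the boundary faces on its left are precisely those between its sink and its source going counterclockwise. Summing the indicator contributions over the $k$ paths — and noting that a boundary face $F_r$ contributes $1$ to $\wei(\bfff_J)_{F_r}$ for each path it lies left of (weights of flows add by definition) — gives exactly $(M_r)_J = \#\{l \mid r \in [j_l, i_{k-l+1}]\}$.

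The main obstacle is pinning down the precise pairing of sources to sinks in the minimal-degree flow and verifying the "boundary faces to the left of $\p_l$ are the cyclic interval $[j_l,i_{k-l+1}]$" claim with the correct index alignment; the indices $j_1<\cdots<j_s\le k$ that are \emph{both} sources and sinks (they are "trivially" in $J\cap[k]$ and carry an empty path) need to be handled carefully, since they shift the correspondence between the ordered sinks $j_{s+1}<\cdots<j_k$ and the ordered sources $i_1<\cdots<i_{k-s}$. I expect the cleanest route is to first treat the full-rank generic case $J\cap[k]=\varnothing$ (so $s=0$ and the pairing is $i_{k-l+1}\leftrightarrow j_l$ for $l=1,\dots,k$), prove the interval statement there by the planarity/cut argument, and then observe that adding an index $j_l\le k$ to $J$ removes source $j_l$ and sink $j_l$ simultaneously, contributing an empty path whose "left region" is the degenerate cyclic interval $[j_l,j_l]$, which is consistent with the uniform formula once one checks the reindexing $\{i_1,\dots,i_{k-s}\}=[k]\setminus\{j_1,\dots,j_s\}$ matches up with the remaining sinks in the stated way. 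Finally I would cross-check the formula against the worked example in Table~\ref{tab:exp Rw val and deg} for $\Gr(2,\mathbb C^5)$ to make sure the conventions (counterclockwise labelling, which side is "left", $F_k=F_\varnothing$) are all consistent.
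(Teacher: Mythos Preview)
Your proposal is correct and follows essentially the same route as the paper: fix the source--sink pairing $i_{k-l+1}\leftrightarrow j_l$, observe that the boundary faces to the left of a path from $i_{k-l+1}$ to $j_l$ are exactly the $F_r$ with $r$ in the cyclic interval $[j_l,i_{k-l+1}]$, and sum over paths.

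The one place where you do more work than necessary is in singling out the \emph{minimal-degree} flow and planning to justify the non-crossing pairing via a degree-decreasing uncrossing argument plus uniqueness. The paper simply writes ``Let $\bfff$ be a flow from $[k]$ to $J$'' and asserts the pairing for \emph{any} flow: since the paths in a flow are vertex-disjoint and the graph is planar in a disk with sources $i_1<\cdots<i_{k-s}\le k$ and sinks $k<j_{s+1}<\cdots<j_k$ separated on the boundary circle, the matching $i_{k-l+1}\leftrightarrow j_l$ is forced by non-crossing in every $J$-flow, not just the minimal one. Moreover the set of boundary faces to the left of a path depends only on its endpoints (topologically the path cuts the disk, and the boundary faces on the left side are determined by where the cut meets the boundary), so the $F_r$-coordinate of $\wei(\bfff)$ is the same for all $J$-flows. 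Thus you can drop the appeal to \cite[Corollary~11.4]{RW17} and the uncrossing argument entirely; the lemma is really a statement independent of which flow realises $\val_\G(\bar p_J)$. Your handling of the lazy paths for $j_l\le k$ and the index bookkeeping is fine and matches the paper.
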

\begin{proof}
Let $\bfff=\{\p_{j_1},\dots,\p_{j_k}\}$ be a flow from $[k]$ to $J$, where $\p_{j_i}$ denotes the path with sink $j_i$. 
The paths $\p_{j_r}$ for $r\le s$ are ``lazy paths", starting and ending at $j_r$ without moving.
Let $[k]\setminus \{j_1,\dots,j_s\}=\{i_1,\dots,i_{k-s}\}$ with $i_1<\dots<i_{k-s}$.
Hence, for $l>s$ the path $\p_{j_l}$ has source $i_{k-l+1}$ and sink $j_l$.
To its left are all boundary faces $F_r$ with $r$ in the cyclic interval $[j_l,i_{k-l+1}]$. 
Note that $i_{k-l+1}<k<j_l$, hence $k\not\in[j_l,i_{k-l+1}]$.
In particular, the claim follows
\end{proof}

\begin{corollary}\label{cor: col in lin sp}
Recall that $L_{I_{k,n}}\subset \trop(\Gr(k,\mathbb C^n))$ is the lineality space of the Plücker ideal $I_{k,n}$. For all $r\in[n]$ we have
\[
M_r\in L_{I_{k,n}}.
\] 
\end{corollary}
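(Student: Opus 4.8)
The plan is to show that each row $M_r$ of the weighting matrix $M_{\mathcal G}$ lies in the lineality space $L_{I_{k,n}}$, equivalently that $\init_{M_r}(I_{k,n}) = I_{k,n}$. Since $I_{k,n}$ is generated by the Plücker relations $R_{K,L}$ with $K \in \binom{[n]}{k-1}$ and $L \in \binom{[n]}{k+1}$, and the initial ideal with respect to a weight vector in the Gröbner fan is generated by the initial forms of a Gröbner basis, it suffices to show that $M_r$ evaluates equally on all monomials appearing in each Plücker relation; that is, for a fixed $r$ and each pair $K, L$, the quantity $(M_r)_{K\cup\{j\}} + (M_r)_{L\setminus\{j\}}$ is independent of $j \in L$. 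This is exactly the statement that the function $J \mapsto (M_r)_J$ on $\binom{[n]}{k}$ is ``affine'' with respect to the exchange relations, i.e. induced by a vector in $\mathbb{R}^n$ under the natural map $\mathbb{R}^n \to \mathbb{R}^{\binom{[n]}{k}}$ sending $e_i$ to $\sum_{J \ni i} e_J$ — the span of whose image is precisely the lineality space $L_{I_{k,n}}$ by the Fundamental Theorem (the lineality space of the Plücker ideal is classically $n$-dimensional, spanned by these vectors; see \cite{SS04}).

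First I would use Lemma~\ref{lem: col corresp to bdy face} to rewrite $(M_r)_J = \#\{l \mid r \in [j_l, i_{k-l+1}]\}$ as a sum of indicator functions, and observe that each cyclic interval $[j_l, i_{k-l+1}]$ depends on $J$ only through its partition into the ``small'' part $\{j_1 < \dots < j_s\} = J \cap [k]$ and the complementary small indices $\{i_1 < \dots < i_{k-s}\} = [k] \setminus (J \cap [k])$, together with the ``large'' part $\{j_{s+1} < \dots < j_k\} = J \setminus [k]$. The key combinatorial claim is that $(M_r)_J$ can be written as $\sum_{i \in J} a_i^{(r)}$ for suitable constants $a_i^{(r)} \in \mathbb{Z}$ independent of $J$ (depending only on $r$), which is what it means for $M_r$ to lie in the span of the vectors $\{e_i\}_i$ mentioned above. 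I would establish this by a careful bookkeeping argument: the multiset of cyclic intervals $\{[j_l, i_{k-l+1}] : l > s\}$ is obtained by a canonical pairing (matching the $l$-th largest of the large part with the $(k-l+1)$-th of the complementary small part), and one checks that the contribution of $r$ to $(M_r)_J$ decomposes as a sum over the elements of $J$ of a local contribution. Concretely I expect $a_i^{(r)} = 1$ if $r \in [i, \text{something}]$ in a way that telescopes, so that $(M_r)_J$ becomes a sum of per-element terms.

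The cleanest route may instead be representation-theoretic or geometric: $\val_{\mathcal G}$ is a valuation on the homogeneous coordinate ring $A_{k,n} = \bigoplus_{r\ge 1} V(r\omega_k)^*$, and it is constructed to encode the $\mathbb{Z}$-grading (or a refinement coming from the face data). The rows $M_i$ corresponding to boundary faces record, for each Plücker coordinate $p_J$, the number of boundary faces $F_i$ to the left of the minimal-degree flow; since a ``lazy path'' / flow interpretation makes $(M_r)_J + (M_r)_{J'}$ depend additively on the underlying multisets when $J, J'$ satisfy an exchange relation, the value on any Plücker relation is balanced. I would make this precise by fixing $K, L$ and the four (or three) terms of $R_{K,L}$, and directly comparing the flows: for each $j \in L$, the flow for $p_{K \cup \{j\}}$ and that for $p_{L \setminus \{j\}}$ together sweep out a region whose boundary-face count is independent of $j$. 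This is the main obstacle — verifying the balancing on Plücker relations directly from the flow combinatorics requires a genuine (though elementary) case analysis on where $r$ falls relative to the indices in $K$ and $L$ and how the cyclic intervals rearrange when $j$ varies over $L$. Once that balancing is checked, $\init_{M_r}(R_{K,L}) = R_{K,L}$ for all $K,L$, hence $\init_{M_r}(I_{k,n}) = I_{k,n}$ since the Plücker relations form a Gröbner basis with respect to (a term order refining) $M_r$, and therefore $M_r \in L_{I_{k,n}} \subset \trop(\Gr(k,\mathbb C^n))$, which is the assertion of the corollary.
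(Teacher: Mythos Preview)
Your overall strategy coincides with the paper's: show that for each Pl\"ucker relation $R_{K,L}$ the $M_r$-weight $(M_r)_{K\cup\{j\}} + (M_r)_{L\setminus\{j\}}$ is independent of $j\in L$, hence $\init_{M_r}(R_{K,L})=R_{K,L}$ and $M_r\in L_{I_{k,n}}$. Where you diverge is in treating the verification as ``the main obstacle'' requiring a case analysis on the position of $r$ relative to $K$ and $L$. The paper avoids any such case analysis by a single clean observation you are circling but do not state: using Lemma~\ref{lem: col corresp to bdy face}, for a monomial $p_Jp_{J'}$ one has
\[
(M_r)_J+(M_r)_{J'}=\#\{\,l_q\in (J\cup J')\setminus [k]\ :\ r\in [l_q,i_q]\,\},
\]
where the multiset $\{l_1,\dots,l_m\}=(J\cup J')\setminus [k]$ and the paired sources $i_q\in [k]$ are determined entirely by the multiset $J\cup J'$ (and its intersection with $[k]$). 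Since every monomial of $R_{K,L}$ has the same multiset union $J\cup J'=K\cup L$, the count is automatically constant across terms; no analysis of where $r$ falls is needed.

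Your first proposed route---exhibiting explicit $a_i^{(r)}$ with $(M_r)_J=\sum_{i\in J}a_i^{(r)}$---is a priori equivalent (since the lineality space of $I_{k,n}$ is exactly the image of $\mathbb{R}^n$), and would give a slightly stronger conclusion if completed. But your sketch leaves this genuinely open (``I expect $a_i^{(r)}=1$ if \dots''), and the pairing of large elements $j_l$ with complementary small elements $i_{k-l+1}$ makes a direct per-element decomposition less transparent than the multiset argument above. The paper does not pursue this route.
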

\begin{proof}
Consider a Plücker relation $R_{K,L}$ with $K\in\binom{[n]}{k-1}$ and $L\in\binom{[n]}{k+1}$ of form \eqref{eq: def plucker rel}. 
Every term in $R_{K,L}$ equals $\pm p_Jp_{J'}$ for some $J,J'\in\binom{[n]}{k}$.
Let $J=\{j_1,\dots,j_k\}\in\binom{[n]}{k}$ with $j_1<\dots j_s\le k<j_{s+1}<\dots <j_k$ and $J'=\{j'_1,\dots,j'_k\}\in\binom{[n]}{k}$ with $j'_1<\dots j'_{s'}\le k<j'_{s'+1}<\dots <j'_k$.
Set $[k]\setminus\{j_1,\dots,j_s\}=\{i_1,\dots,i_{k-s}\}$ with $i_1<\dots<i_{k-s}$ and $[k]\setminus\{j'_1,\dots,j'_{s'}\}=\{i'_1,\dots,i'_{k-s'}\}$ with $i'_1<\dots<i'_{k-s'}$.
We further denote $J\cup J'\setminus ([k]\cap (J\cup J'))=\{l_1,\dots,l_m\}$.
That is $\{l_1,\dots,l_m\}=\{j_{s+1},\dots,j_k,j'_{s'+1},\dots,j'_k\}$ and $m=2k-s-s'$. 
Note that there might be repetitions among the $l_i$.
Define for $q\in[m]$
\[
i_q:=\left\{\begin{matrix}
    i_{k-l+1}, & \text{ if } i_r=j_l,\\
    i'_{k-l'+1},& \text{ if } i_r=j'_{l'}.
\end{matrix}\right.
\]
With this notation we have
\begin{eqnarray*}
 \{j_l\in\{j_{s+1},\dots,j_k\}\mid r\in[j_l,i_{k-l+1}]\} &\cup& \{j'_{l'}\in\{j'_{s'+1},\dots,j'_k\}\mid r\in[j'_{l'},i'_{k-l'+1}]\}\\
&=& \{l_q\in\{l_1,\dots,l_m\}\mid r\in[l_q,i_q] \}.
\end{eqnarray*}
Consider $M_r\in\mathbb R^{\binom{n}{k}}$ as a weight vector for $\mathbb C[p_J]_J$.
Then the $M_r$-weight on $\pm p_Jp_{J'}$ is
\begin{eqnarray*}
(M_r)_J+(M_r)_{J'} &=& \#\{l_q\in\{l_1,\dots,l_m\}\mid r\in[l_q,i_q] \}.
\end{eqnarray*}
As $\#\{l_q\in\{l_1,\dots,l_m\}\mid r\in[l_q,i_q] \}$ depends only on $J\cup J'$, which is equal for all monomials in $R_{K,L}$ we deduce
\[
\init_{M_r}(R_{K,L})=R_{K,L},
\]
and the claim follows.
\end{proof}

Recall the plabic weight vector $\bw_\G$ from Definition~\ref{def: plabic deg}. The following proposition establishes the connection to what we have seen in \S\ref{sec:val and quasival}.
In terms of the weighting matrix $M_\G$, we observe
\[
\bw_\G=\sum_{j=k+1}^{d+1} M_j,
\]
where the sum contains exactly those $M_j$ corresponding to interior faces of $\G$.
Let $e:\mathbb Q^{d+1}\to\mathbb Q$ be a linear form. Using the notation as in \S\ref{sec:val and quasival} we have $e(M_\G)=(e(\val_\G(p_J))_{J\in\binom{[n]}{k}}\in\mathbb Q^{\binom{n}{k}}$.

\begin{proposition}\label{prop: plabic lin form}
For every plabic graph $\G$ there exists a linear form $e:\mathbb Q^{d+1}\to \mathbb Q$ satisfying $\val_\G(\bar p_I)\prec \val_\G(\bar p_J)$ implies $ e(\val_\G(\bar p_I))<e(\val_\G(\bar p_J))$ for $I,J\in\binom{[n]}{k}$, such that for the plabic weight vector $\bw_\G$ we have
\[
\init_{e(M_\G)}(I_{k,n})=\init_{\bw_G}(I_{k,n}).
\]
\end{proposition}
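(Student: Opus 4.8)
The plan is to produce the linear form $e$ from Caldero's lemma (\cite[Lemma~3.2]{Cal02}, as restated in \S\ref{sec:pre val}) applied to the finite set of columns of $M_\G$, i.e. to $S = \{\val_\G(\bar p_J) \mid J \in \binom{[n]}{k}\} \cup \{0\} \subset \mathbb{Z}^{d+1}$. By construction such an $e$ is a linear form $\mathbb{Q}^{d+1} \to \mathbb{Q}$ with strictly positive coefficients (after adding a large multiple of $(1,\dots,1)$ as in the remark following Caldero's lemma) satisfying $\bm \prec \bn \Rightarrow e(\bn) < e(\bm)$ on $S$; reversing the order convention to match the lexicographic $\prec$ used for $\val_\G$, this gives exactly the implication $\val_\G(\bar p_I) \prec \val_\G(\bar p_J) \Rightarrow e(\val_\G(\bar p_I)) < e(\val_\G(\bar p_J))$ demanded in the statement. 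So the content is entirely in the equality $\init_{e(M_\G)}(I_{k,n}) = \init_{\bw_\G}(I_{k,n})$.

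For that equality, first I would invoke Lemma~\ref{lem: init wM vs init M}, which already tells us $\init_{e(M_\G)}(I_{k,n}) = \init_{M_\G}(I_{k,n})$ for any admissible choice of $e$ — this is the whole point of that lemma. Thus it suffices to show $\init_{M_\G}(I_{k,n}) = \init_{\bw_\G}(I_{k,n})$, i.e. that the weighting matrix $M_\G$ and the single weight vector $\bw_\G$ induce the same initial ideal of the Plücker ideal. Here I would use the decomposition $\bw_\G = \sum_{j=k+1}^{d+1} M_j$ recorded just above the proposition, together with Corollary~\ref{cor: col in lin sp}, which says each boundary-face row $M_r$ ($r \in [n]$, in particular $r \le k$ contributes the zero row $M_k$, and the remaining $M_1,\dots,M_n$) lies in the lineality space $L_{I_{k,n}}$, so $\init_{M_r}(I_{k,n}) = I_{k,n}$. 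Consequently $\sum_{\text{all }j} M_j$ and $\sum_{j=k+1}^{d+1} M_j = \bw_\G$ differ by an element of $L_{I_{k,n}}$, and more usefully: on any Plücker relation $R_{K,L}$, the boundary rows $M_r$ are constant across the monomials of $R_{K,L}$ (this is precisely what the proof of Corollary~\ref{cor: col in lin sp} establishes — the $M_r$-weight of $p_Jp_{J'}$ depends only on $J \cup J'$), hence adding them back does not change which monomials of $R_{K,L}$ attain the $\prec$-minimal value of the full matrix $M_\G$. Therefore $\init_{M_\G}(R_{K,L})$ and $\init_{\bw_\G}(R_{K,L})$ pick out the same monomials.

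To turn this statement about generators $R_{K,L}$ into the claimed ideal equality I would appeal to the fact (from \cite[Proof of Theorem~3.4]{SS04} / the structure of the Plücker ideal, and used already in Proposition~\ref{prop: init M_S= init C_S}) that the Plücker relations form a tropical basis, so $\init_{M_\G}(I_{k,n})$ is generated by the $\init_{M_\G}(R_{K,L})$ — and likewise for $\bw_\G$, noting $\bw_\G$ lies in $\trop(\Gr(k,\mathbb C^n))$ as asserted in the text; matching initial forms on each $R_{K,L}$ then gives matching ideals. The main obstacle I anticipate is the careful bookkeeping of the cyclic intervals in Lemma~\ref{lem: col corresp to bdy face} and Corollary~\ref{cor: col in lin sp} to be sure that the boundary rows really are monomial-by-monomial constant on every $R_{K,L}$ (including the degenerate cases where $J \cup J'$ has repeated indices, i.e. $K \not\subset$ the supports), rather than merely constant in aggregate; but this is exactly the computation already carried out in the proof of Corollary~\ref{cor: col in lin sp}, so I would simply cite and reuse it. A secondary subtlety is pinning down the order-reversal convention between Caldero's lemma and the lexicographic order defining $\val_\G$, which is routine but must be stated explicitly so the inequality in the proposition comes out with the correct direction.
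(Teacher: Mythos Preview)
Your reduction to showing $\init_{M_\G}(I_{k,n})=\init_{\bw_\G}(I_{k,n})$ is the right target, and you correctly identify Corollary~\ref{cor: col in lin sp} as the essential input. But the argument you give for that equality has two genuine gaps.

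First, the step ``boundary rows are constant on the monomials of $R_{K,L}$, therefore $\init_{M_\G}(R_{K,L})=\init_{\bw_\G}(R_{K,L})$'' does not follow. What the constancy gives you is $\init_{M_\G}(R_{K,L})=\init_{M'_\G}(R_{K,L})$, where $M'_\G$ is the submatrix of interior rows. But $\init_{M'_\G}$ selects the monomials on which the \emph{vector} $M'_\G\bu$ is lexicographically minimal, whereas $\init_{\bw_\G}$ selects those on which the \emph{scalar} $\bw_\G\cdot\bu=\mathbf 1^t(M'_\G\bu)$ is minimal. Lex-min of a vector and min of the sum of its entries need not pick out the same monomials; you are implicitly assuming that the all-ones linear form on the interior coordinates is order-preserving for $\prec$, which is precisely the thing that has to be proved. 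Second, even if the initial forms agreed on every $R_{K,L}$, the quadratic Pl\"ucker relations are not a universal Gr\"obner basis of $I_{k,n}$ for $k\ge 3$ (and ``tropical basis'' is a different and weaker notion than what you need here), so the passage from generators to ideals fails in the generality claimed. The citations you give --- \cite[Proof of Theorem~3.4]{SS04} and Proposition~\ref{prop: init M_S= init C_S} --- are specific to $k=2$.

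The paper avoids both problems by not separating the two steps. It builds $e$ with the specific shape $e(x)=\sum_{i\text{ boundary}} r_i x_i + q\sum_{j\text{ interior}} x_j$, so that \emph{by construction} $e(M_\G)=q\,\bw_\G+\sum_i r_iM_i\in q\,\bw_\G+L_{I_{k,n}}$ via Corollary~\ref{cor: col in lin sp}. This gives $\init_{e(M_\G)}(I_{k,n})=\init_{\bw_\G}(I_{k,n})$ immediately at the level of weight vectors, with no generator argument needed. The only remaining work is to choose the $r_i$ (large, well-separated) and $q$ (small) so that this particular $e$ satisfies the order implication on the finite set $\{\val_\G(\bar p_J)\}$ --- which is exactly the point your approach was taking for granted.
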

\begin{proof}
We use the following two observations following by definition from the fan structure we chose on $\trop(\Gr(k,\mathbb C^n))$. Firstly, for every $q\in\mathbb Z_{>0}$ we have $\init_{q\bw_\G}(I_{k,n})=\init_{\bw_\G}(I_{k,n})$.

Secondly, by Corollary~\ref{cor: col in lin sp}, and the fan structure on $\trop(\Gr(k,\mathbb C^n))$ we have for $r_1,\dots,r_k\in\mathbb Z_{\ge 0}$
that $\init_{\bw_\G+r_1M_1+\dots +r_kM_k}(I_{k,n})=\init_{\bw_\G}(I_{k,n})$.

In particular, these observations imply that it is enough to find $q,r_1,\dots,r_k\in\mathbb Z_{\ge 0}$ such that $e(x_1,\dots,x_{d+1}):=\sum_{i=1}^kr_ix_i+\sum_{i=k+1}^{d+1}qx_i$ satisfies
\[
\val_\G(\bar p_I)\prec \val_\G(\bar p_J) \Rightarrow e(\val_\G(\bar p_I))<e(\val_\G(\bar p_J)) \text{ for } I,J\in\binom{[n]}{k}.
\]
As $\val_\G(p_J)\in\mathbb Z_{\ge 0}^{d+1}$, it suffices to find $q,r_1,\dots,r_k\in\mathbb Z_{\ge 0}$ such that all $e(\val_\G(p_J))$ are distinct.
If we choose $r_1,\dots,r_k\in\mathbb Z_{>0}$ big enough with $\vert r_i-r_j\vert$ big enough and $q\in\mathbb Z_{>0}$ relatively small, this is the case and the claim follows.
\end{proof}

\begin{example}
Consider the plabi graph $\G$ with perfect orientation as in Figure~\ref{fig:ExamplePerfectOrientation}. We have seen the images of $\val_\G$ in Table~\ref{tab:exp Rw val and deg} above. Note that the columns corresponding to the faces of $\G$ are ordered as we fixed above. We have $F_{23}=F_1,F_{34}=F_2,\dots,F_{12}=F_5$.
In particular, in the row for $p_J$ the entries in columns $F_{23},\dots, F_{14}$ give $\val_\G(\bar p_J)$. For example, for $p_{14}$ these entires are $1,0,0,1,1,0,0$ and $\val_\G(\bar p_{14})=(1,0,0,1,1,0)$.
The matrix $M_\G$ is the transpose of the matrix with columns $F_i$ as in the table.
The last column $e(M_\G)$ corresponds to the linear form $e:\mathbb Z^7\to\mathbb Z$ given by
\[
e(x_1,\dots,x_7)=x_1+2x_2+3x_3+4x_4+5x_5+x_6+x_7.
\]
It is an example of a linear form as in the proof of Proposition~\ref{prop: plabic lin form} with $r_i=i$ and $q=1$.
\end{example}

\begin{theorem}\label{thm: RW val and wt vect}
If $\init_{\bw_\G}(I_{k,n})$ is prime we have 
\[
\gr_{\val_\G}(A_{k,n})\cong \mathbb C[p_J]_J/\init_{\bw_\G}(I_{k,n}).
\]
Moreover, $\Delta(A_{k,n},\val_\G(p_J))=\conv(\val_\G(\bar p_J)\mid J\in\binom{[n]}{k})$ and the Plücker coordinates form a Khovanskii basis for $(A_{k,n},\val_\G)$.
\end{theorem}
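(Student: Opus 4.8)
The plan is to deduce Theorem~\ref{thm: RW val and wt vect} directly from Theorem~\ref{thm: val and quasi val with wt matrix} in \S\ref{sec:val and quasival}, applied to the valuation $\val_\G$ with presentation $\pi:\mathbb C[p_J]_J\to A_{k,n}$, $\ker(\pi)=I_{k,n}$, and $b_J=\pi(p_J)=\bar p_J$. First I would verify the hypotheses of that theorem. The ideal $I_{k,n}$ is homogeneous with respect to the standard $\mathbb Z_{\ge 0}$-grading (with $\deg p_J=1$), and it is generated by the Plücker relations $R_{K,L}$, which are homogeneous of degree $2$; so $\deg f = 2 > 1 = \varepsilon_1$ for every generator, giving the required condition on degrees with $s=1$. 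Next, $\val_\G$ is a full-rank valuation: its image lies in $\mathbb Z^{d+1}$ with $d=k(n-k)$, and the rank equals $\dim A_{k,n} = k(n-k)+1 = d+1$ (this is the Rietsch-Williams construction, a lowest-term valuation in $\mathcal X$-cluster coordinates; full rank is part of their setup, cf.\ \cite[\S7]{RW17}). Thus $\val_\G$ takes values in $\mathbb Z^d$ after identifying $\mathbb Z^{d+1}$ with a rank-$(d+1)$ lattice, matching the hypothesis $\val:A\setminus\{0\}\to(\mathbb Z^d,\prec)$ of Theorem~\ref{thm: val and quasi val with wt matrix} with $d$ the Krull dimension.

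The remaining hypothesis is that $\init_{M_{\val_\G}}(I_{k,n})$ be prime. Here I would invoke Proposition~\ref{prop: plabic lin form} together with Lemma~\ref{lem: init wM vs init M}. By Lemma~\ref{lem: init wM vs init M} we have $\init_{M_\G}(I_{k,n}) = \init_{e(M_\G)}(I_{k,n})$ for any linear form $e$ as in \cite[Lemma~3.2]{Cal02}; and Proposition~\ref{prop: plabic lin form} furnishes precisely such a form with the additional property that $\init_{e(M_\G)}(I_{k,n}) = \init_{\bw_\G}(I_{k,n})$. Hence $\init_{M_\G}(I_{k,n}) = \init_{\bw_\G}(I_{k,n})$, and the hypothesis ``$\init_{\bw_\G}(I_{k,n})$ is prime'' is exactly what is needed.

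With all hypotheses checked, Theorem~\ref{thm: val and quasi val with wt matrix} yields that $S(A_{k,n},\val_\G)$ is generated by $\{\val_\G(\bar p_J)\mid J\in\binom{[n]}{k}\}$, that $\Delta(A_{k,n},\val_\G)=\conv(\val_\G(\bar p_J)\mid J\in\binom{[n]}{k})$, and that the $\bar p_J$ form a Khovanskii basis for $(A_{k,n},\val_\G)$ — which gives the last two assertions of the theorem. For the first assertion, $\gr_{\val_\G}(A_{k,n})\cong \mathbb C[p_J]_J/\init_{\bw_\G}(I_{k,n})$: since $\val_\G$ has full rank it has one-dimensional leaves by \cite[Theorem~2.3]{KM16}, so by \cite[Proposition~2.4]{KM16} (or the cited \cite[Remark~4.13]{BG09}) $\gr_{\val_\G}(A_{k,n})\cong\mathbb C[S(A_{k,n},\val_\G)]$; and the proof of Theorem~\ref{thm: val and quasi val with wt matrix} shows $S(A_{k,n},\val_\G)=S(A_{k,n},\val_{M_\G})$, whence $\gr_{\val_\G}(A_{k,n})\cong\gr_{M_\G}(A_{k,n})\cong\mathbb C[p_J]_J/\init_{M_\G}(I_{k,n})=\mathbb C[p_J]_J/\init_{\bw_\G}(I_{k,n})$ by \cite[Lemma~4.4]{KM16} and the identification above.

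I expect the only genuinely delicate point to be bookkeeping around the lattice rank: the valuation $\val_\G$ literally lands in $\mathbb Z^{d+1}$ indexed by faces of $\G$, but its value semigroup spans a sublattice of rank $d+1$ equal to the Krull dimension, so one must be slightly careful to phrase ``full rank'' consistently with the conventions of \S\ref{sec:val and quasival} (where the target is written $\mathbb Z^d$ with $d$ the Krull dimension). This is a matter of matching notation rather than a substantive obstacle; everything else is a direct citation of the machinery already developed, in particular Proposition~\ref{prop: plabic lin form}, Lemma~\ref{lem: init wM vs init M}, and Theorem~\ref{thm: val and quasi val with wt matrix}.
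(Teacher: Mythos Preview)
Your proposal is correct and follows essentially the same route as the paper: identify $\init_{M_\G}(I_{k,n})=\init_{\bw_\G}(I_{k,n})$ via Proposition~\ref{prop: plabic lin form} and Lemma~\ref{lem: init wM vs init M}, then apply Theorem~\ref{thm: val and quasi val with wt matrix} for the Newton--Okounkov body and Khovanskii basis, and finally use the semigroup equality $S(A_{k,n},\val_\G)=S(A_{k,n},\val_{M_\G})$ from that theorem's proof together with \cite[Lemma~4.4]{KM16} to get the associated graded statement. Your extra care with the rank bookkeeping is a reasonable clarification but not a substantive departure.
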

\begin{proof}
Consider $e:\mathbb Q^{d+1}\to\mathbb Q$ as in Proposition~\ref{prop: plabic lin form}. Then by Lemma~\ref{lem: init wM vs init M} we have
\[
\init_{\bw_\G}(I_{k,n})=\init_{e(M_\G)}(I_{k,n})=\init_{M_\G}(I_{k,n}).
\]
In particular, $\init_{M_\G}(I_{k,n})$ is prime. Moreover, as $I_{k,n}$ is homogeneous with respect to the usual grading on the polynomial ring $\mathbb C[p_J]_J$ and generated by elements of degree 2, we can apply Theorem~\ref{thm: val and quasi val with wt matrix} and obtain $\Delta(A_{k,n},\val_\G)=\conv(\val_\G(p_J)\mid J\in\binom{[n]}{k})$.
Recall the (quasi-)valuation $\val_{M_\G}$ with weighting matrix $M_\G$.
From the proof of Theorem~\ref{thm: val and quasi val with wt matrix} we further deduce that
\[
S(A_{k,n},\val_\G)=S(A_{k,n},\val_{M_\G}).
\]
Hence, by \cite[Lemma~4.4]{KM16} $\gr_{\val_\G}(A_{k,n})=\gr_{M_\G}(A_{k,n})\cong \mathbb C[p_J]_J/\init_{M_\G}(I_{k,n})$ and the claim follows.
\end{proof}

\begin{corollary}\label{cor: integral NO vs prime}
If $\Delta(A_{k,n},\val_\G)$ is not integral, then $\init_{\bw_\G}(I_{k,n})$ is not prime.
\end{corollary}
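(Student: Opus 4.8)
This corollary is the contrapositive of the final implication in Theorem~\ref{thm: RW val and wt vect}, so the plan is essentially to read off what Theorem~\ref{thm: RW val and wt vect} already gives us. Suppose, for contradiction, that $\init_{\bw_\G}(I_{k,n})$ is prime. Then Theorem~\ref{thm: RW val and wt vect} applies and yields both $\gr_{\val_\G}(A_{k,n})\cong \mathbb C[p_J]_J/\init_{\bw_\G}(I_{k,n})$ and the identification of the Newton-Okounkov body as
\[
\Delta(A_{k,n},\val_\G)=\conv(\val_\G(\bar p_J)\mid J\in\tbinom{[n]}{k}).
\]

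The key point is then that the right-hand side is an integral polytope: its vertices are among the points $\val_\G(\bar p_J)$, and by the flow-model description of the valuation (Definition~\ref{def: plabic deg} together with the Rietsch-Williams valuation, $\val_\G(\bar p_J)=\wei(\bfff_J)$) each $\val_\G(\bar p_J)$ lies in $\mathbb Z_{\ge 0}^{d+1}$. Hence $\Delta(A_{k,n},\val_\G)$ is the convex hull of finitely many lattice points, so it is an integral polytope (its vertices, being a subset of these lattice points, are lattice points). This contradicts the hypothesis that $\Delta(A_{k,n},\val_\G)$ is not integral, and therefore $\init_{\bw_\G}(I_{k,n})$ cannot be prime.

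There is really no serious obstacle here — the corollary is a packaging of Theorem~\ref{thm: RW val and wt vect}. The only point worth being slightly careful about is the notion of ``integral'' being used: one should confirm that ``$\Delta(A_{k,n},\val_\G)$ is not integral'' in the sense of \cite[\S8]{RW17} is exactly the negation of ``$\Delta(A_{k,n},\val_\G)$ is the convex hull of lattice points'', i.e. that all vertices are lattice points. Once that is matched up, the contradiction is immediate. So the proof is short: assume primality, invoke Theorem~\ref{thm: RW val and wt vect} to express $\Delta(A_{k,n},\val_\G)$ as a lattice-point convex hull, conclude it is integral, contradiction.

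\begin{proof}
This is the contrapositive of the last assertion of Theorem~\ref{thm: RW val and wt vect}. Indeed, assume $\init_{\bw_\G}(I_{k,n})$ is prime. Then by Theorem~\ref{thm: RW val and wt vect} we have
\[
\Delta(A_{k,n},\val_\G)=\conv(\val_\G(\bar p_J)\mid J\in\tbinom{[n]}{k}).
\]
Each $\val_\G(\bar p_J)=\wei(\bfff_J)\in\mathbb Z_{\ge 0}^{d+1}$ is a lattice point, so $\Delta(A_{k,n},\val_\G)$ is the convex hull of finitely many lattice points; in particular all of its vertices are lattice points and $\Delta(A_{k,n},\val_\G)$ is integral. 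This contradicts the assumption that $\Delta(A_{k,n},\val_\G)$ is not integral. Hence $\init_{\bw_\G}(I_{k,n})$ is not prime.
\end{proof}
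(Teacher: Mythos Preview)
Your proof is correct and takes essentially the same approach as the paper: both argue by contrapositive via Theorem~\ref{thm: RW val and wt vect}, using that if $\init_{\bw_\G}(I_{k,n})$ is prime then $\Delta(A_{k,n},\val_\G)$ is the convex hull of the lattice points $\val_\G(\bar p_J)\in\mathbb Z^{d+1}$ and hence integral. The paper phrases it in the forward direction (not integral $\Rightarrow$ not the convex hull of those points $\Rightarrow$ semigroup not generated by them $\Rightarrow$ not prime), but this is the same argument.
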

\begin{proof}
If $\Delta(A_{k,n},\val_G)$ is not integral it is in particular not the convex hull of $\val_\G(p_J)\in\mathbb Z^{d+1}$ for $J\in\binom{[n]}{k}$. Then the semigroup $S(A_{k,n},\val_\G)$ is also not generated by $\val_\G(p_J)$ for $J\in\binom{[n]}{k}$.
By Theorem~\ref{thm: RW val and wt vect} in hence follows, that $\init_{\bw_\G}(I_{k,n})$ cannot be prime.
\end{proof}

\subsubsection*{The case of $\Gr(2,\mathbb C^n)$}

We show in \S\ref{sec:case gr2n} that $\init_{\bw_\G}(I_{2,n})$ is prime for every plabic graph $\G$ for $\Gr(2,\mathbb C^n)$ (see Theorem~\ref{Thm:main}).
Further, we show that for every isomorphism class of maximal prime cones in $\trop(\Gr(2,\mathbb C^n))$ there exists a plabic graph $\G$ such that $\init_{\bw_\G}$ coincides with an initial ideal from a prime cone in the equivalence class.
Then Theorem~\ref{thm: RW val and wt vect} yields the following corollary.

\begin{corollary}\label{cor: RW val gr2n}
For every plabic graph $\G$ for $\Gr(2,\mathbb C^n)$ there exists a maximal prime cone $C\subset \trop(\Gr(2,\mathbb C^n))$ such that
\[
\gr_{\val_G}(A_{2,n})=\mathbb C[p_{ij}]_{ij}/\init_C(I_{2,n}).
\]
In particular, the special fibre of the toric degeneration of $\Gr(2,\mathbb C^n)$ given in Rietsch-Williams by $\val_\G$ occurs also as a special fibre in a Gröbner toric degeneration as in \eqref{eq: groebner family}.
Moreover, for every maximal cone $C\subset\trop(\Gr(2,\mathbb C^n))$ there exists a plabic graph $\G$ such that the toric variety $\Proj(\gr_{\val_\G}(A_{2,n}))$ is isomorphic to $V(\init_C(I_{2,n}))$.
\end{corollary}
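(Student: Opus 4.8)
The plan is to obtain this corollary as a formal consequence of two inputs: Theorem~\ref{thm: RW val and wt vect} from the previous subsection, and the results announced for \S\ref{sec:case gr2n}, namely that $\init_{\bw_\G}(I_{2,n})$ is prime for every plabic graph $\G$ for $\Gr(2,\mathbb C^n)$ (Theorem~\ref{Thm:main}) together with the statement that every $S_n$-equivalence class of maximal cones of $\trop(\Gr(2,\mathbb C^n))$ is realized by some $\bw_\G$. Since all of the genuinely hard combinatorial and commutative-algebra content sits in those statements, the argument here is essentially an assembly.

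First I would fix a plabic graph $\G$ for $\Gr(2,\mathbb C^n)$ and recall from \S\ref{sec:case gr2n} that the plabic weight vector $\bw_\G$ lies in the relative interior of a maximal cone $C\subset\trop(\Gr(2,\mathbb C^n))$; by the adopted Gröbner fan structure on $\trop(\Gr(2,\mathbb C^n))$ (see \S\ref{sec:pre_trop}) this means $\init_{\bw_\G}(I_{2,n})=\init_C(I_{2,n})$, and by Theorem~\ref{Thm:main} this ideal is prime, so $C$ is a maximal prime cone. Applying Theorem~\ref{thm: RW val and wt vect} with $k=2$ then yields
\[
\gr_{\val_\G}(A_{2,n})\cong \mathbb C[p_{ij}]_{ij}/\init_{\bw_\G}(I_{2,n})=\mathbb C[p_{ij}]_{ij}/\init_C(I_{2,n}),
\]
which is the first assertion. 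For the ``in particular'' part I would pass to $\Proj$: one gets $\Proj(\gr_{\val_\G}(A_{2,n}))=V(\init_C(I_{2,n}))$, and this variety is precisely the special fibre of the Gröbner family \eqref{eq: groebner family} attached to any weight vector in $C^\circ$, whose generic fibre is $\Gr(2,\mathbb C^n)$; hence the Rietsch--Williams toric degeneration coincides with a Gröbner toric degeneration.

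For the final (``moreover'') statement I would begin with an arbitrary maximal cone $C\subset\trop(\Gr(2,\mathbb C^n))$, which is automatically prime by Corollary~\ref{cor: SS all cones prime}. By the second half of Theorem~\ref{Thm:main} there is a plabic graph $\G$ with $\init_{\bw_\G}(I_{2,n})=\init_{C'}(I_{2,n})$ for some maximal cone $C'$ in the $S_n$-equivalence class of $C$; the $S_n$-action on $\mathbb C[p_{ij}]_{ij}$ recalled in \S\ref{sec:pre_trop} carries $\init_C(I_{2,n})$ to $\init_{C'}(I_{2,n})$ via a sign-twisted permutation of the Plücker variables, so $V(\init_{C'}(I_{2,n}))\cong V(\init_C(I_{2,n}))$ as toric varieties. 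Combining this with the first part gives $\Proj(\gr_{\val_\G}(A_{2,n}))=V(\init_{\bw_\G}(I_{2,n}))=V(\init_{C'}(I_{2,n}))\cong V(\init_C(I_{2,n}))$, as required. The only subtlety worth care is the equivalence-class/$S_n$-action bookkeeping — matching labelled trivalent trees (equivalently triangulations of $D_n$) to plabic graphs so that the underlying unlabelled shapes agree — but this is exactly what Theorem~\ref{Thm:main} together with the bijection \cite[Algorithm 12.1]{KW14} provides, so no new ideas are needed here; the real work is located in \S\ref{sec:case gr2n}.
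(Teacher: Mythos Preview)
Your proposal is correct and follows exactly the route the paper indicates: the corollary is stated without an explicit proof, merely as the combination of Theorem~\ref{thm: RW val and wt vect} with the results of \S\ref{sec:case gr2n} (Theorem~\ref{Thm:main} and Corollary~\ref{cor: deg for plabic}), and your write-up fleshes out precisely that assembly. The only cosmetic point is that what you call ``the second half of Theorem~\ref{Thm:main}'' is really the observation that every unlabelled trivalent tree shape is realized as $T_\Delta$ for some triangulation $\Delta$ (via the dual-graph bijection), which together with Theorem~\ref{Thm:main} gives the surjectivity onto $S_n$-equivalence classes of cones; but you make this explicit in your final sentence, so nothing is missing.
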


Note that the Corollary implies in particular, that $\bw_\G\in\trop(\Gr(2,\mathbb C^n))$, which is not clear by definition. However, we observe in the next subsection that this is also the case for $\Gr(3,\mathbb C^6)$.

\subsubsection*{The case of $\Gr(3,\mathbb C^{6})$\footnote{The computations were done in joint work \cite{BFFHL}. The code is provided by Hering in \cite{M2c}.}}\label{Gr(3,6)}

For $\Gr(3,\mathbb C^6)$ there are (up to moves (M2) and (M3)) 34 plabic graphs. 
We compute for each of them the plabic weight vector $\bw_\G$ and the initial ideal $\init_{\bw_\G}(I_{2,n})$.
Before stating the results of our computation we review what is known about $\trop(\Gr(3,\mathbb C^6))$ from \cite{SS04}.
There are 7 isomorphism classes of  maximal cones in $\trop(\Gr(3,\mathbb C^{6}))$,
labelled by 
\[
\text{FFGG, EEEE, EEFF1, EEFF2, EEFG, EEEG, EEFG.}
\]
The last six are prime cones, while the initial ideals for cones in the isomorphism classes FFGG are not prime.
The following Theorem summarizes our results. The weight vectors $\bw_\G$ for all 34 plabic graphs can be found in Table~\ref{tab:matching} in Appendix~\ref{sec:app Gr}.

\begin{theorem}\label{thm: gr36 plabic}
For every plabic graph $\G$ for $\Gr(3,\mathbb C^6)$ the plabic weight vector $\bw_\G$ lies in $\trop(\Gr(3,\mathbb C^6))$.
Up to isomorphism, there are six distinct initial ideals $\init_{\bw_\G}(I_{3,6})$, five of which correspond to ideals from cones in the isomorphism classes
\[
\text{EEFF1, EEFF2, EEFG, EEEG, EEFG.}
\]
Two plabic graphs yield a weight vector lying on a ray of type GG of a maximal (non-prime) cone of type FFGG. These are in fact those plabic graphs, for which $\Delta(A_{3,6},\val_\G)$ is not integral (see \cite[\S8]{RW17}).
\end{theorem}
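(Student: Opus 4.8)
\textbf{Proof strategy for Theorem~\ref{thm: gr36 plabic}.}
The plan is to reduce everything to an explicit finite computation, using the general machinery developed in \S\ref{sec:val and quasival} and \S\ref{subsec: cluster gr2n} to interpret the output. First I would enumerate the $34$ plabic graphs for $\Gr(3,\mathbb C^6)$ up to the moves (M2) and (M3), fix for each one the perfect orientation with source set $[3]$, and compute the weighting matrix $M_\G=(\val_\G(\bar p_J))_{J\in\binom{[6]}{3}}$ by finding, for every $J$, the unique minimal-degree $J$-flow guaranteed by \cite[Corollary~11.4]{RW17}. From $M_\G$ one reads off the plabic weight vector $\bw_\G=\sum_{j}M_j$ over the interior faces, exactly as in the identity stated just before Proposition~\ref{prop: plabic lin form}. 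Then I would compute the initial ideal $\init_{\bw_\G}(I_{3,6})$ in \emph{Macaulay2} (using \emph{gfan} to locate the ambient cone of the Gröbner fan), check that it is monomial-free --- which by the Fundamental Theorem of Tropical Geometry is precisely the assertion $\bw_\G\in\trop(\Gr(3,\mathbb C^6))$ --- and identify which maximal cone of $\trop(\Gr(3,\mathbb C^6))$ contains $\bw_\G$ in its relative interior (or on which face it lies), comparing against the seven isomorphism classes listed by Speyer--Sturmfels in \cite{SS04}.

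Next I would organize the $34$ resulting initial ideals into $S_6$-orbits: the symmetric group acts on Plücker coordinates and hence permutes both the initial ideals coming from cones and those coming from plabic graphs, so it suffices to record one representative per orbit. This is where the count ``six distinct initial ideals up to isomorphism, five of them matching cones of types EEFF1, EEFF2, EEFG, EEEG, EEFG'' comes out; the sixth orbit will be the one landing on the non-prime stratum. For the two exceptional plabic graphs I would verify directly that $\bw_\G$ lies on a ray of type GG inside a maximal cone of type FFGG: concretely, that $\init_{\bw_\G}(I_{3,6})$ is strictly contained in $\init_C(I_{3,6})$ for the corresponding FFGG-cone $C$ and is not itself prime. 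To tie the last sentence to \cite[\S8]{RW17}, I would either cite their explicit computation of the non-integral Newton--Okounkov bodies for $\Gr(3,\mathbb C^6)$, or observe that Corollary~\ref{cor: integral NO vs prime} gives one direction (non-integral $\Rightarrow$ non-prime) and then check by hand for the remaining $32$ graphs that $\Delta(A_{3,6},\val_\G)=\conv(\val_\G(\bar p_J))$ is a lattice polytope, so that exactly these two graphs are the non-integral ones.

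The main obstacle is not conceptual but organizational and computational: the correct enumeration of the $34$ plabic graphs and, especially, the careful bookkeeping of the $S_6$-action so that the orbit count is exactly right and the matching with the Speyer--Sturmfels cone types is unambiguous. A secondary subtlety is that $\bw_\G$ is defined purely combinatorially and there is no \emph{a priori} reason it should lie in $\trop(\Gr(3,\mathbb C^6))$ at all --- unlike the $\Gr(2,\mathbb C^n)$ case, where Corollary~\ref{cor: RW val gr2n} forces it --- so the monomial-freeness of each $\init_{\bw_\G}(I_{3,6})$ genuinely has to be certified by the computation rather than deduced. Once the table of the $34$ weight vectors (deferred to Appendix~\ref{sec:app Gr}, Table~\ref{tab:matching}) is in hand, the proof is complete by inspection. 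The explicit \emph{Macaulay2} code carrying this out is the one provided in \cite{M2c}.
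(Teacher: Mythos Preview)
Your proposal is correct and matches the paper's approach: the theorem is presented there as a summary of an explicit finite computation, with the $34$ weight vectors tabulated in Appendix~\ref{sec:app Gr} (Table~\ref{tab:matching}) and the supporting \emph{Macaulay2} code in \cite{M2c}. The paper offers no argument beyond this computational verification, so your outline of enumerating the graphs, computing $\bw_\G$, checking monomial-freeness of $\init_{\bw_\G}(I_{3,6})$, and matching against the Speyer--Sturmfels cone types is exactly what is done.
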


In \cite{BCL} they study a combinatorial model for cluster algebras of type $D_4$. 
The 50 seeds are given by centrally symmetric pseudo-triangulations of a once punctured disk with $8$ marked points. 
In the paper they analyze symmetries among the cluster seeds and associate each seed to an isomorphism class of maximal cones in $\trop(\Gr(3,\mathbb C^{6}))$. 
Although they consider all 50 cluster seeds, the outcome is similar to ours: they recover only six of the seven types of maximal cones, missing the cone of type EEEE.
In Table~\ref{tab:matching} we indicate to which seeds (using their labelling) the 34 plabic graphs correspond.
We observe that our findings match theirs in the sense that our weight vectors lie in the relative interior of those cones in $\trop(\Gr(3,\mathbb C^6))$ they identified with the corresponding seed through symmetries.

%When $k\ge 3$, the weight vectors we obtain from plabic graphs as in Definition~\ref{def: plabic deg} could fail to give a complete characterization of the maximal cones of $\trop(\Gr(k,\mathbb C^{n}))$. This is the case of $\Gr(3,\mathbb C^{6})$. Indeed,  by \cite{SS04},

%On the other hand, we find 6 isomorphism classes of initial ideals corresponding to the weight vectors obtained from the 34 plabic graphs of $\Gr(3,\mathbb C^{6})$. The isomorphism class that does not correspond to a full-dimensional cone in the tropical variety corresponds to an edge of the form $GG$.
%\clearpage
%The set of non-frozen Pl\"ucker coordinates, the weight vector, the type of tropical cone, the permutations and the number of the cluster seed given in \cite{BCL}

\subsection{Main Theorem for \texorpdfstring{$\Gr(2,\mathbb C^n)$}{}}\label{sec:case gr2n}
%\subsubsection{Triangulations of the $n$-gon}
From now on we focus on $\Gr(2,\mathbb C^n)$. The main result of this section is Theorem~\ref{Thm:main} in which we show that the initial ideal with respect to the plabic weight vector coincides with the initial ideal corresponding to a certain trivalent tree.
In fact, the plabic graph and the tree are related combinatorially: they can both be obtained from the same triangulation.
\medskip

Recall the cluster structure of $\Gr(2,\mathbb C^n)$ from \S\ref{subsec: cluster gr2n}.
For $n\geq 4$, let $D_n$ be a disk with $n$ marked points on the boundary labelled by $1,\ldots,n$ in the counterclockwise order (or $n$-gon for short). For $1\leq i,j\leq n$, let $(i,j)$ be the arc connecting the points $i$ and $j$.

Fix $\Delta=\Delta_e\cup\Delta_d$ a triangulation of the $D_n$ as in \S\ref{subsec: cluster gr2n}.
We define
$$\Delta_d=\{(a_1,b_1),(a_2,b_2),\ldots,(a_{n-3},b_{n-3})\}$$
as the set of internal arcs and
$$\Delta_e=\{(1,2),(2,3),\ldots,(n-1,n),(n,1)\}$$
the set of boundary arcs connecting marked points.

%\subsubsection{A-degree and tree degree}
A \textit{rooted (labelled) tree} is a trivalent tree on $n$ leaves with root $1$ and the other leaves labelled counterclockwise with $2,\ldots,n$. Each triangulation $\Delta$ of $D_n$ gives such a labelled tree $T_\Delta$ by considering the dual graph to $\Delta$. More precisely $T_\Delta$ can be constructed using the following Algorithm~\ref{alg:tree from triang}. See Figure \ref{triangulation:tree} for an example.

\begin{algorithm}[h]
\SetAlgorithmName{Algorithm}{} 
\KwIn{\medskip {\bf Input:\ }  A triangulation $\Delta$ of $D_n$.}
\BlankLine
\For{every triangle $t$ in $\Delta$}{draw a vertex $v_t$.}
\For{every two adjacent triangles $t,t'$ in $\Delta$}{connect the vertices $v_t$ and $v_{t'}$ by an edge.}
\For{every boundary arc $(i,i+1)\mod n$ of $D_n$}{draw a vertex labelled $i+1$ and connect it to the unique vertex $v_t$ for which $(i,i+1)\mod n$ is an edge of the triangle $t$.}
\BlankLine
{\bf Output:\ } a rooted tree $T_\Delta$. 
\label{alg:tree from triang}
\caption{Constructing a rooted tree $T_\Delta$ from a triangulation $\Delta$ of $D_n$.}
\end{algorithm}

\begin{center}
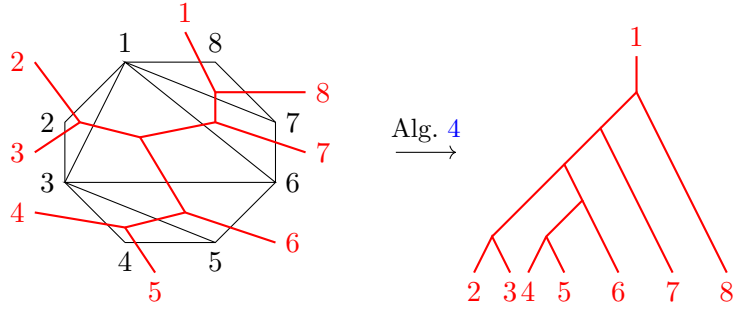
\begin{figure}
\centering
\begin{tikzpicture}[scale=0.4]
\node[below] at (3,1) {4};
\node[below] at (6,1) {5};
\node[right] at (8,3) {6};
\node[right] at (8,5) {7};
\node[above] at (6,7) {8};
\node[above] at (3,7) {1};
\node[left] at (1,5) {2};
\node[left] at (1,3) {3};

\node[below, red] at (4,0) {5};
\node[right, red] at (8,1) {6};
\node[right, red] at (9,4) {7};
\node[right, red] at (9,6) {8};
\node[above, red] at (5,8) {1};
\node[left, red] at (0,7) {2};
\node[left, red] at (0,4) {3};
\node[left, red] at (0,2) {4};

\draw (3,1) --(6,1) -- (8,3) -- (8,5) -- (6,7) -- (3,7) -- (1,5) -- (1,3) -- (3,1);
\draw (6,1) -- (1,3) -- (8,3) -- (3,7) -- (8,5);
\draw (1,3) -- (3,7);
\draw[red, thick] (4,0) -- (3,1.5) -- (0,2);
\draw[red, thick] (3,1.5) -- (5,2) -- (8,1);
\draw[red, thick] (5,2) -- (3.5,4.5) -- (1.5,5) -- (0,4);
\draw[red, thick] (1.5,5) -- (0,7);
\draw[red, thick] (3.5,4.5) -- (6,5) -- (9,4);
\draw[red, thick] (6,5) -- (6,6) -- (9,6);
\draw[red, thick] (6,6) -- (5,8);

\draw[->] (12,4) -- (14,4);
\node[above] at (13,4) {\small Alg.~\ref{alg:tree from triang}};

\begin{scope}[xshift=14cm, scale=0.6]
\node[above, red] at (10,12) {1};
\node[below, red] at (1,0) {2};
\node[below, red] at (3,0) {3};
\node[below, red] at (4,0) {4};
\node[below, red] at (6,0) {5};
\node[below, red] at (9,0) {6};
\node[below, red] at (12,0) {7};
\node[below, red] at (15,0) {8};

\draw[red, thick] (1,0) -- (2,2) -- (3,0);
\draw[red, thick] (2,2) -- (6,6) -- (7,4) -- (5,2) -- (4,0);
\draw[red, thick] (5,2) -- (6,0);
\draw[red, thick] (7,4) -- (9,0);
\draw[red, thick] (6,6) -- (8,8) -- (12,0);
\draw[red, thick] (8,8) -- (10,10) -- (15,0);
\draw[red, thick] (10,10) -- (10,12);
\end{scope}
\end{tikzpicture}
\caption{A triangulation of $D_8$ and the corresponding rooted tree (the output of Algorithm~\protect{\ref{alg:tree from triang}}) after rescaling.}
\label{triangulation:tree}
\end{figure}
\end{center}

For a Pl\"ucker variable $p_{ij}$, recall that the \emph{tree degree} $\ddeg_{T_\Delta}(p_{ij})$ is the number of internal edges between leaves $i$ and $j$ (see Definition~\ref{def:treedeg}). The tree degree admits an alternative description in terms of the corresponding triangulation as follows.

We adopt the following notations on cyclic intervals: $[ i,i ]=\{i\}$ and for $1\leq i<j\leq n$, let $[ i,j ]=\{i,i+1,\ldots,j\}$ and $[ j,i ]=\{j,j+1,\ldots,n\}\cup\{1,2,\ldots,i\}.$
The \emph{A-degree} on Pl\"ucker coordinates is defined for $i<j$ as
\begin{equation}\label{eq: def A-degree}
a_{ij}=\ddeg_A(p_{ij}):=\#\{(a_r, b_r) \in \Delta_d \mid \{a_r,b_r\}\cap[ i,j-1]\ \ \text{has cardinality $1$}\}.
\end{equation}
By definition the following proposition holds.
\begin{proposition}\label{Prop:treeA}
For any $1\leq i<j\leq n$, $\ddeg_{T_\Delta}(p_{ij})=\ddeg_A(p_{ij})$.
\end{proposition}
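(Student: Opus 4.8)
The plan is to use the classical duality between a triangulation of a polygon and its dual tree, reinterpreting both $\ddeg_{T_\Delta}(p_{ij})$ and $\ddeg_A(p_{ij})$ as the cardinality of one and the same set of internal arcs of $\Delta$.

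First I would make explicit the correspondence produced by Algorithm~\ref{alg:tree from triang}. The $n-2$ triangles of $\Delta$ become the internal vertices of $T_\Delta$; the internal (non-leaf) edges of $T_\Delta$ are in bijection with the internal arcs $(a_r,b_r)\in\Delta_d$, the edge $v_t v_{t'}$ corresponding to the common arc of the adjacent triangles $t,t'$; and the $n$ leaf edges correspond to the boundary arcs in $\Delta_e$, with the leaf labelled $k$ attached through the boundary arc $(k-1,k)\bmod n$. Since $T_\Delta$ is a tree, there is a unique path between the leaves labelled $i$ and $j$, and by Definition~\ref{def:treedeg} the quantity $\ddeg_{T_\Delta}(p_{ij})$ equals the number of internal edges on this path. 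An internal edge $e_d$, corresponding to an arc $d=(a,b)\in\Delta_d$ with $a<b$, lies on this path if and only if deleting $e_d$ puts the leaves $i$ and $j$ in different components of $T_\Delta$; since the component of a leaf $k$ is determined by which side of the chord $d$ the boundary arc $(k-1,k)$ lies on, this happens if and only if the chord $d$ separates the boundary arc $(i-1,i)$ from the boundary arc $(j-1,j)$ inside $D_n$.

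Next I would turn this separation condition into the arithmetic condition defining $\ddeg_A$. The chord $d=(a,b)$ splits $D_n$ into the region whose boundary arcs are exactly $(a,a+1),\dots,(b-1,b)$ and the complementary region; hence the boundary arc $(i-1,i)$ lies in the first region precisely when $i\in\{a+1,\dots,b\}$, and likewise for $(j-1,j)$. So $e_d$ is on the $i$--$j$ path iff exactly one of $i,j$ lies in $\{a+1,\dots,b\}$. I would then observe that both this condition and the condition $\#(\{a,b\}\cap[i,j-1])=1$ are equivalent to the statement that the chord $d$ crosses, inside the disk, the chord joining the midpoints of the boundary arcs $(i-1,i)$ and $(j-1,j)$ --- this is just the fact that two chords of a convex polygon cross iff their four endpoints interleave in cyclic order, applied with the endpoints of $d$ at the vertices $a,b$ and the endpoints of the second chord at the ``half-integer'' positions $i-\tfrac12,\,j-\tfrac12$. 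A short interleaving computation makes both equivalences explicit and takes care of the index wrap-around (including the case $i=1$, where the relevant boundary arc is $(n,1)$).

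Putting these steps together, the internal edges of $T_\Delta$ lying on the path between leaves $i$ and $j$ correspond bijectively to the arcs $(a_r,b_r)\in\Delta_d$ for which $\{a_r,b_r\}\cap[i,j-1]$ has cardinality $1$, whence
\[
\ddeg_{T_\Delta}(p_{ij}) \;=\; \#\{(a_r,b_r)\in\Delta_d \mid \{a_r,b_r\}\cap[i,j-1]\text{ has cardinality }1\} \;=\; \ddeg_A(p_{ij}),
\]
which is the assertion. The proof is essentially a translation between two pictures, so there is no real difficulty; the only place that needs care is the bookkeeping of conventions --- the shift between a leaf label $k$ and its boundary arc $(k-1,k)$, which is exactly what forces the interval $[i,j-1]$ rather than $[i,j]$ in the definition of the $A$-degree, together with the elementary verification that ``exactly one of $i,j\in\{a+1,\dots,b\}$'' and ``exactly one of $a,b\in[i,j-1]$'' describe the same crossing.
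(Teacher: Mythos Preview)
Your argument is correct and is exactly the unpacking of the duality between $\Delta$ and $T_\Delta$ that the statement rests on; the paper itself gives no proof at all, simply asserting that the proposition holds ``by definition.'' Your write-up is a careful elaboration of that claim---the bijection between internal edges of $T_\Delta$ and arcs in $\Delta_d$, the separation criterion, and the interleaving check---so the approach is the same, only made explicit.
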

\begin{definition}\label{def: connection number}
An internal arc $(a,b)$ in the triangulation $\Delta$ is called \emph{connecting} $[ p,q]$ and $[ s,t]$, if $a\in [ p,q]$ and $b \in [ s,t]$ or vice versa. The number of such internal arcs are denoted by $C_{p,q}^{s,t}$ and called \emph{connection number}.
\end{definition}
Using this notation, the A-degree on the Pl\"ucker coordinates can be written as:
\begin{equation}\label{Eq:Adeg}
a_{ij}=C_{i,j-1}^{j,i-1}.
\end{equation}

This alternative description allows us to state the following proposition.
\begin{proposition}\label{a-prop}
For $1\leq i<j<k<l\leq n$,
\begin{enumerate}[(i)]
\item $a_{ij}+a_{kl}=a_{ik}+a_{jl}$ if and only if $C_{j,k-1}^{l,i-1}=0$; when this is the case, $a_{ij}+a_{kl}>a_{il}+a_{jk}$.
\item $a_{il}+a_{jk}=a_{ik}+a_{jl}$ if and only if $C_{i,j-1}^{k,l-1}=0$;
when this is the case, $a_{ij}+a_{kl}<a_{il}+a_{jk}$.
\end{enumerate}
\end{proposition}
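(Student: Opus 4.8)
The plan is to work entirely on the combinatorial side via the connection numbers $C_{p,q}^{s,t}$, using the identity $a_{ij} = C_{i,j-1}^{j,i-1}$ from \eqref{Eq:Adeg} and the fact (Proposition~\ref{Prop:treeA}) that tree degrees and A-degrees coincide. The key observation is that for fixed $1 \le i < j < k < l \le n$, the three index sets that appear in the Pl\"ucker relation $R_{i,j,k,l}$ partition the marked points $\{1,\dots,n\}$ into the four cyclic blocks $[i,j-1]$, $[j,k-1]$, $[k,l-1]$, $[l,i-1]$. Every internal arc $(a_r,b_r)$ of $\Delta$ has its two endpoints in some pair of these four blocks (or in a single block, in which case it never contributes to any of the three A-degree sums $a_{ij}+a_{kl}$, $a_{ik}+a_{jl}$, $a_{il}+a_{jk}$). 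So the plan is: classify internal arcs by which pair of the four blocks they connect — there are six types — and for each type compute its contribution to each of the three sums.

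First I would set up the bookkeeping. Write $C_1 = C_{i,j-1}^{j,k-1}$, $C_2 = C_{i,j-1}^{k,l-1}$, $C_3 = C_{i,j-1}^{l,i-1}$, $C_4 = C_{j,k-1}^{k,l-1}$, $C_5 = C_{j,k-1}^{l,i-1}$, $C_6 = C_{k,l-1}^{l,i-1}$ for the six connection numbers between distinct blocks. An arc of a given type contributes $1$ to $a_{xy} = C_{x,y-1}^{y,x-1}$ precisely when it separates $\{x,\dots,y-1\}$ from the rest, i.e. when exactly one of its two endpoints lies in the cyclic interval $[x,y-1]$. Going through $a_{ij}$ (interval $[i,j-1]$), $a_{kl}$ (interval $[k,l-1]$), $a_{ik}$ (interval $[i,k-1] = [i,j-1]\cup[j,k-1]$), $a_{jl}$ (interval $[j,l-1] = [j,k-1]\cup[k,l-1]$), $a_{il}$ (interval $[i,l-1]$), $a_{jk}$ (interval $[j,k-1]$), one obtains
\begin{align*}
a_{ij}+a_{kl} &= (C_1+C_2+C_3) + (C_2+C_4+C_6),\\
a_{ik}+a_{jl} &= (C_2+C_3+C_4+C_5) + (C_1+C_2+C_5+C_6),\\
a_{il}+a_{jk} &= (C_3+C_5+C_6) + (C_1+C_4+C_5).
\end{align*}
Then $a_{ik}+a_{jl} - (a_{ij}+a_{kl}) = 2C_5$ and $a_{ik}+a_{jl} - (a_{il}+a_{jk}) = 2C_2$, while $a_{il}+a_{jk} - (a_{ij}+a_{kl}) = 2C_2 - 2C_5$.

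From these three differences the proposition falls out immediately. For (i): $a_{ij}+a_{kl} = a_{ik}+a_{jl}$ iff $C_5 = C_{j,k-1}^{l,i-1} = 0$; when this holds, $a_{il}+a_{jk} - (a_{ij}+a_{kl}) = 2C_2 \ge 0$, and in fact $C_2 > 0$ here — this is the point needing a short extra argument, namely that if both $C_5 = 0$ and $C_2 = 0$ then the triangulation would disconnect, which is impossible since $\Delta$ triangulates the disk (with $j < k < l$ strict, the blocks $[j,k-1]$ and $[k,l-1]$ are nonempty and must be joined to the rest of the diagram). For (ii): $a_{il}+a_{jk} = a_{ik}+a_{jl}$ iff $C_2 = C_{i,j-1}^{k,l-1} = 0$; when this holds, $a_{il}+a_{jk} - (a_{ij}+a_{kl}) = -2C_5 \le 0$, and again $C_5 > 0$ by the same connectivity argument.

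The main obstacle is the last point: proving the strict inequalities $C_2 > 0$ (resp. $C_5 > 0$) under the stated hypotheses, rather than just $\ge$. I expect to handle this by a direct planarity/connectivity argument on the triangulation $\Delta$: the union of the two ``opposite'' blocks together with the arcs internal to them cannot be separated from the complementary pair of blocks, because $\Delta$ triangulates a disk and every triangle spans three marked points, so some triangle — hence some internal arc — must straddle the relevant pair of blocks as soon as those blocks are nonempty (which is guaranteed by the strict inequalities $i<j<k<l$). Everything else is the routine linear-algebra bookkeeping sketched above, which I would present compactly rather than expand.
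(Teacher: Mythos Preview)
Your approach is essentially identical to the paper's: decompose each $a_{xy}$ via \eqref{Eq:Adeg} and Lemma~\ref{Lem:C-nb}(ii) into connection numbers between the four cyclic blocks $[i,j-1]$, $[j,k-1]$, $[k,l-1]$, $[l,i-1]$, then compare. The paper also handles the strictness by the one-line remark that in a triangulation $C_{j,k-1}^{l,i-1}$ and $C_{i,j-1}^{k,l-1}$ cannot both vanish.

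However, you have a sign slip that reverses both strict inequalities. From your own expansions,
\[
a_{il}+a_{jk} = C_1+C_3+C_4+2C_5+C_6,\qquad a_{ij}+a_{kl} = C_1+2C_2+C_3+C_4+C_6,
\]
so $(a_{il}+a_{jk})-(a_{ij}+a_{kl}) = 2C_5 - 2C_2$, not $2C_2 - 2C_5$. With the corrected sign, case (i) ($C_5=0$) gives $(a_{il}+a_{jk})-(a_{ij}+a_{kl}) = -2C_2 < 0$, i.e.\ $a_{ij}+a_{kl} > a_{il}+a_{jk}$ as the proposition claims; your version yields the opposite inequality. Same for (ii). You should have caught this by checking your conclusion against the statement.

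On the strictness argument: your connectivity sketch is in the right spirit but not yet a proof as written. A cleaner route (and one already available to you) is to invoke Proposition~\ref{Prop:treeA}: since $a_{xy}$ equals the tree distance in the trivalent tree $T_\Delta$, and any trivalent tree on four distinct leaves has a genuine internal edge, the four-point condition holds strictly, forcing exactly one of $C_2,C_5$ to be positive. Alternatively, on the triangulation side: arcs of the two ``crossing'' types (those counted by $C_2$ and by $C_5$) must intersect, so at most one type occurs; that at least one occurs can be argued by noting that the diagonal $(i,k)$ is either in $\Delta$ or is crossed by some arc of $\Delta$, and then iterating, but the tree argument is quicker.
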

To prove the proposition we need the following properties of the connection numbers that follow directly from their definition.

\begin{lemma}\label{Lem:C-nb}
Suppose that $1\leq p,q,s,t\leq n$, the following statements hold.
\begin{enumerate}[(i)]
\item $C_{p,q}^{s,t}=C_{s,t}^{p,q}$.
\item Suppose that $[ s,t]\cap [ p,q]=\emptyset$. For any $r\in[ s,t]$ such that $r\neq s$, $C_{p,q}^{s,t}=C_{p,q}^{s,r-1}+C_{p,q}^{r,t}$; if $r\neq t$, $C_{p,q}^{s,t}=C_{p,q}^{s,r}+C_{p,q}^{r+1,t}$.
\item For $t\in [ s,q]$ such that $t\neq q$, $C_{s,q}^{s,t}=C_{s,t}^{s,t}+C_{t+1,q}^{s,t}$.
\item $C_{s,q}^{s,q}=C_{s,s}^{s+1,q}+C_{s+1,q}^{s+1,q}$.
\end{enumerate}
\end{lemma}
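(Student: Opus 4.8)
The plan is to prove all four identities by the same elementary bookkeeping on the finite set of internal arcs of the triangulation $\Delta$, using one uniform reformulation of the connection numbers. First I would record the basic observation: an internal arc $(a,b)$ (which has $a\neq b$ by definition) contributes to $C_{p,q}^{s,t}$ exactly when its unordered endpoint set $\{a,b\}$ can be labelled with one endpoint in $[p,q]$ and the other in $[s,t]$. With this phrasing, statement (i) is immediate, since the condition ``$a\in[p,q]$ and $b\in[s,t]$ or vice versa'' is manifestly symmetric under interchanging the two intervals.

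For the remaining three statements I would specialize the observation to the two relevant configurations of intervals and in each case reduce the count to a disjoint partition of one interval. For (ii), using $[s,t]\cap[p,q]=\emptyset$, an arc counted by $C_{p,q}^{s,t}$ has exactly one endpoint in $[s,t]$ (the other lies in $[p,q]$, hence outside $[s,t]$); I would then use the partition of cyclic intervals $[s,t]=[s,r-1]\sqcup[r,t]$ (resp. $[s,t]=[s,r]\sqcup[r+1,t]$), valid for $r\in[s,t]$ with $r\neq s$ (resp. $r\neq t$), and note that the unique endpoint lying in $[s,t]$ falls into exactly one of the two pieces, which yields the additivity (together with the obvious fact that arcs counted by either piece are counted by $C_{p,q}^{s,t}$). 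For (iii), with $t\in[s,q]$, $t\neq q$, I would first verify the reduction: since $[s,t]\subseteq[s,q]$, an arc is counted by $C_{s,q}^{s,t}$ if and only if at least one of its endpoints lies in $[s,t]$ and both lie in $[s,q]$; then, splitting $[s,q]=[s,t]\sqcup[t+1,q]$, such an arc either has both endpoints in $[s,t]$ (counted by $C_{s,t}^{s,t}$) or one endpoint in $[s,t]$ and the other in $[t+1,q]$ (counted by $C_{t+1,q}^{s,t}$, using $[s,t]\cap[t+1,q]=\emptyset$), these cases being disjoint and exhaustive. For (iv), I would observe that $C_{s,q}^{s,q}$ simply counts internal arcs with both endpoints in $[s,q]=\{s\}\sqcup[s+1,q]$, and split according to whether $s$ is one of the (distinct) endpoints or not, giving $C_{s,s}^{s+1,q}$ and $C_{s+1,q}^{s+1,q}$ respectively.

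I do not expect a real obstacle — as the text notes, everything follows directly from the definitions — but the one point needing care is the nested case used in (iii) and (iv): in contrast with the disjoint case, an endpoint lying in the smaller interval automatically lies in the larger one, so I must be explicit that the defining condition really does collapse to ``at least one endpoint in the small interval and both in the large one,'' and in particular that an arc with one endpoint in $[s,t]$ but its other endpoint \emph{outside} $[s,q]$ is not counted by $C_{s,q}^{s,t}$. I would isolate this reduction once at the start of part (iii) and reuse it in (iv). The only other thing to keep straight is that all intervals are cyclic, so I should check inline, directly from the definition of cyclic interval, that $[s,t]=[s,r-1]\sqcup[r,t]$, $[s,q]=[s,t]\sqcup[t+1,q]$, and $[s,q]=\{s\}\sqcup[s+1,q]$ are genuine disjoint partitions for the stated index ranges.
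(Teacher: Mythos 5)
Your proposal is correct, and it takes exactly the approach the paper intends: the paper gives no explicit proof, stating only that the identities ``follow directly from their definition,'' and your argument simply supplies those omitted bookkeeping details — reformulating the defining condition symmetrically in the two intervals, then partitioning the relevant cyclic interval and counting arcs by which piece each endpoint lands in. The one point you rightly single out as needing care — that in the nested case of (iii) the defining disjunction collapses to ``both endpoints in $[s,q]$ and at least one in $[s,t]$,'' so an arc with one foot in $[s,t]$ and the other outside $[s,q]$ is excluded — is precisely the subtlety worth writing down, and your handling of it, together with the exhaustive three-way split (both in $[s,t]$; one in $[s,t]$ and one in $[t+1,q]$; both in $[t+1,q]$, contradicting the ``at least one'' clause), is sound.
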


\begin{proof}[Proof of Proposition~\ref{a-prop}]
By (\ref{Eq:Adeg}) and Lemma \ref{Lem:C-nb}(ii), we have
\begin{eqnarray*}
a_{ij}+a_{kl}& =& C_{i,j-1}^{j,k-1}+C_{i,j-1}^{k,l-1}+C_{i,j-1}^{l,i-1}+C_{k,l-1}^{l,i-1}+C_{k,l-1}^{i,j-1}+C_{k,l-1}^{j,k-1}\\
a_{ik}+a_{jl} &=& C_{i,j-1}^{k,l-1}+C_{j,k-1}^{k,l-1}+C_{i,j-1}^{l,i-1}+C_{j,k-1}^{l,i-1}+C_{j,k-1}^{l,i-1}+C_{k,l-1}^{l,i-1}+C_{j,k-1}^{i,j-1}+C_{k,l-1}^{i,j-1}\\
a_{il}+a_{jk} &= & C_{i,j-1}^{l,i-1}+C_{j,k-1}^{l,i-1}+C_{k,l-1}^{l,i-1}+C_{j,k-1}^{k,l-1}+C_{j,k-1}^{l,i-1}+C_{j,k-1}^{i,j-1}.
\end{eqnarray*}
Notice that in a triangulation, $C_{j,k-1}^{l,i-1}$ and $C_{i,j-1}^{k,l-1}$ can not both be zero. The proposition follows from comparing the terms.
\end{proof}

We continue by defining $X$-degrees in terms of connection numbers that coincide with the plabic degrees. 
Having both, plabic and tree degrees, in terms of connection numbers allows us to directly compare them on the triangulation and prove our main theorem.
Before we can do so we need a combinatorial tool relating triangulations and plabic graphs.
Kodama and Williams associate to a triangulation $\Delta$ a plabic
graph $\mathcal{G}_{\Delta}$ in \cite[Algorithm~12.1]{KW14}. 
We recall the algorithm below.

\begin{algorithm}[h]
\SetAlgorithmName{Algorithm}{} 
\KwIn{\medskip {\bf Input:\ }  A triangulation $\Delta$ of $D_n$.}
\BlankLine
\For{every triangle $t$ in $\Delta$}{draw a black vertex $b_t$ and connect it to the vertices of the $t$.}
\For{every every marked point $m\in\partial D_n$ }{
    \If{exist an arc $(m,k)\in \Delta$}{draw a white vertex $w_m$ for $m$}
    \Else{draw a black vertex $b_m$ for $m$}}
Erase the arcs $\Delta_d$ and $\Delta_e$, contract adjacent vertices of the same color. 

{\bf Output:\ } a graph $D_n^\Delta$ with $n$  boundary vertices

\For{every boundary vertex $m$ of $D_n^\Delta$}{add an edge $e_m$ such that no two $e_{m_1},e_{m_2}$ intersect for $m_1,m_2$ boundary vertices of $D_n^\Delta$ }
Embed the resulting graph in a disk such that the new vertices of edges $e_m$ lie on the boundary.
\BlankLine
{\bf Output:\ } a plabic graph $\mathcal G_\Delta$. 
\label{alg:plabic from triang}
\caption{Constructing a plabic graph $\mathcal G_\Delta$ from a triangulation $\Delta$ of $D_n$.}
\end{algorithm}

We call $D_n^\Delta$ (the first output of Algorithm~\ref{alg:plabic from triang}) the \emph{plabic $n$-gon} associated to the triangulation $\Delta$. Boundary vertices of $D_n^\Delta$ are colored by black or white.

Fix a perfect orientation on $\mathcal{G}_\Delta$ such that the source set is $\{1,2\}$. Recall the definition of plabic degree. The plabic degree has an alternative description in terms of the corresponding triangulation as follows.
For a fixed triangulation $\Delta$ of $D_n$ the \emph{X-degrees}  of the Pl\"ucker coordinates are defined by
\begin{equation}\label{eq: def X-deg}
    x_{ij} = \deg_X(p_{ij})=\left\{  
    \begin{matrix}
    0, & \text{ if } i=1,j=2 \\
    C_{j,1}^{j,1}+C_{1,1}^{2,j-1}, & \text{ if } i=1,2< j\leq n \\
    C_{j,1}^{j,1}, & \text{ if } i=2,2< j\leq n \\
    C_{i,1}^{i,1}+C_{j,i-1}^{j,1}, & \text{ otherwise.}
    \end{matrix}
    \right.
\end{equation}

\begin{theorem}\label{Thm:mainDeg}
For any $1\leq i<j\leq n$, $\ddeg_{\mathcal{G}_{\Delta}}(p_{ij})=\ddeg_X(p_{ij})$.
\end{theorem}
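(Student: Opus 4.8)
The statement asserts that the plabic degree $\ddeg_{\mathcal{G}_\Delta}(p_{ij})$, defined via minimal-degree flows in the plabic graph $\mathcal G_\Delta$ coming from a triangulation $\Delta$ (Definition~\ref{def: plabic deg}), agrees with the purely combinatorial $X$-degree $x_{ij}$ defined in \eqref{eq: def X-deg} in terms of connection numbers of $\Delta$. The plan is to translate the flow-theoretic description of $\ddeg_{\mathcal G_\Delta}$ into the language of the triangulation using the Kodama--Williams Algorithm~\ref{alg:plabic from triang}, and then count. The key bridge is the passage through the plabic $n$-gon $D_n^\Delta$: a $J$-flow in $\mathcal G_\Delta$ with $J = \{i,j\}$ and source set $\{1,2\}$ corresponds, after tracing how paths turn at black versus white vertices, to a path in $D_n^\Delta$ whose only ``cost'' (interior faces to its left) comes from interior faces of $\mathcal G_\Delta$, which in turn correspond to triangles of $\Delta$ not incident to the boundary. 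By Proposition~\ref{prop: unique min flow} there is a \emph{unique} minimal flow, so it suffices to identify that unique flow geometrically and count the interior faces to its left.

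First I would set up the correspondence between faces of $\mathcal G_\Delta$ and pieces of $\Delta$: interior faces of $\mathcal G_\Delta$ biject with interior triangles/regions of $\Delta$ (those not touching $\partial D_n$), while boundary faces correspond to boundary arcs. Because plabic degree counts only \emph{interior} faces to the left of a flow, the degree of a path is insensitive to the boundary and depends only on how the path separates the interior triangles of $\Delta$. Next, for fixed $i<j$ I would describe the unique minimal $\{i,j\}$-flow explicitly. As in Lemma~\ref{lem: col corresp to bdy face}, a flow from $\{1,2\}$ to $\{i,j\}$ consists of lazy paths on the overlap $\{1,2\}\cap\{i,j\}$ together with one or two genuinely moving paths, and the minimal-degree one hugs the boundary of the disk as tightly as possible. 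Tracing this ``boundary-hugging'' flow back through Algorithm~\ref{alg:plabic from triang}, the interior faces of $\mathcal G_\Delta$ lying to its left are exactly the interior arcs of $\Delta$ that connect certain cyclic intervals determined by $i$ and $j$ --- precisely the connection numbers appearing in \eqref{eq: def X-deg}. I would then verify the four cases $(i=1,j=2)$, $(i=1, 2<j\le n)$, $(i=2, 2<j\le n)$, and the generic case, checking in each that the count of interior faces to the left of the minimal flow equals $C_{j,1}^{j,1}+C_{1,1}^{2,j-1}$, etc. The bookkeeping here is parallel to the proof of the analogous tree statement (Proposition~\ref{Prop:treeA}) via the A-degree, and the properties of connection numbers in Lemma~\ref{Lem:C-nb} will be the main computational tool for reconciling the case splits with a single uniform flow description.

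The main obstacle will be the explicit identification of the minimal flow inside $\mathcal G_\Delta$ together with a clean description of which interior faces lie to its left --- i.e.\ making precise the intuition that ``the minimal flow hugs the boundary'' and that its left-region interior faces are governed by connection numbers. This requires carefully unwinding Algorithm~\ref{alg:plabic from triang} (the black-vertex-per-triangle construction, the contraction of same-color vertices, and the attachment of boundary edges $e_m$) to see exactly how a triangle of $\Delta$ sits relative to a path, and how the source set $\{1,2\}$ (equivalently $I_{\mathcal O}=[2]$) forces the path's route. Once the minimal flow is pinned down, the remaining step --- counting interior faces to its left and matching the four formulas in \eqref{eq: def X-deg} --- is a routine but somewhat lengthy case analysis using Lemma~\ref{Lem:C-nb}; I would organize it so that the generic case $2\le i<j\le n$ is done first and the special cases involving the leaves $1$ and $2$ follow by degenerating cyclic intervals. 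The uniqueness from Proposition~\ref{prop: unique min flow} guarantees there is nothing to check beyond exhibiting one flow achieving the count, which simplifies the argument considerably.
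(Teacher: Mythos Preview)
Your approach differs fundamentally from the paper's: the paper proves the theorem by \emph{induction on $n$}. It locates a black boundary vertex $s$ of the plabic $n$-gon (chosen so that there is no black boundary vertex in $[s+1,n]$), removes the marked point $s$ to obtain a triangulation $\overline\Delta$ of an $(n-1)$-gon, and then shows via a thirteen-case analysis (depending on the position of $i,j$ relative to $s$, together with a Sector Lemma and Lemma~\ref{Lem:count}) that the differences $\ddeg_{\mathcal G_\Delta}(p_{ij})-\ddeg_{\overline{\mathcal G}_{\overline\Delta}}(\overline p_{ij})$ and $\ddeg_X(p_{ij})-\ddeg_X(\overline p_{ij})$ agree. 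The base case $n=4$ is checked directly, and the degenerate situation $s=3$ reduces to the single caterpillar triangulation handled in Example~\ref{exp:caterpillar2}. Crucially, the paper \emph{never} writes down the minimal flow for a general triangulation.

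Your direct approach has two problems. First, a factual error: interior faces of $\mathcal G_\Delta$ biject with the \emph{internal arcs} (diagonals) of $\Delta$, not with ``interior triangles not touching $\partial D_n$'' --- in a triangulation of a convex polygon every triangle touches the boundary, so that set is empty. (Faces of the plabic graph correspond to cluster and frozen variables, hence to arcs; the $n-3$ interior faces match the $n-3$ diagonals.) Any count based on the triangle picture will not reproduce \eqref{eq: def X-deg}. Second, and more seriously, the step you yourself flag as the main obstacle --- explicitly identifying the minimal flow for an arbitrary $\Delta$ together with the interior faces to its left --- is the entire content of the theorem, and ``the minimal flow hugs the boundary'' is an intuition, not a description. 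For $2<i<j\le n$ the flow consists of two vertex-disjoint paths, and for a general triangulation there is no uniform closed-form route for either; this is exactly why the paper opts for induction, which only needs to track the \emph{change} in degree under removal of one vertex and is therefore local. If you want a direct proof you must replace the boundary-hugging heuristic by a precise structural description of the paths in terms of the diagonals of $\Delta$, which is essentially what the induction accomplishes implicitly.
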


\begin{example}\label{exp:caterpillar2}
We examine Theorem \ref{Thm:mainDeg} in the case where the triangulation $\Delta$ is given by $\Delta_d=\{(2,4),(2,5),\ldots,(2,n)\}$.

\medskip
\noindent
\emph{Claim:}
For every $2<j\leq n$, $\ddeg_{\mathcal{G}_{\Delta}}(P_{1j})=\ddeg_{\mathcal{G}_{\Delta}}(P_{2j})=0$ and for every $2<i<j\leq n$, $\ddeg_{\mathcal{G}_{\Delta}}(P_{ij})=n-j+1$.
\begin{proof}[Proof of Claim]
The first statement follows from the fact that for $i\in\{1,2\}$ and for $2<j\leq n$ there is a path from $i$ to $j$ having only boundary faces of $\mathcal{G}_\Delta$ to its left (see Figure~\ref{fig: palm at vertex 2 and plabic}). 
For the second part of the claim we observe that there there is a (unique) path from $1$ to $j$ having only boundary faces of $\mathcal{G}_\Delta$ to its left. Further, there is a unique minimal path (with respect to the number of faces on its left) from $2$ to $i$ which does not intersect the one from $1$ to $j$. We count that the second path has $n-j+1$ faces on its left, namely those coming from the internal arcs $(2,n),\dots,(2,j)$.
\end{proof}

By straightforward computations, Theorem~\ref{Thm:mainDeg} holds in this case.
\end{example}

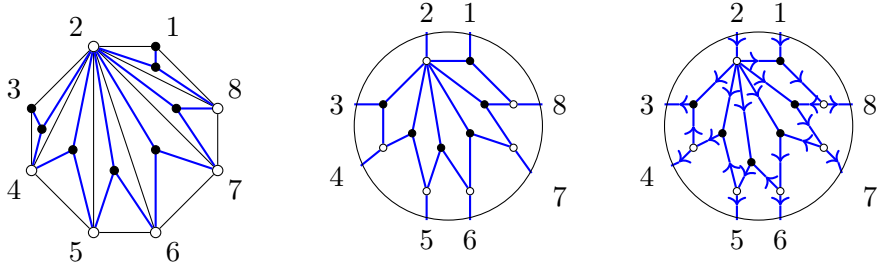
\begin{figure}[h]
\centering
\begin{center}
\begin{tikzpicture}[scale=0.275]
% 8-gon triangulation
\draw (3,0) -- (6,0) -- (9,3) -- (9,6) -- (6,9) -- (3,9) -- (0,6) -- (0,3) -- (3,0);
\draw (3,9) -- (9,6);
\draw (3,9) -- (9,3);
\draw (3,9) -- (6,0);
\draw (3,9) -- (3,0);
\draw (3,9) -- (0,3);

%Vertices of Plabic graph
\draw[blue, thick] (3,9) -- (6,8) -- (6,9);
\draw[blue, thick] (6,8) -- (9,6) -- (7,6) -- (3,9);
\draw[blue, thick] (7,6) -- (9,3) -- (6,4) -- (3,9);
\draw[blue, thick] (6,4) -- (6,0) -- (4,3) -- (3,9);
\draw[blue, thick] (4,3) -- (3,0) -- (2,4) -- (3,9);
\draw[blue, thick] (2,4) -- (0,3) -- (0.5,5) -- (3,9);
\draw[blue, thick] (0.5,5) -- (0,6);

%Boundary nodes black and white
\draw[fill] (6,9) circle [radius=0.2];
\draw[fill] (0,6) circle [radius=0.2];
\draw[fill, white] (3,9) circle [radius=0.25];
    \draw (3,9) circle [radius=0.25];
\draw[fill, white] (9,6) circle [radius=0.25];
    \draw (9,6) circle [radius=0.25];
\draw[fill, white] (9,3) circle [radius=0.25];
    \draw (9,3) circle [radius=0.25];
\draw[fill, white] (6,0) circle [radius=0.25];
    \draw (6,0) circle [radius=0.25];
\draw[fill, white] (3,0) circle [radius=0.25];
    \draw (3,0) circle [radius=0.25];
\draw[fill, white] (0,3) circle [radius=0.25];
    \draw (0,3) circle [radius=0.25];
\node[below left] at (3,0) {5};
\node[below right] at (6,0) {6};
\node[below right] at (9,3) {7};
\node[above right] at (9,6) {8};
\node[above right] at (6,9) {1};
\node[above left] at (3,9) {2};
\node[above left] at (0,6) {3};
\node[below left] at (0,3) {4};

%Internal black nodes
\draw[fill] (0.5,5) circle [radius=0.2];
\draw[fill] (2,4) circle [radius=0.2];
\draw[fill] (4,3) circle [radius=0.2];
\draw[fill] (6,4) circle [radius=0.2];
\draw[fill] (7,6) circle [radius=0.2];
\draw[fill] (6,8) circle [radius=0.2];

%%%%%%%%%%%%%%%%%%%%%%%%%%%%%% PLABIC GRAPH %%
\begin{scope}[xshift=17cm, yshift=2cm, scale=0.7]
\draw[blue, thick] (3,9) -- (6,9);
\draw[blue, thick] (6,9) -- (9,6) -- (7,6) -- (3,9);
\draw[blue, thick] (7,6) -- (9,3) -- (6,4) -- (3,9);
\draw[blue, thick] (6,4) -- (6,0) -- (4,3) -- (3,9);
\draw[blue, thick] (4,3) -- (3,0) -- (2,4) -- (3,9);
\draw[blue, thick] (2,4) -- (0,3) -- (0,6) -- (3,9);
%boundary vertices
\draw[blue, thick] (6,9) -- (6,11); %1
\draw[blue, thick] (9,6) -- (11,6); %8
\draw[blue, thick] (9,3) -- (10.25,1.25); %7
\draw[blue, thick] (6,0) -- (6,-2); %6
\draw[blue, thick] (3,0) -- (3,-2); %5
\draw[blue, thick] (0,3) -- (-1.55,1.75); %4
\draw[blue, thick] (0,6) -- (-2,6); %3
\draw[blue, thick] (3,9) -- (3,11); %2
%Boundary nodes black and white
\draw[fill] (6,9) circle [radius=0.25];
\draw[fill] (0,6) circle [radius=0.25];
\draw[fill, white] (3,9) circle [radius=0.25];
    \draw (3,9) circle [radius=0.25];
\draw[fill, white] (9,6) circle [radius=0.25];
    \draw (9,6) circle [radius=0.25];
\draw[fill, white] (9,3) circle [radius=0.25];
    \draw (9,3) circle [radius=0.25];
\draw[fill, white] (6,0) circle [radius=0.25];
    \draw (6,0) circle [radius=0.25];
\draw[fill, white] (3,0) circle [radius=0.25];
    \draw (3,0) circle [radius=0.25];
\draw[fill, white] (0,3) circle [radius=0.25];
    \draw (0,3) circle [radius=0.25];
\node[below] at (3,-2) {5};
\node[below ] at (6,-2) {6};
\node[below right] at (11,1) {7};
\node[right] at (11,6) {8};
\node[above] at (6,11) {1};
\node[above] at (3,11) {2};
\node[left] at (-2,6) {3};
\node[left] at (-2,1) {4};
%Internal black nodes
\draw[fill] (2,4) circle [radius=0.25];
\draw[fill] (4,3) circle [radius=0.25];
\draw[fill] (6,4) circle [radius=0.25];
\draw[fill] (7,6) circle [radius=0.25];

\draw (4.5,4.5) circle [radius=6.5];
\end{scope}

%%%%%%%%%%%%%%%%%%%%%%%%%%%%%% PERFECT ORIENTATION %%
\begin{scope}[xshift=32cm, yshift=2cm, scale=0.7]
\draw[blue, thick, ->] (3,9) -- (4.5,9);
    \draw[blue, thick] (4.5,9) -- (6,9);
\draw[blue, thick, ->] (6,9) -- (7.5,7.5);
    \draw[blue, thick] (7.5,7.5) -- (9,6);
\draw[blue, thick, ->] (9,6) -- (8,6);
    \draw[blue, thick] (8,6) -- (7,6);
\draw[blue, thick, ->] (3,9) -- (5,7.5);
    \draw[blue, thick] (5,7.5) -- (7,6);
\draw[blue, thick, ->] (7,6) -- (8,4.5);
    \draw[blue, thick] (8,4.5) -- (9,3);
\draw[blue, thick, ->] (9,3) -- (7.5,3.5);
    \draw[blue, thick] (7.5,3.5) -- (6,4);
\draw[blue, thick, ->] (3,9) -- (4.5,6.5);
    \draw[blue, thick] (4.5,6.5) -- (6,4);
\draw[blue, thick, ->] (6,4) -- (6,2);
    \draw[blue, thick] (6,2) -- (6,0);
\draw[blue, thick, ->] (6,0) -- (5,1);
    \draw[blue, thick] (5,1) -- (4,2);
\draw[blue, thick, ->] (3,9) -- (3.5,5.5);
    \draw[blue, thick] (3.5,5.5) -- (4,2);
\draw[blue, thick, ->] (4,2) -- (3.5,1);
    \draw[blue, thick] (3.5,1) -- (3,0);
\draw[blue, thick, ->] (3,0) -- (2.5,2);
    \draw[blue, thick] (2.5,2) -- (2,4);
\draw[blue, thick, ->] (3,9) -- (2.5,6.5);
    \draw[blue, thick] (2.5,6.5) -- (2,4);
\draw[blue, thick, ->] (2,4) -- (1,3.5);
    \draw[blue, thick] (1,3.5) -- (0,3);
\draw[blue, thick, ->] (0,3) -- (0,4.5);
    \draw[blue, thick] (0,4.5) -- (0,6);
\draw[blue, thick, ->] (3,9) -- (1.5,7.5);
    \draw[blue, thick] (1.5,7.5) -- (0,6);

%boundary edges
\draw[blue, thick, ->] (6,11) -- (6,10); %1
    \draw[blue, thick] (6,10) -- (6,9);
\draw[blue, thick, ->] (9,6) -- (10,6); %8
    \draw[blue, thick] (10,6) -- (11,6);
\draw[blue, thick, ->] (9,3) -- (10,2);
    \draw[blue, thick] (10,2) -- (10.25,1.25); %7
\draw[blue, thick, ->] (6,0) -- (6,-1);
    \draw[blue, thick] (6,-1) -- (6,-2);%6
\draw[blue, thick, ->] (3,0) -- (3,-1);
    \draw[blue, thick] (3,-1) -- (3,-2);%5
\draw[blue, thick, ->] (0,3) -- (-1,2);
    \draw[blue, thick] (-1,2) -- (-1.55,1.75);%4
\draw[blue, thick, ->] (0,6) -- (-1,6);
    \draw[blue, thick] (-1,6) -- (-2,6);  %3
\draw[blue, thick, ->] (3,11) -- (3,10);
    \draw[blue, thick] (3,10) -- (3,9); %2

%Boundary nodes black and white
\draw[fill] (6,9) circle [radius=0.25];
\draw[fill] (0,6) circle [radius=0.25];
\draw[fill, white] (3,9) circle [radius=0.25];
    \draw (3,9) circle [radius=0.25];
\draw[fill, white] (9,6) circle [radius=0.25];
    \draw (9,6) circle [radius=0.25];
\draw[fill, white] (9,3) circle [radius=0.25];
    \draw (9,3) circle [radius=0.25];
\draw[fill, white] (6,0) circle [radius=0.25];
    \draw (6,0) circle [radius=0.25];
\draw[fill, white] (3,0) circle [radius=0.25];
    \draw (3,0) circle [radius=0.25];
\draw[fill, white] (0,3) circle [radius=0.25];
    \draw (0,3) circle [radius=0.25];
\node[below] at (3,-2) {5};
\node[below] at (6,-2) {6};
\node[below right] at (11,1) {7};
\node[right] at (11,6) {8};
\node[above] at (6,11) {1};
\node[above] at (3,11) {2};
\node[left] at (-2,6) {3};
\node[left] at (-2,1) {4};
%Internal black nodes
\draw[fill] (2,4) circle [radius=0.25];
\draw[fill] (4,2) circle [radius=0.25];
\draw[fill] (6,4) circle [radius=0.25];
\draw[fill] (7,6) circle [radius=0.25];

\draw (4.5,4.5) circle [radius=6.5];
\end{scope}
\end{tikzpicture}
\caption{A plabic $8$-gon for $\Gr(2,\mathbb C^8)$ and the corresponding plabic graph with perfect orientation.}
  \label{fig: palm at vertex 2 and plabic}
\end{center}
\end{figure}

In order to prove Theorem~\ref{Thm:mainDeg} by induction on $n$, we need the following properties of $X$-degrees.

\begin{proposition}\label{x-prop}
For $1\leq i<j<k<l\leq n$,
\begin{enumerate}[(i)]
\item We have $x_{ij}+x_{kl}=x_{ik}+x_{jl}$ if and only if $C_{j,k-1}^{l,i-1}=0$. In this is the case $x_{ij}+x_{kl}<x_{il}+x_{jk}$.
\item We have $x_{il}+x_{jk}=x_{ik}+x_{jl}$ if and only if $C_{i,j-1}^{k,l-1}=0$. In this is the case $x_{ij}+x_{kl}>x_{il}+x_{jk}$.
\end{enumerate}
\end{proposition}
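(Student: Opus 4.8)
\textbf{Proof proposal for Proposition~\ref{x-prop}.}

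The plan is to mirror the structure of the proof of Proposition~\ref{a-prop}, but using the $X$-degree formula \eqref{eq: def X-deg} in place of \eqref{Eq:Adeg}. First I would expand each of the three sums $x_{ij}+x_{kl}$, $x_{ik}+x_{jl}$, $x_{il}+x_{jk}$ in terms of connection numbers, using the definition \eqref{eq: def X-deg} and repeatedly applying the additivity relations of Lemma~\ref{Lem:C-nb}, in particular parts (ii), (iii), and (iv), to break each term $C_{a,b}^{a,b}$ or $C_{j,i-1}^{j,1}$ occurring in the formula into a sum of connection numbers indexed by the cyclic intervals cut out by the four indices $i<j<k<l$. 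Since the formula \eqref{eq: def X-deg} has four cases depending on whether $i=1$, $i=2$, or $i>2$ (and similarly constrains $j$), I would first treat the generic case $2<i<j<k<l\leq n$ and then check separately the boundary cases $i=1$ and $i=2$, which only differ by the extra term $C_{1,1}^{2,j-1}$ and are handled by the same cancellation bookkeeping.

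The key step is the comparison: after expanding, the differences $(x_{ij}+x_{kl})-(x_{ik}+x_{jl})$ and $(x_{il}+x_{jk})-(x_{ik}+x_{jl})$ should each collapse — again via Lemma~\ref{Lem:C-nb} — to $\pm$ a single connection number, namely $\pm C_{j,k-1}^{l,i-1}$ for the first difference and $\pm C_{i,j-1}^{k,l-1}$ for the second. Since connection numbers are non-negative integers, vanishing of such a number is equivalent to the corresponding equality, which gives the ``if and only if'' statements in (i) and (ii). For the strict inequalities, I would invoke the same observation used in the proof of Proposition~\ref{a-prop}: in any triangulation of $D_n$, the two arcs counted by $C_{j,k-1}^{l,i-1}$ and $C_{i,j-1}^{k,l-1}$ cross each other, so both connection numbers cannot vanish simultaneously; hence when $C_{j,k-1}^{l,i-1}=0$ we must have $C_{i,j-1}^{k,l-1}>0$, forcing $x_{ij}+x_{kl}<x_{il}+x_{jk}$, and symmetrically in case (ii).

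I expect the main obstacle to be the case analysis and the sheer bookkeeping of connection-number identities rather than any conceptual difficulty: the formula \eqref{eq: def X-deg} is piecewise, so each of the six summands $x_{ij},\dots,x_{jk}$ must be expanded under the correct case, and one must be careful with the cyclic intervals (e.g.\ $[j,1]$, $[j,i-1]$) and with the orientation choice (source set $\{1,2\}$) baked into the $X$-degree. A useful sanity check at the end will be to verify the identities against Example~\ref{exp:caterpillar2} (the caterpillar triangulation $\Delta_d=\{(2,4),\dots,(2,n)\}$), where all $X$-degrees are explicitly known, and to confirm consistency with Proposition~\ref{a-prop} via the unimodular relationship between $A$- and $X$-degrees that underlies Theorem~\ref{Thm:mainDeg}. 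Once Proposition~\ref{x-prop} is in hand, it feeds directly into the inductive proof of Theorem~\ref{Thm:mainDeg}, exactly as Proposition~\ref{a-prop} controls the tree-degree side.
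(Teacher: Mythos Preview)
Your proposal is correct and matches the paper's proof essentially line by line: the paper also splits into the four cases $2<i$, $i=1<2<j$, $i=2$, and $i=1<j=2$, rewrites each $x_{pq}$ via Lemma~\ref{Lem:C-nb}(iii) as $C_{p,1}^{p,1}+C_{q,1}^{q,1}+C_{2,p-1}^{q,1}$ (in the generic case), and then subtracts pairwise to obtain exactly $-C_{j,k-1}^{l,i-1}$ and $-C_{i,j-1}^{k,l-1}$ using Lemma~\ref{Lem:C-nb}(i)(ii). The strict-inequality step via ``the two connection numbers cannot both vanish'' is also the argument the paper uses (borrowed from the proof of Proposition~\ref{a-prop}); just be sure to include the fourth boundary case $i=1$, $j=2$ explicitly, since there both $x_{12}=0$ and the $i=1$ formula interact.
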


\begin{proof}
First notice that by Lemma \ref{Lem:C-nb}(iii), for $2<i<j\leq n$, $x_{i,j}=C_{i,1}^{i,1}+C_{j,1}^{j,1}+C_{2,i-1}^{j,1}$. The proof is separated into four cases:
\begin{enumerate}[(i)]
\item When $2<i<j<k<l\leq n$, we have:
\begin{equation}\label{Eq:xa}
x_{ij}+x_{kl}=C_{i,1}^{i,1}+C_{j,1}^{j,1}+C_{k,1}^{k,1}+C_{l,1}^{l,1}+C_{2,i-1}^{j,1}+C_{2,k-1}^{l,1},
\end{equation}
\begin{equation}\label{Eq:xb}
x_{ik}+x_{jl}=C_{i,1}^{i,1}+C_{j,1}^{j,1}+C_{k,1}^{k,1}+C_{l,1}^{l,1}+C_{2,i-1}^{k,1}+C_{2,j-1}^{l,1},
\end{equation}
\begin{equation}\label{Eq:xc}
x_{il}+x_{jk}=C_{i,1}^{i,1}+C_{j,1}^{j,1}+C_{k,1}^{k,1}+C_{l,1}^{l,1}+C_{2,i-1}^{l,1}+C_{2,j-1}^{k,1}.
\end{equation}
By Lemma \ref{Lem:C-nb} (i) and (ii), subtracting (\ref{Eq:xa}) from (\ref{Eq:xb}) gives
\[
C_{2,i-1}^{k,1}+C_{2,j-1}^{l,1}-C_{2,i-1}^{j,1}-C_{2,k-1}^{l,1}=-C_{2,i-1}^{j,k-1}-C_{j,k-1}^{l,1}=-C_{j,k-1}^{l,i-1}.
\]
Subtracting \eqref{Eq:xc} from \eqref{Eq:xb} we obtain
\[
C_{2,i-1}^{k,1}+C_{2,j-1}^{l,1}-C_{2,i-1}^{l,1}-C_{2,j-1}^{k,1}=-C_{i,j-1}^{k,1}+C_{i,j-1}^{l,1}=-C_{i,j-1}^{k,l-1}.
\]
These computations prove the proposition in this case.

\item When $i=1<2<j<k<l\leq n$, we have:
\begin{equation}\label{Eq:xaa}
x_{1j}+x_{kl}=C_{j,1}^{j,1}+C_{1,1}^{2,j-1}+C_{k,1}^{k,1}+C_{l,1}^{l,1}+C_{2,k-1}^{l,1},
\end{equation}
\begin{equation}\label{Eq:xab}
x_{1k}+x_{jl}=C_{k,1}^{k,1}+C_{1,1}^{2,k-1}+C_{j,1}^{j,1}+C_{l,1}^{l,1}+C_{2,j-1}^{l,1},
\end{equation}
\begin{equation}\label{Eq:xac}
x_{1l}+x_{jk}=C_{l,1}^{l,1}+C_{1,1}^{2,l-1}+C_{j,1}^{j,1}+C_{k,1}^{k,1}+C_{2,j-1}^{k,1}.
\end{equation}
Again by Lemma \ref{Lem:C-nb}, subtracting (\ref{Eq:xaa}) from (\ref{Eq:xab}) gives
\[
C_{1,1}^{2,k-1}-C_{1,1}^{2,j-1}+C_{2,j-1}^{l,1}-C_{2,k-1}^{l,1}=-C_{2,k-1}^{l,n}+C_{2,j-1}^{l,n}=-C_{j,k-1}^{l,n}.
\]
Subtracting (\ref{Eq:xac}) from (\ref{Eq:xab}) gives
\[
C_{1,1}^{2,k-1}-C_{1,1}^{2,l-1}+C_{2,j-1}^{l,1}-C_{2,j-1}^{k,1}=-C_{1,1}^{k,l-1}-C_{2,j-1}^{k,l-1}=-C_{1,j-1}^{k,l-1}.
\]
\item When $i=2<j<k<l\leq n$, the proof is similar.
\item When $i=1<j=2<k<l\leq n$, we have
\begin{eqnarray*}
x_{12}+x_{kl} &=& C_{k,1}^{k,1}+C_{l,1}^{l,1}+C_{2,k-1}^{l,1},\\
x_{1k}+x_{2l} &=& C_{k,1}^{k,1}+C_{1,1}^{2,k-1}+C_{l,1}^{l,1},\\
x_{1l}+x_{2k} &=& C_{l,1}^{l,1}+C_{1,1}^{2,l-1}+C_{k,1}^{k,1}.
\end{eqnarray*}
It is then easy to deduce the corresponding statement in the proposition.
\end{enumerate}
\end{proof}

\paragraph{Proof of Theorem \ref{Thm:mainDeg}}
Fix a triangulation $\Delta = \Delta_d\cup \Delta_e$ and let $\mathcal{G}_{\Delta}$ be the associated  plabic graph obtained by applying Algorithm~\ref{alg:plabic from triang} to $\Delta$.
The proof of the theorem is executed by induction on $n$. The case $n=4$ contains only two different triangulations and can be verified directly. Suppose $n\geq 5$. First notice that there exists at least two black boundary vertices in the plabic $n$-gon $D_n^\Delta$ and vertices $1$ and $2$ can not be both black vertices. In fact, all neighbors of a black vertex are white vertices. Let $s$ be the black vertex different from $1$ and $2$ such that there is no black vertex in $[ s+1,n ]$. Then $s-1$, $s+1,\ldots,n$ are all white vertices and $(s-1,s+1)\in\Delta_d$.

\begin{lemma}[Sector lemma]\label{Lem:Sector}
If $(s-1,p)\in\Delta_d$ for some $s+1<p\leq n$, then $(s-1,s+2),\ldots,(s-1,p-1)\in\Delta_d$.
\end{lemma}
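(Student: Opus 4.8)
The plan is to use the defining properties of a triangulation of the disk $D_n$ directly, namely that the arcs of $\Delta$ are pairwise non-crossing and that $\Delta$ is maximal with this property. Recall from the set-up that $s$ is chosen so that $s-1, s+1, s+2, \dots, n$ are all white boundary vertices of the plabic $n$-gon $D_n^\Delta$, which by Algorithm~\ref{alg:plabic from triang} means each of these marked points is an endpoint of at least one internal arc in $\Delta_d$, and that $(s-1,s+1)\in\Delta_d$ (because $s$ is black, $s-1$ and $s+1$ are its neighbours and must be white, forcing the arc $(s-1,s+1)$ to skip over $s$).

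First I would fix some $p$ with $s+1 < p \le n$ and assume $(s-1,p)\in\Delta_d$. Consider any intermediate index $q$ with $s+1 < q < p$. I want to show $(s-1,q)\in\Delta_d$. Since $q$ is a white vertex, there is at least one internal arc $(q,r)\in\Delta_d$ incident to $q$. The two arcs $(s-1,s+1)$ and $(s-1,p)$ together with the boundary arcs cut out a region of $D_n$ whose boundary marked points are exactly $s-1, s+1, s+2, \dots, p$ (and the point $s$ is sealed off in the triangle on $(s-1,s+1)$). The non-crossing condition forces the other endpoint $r$ of any arc at $q$ to lie in the cyclic interval $[s+1,p]$; and since $(s-1,s+1), (s-1,p)\in\Delta_d$, any arc from $q$ whose other endpoint is strictly between $s+1$ and $p$ would have to be accommodated inside that region. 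Iterating this observation — or, more cleanly, arguing by downward induction on $p-q$ starting from the already-known arcs $(s-1,p)$ and $(s-1,s+1)$ — I would show that the region between $(s-1,q)$-would-be and the boundary is forced to be subdivided, and by maximality of $\Delta$ the arc $(s-1,q)$ itself must be present: otherwise one could add it without creating a crossing, contradicting that $\Delta$ is a triangulation.

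Concretely, the cleanest route is induction on $p$. For $p = s+2$ there is nothing to prove (the claimed list of arcs is empty). For the inductive step with $(s-1,p)\in\Delta_d$ and $p>s+2$: the arc $(s-1,p)$ bounds, on the side containing $s+1,\dots,p-1$, a sub-polygon $P$ on the marked points $s-1, s+1, s+2,\dots, p$. Inside $P$ the arc incident to the triangle of $\Delta$ sitting on the edge $(s-1,p)$ of $P$ has $s-1$ as one endpoint (since the other two boundary edges of that triangle must be $(s-1,m)$ and $(m,p)$ for some $m$, and a short case analysis using that $p-1,\dots,s+1$ are white and $s$ is inaccessible pins down $m=p-1$, giving $(s-1,p-1)\in\Delta_d$). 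Now apply the induction hypothesis to $(s-1,p-1)$ to obtain $(s-1,s+2),\dots,(s-1,p-2)\in\Delta_d$, completing the step.

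The main obstacle will be the bookkeeping in that last case analysis: one has to rule out the possibility that the triangle of $\Delta$ on the edge $(s-1,p)$ uses a vertex other than $p-1$ as its apex, and this is exactly where the hypotheses that $s+1,\dots,n$ are white and that no black vertex lies in $[s+1,n]$ get used — a black vertex would create a triangle with all three vertices on the boundary and disrupt the argument. I expect this to be a finite but slightly fiddly planar-topology argument; once it is in place, the induction closes immediately. No deep input beyond the definition of a triangulation and Algorithm~\ref{alg:plabic from triang} is needed.
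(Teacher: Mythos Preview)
Your proposal is correct and uses essentially the same idea as the paper's proof. The paper argues by contradiction via a minimal counterexample: take the smallest $q\in[s+2,p-1]$ with $(s-1,q)\notin\Delta_d$; then $q$ (being white) carries an internal arc $(q,r)$ whose other end is trapped in $[q+1,p]$ by the arcs $(s-1,q-1)$ and $(s-1,p)$, and the sub-polygon cut off by $(q,r)$ must contain a black vertex in $[q+1,r-1]\subset[s+1,n]$, contradicting the choice of $s$. Your induction on $p$ repackages the same mechanism: the apex $m$ of the triangle on $(s-1,p)$ must be $p-1$, because otherwise the arc $(m,p)$ would cut off a sub-polygon among $[s+1,p]$ forcing a black vertex there. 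The ``fiddly planar-topology argument'' you anticipate is exactly the two-ears observation that any triangulated sub-polygon on consecutive boundary vertices has an ear in its interior, and such an ear is black; this is the same fact the paper invokes.
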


\begin{proof}
Let $q\in[ s+2,p-1]$ be the smallest integer such that $(s-1,q)\notin\Delta_d$. In this case, there exists an internal arc $(q,r)$ for some $r\in[ q+1,p-1]$. This is not possible since otherwise there must be at least one black vertex in $[ q+1,r-1]$.
\end{proof}

\begin{corollary}
If $s=3$, then $\Delta_d=\{(2,4),(2,5),\ldots,(2,n)\}$.
\end{corollary}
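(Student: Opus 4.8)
The statement is a corollary of the Sector Lemma (Lemma~\ref{Lem:Sector}), so the plan is to extract the special case $s=3$ and show that it forces $\Delta_d$ to be exactly the ``fan'' triangulation $\{(2,4),(2,5),\ldots,(2,n)\}$. First I would recall the setup from just above the lemma: $s$ is the unique black boundary vertex of the plabic $n$-gon $D_n^\Delta$ different from $1$ and $2$ with no black vertex in $[s+1,n]$; in particular $s-1, s+1, \ldots, n$ are all white and $(s-1,s+1)\in\Delta_d$. Specializing $s=3$, this says vertex $2 = s-1$ is white and $4 = s+1, 5, \ldots, n$ are all white, and $(2,4)\in\Delta_d$.

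Next I would argue that vertex $1$ must also be white, so that $2$ is the \emph{only} black boundary vertex other than possibly $1$ — actually the cleanest route is: since $s=3$ is the only black boundary vertex besides $1$ and $2$, and we have just seen $2$ is white, the only possible black boundary vertices are $1$ and $3$. Now I would apply the Sector Lemma with the internal arc $(s-1,p) = (2,n)$: I must first check $(2,n)\in\Delta_d$. This holds because $n$ is white, hence lies on some internal arc by the construction in Algorithm~\ref{alg:plabic from triang} (a white boundary vertex $m$ is created precisely when some arc $(m,k)\in\Delta$); and the arc out of $n$ must go to $2$ since any arc $(n,r)$ with $3\le r\le n-1$ would cut off a region $[r+1, n-1]$ forcing a black vertex there, contradicting that $4,\ldots,n$ are all white (and $3$ is not in that range). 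With $(2,n)\in\Delta_d$ and $s+1 = 4 < p = n$, the Sector Lemma yields $(2,5),(2,6),\ldots,(2,n-1)\in\Delta_d$; together with $(2,4)$ and $(2,n)$ this gives all of $\{(2,4),\ldots,(2,n)\}\subseteq\Delta_d$. Since a triangulation of $D_n$ has exactly $n-3$ internal arcs and we have exhibited $n-3$ of them, equality $\Delta_d = \{(2,4),(2,5),\ldots,(2,n)\}$ follows.

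The only mildly delicate point — and the step I would expect to need the most care — is justifying that the relevant arc at vertex $n$ (and indeed the existence of $(2,n)\in\Delta_d$) is forced rather than merely possible; this is exactly the kind of ``no black vertex can appear in a cut-off sector'' argument already used inside the proof of the Sector Lemma, so I would either cite that reasoning or repeat the one-line version of it. Everything else is bookkeeping with cyclic intervals and the arc count for triangulations.
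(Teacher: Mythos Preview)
Your proof is correct and follows the paper's strategy: obtain $(2,4),(2,n)\in\Delta_d$, invoke the Sector Lemma, then count internal arcs to conclude equality. The one difference is how $(2,n)$ is obtained. The paper notes that since there are at least two black boundary vertices and $2,4,\ldots,n$ are all white, vertex $1$ must be \emph{black} (not white, as you first write before correcting course); a black vertex carries no internal arc, so the triangle at $1$ has third side $(2,n)$, and you are done in one line. Your alternative route via ``$n$ is white, hence carries an internal arc, which cannot land in $[3,n-2]$ by a cut-off argument'' is valid but longer, and tacitly uses that in the sub-polygon on vertices $r,\ldots,n$ the endpoints $r$ and $n$ (being adjacent along the chord $(r,n)$) cannot both be ears, so some ear---hence some black vertex---must lie in $[r+1,n-1]$. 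Nothing is wrong, but recognizing that $1$ is black buys you $(2,n)$ for free and avoids the extra combinatorics.
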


\begin{proof}
When $s=3$, there are only two black vertices $1$ and $3$ in the plabic $n$-gon, which implies that $(2,4),(2,n)\in\Delta_d$. By the Sector Lemma, for any $r\in[ 5,n-1 ]$, $(2,r)\in\Delta_d$.
\end{proof}

According to the corollary, if $s=3$, by Example~\ref{exp:caterpillar2}, Theorem~\ref{Thm:mainDeg} holds. From now on suppose that $s\neq 3$.
The following lemma explains the local orientation on the square containing $s-1,s,s+1$ in the plabic graph $\mathcal{G}_\Delta$, see Figure~9. In the plabic graph $\mathcal{G}_\Delta$, we use $1,2,\ldots,n$ to denote the internal vertices connected to boundary vertices $1,2,\ldots,n$.
First note that, as $s$ is a boundary black vertex, it has already one edge going out in the plabic graph $\mathcal{G}_\Delta$. Hence, the edges in $\mathcal{G}_\Delta$ connecting $s-1$ and $s+1$ to $s$ have orientations pointing towards $s$ (see e.g. Figure~\ref{fig: case (xi)}).
Suppose that the theorem holds for any triangulation of $D_{n-1}$. Let $\overline{D}_n$ be the disk with $n-1$ markes points obtained from $D_n$ by removing the makred point $s$. 
The triangulation $\Delta$ of $D_n$ induces a triangulation $\overline{\Delta}=\overline{\Delta}_d\cup\overline{\Delta}_e$ of $\overline{D}_n$ where
\[
\overline{\Delta}_d=\Delta_d\backslash\{(s-1,s+1)\}\ \ \text{and}\ \ \overline{\Delta}_e=(\Delta_e\backslash\{(s-1,s),(s,s+1)\})\cup\{(s-1,s+1)\}.
\]
We associate to $\overline{D}_n$ and $\overline{\Delta}$ a plabic $(n-1)$-gon $\overline{D}_n^{\overline{\Delta}}$ and a plabic graph $\overline{\mathcal{G}}_{\overline{\Delta}}$.
For $1\leq i<j\leq n$ and $i,j\neq s$, we denote $\ddeg_{\overline{\mathcal{G}}_{\overline{\Delta}}}(\overline{p}_{ij})$ and $\ddeg_X(\overline{p}_{ij})$ the corresponding degrees with respect to $\overline{\mathcal{G}}_{\overline{\Delta}}$ and $\overline{\Delta}$. If one of $i$ and $j$ equals $s$, we set these degrees to be zero. The connection numbers for $\overline{\Delta}$ are denoted by $\overline{C}_{p,q}^{r,s}$. For $1\leq i<j\leq n$, we denote
$$v_{ij}=\ddeg_X(p_{ij})-\ddeg_X(\overline{p}_{ij})\ \ \text{and}\ \ w_{ij}=\ddeg_{\mathcal{G}_{\Delta}}(p_{ij})-\ddeg_{\overline{\mathcal{G}}_{\overline{\Delta}}}(\overline{p}_{ij}).$$
By the induction hypothesis, to prove the theorem, it suffices to show that for any $1\leq i<j\leq n$, $v_{ij}=w_{ij}$.
We start with the following lemma.

\begin{lemma}\label{Lem:count}
Suppose $2<i<s$ and $s<j\leq n$. The face of $\mathcal{G}_\Delta$ corresponding to the internal arc $(s-1,s+1)\in\Delta_d$ is to the left of any directed path from $1$ or $2$ to $i$, and to the right of any directed path from $1$ or $2$ to $j$.
\end{lemma}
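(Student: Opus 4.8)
\textbf{Plan for the proof of Lemma~\ref{Lem:count}.}

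The statement is a purely combinatorial claim about the plabic graph $\mathcal{G}_\Delta$ obtained from the triangulation $\Delta$ via Algorithm~\ref{alg:plabic from triang}, with the perfect orientation fixed by source set $\{1,2\}$. First I would set up the geometry carefully: the internal arc $(s-1,s+1)\in\Delta_d$ separates the disk $D_n$ into two pieces, the ``small'' side consisting only of the marked point $s$ (recall $s$ is the black boundary vertex with $s-1,s+1,\ldots,n$ all white and $(s-1,s+1)\in\Delta_d$), and the ``large'' side containing all of $\{1,2,\ldots,s-1,s+1,\ldots,n\}$. In the plabic graph $\mathcal{G}_\Delta$ this arc gives rise to a single face $F$; the vertices $2<i<s$ lie, along the boundary of the disk, on one side of the local picture around $F$, while $s<j\le n$ lies on the other. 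The claim is that $F$ is to the left of every directed path from a source in $\{1,2\}$ to $i$, and to the right of every directed path from a source to $j$.

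The key technical input is the local orientation around the triangle(s) adjacent to the arc $(s-1,s+1)$, which was analyzed in the paragraph preceding the lemma (and illustrated in the referenced figure): since $s$ is a boundary black vertex it already has its unique outgoing edge used by the path through $s$, so the edges connecting the internal vertices $s-1$ and $s+1$ to $s$ point \emph{towards} $s$. From this, together with the rule defining a perfect orientation (white vertices have a unique incoming arrow, black vertices a unique outgoing arrow), I would deduce the orientation of the edge of $\mathcal{G}_\Delta$ that borders the face $F$ on the side of the black vertex $b_t$ of the triangle $t$ containing $(s-1,s+1)$. The crucial observation is that any directed path reaching a boundary vertex in the cyclic range $[s+1,n]$ (in particular $j$) must, because of how the source set $\{1,2\}$ sits relative to $s$ and the orientation of the arcs emanating from $s-1$ toward higher-indexed white vertices, pass on the far side of $F$, leaving $F$ on its right; whereas any path reaching $i\in[3,s-1]$ passes on the near side, leaving $F$ on its left. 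Concretely I would trace: a path from $1$ or $2$ targeting $j>s$ enters the ``fan'' of arcs $(s-1,s+1),(s-1,s+2),\ldots$ and exits through the white vertex $s+1$ (or beyond), and one checks directly from the orientation that $F$, which lies between $s-1$ and $s+1$, sits to its right; symmetrically for $i$.

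The main obstacle, I expect, is handling the orientation bookkeeping uniformly across the possible local configurations — i.e. whether the triangle containing $(s-1,s+1)$ also contains another internal arc $(s-1,p)$ for some $p>s+1$ (the situation controlled by the Sector Lemma~\ref{Lem:Sector}), and making sure the ``left/right of the path'' assignment is consistent with the convention fixed when defining $\deg_{\mathcal G}$ and the weight $\wei$ in \S\ref{subsec: plabic graphs}. I would dispose of this by invoking the Sector Lemma to reduce to the case where the arcs incident to $s-1$ on the small side form a consecutive fan $(s-1,s+1),(s-1,s+2),\ldots,(s-1,p-1)$, then argue by following a single directed path through this fan and using the perfect-orientation constraints vertex by vertex; the case $s=3$ is already excluded (by the Corollary to the Sector Lemma it reduces to Example~\ref{exp:caterpillar2}, which is treated separately), so $s\ge 4$ throughout and the fan is nonempty. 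Once the orientation of the edge bordering $F$ is pinned down, the left/right statement for both $i$ and $j$ follows by the same path-tracing argument, and no genuine calculation is needed beyond checking the two symmetric cases.
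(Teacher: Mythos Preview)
Your plan is substantially more elaborate than what is needed, and the ``concrete'' step you propose contains a genuine gap. You write that a directed path from $1$ or $2$ to $j>s$ ``enters the fan of arcs $(s-1,s+1),(s-1,s+2),\ldots$ and exits through the white vertex $s+1$ (or beyond).'' This is not true in general: a path to $j$ may avoid the vertex $s-1$ and its incident fan entirely, taking some other route through the plabic graph determined by arcs of $\Delta$ far from $s$. So the path-tracing argument you sketch does not establish the claim, and invoking the Sector Lemma to normalise the local fan does not help, since the path need not interact with that fan at all.

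The paper's proof is a two-line topological obstruction and uses only the single local fact you already identified --- that all internal edges at $s$ point towards $s$. The argument is by contradiction: suppose some directed path $P$ from $1$ or $2$ to $j>s$ has the face $F$ on its left. The face $F$ has the (contracted) internal vertex $s$ on its boundary, and $s$ is also on the boundary of the two boundary faces corresponding to the arcs $(s-1,s)$ and $(s,s+1)$. If $P$ does not pass through $s$, then $F$ and these two boundary faces all lie on the same side of $P$; but the boundary faces touch the disk boundary at the point $s$, which for a path from $\{1,2\}$ to $j>s$ lies on the right. Hence $P$ must pass through $s$. This is impossible, since the only outgoing edge at $s$ goes to the boundary, so $P$ could not continue on to $j$. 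The case of $i$ with $2<i<s$ is symmetric. No case analysis, no Sector Lemma, and no tracking of orientations along the boundary of $F$ is required.
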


\begin{proof}
If there exists a directed path from $1$ or $2$ to $j$ such that this face is to the left of the path, then it passes through the vertex $s$. This is impossible, since all arrows at $s$ not connecting to the boundary go towards $s$.
The proof of the statement on $i$ is similar.
\end{proof}

%\paragraph{The case $2<i<j\leq n$}
The rest of this section is dedicated to proving that  $\ddeg_{\mathcal{G}_{\Delta}}(p_{ij})=\ddeg_X(p_{ij})$ for all $1\leq i<j\leq n$. 
We distinguish the following 13 cases:
%Suppose that $2<i<j\leq n$; there are several cases to study.
\begin{enumerate}[(i)]
\item $\mathit{i=1<j<s}.$ By Lemma~\ref{Lem:count}, we have $w_{1j}=1$
and 
\[
v_{1j}=C_{j,1}^{j,1}+C_{1,1}^{2,j-1}-\overline{C}_{j,1}^{j,1}-\overline{C}_{1,1}^{2,j-1}=C_{j,1}^{j,1}-\overline{C}_{j,1}^{j,1}=1.
\]
\item $\mathit{i=1<s<j\leq n}$.  
By Lemma~\ref{Lem:count}, we have $w_{1j}=0$  and a similar argument as above shows that $v_{1j}=0$.
\item $\mathit{1=i<j=s<n}$. 
We need to show that $\ddeg_{\mathcal{G}_{\Delta}}(p_{1s})=\ddeg_X(p_{1s})$. Notice that a directed path from $2$ to $s$ must pass through either $s-1$ or $s+1$. 
Since there always exists a directed path from $2$ to $s+1$, by minimality, we have $\ddeg_{\mathcal{G}_{\Delta}}(p_{1s})=\ddeg_{\mathcal{G}_{\Delta}}(p_{1,s+1})$. On the other hand, since there is no internal arc meeting $s$, we have $C_{s,1}^{s,1} = C_{s+1,1}^{s+1,1}$ and $C_{1,1}^{2,s-1} = C_{1,1}^{2,s}$. 
We compüute
\[
\ddeg_X(p_{1s})=C_{s,1}^{s,1}+C_{1,1}^{2,s-1}=C_{s+1,1}^{s+1,1}+C_{1,1}^{2,s}=\ddeg_X(p_{1,s+1}).
\]
Now the claim follows from the case $s<j=s+1\leq n$.
\item $\mathit{1=i<j=s=n}$. In this case, a directed path from $2$ to $s$ must pass through $s-1=n-1$, since it cannot pass through $1$. 
Therefore  $\ddeg_{\mathcal{G}_{\Delta}}(p_{1n})=\ddeg_{\mathcal{G}_{\Delta}}(p_{1,n-1})$. 
As $(1,n-1)\in \Delta_d$, we have $C_{n,1}^{n,1}=0$ and $C_{1,1}^{n-1,n-1}=C^{n-1,1}_{n-1,1}$. 
We can hence apply Lemma~\ref{Lem:C-nb} (2) and obtain
\[
\ddeg_X(p_{1n})=C_{n,1}^{n,1} + C_{1,1}^{2,n-1}=C_{1,1}^{2,n-2}+C^{n-1,1}_{n-1,1} =\ddeg_X(p_{1,n-1}).
\]
The statement follows from Case (i).
\item $\mathit{i=2<j\leq s}$. This(ese) case(s) can be examined in a similar manner as the corresponding cases for $i=1$.
\item $\mathit{i=2<s+1\leq j}$. The proof of this case is similar to the proof of Case (i).
Nevertheless, we repeat the argument since this case is applied to prove Case (xii).
By Lemma~\ref{Lem:count}, we have $w_{2j}=0$. 
On the other hand $v_{2j}=C_{j,1}^{j,1}-\overline{C}_{j,1}^{j,1}=0$,
since there are no internal arcs of $\Delta_d$ (or $\overline{\Delta}_d$) entirely contained in $[ j,1]$.
\item $\mathit{2<i<s<j}$. By Lemma~\ref{Lem:count}, $w_{ij}=1$. 
By definition, $v_{ij}=(C_{i,1}^{i,1}-\overline{C}_{i,1}^{i,1})+(C_{j,i-1}^{j,1}-\overline{C}_{j,i-1}^{j,1})$. 
Since $i\neq s$, the second bracket gives zero. 
The first bracket gives $1$, as the internal arc $(s-1,s+1)$ is no longer in $\overline{\Delta}$.
\item $\mathit{2<s<i<j}$. By Lemma~\ref{Lem:count}, $w_{ij}=0$. 
A similar argument as above shows $v_{ij}=0$.
\item $\mathit{2<i<j<s}$. By Lemma~\ref{Lem:count}, $w_{ij}=2$.  
A similar argument as above shows $v_{ij}=2$.
\item $\mathit{2<i=s<j=s+1}$. We consider directed paths from $1$ to $s+1$ and from $2$ to $s$. 
Since the vertex $s+1$ is occupied, to reach the vertex $s$, the path from $2$ to $s$ is forced to go through $s-1$, which shows $\ddeg_{\mathcal{G}_{\Delta}}(p_{s,s+1})=\ddeg_{\mathcal{G}_{\Delta}}(p_{s-1,s+1})$. 

By Case (i) we have proved, $\ddeg_{\mathcal{G}_{\Delta}}(p_{s-1,s+1})=\ddeg_X(p_{s-1,s+1})$. It suffices to show that $\ddeg_X(p_{s,s+1})-\ddeg_X(p_{s-1,s+1})=0$. 
Since $s-1>2$, the left hand side equals
\[
C_{s,1}^{s,1}+C_{s+1,s-1}^{s+1,1}-C_{s-1,1}^{s-1,1}-C_{s+1,s-2}^{s+1,1}.
\]
By applying Lemma \ref{Lem:C-nb} several times, we obtain
\begin{eqnarray*}
\ddeg_X(p_{s,s+1})-\ddeg_X(p_{s-1,s+1})&=& C_{s,1}^{s,1}+C_{s+1,s-1}^{s+1,1}-C_{s-1,1}^{s-1,1}-C_{s+1,s-2}^{s+1,1}\\
&=& C_{s-1,1}^{s-1,1}-C^{s,1}_{s-1,s-1}+C_{s+1,s-1}^{s+1,1}-C_{s-1,1}^{s-1,1}-C_{s+1,s-2}^{s+1,1}\\
&=& -C^{s,s}_{s-1,s-1}-C^{s+1,1}_{s-1,s-1}+C_{s+1,s-1}^{s+1,1}-C_{s+1,s-2}^{s+1,1}\\
&=& -C^{s,s}_{s-1,s-1}-C^{s+1,1}_{s+1,s-1}+C_{s+1,s-1}^{s+1,1}\\
&=& -C^{s,s}_{s-1,s-1}.
\end{eqnarray*}
The first two equalities follow from point (4) and (2) of Lemma~\ref{Lem:C-nb} and the third one by combining (1) and (2) of Lemma~\ref{Lem:C-nb}.
Since there is no internal arc touching $s$, the connection number $C_{s-1,s-1}^{s,s}$ is zero and the statement follows.
%\todo{fill in details -- the signs were incorrect}
\item $\mathit{i=s<s+1<j}$.
We claim that $\ddeg_{\mathcal{G}_{\Delta}}(p_{sj}) = \ddeg_{\mathcal{G}_{\Delta}}(p_{s+1,j})$. 
To compute these degrees, we have to consider directed paths from $1$ to $j$ and from $2$ to $s$. Note that the path of minimal degree from $1$ to $j$ is the same for both calculations.

Consider paths from $2$ to $s$ or $s+1$. As all edges in $\mathcal{G}_{\Delta}$ meeting $s+1$ connect to black vertices, there is a unique black vertex $v$ such that the edge connecting $v$ and $s+1$ goes towards $s+1$ (see e.g. Figure~\ref{fig: case (xi)}). 
Let $(p,q,s+1)$ be the triangle in $\Delta$ corresponding to $v$ and assume that $p<q$, then $p\leq s-1$. 
Since $v$ has an outgoing edge to $s+1$, the edge between $p$ and $v$ is directed from $p$ to $v$. 
Hence, the plabic graph has a path $p\to v\to s+1$. As $p$ and $s+1$ are boundary vertices, and $s+1$ can only have one incoming vertex, every path from $2$ to $s$ or $s+1$ has to pass through $p$.
Note that the path of lowest degree must end with $p\to v\to s+1 \to s$, so the claim follows.

By Case (viii), $\ddeg_{\mathcal{G}_{\Delta}}(p_{s+1,j})=\ddeg_X(p_{s+1,j})$, hence it suffices to show that $\ddeg_X(p_{sj})=\ddeg_X(p_{s+1,j})$. 
Their difference is given by
\[
(C_{s,1}^{s,1}-C_{s+1,1}^{s+1,1})+(C_{j,s-1}^{j,1}-C_{j,s}^{j,1}).
\]
As there is no internal arc incident to the vertex $s$, we have $C_{s,1}^{s,1} = 0$ and $C_{j,s-1}^{j,1}=C_{j,s}^{j,1}$. 
It follows from our assumptions on $s$ that $C_{s+1,1}^{s+1,1}=0$.

\begin{figure}
\centering
\begin{tikzpicture}[scale=0.4]
\draw (3,13) -- (6,14) -- (9,13);
\draw (11,10) -- (12,8);
\draw (11,5) -- (9,3.5) -- (6,2) -- (3,3.5) -- (1,5);
\draw (0.5,10) -- (0,8) -- (2,12) -- (9,3.5) -- (0,8);
\draw (9,3.5) -- (3,3.5);

\draw[dashed] (9,13) -- (11,10);
\draw[dashed] (12,8) -- (11,5);
\draw[dashed] (1,5) -- (0,8);
\draw[dashed] (0.5,10) -- (2,12) -- (3,13);

\draw[blue, thick, ->] (2,12) -- (2,10);
\draw[blue, thick] (2,10) -- (2,8);
\draw[blue, thick, ->] (0,8) -- (1,8);
\draw[blue, thick] (1,8) -- (2,8);
\draw[blue, thick, ->] (2,8) -- (5.5,5.75);
\draw[blue, thick] (5.5,5.75) -- (9,3.5);
\draw[blue, thick, ->] (9,3.5) -- (9.5,1.75);
\draw[blue, thick] (9.5,1.75) -- (10,0);
\draw[blue, thick, ->] (6,2) -- (6,1);
\draw[blue, thick] (6,1) -- (6,-1);
\draw[blue, thick, ->] (3,3.5) -- (2.5,1.75);
\draw[blue, thick] (2.5,1.75) -- (2,0);
\draw[blue, thick, ->] (9,3.5) -- (7.5,3);
\draw[blue, thick] (7.5,3) -- (6,2.1);
\draw[blue, thick, ->] (3,3.5) -- (4.5,3);
\draw[blue, thick] (4.5,3) -- (6,2.1);

\draw [thick, red] (6,14) to [out=-90,in=-180] (7,11)
to [out=0,in=180] (9,9) to [out=0,in=-135] (12,8) ;

\draw[fill, white] (2,12) circle [radius=0.25]; %p
    \draw (2,12) circle [radius=0.25];
    \node[above left] at (2,12) {$p$};
\draw[fill, white] (0,8) circle [radius=0.25]; %q
    \draw (0,8) circle [radius=0.25];
    \node[above left] at (0,8) {$q$};
\draw[fill, white] (3,3.5) circle [radius=0.25];
    \draw (3,3.5) circle [radius=0.25];%s-1
    \node[left] at (3,3.5) {$s-1$};
\draw[fill] (6,2) circle [radius=0.25]; % s
    \node[right] at (6,2) {$s$};
\draw[fill, white] (9,3.5) circle [radius=0.25]; % s+1
    \draw (9,3.5) circle [radius=0.25];
    \node[right] at (9,3.5) {$s+1$};
\draw[fill] (2,8) circle [radius=0.25]; % v
    \node[right] at (2,8) {$v$};
\draw[fill] (6,14) circle [radius=0.1]; % 1
    \node[above] at (6,14) {$1$};
\draw[fill] (3,13) circle [radius=0.1]; % 2
    \node[above left] at (3,13) {$2$};
\draw[fill] (12,8) circle [radius=0.1]; % j
    \node[right] at (12,8) {$j$};
\draw[fill] (11,5) circle [radius=0.1]; % s+2
    \node[right] at (11,5) {$s+2$};
\draw[fill] (1,5) circle [radius=0.1]; % s-2
    \node[left] at (1,5) {$s-2$};
\end{tikzpicture}
\caption{A picture for Case (xi)}
 \label{fig: case (xi)}
\end{figure}
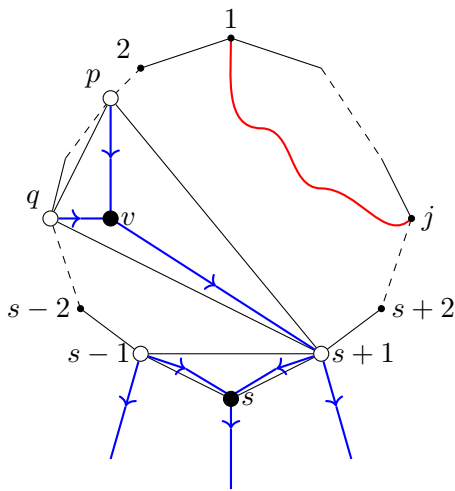

\item$\mathit{2<i<j=s<n}$. From Case (vi), we deduce that $\ddeg_{\mathcal{G}_{\Delta}}(p_{2,s+1})=C^{s+1,1}_{s+1,1}=0$. This implies that we can find a path from $1$ to $s+1$ of plabic degree 0. 
Moreover, since there is an edge between $s+1$ and $s$ oriented towards the latter, we have just shown that there exists a path from 1 to $s$ which does not contribute to the plabic degree of $p_{is}$. 
It follows that $\ddeg_{\mathcal{G}_{\Delta}}(p_{is})=\ddeg_{\mathcal{G}_{\Delta}}(p_{i,s+1})$.
By Case (vii), $\ddeg_{\mathcal{G}_{\Delta}}(p_{i,s+1})=\ddeg_X(p_{i,s+1})$, hence it suffices to show that $\ddeg_X(P_{is})=\ddeg_X(P_{i,s+1})$: this follows from Lemma~\ref{Lem:C-nb} (3) and the fact that no internal arcs end at $s$.

\item  $\mathit{2<i<j=s=n}$.  If $s=n$, then $(1, n-1)\in\Delta_d$ and the edge between $1$ and $n$ has to be oriented towards $n$, since $1$ is a white vertex and has already an edge going in. 
This implies that only the path from 2 to $i$ contributes to the plabic degree of $p_{in}$ and hence $\ddeg_{\mathcal{G}_{\Delta}}(p_{in})=\ddeg_{\mathcal{G}_{\Delta}}(p_{1i})$. 
On the other hand, $\ddeg_X(p_{1i})=C^{i,1}_{i,1}+C^{2,i-1}_{1,1}$ and $\ddeg_X(p_{in})=C^{i,1}_{i,1}+C^{n,1}_{n,i-1}$. 
Since there is no internal arc meeting the vertex $n$,  the connection numbers $C^{2,i-1}_{1,1}$ and $C^{n,1}_{n,i-1}=C_{n,1}^{n,i-1}$ coincide. 
To conclude we hence have to show that $\ddeg_X(p_{1i})=\ddeg_{\mathcal{G}_{\Delta}}(p_{1i})$, but this has been dealt with in Case (i).
\end{enumerate}
%We first prove that $\ddeg_{\mathcal{G}_{\Delta}}(P_{is})=\ddeg_{\mathcal{G}_{\Delta}}(P_{i,s+1})$: consider the path from $1$ to $s$, it passes through either the vertex $s-1$ or $s+1$. If it passes through $s-1$, by Lemma \ref{Lem:direction},
%there exists a directed path from $s-1$ to $s+1$, for the minimality, the path should pass through $s+1$.
%\todo{There are still some gaps here. What happens if the assumptions of this Lemma are not satisfied?}
%This argument shows $\ddeg_{\mathcal{G}_{\Delta}}(P_{is})=\ddeg_{\mathcal{G}_{\Delta}}(P_{i,s+1})$. By Case (i), $\ddeg_{\mathcal{G}_{\Delta}}(P_{i,s+1})=\ddeg_X(P_{i,s+1})$, hence it suffices to show that $\ddeg_X(P_{is})=\ddeg_X(P_{i,s+1})$: this is clear by Lemma \ref{Lem:C-nb}.
%\paragraph{The case $i\in \{1,2\}$.}\label{Subsec:12}

\subsubsection*{Main theorem}\label{sec:maintheorem}
Let $\Delta=\Delta_d\cup\Delta_e$ be a triangulation of $D_n$ and let  $T_{\Delta}$ be the labelled tree corresponding to $\Delta$.
Recall that the tree degrees give the weight vector $\mathbf{w}_{T_{\Delta}} = (-\deg_{T_\Delta}(p_{ij}))_{ij}\in\mathbb R^{\binom{n}{2}}$.
Let $\init_{T_{\Delta}}(I_{2,n}) = \init_{\mathbf{w}_{T_{\Delta}}}(I_{2,n})$.
Similarly, consider $\mathbf{w}_P=(\deg_{\mathcal G_\Delta}(p_{ij}))_{ij}\in\mathbb R^{\binom{n}{2}}$ the weight vector associated to a plabic graph $\mathcal G_\Delta $ from Definition~\ref{def: plabic deg} and let 
$\init_{\mathcal{G}_{\Delta}}(I_{2,n}) = \init_{\mathbf{w}_{\mathcal{G}_{\Delta}}}(I_{2,n})$.

We are now prepared to prove the main theorem as stated in the introduction. We restate it here with the notation introduced in the previous paragraphs.

\begin{theorem}\label{Thm:main}
For a given triangulation $\Delta$ of $D_n$, we have  $\init_{T_{\Delta}}(I_{2,n})= \init_{\mathcal{G}_{\Delta}}(I_{2,n})$.
\end{theorem}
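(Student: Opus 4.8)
The plan is to reduce the equality of initial ideals to a termwise comparison on the generators of $I_{2,n}$, exactly as in the proof of Proposition~\ref{prop: init M_S= init C_S}. Recall from \cite[Proof of Theorem~3.4]{SS04} that $\init_{T_\Delta}(I_{2,n})=\langle \init_{T_\Delta}(R_{i,j,k,l})\mid 1\le i<j<k<l\le n\rangle$, and moreover each $\init_{T_\Delta}(R_{i,j,k,l})$ is either the full trinomial or a binomial obtained by dropping exactly one of the three terms. The same structural description will hold for $\init_{\mathcal G_\Delta}(R_{i,j,k,l})$ once we know $\mathbf w_{\mathcal G_\Delta}\in\trop(\Gr(2,\mathbb C^n))$ (which follows a posteriori, or directly from the computations below). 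Hence it suffices to prove the claim: for every $1\le i<j<k<l\le n$, $\init_{T_\Delta}(R_{i,j,k,l})=\init_{\mathcal G_\Delta}(R_{i,j,k,l})$. Since for the trinomial $R_{i,j,k,l}=p_{ij}p_{kl}-p_{ik}p_{jl}+p_{il}p_{jk}$ the initial form with respect to a weight vector $\mathbf w$ is determined by which of the three pairings $a_{ij}+a_{kl}$, $a_{ik}+a_{jl}$, $a_{il}+a_{jk}$ (for tree degrees $a$) resp. $x_{ij}+x_{kl}$, $x_{ik}+x_{jl}$, $x_{il}+x_{jk}$ (for plabic degrees $x$) attains the minimum, the whole theorem comes down to comparing these three sums for the two degree functions.

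\textbf{Key steps.} First, invoke Proposition~\ref{Prop:treeA} and Theorem~\ref{Thm:mainDeg} to rewrite both degree functions purely in terms of connection numbers on the fixed triangulation $\Delta$: $\deg_{T_\Delta}(p_{ij})=a_{ij}=C_{i,j-1}^{j,i-1}$ and $\deg_{\mathcal G_\Delta}(p_{ij})=x_{ij}$ as given by \eqref{eq: def X-deg}. Second, apply Proposition~\ref{a-prop} and Proposition~\ref{x-prop}: these two propositions were precisely engineered so that the trichotomy governing which term survives is controlled by the \emph{same} vanishing conditions on connection numbers. Concretely, Proposition~\ref{a-prop}(i) says $a_{ij}+a_{kl}=a_{ik}+a_{jl}$ iff $C_{j,k-1}^{l,i-1}=0$, and in that case $a_{ij}+a_{kl}>a_{il}+a_{jk}$; Proposition~\ref{x-prop}(i) says $x_{ij}+x_{kl}=x_{ik}+x_{jl}$ iff the same $C_{j,k-1}^{l,i-1}=0$, and in that case $x_{ij}+x_{kl}<x_{il}+x_{jk}$. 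Similarly for part (ii) with the condition $C_{i,j-1}^{k,l-1}=0$. Third, use the key observation already recorded in the proof of Proposition~\ref{a-prop}: in a triangulation of $D_n$, $C_{j,k-1}^{l,i-1}$ and $C_{i,j-1}^{k,l-1}$ cannot both vanish. This forces exactly one of three mutually exclusive cases for each Plücker relation: (a) $C_{j,k-1}^{l,i-1}=0$ and $C_{i,j-1}^{k,l-1}>0$; (b) $C_{i,j-1}^{k,l-1}=0$ and $C_{j,k-1}^{l,i-1}>0$; (c) both positive.

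\textbf{Case analysis.} In case (a): for tree degrees, $a_{ij}+a_{kl}=a_{ik}+a_{jl}$ and both strictly exceed $a_{il}+a_{jk}$, so $\init_{T_\Delta}(R_{i,j,k,l})=p_{il}p_{jk}$ (a monomial — wait, this cannot be, since the initial ideal is monomial-free; in fact the correct reading is that strict \emph{minimum} is attained at the third term only if the other two are strictly larger, which would make the initial form a monomial, contradicting primality of $\init_{T_\Delta}(I_{2,n})$; so case (a) must actually give the binomial — let me restate). The correct bookkeeping: with the $\min$ convention of Definition~\ref{def:initial form and ideal}, $\init_{\mathbf w}(R_{i,j,k,l})$ collects the terms of \emph{smallest} weight. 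In case (a) for tree degrees, $a_{il}+a_{jk}$ is the smallest and the other two are equal and larger; but then $\init_{T_\Delta}$ would be a monomial — impossible by \cite[Corollary~4.4]{SS04}. Hence in a genuine triangulation case (a) with $a_{ij}+a_{kl}=a_{ik}+a_{jl}$ strictly largest cannot leave the minimum \emph{unique}; the resolution is that the minimum is attained at $p_{il}p_{jk}$ \emph{together with} one of the other two by a further equality forced by the tree structure. The clean way to run the argument is therefore: in each of cases (a), (b), (c), read off from Propositions~\ref{a-prop} and \ref{x-prop} exactly which two-element subset of $\{p_{ij}p_{kl},\,p_{ik}p_{jl},\,p_{il}p_{jk}\}$ achieves the minimum for $a$, and separately for $x$, and check they coincide. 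In case (c) both degree functions give the full trinomial (no equality among the three pairwise sums in a way that isolates a minimum — rather all three weights can fail to have a unique minimizer, or the structure theorem guarantees monomial-freeness forcing at least two minimal terms). The point is that Propositions~\ref{a-prop} and \ref{x-prop} have \emph{identical} hypotheses ($C_{j,k-1}^{l,i-1}=0$ versus $C_{i,j-1}^{k,l-1}=0$) but \emph{opposite} inequality directions, which is exactly compatible with the sign flip $\mathbf w_{T_\Delta}=-(\deg_{T_\Delta})$ versus $\mathbf w_{\mathcal G_\Delta}=+(\deg_{\mathcal G_\Delta})$ in the conventions of Definition~\ref{def:treedeg} and Definition~\ref{def: plabic deg}. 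Tracking this sign carefully through the $\min$ in Definition~\ref{def:initial form and ideal} yields that the surviving terms of $R_{i,j,k,l}$ are the same for $\init_{T_\Delta}$ and $\init_{\mathcal G_\Delta}$ in every case.

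\textbf{Main obstacle.} The genuinely delicate point is the sign/convention bookkeeping just described: $\init_{T_\Delta}$ is defined via the weight vector $\mathbf w_{T_\Delta}$ whose entries are \emph{negative} tree degrees, while $\mathbf w_{\mathcal G_\Delta}$ has \emph{positive} plabic degrees, and both initial forms are taken with the $\min$-convention. One must verify that "$a_{ij}+a_{kl}>a_{il}+a_{jk}$" (a statement about tree degrees) translates, after negation, to "$(\mathbf w_{T_\Delta})_{ij}+(\mathbf w_{T_\Delta})_{kl}<(\mathbf w_{T_\Delta})_{il}+(\mathbf w_{T_\Delta})_{jk}$", hence $p_{il}p_{jk}$ is dropped from $\init_{T_\Delta}$; while "$x_{ij}+x_{kl}<x_{il}+x_{jk}$" directly says $(\mathbf w_{\mathcal G_\Delta})_{ij}+(\mathbf w_{\mathcal G_\Delta})_{kl}<(\mathbf w_{\mathcal G_\Delta})_{il}+(\mathbf w_{\mathcal G_\Delta})_{jk}$, also dropping $p_{il}p_{jk}$ — so the two agree. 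Once this dictionary is set up cleanly, the rest is a finite, mechanical check over the cases (a), (b), (c) for each index pattern, using that $I_{2,n}$ is generated by the $R_{i,j,k,l}$ and that the structure theorem / \cite[Corollary~4.4]{SS04} guarantees monomial-freeness on both sides, which pins down the remaining ambiguous situation (c) to the full trinomial on both sides. Therefore $\init_{T_\Delta}(R_{i,j,k,l})=\init_{\mathcal G_\Delta}(R_{i,j,k,l})$ for all $i<j<k<l$, and taking the ideal generated by these establishes $\init_{T_\Delta}(I_{2,n})=\init_{\mathcal G_\Delta}(I_{2,n})$.
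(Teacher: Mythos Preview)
Your overall strategy---reduce to the Pl\"ucker generators, rewrite both degree functions via connection numbers, and compare using Propositions~\ref{a-prop} and~\ref{x-prop}---is exactly the paper's approach, and your sign bookkeeping in the ``Main obstacle'' paragraph is correct. Cases~(a) and~(b) go through as you indicate once the sign convention $\mathbf w_{T_\Delta}=-a$ versus $\mathbf w_{\mathcal G_\Delta}=+x$ is applied consistently.

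The genuine gap is your case~(c), where both $C_{j,k-1}^{l,i-1}>0$ and $C_{i,j-1}^{k,l-1}>0$. Your claim that ``both degree functions give the full trinomial'' is false, and your appeal to monomial-freeness is circular on the plabic side. Concretely: from the proof of Proposition~\ref{x-prop} one has $(x_{ik}+x_{jl})-(x_{ij}+x_{kl})=-C_{j,k-1}^{l,i-1}$ and $(x_{ik}+x_{jl})-(x_{il}+x_{jk})=-C_{i,j-1}^{k,l-1}$, so in case~(c) the sum $x_{ik}+x_{jl}$ is the \emph{unique} minimum and $\init_{\mathcal G_\Delta}(R_{i,j,k,l})=-p_{ik}p_{jl}$, a monomial. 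On the tree side, by the four-point condition the maximum among the three $a$-sums is attained twice, and since neither equality in Proposition~\ref{a-prop} holds, it must be $a_{ij}+a_{kl}=a_{il}+a_{jk}>a_{ik}+a_{jl}$; hence $\init_{T_\Delta}(R_{i,j,k,l})=p_{ij}p_{kl}+p_{il}p_{jk}$, a binomial dropping the \emph{middle} term. These do not agree, and there is no contradiction with \cite[Corollary~4.4]{SS04} on the tree side. You cannot invoke monomial-freeness for $\mathbf w_{\mathcal G_\Delta}$ since its membership in $\trop(\Gr(2,\mathbb C^n))$ is precisely what is being established.

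What is missing is the paper's short geometric argument ruling out case~(c) entirely: any internal arc connecting $[j,k-1]$ and $[l,i-1]$ divides $D_n$ so that $[i,j-1]$ and $[k,l-1]$ lie on opposite sides, hence no non-crossing arc can connect the latter two intervals; this gives $C_{j,k-1}^{l,i-1}\neq 0\Rightarrow C_{i,j-1}^{k,l-1}=0$, and symmetrically the converse. Combined with the ``not both zero'' observation you already cite, exactly one of the two connection numbers vanishes, so only cases~(a) and~(b) occur and the proof concludes.
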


\begin{proof}
Recall that from Definition~\ref{def:treedeg} that $\mathbf w_{T_\Delta}\in \trop(\Gr(2,\mathbb C^{n}))$. More precisely, $\mathbf w_{T_\Delta}$ lies in the relative interior of a maximal cone $C_\Delta$ of $\trop(\Gr(2,\mathbb C^{n}))$. To prove the theorem, it suffices to show that $\mathbf w_{\mathcal G_\Delta}$ lies in the relative interior of the same cone.

First note that every arc $(a,b)$ of $\Delta$ connecting $[ l, i-1]$ and $[ j, k-1]$ divides the disk $D_n$ into two parts. 
One of these two parts contains  the marked points in $[i, j-1]$ and has empty intersection with the set of marked points $[k,l-1]$. 
We deduce that every internal arc connecting $[ i,j-1 ]$ and $[ k,l-1]$ intersects $(a,b)$ and therefore $C_{j,k-1}^{l,i-1}\neq 0$ implies $C_{i,j-1}^{k,l-1} = 0$. 
The same argument, applied to $(l,i,j,k)$ instead of $(i,j,k,l)$, shows the opposite implication. 
We conclude that $C_{j,k-1}^{l,i-1}\neq 0$ if and only if $C_{i,j-1}^{k,l-1} = 0$.

From Proposition~\ref{x-prop} it follows that for every $1\leq i<j<k<l\leq n$, the minimum of the numbers $x_{ij}+x_{kl}, x_{ik}+x_{jl}, x_{il}+x_{jk}$ is attained exactly twice.
In particular, by Theorem~\ref{Thm:mainDeg}, this implies that $\init_{\mathcal G_\Delta}(p_{ij}p_{kl}-p_{ik}p_{ji}+p_{il}p_{jk})$ is a binomial.
Further, by Propositions~\ref{Prop:treeA} and \ref{x-prop} we deduce
\[
\init_{\mathcal G_\Delta}(p_{ij}p_{kl}-p_{ik}p_{ji}+p_{il}p_{jk}) = \init_{T_\Delta}(p_{ij}p_{kl}-p_{ik}p_{ji}+p_{il}p_{jk}).
\]
As $\init_{T_{\Delta}}(I_{2,n})$
is generated by the initial terms of the Pl{\"u}cker relations with respect to
$\mathbf{w}_{T_{\Delta}}$ we conclude $\init_{\mathcal G_\Delta}(I_{2,n})=\init_{T_\Delta}(I_{2,n})$.
%Hence, the weight vector associated to $\mathcal{G}_{\Delta}$ lies in $\trop(\Gr(2,\mathbb C^{n}))$. Moreover, it follows from the second proof of \cite[Theorem 4.3.5, page 177]{M-S} that $\init_{\mathbf{w}_{\mathcal{G}_{\Delta}}}(I_{2,n})$ is generated by the initial terms of the Pl{\"u}cker relations with respect to $\mathbf{w}_{\mathcal{G}_{\Delta}}$. Similarly, $\mathbf{w}_{T_{\Delta}}$ defines a tree metric by definition and thus,$-\mathbf{w}_{T_{\Delta}} \in \trop(\Gr(2,\mathbb C^{n}))$, and  By Propositions~\ref{Prop:treeA} and \ref{x-prop}, these initial terms agree, and the statement follows.
\end{proof}

\begin{corollary}\label{cor: deg for plabic}
For every plabic graph $\mathcal G$ there exists a maximal prime cone $C\subset \trop(\Gr(2,\mathbb C^{n}))$ with $\mathbf w_{\mathcal{G}}\in C^\circ$. In particular, $\mathcal G$ induces a toric degeneration of $\Gr(2,\mathbb C^{n})$.
\end{corollary}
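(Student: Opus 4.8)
The plan is to reduce the statement to Theorem~\ref{Thm:main} together with the known primality of the initial ideals of $I_{2,n}$. First I would use the correspondence between triangulations and plabic graphs: by \cite[Algorithm~12.1]{KW14} every reduced plabic graph $\mathcal G$ for $\Gr(2,\mathbb C^{n})$ (i.e.\ with trip permutation $\pi_{2,n}$) is, up to the local moves (M2) and (M3), of the form $\mathcal G_\Delta$ produced by Algorithm~\ref{alg:plabic from triang} for a unique triangulation $\Delta$ of $D_n$. Since (M2) and (M3) only contract same-colour edges and insert or delete degree-two vertices, they do not change the set of faces of the graph, hence leave the plabic degrees $\deg_{\mathcal G}(p_{ij})$ and therefore the weight vector $\mathbf w_{\mathcal G}$ unchanged. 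Thus $\mathbf w_{\mathcal G}=\mathbf w_{\mathcal G_\Delta}$, and it suffices to prove the corollary for the plabic graphs $\mathcal G_\Delta$.

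Now I would apply Theorem~\ref{Thm:main}: for the labelled trivalent tree $T_\Delta$ associated with $\Delta$ one has $\init_{\mathbf w_{\mathcal G_\Delta}}(I_{2,n})=\init_{\mathcal G_\Delta}(I_{2,n})=\init_{T_\Delta}(I_{2,n})=\init_{\mathbf w_{T_\Delta}}(I_{2,n})$. By Definition~\ref{def:treedeg} the weight vector $\mathbf w_{T_\Delta}$ lies in the relative interior of a maximal cone $C_\Delta$ of $\trop(\Gr(2,\mathbb C^{n}))$, so the common initial ideal above equals $\init_{C_\Delta}(I_{2,n})$ (indeed the proof of Theorem~\ref{Thm:main} already establishes $\mathbf w_{\mathcal G_\Delta}\in C_\Delta^\circ$). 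Because we equip $\trop(\Gr(2,\mathbb C^{n}))$ with the fan structure coming from the Gr\"obner fan of $I_{2,n}$, two weight vectors that produce the same initial ideal lie in the relative interior of the same cone; hence $\mathbf w_{\mathcal G}=\mathbf w_{\mathcal G_\Delta}\in C_\Delta^\circ$. By \cite[Corollary~4.4]{SS04}, restated above, the ideal $\init_{C_\Delta}(I_{2,n})$ is prime, so $C_\Delta$ is a maximal prime cone and the first assertion is proved.

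For the ``in particular'' I would invoke the Gr\"obner degeneration \eqref{eq: groebner family} with weight vector $\mathbf w=\mathbf w_{\mathcal G}$: it gives a flat family over $\mathbb A^1$ whose generic fibre is $\Gr(2,\mathbb C^{n})$ and whose special fibre is $V(\init_{\mathbf w_{\mathcal G}}(I_{2,n}))=V(\init_{C_\Delta}(I_{2,n}))$. This special fibre is a toric variety: by the proof of Theorem~\ref{Thm:main} every $\init_{\mathcal G_\Delta}(R_{i,j,k,l})$ is a binomial, so $\init_{C_\Delta}(I_{2,n})$ is a binomial prime ideal, and $V$ of such an ideal is toric. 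The only genuinely delicate point is the appeal to the Kodama--Williams bijection in the first step: one must be sure that every reduced plabic graph for $\Gr(2,\mathbb C^{n})$ really is realized (up to plabic-degree-preserving moves) by some $\mathcal G_\Delta$, so that $\mathbf w_{\mathcal G}$ is a well-defined invariant of the equivalence class; everything else is a formal consequence of Theorem~\ref{Thm:main} and \cite[Corollary~4.4]{SS04}.
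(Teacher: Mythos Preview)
Your proof is correct and follows essentially the same approach as the paper's. Both reduce to Theorem~\ref{Thm:main} plus the bijection between plabic graphs for $\Gr(2,\mathbb C^n)$ and triangulations of $D_n$; the only cosmetic difference is that you justify this bijection directly via the Kodama--Williams algorithm together with the observation that moves (M2) and (M3) preserve the face structure (hence the plabic degrees), whereas the paper routes it through the cluster-algebra identification seeds $\leftrightarrow$ triangulations $\leftrightarrow$ plabic graphs from \cite{FZ02}, \cite{Sco06}, and \cite{RW17}. You also make explicit the appeal to \cite[Corollary~4.4]{SS04} for primality of $\init_{C_\Delta}(I_{2,n})$, which the paper leaves implicit.
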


\begin{proof}
Recall that for $\Gr(2,\mathbb C^{n})$ there is a bijection between seeds of the cluster algebra $\mathbb C[\Gr(2,\mathbb C^{n})]$ and triangulations $\Delta$ of $D_n$ by \cite{FZ02} and \cite{Sco06}. Plabic graphs for general $\Gr(k,\mathbb C^{n})$ encode ($\mathcal A$-)seeds of $\mathbb C[\Gr(k,\mathbb C^{n})]$ given purely in terms of Pl\"ucker coordinates (see e.g. \cite[(4.1)]{RW17}). 
In particular, for $k=2$ there exists a triangulation $\Delta$ of $D_n$ for every such plabic seed with $\mathcal{G}_\Delta=\mathcal G$. 
In fact, for $\Gr(2,\mathbb C^{n})$ there is a bijection between seeds and plabic graphs, as all seeds consist of only Pl\"ucker coordinates.
Applying Algorithm~\ref{alg:tree from triang} we obtain $T_\Delta$ and a corresponding cone $C_{T_\Delta}$. 
By Theorem~\ref{Thm:main} $\mathbf w_{\mathcal G}\in C_{T_\Delta}^\circ$. The toric degeneration is then given by the family described in \eqref{eq: groebner family}.
\end{proof}

\subsection{Mutation and initial ideals}
Recall from \S\ref{subsec: cluster gr2n} cluster mutation in the case of $\mathbb C[\Gr(2,\mathbb C^{n})]$ following \cite{Sco06}.
%Consider a triangulation $\Delta$ of $D_n$ and the corresponding seed $s_\Delta$ of $\mathbb C[\Gr(2,\mathbb C^{n})]$.
%Then $s_\Delta$ contains a mutable cluster variable for each arc $(i,j)\in \Delta_d$, namely $p_{ij}$, and a frozen cluster variable for each $k\in[n]$, namely $p_{k,k+1}$.
%For an arc $(a,b)\in\Delta_d$ consider the quadrilateral whose diagonal is $(a,b)$, i.e. the two adjacent triangles having $(a,b)$ as a common face.
%Let $a,a',b,b'$ be the marked points that are the vertices of the quadrilateral.
%Then replacing the arc $(a,b)$ by $(a',b')$ gives a new triangulation $\Delta'$ of $D_n$.
%This process is called \emph{flipping} arcs (see e.g. Figure~\ref{fig:triangulation:mutation}).
%The seed $s_{\Delta'}$ associated to $\Delta'$ is obtained from $s_{\Delta}$ by performing the usual cluster ($\mathcal A$-)mutation in direction of the mutable index $(a,b)$.
The aim of this section is to understand the effect cluster mutation has on the inital ideal $\init_{\Delta}(I_{2,n})=\init_{T_\Delta}(I_{2,n})$. 
We translate mutation in terms of flipping arcs to rooted trees and analyze how this changes the associated weight vectors $\mathbf w_{T_\Delta}$.

\begin{center}
\begin{figure}
\centering
\begin{tikzpicture}[scale=0.4]
\node[below] at (3,1) {4};
\node[below] at (6,1) {5};
\node[right, red] at (8,3) {6};
\node[right, red] at (8,5) {7};
\node[above] at (6,7) {8};
\node[above, red] at (3,7) {1};
\node[left] at (1,5) {2};
\node[left, red] at (1,3) {3};

\draw (3,1) --(6,1) -- (8,3) -- (8,5) -- (6,7) -- (3,7) -- (1,5) -- (1,3) -- (3,1);
\draw (6,1) -- (1,3) -- (8,3) -- (3,7) -- (8,5);
\draw[red, ultra thick] (8,3) -- (3,7);
\draw (1,3) -- (3,7);
\node at (10.5,4.5) {$\xrightarrow{\mu_{(1,6)}}$};

\begin{scope}[xshift=12cm, scale=1]
\node[below] at (3,1) {4};
\node[below] at (6,1) {5};
\node[right, red] at (8,3) {6};
\node[right, red] at (8,5) {7};
\node[above] at (6,7) {8};
\node[above, red] at (3,7) {1};
\node[left] at (1,5) {2};
\node[left, red] at (1,3) {3};

\draw (3,1) --(6,1) -- (8,3) -- (8,5) -- (6,7) -- (3,7) -- (1,5) -- (1,3) -- (3,1);
\draw (6,1) -- (1,3);
\draw (3,7) -- (8,5);
\draw (8,3) -- (1,3);
\draw (1,3) -- (3,7);
\draw[red, ultra thick] (1,3) -- (8,5);
\end{scope}
\end{tikzpicture}
\caption{Mutation at the arc $(1,6)$.}
\label{fig:triangulation:mutation}
\end{figure}
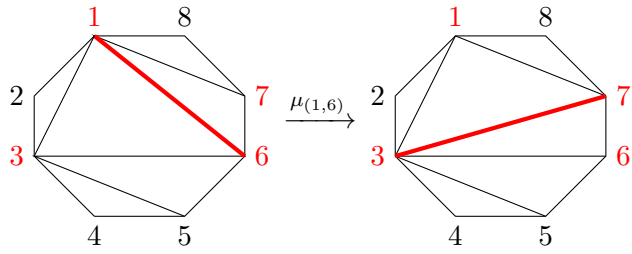
\end{center}

Let $\Delta$ be a triangulation of $D_n$ and $T_{\Delta}$ the associated rooted tree.
Recall that $T_\Delta$ is a rooted tree with root corresponding to the leaf labelled by $1$ and counterclockwise labelling of the leaves $1,2, \ldots, n$.
Then $T_{\Delta}$ can be seen as a directed graph, where an edge $\ba - \bb$ gets an orientation $\ba \longrightarrow \bb$, if the distance of $\ba$ to $1$ is less than the distance of $\bb$ to $1$.

Let $\ba$ be an internal vertex of $T_{\Delta}$ and $\ba \longrightarrow \bb, \ba \longrightarrow \bc$ be the adjacent edges. 
We say $\bc$ is the \textit{left child} of $\ba$ and $\bb$ is the \textit{right child} of $\ba$ if the labels of the leaves reachable by a directed path (with respect to orientation) from $\bb$ are smaller than those reachable from $\bc$, having in mind that leaves are labelled counterclockwise by $1,\cdots,n$ (see Figure~\ref{fig:tree:mutation}).
The following definition formulates on the level of trees how mutation of triangulations deforms the corresponding trees. It coincides with the notation of mutation of phylogenetic trees by Buczynska and Wisniewski in \cite{BW07}.

\begin{definition}\label{def: tree mut}
Let $\ba \longrightarrow \bb$ be an internal edge of $T_{\Delta}$ and $\bb$ be the right child of $\ba$. We further denote by $\bc$ the left child of $\ba$, $\bd$ the right child of $\bb$ and $\be$ the left child of $\bb$ (see Figure~\ref{fig:tree:mutation}). The rooted tree $\mu_{\ba\to \bb}(T_\Delta)$ is the tree obtained from $T_\Delta$ by defining $\bd$ to be the right child of $\ba$, $\bb$ to be the left child of $\ba$, $\bc$ to be the left child of $\bb$ and $\be$ to be the right child of $\bb$. All other edges of the tree remain unchanged.
\end{definition} 

\begin{figure}
\centering
\begin{tikzpicture}[scale=0.3]
\draw[thick, ->] (10,12) -- (10,10); 
\draw[ultra thick, red, ->] (10,10) -- (8,8);
\draw[thick, ->] (8,8) -- (6,6);
\draw[thick, ->] (10,10) -- (12,8);
\draw[thick, ->] (8,8) -- (10,6);

\node[red, left] at (10,10.25) {$\ba$};
\node[red, left] at (8.1,8.25) {$\bb$};
\node[red, below] at (6,6) {$\bd$};
\node[red, below] at (10,6) {$\be$};
\node[red, below] at (12.25,8) {$\bc$};

\node at (16,8) {$\xrightarrow{\mu_{{\bf a}\to{\bf b}}}$};

\begin{scope}[xshift=12cm]
\draw[thick, ->] (10,12) -- (10,10); 
\draw[thick, ->] (10,10) -- (8,8);
\draw[ultra thick, red, ->] (10,10) -- (12,8);
\draw[thick, ->] (12,8) -- (14,6);
\draw[thick, ->] (12,8) -- (10,6);

\node[red, right] at (10,10.25) {$\ba$};
\node[red, right] at (12.1,8.25) {$\bb$};
\node[red, below] at (7.9,8) {$\bd$};
\node[red, below] at (10,6) {$\be$};
\node[red, below] at (14,6) {$\bc$};
\end{scope}
\end{tikzpicture}
    \caption{Mutation on trees}
    \label{fig:tree:mutation}
\end{figure}
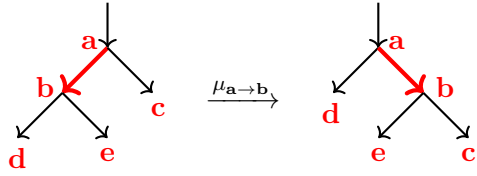

\begin{remark}\label{rem-tree-mut}
Let $(a_r, b_r) \in \Delta_d$ be the arc corresponding to $\ba \longrightarrow \bb$ and $(a_r', b_r')$ be the arc obtained by mutating at $(a_r, b_r)$. Set $\Delta'$ be the triangulation of the $n$-gon obtained through the internal arcs
\[
\Delta'_d = \Delta_d \cup \{(a_r', b_r')\} \setminus \{(a_r, b_r)\}.
\]
Then, by construction,
\[
T_{\Delta'} =  \mu_{\ba \rightarrow \bb}(T_{\Delta}).
\]
\end{remark}

\begin{lemma}
Given two arbitrary trees $T_\Delta$ and $T_{\Delta'}$, then there is a sequence of mutations on inner edges transforming $T_{\Delta}$ into $T_{\Delta'}$.
\end{lemma}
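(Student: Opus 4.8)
The statement asserts that any two rooted trees $T_\Delta$ and $T_{\Delta'}$ (equivalently, any two triangulations $\Delta,\Delta'$ of $D_n$) are connected by a finite sequence of mutations on internal edges. By Remark~\ref{rem-tree-mut}, tree mutation $\mu_{\ba\to\bb}$ corresponds exactly to a flip of the internal arc of $\Delta$ associated to the edge $\ba\to\bb$. So the statement is equivalent to the classical fact that the flip graph on triangulations of a convex $n$-gon is connected, transported to the language of rooted trees via Algorithm~\ref{alg:tree from triang}. I would therefore prove it directly on trees by induction on $n$, which keeps everything self-contained and avoids invoking the polygon flip-connectivity as a black box.

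First I would fix a convenient ``normal form'' tree, namely the \emph{caterpillar} $T^{\mathrm{cat}}_n$ in which, reading the internal vertices along the spine from the root, each internal vertex has a single leaf as its right (or left) child; on the triangulation side this is the fan triangulation $\Delta_d=\{(2,4),(2,5),\dots,(2,n)\}$ appearing in Example~\ref{exp:caterpillar2}. It suffices to show every rooted tree $T_\Delta$ can be transformed into $T^{\mathrm{cat}}_n$ by internal-edge mutations, since mutation is an involution (flipping an arc twice returns the original, as noted in \S\ref{subsec: cluster gr2n}), hence the relation ``connected by a sequence of mutations'' is symmetric and transitive, and then $T_\Delta \to T^{\mathrm{cat}}_n \to T_{\Delta'}$ gives the claim.

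For the inductive step I would locate a leaf that can be ``pruned'': in any rooted trivalent tree with $n\geq 4$ leaves there is an internal vertex both of whose children are leaves — this is exactly the cherry of Lemma~\ref{lem:cherry}. Pick the cherry whose leaf-labels are consecutive and as small as possible; using internal-edge mutations along the path from the root toward this cherry (each such mutation rotates one internal edge, moving the cherry one step toward a prescribed location, precisely as in Figure~\ref{fig:tree:mutation}) I can arrange the cherry to involve the leaves labelled by the two smallest available labels and to sit in the position dictated by the caterpillar. Then I delete one of these two leaves and its pendant edge, obtaining a rooted tree on $n-1$ leaves (relabelling), to which the induction hypothesis applies: it can be brought to $T^{\mathrm{cat}}_{n-1}$ by internal-edge mutations. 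Every such mutation lifts to an internal-edge mutation of the $n$-leaf tree (the deleted leaf was attached at a cherry, so reattaching it does not obstruct any of the edges being mutated), and the result, with the leaf reattached, is $T^{\mathrm{cat}}_n$. The base case $n=4$ is handled by hand: there is only one unlabelled trivalent tree, and the finitely many rooted labelled versions are related by the single available internal-edge mutation, as one checks directly.

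The main obstacle is the bookkeeping in the ``move the cherry into position'' step: I must verify that rotating internal edges one at a time along the root-to-cherry path indeed realizes the desired repositioning of the cherry, that each intermediate move is a mutation at an \emph{internal} (not pendant) edge, and that after pruning, the lifted mutations of the smaller tree really are legal internal-edge mutations of the larger tree (i.e. none of them is obstructed by, or tries to act at, the reattached pendant edge). These are all finite, local checks on the combinatorics of rooted trivalent trees — essentially a careful translation of the corresponding statements for polygon triangulations via Algorithm~\ref{alg:tree from triang} and Remark~\ref{rem-tree-mut} — but they are where the real content lies; everything else is formal.
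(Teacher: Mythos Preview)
Your proposal is correct, but it takes a much longer route than the paper. The paper's proof is essentially one line: the flip graph on triangulations of the convex $n$-gon is connected (a standard and well-known combinatorial fact), and Remark~\ref{rem-tree-mut} says precisely that tree mutation at an internal edge corresponds to a flip of the associated internal arc; hence connectivity transfers immediately from triangulations to trees. You recognize this equivalence at the outset, but then deliberately choose not to invoke the polygon result and instead reprove it from scratch on the tree side via a caterpillar normal form and induction on $n$.

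What this buys you is self-containment: your argument does not rely on an external fact about the associahedron. What the paper's approach buys is brevity and transparency --- the lemma is really just a restatement of flip-connectivity, and the paper treats it as such. Your inductive scheme (move a cherry to a canonical position, prune, apply induction, reattach) is the tree-side translation of the usual ``flip towards the fan triangulation'' proof, so in substance the two arguments are the same; you are just unpacking what the paper takes for granted. The bookkeeping you flag (that each rotation is at a genuinely internal edge, and that mutations of the pruned tree lift) is real but routine, and your observation that cherries in $T_\Delta$ automatically carry consecutive labels (since an ear of $\Delta$ has two adjacent boundary arcs) is correct and simplifies that step.
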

\begin{proof}
This is true for the triangulations $\Delta$ and $\Delta'$ on the $n$-gon $G_n$ and hence by Remark~\ref{rem-tree-mut} for the labelled trees $T_\Delta$ and $T_{\Delta'}$.
\end{proof}

%Recall that the tree degree $\operatorname{deg}_{T_{\Delta}} (P_{ij})$ of a Pl\"ucker coordinate is the number of internal edges between the leaves $i < j$ in the tree $T_{\Delta}$. 
From the direction of edges in $T_\Delta$ we obtain a partial order on the vertices. We set for two vertices $x,y\in T_\Delta$ (internal or leaves)
\[
x\le y, \text{ if } \exists \text{ a directed path } x\to y \text{ in } T_\Delta. 
\]
We further define for a vertex $x$ the subset of leaves $[n]_{\le x}=\{k\in[n]\mid k\le x\}$. Similarly we define $[n]_{\not \le x}$.

Let $\ba \to \bb$ be an internal edge of $T_{\Delta}$, and we keep the same notation as in Figure~\ref{fig:tree:mutation} with $\bc, \bd, \be$. Observe that the set of leaves $[n]$ can be decomposed with respect to $\ba\to\bb$ as follows
\[
[n]=[n]_{\not\le\bb}\cup [n]_{\le \bb}=([n]_{\not \le \ba}\cup [n]_{\le \bc})\cup ([n]_{\le \bd}\cup [n]_{\le \be}).
\]
After mutation the edge $\ba'\to\bb'$ in $T_{\Delta'}=\mu_{\ba\to\bb}(T_\Delta)$ separates $[n]_{\not\le\bb'}=[n]_{\le \bd'}\cup [n]_{\not\le \ba'}$ from $[n]_{\le \bb'}=[n]_{\le \be'}\cup [n]_{\le\bc'}$. Note that $[n]_{\le \mathbf x'}=[n]_{\le \mathbf x}$ for a vertex $\mathbf x\not=\bb$ in $T_\Delta$ resp. $T_{\Delta'}$. An example is shown in Figure~\ref{fig:tree mut exp}.

\begin{figure}
\centering
\begin{tikzpicture}[scale=0.2]
\node[above] at (10,12) {1};
\node[below] at (1,0) {2};
\node[below] at (3,0) {3};
\node[below] at (4,0) {4};
\node[below] at (6,0) {5};
\node[below] at (9,0) {6};
\node[below] at (12,0) {7};
\node[below] at (15,0) {8};

\draw[thick] (1,0) -- (2,2) -- (3,0);
\draw[thick] (2,2) -- (6,6) -- (7,4) -- (5,2) -- (4,0);
\draw[thick] (5,2) -- (6,0);
\draw[thick] (7,4) -- (9,0);
\draw[thick] (6,6) -- (8,8) -- (12,0);
\draw[thick] (8,8) -- (10,10) -- (15,0);
\draw[thick] (10,10) -- (10,12);
\draw[red, ultra thick] (8,8) -- (6,6);

\draw[fill, red] (8,8) circle [radius=0.2];
\node[above, red] at (8,8) {$\bf a$};
\draw[fill, red] (6,6) circle [radius=0.2];
\node[above, red] at (5.75,6) {$\bf b$};
\draw[fill, red] (2,2) circle [radius=0.2];
\node[above, red] at (2,2) {$\bf d$};
\draw[fill, red] (7,4) circle [radius=0.2];
\node[right, red] at (7,4) {$\bf e$};
\draw[fill, red] (12,0) circle [radius=0.2];
\node[right, red] at (12,0) {$\bf c$};

\node at (18,5) {$\xrightarrow{\mu_{\ba \to \bb}}$};

\begin{scope}[xshift=20cm]
\node[above] at (10,12) {1};
\node[below] at (1,0) {2};
\node[below] at (3,0) {3};
\node[below] at (4,0) {4};
\node[below] at (6,0) {5};
\node[below] at (9,0) {6};
\node[below] at (12,0) {7};
\node[below] at (15,0) {8};

\draw[thick] (1,0) -- (2,2) -- (3,0);
\draw[thick] (2,2) -- (8,8) -- (9,6);
\draw[red, ultra thick] (8,8) -- (9,6);
\draw[thick] (4,0) -- (5,2) -- (6,0);
\draw[thick] (5,2) -- (7,4) -- (9,0);
\draw[thick] (7,4) -- (9,6) -- (12,0);
\draw[thick] (8,8) -- (10,10) -- (15,0);
\draw[thick] (10,10) -- (10,12);

\draw[fill, red] (8,8) circle [radius=0.2];
\node[above, red] at (8,8) {$\bf a'$};
\draw[fill, red] (9,6) circle [radius=0.2];
\node[right, red] at (9,6) {$\bf b'$};
\draw[fill, red] (2,2) circle [radius=0.2];
\node[above, red] at (2,2) {$\bf d'$};
\draw[fill, red] (7,4) circle [radius=0.2];
\node[right, red] at (7,4) {$\bf e'$};
\draw[fill, red] (12,0) circle [radius=0.2];
\node[right, red] at (12,0) {$\bf c'$};

\end{scope}
\end{tikzpicture}
\caption{Mutation of the trees corresponding to Figure~\protect{\ref{fig:triangulation:mutation}}. Here we have $n=8$ and $[8]_{\le \bd}=\{2,3\}, [8]_{\le \be}=\{4,5,6\}, [8]_{\le \bc}=\{7\}$ and $[8]_{\not\le\ba}=\{1,8\}$.}
    \label{fig:tree mut exp}
\end{figure}

Consider $i,j,k,l\in[n]$ pairwise distinct and the paths between each of them in $T_\Delta$.
Then there is a unique non-intersecting pair of paths, say $i\to j$ and $k\to l$. Let $\mathbf x$ be the first (in direction of the paths as indictaed) vertex in which the paths $i\to k$ and $j\to k$ intersect. 
Similary, let $\mathbf y$ the last vertex that lies on bath paths $i\to k$ and $i\to l$.

\begin{definition}\label{def: cotracted tree}
We define the \emph{contracted tree} $T_\Delta\vert_{i,j,k,l}$ to be the trivalent tree with four leaves $i,j,k,l$ obtained from $T_\Delta$ by contracting all edges on the paths $i\to\mathbf x$, $j\to\mathbf x$, $\mathbf x\to\mathbf y$, $\mathbf y\to k$, and  $\mathbf y\to l$ to one edge only and deleting all other edges from $T_\Delta$.
With an edge of $T_\Delta\vert_{i,j,k,l}$ we associate the number of internal edges of $T_\Delta$ it contracted (see Figure~\ref{fig: contracted tree}). 
\end{definition}
For example, the edge $i-\mathbf x$ in $T_\Delta\vert_{i,j,k,l}$ obtains the weight $d_{i\mathbf x}=\#\{\text{internal edges on path }i\to\mathbf x \text{ in } T_\Delta \}$.
Note that the sums of edge weights along a path between two leaves in $T_\Delta\vert_{i,j,k,l}$ equals the corresponding $T_\Delta$-degree.
In particular, the tree $T_\Delta\vert_{i,j,k,l}$ encodes all necessary information for computing the initial form $\init_{\Delta}(p_{ij}p_{kl}-p_{ik}p_{jl}+p_{il}p_{jk})$.

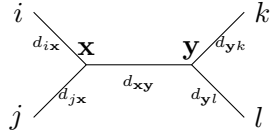
\begin{figure}[ht]
\begin{center}
\begin{tikzpicture}[scale=.7]

\draw (0,0) -- (1,1) -- (0,2);
\draw (1,1) -- (3,1) -- (4,2);
\draw (3,1) -- (4,0);

\node[left] at (0,0) {$j$};
\node[left] at (0,2) {$i$};
\node[above] at (1,1) {$\mathbf x$};
\node[above] at (3,.9) {$\mathbf y$};
\node[right] at (4,2) {$k$};
\node[right] at (4,0) {$l$};

\node[below] at (.75,.75) {\tiny $d_{j\mathbf{x}}$};
\node[below] at (.25,1.75) {\tiny $d_{i\mathbf{x}}$};
\node[below] at (2,1) {\tiny $d_{\mathbf{xy}}$};
\node[below] at (3.75,1.75) {\tiny $d_{\mathbf{y}k}$};
\node[below] at (3.25,.75) {\tiny $d_{\mathbf{y}l}$};

\end{tikzpicture}
\end{center}\caption{The contracted tree $T_{\Delta}\vert_{i,j,k,l}$ with edge weights.}\label{fig: contracted tree}
\end{figure}

We analyze further the tree degrees and their relation to internal edges of $T_\Delta$. For an internal edge $\ba\to\bb$ in $T_\Delta$ consider $r_{\ba\to\bb}\in\mathbb R^{\binom{n}{2}}$ given by
\begin{eqnarray}\label{eq: rays of trop cone}
(r_{\ba\to\bb})_{i,j}=\left\{
    \begin{matrix}
    1, & \text{ if } i\in[n]_{\le \bb},j\in[n]_{\not\le \bb} \text{ or vice versa}\\
    0, & \text{ otherwise.}
    \end{matrix}
\right.
\end{eqnarray}

\begin{remark}
The vector $-r_{\ba\to\bb}$ is in fact a ray generator for the maximal cone $C_T\subset\trop(\Gr(2,\mathbb C^{n}))$ by \cite[Equation~(8)]{SS04} and \cite[Theorem~4.3.5]{M-S}.
\end{remark}

\begin{lemma} \label{lem:rays and tree deg}
For a labelled trivalent tree $T$ we have $\mathbf w_T=-\sum_{\ba\to\bb \text{ internal edge of }T}r_{\ba\to\bb}$.
\end{lemma}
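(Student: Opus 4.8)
The plan is to read off the vectors $r_{\ba\to\bb}$ combinatorially. Fix a labelled trivalent tree $T$ on the leaf set $[n]$, oriented away from the root $1$, and let $e=(\ba\to\bb)$ be an internal edge. Since $T$ is a tree, deleting $e$ disconnects it into exactly two subtrees: one containing $\bb$, whose leaf set is precisely $[n]_{\le\bb}$, and one containing $\ba$, whose leaf set is $[n]\setminus[n]_{\le\bb}=[n]_{\not\le\bb}$. Comparing this with \eqref{eq: rays of trop cone} I obtain
\[
(r_{\ba\to\bb})_{i,j}=\begin{cases}1,&\text{if $i,j$ lie in different components of }T\setminus\{e\},\\ 0,&\text{otherwise.}\end{cases}
\]
Now two leaves $i$ and $j$ lie in different components of $T\setminus\{e\}$ if and only if the unique path in $T$ from leaf $i$ to leaf $j$ traverses the edge $e$; so $(r_{\ba\to\bb})_{i,j}$ is simply the indicator of the event ``$e$ lies on the path from $i$ to $j$''.

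Next I would sum this identity over all internal edges of $T$. For fixed $i\neq j$ the path from leaf $i$ to leaf $j$ is a concrete sequence of edges, and among these the only non-internal ones are the two pendant edges incident to $i$ and $j$; every remaining edge on the path is internal. Therefore
\[
\Bigl(\sum_{\ba\to\bb\text{ internal edge of }T} r_{\ba\to\bb}\Bigr)_{i,j}=\#\{\ba\to\bb\text{ internal}\mid \ba\to\bb\text{ lies on the path }i\to j\}=\#\{\text{internal edges on the path from }i\text{ to }j\text{ in }T\}.
\]
By Definition~\ref{def:treedeg} the right-hand side equals $-(\mathbf w_T)_{i,j}$, and since $i\neq j$ was arbitrary this gives $\mathbf w_T=-\sum_{\ba\to\bb} r_{\ba\to\bb}$, as claimed.

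The only point requiring care is the identification of the leaf set of the $\bb$-component of $T\setminus\{e\}$ with $[n]_{\le\bb}$; this is immediate from the conventions fixed earlier (edges oriented away from the root $1$, and $x\le y$ recording reachability along directed paths), together with the observation that for a trivalent tree on $n\ge 4$ leaves each of the two components obtained by deleting an internal edge contains at least two leaves, so that the pendant edges at $i$ and $j$ are genuinely excluded from the internal edges being counted. I expect no substantive difficulty beyond this bookkeeping: the statement is just the standard ``split'' description of the edges of a tree, which is also what underlies the ray description of $C_T$ in \cite[Equation~(8)]{SS04}.
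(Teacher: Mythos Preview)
Your proof is correct and follows exactly the same approach as the paper's: both observe that $(r_{\ba\to\bb})_{i,j}$ is the indicator that the internal edge $\ba\to\bb$ lies on the path from leaf $i$ to leaf $j$, then sum over all internal edges and invoke Definition~\ref{def:treedeg}. Your version simply spells out more of the bookkeeping (the split of $T\setminus\{e\}$ into two components and the identification of their leaf sets) that the paper leaves implicit.
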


\begin{proof}
Consier $i<j$ in $[n]$. Then $(r_{\ba\to\bb})_{ij}$ is $1$ if $\ba\to\bb$ lies on the path from $i$ to $j$ in $T$ and zero otherwise. In particular, this implies 
\[
\sum_{\ba\to\bb \text{ internal edge of }T}(r_{\ba\to\bb})_{ij}=\deg_{T}(p_{ij}).
\]
The claim follows as $(\mathbf w_T)_{ij}=-\deg_T(p_{ij})$.
\end{proof}

The following lemma follows from Lemma~\ref{lem:rays and tree deg} and the fact that by definition mutation on trees only changes one internal edge and keeps all other edges unchanged.

\begin{lemma}\label{lem:mutation tree weight}
Let $\Delta$ be a triangulation of $D_n$ and $T_\Delta$ the corrresponding tree. Consider an internal edge $\ba\to\bb$ of $T_\Delta$ and let $\mu_{\ba\to\bb}(T_\Delta)=T_{\Delta'}$. Denote the internal edge of $T_{\Delta'}$ obtained from $\ba\to\bb$ by $\ba'\to\bb'$. Then
\[
\mathbf w_{T_{\Delta'}}=\mathbf w_{T_\Delta} +r_{\ba\to\bb} - r_{\ba'\to\bb'}.
\]
\end{lemma}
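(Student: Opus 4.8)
\textbf{Proof plan for Lemma~\ref{lem:mutation tree weight}.}

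The strategy is to reduce everything to Lemma~\ref{lem:rays and tree deg} together with a careful bookkeeping of which internal edges of $T_\Delta$ and $T_{\Delta'}$ coincide. By Lemma~\ref{lem:rays and tree deg} we have $\mathbf w_{T_\Delta} = -\sum_{\mathbf c\to\mathbf d} r_{\mathbf c\to\mathbf d}$, where the sum ranges over all internal edges of $T_\Delta$, and similarly $\mathbf w_{T_{\Delta'}} = -\sum r_{\mathbf c'\to\mathbf d'}$ over the internal edges of $T_{\Delta'}$. So the entire claim will follow once I show that the two edge sets differ in exactly one element: the internal edge $\mathbf a\to\mathbf b$ of $T_\Delta$ is removed, the internal edge $\mathbf a'\to\mathbf b'$ of $T_{\Delta'}$ is added, and there is a weight-preserving bijection between the remaining internal edges of the two trees under which the corresponding rays $r$ agree.

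First I would recall explicitly, from Definition~\ref{def: tree mut} and Figure~\ref{fig:tree:mutation}, that mutation at an internal edge $\mathbf a\to\mathbf b$ only reassigns the four pendant subtrees hanging at $\mathbf c,\mathbf d,\mathbf e$ (the left child of $\mathbf a$, and the two children of $\mathbf b$) and leaves every edge of $T_\Delta$ other than $\mathbf a\to\mathbf b$ in place as an abstract edge of the tree — that is, each such edge $\mathbf c\to\mathbf d\neq\mathbf a\to\mathbf b$ has an obvious counterpart $\mathbf c'\to\mathbf d'$ in $T_{\Delta'}$. The key observation is then that for every vertex $\mathbf x\neq\mathbf b$ of $T_\Delta$ we have $[n]_{\le\mathbf x'}=[n]_{\le\mathbf x}$ (this is exactly the remark made just before Definition~\ref{def: cotracted tree}, obtained by decomposing $[n]$ with respect to $\mathbf a\to\mathbf b$ before and after the mutation). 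Since the ray $r_{\mathbf c\to\mathbf d}$ defined in \eqref{eq: rays of trop cone} depends only on the partition $[n] = [n]_{\le\mathbf d}\sqcup [n]_{\not\le\mathbf d}$ induced by the edge, and since removing the edge $\mathbf c\to\mathbf d$ from $T_\Delta$ splits the leaves into the same two blocks as removing $\mathbf c'\to\mathbf d'$ from $T_{\Delta'}$ whenever $\mathbf d\neq\mathbf b$, we get $r_{\mathbf c\to\mathbf d} = r_{\mathbf c'\to\mathbf d'}$ for all internal edges other than the mutated one.

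Consequently the two sums telescope: $\mathbf w_{T_\Delta} + r_{\mathbf a\to\mathbf b}$ equals the negative of the sum over all internal edges of $T_\Delta$ different from $\mathbf a\to\mathbf b$, which equals the negative of the sum over all internal edges of $T_{\Delta'}$ different from $\mathbf a'\to\mathbf b'$, which in turn equals $\mathbf w_{T_{\Delta'}} + r_{\mathbf a'\to\mathbf b'}$. Rearranging gives $\mathbf w_{T_{\Delta'}} = \mathbf w_{T_\Delta} + r_{\mathbf a\to\mathbf b} - r_{\mathbf a'\to\mathbf b'}$, as claimed. The one point that needs genuine care — and which I expect to be the main obstacle to write up cleanly rather than a deep difficulty — is verifying the edge-correspondence at the boundary of the swapped region: one must check that the edge $\mathbf a\to\mathbf c$ in $T_\Delta$ (now $\mathbf b\to\mathbf c$ in $T_{\Delta'}$) and the edges into $\mathbf d$ and $\mathbf e$ really do induce the same leaf-partitions before and after, which is precisely the content of the decomposition $[n]=([n]_{\not\le\mathbf a}\cup[n]_{\le\mathbf c})\cup([n]_{\le\mathbf d}\cup[n]_{\le\mathbf e})$ being symmetric under the mutation. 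Once that is spelled out, the proof is complete; I would then remark that this is exactly the observation already used implicitly in the discussion preceding Definition~\ref{def: cotracted tree}.
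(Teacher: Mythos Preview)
Your proposal is correct and follows exactly the same approach as the paper: the paper's proof is a one-sentence remark that the claim ``follows from Lemma~\ref{lem:rays and tree deg} and the fact that by definition mutation on trees only changes one internal edge and keeps all other edges unchanged.'' You have simply unpacked this sentence in detail, including the verification that $[n]_{\le\mathbf x'}=[n]_{\le\mathbf x}$ for $\mathbf x\neq\mathbf b$ (which the paper states just before Definition~\ref{def: cotracted tree}) and hence that the rays $r_{\mathbf c\to\mathbf d}$ agree for all unchanged edges.
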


%%%%%%%%%%%%%%%%%%%%% THEOREM 3 %%%%%%%%%%%%%%%%%%%%%%%%%%%
\begin{theorem}\label{Thm:muta}
Let $\Delta$ be a triangulation of the $D_n$ and $\ba \to \bb$ be an internal edge of $T_\Delta$ and $T_{\Delta'}=\mu_{\ba\to\bb}(T_\Delta)$. Consider the Pl\"ucker relation $R_{i,j,k,l}=p_{ij}p_{kl}-p_{ik}p_{jl}+p_{il}p_{jk}\in I_{2,n}$ for $i,j,k,l$ cyclically ordered. Assume $\init_\Delta(R_{i,j,k,l})=p_{il}p_{jk}-p_{ik}p_{jl}$, then
\[
\init_{\Delta'}(R_{i,j,k,l})=\left\{
    \begin{matrix}
    p_{ij}p_{kl}-p_{il}p_{jk}, & \text{ if } i\in[n]_{\not\le \ba},j\in[n]_{\le\bc},k\in[n]_{\le \bd},l\in[n]_{\le \be}\\
    \init_{\Delta}(R_{i,j,k,l}), & \text{ otherwise.}
    \end{matrix}
\right.
\]
\end{theorem}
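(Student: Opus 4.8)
The strategy is to reduce the computation of the initial form $\init_{\Delta'}(R_{i,j,k,l})$ to an inspection of the \emph{contracted tree} $T_{\Delta'}\vert_{i,j,k,l}$ (Definition~\ref{def: cotracted tree}), and to track how this contracted tree changes when we mutate $T_\Delta$ at the internal edge $\ba\to\bb$. Recall from the discussion after Definition~\ref{def: cotracted tree} that $T_\Delta\vert_{i,j,k,l}$ encodes exactly the data needed to compute $\init_\Delta(R_{i,j,k,l})$: the three pairings of $\{i,j,k,l\}$ correspond to the three ways of splitting the four leaves across the central edge of the contracted tree, and $\init_\Delta(R_{i,j,k,l})$ is the sum of the two Pl\"ucker monomials whose index pairs are \emph{not} separated by the central edge (equivalently, the pairing realized by the central edge is the one with strictly larger tree degree, the other two being equal). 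Since by hypothesis $\init_\Delta(R_{i,j,k,l})=p_{il}p_{jk}-p_{ik}p_{jl}$, the central edge of $T_\Delta\vert_{i,j,k,l}$ separates $\{i,j\}$ from $\{k,l\}$.

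First I would use Lemma~\ref{lem:mutation tree weight}: $\mathbf w_{T_{\Delta'}} = \mathbf w_{T_\Delta} + r_{\ba\to\bb} - r_{\ba'\to\bb'}$, so the only Pl\"ucker variables whose tree degree changes are those $p_{xy}$ with exactly one of $x,y$ in $[n]_{\le\bb}$ (for the $+r_{\ba\to\bb}$ term) or exactly one in $[n]_{\le\bb'}$ (for the $-r_{\ba'\to\bb'}$ term). Using the leaf decompositions recorded before Definition~\ref{def: cotracted tree},
\[
[n] = [n]_{\not\le\ba}\;\sqcup\;[n]_{\le\bc}\;\sqcup\;[n]_{\le\bd}\;\sqcup\;[n]_{\le\be},
\]
with $[n]_{\le\bb}=[n]_{\le\bd}\sqcup[n]_{\le\be}$ before mutation and $[n]_{\le\bb'}=[n]_{\le\bc}\sqcup[n]_{\le\be}$ after, I would split into cases according to how the four leaves $i,j,k,l$ distribute among the four blocks $[n]_{\not\le\ba}, [n]_{\le\bc}, [n]_{\le\bd}, [n]_{\le\be}$. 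For the relation $R_{i,j,k,l}$ to be affected at all, the leaves must straddle both $\bb$ and $\bb'$ in a nontrivial way; the combinatorics forces that the generic position — one leaf in each of the four blocks — is the only one that can change the pairing, and every other distribution either leaves $T_{\Delta'}\vert_{i,j,k,l}$ combinatorially identical to $T_\Delta\vert_{i,j,k,l}$ (same central split) or changes edge weights without changing which split is strictly dominant. This gives the ``otherwise'' branch. For the exceptional branch, I would check that when $i\in[n]_{\not\le\ba}$, $j\in[n]_{\le\bc}$, $k\in[n]_{\le\bd}$, $l\in[n]_{\le\be}$, the non-intersecting pair of paths in $T_\Delta$ is $i\to j$ and $k\to l$ while in $T_{\Delta'}$ it becomes $i\to l$ and $j\to k$ (this is precisely the local ``four-leaves'' move induced by Definition~\ref{def: tree mut}, matching Figure~\ref{fig:tree:mutation}): the central edge of the contracted tree flips from the $\{ij|kl\}$ split to the $\{il|jk\}$ split, so the dominant pairing switches from $il,jk$ to $ij,kl$, giving $\init_{\Delta'}(R_{i,j,k,l})=p_{ij}p_{kl}-p_{il}p_{jk}$ with the correct signs from \eqref{eq: def plucker rel}.

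The step I expect to be the main obstacle is the case analysis itself: verifying that in \emph{every} distribution of $i,j,k,l$ other than the single exceptional one, the strict inequality among $\deg_{T_\Delta}(p_{ij})+\deg_{T_\Delta}(p_{kl})$, $\deg_{T_\Delta}(p_{ik})+\deg_{T_\Delta}(p_{jl})$, $\deg_{T_\Delta}(p_{il})+\deg_{T_\Delta}(p_{jk})$ is preserved after adding $r_{\ba\to\bb}-r_{\ba'\to\bb'}$. The clean way to organize this is to work entirely inside the contracted tree: since $T_\Delta\vert_{i,j,k,l}$ and $T_{\Delta'}\vert_{i,j,k,l}$ differ only through the portion of the tree touched by the edges $\ba\to\bb$, $\ba'\to\bb'$, and since a flip of an internal edge of $T_\Delta$ restricts either to the identity on the contracted tree or to the unique nontrivial flip of a four-leaf tree, there are genuinely only two outcomes to consider. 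I would phrase the bookkeeping using the ray vectors $r_{\ba\to\bb}$ from \eqref{eq: rays of trop cone} and the observation that $(r_{\ba\to\bb})_{xy}=1$ iff the edge $\ba\to\bb$ lies on the $x$--$y$ path in $T_\Delta$, which makes each of the six relevant tree-degree changes a matter of counting which of $i,j,k,l$ lie on which side of $\bb$ (resp.\ $\bb'$). Once the exceptional configuration is isolated, both branches follow by reading off the dominant split of the contracted tree, and the sign conventions are exactly those already fixed in the statement's hypothesis $\init_\Delta(R_{i,j,k,l})=p_{il}p_{jk}-p_{ik}p_{jl}$.
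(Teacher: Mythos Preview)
Your approach is essentially the paper's: both arguments reduce to the contracted tree $T_\Delta\vert_{i,j,k,l}$, use Lemma~\ref{lem:mutation tree weight} (equivalently the ray vectors $r_{\ba\to\bb}$) to track how tree degrees change, and then split into cases according to where the edge $\ba\to\bb$ sits relative to the paths among $i,j,k,l$. The paper organizes the cases by $q=\#(\{\mathbf x,\mathbf y\}\cap\{\ba,\bb\})$ rather than by the distribution of the leaves among the four blocks, but this is only a cosmetic difference.

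One concrete slip in your exceptional case: with $i\in[n]_{\not\le\ba}$, $j\in[n]_{\le\bc}$, $k\in[n]_{\le\bd}$, $l\in[n]_{\le\be}$, after mutation one has $[n]_{\le\bb'}=[n]_{\le\bc}\sqcup[n]_{\le\be}$, so the new central edge separates $\{i,k\}$ from $\{j,l\}$, not $\{i,l\}$ from $\{j,k\}$ as you write. The non-intersecting pair in $T_{\Delta'}$ is therefore $i\to k$ and $j\to l$, and the monomial dropped from $R_{i,j,k,l}$ is $-p_{ik}p_{jl}$. Your stated conclusion agrees with the theorem, but the intermediate split you name is the wrong one of the two remaining possibilities; fixing this makes the argument internally consistent.
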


\begin{proof}
Assume $i,j,k,l$ are such that $T_\Delta\vert_{i,j,k,l}$ is of shape as in Figure~\ref{fig: contracted tree}, if necessary reorder them. We distinguish two cases, $\ba\to\bb$ contributes to the edge $\mathbf x-\mathbf y$ in $T_\Delta\vert_{i,j,k,l}$ or it does not.

If $\ba\to\bb$ does not contribute to $\mathbf x-\mathbf y$ first observe that we are in the "otherwise" case of the claim. In this case either $\ba\to\bb$ was deleted from $T_\Delta$ by construction $T_\Delta\vert_{i,j,k,l}$ (in which case the claim follows) or it contributes to one of the edges adjacent to a leaf in $T_\Delta\vert_{i,j,k,l}$. Assume without loss of generality that $\ba\to\bb$ contributes to the edge $i-\mathbf x$. Then by Lemma~\ref{lem:mutation tree weight} the $T_\Delta$-degrees of $p_{ij},p_{ik}$ and $p_{il}$ are all changed by $\pm1$ while all others remain the same after mutation. In particular, this implies the claim.

If $\ba\to\bb$ contributes to $\mathbf x-\mathbf y$ we further distinguish depending on $q=\#(\{\mathbf x,\mathbf y\}\cap\{\ba,\bb\})$:
\begin{itemize}
    \item[$q=0$] In this case $\ba'\to\bb'$ does not contribute to the edge $\mathbf x'-\mathbf y'$ of $T_{\Delta'}\vert_{i,j,k,l}$. 
    By the proof of Lemma~\ref{lem:rays and tree deg} the $T_\Delta$-degrees of $p_{ik},p_{il},p_{jk},p_{jl}$ differ by $-1$ from the $T_{\Delta'}$-degrees while the others stay unchanged. 
    Hence, $\init_\Delta(R_{i,j,k,l})=\init_{\Delta'}(R_{i,j,k,l})$. 
    \item[$q=1$] We assume without loss of generality $\mathbf x=\ba$ (otherwise relabel accordingly). 
    After mutation, the edge $\ba'\to\bb'$ contributes to either $i-\mathbf x'$ or $j\mathbf x'$.
    We treat the first case, the argument for the second is the same. In this case the $T_\Delta$-degrees of $p_{ij},p_{ik},p_{il}$ equal the $T_{\Delta'}$ degrees while all others differ by $-1$. 
    In particular, the degree of each monomial in $R_{i,j,k,l}$ decreases by $1$ and so $\init_\Delta(R_{i,j,k,l})=\init_{\Delta'}(R_{i,j,k,l})$.
    \item[$q=0$] Observe that this is (up to relabelling if necessary) the case $i\in[n]_{\not\le \ba},j\in[n]_{\le\bd},k\in[n]_{\le \be},l\in[n]_{\le \bc}$. The tree $T_{\Delta'}\vert_{i,j,k,l}$ has cherries $i,k$ and $j,l$. In particular, $\deg_{\Delta'}(p_{ik}p_{jl})<\deg_{\Delta'}(p_{ij}p_{ik})=\deg_{\Delta'}(p_{il}p_{jk})$ and the claim follows. 
\end{itemize}
\end{proof}

\chapter{Flag and Schubert varieties}\label{chap:flag}

\section{Preliminary notions}\label{sec:pre flag}
\begin{definition}
A \emph{complete flag} in the vector space ${\mathbb C}^n$ is a chain
\[
\mathbb{V}:\ \{0\}= V_0\subset V_1\subset\cdots\subset V_{n-1}\subset V_n=\mathbb C^n
\]
of vector subspaces of $\mathbb C^n$ with ${\rm dim}_{\mathbb C }(V_i) = i$. 
\end{definition}

The set of all complete flags in $\mathbb C^n$ is denoted by $\Flag_n$ and it has the structure of an algebraic variety. More precisely, it is a subvariety of the product of  Grassmannians $\Gr(1,\mathbb C^n)\times \Gr(2,\mathbb C ^n)\times\cdots \times \Gr(n-1,\mathbb C^n)$. 

Composing with the Pl\"ucker embeddings of the Grassmannians, $\Flag_n$ becomes a subvariety of 
$\mathbb{P}^{{\binom{n}{1}}-1}\times\mathbb{P}^{{\binom{n}{2}}-1}\times \cdots\times\mathbb{P}^{{\binom{n}{n-1}}-1}$ 
and so we can ask for its defining ideal $I_n$.  
Each point in the flag variety can be represented by an $n\times n$-matrix $M=(x_{i,j})$ whose first $d$ rows generate $V_d$. Each $V_d$ corresponds to a point in a Grassmannian. Moreover, they satisfy the condition $V_d\subset V_{d+1}$ for $d=1,\ldots,n-1$. 
In order to compute the  ideal $I_n$  defining $\Flag_n$ in   $\mathbb{P}^{{\binom{n}{1}}-1}\times\mathbb{P}^{{\binom{n}{2}}-1}\times \cdots\times\mathbb{P}^{{\binom{n}{n-1}}-1}$ we have to translate the inclusions  $V_d\subset V_{d+1}$ into polynomial equations. We define the map
\begin{equation}\label{eq:def ideal flag}
\varphi_n:  \mathbb C[p_J\mid \varnothing\neq J\subsetneq [n]]\rightarrow \mathbb C[x_{i,j}\mid i,j\in[n]]   
\end{equation}
sending each Pl\"ucker variable
$p_J$ to the determinant of the submatrix of $M$ with row indices $[|J|]$ and column indices $J$. The ideal $I_n$ of $\Flag_n$ is the kernel of $\varphi_n$.

\begin{example}\label{exp: ideal flag4}
Consider $\Flag_4\hookrightarrow \mathbb P^3\times\mathbb P^5\times\mathbb P^3$. 
The Plücker variables are 
\[
p_1,p_2,p_3,p_4,p_{12},p_{13},p_{23},p_{14},p_{24},p_{34},p_{123},p_{124},p_{134},p_{234}.
\]
Then $\ker(\varphi_4)$ contains the Plücker relation defining $\Gr(2,\mathbb C^4)$:
\[
p_{12}p_{34}-p_{13}p_{24}+p_{14}p_{23}.
\]
And further relations between Plücker variables associated to different Grassmannians:
\begin{eqnarray*}
&p_{4}p_{23}-p_{3}p_{24}+p_{2}p_{34}, \ 
p_{4}p_{13}-p_{3}p_{14}+p_{1}p_{34}, &\\
&p_{4}p_{12}-p_{2}p_{14}+p_{1}p_{24}, \ 
p_{3}p_{12}-p_{2}p_{13}+p_{1}p_{23}, &\\
&p_{34}p_{124}-p_{24}p_{134}+p_{14}p_{234},\  
p_{34}p_{123}-p_{23}p_{134}+p_{13}p_{234},&\\
&p_{24}p_{123}-p_{23}p_{124}+p_{12}p_{234}, \ 
p_{14}p_{123}-p_{13}p_{124}+p_{12}p_{134}, &\\
&p_{4}p_{123}-p_{3}p_{124}+p_{2}p_{134}-p_{1}p_{234}.&
\end{eqnarray*}
The above relations form a complete list of generators for the ideal $I_4$.
\end{example}

There is an action of $S_n\rtimes \mathbb Z_2$  on $\Flag_n$. The symmetric group acts by permuting the columns of $M$. The action of $\mathbb Z_2$ maps a complete flag $\mathbb V$ to its complement, which is defined to be
\[
\mathbb{V}^{\bot}:\ \{0\}= V_n^{\bot} \subset V_{n-1}^{\bot}\subset\cdots\subset V_{1}^{\bot}\subset V_0^{\bot}=\mathbb C^n.
\]
We hence do computations in \S\ref{sec:BLMM} up to $S_n \rtimes \mathbb Z_2$-symmetry. 

Recall our notation for $SL_n$ from \S\ref{sec:pre rep theory}. 
Representing flags by matrices corresponds to realizing  the flag variety as the quotient $SL_n/B$. 
We construct line bundles on $SL_n/B$ as follows. Consider a weight $\lambda\in\Lambda^+$, it is a character of $B$ (i.e. a morphism of algebraic groups $\lambda:B\to\mathbb C^*$). 
We have a free action of $B$ on $SL_n\times \mathbb C$, which for $b\in B,g\in SL_n$ and $t\in T$ is given by 
\[
b(g,t):=(gb^{-1},\lambda(b)t).
\]
Let $L_\lambda$ be the fibre product $SL_n\times_B\mathbb C=(SL_n\times \mathbb C)/B$. Then there is a map 
\[
L_\lambda\to SL_n/B, \text{ given by } (g,t)B \mapsto gB. 
\]
It follows that $L_\lambda$ is the total space of a line bundle over $SL_n/B$ called the \emph{homogeneous line bundle associated to the weight $\lambda$}. These line bundles satisfy $L_{m\lambda}=L_\lambda^{\otimes m}$ for $m\ge 1$ and are ample, if $\lambda\in\Lambda^{++}$.
By the \emph{Borel-Weil-Theorem} we have the following correspondence between line bundles $L_\lambda$ for $\lambda\in\Lambda^{++}$ and irreducible highest weight representations
\[
H^0(SL_n/B,L_\lambda)^*\cong V(\lambda).
\]
Recall that the highest weight representation is cyclically generated by a highest weight vector $v_\lambda \in V(\lambda)$. Then the above correspondence induces an embedding
\[
SL_n/B\hookrightarrow \mathbb P(V(\lambda)), \  gB \mapsto g[v_\lambda].
\]
In particular, we can realize the homogeneous coordinate ring of the flag variety as $\mathbb C[SL_n/B]=\bigoplus_{k\ge 0}V(k\lambda)^*$. Similarly, we obtain $\mathbb C[SL_n/U]=\bigoplus_{\lambda\in\Lambda^+} V(\lambda)$ which is a consequence of the \emph{Peter-Weyl-Theorem}. The quasi-affine variety $SL_n/U$ is sometimes also called \emph{base affine space}.

In the next section we consider Schubert varieties. These are subvarieties of $SL_n/B$ indexed by Weyl group elements $w\in S_n=N_{SL_n}(T)/T$. 
We identify $w\in S_n$ with a coset representative in the quotient $N_{SL_n}(T)/T$ and consider the \emph{Bruhat cell} $BwB\subset SL_n$.
The quotient $BwB/B$ is called \emph{Schubert cell}.
\begin{definition}\label{def: Schubert variety}
For $w\in S_n$ the \emph{Schubert variety} $X_w\subset SL_n/B$ is defined as the Zariski closure $X_w:=\overline{BwB/B}$.
\end{definition}
Schubert varieties are normal, not necessarily smooth (but if singular having only rational singularities) subvarieties of the flag variety.
Their dimension equals the length of the associated Weyl group element, i.e. $\dim X_w=\ell(w)$.
The line bundles $L_\lambda$ can be restricted to Schubert varieties and the Borel-Weil Theorem generalizes as follows.
Fix $w\in S_n$ and $\lambda\in\Lambda^+$, then
\[
H^0(X_w,L_\lambda)^*\cong V_w(\lambda),
\]
where $V_w(\lambda)$ is the Demazure module (see Definition~\ref{def: Demazure module} in \S\ref{sec:pre rep theory})
Observe, that the Borel-Weil-Theorem is in fact a special case as we have $X_{w_0}=SL_n/B$ and $V_{w_0}(\lambda)=V(\lambda)$.
Using observation one can generalize many constructions for $SL_n/B$ that rely on Borel-Weil to Schubert varieties. 
An example of this incidence can be found in the following section when studying string polytopes for flag and Schubert varieties.

\newpage
\section{String cones and the Superpotential}\label{sec:BF}

In this section we study the combinatorics of pseudoline arrangements. We associate to each in dual ways two collections of polyhedral objects (consisting of two polyhedral cones and a $\mathbb R^{n-1}$-family of polytopes).
We show they are unimodularly equivalent and relate the cones to the geometry of flag and Schubert varieties.
We prove that one of them is the weighted string cone by Littelmann \cite{Lit98} (see also Berenstein-Zelevinsky \cite{BZ01}). 
It was used by Caldero \cite{Cal02} to degenerate flag and Schubert varieties to toric varieties.
For the other we show that it is closely related to the framework of cluster varieties and mirror symmetry by Gross-Hacking-Keel-Kontsevich \cite{GHKK14}. 
For the flag variety the cone is the tropicalization of their superpotential while for Schbert varieties a restriction of the superpotential is necessary.
In their framework they also give a construction of toric degenerations using the superpotential.
As a corollary of our combinatorial result we realize Caldero's degenerations as GHKK-degenerations using cluster theory.

The section is structured as follows: we recall pseudoline arrangements and define the two collections of polyhedral objects and unimodular equivalences among them in \S\ref{sec:pa and gp}.
In \S\ref{subsec:string} we show that of the cones is the weighted string cone and in \S\ref{subsec:super} we show how the other arises from the superpotential. 
Then in \S\ref{subsec:apply} we apply our combinatorial result and relate to toric degenrations.\footnote{based on joint work with Ghislain Fourier.}

\subsection{Pseudoline arrangements and Gleizer-Postnikov paths}\label{sec:pa and gp}

Recall our notation for the symmetriy group $S_n$ from \S\ref{sec:pre rep theory}. In the following section we associate for $w\in S_n$ a diagram called a pseudoline arrangement to every reduced expression $\w$. 
These diagrams turn out to be closely related to cluster algebras. In fact, to every pseudoline arrangement one can associate a quiver and then using the construction summarized in \S\ref{sec: prep cluster} define a cluster algebra. 
We start by introducing the combinatorial tools: to a pseudoline arrangement we associate two weighted cones and give a unimodular equivalence between them.
%A bit later we will see that one of the cones is the weighted string cone as introduced by Littelmann in \cite{Lit98} while the other is closely related to the tropicalization of a superpotential in the \cite{GHKK14}-setting.

\begin{definition}\label{def:pseusoline arr}
A \emph{pseudoline arrangement} $\pa(\w)$ associated to a reduced expression $\w=s_{i_1} \cdots s_{i_{\ell(w)}}$ is a diagram consisting of $n$ horizontal \emph{pseudolines} $l_1,\dots,l_{n}$ (or short \emph{lines}) labelled at the left end from bottom to top, with crossings indicated by the reduced expression. A reflection $s_i$ indicates a crossing at \emph{level} $i$ (see e.g. Figure~\ref{fig:pa 121}).
\end{definition}
For a given reduced expression $\w=s_{i_1} \cdots s_{i_{l(w)}}$, we associate to each $s_{i_j}$ the positive root $\beta_{i_j}:=s_{i_1}\cdots s_{i_{j-1}}(\alpha_{i_j})$.
Then $\beta_{i_j}=\alpha_{k,m-1}$ for $k,m< n$ and $s_{i_j}$ induces the crossing of the lines $l_k$ and $l_m$ in $\pa(\w)$.
The crossing point is a vertex in the diagram and it is labelled ${(k,m)}$.
As two lines $l_k,l_m$ cross at most once, there is at most one position with label $(k,m)$. 
For a given $w$ the pairs appearing as labels for crossing points are exactly those for which $w(\alpha_{k,m-1})<0$.
Further, the right end of a pseudoline $l_i$ is a vertex labelled $L_i$. 
Let $\pa(\w)_0$ be the set of all vertices in $\pa(\w)$.

\begin{definition}\cite[Definition~2.2]{BFZ05}\label{def:quiver pa}
Let $w\in S_n$ with reduced expression $\w$. 
Then the quiver $Q_{\w}$ associated to $\pa(\w)$ has \emph{vertices} $w_F$ associated to faces $F$ of $\pa(\w)$ and \emph{arrows}:

(1) if two faces are at the same level separated by a crossing then there is an arrow from

left to right (see Figure~\ref{fig:arrow}a);

(2) if two faces are on consecutive levels separated by two crossings then there is an arrow 

from right to left (either upwards or downwards, see Figure~\ref{fig:arrow}b, \ref{fig:arrow}c).

\noindent
Vertices corresponding to unbounded faces are \emph{frozen} and we disregard arrows between them. All the other vertices are called \emph{mutable}.
\end{definition}

\begin{center}
\begin{figure}[ht]
\centering
\begin{tikzpicture}[scale=.8]
\draw[rounded corners] (0,.5) -- (1,.5) -- (2,1.5) -- (3,1.5);
\draw[rounded corners] (0,1.5) -- (1,1.5) -- (2,.5) -- (3,.5);
\draw[thick, ->] (.5,1) -- (2.5,1);
\node at (1.5,-.8) {$a$};

\begin{scope}[xshift=4cm]
\draw[rounded corners] (0,2) -- (1,2) -- (2,1) -- (3,1) -- (4,0) -- (5,0);
\draw[rounded corners] (0,1) -- (1,1) -- (2,2) -- (3,2);
\draw[rounded corners] (2,0) -- (3,0) -- (4,1) -- (5,1);
\draw[thick, ->] (3.5,1.5) -- (1.5,0.5);
\node at (2.5,-.8) {$b$};
\end{scope}

\begin{scope}[xshift=10cm]
\draw[rounded corners] (0,0) -- (1,0) -- (2,1) -- (3,1) -- (4,2) -- (5,2);
\draw[rounded corners] (0,1) -- (1,1) -- (2,0) -- (3,0);
\draw[rounded corners] (2,2) -- (3,2) -- (4,1) -- (5,1);
\draw[thick,->] (3.5,0.5) -- (1.5,1.5);
\node at (2.5,-.8) {$c$};
\end{scope}

\end{tikzpicture}
\caption{Arrows of the quiver arising from the pseudoline arrangement.}\label{fig:arrow}
\end{figure}
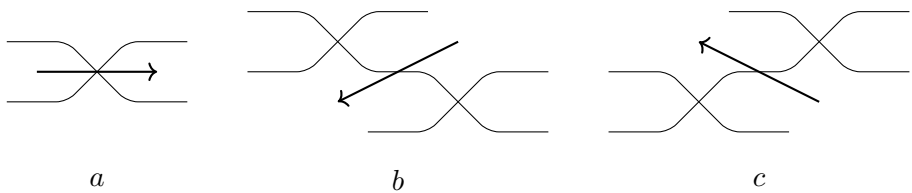
\end{center}

Recall the notion of of quiver mutation from Definition~\ref{def:quiver mutation} in \S\ref{sec: prep cluster}. 

\begin{definition}\label{defn:mutation pa}
Let $w\in S_n$ with reduced expression $\w$. 
A \emph{mutation} of $\pa(\w)$ (resp. of $\w$) is a change of consecutive $s_r s_{r + 1} s_r$ in $\w$ to $s_{r+1} s_r s_{r+1}$ (or vice versa) (see Figure \ref{fig:pseudo.mut}). 
We call a face $F$ of $\pa(\w)$ \emph{mutable} if it corresponds to  $s_rs_{r+1}s_r$ (or $s_{r+1}s_r s_{r+1}$) and denote the corresponding mutation by $\mu_F$. 
The resulting pseudoline arrangement is associated to the reduced expression $\mu_F(\w)$ of $w$ and denoted by $\pa(\mu_F(\w))$.
\end{definition}

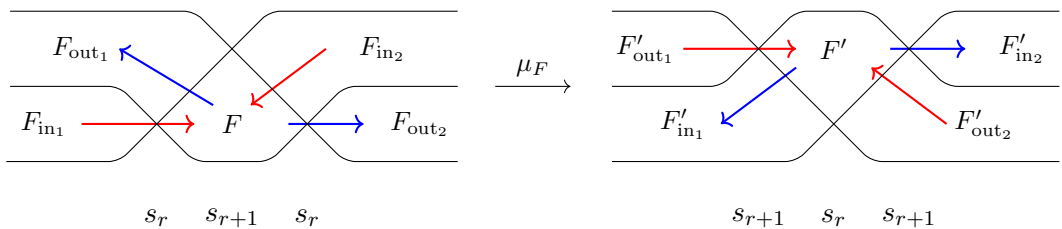
\begin{figure}[h]
    \centering
\begin{tikzpicture}
%\node at (0,2) {$l_{i}$};
%\node at (0,1) {$l_{j}$};
%\node at (0,0) {$l_{k}$};

\node at (5.5,1.5) {\small $F_{\init_2}$};
\node at (6,.5) {\small $F_{\text{out}_2}$};
\node at (3.5,.5) {\small $F$};
\node at (1.5,1.5) {\small $F_{\text{out}_1}$};
\node at (1,.5) {\small $F_{\init_1}$};

\draw[thick, red, ->] (1.5,.5) -- (3,.5);
\draw[thick, blue, ->] (4.25,.5) -- (5.25,.5);
\draw[thick, red, ->] (4.75,1.5) -- (3.75,.75);
\draw[thick, blue, <-] (2,1.5) -- (3.25,.75);

\draw[rounded corners] (0.5,0) --(1.25,0) -- (2,0)-- (3,1) --(4,2) -- (6.5,2);
\draw[rounded corners] (.55,1) -- (2,1) -- (3,0) -- (3.5,0)-- (4,0) -- (5,1) -- (5.57,1)-- (6.5,1);
\draw[rounded corners] (.55,2) -- (3,2) -- (4,1) -- (5,0) -- (5.75,0) -- (6.5,0);

\node at (3.5,-.75) {$s_{r+1}$};
\node at (2.5,-.75) {$s_r$};
\node at (4.5,-.75) {$s_r$};

\draw[->] (7,1) -- (8,1);
\node[above] at (7.5,1) {\small $\mu_F$};
    
\begin{scope}[xshift=8cm]
%\node at (0,2) {$l_{i}$};
%\node at (0,1) {$l_{j}$};
%\node at (0,0) {$l_{k}$};

\node at (6,1.5) {\small $F'_{\init_2}$};
\node at (5.5,.5) {\small $F'_{\text{out}_2}$};
\node at (3.5,1.5) {\small$F'$};
\node at (1,1.5) {\small $F'_{\text{out}_1}$};
\node at (1.5,.5) {\small $F'_{\init_1}$};

\draw[thick, red, ->] (1.5,1.5) -- (3,1.5);
\draw[thick, blue, ->] (4.25,1.5) -- (5.25,1.5);
\draw[thick, blue, ->] (3,1.25) -- (2,.5);
\draw[thick, red, ->] (5,.5) -- (4,1.25);

\draw[rounded corners] (0.55,0) --(1.25,0) -- (3,0)-- (4,1) -- (5,2) -- (6.5,2);
\draw[rounded corners] (0.55,1) -- (2,1) -- (3,2) -- (4,2) -- (5,1) -- (6.5,1);
\draw[rounded corners] (.55,2) -- (2,2) -- (3,1)-- (4,0) -- (5.75,0) -- (6.5,0);

\node at (3.5,-.75) {$s_{r}$};
\node at (2.5,-.75) {$s_{r+1}$};
\node at (4.5,-.75) {$s_{r+1}$};
\end{scope}    
\end{tikzpicture}
\caption{Mutation of pseudoline arrangements.}
\label{fig:pseudo.mut}
\end{figure}

Note, that the quivers $Q_{\w}$ and $Q_{\mu_{F(\w)}}$ are related by quiver mutation at the vertex $w_F$. However, $Q_{\w}$ has more mutable vertices than $\pa(\w)$ has mutable faces. 
When mutating $Q_{\w}$ at a vertex $w_{F'}$ with $F'$ not mutable in $\pa(\w)$, then for $\mu_{F'}(Q_{\w})$ there is no reduced expression of $w$ that would give rise to this quiver via a pseudoline arrangement.

Consider $\w_0\in S_n$ with reduced expression $\hat\w_0:=s_1s_2s_1s_3s_2s_1\dots s_{n-1}s_{n-2}\dots s_3s_2s_1$ and the quiver $Q_{\hat\w_0}$. 
We label the vertices for faces $F_{(i,j)}$ bounded to the left by the crossing of lines $l_i$ and $l_j$ by $w_{(i,j)}$. 
In particular, the frozen vertices at the right boundary are labelled $w_{(n-1,n)},\dots,w_{(1,n)}$ from bottom to top.
Referring to their level, the frozen vertices on the left boundary are labelled by $w_{1},\dots,w_{n-1}$ from bottom to top. 
In the following example we describe the quiver corresponding to this \emph{initial} reduced expression $\hat\w_0$ for $n=5$.

\begin{example}\label{exp:initial seed S_5}
Consider $\hat \w_0=s_1s_2s_1s_3s_2s_1s_4s_3s_2s_1\in S_5$. The pseudoline arrangement and the corresponding quiver are depicted in Figure~\ref{fig:initial}. 

\begin{center}
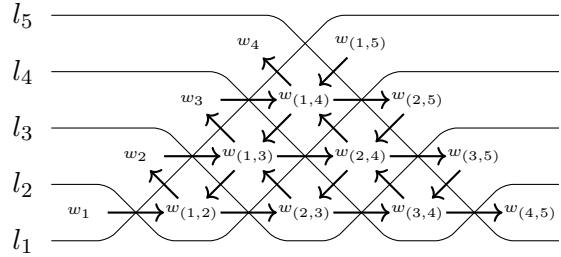
\begin{figure}[ht]
\centering
\begin{tikzpicture}[scale=.75]

%\draw [fill=blue, semitransparent, blue] (9,4) -- (5,4) -- (1.5,0.5) -- (2,0) -- (3,0)-- (6,3) -- (9,3) -- (9,4);
%\draw [fill=red, semitransparent, red] (0,1.07) -- (6.035,1.07) -- (6.5,1.5) -- (5.93,2.07) -- (0,2.07);

%frozen
\node at (-.5,0) {$l_1$};
\node at (-.5,1) {$l_2$};
\node at (-.5,2) {$l_3$};
\node at (-.5,3) {$l_4$};
\node at (-.5,4) {$l_5$};

\node at (.5,.5) {\tiny $w_{1}$};
\node at (1.5,1.5) {\tiny $w_{2}$};
\node at (2.5,2.5) {\tiny $w_{3}$};
\node at (3.5,3.5) {\tiny $w_{4}$};
\node at (5.5,3.5) {\tiny $w_{(1,5)}$};
\node at (6.5,2.5) {\tiny $w_{(2,5)}$};
\node at (7.5,1.5) {\tiny $w_{(3,5)}$};
\node at (8.5,.5) {\tiny $w_{(4,5)}$};
%mutable
\node at (4.5,2.5) {\tiny $w_{(1,4)}$};
\node at (3.5,1.5) {\tiny $w_{(1,3)}$};
\node at (5.5,1.5) {\tiny $w_{(2,4)}$};
\node at (2.5,.5) {\tiny $w_{(1,2)}$};
\node at (4.5,.5) {\tiny $w_{(2,3)}$};
\node at (6.5,.5) {\tiny $w_{(3,4)}$};

%\draw[red] (0,0.07) -- (9,0.07);
%\draw[red] (0,1.07) -- (9,1.07);
%\draw[red] (0,2.07) -- (9,2.07);
%\draw[red] (0,3.07) -- (9,3.07);
%\draw[red] (0,4.07) -- (9,4.07);

\draw[rounded corners] (0,0) --(1,0) -- (5,4) -- (9,4);
\draw[rounded corners] (0,1) -- (1,1) -- (2,0) -- (3,0) -- (6,3) -- (9,3);
\draw[rounded corners] (0,2) -- (2,2) -- (4,0) -- (5,0) -- (7,2) -- (9,2);
\draw[rounded corners](0,3) -- (3,3) -- (6,0) -- (7,0) -- (8,1) -- (9,1);
\draw[rounded corners] (0,4) -- (4,4) -- (8,0) -- (9,0);

\draw[thick,->] (5.25,3.25) -- (4.75,2.75);
\draw[thick,->] (4.25,2.25) -- (3.75,1.75);
\draw[thick,->] (3.25,1.25) -- (2.75,.75);

\draw[thick,<-] (3.75,3.25) -- (4.25,2.75);
\draw[thick,<-] (4.75,2.25) -- (5.25,1.75);
\draw[thick,<-] (5.75,1.25) -- (6.25,.75);

\draw[thick,<-] (2.75,2.25) -- (3.25,1.75);
\draw[thick,<-] (3.75,1.25) -- (4.25,.75);

\draw[thick,<-] (1.75,1.25) -- (2.25,.75);

\draw[thick,->] (6.25,2.25) -- (5.75,1.75);
\draw[thick,->] (5.25,1.25) -- (4.75,0.75);

\draw[thick,->] (7.25,1.25) -- (6.75,0.75);

\draw[thick,->] (3,2.5) -- (4,2.5);
\draw[thick,->] (5,2.5) -- (6,2.5);

\draw[thick,->] (2,1.5) -- (3,1.5);
\draw[thick,->] (4,1.5) -- (5,1.5);
\draw[thick,->] (6,1.5) -- (7,1.5);

\draw[thick,->] (1,0.5) -- (2,0.5);
\draw[thick,->] (3,0.5) -- (4,0.5);
\draw[thick,->] (5,0.5) -- (6,0.5);
\draw[thick,->] (7,0.5) -- (8,0.5);
\end{tikzpicture}
\caption{$\pa(\hat\w_0)$ and $Q_{\hat\w_0}$ with $\hat\w_0=s_1s_2s_1s_3s_2s_1s_4s_3s_2s_1\in S_5$.}
\label{fig:initial}
\end{figure}
\end{center}
\end{example}

\subsubsection*{Orientation and paths.}
For every pair $(l_i,l_{i+1})$ with $1\le i\le n-1$ we give an orientation to a pseudoline arrangement by orienting lines $l_1,\dots,l_i$ from right to left and lines $l_{i+1},\dots,l_{n}$ from left to right, see Figure~\ref{fig:pa 121}. 
Consider an oriented path with three consecutive crossings $v_{k-1}\to v_{k}\to v_{k+1}$ belonging to the same pseudoline $l_i$. Then $v_k$ is the intersection of $l_i$ with some line $l_j$, i.e. $v_k=v_{(i,j)}$.
If either $i<j$ and both lines are oriented to the left, or $i>j$ and both lines are oriented to the right, the path is called \emph{non-rigorous}. Figure~\ref{fig:rigorous} shows these two situations. A path is called \emph{rigorous} if it is not non-rigorous. 
\begin{figure}[h]
\centering
\begin{center}
\begin{tikzpicture}[scale=.7]

\node at (0,2) {$l_3$};
\node at (0,1) {$l_2$};
\node at (0,0) {$l_1$};

\node at (7,0) {$L_3$};
    \draw [fill] (6.5,0) circle [radius=0.05];
\node at (7,1) {$L_2$};
    \draw [fill] (6.5,1) circle [radius=0.05];
\node at (7,2) {$L_1$};
    \draw [fill] (6.5,2) circle [radius=0.05];

\node[below] at (2.5,-.5) {$s_{1}$};
\node[below] at (3.5,-.5) {$s_{2}$};
\node[below] at (4.5,-.5) {$s_{1}$};

\node[above] at (3.5,1.65) {$v_{(1,3)}$};
    \draw [fill] (3.5,1.5) circle [radius=0.05];
\node[right] at (4.6,0.5) {$v_{(2,3)}$};
    \draw [fill] (4.5,0.5) circle [radius=0.05];
\node[left] at (2.4,0.5) {$v_{(1,2)}$};
    \draw [fill] (2.5,0.5) circle [radius=0.05];

\draw[rounded corners] (0.5,0) --(1.25,0) -- (2,0)-- (3,1) --(4,2) -- (5.25,2);
    \draw[->, rounded corners] (1.5,0) -- (1.25,0);
    \draw[->, rounded corners] (3.5,1.5) -- (3,1);
    \draw[->, rounded corners] (6.5,2) -- (5.25,2);
\draw[rounded corners] (1.25,1) -- (2,1) -- (3,0) -- (3.5,0)-- (4,0) -- (5,1) -- (5.57,1)-- (6.5,1);
    \draw[->] (0.5,1) -- (1.25,1);
    \draw[->] (3.25,0) -- (3.5,0);
    \draw[->] (5.5,1) -- (5.75,1);
\draw[rounded corners] (1.5,2) -- (3,2) -- (4,1)-- (5,0) -- (5.75,0) -- (6.5,0);
    \draw[->] (0.5,2) -- (1.5,2);
    \draw[->] (3.5,1.5) -- (4,1);
    \draw[->] (5.5,0) -- (5.75,0);

\end{tikzpicture}
\end{center}
\caption{$\pa(\w_0)$ for $\underline w_0=s_1s_2s_1\in S_3$ with orientation for $(l_1,l_2)$.}\label{fig:pa 121} 
\end{figure}

\begin{definition} Let $\underline{w}$ be a fixed reduced expression of $w \in S_{n}$. 

A \emph{Gleizer-Postnikov path} (or short \emph{GP-path}) is a rigorous path $\p$ in $\pa(\w)$ endowed with some orientation $(l_i,l_{i+1})$ for $i\in[n-1]$. 
It has source $L_p$ and sink $L_q$ for $p\le i$ and $ q\ge i+1$. 
Further, $w(i+1)\le w(p)\le w(i)$ and $w(i+1)\le w(q)\le w(i)$. %We then say $\p$ is of \emph{shape} $(l_i,l_{i+1})$. 
The set of all GP-paths for all orientations in the pseudoline arrangement associated to $\underline{w}$ is denoted by $\mathcal P_{\w}$.
\end{definition}

\begin{figure}[h]
\centering
\begin{center}
\begin{tikzpicture}

\draw[rounded corners] (3,0) -- (2,0) -- (1,1) -- (0,1);
        \draw[->] (3,0) -- (2.5,0);
        \draw[->] (0.8,1) -- (.5,1);
\draw[rounded corners] (3,1) -- (2,1) -- (1,0) -- (0,0);
    \draw[->] (3,1) -- (2.5,1);
    \draw[->] (0.8,0) -- (.5,0);
\draw[->, ultra thick, red] (1.9,0.9) -- (1.1,0.1);

\begin{scope}[xshift=5cm]
  \draw[rounded corners] (3,0) -- (2,0) -- (1,1) -- (0,1);
        \draw[->] (.5,1) -- (.8,1);
        \draw[->] (2.2,0) -- (2.5,0);
\draw[rounded corners] (3,1) -- (2,1) -- (1,0) -- (0,0);
    \draw[->] (.5,0) -- (.8,0);
    \draw[->] (2.2,1)-- (2.5,1);
\draw[<-, ultra thick, red] (1.9,0.1) -- (1.1,0.9);
\end{scope}
\end{tikzpicture}
\end{center}
\caption{The two red arrows are forbidden in rigorous paths.}\label{fig:rigorous}
\end{figure}
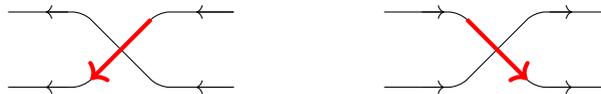

Note that if $w(i)<w(i+1)$ there are no GP-paths of shape $(l_i,l_{i+1})$ and in case $w(p)\le w(q)$ there are no GP-paths with source $L_p$ and sink $L_q$.

%\subsubsection{Properties of Gleizer-Postnikov paths}\label{sec:propGP}
\begin{proposition}\label{prop:in stream} 
Let $w\in S_n$ with reduced expression $\w$.
Consider $\mathbf{p}\in \mathcal P_{\w}$ of shape $(l_i,l_{i+1})$.
Then $\mathbf{p}$ is either the empty path or does not cross the lines $l_{i+1}$ and $l_i$.
In particular, $\p$ does not leave the area in $\pa(\w)$ bounded by $l_i$ and $l_{i+1}$ to the left. 
\end{proposition}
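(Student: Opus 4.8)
The statement concerns a GP-path $\mathbf p$ of shape $(l_i,l_{i+1})$, i.e.\ a rigorous path endowed with the orientation in which lines $l_1,\dots,l_i$ run right-to-left and lines $l_{i+1},\dots,l_n$ run left-to-right. Recall that by definition the source of $\mathbf p$ is some $L_p$ with $p\le i$ and the sink is some $L_q$ with $q\ge i+1$, and moreover $w(i+1)\le w(p)\le w(i)$ and $w(i+1)\le w(q)\le w(i)$. The plan is to argue by contradiction: assume $\mathbf p$ is nonempty and crosses $l_i$ (the argument for $l_{i+1}$ is symmetric, obtained by reflecting the orientation), and extract a violation of rigorousness at the first such crossing.

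First I would set up notation for where $\mathbf p$ can cross $l_i$. Since $l_i$ is oriented right-to-left and $\mathbf p$ starts at $L_p$ on the left boundary with $p\le i$, the path $\mathbf p$ lives, near its source, on a line $l_p$ with $p\le i$; it can only leave the strip between $l_i$ and $l_{i+1}$ upward by crossing $l_i$ (or downward by crossing $l_{i+1}$). Consider the \emph{first} vertex $v$ along $\mathbf p$ that lies on $l_i$. At $v$, the path $\mathbf p$ arrives along some line $l_k$ and, to cross out of the strip, must continue along $l_i$; so $v=v_{(i,k)}$ is the crossing point of $l_i$ and $l_k$. The key structural point is to determine the orientation of $l_k$ at $v$ together with the inequality relating $i$ and $k$, and to see that it matches exactly one of the two configurations forbidden in rigorous paths (Figure~\ref{fig:rigorous}): either $i>k$ and both $l_i,l_k$ are oriented right-to-left, or $i<k$ and both are oriented left-to-right. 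Because $l_i$ is oriented right-to-left, $\mathbf p$ must arrive at $v$ from the right on $l_k$ as well (a rigorous turn from a left-to-right line onto $l_i$ at $v$ is impossible in the configuration where $\mathbf p$ then continues leftward on $l_i$ past $v$, since that would be a non-rigorous vertex when $k<i$, and if $k>i$ one checks $l_k$ must be oriented right-to-left to be consistent with $\mathbf p$ having entered the strip from $l_p$ with $p\le i$); either way the local picture at $v$ is one of the two forbidden patterns, contradicting rigorousness.

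The cleanest way to organize the last step is: track the index of the line carrying $\mathbf p$ as one moves from the source $L_p$ toward $v$. Initially this index is $p\le i$; at $v$ the index becomes $i$ (the path switches onto $l_i$). Every intermediate vertex is a rigorous turn, and I would show by a short monotonicity argument that along a rigorous path staying inside the strip the carrying-line index can only move toward $i$ from below, never reaching $i$ from a configuration that would let it ``pop out'' rigorously — the only way to touch $l_i$ at all forces the forbidden local pattern. Symmetrically, if $\mathbf p$ crossed $l_{i+1}$ first, one gets the mirror-image forbidden pattern with $l_{i+1}$ oriented left-to-right. Hence a nonempty $\mathbf p$ crosses neither $l_i$ nor $l_{i+1}$, and in particular cannot exit the bounded region between them to the left.

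The main obstacle I anticipate is pinning down the orientation of the auxiliary line $l_k$ at the first exit vertex $v$ with full rigor: one has to rule out the case in which $\mathbf p$ enters $v$ on a line $l_k$ oriented the ``good'' way and still manages to turn onto $l_i$ while respecting rigorousness. This requires carefully invoking the source/sink constraints ($p\le i$, $q\ge i+1$, and the two sandwich inequalities on $w$) to exclude that $\mathbf p$ has already wrapped around in a way that makes the turn legal — essentially a case analysis on whether $k<i$ or $k>i$, using that between its source and $v$ the path never touched $l_i$ or $l_{i+1}$, so every line it rode or crossed has index strictly between (or equal to endpoints only at the start). Once that orientation is nailed down, the contradiction with Figure~\ref{fig:rigorous} is immediate, and the ``in particular'' clause is a triviality since the region bounded by $l_i$ and $l_{i+1}$ to the left is precisely the strip between those two lines.
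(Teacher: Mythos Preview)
Your approach diverges from the paper's, and the divergence is where the gap sits. The paper does \emph{not} use rigorousness at all for this proposition; it uses only the fact that a GP-path follows the orientation of the lines it travels on. The clean observation is: to the right of the crossing $v_{(i,i+1)}$, every line $l_k$ that meets $l_i$ has its orientation pointing from above $l_i$ to below (i.e.\ into the strip), and every line meeting $l_{i+1}$ is oriented from below to above (again into the strip). This is immediate from the left/right labeling: if $k<i$ then $l_k$ starts below $l_i$ on the left and is oriented right-to-left, so at a crossing with $l_i$ it moves downward; if $k>i$ then $l_k$ starts above $l_i$ on the left and is oriented left-to-right, so again it moves downward through $l_i$. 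Since $L_p$ and $L_q$ lie in the strip, an oriented path starting and ending there simply cannot exit across $l_i$ or $l_{i+1}$ without moving against some line's orientation; the residual case of riding $l_i$ itself through $v_{(i,i+1)}$ is dispatched by noting the path could then never return to $L_q$.

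Your argument instead tries to locate a forbidden rigorous pattern at the first exit vertex. This does not close: the two forbidden configurations in Figure~\ref{fig:rigorous} require both lines at the crossing to have the \emph{same} orientation direction. At a crossing $v_{(i,k)}$ with $k>i$, line $l_i$ points right-to-left while $l_k$ points left-to-right, so rigorousness forbids nothing there --- the path going straight through on $l_k$ is perfectly rigorous. What actually blocks it is that going from inside the strip to outside across $l_i$ on $l_k$ would be moving against $l_k$'s orientation, which is a violation of being an oriented path, not of rigorousness. You also conflate ``crossing $l_i$'' (passing from one side of the curve $l_i$ to the other, i.e.\ going straight through $v_{(i,k)}$ on $l_k$) with ``turning onto $l_i$'' (which keeps you on the boundary and is not a crossing). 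Once you replace the rigorousness appeal with the orientation observation above, the monotonicity/case analysis you were anticipating evaporates and the proof becomes two lines.
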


\begin{proof}
Without loss of generality we assume $w(i)<w(i+1)$, otherwise $\mathbf{p}$ is empty and we are done. 
Further, let $L_p$ be the source of $\mathbf p$ and $L_q$ the sink.
We assume $w(p)\le w(q)$, otherwise, again, $\mathbf p$ is empty. 
We focus on the part of $\pa(\w)$ to the \emph{right} of the crossing of $l_i$ and $l_{i+1}$ (which exists as $w(i)<w(i+1)$).
Observe the following:

$\bullet$ all lines crossing $l_i$ do so oriented from top to bottom.

$\bullet$ all lines crossing $l_{i+1}$ do so oriented from bottom to top.

\noindent
As $L_p$ and $L_q$ lie in between the lines $l_i$ and $l_{i+1}$ this observation implies that $\mathbf p$ can not cross $l_i$ and if it was to cross $l_{i+1}$ it could not return to $L_q$, a contradiction. 
The only possibility that is left, is if $\mathbf p$ was to follow $l_i$ through the crossing with $l_{i+1}$, but then again, it could not return to $L_q$.
\end{proof}

\subsubsection*{Cones and polytopes arising from pseudoline arrangements}
We define two weighted cones, two cones, and two families of polytopes that arise from $\mathcal P_{\w}$ for $\w$ reduced expression of $w\in S_n$. 
We relate the two cones in the forthcoming sections, one to the weighted string cone (introduced by Littelmann \cite{Lit98} and Berenstein-Zelevinsky \cite{BZ01}), the other to the tropicalization of the (restriction of the) superpotential for a double Bruhat cell (see Magee \cite{Mag15}).

\paragraph{The (weighted) GP-cone} For $\w=s_{i_1}\dots s_{i_{\ell(w)}}$ we label the standard basis of $\mathbb R^{\ell(w)}$ by crossing points in $\pa(\w)$, i.e. $\{c_{(k,m)}\mid w(\alpha_{k,m-1})<0\}$. 
Sometimes it is also convenient to use the notation $c_{i_j}:=c_{(k,m)}$, when $s_{i_j}$ induces the crossing of $l_k$ and $l_m$ in $\pa(\w)$.
Consider $\mathbf{p}\in\mathcal P_{\underline{w}}$. It is uniquely determined by those vertices in $\pa(\w)_0$ where $\p$ changes from one line to another. 
For some $1\le  p \le i < q \le n$ we can therefore write $\p$ as
\[ 
\mathbf{p}=L_{p}\to v_{(p,j_1)}\to v_{(j_1,j_2)}\to\dots\to v_{(j_{k},q)}\to L_{q}.
\]
Set $j_0:=p$ and $j_{k+1}:=q$, then we associate to $\p$ the vector
\begin{eqnarray}\label{eq:def c_p}
c_{\p} := \sum_{s=0}^{k} c_{(j_s, j_{s+1})} \in \mathbb R^{\ell(w)},
\end{eqnarray}
where we set $c_{(i,j)} := - c_{(j,i)}$ if $i > j$ and $c_{(i,i)}:=0$.

\begin{definition}\label{def:gp-cone}
The following polyhedral cone is called \textit{GP-cone} (due to Gleizer-Postnikov \cite{GP00} who call it \emph{principal cone}):
\begin{eqnarray}\label{eq:def GP cone}
C_{\w} = \{ {\mathbf x} \in \mathbb{R}^{\ell(w)} \mid (c_{\p})^t({\mathbf x}) \geq 0, \forall \p \in \mathcal P_{\w} \}.
\end{eqnarray}
\end{definition}

\begin{example}
Consider the reduced expression $\w_0=s_1s_2s_1\in S_3$. 
We endow $\pa(s_1s_2s_1)$ with the orientation for $(l_1,l_2)$, i.e. $l_1$ is oriented to the left and $l_2,l_3$ are oriented to the right (see Figure~\ref{fig:pa 121}). There are two paths in $\mathcal P_{s_1s_2s_1}$ from $L_1$ to $L_2$,
\[
\mathbf p_1=L_1\to v_{(1,3)}\to v_{(1,2)}\to v_{(2,3)}\to L_2 \text{ and } \mathbf p_2=L_1\to v_{(1,3)}\to v_{(2,3)} \to L_2.
\]
They yield $c_{\mathbf p_1}= c_{(1,2)}$ and $c_{\mathbf p_2}=c_{(1,3)}-c_{(2,3)}$. Similarly for the orientation $(l_2,l_3)$ we find a path $\mathbf p_3=L_2\to v_{(2,3)}\to L_3$ with $c_{\mathbf p_3}=c_{(2,3)}$. 
Then
\[
C_{s_1s_2s_1}=\{(x_{(1,2)},x_{(1,3)},x_{(2,3)})\in \mathbb R^3\mid x_{(1,2)}\ge 0, x_{(1,3)}\ge x_{(2,3)}\ge 0\}.
\]
\end{example}

We are interested in a weighted version of this cone to relate it to string polytopes in the next section.
The weighted cone lives in $\mathbb{R}^{\ell(w)+n-1}$, where the additional basis elements are indexed $c_1, \ldots, c_{n-1}$. 
By some abuse of notation we denote by $c_{\p}$ also the vector $(c_{\p},0\dots,0)\in \mathbb R^{\ell(w)}\times \{0\}^{n-1}\subset \mathbb R^{\ell(w)+n-1}$.

For every $i\in[n-1]$ we define the following subset of $[\ell(w)]$
\begin{eqnarray}\label{eq:def J(i) and n_i}
J(i):=\{k\in[\ell(w)]\mid s_{i_k}=s_i\} \text{ with } n_i:=\#J(i).
\end{eqnarray}
Let $J(i)=\{j_1,\dots,j_{n_i}\}$, then we set $c_{[i:0]}:=c_i$ and for $1\le k\le n_i$ we define 
\begin{eqnarray}\label{eq: def wt ineq GP}
c_{[i:k]} := c_i - c_{{i_{j_k}}} - 2 \sum_{j\in J(i),j>j_k} c_{{i_j}} + \sum_{l\in J(i-1)\cup J(i+1), l>j_k} c_{{i_l}}.
\end{eqnarray}
These vectors are normal vectors to the faces of the following weighted cone.

\begin{definition}\label{def:wgp-cone}
The \emph{weighted Gleizer-Postnikov cone} $\mathcal{C}_{\w} \subset \mathbb{R}^{\ell(w)+n-1}$ is defined as
\begin{eqnarray}\label{eq:def weighted GP cone}
\mathcal{C}_{\w}  := \left\{ 
{\mathbf x} \in \mathbb{R}^{\ell(w)+n-1} \left|
\begin{matrix}
(c_\mathbf{p})^t({\mathbf x}) \ge 0 \; , \; &\forall \; \p \in \mathcal P_{\w},&\\
(c_{[i:k]})^t ({\mathbf x}) \ge 0, \; & \forall  \; i\in[n-1], 0\le k \le n_i&
\end{matrix}
\right.\right\}.
\end{eqnarray} 
\end{definition}

\begin{example}\label{exp:pathGT}
Consider $w_0\in S_n$ and consider the reduced expression $\hat\w_0$ defined above Example~\ref{exp:initial seed S_5}. 
For $ i\in [n-1]$ all GP-paths in $\pa(\w_0)$ with orientation $(l_i,l_{i+1})$ are of form
\[
\p_{i,j}:=L_i\to v_{(i,n)} \to v_{(i,n-1)}\to \dots \to v_{(i,j)}\to v_{(i+1,j)}\to \dots \to v_{(i+1,n)}\to L_{i+1}.
\]
In particular, the GP-cone $C_{\w_0}$ is described by inequalities defined by the normal vectors $c_{(i,j+1)}- c_{(i+1,j+1)}$ and $c_{(i,i+1)}$ for $i\in[n-1]$ and $j \in[i+1,n-1]$.
The vectors defining weight inequalities are (for all $i < j$):
\[
c_{j-i} - c_{(i,j)} - 2 \sum_{k = 1}^{n-j} c_{(i+k, j+k)} + \sum_{k = 0}^{n-j-1} c_{(i+k, j+1+k)} + \sum_{k = 0}^{n-j} c_{(i+1+k, j+k)}.
\]
\end{example}

\paragraph{The (weighted) area cone}
We associate to the set of all GP-paths $\mathcal P_{\w}$ a second cone. In this setup, the standard basis of $\mathbb{R}^{\ell(w)+n-1}$ is indexed by the faces of the pseudoline arrangement $\{e_F\mid F \text{ face of }\pa(\w)\}$. 
Namely, there are basis vectors associated to faces $F_{(i,j)}$ bounded to the left by a crossing $(i,j)$, and to faces $F_l$ unbounded to the left for every $l\in[n-1]$. 
Let $\p \in \mathcal P_{\w}$.
We denote by $\area_{\p}$ the area to the left of $\p$ (with respect to the orientation), i.e. the area enclosed by $\p$. 
Note that for non-trivial $\p$, $\area_{\p}$ is a non-empty union of faces $F$ in the pseudoline arrangement. We associate to $\p$ the vector
\begin{eqnarray}\label{eq:def area ineq}
e_{\p} := - \sum_{F \subset \area_{\p}} e_F \in \mathbb{R}^{\ell(w)+n-1}.
\end{eqnarray}
With a little abuse of notation we denote by $e_{\p}$ also the vector in $\mathbb R^{\ell(w)}$ obtained by projecting onto the first $\ell(w)$ coordinates (forgetting the coordinates belonging to the faces that are unbounded to the left, which equal $0$ in $e_{\p}$).

\begin{definition}\label{def:s-cone} For a reduced expression $\underline{w} \in S_{n}$, we define the \emph{area cone}
\begin{eqnarray}\label{eq:def area cone}
S_{\w} := \{ {\mathbf x} \in \mathbb{R}^{\ell(w)} \mid (e_{\p})^t({\mathbf x}) \geq 0, \forall\; \p \in \mathcal P_{\w} \}.
\end{eqnarray}
\end{definition}

Again, we are interested in a weighted extension of this cone.
For this, we associate to every level $ i\in[n-1]$ a union of faces. 
Consider $F_{i}$, the face of $\pa(\w)$ that is unbounded to the left at level $i$.
As before for crossings we set $F_{i_j}:=F_{(k,m)}$ if $s_{i_j}$ in $\w$ induces the crossing of $l_k$ and $l_m$ in $\pa(\w)$.
We define $\area_{i}:=F_{i}\cup \bigcup_{k=1}^{n_i}F_{i_k}$, then $\area_i \cap \area_{i'} = \varnothing$ if $i \neq i'$. 
It is called the \emph{weight area} associated to the level $i$. 
For each $k$ with $0\le k\le n_i$, we define a vector
\begin{eqnarray}\label{eq:def area wt ineq}
e_{[i:k]} := -e_{F_i} - \sum_{j\in J(i),j\le j_k}  e_{F_{i_j}} \in \mathbb{R}^{\ell(w)+n}.
\end{eqnarray}
Note that $e_{[i:0]}=-e_{F_i}$ and $e_{[i:n_i]}=-\sum_{F\subset \area_i} e_F$.

\begin{definition}\label{def:ws-cone}
The \emph{weighted area cone} $\mathcal S_{\w}\subset \mathbb R^{\ell(w)+n-1}$ associated to the reduced expression $\w$ of $w\in S_n$ is defined as 
\begin{eqnarray}\label{eq: def wt area cone}
\mathcal S_{\w} := 
\left\{  {\mathbf x} \in \mathbb{R}^{\ell(w)+n-1} \left|
\begin{matrix} (e_{\p})^t({\mathbf x}) \ge 0 \; , \; &\forall \;  \p \in \mathcal P_{\w},& \\ 
(e_{[i:k]})^t({\mathbf x}) \ge 0 \; , \; &\forall \; i\in[n-1], 0\le k \le n_i&   
\end{matrix} 
\right. \right\}. 
\end{eqnarray}
The additional inequalities induced by the $e_{[i:k]}$ are called \textit{weight inequalities}.
\end{definition}

\begin{remark}
In all four cases, $C_{\w}, \mathcal{C}_{\w}, S_{\w}$ and $\mathcal{S}_{\w}$, some of the inequalities might be redundant and these cones are far from being simplical in general. The vectors $e_{\mathbf p},c_{\mathbf p},e_{[i:k]}$ and $c_{[i:k]}$ are normal vectors to the defining hyperplanes of the cones $S_{\w},C_{\w},\mathcal S_{\w}$ and $\mathcal C_{\w}$ respectively. Not all of them are normal vectors to facets of these cones in general.
\end{remark}

\begin{example}\label{exp:area}
Consider the reduced expression $\hat\w_0\in S_5$. We have seen all GP-paths in $\pa(\w)$ in Example~\ref{exp:pathGT}. 
Take the path ${\p}= L_1\to v_{(1,5)}\to v_{(1,4)}\to v_{(1,3)} \to v_{(1,2)}\to v_{(2,3)}\to v_{(2,4)}\to v_{(2,5)}\to L_2$. 
The area $\area_{\p}$ associated to this path is shaded blue in Figure~\ref{fig:GTstreams}. 
The weight area $\area_2$ corresponding to level $2$ is also shown in Figure~\ref{fig:GTstreams} dotted in red.
\begin{center}
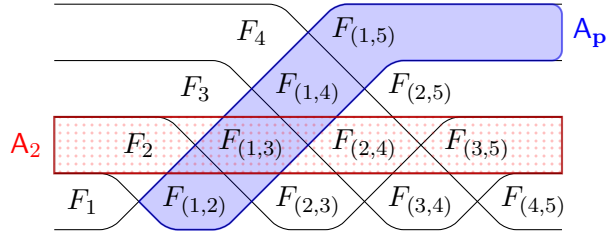
\begin{figure}[ht]
\centering
\begin{tikzpicture}[scale=.75]
%fill=blue, opacity=.4, blue

%\draw [fill=red, opacity=.4, red] (0,1) -- (9,1) -- (9,2) -- (0,2); %L: changed area according to change in defintion of A_i
%%% FOR PAPER CHANGE TO GRAY
%frozen

%\draw[red] (0,0.07) -- (9,0.07);
\draw[red] (0,1) -- (9,1);
\draw[red] (0,2) -- (9,2);
%\draw[red] (0,3.07) -- (9,3.07);
%\draw[red] (0,4.07) -- (9,4.07);

\draw[rounded corners] (0,0) --(1,0) -- (5,4) -- (9,4);
\draw[rounded corners] (0,1) -- (1,1) -- (2,0) -- (3,0) -- (6,3) -- (9,3);
\draw[rounded corners] (0,2) -- (2,2) -- (4,0) -- (5,0) -- (7,2) -- (9,2);
\draw[rounded corners] (0,3) -- (3,3) -- (6,0) -- (7,0) -- (8,1) -- (9,1);
\draw[rounded corners] (0,4) -- (4,4) -- (8,0) -- (9,0);

\draw [fill=blue, rounded corners, opacity=.2] (1.5,0.5) --  (2,0) -- (3,0)-- (6,3) -- (9,3) -- (9,4)-- (5,4) -- (1.5,0.5);
\draw [blue, opacity=.5, thick, rounded corners] (1.5,0.5) --  (2,0) -- (3,0)-- (6,3) -- (9,3) -- (9,4)-- (5,4) -- (1.5,0.5);
\draw[pattern=dots, pattern color=red, opacity=.5, thick] (0,1) rectangle (9,2);
\draw[red, opacity=.5, thick] (0,1) rectangle (9,2);

\node[blue] at (9.5,3.5) {$\area_{\p}$};
\node at (.5,.5) {$F_{1}$};
\node at (1.5,1.5) {$F_{2}$};
\node at (2.5,2.5) {$F_{3}$};
\node at (3.5,3.5) {$F_{4}$};
\node at (5.5,3.5) {$F_{(1,5)}$};
\node at (6.5,2.5) {$F_{(2,5)}$};
\node at (7.5,1.5) {$F_{(3,5)}$};
\node at (8.5,.5) {$F_{(4,5)}$};
%mutable
\node at (4.5,2.5) {$F_{(1,4)}$};
\node at (3.5,1.5) {$F_{(1,3)}$};
\node at (5.5,1.5) {$F_{(2,4)}$};
\node at (2.5,.5) {$F_{(1,2)}$};
\node at (4.5,.5) {$F_{(2,3)}$};
\node at (6.5,.5) {$F_{(3,4)}$};
\node[red] at (-.5,1.5) {$\area_2$};

\end{tikzpicture}
\caption{The area $\area_{\p}$ for ${\p}$ as in Example~\protect{\ref{exp:area}} shaded in blue and the weight area $\area_2$ dotted in red.}
\label{fig:GTstreams}
\end{figure}
\end{center}
\end{example}

\begin{example}\label{exp:areaGT}
Consider as in Example~\ref{exp:pathGT} $\hat\w_0\in S_n$ and recall $\p_{i,j}\in\mathcal P_{\w_0}$ with $i\in[n-1]$ and $j\in[i+1,n-1]$.
The assigned area is $\area_{\p_{i,j}}=F_{(i,j)}\cup F_{(i,j+1)}\cup \dots\cup F_{(i,n)}$ for $F_{(i,k)}$ the area bounded by $v_{(i,k)}$ to the left. Hence, the cone $S_{\w_0}$ is given by inequalities defined by
\begin{eqnarray}\label{eq: e_p in GT w_0}
e_{\p_{i,j}}=-e_{F_{(i,j)}}-e_{F_{(i,j+1)}}-\dots -e_{F_{(i,n)}}-e_{F_{(i,n+1)}}.
\end{eqnarray}
The additional weight inequalities defining the cone $\mathcal S_{\w_0}$ are given by the normal vectors
\begin{eqnarray}\label{eq: f_i,k in GT w_0}
e_{[i:k]}=-e_{F_i}-e_{F_{(1,i+1)}}-e_{F_{(2,i+2)}}-\dots -e_{F_{(k,i+k)}},
\end{eqnarray}
for $i\in[n-1]$ and $0\le k\le n-i$.
\end{example}

\paragraph{The polytopes} Let $\pi: \mathbb{R}^{\ell(w) + n-1} \to \mathbb{R}^{n-1}$ be the projection onto the last $n-1$ coordinates, also called \emph{weight coordinates}. 
We are interested in the preimage $\pi^{-1}(\lambda)$ for $\lambda\in\mathbb R^{n-1}$.
It is the intersection of the following hyperplanes for each $i\in[n-1]$ defined by
\begin{eqnarray}\label{eq: wt hyperplanes gp}
(c_{[i:0]})^t(\mathbf x) =\lambda_i,  \ \forall \ \mathbf x\in\mathbb R^{\ell(w)+n-1}.
\end{eqnarray}
Fix $w\in S_n$ with reduced expression $\w$. We define a second map $\tau_{\w}:\mathbb R^{\ell(w)+n-1}\to \mathbb R^{n-1}$ by
$\tau_{\w}(\mathbf x)=((e_{[i:n_i]})^t(\mathbf x))_{i=1,\dots,n-1}$.
The preimage of $\lambda\in \mathbb R^{n-1}$ with respect to $\tau_{\w}$ is also an intersection of hyperplanes in $\mathbb R^{\ell(w)+n-1}$. For each $i\in[n-1]$ they are defined by
\begin{eqnarray}\label{eq:wt hyperplanes area}
(e_{[i:n_i]})^t(\mathbf x)=\lambda_i, \  \forall \  \mathbf x\in\mathbb R^{\ell(w)+n-1}.
\end{eqnarray}

\begin{definition}\label{def: polytopes from cones}
For $w\in S_n$ with reduced expression $\w$ and for $\lambda\in\mathbb R^{n-1}$ we define the following polytopes in $\mathbb R^{\ell(w)+n-1}$
\begin{eqnarray}\label{eq:def polytopes}
\mathcal S_{\w}(\lambda):=\mathcal{S}_{\w}\cap \tau_{\w}^{-1}(\lambda)
\text{   and   } \mathcal C_{\w}(\lambda):=\mathcal C_{\w}\cap \pi^{-1}(\lambda).
\end{eqnarray}
\end{definition}

Note that by \eqref{eq:wt hyperplanes area} (resp. \eqref{eq: wt hyperplanes gp}) we obtain a description of $\mathcal S_{\w}(\lambda)$ (resp. $\mathcal C_{\w}(\lambda)$) in terms of defining equalities and inequalities
by replacing the weight inequalities $e_{[i:n_i]}^t({\mathbf x})\ge 0$ in \eqref{eq: def wt area cone} (resp. $(c_i)^t(\mathbf x)\ge 0$ in \eqref{eq:def weighted GP cone}) by $(e_{[i:n_i]})^t(\mathbf x)=\lambda_i$ (resp. $(c_i)^t(\mathbf x) =\lambda_i$).
In particular, the defining normal vectors for $\mathcal S_{\w}$ (resp. $\mathcal C_{\w}$) coincide with those for $\mathcal S_{\w}(\lambda)$ (resp. $\mathcal C_{\w}(\lambda)$).
This observation is important in the proof of Theorem~\ref{thm:unimod}.

%\subsubsection{A unimodular equivalence}\label{unimod}
\subsubsection*{A unimodular equivalence} The above pairs of cones (resp. polytopes) $(S_{\w},C_{\w})$ and $(\mathcal S_{\w},\mathcal C_{\w})$ ( resp. $(\mathcal S_{\w}(\lambda),\mathcal C_{\w}(\lambda))$) have in fact more in common than the combinatorics defining them. 
To make this statement precise we need to introduce the notion of unimodular equivalence (see e.g. \cite[\S2]{HL16}).

\begin{definition}\label{def:unimod equiv}
Two polytopes $P,Q\subset \mathbb R^d$ (resp. polyhedral cones $C,D\subset \mathbb R^{d}$) are called \emph{unimodularly equivalent} if there exists matrix $M\in GL_d(\mathbb Z)$ and $w\in \mathbb Z^d$
\[
Q=f_M(P)+w \ (\text{resp. } D=f_M(C)+w),
\]
where $f_M(x)=xM$ for $x\in \mathbb R^d$. We denote this by $Q\cong P$ (resp. $C\cong D$).
\end{definition}

This notion of equivalence is of particular interest to us because of its implication on the associated toric varieties. 
Recall the construction of a projective toric variety $X_P\subset \mathbb P^{d-1}$ associated with a polytope $P\subset \mathbb R^d$ in \cite[\S2.1 and \S2.3]{CLS11}. Then
\begin{eqnarray}\label{eq: unimod for toric}
Q\cong P \text{ implies } X_Q\cong X_P.
\end{eqnarray}

We want to construct a unimodular equivalence between $\mathcal C_{\w}$ and $\mathcal S_{\w}$ for all reduced expression $\w$ of $w\in S_n$.
The following definition is the affine lattice transformation ($f_M$ in Definition~\ref{def:unimod equiv}) that defines the unimodular equivalence.
We give it in terms of the bases $\{e_F\mid F \text{ face of }\pa(\w)\}$ and $\{c_{(k,m)},c_i\mid v_{(k,m)}\in\pa(\w)_0 ,i\in[n-1]\}$.
Morally, we send a face $F$ bounded to the left by a crossing to a linear combination of its adjacent crossings (see \eqref{eq: def psi wt}).
A face unbounded to the left is sent to the sum of all crossings at its level.
%Consider $F_{(i,j)}$, the face bounded to the left by the crossing $(i,j)$. If bounded, it is bounded to the right by another crossing of the same level, which we denote $(i',j')$. If $(i',j')$ does not exist, set $c_{(i',j')}=0$. Let $(i_k,j_k)$ denote all crossings bounding $F_{(i,j)}$ from above or below for $1\le k\le n_{F_{(i,j)}}$. Here $n_{F_{(i,j)}}$ is the number of such $(i_k,j_k)$, which might in fact be zero.For the faces $F_i$ unbounded to the left, let $k_i$ be left most index with $s_{i_{k_i}}=s_i$ in $\w$.

\begin{definition}\label{def: psi_w}
For $w\in S_n$ and $\w$ a reduced expression we define the linear map $\Psi_{\w}:\mathbb R^{\ell(w)+n-1}\to \mathbb R^{\ell(w)+n-1}$ on the basis $\{-e_{F}\}$ associated to faces $F$ of $\pa(\w)$.
Let $F=F_{i_{j_k}}$ be the face bounded to the left by the crossing induced from $s_{i_{j_k}}=s_i$ and $J(i)=\{j_1,\dots,j_{n_i}\}$ (see \eqref{eq:def J(i) and n_i}). Then \begin{eqnarray}\label{eq: def psi}
\Psi_{\w}(-e_{F_{i_{j_k}}}):=  c_{i_{j_k}}+c_{i_{j_{k+1}}} - \sum_{\begin{smallmatrix}j\in J(i-1)\cup J(i+1),\\ j_k<j<j_{k+1}\end{smallmatrix}} c_{i_j}.
\end{eqnarray}
For every level $i\in[n-1]$, we define
\begin{eqnarray}\label{eq: def psi wt}
\Psi_{\w}(-e_{F_i}) := c_{[i:1]}.
\end{eqnarray}
\end{definition}

\begin{example}\label{exp: psi lattice}
Consider $\pa(\w)$ for $\w=s_1s_2s_1\in S_3$ as in Figure~\ref{fig:pa 121}. The two bases for $\mathbb R^{5}$ are
\[
\mathcal B_{e}=\{-e_{F_1},-e_{F_2},-e_{F_{(1,2)}},-e_{F_{(1,3)}},-e_{F_{(2,3)}}\} \text{ and } \mathcal B_c=\{c_1,c_2,c_{(1,2)},c_{(1,3)},c_{(2,3)}\}.
\]
We compute the images of elements in $\mathcal B_e$ and express them in $\mathcal B_c$. The coefficients form the columns of the following matrix with the order of the bases as given above.
\begin{eqnarray*}
\Bigg(\begin{smallmatrix}
1 & 0 & 0 & 0 & 0 \\
0 & 1 & 0 & 0 & 0 \\
-1& 0 & 1 & 0 & 0 \\
1 & -1& -1& 1 & 0 \\
-2& 1 & 1 & -1 & 1
\end{smallmatrix}\Bigg) \in GL_5(\mathbb Z).
\end{eqnarray*}
\end{example}

The observation in the example above is true in general. We obtain the following Lemma as a straightforward consequence of the definition of $\Psi_{\w}$.

\begin{lemma}\label{lem: psi lattice}
Let $w\in S_n$ with reduced expression $\w$. Order the bases induced by the faces of $\pa(\w)$ resp. by the crossing points in $\pa(\w)$ as
\[
\mathcal B_e=\{-e_{F_1},\dots,-e_{F_{n-1}},-e_{F_{i_1}},\dots-e_{F_{i_{\ell(w)}}}\},
\text{ resp. } \mathcal B_c=\{c_1,\dots,c_{n-1},c_{i_1},\dots,c_{i_{\ell(w)}}\}.
\]
Then $\Psi_{\w}$ can be represented by a lower triangular matrix ${M}_{\w}^{e,c}$ with all diagonal entries being 1. In particular, ${M}_{\w}^{e,c}\in GL_{\ell(w)+n-1}(\mathbb Z)$.
\end{lemma}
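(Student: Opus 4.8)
The plan is to read off the matrix of $\Psi_{\w}$ entry by entry from Definition~\ref{def: psi_w}, using the chosen orderings of the two bases, and to verify that it is lower triangular with unit diagonal. First I would fix the orderings exactly as stated: $\mathcal B_e$ lists the $n-1$ unbounded-to-the-left faces $F_1,\dots,F_{n-1}$ first, followed by the bounded faces $F_{i_1},\dots,F_{i_{\ell(w)}}$ in the order the corresponding crossings appear in the reduced word $\w = s_{i_1}\cdots s_{i_{\ell(w)}}$; similarly $\mathcal B_c$ lists $c_1,\dots,c_{n-1}$ first, then $c_{i_1},\dots,c_{i_{\ell(w)}}$. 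The key point is that the index $j_k$ of a bounded face $F_{i_{j_k}}$ in the $e$-basis ordering is literally its position $j_k$ in the word (shifted by $n-1$).

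The main computation is then to expand each basis image in $\mathcal B_c$ and check where the nonzero coefficients land. For a bounded face $F_{i_{j_k}}$ with $s_{i_{j_k}}=s_i$ and $J(i)=\{j_1,\dots,j_{n_i}\}$, formula~\eqref{eq: def psi} gives
\[
\Psi_{\w}(-e_{F_{i_{j_k}}}) = c_{i_{j_k}} + c_{i_{j_{k+1}}} - \sum_{\substack{j\in J(i-1)\cup J(i+1)\\ j_k<j<j_{k+1}}} c_{i_j},
\]
where $c_{i_{j_{n_i+1}}}$ is understood to be absorbed into the frozen part (I would spell out the boundary case $k=n_i$: the "next crossing at level $i$" does not exist, and the term $c_{i_{j_{n_i+1}}}$ is replaced by a combination of frozen generators $c_i$ and $c_{i\pm1}$, matching the definition of $c_{[i:n_i]}$-type vectors). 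The crucial observation is that every crossing index appearing on the right-hand side — namely $j_k$, $j_{k+1}$, and the intermediate indices $j\in J(i-1)\cup J(i+1)$ with $j_k<j<j_{k+1}$ — is $\ge j_k$. Hence in the $c$-basis ordering the column of $-e_{F_{i_{j_k}}}$ has nonzero entries only in rows indexed by crossings occurring at position $\ge j_k$ in $\w$, i.e. only on or below the diagonal; and the diagonal entry (the coefficient of $c_{i_{j_k}}$) is exactly $1$. For the unbounded face $-e_{F_i}$, formula~\eqref{eq: def psi wt} gives $\Psi_{\w}(-e_{F_i}) = c_{[i:1]}$, and unwinding~\eqref{eq: def wt ineq GP} with $k=1$ shows this equals $c_i$ plus a combination of $c_{i_j}$'s with $j$ ranging over $J(i)\cup J(i-1)\cup J(i+1)$ with $j>j_1$; so again the only entry in the "frozen block" is the coefficient $1$ of $c_i$ on the diagonal, and all other nonzero entries sit in the lower (bounded) block, hence strictly below the diagonal in the global ordering.

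Putting these together, the matrix ${M}_{\w}^{e,c}$ of $\Psi_{\w}$ in the ordered bases $(\mathcal B_e,\mathcal B_c)$ is block lower triangular: the top-left $(n-1)\times(n-1)$ block (unbounded faces $\to$ frozen $c_i$) is the identity, the bottom-left block is arbitrary, and the bottom-right block (bounded faces $\to$ crossing generators, ordered by word position) is itself lower triangular with $1$'s on the diagonal by the index inequality above. Therefore $\det {M}_{\w}^{e,c} = 1$ and ${M}_{\w}^{e,c}\in GL_{\ell(w)+n-1}(\mathbb Z)$, which is the claim. I expect the only mildly delicate point to be the bookkeeping at the "last crossing of a given level" (the boundary case $k=n_i$ and the interaction with the frozen generators $c_i, c_{i\pm1}$); once one checks that the replacement prescribed in Definition~\ref{def: psi_w} only introduces frozen generators and crossings at positions $\ge j_{n_i}$, the triangularity is immediate and the rest is routine.
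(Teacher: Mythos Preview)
Your overall plan is correct and is precisely the computation the paper leaves implicit when it says the lemma is ``a straightforward consequence of the definition of $\Psi_{\w}$.'' However, your treatment of the boundary case $k=n_i$ contains a genuine error, and if your stated expectation were right the proof would actually fail.

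You claim that when $k=n_i$ the missing term $c_{i_{j_{n_i+1}}}$ is ``replaced by a combination of frozen generators $c_i$ and $c_{i\pm1}$,'' and that triangularity still holds. But the frozen generators $c_1,\dots,c_{n-1}$ occupy rows $1,\dots,n-1$ in your ordering, while the column of $-e_{F_{i_{j_{n_i}}}}$ sits at position $n-1+j_{n_i}>n-1$. Any nonzero coefficient on a frozen $c_i$ in that column would therefore lie strictly \emph{above} the diagonal, destroying lower triangularity. So ``only introduces frozen generators and crossings at positions $\ge j_{n_i}$'' is not a criterion that yields the conclusion.

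The correct reading of Definition~\ref{def: psi_w} for $k=n_i$ is simpler than you anticipated: one interprets $j_{n_i+1}$ as ``past the end of the word,'' so the term $c_{i_{j_{k+1}}}$ simply vanishes and the sum ranges over $j\in J(i-1)\cup J(i+1)$ with $j>j_{n_i}$. Geometrically this reflects that the rightmost level-$i$ face has no crossing on its right boundary. Example~\ref{exp: psi lattice} confirms this: for $\w=s_1s_2s_1$ one gets $\Psi_{\w}(-e_{F_{(1,3)}})=c_{(1,3)}-c_{(2,3)}$ and $\Psi_{\w}(-e_{F_{(2,3)}})=c_{(2,3)}$, with no frozen contribution in either. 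With this convention only crossing generators at positions $\ge j_{n_i}$ appear, and your index-inequality argument goes through. (A minor correction in the unbounded case: $c_{[i:1]}$ also contains $-c_{i_{j_1}}$, so the range should be $j\ge j_1$ rather than $j>j_1$; this does not affect triangularity since all such entries still lie in the lower block.)
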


\begin{corollary}\label{cor: res psi lattice}
With assumptions as in Lemma~\ref{lem: psi lattice} consider $\Psi_{\w}\vert_{\mathbb R^{\ell(w)}}:\mathbb R^{\ell(w)}\to\mathbb R^{\ell(w)}$. We order as before the bases for $\mathbb R^{\ell(w)}$ induced by the faces resp. crossing points in $\pa(\w)$ by
\[
\overline{\mathcal B}_e=\{-e_{F_{i_1}},\dots-e_{F_{i_{\ell(w)}}}\},
\text{ resp. }  \overline{\mathcal B}_c=\{c_{i_1},\dots,c_{i_{\ell(w)}}\}.
\]
Then $\Psi_{\w}\vert_{\mathbb R^{\ell(w)}}$ can be represented by a lower triangular matrix $\overline{M}_{\w}^{e,c}$ with all diagonal entries 1. In particular, $\overline{M}_{\w}^{e,c}\in GL_{\ell(w)}(\mathbb Z)$.
\end{corollary}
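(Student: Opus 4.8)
The plan is to deduce the corollary directly from Lemma~\ref{lem: psi lattice} by observing that the matrix $M_{\w}^{e,c}$ is block lower triangular with respect to the splitting of the bases into ``weight'' vectors and ``crossing'' vectors, and that $\overline{M}_{\w}^{e,c}$ is exactly the trailing diagonal block. First I would record the one fact that makes the restriction $\Psi_{\w}\vert_{\mathbb R^{\ell(w)}}\colon\mathbb R^{\ell(w)}\to\mathbb R^{\ell(w)}$ well defined with the stated bases: by formula \eqref{eq: def psi}, for a face $F_{i_{j_k}}$ of $\pa(\w)$ that is bounded to the left by a crossing, the image $\Psi_{\w}(-e_{F_{i_{j_k}}})$ is a $\mathbb Z$-linear combination of the crossing vectors $c_{i_j}$ only, with no contribution from any of the weight vectors $c_1,\dots,c_{n-1}$. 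Hence $\Psi_{\w}$ sends $\sspan_{\mathbb R}\{-e_{F_{i_1}},\dots,-e_{F_{i_{\ell(w)}}}\}$ into $\sspan_{\mathbb R}\{c_{i_1},\dots,c_{i_{\ell(w)}}\}$, which is precisely the assertion that $\overline{\mathcal B}_e\mapsto\overline{\mathcal B}_c$ defines an endomorphism of $\mathbb R^{\ell(w)}$.

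Next I would analyse $M_{\w}^{e,c}$ in block form, using the orderings of $\mathcal B_e$ and $\mathcal B_c$ from Lemma~\ref{lem: psi lattice}, in which the $n-1$ weight basis vectors come first and the $\ell(w)$ crossing basis vectors last. By \eqref{eq: def psi wt} we have $\Psi_{\w}(-e_{F_i})=c_{[i:1]}$, and by \eqref{eq: def wt ineq GP} the coefficient of $c_{i'}$ in $c_{[i:1]}$, for $i'\in[n-1]$, equals $\delta_{i,i'}$; combining this with the observation of the previous paragraph gives
\[
M_{\w}^{e,c}\;=\;\begin{pmatrix} I_{n-1} & 0 \\[-1ex] * & \overline{M}_{\w}^{e,c}\end{pmatrix},
\]
where the lower right $\ell(w)\times\ell(w)$ block is by construction the matrix of $\Psi_{\w}\vert_{\mathbb R^{\ell(w)}}$ in the bases $\overline{\mathcal B}_e,\overline{\mathcal B}_c$. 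Since the submatrix of a lower triangular matrix formed by its last $\ell(w)$ rows and last $\ell(w)$ columns is again lower triangular, with the same diagonal entries, Lemma~\ref{lem: psi lattice} forces $\overline{M}_{\w}^{e,c}$ to be lower triangular with all diagonal entries equal to $1$. In particular $\det\overline{M}_{\w}^{e,c}=1$, so $\overline{M}_{\w}^{e,c}\in GL_{\ell(w)}(\mathbb Z)$.

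I do not expect a genuine obstacle here: the whole content is the block-triangular shape of $M_{\w}^{e,c}$, which rests on the single bookkeeping fact that the images under $\Psi_{\w}$ of the bounded-face basis vectors involve no weight vectors $c_i$. The only point needing (minor) care is to make sure the relative order of the crossing basis vectors in $\overline{\mathcal B}_e$ and $\overline{\mathcal B}_c$ is the one inherited from $\mathcal B_e$ and $\mathcal B_c$ — this is exactly the convention fixed in the statement — so that passing to the trailing $\ell(w)\times\ell(w)$ block really does produce a lower triangular matrix rather than merely a triangular matrix up to a permutation.
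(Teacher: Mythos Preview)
Your proposal is correct and is exactly the argument the paper has in mind; the corollary is stated without proof because it follows immediately from the block lower-triangular shape of $M_{\w}^{e,c}$ with respect to the split into weight and crossing coordinates, which is precisely what you spell out.
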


\begin{remark}
The map $\Psi_{\w}$ restricted to $\mathbb R^{\ell(w)}$ is related to the Chamber Ansatz due to Berenstein-Fomin-Zelevinsky in \cite{BFZ96} (see also \cite{GKS}).
\end{remark}

\begin{figure}
\centering
\begin{tikzpicture}
 %%%% RL one F
  \node at (-.5,.5) {1a};
  \node at (3.25,1) {$l_i$};
  \node at (3.25,0) {$l_{j}$};
  \draw [fill] (1.5,.5) circle [radius=0.05];
  \node at (.9,0.5) {$c_{(i,j)}$};

\draw [fill=red, semitransparent, red, rounded corners] (3,1) --(2,1) --(1.5,.5)-- (2,0) -- (3,0) -- (3,1);
\draw[rounded corners] (0,1) -- (1,1) -- (2,0) -- (3,0);
\draw[rounded corners] (0,0) -- (1,0) -- (2,1) -- (3,1);
    \draw[->] (0.25,1) -- (0.5,1);
    \draw[->] (2.25,0) -- (2.5,0);
\draw[->] (2.5,1) -- (2.25,1);
\draw[->] (0.75,0) -- (0.5,0);

 \begin{scope}[xshift=4.5cm]
 %%%% RR one F
   \node at (-.5,.5) {2a};
     \node at (3.25,1) {$l_i$};
  \node at (3.25,0) {$l_{j}$};
    \draw [fill] (1.5,.5) circle [radius=0.05];
   \node at (.8,0.5) {$-c_{(i,j)}$};

\draw [fill=red, semitransparent, red, rounded corners] (0,1) --(1,1) --(1.5,.5)-- (2,1) -- (3,1) -- (3,1.5) -- (0,1.5) -- (0,1);
\draw[rounded corners] (0,1) -- (1,1) -- (2,0) -- (3,0);
\draw[rounded corners] (0,0) -- (1,0) -- (2,1) -- (3,1);
    \draw[->] (0.25,1) -- (0.5,1);
    \draw[->] (2.25,0) -- (2.5,0);
\draw[->] (0.25,0) -- (0.5,0);
\draw[->] (2.25,1) -- (2.5,1);

\end{scope}

%%%%% LL one F
\begin{scope}[xshift=9cm]
  \node at (-.5,.5) {3a};
    \node at (3.25,1) {$l_i$};
  \node at (3.25,0) {$l_{j}$};
    \draw [fill] (1.5,.5) circle [radius=0.05];
   \node at (.8,0.5) {$-c_{(i,j)}$};

\draw [fill=red, semitransparent, red, rounded corners] (0,0) --(1,0) --(1.5,.5)-- (2,0) -- (3,0) -- (3,-.5) -- (0,-.5) -- (0,0);

\draw[rounded corners] (0,1) -- (1,1) -- (2,0) -- (3,0);
\draw[rounded corners] (0,0) -- (1,0) -- (2,1) -- (3,1);
    \draw[->] (0.75,1) -- (0.5,1);
    \draw[->] (2.5,0) -- (2.25,0);
\draw[->] (2.5,1) -- (2.25,1);
\draw[->] (0.75,0) -- (0.5,0);

\end{scope}

%%% RL three F
\begin{scope}[yshift=-2.5cm]
  \node at (-.5,.5) {1b};
    \node at (3.25,1) {$l_i$};
  \node at (3.25,0) {$l_{j}$};
    \draw [fill] (1.5,.5) circle [radius=0.05];
  \node at (.9,0.5) {$-c_{(i,j)}$};

\draw [fill=red, semitransparent, red, rounded corners] (0,1) --(1,1) --(1.5,.5)-- (1,0) -- (0,0) -- (0,-.5) -- (3,-.5) -- (3,1.5) -- (0,1.5) -- (0,1);
\draw[rounded corners] (0,1) -- (1,1) -- (2,0) -- (3,0);
\draw[rounded corners] (0,0) -- (1,0) -- (2,1) -- (3,1);
    \draw[->] (0.25,1) -- (0.5,1);
    \draw[->] (2.25,0) -- (2.5,0);
\draw[->] (2.5,1) -- (2.25,1);
\draw[->] (0.75,0) -- (0.5,0);
\end{scope}

 %%%% LL three F
\begin{scope}[xshift=4.5cm, yshift=-2.5cm]
  \node at (-.5,.5) {2b};
    \node at (3.25,1) {$l_i$};
  \node at (3.25,0) {$l_{j}$};
    \draw [fill, rounded corners] (1.5,.5) circle [radius=0.05];
  \node at (1.5,1) {$c_{(i,j)}$};

\draw [fill=red, semitransparent, red, rounded corners] (0,1) --(1,1) --(1.5,.5)-- (2,1) -- (3,1) -- (3,-.5) -- (0,-.5) -- (0,1);
\draw[rounded corners] (0,1) -- (1,1) -- (2,0) -- (3,0);
\draw[rounded corners] (0,0) -- (1,0) -- (2,1) -- (3,1);
    \draw[->] (0.75,1) -- (0.5,1);
    \draw[->] (2.5,0) -- (2.25,0);
\draw[->] (2.5,1) -- (2.25,1);
\draw[->] (0.75,0) -- (0.5,0);
\end{scope}

%%%% RR three F
\begin{scope}[xshift=9cm, yshift=-2.5cm]
  \node at (-.5,.5) {3b};
    \node at (3.25,1) {$l_i$};
  \node at (3.25,0) {$l_{j}$};
    \draw [fill] (1.5,.5) circle [radius=0.05];
  \node at (1.5,0) {$c_{(i,j)}$};

\draw [fill=red, semitransparent, red, rounded corners] (0,0) --(1,0) --(1.5,.5)-- (2,0) -- (3,0) -- (3,1.5) -- (0,1.5) -- (0,0);
\draw[rounded corners] (0,1) -- (1,1) -- (2,0) -- (3,0);
\draw[rounded corners] (0,0) -- (1,0) -- (2,1) -- (3,1);
    \draw[->] (0.25,1) -- (0.5,1);
    \draw[->] (2.25,0) -- (2.5,0);
\draw[->] (0.25,0) -- (0.5,0);
\draw[->] (2.25,1) -- (2.5,1);
\end{scope}

\end{tikzpicture}
\caption{A path $\p$ changing the line at a crossing $(i,j)$ and the corresponding area $\area_{\p}$.}\label{fig:path.cross}
\end{figure}
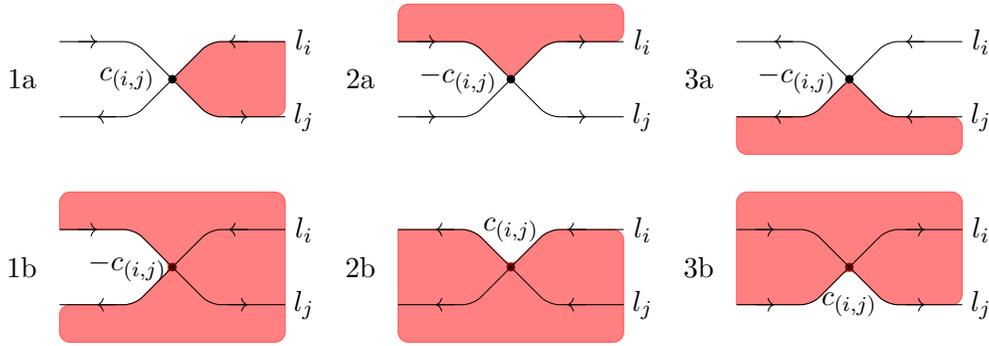

\begin{proposition}\label{prop:unimod}
Let $w\in S_n$ with reduced expression $\w$. 
For every $\p\in\mathcal P_{\w}$ we have
\[
\Psi_{\w}(e_\p)=c_\p.
\]
In particular, $\Psi_{\w}$ sends the normal vector of a defining hyperplane of $\mathcal S_{\w}$ to the normal vector of a defining hyperplane of $\mathcal C_{\w}$.
\end{proposition}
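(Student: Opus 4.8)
The statement to prove is that the linear map $\Psi_{\w}$ from Definition~\ref{def: psi_w} satisfies $\Psi_{\w}(e_{\p}) = c_{\p}$ for every $\p \in \mathcal P_{\w}$. Since both sides depend linearly on $\p$ through the decomposition of $\area_{\p}$ into faces, my plan is to set up a \emph{telescoping} argument: I will track how $e_{\p}$ and $c_{\p}$ change as the path $\p$ is built up step by step, moving along its vertices from source $L_p$ to sink $L_q$, and show that each elementary move contributes matching increments to $\Psi_{\w}(e_{\p})$ and to $c_{\p}$.

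First I would fix an orientation $(l_i, l_{i+1})$ and a GP-path $\p = L_{j_0} \to v_{(j_0,j_1)} \to v_{(j_1,j_2)} \to \cdots \to v_{(j_k, j_{k+1})} \to L_{j_{k+1}}$ with $j_0 = p \le i < q = j_{k+1}$. By definition, $c_{\p} = \sum_{s=0}^{k} c_{(j_s, j_{s+1})}$ and $e_{\p} = -\sum_{F \subset \area_{\p}} e_F$. The key observation (which I would isolate as the technical heart of the argument) is a local description: at each crossing $v_{(i',j')}$ where $\p$ switches lines, the path locally separates a certain union of faces, and Figure~\ref{fig:path.cross} enumerates the six local configurations (1a--3a, 1b--3b) according to whether the path turns up or down and the relative orientation of the two lines. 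For each configuration one reads off which face $F_{(i',j')}$ (the one bounded to the left by that crossing) is or is not in $\area_{\p}$, and with which sign the increment $\pm c_{(i',j')}$ enters $c_{\p}$. I would then observe that $\Psi_{\w}(-e_{F_{(i',j')}})$, as given by \eqref{eq: def psi}, is exactly designed so that when one sums the contributions $\Psi_{\w}(-e_F)$ over all faces $F$ traversed by a maximal run of $\p$ along a single line $l_m$ between two consecutive switches, the ``correction terms'' $-\sum c_{i_j}$ (over crossings of $l_m$ with lines $l_{i-1}, l_{i+1}$ strictly between the two switch points) telescope, leaving precisely the difference of the crossing vectors at the two endpoints of that run. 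Summing over all runs of $\p$ gives $c_{\p}$.

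More concretely, the plan is: (i) write $\area_{\p}$ as a disjoint union of ``column strips'' $\area_{\p} \cap l_m$-region, one for each line segment of $\p$; (ii) for a single segment of $\p$ running along $l_m$ from switch-vertex $v$ to switch-vertex $v'$, use Proposition~\ref{prop:in stream} to control which faces lie to the left (the path cannot escape the strip bounded by $l_i, l_{i+1}$, so the relevant faces are exactly those bounded to the left by crossings of $l_m$ lying in the appropriate interval, together with an unbounded face $F_m$ when $m \in \{i, i+1\}$ and the segment is the first or last); (iii) apply $\Psi_{\w}$ termwise using \eqref{eq: def psi} and \eqref{eq: def psi wt}, and check the telescoping of the $c_{i_{j_k}} + c_{i_{j_{k+1}}}$ terms against the $-\sum_{j \in J(i-1) \cup J(i+1)} c_{i_j}$ terms; (iv) recombine to get $\sum_s c_{(j_s, j_{s+1})} = c_{\p}$. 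The final sentence of the proposition --- that $\Psi_{\w}$ sends a defining normal vector of $\mathcal S_{\w}$ to one of $\mathcal C_{\w}$ --- then follows immediately for the path-inequalities; and for the weight inequalities $e_{[i:k]} \mapsto c_{[i:k]}$ it follows from a short separate computation summing \eqref{eq: def psi} and \eqref{eq: def psi wt} over $\area_i$'s faces, which I would do in parallel.

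\textbf{Main obstacle.} I expect the bookkeeping in step (ii)--(iii) to be the real difficulty: one has to handle the boundary segments of $\p$ (those incident to $L_p$ or $L_q$) differently from interior segments, because there the unbounded faces $F_i$ or $F_{i+1}$ enter and the map $\Psi_{\w}$ uses the separate rule \eqref{eq: def psi wt}; and one must carefully verify, using the rigorousness of $\p$ and Proposition~\ref{prop:in stream}, that no ``unexpected'' faces (e.g.\ faces bounded by crossings of $l_m$ with lines outside $\{l_{i-1}, l_m, l_{i+1}\}$, or faces on the wrong side of $l_i$/$l_{i+1}$) sneak into $\area_{\p}$. Organizing this as a clean induction on the number of switch-vertices of $\p$, with the six-case local analysis of Figure~\ref{fig:path.cross} as the inductive step, should keep the argument manageable.
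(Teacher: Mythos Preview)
Your proposal shares the right local ingredient (the six-case analysis at switch-vertices in Figure~\ref{fig:path.cross}), but the organizing framework you propose---decomposing $\area_{\p}$ into ``column strips'' indexed by maximal segments of $\p$ along a single line, and telescoping within each strip---does not work as stated. The faces of $\pa(\w)$ are indexed by the crossings that bound them on the \emph{left}, not by segments of the path; a single face $F\subset\area_{\p}$ may have the path running along several of its boundary edges (e.g.\ when $\p$ wraps around $F$ on two or three sides), so there is no natural way to assign $F$ to a unique segment of $\p$. Your step~(i) therefore has a gap, and the telescoping in step~(iii) cannot be set up as you describe. The worry you flag about ``unexpected faces sneaking in'' and the appeal to Proposition~\ref{prop:in stream} are symptoms of this: they arise precisely because the strip picture does not match the actual face structure of $\area_{\p}$.

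The paper's approach sidesteps all of this by working \emph{dually}: rather than summing $\Psi_{\w}(-e_F)$ over faces and hoping for cancellation, it fixes a single crossing $(i,j)$ and computes the coefficient of $c_{(i,j)}$ in $\Psi_{\w}(e_{\p})$ directly. By the shape of \eqref{eq: def psi}, this coefficient is determined entirely by which of the (at most four) faces adjacent to $(i,j)$ lie in $\area_{\p}$: $+1$ for each adjacent face at the same level, $-1$ for each adjacent face at level $\pm 1$. This immediately gives three cases---$(i,j)$ interior to $\area_{\p}$ (four faces, net $0$), $\p$ passes through $(i,j)$ without switching (two faces, net $0$), $\p$ switches at $(i,j)$ (the six configurations of Figure~\ref{fig:path.cross}, each yielding $\pm 1$ matching the sign convention $c_{(i,j)} = -c_{(j,i)}$ in $c_{\p}$)---and no global bookkeeping or induction is needed. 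Finally, note that the weight-inequality claim $e_{[i:k]}\mapsto c_{[i:k+1]}$ you mention at the end is not part of this proposition; it is handled separately in Proposition~\ref{prop:wunimod}.
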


\begin{proof}
We show that for every crossing point $(i,j)$ the coefficient of $c_{(i,j)}$ coincides in $\Psi_{\w}(e_\p)$ and $c_\p$.
Recall that $\area_\p$ is the union of all faces to the left of $\p$ with respect to the given orientation. 
We distinguish three cases:

\begin{itemize}
\item If $(i,j)$ lies in the interior of $\area_{\p}$, then four faces $F^r\subset \area_\p$ with $r\in[4]$ are adjacent to $(i,j)$. 
For two of them in $\Psi_{\w}(-e_{F^r})$ the coefficient of $c_{(i,j)}$ is $+1$, for the other two it is $-1$.
Hence, they cancel each other and in $\Psi_{\w}(-e_\p)$ it is zero as it is in $c_{\p}$.

\item If $\p$ contains $(i,j)$ but does not change the line at $(i,j)$, then $c_{(i,j)}$ has coefficient zero in $c_{\p}$. 
For $\area_{\p}$, this means that two faces, $F^1$ and $F^2$, are adjacent to ${(i,j)}$. 
One of the two, say $F^1$, is bounded by ${(i,j)}$ to the left where for $F^2$, ${(i,j)}$ is part of the upper or lower boundary. 
In particular, $\Psi_{\w}(e_{\p})$ contains $c_{(i,j)}$ once with positive and once with negative sign, hence with the coefficient is zero.

\item Assume $\p$ changes the line at the crossing ${(i,j)}$. Figure \ref{fig:path.cross} shows the three possible orientations of $l_i$ and $l_j$.
Each yields two possibilities for the path. 
If in situation 1a, there is one face $F$ in $\area_{\p}$ bounded by ${(i,j)}$ to the left.
So $c_{(i,j)}$ has coefficient $1$ in $\Psi_{\w}(e_{\p})$. 
As $\p$ changes from $l_i$ to $l_j$ and $i<j$, also $c_{\p}$ contains $c_{(i,j)}$ with coefficient $1$.

In cases 2a and 3a, $\area_{\p}$ contains only one face bounded by ${(i,j)}$ below resp. above. 
Hence $c_{(i,j)}$ appears with coefficient $-1$ in $\Psi_{\w}(e_{\p})$. 
The same is true for $c_{\p}$: in both cases $\p$ changes from line $l_j$ to $l_i$ but $i<j$. 

Three cases remain to be checked, 1b, 2b and 3b in Figure~\ref{fig:path.cross}. 
In all of them $\area_{\p}$ contains three faces $F^1,F^2$ and $F^3$ adjacent to ${(i,j)}$. 
In case 1b, ${(i,j)}$ bounds one face to the left and the other two from above, resp. below. 
This implies that $c_{(i,j)}$ appears with coefficient $-1$ in $\Psi_{\w}(e_{\p})$. 
As $\p$ changes from $l_j$ to $l_i$ the same is true for $c_{\p}$. 
For 2b and 3b we are in the opposite case: two faces in $\area_{\p}$ are bounded to the left, resp. right, by ${(i,j)}$ and only one from above, resp. below. 
Hence, $\Psi_{\w}(e_{\p})$ contains $c_{(i,j)}$ with coefficient $1$ and the same is true for $c_{\p}$, as $\p$ changes from line $l_i$ to line $l_j$.
\end{itemize}
\end{proof}

For our application later, it remains to show that the normal vectors defining the weight inequalities are mapped onto each other by $\Psi_{\w}$. 
Recall the weight area $\area_{i}=F_{i}\cup \bigcup_{r=1}^{n_i}F_{(i_r,j_r)}$ of level $i\in[n-1]$, with $n_i$ the number of faces bounded to the left of level $i$.

\begin{proposition}\label{prop:wunimod}
Let $w\in S_n$ with reduced expression $\w$. 
Consider $i\in [n-1]$ with $J(i)=\{j_1,\dots,j_{n_i}\}$.
Then for $ k\in [0, n_i]$ we have
\[
\Psi_{\w}(e_{[i:k]})=c_{[i:k+1]} \text{ and } \Psi_{\w}(e_{[i:n_i]})=c_{[i:0]}.
\]
In particular, $\Psi_{\w}$ sends normal vectors of defining (weight) hyperplanes of $\mathcal S_{\w}$ to normal vectors of defining (weight) hyperplanes of $\mathcal C_{\w}$.
\end{proposition}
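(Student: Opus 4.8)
\textbf{Proof plan for Proposition~\ref{prop:wunimod}.}
The plan is to compute $\Psi_{\w}(e_{[i:k]})$ directly from the definitions and match it term by term with $c_{[i:k+1]}$. Recall that by \eqref{eq:def area wt ineq} we have $e_{[i:k]}=-e_{F_i}-\sum_{j\in J(i),\,j\le j_k}e_{F_{i_j}}$, so by linearity of $\Psi_{\w}$ we get
\[
\Psi_{\w}(e_{[i:k]})=\Psi_{\w}(-e_{F_i})+\sum_{r=1}^{k}\Psi_{\w}(-e_{F_{i_{j_r}}})=c_{[i:1]}+\sum_{r=1}^{k}\Psi_{\w}(-e_{F_{i_{j_r}}}),
\]
using \eqref{eq: def psi wt}. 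The main work is to expand the telescoping sum $\sum_{r=1}^{k}\Psi_{\w}(-e_{F_{i_{j_r}}})$ using \eqref{eq: def psi}, where $\Psi_{\w}(-e_{F_{i_{j_r}}})=c_{i_{j_r}}+c_{i_{j_{r+1}}}-\sum_{j\in J(i-1)\cup J(i+1),\,j_r<j<j_{r+1}}c_{i_j}$.

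First I would carry out the telescoping. Writing $J(i)=\{j_1<\dots<j_{n_i}\}$ and summing over $r=1,\dots,k$, the ``doubling'' structure of the middle crossings appears: each $c_{i_{j_r}}$ with $1<r\le k$ occurs once as the first term of $\Psi_{\w}(-e_{F_{i_{j_r}}})$ and once as the second term of $\Psi_{\w}(-e_{F_{i_{j_{r-1}}}})$, giving coefficient $2$, while $c_{i_{j_1}}$ appears with coefficient $1$ and $c_{i_{j_{k+1}}}$ (the next crossing of level $i$ after $j_k$, if it exists) appears with coefficient $1$. Simultaneously, the subtracted sums over $J(i-1)\cup J(i+1)$ accumulate: every such index $j$ with $j_1<j<j_{k+1}$ appears in exactly one of the intervals $(j_r,j_{r+1})$ for $1\le r\le k$, contributing $-c_{i_j}$ once. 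Adding in $c_{[i:1]}=c_i-c_{i_{j_1}}-2\sum_{j\in J(i),\,j>j_1}c_{i_j}+\sum_{l\in J(i-1)\cup J(i+1),\,l>j_1}c_{i_l}$ from \eqref{eq: def wt ineq GP}, the terms $c_{i_{j_1}}$ cancel, the coefficient-$2$ contributions on $c_{i_{j_2}},\dots,c_{i_{j_k}}$ cancel against the $-2\sum_{j>j_1}$ in $c_{[i:1]}$, the single $c_{i_{j_{k+1}}}$ from the telescope combines with the $-2c_{i_{j_{k+1}}}$ from $c_{[i:1]}$ to leave $-c_{i_{j_{k+1}}}-2\sum_{j\in J(i),\,j>j_{k+1}}c_{i_j}$, and the subtracted $-c_{i_j}$ over $J(i-1)\cup J(i+1)$ with $j_1<j<j_{k+1}$ cancels part of the $+\sum_{l>j_1}$ in $c_{[i:1]}$, leaving $+\sum_{l\in J(i-1)\cup J(i+1),\,l>j_{k+1}}c_{i_l}$. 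Matching this against the definition \eqref{eq: def wt ineq GP} of $c_{[i:k+1]}=c_i-c_{i_{j_{k+1}}}-2\sum_{j\in J(i),\,j>j_{k+1}}c_{i_j}+\sum_{l\in J(i-1)\cup J(i+1),\,l>j_{k+1}}c_{i_l}$ yields the first claimed identity.

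For the boundary case $k=n_i$, one uses the same computation but notes that there is no ``next'' crossing $j_{n_i+1}$: the face $F_{i_{j_{n_i}}}$ is the last bounded face at level $i$, and in \eqref{eq: def psi} the term $c_{i_{j_{n_i+1}}}$ is absent (equivalently interpreted as $0$), as is the subtracted sum over the empty interval. The telescope then collapses completely: all the doubled middle terms cancel against $c_{[i:1]}$, all the $J(i-1)\cup J(i+1)$ corrections cancel, and one is left with $c_i=c_{[i:0]}$. The final sentence of the proposition is then immediate since $\Psi_{\w}$ is the lattice isomorphism $M_{\w}^{e,c}\in GL_{\ell(w)+n-1}(\mathbb Z)$ of Lemma~\ref{lem: psi lattice}, so it carries the defining (weight) normal vectors $e_{[i:k]}$ of $\mathcal S_{\w}$ bijectively onto the defining (weight) normal vectors $c_{[i:k]}$ of $\mathcal C_{\w}$, reindexed by $k\mapsto k+1 \bmod (n_i+1)$.

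The main obstacle I anticipate is purely bookkeeping: keeping careful track of which index ranges the various sums run over (strict vs.\ non-strict inequalities, and whether $j_{k+1}$ exists), since the off-by-one behavior between $e_{[i:k]}$ and $c_{[i:k+1]}$ is exactly what makes the indices shift. A clean way to avoid sign errors is to fix a small example first --- e.g.\ $\hat{\w}_0\in S_5$ as in Examples~\ref{exp:area} and \ref{exp:areaGT}, where $\area_i$ and the $c_{[i:k]}$ are explicit --- verify the identity there, and only then write out the general telescoping argument. No deep idea is needed beyond the linearity of $\Psi_{\w}$ and the telescoping structure already built into Definitions~\ref{def:wgp-cone} and \ref{def: psi_w}; combined with Proposition~\ref{prop:unimod}, this completes the proof that $\Psi_{\w}$ realizes the unimodular equivalence $\mathcal{S}_{\w}\cong\mathcal{C}_{\w}$ (and, restricting, $S_{\w}\cong C_{\w}$) claimed in Theorem~\ref{thm:unimod}.
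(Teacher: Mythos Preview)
Your proposal is correct and essentially the same as the paper's proof. The paper organizes the computation as an induction on $k$, using $e_{[i:k+1]}=e_{[i:k]}-e_{F_{i_{j_{k+1}}}}$ and showing that adding $\Psi_{\w}(-e_{F_{i_{j_{k+1}}}})$ to $c_{[i:k+1]}$ gives $c_{[i:k+2]}$, while you expand the full sum at once and read off the telescoping; these are the same argument packaged differently, including the boundary case $k=n_i$ where the ``missing'' term $c_{i_{j_{n_i+1}}}$ and the open upper bound on the $J(i\pm1)$ sum are handled by the same convention.
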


\begin{proof}
We prove the claim by induction on $k$.
By definition we have $\Psi_{\w}(e_{[i:0]})=\Psi_{\w}(-e_{F_i}) = c_{[i:1]}$.
Let $1\le k< n_i-1$, then using induction for the third equation, we obtain
\begin{eqnarray*}
\Psi_{\w}(e_{[i:k+1]}) & \stackrel{\tiny\eqref{eq:def area wt ineq}}{=} & \Psi_{\w}(e_{[i:k]}-e_{F_{i_{j_{k+1}}}}) \\ 
& \stackrel{\tiny\eqref{eq: def psi}}{=} & c_{[i:k+1]} + c_{i_{j_{k+1}}} + c_{i_{j_{k+2}}} -\sum_{\begin{smallmatrix} j\in J(i-1)\cup J(i+1),\\ j_{k+1}<j<j_{k+2}\end{smallmatrix}} c_{i_j}\\
& \stackrel{\tiny\eqref{eq: def wt ineq GP}}{=} & c_i -c_{i_{j_{k+2}}}-2\sum_{j\in J(i),j>j_{k+2}}c_{i_j} +\sum_{\begin{smallmatrix} j\in J(i-1)\cup J(i+1),\\ j>j_{k+2}\end{smallmatrix}} c_{i_j} \ = \ c_{[i:k+2]}.
\end{eqnarray*}
Now consider $e_{[i:n_i]}=e_{[i:n_i-1]}-e_{F_{i_{j_{n_i}}}}$.  
We apply $\Psi_{\w}$ and obtain the following by induction. 
\begin{eqnarray*}
\Psi(e_{[i:n_i]})& \stackrel{\tiny\eqref{eq:def area wt ineq}}{=} & \Psi_{\w}(e_{[i:n_i-1]}-e_{F_{j_{n_i}}}) \\
& \stackrel{\tiny\eqref{eq: def psi}}{=} & c_{[i:n_i]}+ c_{i_{j_{n_i}}} - \sum_{\begin{smallmatrix} j\in J(i-1)\cup J(i+1),\\ j_{n_i}<j \end{smallmatrix}} c_{i_j} \ \stackrel{\tiny\eqref{eq: def wt ineq GP}}{=} \   c_i.
\end{eqnarray*}

\end{proof}

We can now prove the first Theorem of this section. It is a more precise formulation of Theorem~\ref{thm:dual cones intro} as stated in the Introduction.

\vbox{
\begin{theorem}\label{thm:unimod}
Let $w\in S_n$ and $\w$ a reduced expression. The following polyhedral objects are unimodularly equivalent
\begin{enumerate}[(i)]
    \item $\mathcal S_{\w}\cong \mathcal C_{\w}$ via $\Psi_{\w}$,
    \item $S_{\w}\cong C_{\w}$ via $\Psi_{\w}\vert_{\mathbb R^{\ell(w)}}$,
    \item $\mathcal S_{\w}(\lambda)\cong \mathcal C_{\w}(\lambda)$ for all $\lambda\in\mathbb R^{n-1}$ via $\Psi_{\w}$.
\end{enumerate}
\end{theorem}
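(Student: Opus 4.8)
The statement asserts three unimodular equivalences, all witnessed by the linear map $\Psi_{\w}$ (or its restriction). The strategy is to assemble them from the pieces already established in the preceding propositions, treating $\Psi_{\w}$ as a change of coordinates and then matching up the defining half-spaces (and, for the polytopes, the defining hyperplanes) of the two cones under that change.

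First I would record that $\Psi_{\w}$ is a lattice isomorphism: by Lemma~\ref{lem: psi lattice} its matrix $M_{\w}^{e,c}$ in the ordered bases $\mathcal B_e, \mathcal B_c$ is lower triangular with $1$'s on the diagonal, hence lies in $GL_{\ell(w)+n-1}(\mathbb Z)$; similarly Corollary~\ref{cor: res psi lattice} gives $\overline M_{\w}^{e,c}\in GL_{\ell(w)}(\mathbb Z)$ for the restriction $\Psi_{\w}\vert_{\mathbb R^{\ell(w)}}$. Since in both cases the translation vector is $0$, $\Psi_{\w}$ and $\Psi_{\w}\vert_{\mathbb R^{\ell(w)}}$ are of the form $f_M$ as in Definition~\ref{def:unimod equiv}. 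Next I would identify, for each of the cones, its complete list of defining normal vectors: $\mathcal S_{\w}$ is cut out by the $e_{\p}$ for $\p\in\mathcal P_{\w}$ together with the weight normals $e_{[i:k]}$ for $i\in[n-1],\,0\le k\le n_i$; $\mathcal C_{\w}$ is cut out by the $c_{\p}$ together with the $c_{[i:k]}$; and $S_{\w}, C_{\w}$ are cut out by (the projections of) the $e_{\p}$, resp. the $c_{\p}$, alone.

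For part (i): Proposition~\ref{prop:unimod} gives $\Psi_{\w}(e_{\p})=c_{\p}$ for every $\p\in\mathcal P_{\w}$, and Proposition~\ref{prop:wunimod} gives $\Psi_{\w}(e_{[i:k]})=c_{[i:k+1]}$ for $0\le k\le n_i-1$ together with $\Psi_{\w}(e_{[i:n_i]})=c_{[i:0]}$. Thus $\Psi_{\w}$ carries the full set $\{e_{\p}\}\cup\{e_{[i:k]}\}$ of normals of $\mathcal S_{\w}$ bijectively onto the full set $\{c_{\p}\}\cup\{c_{[i:k]}\}$ of normals of $\mathcal C_{\w}$ (the weight normals are permuted by the cyclic shift $k\mapsto k+1 \bmod (n_i+1)$ on each level $i$, so no normal is lost or repeated). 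Since for an invertible linear map $f$ one has $f(\{x:\langle v_\alpha,x\rangle\ge 0\ \forall\alpha\})=\{y:\langle (f^{-t})v_\alpha,y\rangle\ge 0\ \forall\alpha\}$, and here it is cleaner to argue directly: $x\in\mathcal S_{\w}$ iff $(e_\bullet)^t x\ge 0$ for all the $\mathcal S$-normals; applying the bijection, $\mathcal C_{\w}=\{y : (c_\bullet)^t y\ge 0\}$ equals $\Psi_{\w}(\mathcal S_{\w})$ once one checks $(\Psi_{\w}(e_\bullet))^t(\Psi_{\w}(x)) \ge 0 \iff (e_\bullet)^t x\ge 0$ — this is where I must be slightly careful, since it is the inner products $(c_\p)^t(\mathbf x)$ in the definition of $\mathcal C_{\w}$, not $(c_\p)^t(\Psi_{\w}^{-t}\mathbf x)$. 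The correct formulation is that $\Psi_{\w}$ sends the \emph{supporting halfspace with inner normal $e_\bullet$ in $\mathbb R^{\ell(w)+n-1}$} to the halfspace with inner normal $\Psi_{\w}^{-t}(e_\bullet)$; so I should instead invoke that $\mathcal S_{\w}$ and $\mathcal C_{\w}$ are, by Propositions~\ref{prop:unimod} and~\ref{prop:wunimod}, \emph{dual} descriptions — or, most simply, observe that the maps $e_\bullet\mapsto c_\bullet$ are exactly the action of $(\Psi_{\w}^{-1})^t$ would require $\Psi_{\w}$ to be the transpose-inverse of the coordinate change; the honest route, and the one I will take, is: the cones here are given in the form $\{x : \ell_\alpha(x)\ge 0\}$ for linear functionals $\ell_\alpha$ written in a fixed basis, and $\Psi_{\w}$ is the \emph{dual} map turning the $\ell_\alpha$ of $\mathcal S$ into those of $\mathcal C$ — equivalently $(\Psi_{\w}^{\vee})$ maps $\mathcal C_{\w}$ to $\mathcal S_{\w}$, and since $\Psi_{\w}^{\vee}\in GL(\mathbb Z)$ too, unimodular equivalence follows. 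I will phrase this cleanly by noting that the normal fan of $\mathcal S_{\w}$ is carried by $\Psi_{\w}$ to the normal fan of $\mathcal C_{\w}$, hence the polars, hence the cones, are unimodularly equivalent via $(\Psi_{\w}^{\vee})^{-1}$, which I then rename $\Psi_{\w}$ by the usual abuse (identifying a space with its dual via the fixed bases). Part (ii) is the same argument with $\Psi_{\w}\vert_{\mathbb R^{\ell(w)}}$, using only the path-normals and Proposition~\ref{prop:unimod} (the $e_\p, c_\p$ projected to the first $\ell(w)$ coordinates); Corollary~\ref{cor: res psi lattice} supplies unimodularity. For part (iii), I use the remark after Definition~\ref{def: polytopes from cones}: $\mathcal C_{\w}(\lambda)$ (resp. $\mathcal S_{\w}(\lambda)$) is obtained from $\mathcal C_{\w}$ (resp. $\mathcal S_{\w}$) by replacing, for each $i$, the inequality with normal $c_{[i:0]}$ (resp. $e_{[i:n_i]}$) by the corresponding equality with right-hand side $\lambda_i$. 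Since Proposition~\ref{prop:wunimod} matches precisely $e_{[i:n_i]}\leftrightarrow c_{[i:0]}$, the map $\Psi_{\w}$ (resp. its dual) carries the equality-constrained slice $\mathcal S_{\w}(\lambda)$ onto $\mathcal C_{\w}(\lambda)$ for every $\lambda$, with no translation needed, so they are unimodularly equivalent.

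The only real subtlety — and the step I expect to need the most care with — is the bookkeeping of halfspaces versus normals under the linear isomorphism: $\Psi_{\w}$ is defined so that it sends \emph{normal vectors} $e_\bullet$ to \emph{normal vectors} $c_\bullet$, so strictly it induces the unimodular equivalence between the \emph{duals} $\mathcal S_{\w}^{\vee}$ and $\mathcal C_{\w}^{\vee}$, and one must pass to polars (or transpose) to get the equivalence of $\mathcal S_{\w}$ and $\mathcal C_{\w}$ themselves. Because all of $\Psi_{\w}$, $\Psi_{\w}\vert_{\mathbb R^{\ell(w)}}$ and their transpose-inverses lie in the respective $GL(\mathbb Z)$, this passage is harmless, but I will state it explicitly to avoid the (common) sign/duality confusion, and then the three claimed equivalences all follow formally. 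The surjectivity/injectivity of the normal-to-normal correspondence — i.e. that $\{e_\p\}\cup\{e_{[i:k]}\}\to\{c_\p\}\cup\{c_{[i:k]}\}$ is a genuine bijection and not merely a containment — is immediate from Propositions~\ref{prop:unimod} and~\ref{prop:wunimod}: the path part is index-preserving on $\mathcal P_{\w}$, and on each level $i$ the weight part is the cyclic permutation $k\mapsto k+1\pmod{n_i+1}$.
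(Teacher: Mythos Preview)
Your proposal is correct and follows essentially the same approach as the paper: assemble Lemma~\ref{lem: psi lattice}/Corollary~\ref{cor: res psi lattice} (unimodularity of $\Psi_{\w}$ and its restriction) with Propositions~\ref{prop:unimod} and~\ref{prop:wunimod} (normals go to normals, with the weight normals cyclically permuted on each level), then read off (i)--(iii). You are in fact more scrupulous than the paper on one point: since $\Psi_{\w}$ is defined to send \emph{defining normals} of $\mathcal S_{\w}$ to those of $\mathcal C_{\w}$, the linear map carrying $\mathcal S_{\w}$ onto $\mathcal C_{\w}$ as a point set is $\Psi_{\w}^{-t}$, not $\Psi_{\w}$; the paper simply writes ``$\Psi_{\w}(\mathcal S_{\w})=\mathcal C_{\w}$'' and moves on, whereas you correctly flag the dual/transpose passage and note it is harmless for unimodular equivalence because $\Psi_{\w}^{-t}\in GL(\mathbb Z)$ too.
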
}

\begin{proof}
We begin by proving (i). From Proposition~\ref{prop:unimod} we know that normal vectors $e_{\p}$ of $\mathcal S_{\w}$ for $\p\in\mathcal P_{\w}$ are mapped to the normal vectors $c_{\p}$ of $\mathcal C_{\w}$, i.e. $\Psi_{\w}(e_{\p})=c_{\p}$.
Further, from Proposition~\ref{prop:wunimod} we know the same is true for the normal vectors $e_{[i:k]}$ of $\mathcal S_{\w}$ for $i\in[n-1]$: 
we have $\Psi_{\w}(e_{[i:k]})=c_{[i:k+1]}$ for $k<n_i$  and  $\Psi_{\w}(e_{[i:n_i]})=c_{[i:0]}$.
As the right hand side of all defining inequalities is zero, we deduce that $\Psi_{\w}(\mathcal S_{\w})=\mathcal C_{\w}$.
By Lemma~\ref{lem: psi lattice}, $\Psi_{\w}$ is given by a matrix in $GL_{\ell(w)+n-1}(\mathbb Z)$ and hence, $\mathcal S_{\w}\cong \mathcal C_{\w}$.

To show (ii), note that by the same argument as for (i) we have $\Psi_{\w}\vert_{\mathbb R^{\ell(w)}}(S_{\w})=C_{\w}$. By Corollary~\ref{cor: res psi lattice} we deduce $S_{\w}\cong C_{\w}$.

For (iii) recall that for $\lambda\in\mathbb R^{n-1}$ the polytopes $\mathcal S_{\w}(\lambda)$ resp. $\mathcal C_{\w}(\lambda)$ are defined by the same normal vectors as $\mathcal S_{\w}$ resp. $\mathcal C_{\w}$. 
As above, it is also true that the right hand sides of the defining (in-)equalities coincides for all normal vectors being mapped onto each other. 
We therefore deduce $\mathcal{S}_{\w}(\lambda) =\mathcal{C}_{\w}(\lambda)$.
\end{proof}

%%%%%%%%%%%%%%%%%%%%%%%%%%%%%%%%%%%%%%

\subsection{String cones, polytopes and toric degenerations}\label{subsec:string}

Recall from \S\ref{sec:pre rep theory} our notation for the representation theoretic background regarding $SL_n$.
In this section we recall \emph{string polytopes} and \emph{string cones} introduced by Littelmann in \cite{Lit98} and Berenstein-Zelevinsky in\cite{BZ01} as well as the \emph{weighted string cones} defined in \cite{Lit98}. 
We prove using a result from Gleizer-Postnikov in \cite{GP00} that these are exactly $\mathcal C_{\w}(\lambda)$ resp. $C_{\w}$ and $\mathcal C_{\w}$.

Littelmann \cite{Lit98} introduced in the context of quantum groups and crystal bases the so called (weighted) string cones and string polytopes $Q_{\w}(\lambda)$.
The motivation is to find monomial bases for the Demazure modules $V_{\w}(\lambda)$ for $w\in S_n$ and $\lambda\in\Lambda^+$.
Recall that by \eqref{eq: PBW for demazure} $\{ f_{\alpha_{i_1}}^{m_{i_1}} \cdots f_{\alpha_{i_{\ell(w)}}}^{m_{i_{\ell(w)}}} \cdot v_\lambda \in V(\lambda) \mid m_{i_j} \geq 0 \}$
is a spanning set for $V_{\w}(\lambda)$ depending on a reduced expression $\w=s_{i_1}\cdots s_{i_{\ell(w)}}$.
Littelmann identifies a linearly independent subset of this spanning set by introducing the notion of \emph{adapted string} (see \cite[p.~4]{Lit98}) referring to a tuple $(a_1,\dots,a_{\ell(w)})\in\mathbb Z_{\ge 0}^{\ell(w)}$.
His basis for $V_{\w}(\lambda)$ consists of those elements $f_{i_1}^{a_1}\cdots f_{i_{\ell(w)}}^{a_{\ell(w)}}\cdot v_\lambda$ for which $(a_1,\dots,a_{\ell(w)})$ is adapted.

For a fixed reduced expression $\w$ of $w\in S_n$ and $\lambda\in\Lambda^+$ he gives a recursive definition of the the \emph{string polytope} $Q_{\w}(\lambda)\subset\mathbb R^{\ell(w)}$ (\cite[p.~5]{Lit98}, see also \cite{BZ01}). 
The lattice points $Q_{\w}(\lambda)\cap \mathbb Z^{\ell(w)}$ are the adapted strings for $\w$ and $\lambda$.
The \emph{string cone} $Q_{\w}\subset \mathbb R^{\ell(w)}$ is the convex hull of all $Q_{\w}(\lambda)$ for $\lambda\in\Lambda^+$. The \emph{weighted string cone} $\mathcal Q_{\w}\subset\mathbb R^{\ell(w)+n-1}$ is defined as
\[
\mathcal Q_{\w} := \conv \bigg( \bigcup_{\lambda\in\Lambda^+}Q_{\w}(\lambda)\times\{\lambda\}\bigg)\subset \mathbb R^{\ell(w)+n-1}.
\]
By definition, one obtains the string polytope from the weighted string cone by intersecting it with the hyperplanes given by $\pi^{-1}(\lambda)$ as in \eqref{eq: wt hyperplanes gp}.
The lattice points in the weighted string cone for $w=w_0$ parametrize a basis of $\mathbb{C}[SL_{n}/U]\cong \bigoplus_{\lambda\in\Lambda^+}V(\lambda)$.

\medskip

String polytopes are of great interest to us because of Caldero's work \cite{Cal02} in 2002.
He defines for a Schubert variety $X_w$ a flat family over $\mathbb A^1$ with generic fibre $X_w$ and special fibre a toric variety. % $Y$ whose normalization is the toric variety $X_{\mathcal Q_{\w}(\lambda)}$ associated to the string polytope.
The family is of form \eqref{eq: def Rees} given by a construction using Rees algebras (see \S\ref{sec:pre val}). Although not defined using valuations initially, it was realized this way in \cite{Kav15} and \cite{FFL15}.
His main tools are Lusztig's dual canonical basis and the string parametrization due to \cite{BZ01} and \cite{Lit98}.
We summarize his results (restricted to the case of $SL_n$) below.

Let $\w = s_{i_1} \cdots s_{i_{\ell(w)}}$ be a reduced expression of $w \in S_{n}$. 
We extend $\w$ to the \emph{right} to a reduced expression $\w_0 =\w s_{i_{\ell(w)+1}} \cdots s_{i_N}$ of $w_0$. 
This extension is not unique but the results are independent of the extension.
Caldero realizes the string cone $Q_{\w}$ for the Demazure module $V_w(\lambda)$ as a face of the string cone $Q_{\w_0}$. He deduces the following Lemma as a consequence of \cite[\S1]{Lit98}.

\begin{lemma*}(see \cite{Lit98},\cite[Lemma~3.3]{Cal02})\label{lem: restr. string cone}
Let $w\in S_n$ with reduced expression $\w=s_{i_1}\cdots s_{i_{\ell(w)}}$ and choose a reduced expression $\w_0=\w s_{i_{\ell(w)+1}}\cdots s_{i_N}$. Then the weighted string cone $\mathcal Q_{\w}$ is obtained from the weighted string cone $\mathcal Q_{\w_0}$ by setting the variables corresponding to $s_{i_{\ell(w)+1}},\dots,s_{i_N}$ equal to zero.
\end{lemma*}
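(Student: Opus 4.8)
The statement is a restriction/face result for weighted string cones, attributed to Littelmann's work \cite{Lit98} and extracted by Caldero as \cite[Lemma~3.3]{Cal02}. The plan is to recall the recursive definition of the string cone $Q_{\underline w}$ from \cite[p.~5]{Lit98} and observe that it is literally built up along a reduced word $\underline w = s_{i_1}\cdots s_{i_{\ell(w)}}$, one letter at a time, by a sequence of inequalities each involving only the coordinates corresponding to an initial segment of the word. Concretely, the defining inequalities of $Q_{\underline w_0}$ for the extended word $\underline w_0 = \underline w\, s_{i_{\ell(w)+1}}\cdots s_{i_N}$ come in ``blocks'' indexed by the letters of $\underline w_0$, and the block attached to the $j$-th letter only constrains the variables $a_1,\dots,a_j$. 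First I would make this precise: write out the recursion and check that deleting the last letter of $\underline w_0$ (i.e.\ removing the last block of inequalities and the last coordinate) yields exactly the defining system of $Q_{\underline w_0 s_{i_N}^{-1}}$. Iterating this $N - \ell(w)$ times removes the coordinates attached to $s_{i_{\ell(w)+1}},\dots,s_{i_N}$ and leaves $Q_{\underline w}$.

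The translation to the weighted setting is then routine: since $\mathcal Q_{\underline w} = \conv\big(\bigcup_{\lambda\in\Lambda^+} Q_{\underline w}(\lambda)\times\{\lambda\}\big)$ and likewise for $\mathcal Q_{\underline w_0}$, and since the defining weight inequalities (the $c_{[i:k]}$ of \eqref{eq: def wt ineq GP}, equivalently the recursion of \cite{Lit98}) are compatible with setting the extra coordinates to zero, intersecting $\mathcal Q_{\underline w_0}$ with the linear subspace $\{x_{j} = 0 : \ell(w) < j \le N\}$ recovers $\mathcal Q_{\underline w}$. Here I would use that for each dominant $\lambda$ the slice $Q_{\underline w_0}(\lambda)\cap\{x_{\ell(w)+1}=\dots=x_N=0\} = Q_{\underline w}(\lambda)$ — this is the geometric content of \cite[\S1]{Lit98}, namely that the adapted strings for $\underline w$ are precisely the adapted strings for $\underline w_0$ supported on the first $\ell(w)$ coordinates, which in turn reflects that $V_w(\lambda)$ sits inside $V(\lambda) = V_{w_0}(\lambda)$ as the span of the Demazure PBW monomials \eqref{eq: PBW for demazure}.

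The main obstacle is that, as stated, the lemma really is a citation to \cite[\S1]{Lit98} and \cite[Lemma~3.3]{Cal02}, so a self-contained proof has to reconstruct enough of Littelmann's recursive machinery to see the ``initial segment'' structure of the inequalities. The delicate point is checking that no inequality of $Q_{\underline w_0}$ attached to a late letter $s_{i_j}$ with $j > \ell(w)$ imposes a hidden nontrivial constraint on $x_1,\dots,x_{\ell(w)}$ once $x_{\ell(w)+1}=\dots=x_N=0$ — i.e.\ that such inequalities become either trivially satisfied or already implied. This is where one must invoke independence of the extension $\underline w_0$ of $\underline w$ and the precise recursive form of the string cone; I would handle it by induction on $N - \ell(w)$, reducing to the single step of removing one final letter, where the recursion of \cite{Lit98} makes the claim transparent.

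<br>

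Since the lemma is explicitly quoted from the literature with its proof attributed to \cite{Lit98} and \cite[Lemma~3.3]{Cal02}, it is entirely legitimate in this thesis to keep the proof at the level ``see the cited references''; the plan above is what one would write if a self-contained argument were wanted.
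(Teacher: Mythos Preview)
Your proposal is correct, and in fact you have already identified the key point: the paper does not prove this lemma at all. It is stated as a starred lemma with the attribution ``(see \cite{Lit98}, \cite[Lemma~3.3]{Cal02})'' and introduced by the sentence ``He deduces the following Lemma as a consequence of \cite[\S1]{Lit98}.'' The text then moves on immediately to describe Caldero's filtration without further argument. So there is nothing to compare against; your closing remark that the thesis treats this as a pure citation is exactly right, and the reconstruction you sketch (induction on $N-\ell(w)$ using the recursive structure of Littelmann's string inequalities, then passing to the weighted cone slice by slice) is a faithful outline of what the cited proofs in \cite{Lit98} and \cite{Cal02} actually do.
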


Caldero defines a filtration $(\mathcal F_{\le m})_{m\ge 1}$ on $\mathbb C[SL_n/U]$ with associated graded algebra the semi-group algebra $\mathbb C[\mathcal Q_{\w_0}\cap\mathbb Z^{N+n-1}]$. Using the Lemma, he defines a quotient filtration $(\overline{\mathcal F}_{\le m})_{m\ge 1}$ on $\mathbb C[SL_n/U]/I_{w}$, where $I_w=\bigoplus_{\lambda\in \Lambda^+} V_w(\lambda)^\perp$ (recall \S\ref{sec:pre rep theory}), i.e. (from what we have seen in \S\ref{sec:pre flag})
\[
\mathbb C[SL_n/U]/I_{w}=\bigoplus_{\lambda\in\Lambda^+}V(\lambda)^*/\bigoplus_{\lambda\in\Lambda^+}V_w(\lambda)^{\perp}.
\]
The semi-group algebra $\mathbb C[\mathcal Q_{\w}\cap \mathbb Z^{\ell(\w)+n-1}]$ is the associated graded algebra of the quotient filtration.
In particular, he degenerates $X_w$ into a toric variety $Y$, whose normalization is the toric variety $X_{\mathcal Q_{\w}(\lambda)}$ associated to the string polytope $\mathcal Q_{\w}(\lambda)$ for $\lambda\in\Lambda^{++}$.

\paragraph{Relation to the GP cones} Gleizer and Postnikov develop in \cite{GP00} a combinatorial model to describe string cones $Q_{\w_0}$ \emph{non-recursively} for every reduced expression $\w_0$ of $w_0\in S_n$. They use pseudoline arrangements and GP-paths to obtain the following.

\begin{corollary*}\cite[Corollary~5.8]{GP00}
Let $\w_0$ be a reduced expression for $w_0\in S_n$. Then $C_{\w_0}=Q_{\w_0}$.
\end{corollary*}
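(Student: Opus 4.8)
Since the cone $C_{\w}$ of Definition~\ref{def:gp-cone} is, by construction, exactly the object Gleizer--Postnikov call the \emph{principal cone}, the statement is precisely \cite[Corollary~5.8]{GP00}, and the real work is to make the translation between the two setups explicit. The plan is therefore as follows. First I would check that a path is a GP-path in the sense of \S\ref{sec:pa and gp} --- rigorous for one of the orientations $(l_i,l_{i+1})$, $i\in[n-1]$ --- if and only if it is a \emph{rigorous path} in the wiring-diagram language of \cite{GP00}; here Proposition~\ref{prop:in stream} is what guarantees such a path never leaves the horizontal strip bounded by $l_i$ and $l_{i+1}$, which is the regime in which Gleizer--Postnikov state their description. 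Second I would verify that the functional $c_{\mathbf p}$ of \eqref{eq:def c_p}, read off from the vertices $v_{(j_s,j_{s+1})}$ where $\mathbf p$ switches lines and using the conventions $c_{(i,j)}=-c_{(j,i)}$ and $c_{(i,i)}=0$, is the same linear functional that \cite{GP00} attach to a rigorous path. Once this dictionary is in place, $C_{\w_0}=Q_{\w_0}$ is immediate from \cite[Corollary~5.8]{GP00}.

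If instead a self-contained argument is wanted, I would establish the two inclusions directly. For $Q_{\w_0}\subseteq C_{\w_0}$ one must show that each GP-path inequality $(c_{\mathbf p})^t(\mathbf x)\ge 0$ is valid on the string cone. This is proved via the Chamber Ansatz of Berenstein--Fomin--Zelevinsky \cite{BFZ96}: the string datum of a canonical basis element is recorded by certain factorization/minor parameters of a unipotent matrix, and tropicalizing the relevant minor rewrites the corresponding string inequality as a minimum, over lattice paths in the wiring diagram of $\w_0$, of the functionals $c_{\mathbf p}$; non-negativity of every term then forces $(c_{\mathbf p})^t(\mathbf x)\ge 0$. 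For the reverse inclusion $C_{\w_0}\subseteq Q_{\w_0}$ one must show that the rigorous-path inequalities already cut out the string cone, i.e. that nothing is lost by discarding the non-rigorous paths; the mechanism is a ``shortcut'' lemma --- at a vertex $v_{(i,j)}$ where a path is non-rigorous, the offending segment can be locally rerouted so that the new functional is dominated by the old one --- combined with a Lindström--Gessel--Viennot count of non-intersecting paths in the diagram.

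The main obstacle is this second inclusion: proving that the finitely many rigorous-path inequalities \emph{suffice} to describe the string cone is the technical heart of \cite{GP00} and is not formal. If one is content to cite that theorem, the remaining work is only the notational reconciliation described in the first paragraph, which is routine but deserves care, since the orientation chosen at each level $i$ and the sign conventions for the $c_{(i,j)}$ are easy to reverse, and getting them wrong flips all the inequalities.
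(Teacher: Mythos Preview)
Your reading is correct: the paper gives no proof of this corollary at all --- it is simply quoted from \cite{GP00} as an input, and everything that follows (Theorem~\ref{thm:wt GP is wt string}) builds on it via the restriction argument in Proposition~\ref{prop:wt GP res}. So there is nothing to compare on the level of argument; your proposal already does more than the paper by spelling out the dictionary (GP-paths $\leftrightarrow$ rigorous paths, $c_{\mathbf p}\leftrightarrow$ Gleizer--Postnikov's linear functional) and sketching the two inclusions behind the Gleizer--Postnikov result.

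Your sketch of the self-contained route is accurate in spirit, and you are right that the hard inclusion $C_{\w_0}\subseteq Q_{\w_0}$ --- sufficiency of the rigorous-path inequalities --- is the nontrivial content of \cite{GP00}. For the purposes of this thesis the citation is all that is needed, so the first paragraph of your proposal (the notational reconciliation, with attention to the orientation and sign conventions you flag) is exactly what a careful reader would want here.
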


On our way to showing that a toric variety isomorphic to $X_{\mathcal Q_{\w}(\lambda)}$ arises in the context of cluster varieties and mirror symmetry, we first generalize Gleizer-Postnikov's result as follows.

\vbox{
\begin{theorem}\label{thm:wt GP is wt string}
For every $w \in S_{n}$ with reduced expression $\w$ and every extension $\w_0=\w s_{i_{\ell(w)}+1}\cdots s_{i_N}$ the following polyhedral objects coincide
\begin{enumerate}[(i)]
    \item $\mathcal C_{\w}=\mathcal Q_{\w}$,
    \item $C_{\w}=Q_{\w}$,
    \item $\mathcal{C}_{\w}(\lambda)=\mathcal Q_{\w}(\lambda)$ for $\lambda\in\mathbb R^{n-1}$.
\end{enumerate}
\end{theorem}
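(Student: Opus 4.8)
The plan is to reduce everything to the unextended case via the restriction lemmas and then to invoke the Gleizer--Postnikov description of the full string cone $Q_{\w_0}=C_{\w_0}$ from \cite[Corollary~5.8]{GP00}. The three statements (i)--(iii) are tightly linked: once (i) is established, (iii) follows immediately by intersecting both cones with the same family of hyperplanes $\pi^{-1}(\lambda)$ (for $\mathcal{C}_{\w}(\lambda)$ by Definition~\ref{def: polytopes from cones}, and for $\mathcal{Q}_{\w}(\lambda)$ by the defining property of string polytopes recalled above), and (ii) follows from (i) by forgetting the $n-1$ weight coordinates --- both $C_{\w}$ and $Q_{\w}$ are the images of their weighted counterparts under the coordinate projection $\mathbb{R}^{\ell(w)+n-1}\to\mathbb{R}^{\ell(w)}$. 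So the real content is (i).

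For (i), first I would treat the case $w = w_0$. Here the extension is trivial, $\w_0 = \w$, and I need $\mathcal{C}_{\w_0} = \mathcal{Q}_{\w_0}$. The GP-corollary already gives $C_{\w_0} = Q_{\w_0}$ at the level of the unweighted cones, i.e.\ the inequalities $(c_\p)^t(\mathbf{x})\ge 0$ for $\p\in\mathcal{P}_{\w_0}$ cut out the string cone $Q_{\w_0}$ (after forgetting weight coordinates). It then remains to check that the weight inequalities $(c_{[i:k]})^t(\mathbf{x})\ge 0$ from \eqref{eq: def wt ineq GP} for $i\in[n-1]$, $0\le k\le n_i$, together with the hyperplane $c_{[i:0]} = c_i$ encoding the weight, match Littelmann's recursive description of the weighted string cone $\mathcal{Q}_{\w_0}$. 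This is the step I expect to require the most care: I would unwind Littelmann's recursive definition of $\mathcal{Q}_{\w}(\lambda)$ (the ``adapted string'' conditions together with the bookkeeping of how the highest weight $\lambda$ is decremented as one reads off $s_{i_1},\dots,s_{i_{\ell(w)}}$) and show that the combinatorial vectors $c_{[i:k]}$ --- which track, for each simple reflection $s_i$, the contribution $c_i - c_{i_{j_k}} - 2\sum_{j\in J(i),j>j_k} c_{i_j} + \sum_{l\in J(i-1)\cup J(i+1), l>j_k} c_{i_l}$ --- reproduce exactly Littelmann's weight constraints. Concretely, in the reference reduced expression $\hat\w_0$ computed in Example~\ref{exp:pathGT} these $c_{[i:k]}$ specialize to the Gelfand--Tsetlin-type weight inequalities, so one can anchor the identification there and propagate it; alternatively one checks directly that the two descriptions have the same defining inequalities.

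Finally, for general $w\in S_n$ with reduced expression $\w$ and extension $\w_0 = \w\, s_{i_{\ell(w)+1}}\cdots s_{i_N}$, I would apply the restriction lemma of Caldero--Littelmann (\cite[Lemma~3.3]{Cal02}, recalled above): $\mathcal{Q}_{\w}$ is obtained from $\mathcal{Q}_{\w_0}$ by setting the coordinates indexed by $s_{i_{\ell(w)+1}},\dots,s_{i_N}$ to zero. On the GP side I need the analogous statement: $\mathcal{C}_{\w}$ is the ``face'' of $\mathcal{C}_{\w_0}$ obtained by the same specialization. This should follow from Proposition~\ref{prop:in stream}, which says a GP-path of shape $(l_i,l_{i+1})$ stays in the strip bounded by $l_i$ and $l_{i+1}$ and in particular never reaches to the left of the crossing of those two lines --- so the GP-paths of $\pa(\w)$ are precisely the GP-paths of $\pa(\w_0)$ that do not use any crossing corresponding to the appended reflections, and the corresponding vectors $c_\p$, $c_{[i:k]}$ for $\pa(\w)$ are the restrictions of those for $\pa(\w_0)$. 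Hence setting the extra coordinates to zero carries $\mathcal{C}_{\w_0}$ to $\mathcal{C}_{\w}$ exactly as it carries $\mathcal{Q}_{\w_0}$ to $\mathcal{Q}_{\w}$, and the $w = w_0$ case gives the equality of the ambient cones. This yields (i) for all $w$, and (ii), (iii) follow as above. The main obstacle, as noted, is the precise dictionary between the combinatorial weight vectors $c_{[i:k]}$ and Littelmann's recursive weight conditions; everything else is a matter of carefully tracking which paths and faces survive the restriction.
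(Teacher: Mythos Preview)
Your overall strategy is right and matches the paper's: prove the $w_0$ case via \cite[Corollary~5.8]{GP00}, then reduce general $w$ to $w_0$ using Caldero--Littelmann's restriction lemma on the $\mathcal Q$ side together with an analogous restriction statement on the $\mathcal C$ side. The deductions of (ii) and (iii) from (i) are as you say.

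There is, however, a real gap in your restriction argument on the GP side. Proposition~\ref{prop:in stream} does not do what you claim: it only says that a GP-path of shape $(l_i,l_{i+1})$ stays in the strip between $l_i$ and $l_{i+1}$, not that it avoids the right-hand part of $\pa(\w_0)$. In fact it cannot avoid that part --- every GP-path of $\pa(\w_0)$ has its source and sink $L_p,L_q$ at the far right boundary, so it necessarily traverses crossings coming from the appended reflections $s_{i_{\ell(w)+1}},\dots,s_{i_N}$. Your claim that ``the GP-paths of $\pa(\w)$ are precisely the GP-paths of $\pa(\w_0)$ that do not use any crossing corresponding to the appended reflections'' is therefore false as stated. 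What is actually needed is: (a) if you cut a GP-path $\p_{\w_0}\in\mathcal P_{\w_0}$ along the vertical line separating $\pa(\w)$ from the extension, the pieces on the left are (possibly several) GP-paths in $\mathcal P_{\w}$; and (b) conversely, every $\p_{\w}\in\mathcal P_{\w}$ arises this way, i.e.\ extends to some $\p_{\w_0}\in\mathcal P_{\w_0}$. The paper proves exactly these two statements (Propositions~\ref{prop:respath} and~\ref{prop:restrict}, the latter via an explicit extension Algorithm~\ref{alg:induced path}), packages the conclusion as $\mathcal C_{\w}=\res_{\w}(\mathcal C_{\w_0})$ in Proposition~\ref{prop:wt GP res}, and then the proof of the theorem is a short paragraph combining this with Caldero's lemma.

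One more remark: you flag the dictionary between the $c_{[i:k]}$ and Littelmann's weight inequalities as the ``main obstacle''. In the paper this is treated as a direct observation --- the vectors $c_{[i:k]}$ are literally Littelmann's weight constraints written in the crossing-point basis, and under restriction the extra ones (with $k>n_i^{\w}$) collapse to the trivial inequality $c_i\ge 0$. So the weight-inequality part is lighter than you expect; the path-restriction part is heavier.
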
}

In order to prove Theorem~\ref{thm:wt GP is wt string} we show how to obtain $\mathcal C_{\w}$ from restricting $\mathcal C_{\w_0}$ for appropriate $\w_0$. The next subsection is dedicated to introducing restricted paths and concludes with the proof of Theorem~\ref{thm:wt GP is wt string}.

\subsubsection*{Restriction of paths} 
We show that for $\w_0=\w s_{i_{\ell(w)+1}}\cdots s_{i_N}$ we obtain $\mathcal C_{\w}$ from $\mathcal C_{\w_0}$ by setting to zero the coordinates corresponding to crossing points $c_{i_{\ell(w)+1}},\dots,c_{i_N}$ in $\pa(\w_0)$.

\begin{definition}\label{def: res path}
Let $\w$ be a reduced expression of $w\in S_n$ and fix $\w_0=\w s_{i_{\ell(w)+1}}\cdots s_{i_N}$. 
Consider $\mathbf{p}_{\w_0}\in\mathcal P_{\w_0}$ and draw it in $\pa(\w_0)$.
Then cut $\pa(\w_0)$ in two pieces along a vertical line, such that all crossing points $v_{i_p}$ corresponding to $s_{i_p}$ with $1\le p\le \ell(w)$ are on the left of the cut and all $v_{i_q}$ corresponding to $s_{i_q}, \ell(w)<q\le N$ are on the right (see Figure~\ref{fig: res path}).
We define the \emph{restriction} $\res_{\w}(\mathbf{p}_{\w_0})$ of ${\mathbf p}_{\w_0}$ to $\pa(\w)$ as the part of $\p_{\w_0}$ that is to the left of the cut.
\end{definition}

We label the intersection points of the lines $l_i$ with the cutting line by $\hat L_i$.
An alternative way of describing $\res_{\w}(\p_{\w_0})$ is by removing all vertices $v_{(i,j)}$ from it for which $w(\alpha_{i,j-1})>0$. 
Denote by $\res_{\w}(\mathcal P_{\w_0})$ the set of all paths in $\pa(\w)$ that appear in a restriction of a path in $\mathcal P_{\w_0}$ (counting each path only once).

\begin{center}
\begin{figure}[]
\centering
\begin{tikzpicture}[scale=.8]
\draw[rounded corners] (0,0) -- (2,0) -- (3,1) -- (5,1) -- (7,3) -- (9,3);
\draw[rounded corners] (0,1) -- (1,1) -- (2,2) -- (3,2) -- (4,3) -- (6,3) -- (7,2) -- (9,2);
\draw[rounded corners] (0,2) -- (1,2) -- (3,0) -- (7,0) -- (8,1) -- (9,1);
\draw[rounded corners] (0,3) -- (3,3) -- (4,2) -- (5,2) -- (6,1) -- (7,1) -- (8,0) -- (9,0);

\draw[->] (9,3) --  (8,3);
\draw[<-] (8.5,0) -- (8.1,0);
\draw[<-] (2,3) -- (1.5,3);
\draw[<-] (1,0) -- (1.9,0);
\draw[->] (0,2) -- (.5,2);
\draw[->] (.8,1) -- (.5,1);

\node at (1.5,-.5) {$s_2$};
\node at (2.5,-.5) {$s_1$};
\node at (3.5,-.5) {$s_3$};
\node at (5.5,-.5) {$s_2$};
\node at (6.5,-.5) {$s_3$};
\node at (7.5,-.5) {$s_1$};

\node at (-.5,0) {$l_1$};
\node at (-.5,1) {$l_2$};
\node at (-.5,2) {$l_3$};
\node at (-.5,3) {$l_4$};

%% p_\w c_2
\draw[rounded corners, blue, ultra thick] (4.5,3) -- (4,3) -- (3.5,2.5);
    \draw[rounded corners, blue, ultra thick] (3.5,2.5) -- (4,2) -- (4.5,2);
   % \node[blue] at (4,2.5) {$\underline{c}_2$};
    \draw[blue, ultra thick, ->] (4.5,3) -- (4.1,3);
    \draw[blue, ultra thick, ->] (4.1,2) -- (4.2,2);
%% p'_\w   c_4 
\draw[rounded corners, blue, ultra thick] (4.5,1) -- (3,1) -- (2.5,.5);
    \draw[rounded corners, blue, ultra thick] (2.5,0.5) -- (3,0) -- (4.5,0);
    %\node[blue] at (3.5,0.5) {$\underline{c}_4$};
    \draw[blue, ultra thick, ->] (3.8,1) -- (3.5,1);
    \draw[blue, ultra thick, ->] (3.1,0) -- (3.5,0);
%% p_\w_0 c_1 and c_5    
\draw[rounded corners, ultra thick, red] (9,2) -- (7,2) -- (6,3) -- (4.5,3);
    %\node[teal] at (7,2.5) {$\underline{c}_1$};
    \draw[rounded corners, ultra thick, red] (9,1) -- (8,1) -- (7,0) -- (4.5,0);
    % \node[teal] at (7,.5) {$\underline{c}_5$};
    \draw[red, ultra thick, ->] (8,2) -- (7.8,2);
    \draw[red, ultra thick, ->] (6,0) -- (6.5,0);
%% p_\w_0 c_3    
\draw[rounded corners, ultra thick, red]    (4.5,2) -- (5,2) -- (5.5,1.5);
    \draw[rounded corners, ultra thick, red] (4.5,1) -- (5,1) -- (5.5,1.5);
    % \node[red] at (5,1.5) {$\underline{c}_{3}$};
    \draw[red, ultra thick,->] (5.1,1.9) -- (5.3,1.7);
    \draw[red, ultra thick, ->] (4.8,1) -- (4.6,1);
    
\draw[dashed, thick] (4.5,-.8) -- (4.5,3.8);    
\end{tikzpicture}
\caption{A path ${\p}_{\w_0}\in\mathcal P_{\w_0}$ for $\w_0=s_2s_1s_3s_2s_3s_1$ that restricts to two paths ${\p}_{\w},{\p}'_{\w}\in\mathcal{P}_{\w}$ (in blue to the left of the dashed cut) for $\w=s_2s_1s_3$. %The labels $\underline{c}_i$ denote the vectors corresponding to each fragment of ${\p}_{\w_0}$.
}\label{fig: res path}
\end{figure}
\end{center}

\begin{example}\label{exp: res path}
Consider $\w=s_2s_1s_3$ and extend it to $\w_0=s_2s_1s_3s_2s_3s_1\in S_4$. We draw $\pa(\w)$ and endow it with the orientation for $(l_2,l_3)$. 
Figure~\ref{fig: res path} shows a GP-path ${\p}_{\w_0}$.
Its restriction $\res_{\w}({\p}_{\w_0})$ consists of two GP-paths for $\w$ shown in blue to the left of the cut. %Let ${\p}_{\w}$ denote the upper blue path and ${\p}'_{\w}$ the lower blue path to the left of the dashed line. 
\end{example}

\begin{proposition}\label{prop:respath}
Let $\w$ be a reduced expression of $w\in S_n$ and fix $\w_0=\w s_{i_{\ell(w)+1}}\cdots s_{i_N}$. 
Consider $\p_{\w_0}\in\mathcal P_{\w_0}$, then $\res_{\w}(\p_{\w_0})$ is either empty or a union of paths in $\mathcal P_{\w}$.
In particular, $\res_{\w}(\mathcal P_{\w_0})\subset \mathcal P_{\w}$.
\end{proposition}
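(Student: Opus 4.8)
The statement to prove is: for $\w_0 = \w\, s_{i_{\ell(w)+1}}\cdots s_{i_N}$ and any GP-path $\p_{\w_0} \in \mathcal{P}_{\w_0}$, the restriction $\res_{\w}(\p_{\w_0})$ is either empty or a union of paths in $\mathcal{P}_{\w}$. The key observation is that a GP-path is, by definition, a rigorous path in $\pa(\w_0)$ with a fixed orientation $(l_i, l_{i+1})$, and that the orientation data on $\pa(\w_0)$ restricts to the same orientation data on $\pa(\w)$, since the lines $l_1,\dots,l_n$ and their left-to-right/right-to-left orientations are unchanged by the cut. First I would unwind the definitions: $\res_{\w}(\p_{\w_0})$ is obtained from $\p_{\w_0}$ by deleting all crossing vertices $v_{(k,m)}$ with $w(\alpha_{k,m-1}) > 0$, equivalently by intersecting the drawn path with the half-diagram to the left of the vertical cut. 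Since the cut is vertical, the intersection of $\p_{\w_0}$ with the left half is a disjoint union of connected sub-paths, each of which runs between two points that are either original endpoints $L_p, L_q$ of $\p_{\w_0}$ or intersection points $\hat{L}_r$ of lines $l_r$ with the cutting line.

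The main step is to check that each such connected piece is itself a valid GP-path in $\pa(\w)$. There are three things to verify for a connected component $\q$ of $\res_{\w}(\p_{\w_0})$: (1) $\q$ is rigorous in $\pa(\w)$; (2) $\q$ has a source $L_p$ and a sink $L_q$ with $p \le i < q$; (3) the Weyl-group conditions $w(i+1) \le w(p) \le w(i)$ and $w(i+1) \le w(q) \le w(i)$ hold. For (1): rigorousness is a local condition at each interior vertex of the path — a vertex $v_{(k,m)}$ of $\q$ where $\q$ changes lines is non-rigorous only in the two configurations of Figure~\ref{fig:rigorous}, and these configurations are identical in $\pa(\w)$ and $\pa(\w_0)$ since the incident line segments and their orientations are the same; hence $\q$ inherits rigorousness from $\p_{\w_0}$. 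For (2) and (3): this is where Proposition~\ref{prop:in stream} does the work. By that proposition, $\p_{\w_0}$ of shape $(l_i, l_{i+1})$ does not cross the lines $l_i$ and $l_{i+1}$ and stays in the region bounded by them to the left. This confines $\p_{\w_0}$, and hence every piece of $\res_{\w}(\p_{\w_0})$, to the band between $l_i$ and $l_{i+1}$. I would use this confinement together with the structure of the cut to argue that a connected piece of the restriction either (a) is empty, (b) is the whole of $\p_{\w_0}$ (when no crossing of $\p_{\w_0}$ lies to the right of the cut, so nothing is deleted), or (c) begins and ends at points $L_p$, $L_q$ that were already sources/sinks of $\p_{\w_0}$ — because the only way a piece could terminate at a cut-point $\hat{L}_r$ in the interior of the band is if $\p_{\w_0}$ itself crossed the vertical cut there, but then the adjacent crossing vertices $v_{(r,\cdot)}$ of $\p_{\w_0}$ straddle the cut, and I must check that the piece to the left is still a legitimate GP-path.

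The subtle point — and I expect this to be the main obstacle — is case (c): when $\p_{\w_0}$ weaves back and forth across the vertical cut so that its restriction genuinely breaks into several pieces (as in Example~\ref{exp: res path} and Figure~\ref{fig: res path}), each piece must independently satisfy the source/sink inequalities for $\pa(\w)$. Here I would argue as follows: a piece $\q$ that ends at a point on the cutting line must, by the confinement from Proposition~\ref{prop:in stream} and the fact that $\p_{\w_0}$ is rigorous, actually end at one of the points $\hat L_p$ with $p\le i$ or $\hat L_q$ with $q\ge i+1$; but moreover the rigorousness of $\p_{\w_0}$ at the crossing vertex where it re-enters the right half forces the line it is travelling on at the cut to be one on which a GP-path in $\pa(\w)$ is allowed to start or stop, i.e. the relevant Weyl-group inequalities $w(i+1)\le w(p)\le w(i)$ must hold for that line index $p$. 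Concretely, a segment of $\p_{\w_0}$ on line $l_p$ that is oriented, say, leftward and runs into the cut corresponds to $l_p$ being oriented right-to-left, which by definition of the orientation means $p \le i$; combined with confinement between $l_i$ and $l_{i+1}$ and rigorousness, one gets $w(i+1) \le w(p) \le w(i)$. The same reasoning applies at the sink. Once all connected pieces of $\res_{\w}(\p_{\w_0})$ are shown to be rigorous paths with valid sources, sinks, and Weyl-group conditions, they are by definition elements of $\mathcal{P}_{\w}$, and the final claim $\res_{\w}(\mathcal{P}_{\w_0}) \subset \mathcal{P}_{\w}$ follows immediately by taking the union over all $\p_{\w_0}$. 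I would close by noting that the empty-restriction case occurs exactly when $\p_{\w_0}$ lies entirely to the right of the cut, which is consistent with the statement.
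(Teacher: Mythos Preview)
Your approach is essentially the same as the paper's: both decompose $\p_{\w_0}$ as a sequence of line-change vertices, observe that the restriction extracts maximal consecutive runs of surviving crossings as connected sub-paths, and argue that rigorousness is a local condition at each crossing and therefore inherited by every sub-path. The paper's proof is extremely terse---after writing $\p_{\w_0}=L_i\to v_{(i,j_1)}\to\cdots\to v_{(j_k,i+1)}\to L_{i+1}$ and identifying the pieces $\p_r=\hat L_{j_r}\to\cdots\to\hat L_{j_{r+s+1}}$, it simply concludes ``by definition, each $\p_r$ is rigorous and hence in $\mathcal P_{\w}$'' without separately checking the source/sink and Weyl-group inequalities.

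Where you differ is in the level of care: you explicitly worry about verifying the source/sink conditions $p\le i<q$ and $w(i+1)\le w(p),w(q)\le w(i)$ for each piece, and you bring in Proposition~\ref{prop:in stream} (confinement between $l_i$ and $l_{i+1}$) to handle them. The paper does not invoke that proposition here at all. Your argument for these conditions is correct---the orientation forces a sub-path entering $\pa(\w)$ from the cut to be on a leftward line ($p\le i$) and one exiting to be on a rightward line ($q\ge i+1$), and confinement of $\p_{\w_0}$ between $l_i$ and $l_{i+1}$ translates at the cut (which is the right boundary of $\pa(\w)$) into exactly the inequalities $w(i+1)\le w(j_r)\le w(i)$. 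So your proof is more complete than the paper's on this point, at the cost of being longer; the paper is content to absorb these checks into ``by definition.'' One small wobble: your intermediate case (c), where you first suggest pieces begin and end at original endpoints of $\p_{\w_0}$, is not right (Example~\ref{exp: res path} already shows otherwise), but you correctly abandon it in the next paragraph and treat the genuine cut-point endpoints.
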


\begin{proof}
Let $\p_{\w_0}$ be a path for orientation $(l_i,l_{i+1})$, i.e. of form $\p_{\w_0}=L_{i}\to v_{(i,j_1)}\to v_{(j_1,j_2)}\to\dots\to v_{(j_k,i+1)}\to L_{i+1}$.
To simplify notation we set $i=j_0$ and $i+1=j_{k+1}$.
First note that if $w(\alpha_{j_r,j_{r+1}-1})>0$ for all $0\le r\le k$ then $\res_{\w}(\p_{\w_0})=\varnothing$.
Otherwise $\res_{\w}(\p_{\w_0})$ is a union of paths
\[
\p_r=\hat L_{j_r}\to v_{(j_r,j_{r+1})}\to\dots\to v_{(j_{r+s},j_{r+s+1})}\to \hat L_{j_{r+s+1}}
\]
such that $w(\alpha_{j_{r+p},j_{r+p+1}-1})<0$ for all $0\le p\le s$, $0\le r\le k$ and $ 0\le s\le k-r$.
By definition, each $\p_r$ is rigorous and hence, in $\mathcal P_{\w}$.
\end{proof}

\begin{algorithm}
\SetAlgorithmName{Algorithm}{} 
\KwIn{\medskip {\bf Input:\ }  A path in $\mathcal P_{\w} \ni {\p}_{\w}=\hat{L}_{i-l}\to v_{r_1}\to \dots \to v_{r_m}\to \hat{L}_{i+m}$ for orientation $(l_i,l_{i+1})$.}
\BlankLine
{\bf Initialization:} extend $\w$ to $\w_0=\w s_{i_{\ell(w)+1}}\cdots s_{i_N}$;\\ complete $\pa(\w)$ to $\pa(\w_0)$ with orientation for $(l_i,l_{i+1})$;\\
set $p=q=0$ and $\hat\p_{\w}=\p_{\w}$.\\
\For{$p<m-1$}{follow $l_{i+m-p}$ with respect to the orientation to the next crossing with a line $l_{i+m-p-p'}$ with $p'\in[m-p-1]$,\\
    \If{$p'=m-p-1$}{
        {\bf Output:} $\hat\p_{\w}\to v_{(i+m-p,i+1)}\to L_{i+1}$.
        }
    \Else{replace $p$ by $p+p'$ and $\hat\p_{\w}$ by $\hat\p_{\w}\to v_{(i+m-p,i+m-p-p')}$ and start over.}
}
\For{$q<l$}{
follow $l_{i-l+q}$ against the orientation to the next crossing with a line $l_{i-l+q+q'}$ with $q'\in[l-q]$,\\
    \If{$q'=l-q$}{
        {\bf Output:} $L_i\to v_{(i,i-l+q+q')}\to\hat\p_{\w}$.
        }
    \Else{replace $q$ by $q+q'$ and $\hat\p_{\w}$ by $v_{(i-l+q+q',i-l+q)}\to\hat\p_{\w}$ and start over.}
}
\BlankLine
{\bf Output:\ } A path $\ind_{\w_0}(\p_{\w}):= L_{i}\to v_{(i,i-l+q)}\to \dots\to\p_{\w}\to \dots\to v_{(i+m-p,i+1)}\to L_{i+1}$.
\label{alg:induced path}
\caption{Constructing the induced path $\ind_{\w_0}(\p_{\w})$ from $\p_{\w}\in\mathcal P_{ \w}$.}
\end{algorithm}

We want to show the other implication, $\mathcal P_{\w}\subset \res_{\w}(\mathcal P_{\w_0})$. 
In Algorithm~\ref{alg:induced path} we give a construction to obtain a path in $\pa(\w_0)$ from a given path in $\mathcal P_{\w}$.
The following proposition shows that the algorithm always terminates and that the output is in fact a path in $\mathcal P_{\w_0}$.

\begin{proposition}\label{prop: algo induced path}
Algorithm~\ref{alg:induced path} terminates for all $\p_{\w}\in\mathcal P_{\w}$ and $\ind_{\w_0}(\p_{\w})\in\mathcal P_{\w_0}$.
\end{proposition}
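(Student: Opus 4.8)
The statement splits naturally into two claims: (a) Algorithm~\ref{alg:induced path} terminates on every input $\p_{\w}\in\mathcal P_{\w}$, and (b) the output path $\ind_{\w_0}(\p_{\w})$ is rigorous and has the correct source/sink data, i.e. lies in $\mathcal P_{\w_0}$. I would handle termination first. The algorithm has two \texttt{for}-loops; in the first, the counter $p$ starts at $0$ and each iteration increases it by some $p'\ge 1$ until $p=m-1$ (equivalently until the current line crosses $l_{i+1}$). Since $p$ is a strictly increasing integer bounded above by $m-1$, the loop terminates after at most $m-1$ steps. The key point needing justification is that the ``next crossing'' the algorithm looks for always exists: following $l_{i+m-p}$ with respect to the orientation $(l_i,l_{i+1})$ — so to the right, since $i+m-p\ge i+1$ — the line $l_{i+m-p}$ must cross every line $l_j$ with $j<i+m-p$ that it has not yet crossed before reaching its right endpoint; in particular it eventually crosses one of $l_{i+m-p-1},\dots,l_{i+1}$. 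This uses that in $\hat{\w}_0=\w s_{i_{\ell(w)+1}}\cdots s_{i_N}$ the word is long enough that $w_0$ is reached, so all the needed crossings are present in $\pa(\hat\w_0)$. The second loop is symmetric, with the counter $q$ increasing by $q'\ge 1$ up to $l$, following $l_{i-l+q}$ \emph{against} the orientation (to the left), and the analogous crossing-existence argument applies.

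For claim (b), I would verify the three defining properties of a GP-path for $\pa(\w_0)$ with orientation $(l_i,l_{i+1})$. First, source and sink: by construction the first loop extends $\p_{\w}$ on the right until it reaches $v_{(i+m-p,i+1)}$ and then $L_{i+1}$, and the second loop extends it on the left until $v_{(i,i-l+q)}$ and then $L_i$; so the source is $L_i$ and sink $L_{i+1}$, which satisfy the required inequalities $w_0(i+1)\le w_0(i)\le w_0(i)$ trivially since $i\le i$. (More precisely the GP-path conditions $w(i+1)\le w(p)\le w(i)$ become tautological here because $p=i$ and $q=i+1$ with the orientation chosen so that $w_0(i)>w_0(i+1)$, which holds for the pair $(l_i,l_{i+1})$ whenever such a path exists.) Second, rigorousness: at each crossing point $v_{(k,m)}$ where the path changes lines, I must check the configuration is not one of the two forbidden ones of Figure~\ref{fig:rigorous}. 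At the crossings lying inside the restricted part the path $\p_{\w}$ is already rigorous by hypothesis, so only the newly-added crossings (those with $w(\alpha_{\cdot,\cdot})>0$, i.e. on the right piece of $\pa(\w_0)$) need checking. In the first loop the path always moves from a higher-indexed line $l_{i+m-p}$ to a lower-indexed line $l_{i+m-p-p'}$ following the orientation to the right — and both lines are oriented to the right since their indices are $\ge i+1$; the forbidden right-oriented configuration is the one where the path passes from the \emph{lower}-indexed to the \emph{higher}-indexed line, so our move is admissible. The second loop is the mirror image with lines oriented to the left. Third, that $\ind_{\w_0}(\p_{\w})$ is a genuine path (connected, no backtracking): this follows because between consecutive recorded vertices the path just travels along one pseudoline, and the counters guarantee each line is used on a single contiguous segment.

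I expect the main obstacle to be the rigorousness verification at the junction vertices $\hat L_{i-l},\dots,\hat L_{i+m}$ where the restricted path $\p_{\w}$ meets the newly-constructed extensions — one has to argue that the orientation of the line on which the splice happens is consistent, i.e. that $\p_{\w}$ arrives at $\hat L_{i+m}$ along line $l_{i+m}$ travelling to the right (so that the continuation along $l_{i+m}$ to its next crossing is in the same direction), and similarly at $\hat L_{i-l}$. This is exactly the content of Proposition~\ref{prop:in stream}: a GP-path of shape $(l_i,l_{i+1})$ stays inside the strip bounded by $l_i$ and $l_{i+1}$ and does not leave it to the left, which pins down which line $\p_{\w}$ occupies at its two ends and the direction of travel there. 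Once that is in place, the splicing is automatic and claim (b) follows; together with the termination argument this proves the proposition. I would also remark that $\res_{\w}$ and $\ind_{\w_0}$ are mutually inverse up to the obvious bookkeeping, which gives $\mathcal P_{\w}\subseteq\res_{\w}(\mathcal P_{\w_0})$ and, combined with Proposition~\ref{prop:respath}, the equality $\res_{\w}(\mathcal P_{\w_0})=\mathcal P_{\w}$ used in the proof of Theorem~\ref{thm:wt GP is wt string}.
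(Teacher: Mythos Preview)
Your approach---termination via a strictly increasing counter, then rigorousness by checking the forbidden configurations of Figure~\ref{fig:rigorous}---is exactly what the paper does, only with more detail. Two points deserve tightening.

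First, for termination: saying that all pairs of lines cross in $\pa(\w_0)$ is not quite enough, because the crossing of the current line $l_{i+m-p}$ with some $l_j$ in the target range might already lie to the \emph{left} of the current position, in which case the ``next crossing'' the algorithm seeks would not exist to the right. What rules this out is precisely Proposition~\ref{prop:in stream}: since $\p_{\w}$ stays in the strip between $l_i$ and $l_{i+1}$, the sink $\hat L_{i+m}$ sits above $l_{i+1}$ at the cut, so $l_{i+m}$ has not yet crossed $l_{i+1}$ and the needed crossing lies ahead; this persists inductively along the loop. The paper invokes Proposition~\ref{prop:in stream} for exactly this reason, whereas you invoke it only for the junction splice (where it is actually not needed---the direction of travel at $\hat L_{i+m}$ is automatic from the orientation).

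Second, the rigorousness condition concerns vertices the path passes through \emph{without} switching lines (three consecutive vertices on the same pseudoline), not the switching vertices themselves. Your check ``higher-to-lower switch is admissible'' is therefore aimed at the wrong vertices. The correct verification is that while following $l_{i+m-p}$ to the right before the first in-range crossing, the passed-through crossings are with lines of index $\le i$ (oriented left, so not both right) or index $>i+m-p$ (both right, but the current line has the smaller index), and neither matches the forbidden pattern. Your conclusion is right; only the justification needs this adjustment.
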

\begin{proof}
By Proposition~\ref{prop:in stream} $\p_{\w}$ lies in the region of $\pa(\w)$ in between the lines $l_i$ and $l_{i+1}$. In particular, at some point there is a $p'$ with $l_{i+m-p-p'}=l_{i+1}$ terminating the first loop and a $q'$ with $l_{i-l+q+q'}=l_i$ terminating the second loop.

To see that $\ind_{\w_0}(\p_{\w})\in\mathcal P_{\w_0}$ observe that changing the lines as indicated by the algorithm avoids exactly the two situations from Figure~\ref{fig:rigorous} forbidden in rigorous paths. Hence, $\ind_{\w_0}(\p_{\w})$ is rigorous. 
\end{proof}

By Proposition~\ref{prop: algo induced path} we can define the following.

\begin{definition}\label{def: ind path}
Let $\w$ be a reduced expression of $w\in S_n$ and fix $\w_0=\w s_{i_{\ell(w)+1}}\cdots s_{i_N}$. 
For $\p_{\w}\in\mathcal P_{\w}$ we define the \emph{induced path} $\ind_{\w_0}(\p_{\w})\in\mathcal P_{\w_0}$ as the output of Algorithm~\ref{alg:induced path}.
\end{definition}

\begin{example}\label{exp: ind path}
Consider $\w=s_2s_1s_3$ and extend it to $\w_0=s_2s_1s_3s_2s_3s_1\in S_4$. We draw $\pa(\w)$ and endow it with the orientation for $(l_2,l_3)$. 
Figure~\ref{fig: ind path} shows a GP-path ${\p}_{\w}$ in blue to the left of the cut. The extension of $\p_{\w}$ in red to the right of the cut completes $\p_{\w}$ to the induced path $\ind_{\w_0}(\p_{\w})$ that is the output of Algorithm~\ref{alg:induced path}.
\end{example}

\begin{center}
\begin{figure}[]
\centering
\begin{tikzpicture}[scale=.8]
\draw[rounded corners] (0,0) -- (2,0) -- (3,1) -- (5,1) -- (7,3) -- (9,3);
\draw[rounded corners] (0,1) -- (1,1) -- (2,2) -- (3,2) -- (4,3) -- (6,3) -- (7,2) -- (9,2);
\draw[rounded corners] (0,2) -- (1,2) -- (3,0) -- (7,0) -- (8,1) -- (9,1);
\draw[rounded corners] (0,3) -- (3,3) -- (4,2) -- (5,2) -- (6,1) -- (7,1) -- (8,0) -- (9,0);

\draw[->] (9,3) --  (8,3);
\draw[<-] (8.5,0) -- (8.1,0);
\draw[<-] (2,3) -- (1.5,3);
\draw[<-] (1,0) -- (1.9,0);
\draw[->] (0,2) -- (.5,2);
\draw[->] (.8,1) -- (.5,1);

\node at (1.5,-.5) {$s_2$};
\node at (2.5,-.5) {$s_1$};
\node at (3.5,-.5) {$s_3$};
\node at (5.5,-.5) {$s_2$};
\node at (6.5,-.5) {$s_3$};
\node at (7.5,-.5) {$s_1$};

\node at (-.5,0) {$l_1$};
\node at (-.5,1) {$l_2$};
\node at (-.5,2) {$l_3$};
\node at (-.5,3) {$l_4$};

%% p'_\w   c_4 
\draw[rounded corners, blue, ultra thick] (4.5,1) -- (3,1) -- (2.5,.5);
    \draw[rounded corners, blue, ultra thick] (2.5,0.5) -- (3,0) -- (4.5,0);
    \draw[blue, ultra thick, ->] (3.8,1) -- (3.5,1);
    \draw[blue, ultra thick, ->] (3.1,0) -- (3.5,0);
    \draw[rounded corners, ultra thick, red] (9,1) -- (8,1) -- (7,0) -- (4.5,0);   \draw[red, ultra thick, ->] (6,0) -- (6.5,0); 
    \draw[red, ultra thick, rounded corners] (4.5,1) -- (5,1)-- (6.5,2.5);
    \draw[red, ultra thick, rounded corners] (6.5,2.5) -- (7,2) -- (9,2);
    \draw[red, ultra thick, ->] (8,2) -- (7.8,2);
    
\draw[dashed, thick] (4.5,-.8) -- (4.5,3.8);    
\end{tikzpicture}
\caption{A path ${\p}_{\w}\in\mathcal P_{\w}$ for $\w_0=s_2s_1s_3$ and the induced path $\ind_{\w_0}(\p_{\w})\in\mathcal P_{\w_0}$ with $\w_0=\w s_2s_3s_1$.
}\label{fig: ind path}
\end{figure}
\end{center}

\begin{proposition}\label{prop:restrict}
Let $\w$ be a reduced expression of $w\in S_n$ and fix $\w_0=\w s_{i_{\ell(w)}+1}\cdots s_{i_N}$.
For every $\mathbf{p}_{\w}\in\mathcal P_{\w}$ there exists $\mathbf{p}_{\w_0}\in\mathcal P_{\w_0}$ such that, $\res_{\w}(\mathbf{p}_{\underline w_0})=\mathbf{p}_{\w}$.  %Moreover, we have
%\[
%{\res}_{\w}({\ind}_{\w_0}({\p}_{\w}))={\p}_{\w}.
%\]
In particular, we have $\mathcal P_{\w}\subset\res_{\w}(\mathcal P_{\w_0})$.
\end{proposition}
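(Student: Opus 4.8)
The statement to prove is that for every GP-path $\mathbf{p}_{\w}\in\mathcal{P}_{\w}$ there exists a GP-path $\mathbf{p}_{\w_0}\in\mathcal{P}_{\w_0}$ whose restriction to $\pa(\w)$ equals $\mathbf{p}_{\w}$; in particular $\mathcal{P}_{\w}\subseteq\res_{\w}(\mathcal{P}_{\w_0})$. The natural candidate for $\mathbf{p}_{\w_0}$ is the induced path $\ind_{\w_0}(\mathbf{p}_{\w})$ constructed by Algorithm~\ref{alg:induced path}, which by Proposition~\ref{prop: algo induced path} is well-defined and lies in $\mathcal{P}_{\w_0}$. So the plan is: first I would set $\mathbf{p}_{\w_0}:=\ind_{\w_0}(\mathbf{p}_{\w})$, and then I would show that $\res_{\w}(\mathbf{p}_{\w_0})=\mathbf{p}_{\w}$.

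For the second part, I would argue using Definition~\ref{def: res path} together with its alternative description: $\res_{\w}(\mathbf{p}_{\w_0})$ is obtained from $\mathbf{p}_{\w_0}$ by deleting every vertex $v_{(i,j)}$ with $w(\alpha_{i,j-1})>0$, equivalently, by keeping only the part of $\mathbf{p}_{\w_0}$ lying to the left of the vertical cut separating the crossings of $\w$ from those of $s_{i_{\ell(w)+1}}\cdots s_{i_N}$. By construction, Algorithm~\ref{alg:induced path} takes $\mathbf{p}_{\w}$ and only \emph{appends} segments on the far right (in the first loop, following the lines $l_{i+m-p}$ rightward through crossings that correspond to $s_{i_q}$ with $\ell(w)<q\le N$, terminating at $L_{i+1}$) and \emph{prepends} segments on the far left (in the second loop, following $l_{i-l+q}$ leftward through crossings corresponding to $s_{i_q}$ with $\ell(w)<q\le N$, terminating at $L_i$). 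Here I need to use Proposition~\ref{prop:in stream}: since $\mathbf{p}_{\w}$ is a GP-path of shape $(l_i,l_{i+1})$, it does not leave the region between $l_i$ and $l_{i+1}$ to the left, so the newly traversed crossings in both loops are genuinely ``new'' crossings in $\pa(\w_0)\setminus\pa(\w)$ — i.e. they correspond to letters $s_{i_q}$ with $q>\ell(w)$ — and hence get deleted under restriction. The segment of $\ind_{\w_0}(\mathbf{p}_{\w})$ that survives the deletion is therefore exactly the middle piece, namely $\mathbf{p}_{\w}$ itself; I would verify this by tracking the vertex list output by the algorithm and checking that removing the prefix $L_i\to v_{(i,i-l+q)}\to\cdots$ and the suffix $\cdots\to v_{(i+m-p,i+1)}\to L_{i+1}$ returns the input path.

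The main obstacle I anticipate is a careful bookkeeping issue rather than a conceptual one: I must confirm that \emph{all} crossings introduced by the two loops of Algorithm~\ref{alg:induced path} correspond to letters of $\w_0$ occurring strictly after position $\ell(w)$, so that none of the appended/prepended vertices accidentally survives the restriction, and conversely that no vertex of the original $\mathbf{p}_{\w}$ is lost. This requires knowing that a crossing of lines $l_a,l_b$ appears in $\pa(\w)$ (at a position $\le\ell(w)$) precisely when $w(\alpha_{\min(a,b),\max(a,b)-1})<0$, as recorded after Definition~\ref{def:pseusoline arr}; the crossings traversed by the loops connect a line $l_{i+m-p}$ or $l_{i-l+q}$ to lines that $\mathbf{p}_{\w}$ did \emph{not} previously meet, and one checks these pairs do not satisfy the sign condition for $w$ but do for $w_0$. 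Once this compatibility is pinned down, the equality $\res_{\w}(\ind_{\w_0}(\mathbf{p}_{\w}))=\mathbf{p}_{\w}$ is immediate, and the inclusion $\mathcal{P}_{\w}\subseteq\res_{\w}(\mathcal{P}_{\w_0})$ follows. Combined with Proposition~\ref{prop:respath}, which gives the reverse inclusion $\res_{\w}(\mathcal{P}_{\w_0})\subseteq\mathcal{P}_{\w}$, this will also yield the equality $\res_{\w}(\mathcal{P}_{\w_0})=\mathcal{P}_{\w}$, which is what is needed for the subsequent proof of Theorem~\ref{thm:wt GP is wt string}.
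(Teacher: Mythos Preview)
Your approach is correct and essentially identical to the paper's: set $\mathbf{p}_{\w_0}:=\ind_{\w_0}(\mathbf{p}_{\w})$, invoke Proposition~\ref{prop: algo induced path} to get $\mathbf{p}_{\w_0}\in\mathcal{P}_{\w_0}$, and observe that restriction returns $\mathbf{p}_{\w}$. The paper's proof is a single line (``by construction''), whereas you spell out the bookkeeping; one small slip is that the segments added by the \emph{second} loop are not ``on the far left'' but also lie to the right of the cut (the loop follows $l_{i-l+q}$ \emph{against} its right-to-left orientation, i.e.\ rightward into the extension region), though this does not affect the argument.
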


\begin{proof}
%Let $\mathbf{p}_{\w}=\hat{L}_{i-l}\to v_{r_1}\to \dots \to v_{r_m}\to \hat{L}_{i+m}$ for the orientation $(l_i,l_{i+1})$. 
By construction $\ind_{\w_0}({\p}_{\w})\in\mathcal P_{\w_0}$ satisfies $\res_{\w}(\ind_{\w_0}({\p}_{\w}))={\p}_{\w}$.
\end{proof}

Recall for $i\in[n-1]$ the definition $J(i)$ and $n_i$ from \eqref{eq:def J(i) and n_i}. To distinguish between the sets for $\w$ and $\w_0$, we use the notation $J(i)^{\w}$ (resp. $J(i)^{\w_0}$) and $n_i^{\w}$ (resp. $n_i^{\w_0}$). We define the following polyhedral objects from restricted paths and show they equal the (weighted) GP-cone, respectively polytope, in the subsequent key proposition for proving Theorem~\ref{thm:wt GP is wt string}. 

\begin{definition}\label{def: res GP cone}
Let $\w$ be a reduced expression of $w\in S_n$ and fix $\w_0=\w s_{i_{\ell(w)}+1}\cdots s_{i_N}$.
We define the  \emph{restricted weighted GP-cone} as
\begin{eqnarray}\label{eq:def res wt GPcone}
\res_{\w}(\mathcal C_{\w_0}):=\left\{ 
\mathbf x\in \mathbb R^{\ell(w)+n-1} \left\vert
\begin{matrix}
(c_{\res_{\w}(\p_{\w_0})})^t(\mathbf x)\ge 0, & \forall \ \p_{\w_0}\in\mathcal P_{\w_0},\\
(c_{[i:k]})^t(\mathbf x)\ge 0, & \forall i\in[n-1], 0\le k\le n_i^{\w}
\end{matrix}
\right.\right\}.
\end{eqnarray}
Similarly, we define $\res_{\w}(C_{\w_0}):=\{ \mathbf x\in \mathbb R^{\ell(w)}\mid  
(c_{\res_{\w}(\p_{\w_0})})^t(\mathbf x)\ge 0,  \forall \ \p_{\w_0}\in\mathcal P_{\w_0}\}$ the \emph{restricted GP-cone} and the polytope $\res_{\w}(\mathcal C_{\w_0}(\lambda)):=\res_{\w}(\mathcal C_{\w_0})\cap \pi^{-1}(\lambda)$ (see \eqref{eq: wt hyperplanes gp}) for $\lambda\in\mathbb R^{n-1}$.
\end{definition}

\vbox{
\begin{proposition}\label{prop:wt GP res}
For every $w \in S_{n}$ with reduced expression $\w$ and every extension $\w_0=\w s_{i_{\ell(w)}+1}\cdots s_{i_N}$ the following polyhedral objects coincide
\begin{enumerate}[(i)]
    \item $\mathcal C_{\w}=\res_{\w}(\mathcal C_{\w_0})$,
    \item $C_{\w}=\res_{\w}(C_{\w_0})$,
    \item $\mathcal{C}_{\w}(\lambda)=\res_{\w}(\mathcal C_{\w_0}(\lambda))$ for $\lambda\in\mathbb R^{n-1}$.
\end{enumerate}
\end{proposition}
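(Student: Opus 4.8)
The plan is to show that the inequalities defining $\mathcal{C}_{\w}$ and those defining $\res_{\w}(\mathcal{C}_{\w_0})$ cut out the same cone in $\mathbb{R}^{\ell(w)+n-1}$, and then to deduce the statements for $C_{\w}$ and $\mathcal{C}_{\w}(\lambda)$ by restriction to the first $\ell(w)$ coordinates, respectively by intersecting with the hyperplanes $\pi^{-1}(\lambda)$. Since the weight inequalities $(c_{[i:k]})^t(\mathbf{x})\ge 0$ for $i\in[n-1]$, $0\le k\le n_i^{\w}$ literally appear in both definitions (here one must first observe that $J(i)^{\w}=\{j\in J(i)^{\w_0}\mid j\le\ell(w)\}$, so the weight vectors $c_{[i:k]}$ for $0\le k\le n_i^{\w}$ are the same whether computed in $\pa(\w)$ or in the left part of $\pa(\w_0)$ — the extra crossings in $\pa(\w_0)$ all sit to the right and do not enter the formula \eqref{eq: def wt ineq GP} for these indices), it suffices to compare the path inequalities: I must show that
\[
\{c_{\p}\mid \p\in\mathcal P_{\w}\} \quad\text{and}\quad \{c_{\res_{\w}(\p_{\w_0})}\mid \p_{\w_0}\in\mathcal P_{\w_0}\}
\]
define the same set of halfspaces, up to the harmless presence of the zero vector (coming from restrictions $\res_{\w}(\p_{\w_0})=\varnothing$, which give the trivial inequality $0\ge 0$).

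\textbf{Key steps.} First I would record the combinatorial fact that $c_{\res_{\w}(\p_{\w_0})}$ makes sense: by Proposition~\ref{prop:respath}, $\res_{\w}(\p_{\w_0})$ is either empty or a union of paths $\p_1,\dots,\p_t$ in $\mathcal P_{\w}$, and from the definition \eqref{eq:def c_p} of $c_{\p}$ as a telescoping sum of the vectors $c_{(j_s,j_{s+1})}$ along the path, one gets immediately that
\[
c_{\res_{\w}(\p_{\w_0})}=c_{\p_1}+\dots+c_{\p_t},
\]
because the crossing points of $\p_{\w_0}$ that get deleted are exactly those $v_{(i,j)}$ with $w(\alpha_{i,j-1})>0$, i.e. those not present in $\pa(\w)$, and telescoping within each surviving arc gives the sum of the individual $c_{\p_r}$'s. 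Hence every inequality $(c_{\res_{\w}(\p_{\w_0})})^t(\mathbf{x})\ge 0$ is a nonnegative sum of inequalities $(c_{\p_r})^t(\mathbf{x})\ge 0$ with $\p_r\in\mathcal P_{\w}$, so $\mathcal{C}_{\w}\subseteq\res_{\w}(\mathcal{C}_{\w_0})$. For the reverse inclusion I would use Proposition~\ref{prop:restrict}: given $\p_{\w}\in\mathcal P_{\w}$, the induced path $\ind_{\w_0}(\p_{\w})\in\mathcal P_{\w_0}$ of Algorithm~\ref{alg:induced path} satisfies $\res_{\w}(\ind_{\w_0}(\p_{\w}))=\p_{\w}$ (a single path, not a union), so by the telescoping identity above $c_{\res_{\w}(\ind_{\w_0}(\p_{\w}))}=c_{\p_{\w}}$; thus the inequality $(c_{\p_{\w}})^t(\mathbf x)\ge 0$ is among the defining inequalities of $\res_{\w}(\mathcal{C}_{\w_0})$, giving $\res_{\w}(\mathcal{C}_{\w_0})\subseteq\mathcal{C}_{\w}$. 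This proves (i). Part (ii) follows by projecting onto the first $\ell(w)$ coordinates: the weight inequalities involve the extra coordinates $c_1,\dots,c_{n-1}$, so dropping them leaves exactly the path inequalities, and the same two inclusions go through verbatim with $c_{\p}$ replaced by its projection $c_{\p}\in\mathbb{R}^{\ell(w)}$. Part (iii) follows from (i) by intersecting both sides with $\pi^{-1}(\lambda)$, which is the common definition of $\mathcal{C}_{\w}(\lambda)$ and $\res_{\w}(\mathcal{C}_{\w_0}(\lambda))$.

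\textbf{Main obstacle.} The delicate point is the telescoping identity $c_{\res_{\w}(\p_{\w_0})}=\sum_r c_{\p_r}$, and more precisely verifying that when $\res_{\w}(\p_{\w_0})$ breaks into several arcs, the ``gluing'' crossing points $v_{(j_s,j_{s+1})}$ at the breakpoints really do cancel out correctly in the signed sum — one has to track the sign conventions $c_{(i,j)}=-c_{(j,i)}$ and $c_{(i,i)}=0$ through the cut. A related subtlety is confirming that no path $\p_{\w_0}\in\mathcal P_{\w_0}$ restricts to something that is not a disjoint union of genuine GP-paths in $\mathcal{P}_{\w}$ — but this is precisely the content of Proposition~\ref{prop:respath}, so it can be invoked directly. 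Everything else is bookkeeping. Once (i) is in place, (ii) and (iii) are immediate, and combining Proposition~\ref{prop:wt GP res} with the Gleizer-Postnikov corollary $C_{\w_0}=Q_{\w_0}$, Caldero's restriction lemma, and the analogous restriction behaviour of the weighted string cone yields Theorem~\ref{thm:wt GP is wt string}.
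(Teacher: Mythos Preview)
Your proposal is correct and follows essentially the same route as the paper's proof: note that the weight inequalities coincide by definition, then use Proposition~\ref{prop:respath} for the inclusion $\mathcal{C}_{\w}\subseteq\res_{\w}(\mathcal{C}_{\w_0})$ and Proposition~\ref{prop:restrict} for the reverse inclusion, deducing (ii) and (iii) immediately from (i). The paper compresses this into two sentences, but the logic is identical.

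One remark: the ``main obstacle'' you flag is not actually delicate. By \eqref{eq:def c_p}, $c_{\p}$ is simply the sum of the basis vectors $c_{(j_s,j_{s+1})}$ over the line-change vertices of $\p$; restricting deletes precisely those vertices $v_{(j_s,j_{s+1})}$ lying to the right of the cut, and the surviving vertices partition into the line-change vertices of the pieces $\p_1,\dots,\p_t$. So $c_{\res_{\w}(\p_{\w_0})}=\sum_r c_{\p_r}$ is pure additivity over a disjoint decomposition---there is no telescoping or sign cancellation at the breakpoints, because the endpoints $\hat L_{j_r}$ on the cutting line contribute nothing to $c_{\p_r}$. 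The sign convention $c_{(i,j)}=-c_{(j,i)}$ never enters this computation.
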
}

\begin{proof}
We start by showing (i), then (ii) and (iii) are direct implications.
Note that only the inequalities induced by GP-paths differ in the definition of $\mathcal C_{\w}$ \eqref{eq:def weighted GP cone}, resp. $\res_{\w}(\mathcal C_{\w_0})$ \eqref{eq:def res wt GPcone}.
By Proposition~\ref{prop:respath} we have $\mathcal C_{\w}\subseteq \res_{\w}(\mathcal C_{\w_0})$. By Proposition~\ref{prop:restrict} we deduce $\res_{\w}(\mathcal C_{\w_0})\subseteq\mathcal C_{\w}$ and hence, equality follows.
\end{proof}

We have now collected all ingredients necessary to provide the proof Theorem~\ref{thm:wt GP is wt string}.

\begin{proof}[Proof of Theorem~\ref{thm:wt GP is wt string}]
We show $\mathcal Q_{\w}=\res_{\w}(\mathcal C_{\w_0})$ for every extension $\w_0=\w s_{i_{\ell(w)+1}}\cdots s_{i_N}$ and then apply Proposition~\ref{prop:wt GP res}.
By \cite[Lemma~3.3]{Cal02} (restated above) we know that
\[
\mathcal Q_{\w}=\mathcal Q_{{\w}_0}\cap\bigcap_{(i,k):\ w(\alpha_{i,k-1})>0}\{x_{(i,k)}=0\},
\]
as the $x_{(i,k)}$ appearing in the intersection of hyperplanes on the right correspond to the coordinates $x_{s_p}$ with $\ell(w)<p\le N$ in the extension of $\w$ to $\w_0$.
Further, we observe that if $c_{\p_{\w_0}}=\sum_{k}c_{(i_k,j_k)}$ then $c_{\res_{\w}(\p_{\w_0})}=\sum_{k: w(\alpha_{i_k,j_k-1})>0} c_{(i_k,j_k)}$.
Regarding the  normal vectors for weight inequalities $c_{[i:k]}$ (see \eqref{eq: def wt ineq GP}), observe that for $k>n_i^{\w}$ we obtain $c_i$ from $c_{[i:k]}$ when setting those $c_{(i_k,j_k)}$ to zero with $w(\alpha_{i_k,j_k-1})>0$.
Hence, $\mathcal Q_{\w}=\res_{\w}(\mathcal C_{\w_0})=\mathcal C_{\w}$ by Propositon~\ref{prop:wt GP res}.
Then $Q_{\w}=C_{\w}$ is a direct consequence and identifying $\Lambda^+$ with $\mathbb R^{n-1}$ using the fundamental weights, (iii) follows.
\end{proof}

\subsection{Double Bruhat cells and the superpotential}\label{subsec:super}

Recall the introduction to cluster varieties given in \S\ref{sec: prep cluster} as well as sections \S\ref{sec:pre rep theory} and \S\ref{sec:pre flag}.
In this section we explain the $\A$-cluster variety that can be associated to the quiver from a pseudoline arrangement.
Based on results of Berenstein-Fomin-Zelevinsky this variety is a double Bruhat cell (see Definition~\ref{def: double bruhat cell}).
We apply the construction of \cite{GHKK14} (see \S\ref{sec: prep cluster}) and recall results of Magee in \cite{Mag15} regarding the superpotential.

Recall that $SL_n$ has two cell decompositions (\emph{the Bruhat decompositions}) in terms of Bruhat cells indexed by elements of the symmetric group
\[
SL_n=\bigcup_{u\in S_n}BuB=\bigcup_{v\in S_n} B^-vB^-.
\]

\begin{definition}\label{def: double bruhat cell}
The \emph{double Bruhat cell} associated to $e$ and $w$ in $S_n$ is
\[
G^{e,w}:=B\cap B^-wB^-\subset SL_n.
\]
\end{definition}

The cluster structure of $G^{e,w}$ can be established as follows. Choose a reduced expression $\w$ and consider $\pa(\w)$.
Recall from Definition~\ref{def:quiver pa} that every face of $\pa(\w)$ corresponds to a vertex of $Q_{\w}$.
We therefore associate cluster variables to faces of $\pa(\w)$.
Let $F$ be such a face and assume the lines $l_{j_1},\dots,l_{j_k}$ pass below $F$.
In particular, $F$ is of level $k$.
Then associate the Pl\"ucker coordinate $\bar p_{j_1,\dots,j_k}\in\mathbb C[SL_n]$ to $F$, i.e. the minor of the columns $[k]$ and rows $\{{j_1},\dots,{j_k}\}$.
To remember it was associated with $F$, we set $A_F:=\bar p_{j_1,\dots,j_k}$.

\begin{definition}
Let $w\in S_n$ with reduced expression $\w$. Then the quiver $Q_{\w}$ together with the set of cluster variables $\mathbf A_{\w}:=\{A_F\mid F \text{ a face of }\pa(\w)\}$ forms the seed $s_{\w}:=(\mathbf A_{\w},Q_{\w})$.
\end{definition}

\begin{example}\label{exp: initial cluster var. S_5}
Recall from Example~\ref{exp:initial seed S_5} and Figure~\ref{fig:initial} the pseudoline arrangement $\pa(\hat \w_0)$ and the quiver $Q_{\hat \w_0}$ for $\hat\w_0\in S_5$.
To a face $F_{(i,j)}$ with $i\in[n-1]$ and $j\in[i+1,n]$ we associate following the above recipe the cluster variable $A_{(i,j)}:=p_{i+1,\dots,j}$.
To the faces unbounded $F_i, i\in[4]$ to the left, we associate the variables $A_i:=p_{5-i+1,\dots,5}$.
Note that the variables associated to the frozen vertices on the left (from bottom to top) are $\bar p_5,\bar p_{45},\bar p_{345},\bar p_{2345}$ and those associated to the frozen vertices on the right are $\bar p_{1},\bar p_{12},\bar p_{123},\bar p_{1234}$.
These Pl\"ucker coordinates are called \emph{consecutive minors}.
The collection of all cluster variables associated to this initial seed is
\[
\mathbf A_{\hat \w_0}=\{\bar p_{1},\bar p_{2},\bar p_{3},\bar p_4,\bar p_5,\bar p_{12},\bar p_{23},\bar p_{34},\bar p_{45},\bar p_{123},\bar  p_{234},\bar p_{345},\bar p_{1234},\bar
p_{2345}\}.
\]
\end{example}

\begin{example}\label{exp:initial seed w_0}
Consider $\hat\w_0\in S_n$ as in Examples~\ref{exp:pathGT} and \ref{exp:areaGT}. Then the collection $\mathbf A_{\hat \w_0}$ of associated cluster variables is
\[
\mathbf A_{\hat\w_0}=\{\bar p_{i,\dots, j}\mid i\in[n-1],j\in[i,n] \}, 
\]
where $\bar p_{i,\dots,j}$ is a frozen variable if either $i=1$ or $j=n$. Note that $\bar p_{1,\dots,n}=\det$, which is constant on $SL_n$, hence we disregard it. From now on we denote by $s_0$ the seed $s_0:=s_{\hat \w_0}=(\mathbf A_{\hat \w_0},Q_{\hat\w_0})$.
\end{example}

Berenstein-Fomin-Zelevinsky show
\begin{theorem*}(\cite[Theorem~2.10]{BFZ05})
Let $w\in S_n$ with reduced expression $\w$. Then for the upper cluster algebra $\overline{\mathcal{Y}}(s_{\w})$ we haven an isomorphism of algebras
\begin{eqnarray}\label{eq: cluster alg G^e,w}
\overline{\mathcal{Y}}(s_{\w})\otimes_{\mathbb Z} \mathbb C\cong \mathbb C[G^{e,w}].
\end{eqnarray}
\end{theorem*}

In particular, the Theorem implies the following: 
if $\w_1$ and $\w_2$ are two reduced expressions of $w\in S_n$ related by mutation in the sense of Definition~\ref{defn:mutation pa}, then the associated seeds $s_{\w_1}$ and $s_{\w_2}$ are related by cluster ($\A$-)mutation in the sense of Definition~\ref{def: A mutation}.
This explains our abuse of notation using the same letter $\mu$ for both types of mutation.

We now focus on the $\A$-cluster variety $G^{e,w_0}$ and the natural partial compactification using the frozen variables to study the superpotential as in \S\ref{sec: prep cluster}.
We partially compactify $G^{e,w_0}$ to $\bar G^{e,w_0}$ by allowing the frozen variables $\bar p_{[i]}$ and $\bar p_{[n-i,n]}$ for $i\in[n-1]$ to vanish.
Denote the resulting boundary divisor $D\subset \bar G^{e,w_0}$ and its irreducible components by
\[
D_{i}:=\{\bar p_{[i]}=0\}, \ \text{ resp. } \ D_{i,n}:=\{\bar p_{{[n-i,n]}}=0\}.
\]
There is an open embedding
$G^{e,w_0}\hookrightarrow SL_n/U$ given by  $g\mapsto g^tU$ and
up to codimension 2 the variety $\bar G^{e,w_0}$ agrees with $SL_n/U$ (this follows, for example, from \cite[Proposition~23]{Mag15}).
Hence, we have an isomorphism of rings $\mathbb C[\bar G^{e,w_0}]\cong \mathbb C[SL_n/U]$.
One of Magee's main results in \cite{Mag15} is the following.
\begin{theorem*}(\cite[Corollary~3]{Mag15})
The full Fock-Goncharov conjecture holds for $SL_n/U$.
\end{theorem*}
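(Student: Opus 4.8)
The statement to prove is that the full Fock--Goncharov conjecture holds for $SL_n/U$, which by Magee \cite{Mag15} is \cite[Corollary~3]{Mag15}, so the plan is essentially to transcribe Magee's argument in the language set up in this excerpt. The full FG-conjecture (see \cite[Definition~0.6]{GHKK14}) asserts that for the $\mathcal A$-cluster variety $\mathcal A = G^{e,w_0}$ the $\vartheta$-functions form a vector space basis for $H^0(\mathcal A, \mathcal O_{\mathcal A}) = \overline{\mathcal Y}(s_0)\otimes_{\mathbb Z}\mathbb C$, and dually on the mirror side. First I would fix the initial seed $s_0 = s_{\hat\w_0}$ coming from the pseudoline arrangement $\pa(\hat\w_0)$ as in Example~\ref{exp:initial seed w_0}, and record the identification $\overline{\mathcal Y}(s_0)\otimes_{\mathbb Z}\mathbb C \cong \mathbb C[G^{e,w_0}]$ from \cite[Theorem~2.10]{BFZ05} together with the codimension-two identification $\mathbb C[\bar G^{e,w_0}]\cong\mathbb C[SL_n/U]$ noted just above, so that it suffices to verify the conjecture for the $\mathcal A$-cluster variety $G^{e,w_0}$.

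The substance of the argument is to exhibit a sufficient criterion from \cite[\S8.4]{GHKK14} and check it in this case. The cleanest route is: (1) show that the upper cluster algebra equals the ordinary cluster algebra here (an \emph{Enough Global Monomials}-type statement), which combined with the Laurent phenomenon and positivity \cite[Corollary~0.4]{GHKK14} puts us in the situation where the cluster monomials, hence the $\vartheta$-functions, are a well-defined linearly independent set; (2) verify that the superpotential cone $\{\mathbf x\in\mathbb R^m \mid W^{\trop}|_{\mathcal X_{s_0}}(\mathbf x)\ge 0\}$ is full-dimensional --- equivalently, that the Newton--Okounkov/string cone $\mathcal C_{\hat\w_0}$ computed in \S\ref{subsec:string} via Theorem~\ref{thm:wt GP is wt string} has nonempty interior, which is immediate since it contains the dominant chamber and the string cone has dimension $N + n - 1$; and (3) use the explicit description of the value semigroup (the lattice points of the string cone parametrize a basis of $\mathbb C[SL_n/U]=\bigoplus_{\lambda}V(\lambda)$, by \cite{Lit98} and Theorem~\ref{thm:wt GP is wt string}) to conclude that the $\vartheta$-basis, which by \eqref{eq:pts in superpot cone} is parametrized by the lattice points of exactly this cone, has the right size in each graded piece and therefore spans. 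In other words, one matches dimensions of graded components: $\dim_{\mathbb C} V(\lambda)$ on one side against the number of lattice points of $\mathcal C_{\hat\w_0}(\lambda) = \mathcal Q_{\hat\w_0}(\lambda)$ on the other, the latter being a string polytope whose lattice points index a basis of $V(\lambda)$ by Littelmann's theorem.

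Concretely the key steps in order are: first, set up $s_0$ and the ring isomorphisms; second, invoke \cite[Theorem~2.10]{BFZ05} and the positivity of the Laurent phenomenon to get a candidate $\vartheta$-basis and its linear independence (this is where a criterion of \cite[\S8.4]{GHKK14}, e.g. the existence of an ``optimized seed'' for every frozen vertex --- which Algorithm~\ref{alg:superpot via opt seeds} shows is available here since $\hat\w_0$ can be mutated so that any frozen vertex becomes a sink --- does the work); third, identify the superpotential cone $\Xi_{s_0}$ with the weighted string cone $\mathcal S_{\hat\w_0}=\mathcal C_{\hat\w_0}=\mathcal Q_{\hat\w_0}$ using Magee's equality $\mathcal S_{\hat\w_0}=\Xi_{s_0}$ together with Theorems~\ref{thm:unimod} and \ref{thm:wt GP is wt string}; fourth, compare graded dimensions using Littelmann's string polytope count to see the $\vartheta$-functions span $\mathbb C[SL_n/U]$ in every degree, yielding a basis; finally, note the dual statement for $\mathcal X$ follows by the same count (or by the symmetry of the FG-conjecture under $\mathcal A \leftrightarrow \mathcal X$ once the relevant cone is full-dimensional). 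The main obstacle I expect is step two: verifying that $G^{e,w_0}$ genuinely satisfies one of the abstract sufficient conditions of \cite[\S8.4]{GHKK14} --- in particular controlling the ``large'' and ``injective'' hypotheses on the scattering diagram --- rather than merely quoting the conclusion; this is precisely the technical core of \cite[Corollary~3]{Mag15} and is where I would lean most heavily on Magee's optimized-seed analysis and the fact (Theorem~\ref{thm:wt GP is wt string}) that the tropicalized superpotential cone is the honest string cone, hence full-dimensional and pointed, so that the $\vartheta$-parametrization is by a set of exactly the right cardinality in each weight space.
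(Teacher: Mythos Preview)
The paper does not prove this statement: it is quoted verbatim as \cite[Corollary~3]{Mag15} and used as an input to the rest of \S\ref{subsec:super}. There is therefore no ``paper's own proof'' to compare your proposal against; the thesis treats Magee's theorem as a black box.

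Your outline, however, has a genuine circularity relative to the logical structure set up in this thesis. Your step~(3) invokes the identification $\Xi_{s_0}=\mathcal S_{\hat\w_0}$ (Proposition~\ref{prop: superpot area GT}) and Theorems~\ref{thm:unimod}, \ref{thm:wt GP is wt string}; but Proposition~\ref{prop: superpot area GT} rests on the explicit expression for $W|_{\mathcal X_{s_0}}$ from \cite[Corollary~24]{Mag15}, and the thesis obtains that expression via Algorithm~\ref{alg:superpot via opt seeds}, which is introduced with the clause ``Assuming the full FG-conjecture we can compute an expression for $\vartheta_f$\ldots''. So as the dependencies are arranged here, the superpotential description you feed into your dimension count already presupposes the very conclusion you want. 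The same applies to \eqref{eq:pts in superpot cone}, which the thesis derives under the standing assumption that the full FG-conjecture holds.

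If your intention is to reconstruct Magee's own argument rather than to reuse this thesis's downstream theorems, then the honest version of your plan must replace step~(3) entirely: you need an \emph{a priori} identification of the lattice points parametrizing the $\vartheta$-functions with the string cone lattice points that does not pass through the superpotential. In \cite{Mag15} this is done by directly analyzing the scattering diagram and verifying the ``Enough Global Monomials'' / large-cluster-complex criteria of \cite[\S8]{GHKK14}, exactly the step you yourself flag as ``the main obstacle''. That is indeed the entire content of the theorem, and nothing in the present thesis supplies it.
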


Moreover, Magee shows that there exists an optimized seed for every frozen vertex and therefore we can apply Algorithm~\ref{alg:superpot via opt seeds} stated in \S\ref{sec: prep cluster} to compute the superpotential. 
This is indeed what Magee did for the intial seed $s_0$ (see Example~\ref{exp:initial seed w_0}). 
Let $\X$ denote the Fock-Goncharov dual to the $\A$-cluster variety $G^{e,w_0}$ (see Definition~\ref{def:cluster variety}).
Recall that in the initial seed $s_0$ we have $N_{s_0}\cong \mathbb Z^{N+n-1}$ with basis $\{e_F\mid F\text{ face of }\pa(\w_0)\}$.
As before we set $e_{F_{(i,j)}}=:e_{(i,j)}$ and $e_{F_k}=:e_k$.
Further, recall that the superpotential $W:\X\to\mathbb C$ is given by the sum of $\vartheta$-functions associated to frozen variables. 
We denote by $\vartheta_{i}$ (resp. $\vartheta_{(i,n)}$) the $\vartheta$-functions associated to the frozen vertex $w_i$ (resp. $w_{(i,n)}$) in the inital quiver $Q_{s_0}$ (see Figure~\ref{fig:initial}) for $i\in[n-1]$.

\begin{proposition*}(\cite[Corollary~24]{Mag15})
Let $W:\X\to\mathbb C$ denote the superpotential. Then we have
$W\vert_{\X_{s_0}}=\sum_{i=1}^{n-1} \vartheta_{i}\vert_{\X_{s_0}}+\vartheta_{(n-i,n)}\vert_{\X_{s_0}}$, where 
\[
\vartheta_{i}\vert_{\X_{s_0}}=\sum_{k=0}^{n-1-i}z^{-e_i-\sum_{j=1}^k e_{(j,i+j)}}, \ \text{ and } \
\vartheta_{(i,n)}\vert_{\X_{s_0}}=\sum_{k=0}^{n-1-i}z^{-\sum_{j=0}^k e_{(i,n-j)}}, \ \text{ for } \ i\in[n-1].
\]
\end{proposition*}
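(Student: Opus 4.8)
The plan is to compute the superpotential $W\vert_{\X_{s_0}}$ explicitly by running Algorithm~\ref{alg:superpot via opt seeds} for each frozen vertex of the initial quiver $Q_{\hat\w_0}$ depicted in Figure~\ref{fig:initial}. Since by Magee's theorem (\cite[Corollary~3]{Mag15}, restated above) the full Fock-Goncharov conjecture holds for $SL_n/U \cong \bar G^{e,w_0}$, the algorithm is valid, so we only need: (1) for each frozen $w_i$ on the left boundary and each frozen $w_{(i,n)}$ on the right boundary, find a sequence of mutations $\overline{\mu}$ to a seed in which that vertex becomes a sink; (2) pull back $z^{-e_{f,s_f}}$ under the reverse mutation sequence using the $\X$-mutation formula \eqref{eq: def pullback X-mut}. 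The final answer is the sum over all $2(n-1)$ frozen vertices of the resulting $\vartheta$-functions.

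First I would describe the optimized seeds concretely. For the left-boundary frozen vertex $w_i$: in the initial quiver all arrows at $w_i$ point from $w_i$ to its mutable neighbours (one arrow to the right along level $i$ into $w_{(1,i+1)}$ or its analogue, plus arrows up/down from the two-crossing configuration), so $w_i$ is already a \emph{source}, not a sink. To make it a sink one performs a sequence of mutations sweeping along the diagonal chain of faces $F_i, F_{(1,i+1)}, F_{(2,i+2)},\dots$ — this is exactly the chain appearing in the weight area $\area_i$ of Example~\ref{exp:areaGT}. I would mutate successively at the mutable vertices $w_{(1,i+1)}, w_{(2,i+2)}, \dots, w_{(n-1-i, n-1)}$, reversing the relevant arrows so that $w_i$ ends up a sink. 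Then $z^{-e_{w_i, s_{w_i}}}$ pulls back along the reverse of this chain, and because each $\X$-mutation \eqref{eq: def pullback X-mut} introduces a factor $(1+z^{e_{k,s}})^{-\{n,e_{k,s}\}}$ one obtains, after tracking the bilinear form contributions, a sum of $n-i$ monomials
\[
\vartheta_i\vert_{\X_{s_0}} = \sum_{k=0}^{n-1-i} z^{-e_i - \sum_{j=1}^k e_{(j,i+j)}}.
\]
The symmetric computation for the right-boundary vertex $w_{(i,n)}$ — sweeping along the other family of diagonal faces $F_{(i,n)}, F_{(i,n-1)}, \dots$ appearing in the GP-paths $\p_{i,j}$ of Example~\ref{exp:pathGT} — yields
\[
\vartheta_{(i,n)}\vert_{\X_{s_0}} = \sum_{k=0}^{n-1-i} z^{-\sum_{j=0}^k e_{(i,n-j)}}.
\]
Summing over $i \in [n-1]$ and reindexing $i \mapsto n-i$ on the second family (so that the frozen variables $\bar p_{[n-i,n]}$ are matched correctly, as fixed in Example~\ref{exp:initial seed w_0}) gives $W\vert_{\X_{s_0}} = \sum_{i=1}^{n-1}\bigl(\vartheta_i\vert_{\X_{s_0}} + \vartheta_{(n-i,n)}\vert_{\X_{s_0}}\bigr)$, which is the claimed formula by Definition~\ref{def:superpotential}.

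The main obstacle is step (2): carefully verifying that the iterated pullback along the mutation chain produces exactly the stated monomials with exactly these exponents, and in particular that no extra terms survive and no denominators remain. This requires knowing the precise values of the bilinear form $\{\cdot,\cdot\}_s$ on the relevant basis vectors at each intermediate seed — equivalently, tracking how the exchange matrix of $Q_{\hat\w_0}$ evolves under the diagonal sweep. The bookkeeping is delicate because the two-crossing (right-to-left) arrows of Definition~\ref{def:quiver pa} interact with the mutations; one must check that at each step the vertex being mutated has exactly the incoming/outgoing arrow pattern that makes $\{n, e_{k,s}\} \in \{0, \pm 1\}$ for the relevant $n$, so that the binomial factors $(1+z^{e_k})^{\pm 1}$ telescope into the geometric-series-like sum of monomials. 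Since this is precisely the computation Magee carried out in \cite{Mag15}, I would organize the verification by induction on the length of the mutation chain, checking the base case ($k=0$, trivial) and the inductive step (one more mutation appends one more monomial $z^{-e_i - \sum_{j=1}^{k+1} e_{(j,i+j)}}$ to the partial sum), and citing \cite[Corollary~24]{Mag15} for the details of the arrow-pattern bookkeeping at each stage.
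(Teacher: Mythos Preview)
The paper does not supply its own proof of this proposition; it is quoted as \cite[Corollary~24]{Mag15} and used as a black-box input for Proposition~\ref{prop: superpot area GT} and what follows. Your sketch is a reasonable outline of how Magee's computation runs via Algorithm~\ref{alg:superpot via opt seeds}, and since you ultimately defer the arrow-pattern bookkeeping to \cite{Mag15} as well, your treatment is consistent with the paper's. One small inaccuracy in your description: for $i\ge 2$ the frozen vertex $w_i$ is not a pure source in $Q_{\hat\w_0}$ --- by rule~(2) of Definition~\ref{def:quiver pa} it receives an incoming arrow from the mutable vertex one level below (e.g.\ $w_{(1,2)}\to w_2$ in Figure~\ref{fig:initial}) --- but you are correct that it is not a sink, which is all the algorithm requires.
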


\begin{example}
Consider $S_3$ and the initial seed with quiver $Q_{s_1s_2s_1}$. Then 
\begin{eqnarray*}
W\vert_{\X_{s_0}}&=&\vartheta_{(1,3)}+\vartheta_{(2,3)}+\vartheta_{1}+\vartheta_{2}\\
&=&z^{-e_{(1,3)}}+z^{-e_{(1,3)}-e_{(1,2)}}+z^{-e_{(2,3)}}+z^{-e_{1}}+z^{-e_{1}-e_{(1,2)}}+z^{-e_{2}}. \end{eqnarray*} 
\end{example}

\vbox{
\begin{definition}\label{def:super cone}
For $\w_0$ a reduced expression of $w_0\in S_n$ we define the following polyhedral objects by tropicalizing a sum of $\vartheta$-functions resp. the superpotential:
\begin{eqnarray*}
\Xi_{\w_0}&:=&\{\mathbf x\in \mathbb R^{N+n-1}\mid W\vert_{\X_{\w_0}}^{\trop}(\mathbf x)\ge0\},\\
\mathsf \Xi_{\w_0}&:=& \{\mathbf x\in \mathbb R^N\mid (\sum_{i=1}^{n-1}\vartheta_{(i,n)}\vert_{\X_{\w_0}})^{\trop}(\mathbf x)\ge 0\},\\
\Xi_{\w_0}(\lambda)&:=& \Xi_{\w_0}\cap \tau^{-1}_{\w_0}(\lambda) \text{ for } \lambda\in \mathbb R^{n-1}.
\end{eqnarray*}
\end{definition}}

The $\A_{\text{prin}}$-construction in \cite{GHKK14} applied to our setting defines a flat family over $\mathbb A^{N-2n+2}$ for every choice of seed, in particular for every ${\w}_0$.
The central fibre is by \cite[Theorem~8.39]{GHKK14} the toric variety associated to $\Xi_{\w_0}(\lambda)$ for $\lambda\in\mathbb Z_{>0}^{n-1}$.
One generic fibre is $SL_n/B$, hence we have a toric degeneration of the flag variety.
We do not go into the details on this construction but refer the reader to \cite[\S8]{GHKK14}.

\paragraph{Relating to the area cones} Let $\w_0$ be an arbitrary reduced expression of $w_0\in S_n$. In what follows we show how to obtain an expression of the superpotential in any seed $s_{\w_0}$ associated to $\w_0$ by ``detropicalizing" the weighted cone $\mathcal S_{\w_0}$. 
We define it more generally for $\w$ a reduced expression of $w\in S_n$.
Denote by $\X_{\w}$ the cluter torus associated to the seed $s_{\w}$.

\begin{definition}\label{def:detrop area cone}
Let $\w$ be an arbitrary reduced expression of $w\in S_n$. Then the \emph{detropicalization} of the cone $\mathcal S_{\w}$ is defined as the function $W_{\mathcal S_{\w}}:\X_{\w}\to \mathbb C$ with
\begin{eqnarray}
W_{\mathcal S_{\w}}:=\sum_{\p\in\mathcal P_{\w}}z^{e_{\p}}+\sum_{i\in[n-1],0\le k\le n_i}z^{e_{[i:k]}}.
\end{eqnarray}
\end{definition}

The name is self-explanatory, observe that by definition we have
\[
\{\mathbf x\in \mathbb R^{\ell(w)+n-1}\mid W_{\mathcal S_{\w}}^{\trop}(\mathbf x)\ge 0\}=\mathcal S_{\w}.
\]

\begin{proposition}\label{prop: superpot area GT}
Let $\w_0=s_1s_2s_1\cdots s_{n-1}s_{n-2}\cdots s_2s_1$ be the reduced expression associated to the initial seed $s_0$ as above. Then $W_{\mathcal S_{\w_0}}=W\vert_{\X_{s_0}}$.
\end{proposition}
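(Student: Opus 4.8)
The goal is to match two explicit sums of Laurent monomials on the torus $\X_{s_0}$: the detropicalization $W_{\mathcal S_{\w_0}}$ from Definition~\ref{def:detrop area cone}, and Magee's expression for $W\vert_{\X_{s_0}}$ recalled in the Proposition above. Both are indexed by combinatorial data attached to the pseudoline arrangement $\pa(\hat\w_0)$ for the initial reduced expression $\hat\w_0 = s_1s_2s_1\cdots s_{n-1}\cdots s_2s_1$. The plan is to establish a bijection between the index sets on the two sides under which the exponent vectors agree.

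\textbf{Key steps.} First I would make the two descriptions of $W_{\mathcal S_{\w_0}}$ fully explicit. By Definition~\ref{def:detrop area cone}, $W_{\mathcal S_{\w_0}} = \sum_{\p\in\mathcal P_{\w_0}} z^{e_{\p}} + \sum_{i\in[n-1],\,0\le k\le n_i} z^{e_{[i:k]}}$. For the initial reduced expression $\hat\w_0$ the GP-paths were completely classified in Example~\ref{exp:pathGT}: for each orientation $(l_i,l_{i+1})$ with $i\in[n-1]$ the GP-paths are exactly the $\p_{i,j}$ for $j\in[i+1,n-1]$ (plus the empty path), and in Example~\ref{exp:areaGT} the associated vectors are computed as $e_{\p_{i,j}} = -e_{F_{(i,j)}} - e_{F_{(i,j+1)}} - \cdots - e_{F_{(i,n)}} - e_{F_{(i,n+1)}}$, while the weight-area vectors are $e_{[i:k]} = -e_{F_i} - e_{F_{(1,i+1)}} - \cdots - e_{F_{(k,i+k)}}$ for $0\le k\le n-i$. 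Second, I would translate the cluster-torus coordinates: in the initial seed $s_0$ the basis of $N_{s_0}$ is $\{e_F\mid F \text{ face of }\pa(\hat\w_0)\}$, written $e_{F_{(i,j)}} = e_{(i,j)}$ and $e_{F_k} = e_k$, which is exactly the notation used in Magee's proposition. Third, I would read off Magee's $\vartheta$-functions: $\vartheta_{(i,n)}\vert_{\X_{s_0}} = \sum_{k=0}^{n-1-i} z^{-\sum_{j=0}^k e_{(i,n-j)}}$ and $\vartheta_i\vert_{\X_{s_0}} = \sum_{k=0}^{n-1-i} z^{-e_i - \sum_{j=1}^k e_{(j,i+j)}}$. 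The core of the proof is then a direct term-by-term identification: the monomial in $\vartheta_{(i,n)}\vert_{\X_{s_0}}$ indexed by $k$ has exponent $-(e_{(i,n)} + e_{(i,n-1)} + \cdots + e_{(i,n-k)})$, which after reindexing $j' = n-k$ is precisely $e_{\p_{i,j'}}$ restricted to the bounded faces — one must check that the boundary face $F_{(i,n+1)}$ (unbounded to the right) contributes zero in the torus coordinates of $\X_{s_0}$, i.e. that its coordinate is suppressed, exactly as remarked after \eqref{eq:def area ineq}. Similarly the monomial in $\vartheta_i\vert_{\X_{s_0}}$ indexed by $k$ has exponent $-(e_i + e_{(1,i+1)} + e_{(2,i+2)} + \cdots + e_{(k,i+k)}) = e_{[i:k]}$ from \eqref{eq: f_i,k in GT w_0}. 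Summing over all $i\in[n-1]$ and all valid $k$ then gives $W\vert_{\X_{s_0}} = \sum_{i=1}^{n-1}(\vartheta_i\vert_{\X_{s_0}} + \vartheta_{(n-i,n)}\vert_{\X_{s_0}}) = W_{\mathcal S_{\w_0}}$, matching the two sums of monomials exactly.

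\textbf{Main obstacle.} The calculation itself is routine bookkeeping; the delicate point is matching the \emph{number} of terms and the treatment of unbounded faces. Specifically, one must verify that the GP-paths $\p_{i,j}$ for $j\in[i+1,n-1]$ together with the empty path account for exactly the $n-i$ monomials in $\vartheta_{(n-i,n)}\vert_{\X_{s_0}}$ (the $k=0$ term being a single boundary-face monomial, and the empty GP-path giving the constant/zero-exponent contribution), and that the $n-i+1$ vectors $e_{[i:k]}$ for $0\le k\le n_i = n-i$ match the $n-i$ monomials of $\vartheta_i\vert_{\X_{s_0}}$ plus the $k=0$ weight term. I would handle this by carefully counting: $n_i^{\hat\w_0} = n-i$ for the initial reduced expression, and by Proposition~\ref{prop:in stream} or direct inspection of Figure~\ref{fig:initial} the GP-paths are in bijection with the faces $F_{(i,j)}$ bounded to the left at the relevant levels. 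Once the index sets and their cardinalities are reconciled and the suppression of unbounded-face coordinates in $\X_{s_0}$ is invoked, the exponent vectors agree on the nose and the proof concludes.
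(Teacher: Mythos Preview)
Your approach is exactly the paper's: invoke the explicit formulas for $e_{\p_{i,j}}$ and $e_{[i:k]}$ from Examples~\ref{exp:pathGT} and~\ref{exp:areaGT}, and match them term-by-term against Magee's expression for $\vartheta_{(i,n)}\vert_{\X_{s_0}}$ and $\vartheta_{i}\vert_{\X_{s_0}}$. The paper's proof does precisely this in two lines.

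However, your bookkeeping in the ``main obstacle'' section is off in several places. First, the GP-paths $\p_{i,j}$ for orientation $(l_i,l_{i+1})$ range over $j\in[i+1,n]$, not $[i+1,n-1]$: the path $\p_{i,n}$ going $L_i\to v_{(i,n)}\to v_{(i+1,n)}\to L_{i+1}$ is perfectly valid and encloses exactly the single face $F_{(i,n)}$. This gives exactly $n-i$ paths, matching the $n-i$ terms of $\vartheta_{(i,n)}$ on the nose; no ``empty path'' is needed or wanted (an empty path would contribute $z^0=1$, which appears in neither potential). Second, the term $e_{F_{(i,n+1)}}$ you are worried about is simply a typo in Example~\ref{exp:areaGT}: there is no such face, and the correct formula is $e_{\p_{i,j}}=-e_{(i,j)}-\cdots-e_{(i,n)}$. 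Nothing needs to be ``suppressed''. Third, your proposed reconciliation of the weight-vector count does not work: there are $n-i+1$ vectors $e_{[i:k]}$ for $0\le k\le n_i=n-i$, but only $n-i$ terms in $\vartheta_i$. The paper's proof writes $\vartheta_i\vert_{\X_{s_0}}=\sum_{k=0}^{n-1-i}z^{e_{[i:k]}}$, silently omitting the top term $z^{e_{[i:n_i]}}$; this term (involving the rightmost frozen face $F_{(n-i,n)}$) defines a redundant inequality for the cone $\mathcal S_{\w_0}$, so the \emph{cones} still coincide, but as written $W_{\mathcal S_{\w_0}}$ and $W\vert_{\X_{s_0}}$ differ by these $n-1$ redundant monomials. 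You should flag this explicitly rather than trying to absorb it into an empty-path or boundary-suppression argument.
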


\begin{proof}
Recall from Example~\ref{exp:areaGT} the expressions $e_{\p_{i,j}}$ \eqref{eq: e_p in GT w_0} and $e_{[i:k]}$ \eqref{eq: f_i,k in GT w_0} for $i\in[n-1]$ and $j,k\in[i+1,n]$.
In comparison with \cite[Corollary~24]{Mag15} (restated above) we obtain
\begin{eqnarray*}
\vartheta_{(i,n)}\vert_{\X_{s_0}}= \sum_{j=i+1}^n z^{e_{\p_{i,j}}}, \ \text{ and } \
\vartheta_{i}\vert_{\X_{s_0}}= \sum_{k=0}^{n-1-i} z^{e_{[i:k]}}.
\end{eqnarray*}
As from Example~\ref{exp:pathGT} we know $\mathcal P_{\w_0}=\{\p_{i,j}\mid i\in[n-1],j\in[i+1,n]\}$, the claim follows.
\end{proof}

%%%%%%%%%%%%%%%%%%%%%%%%%%%%%%%%% MUTATION AREA

\subsubsection*{Mutation of $\mathcal S_{\w}$}\label{subsubsec: path and area mut}

Our aim is to generalize Proposition~\ref{prop: superpot area GT} for arbitrary reduced expressions $\w_0$. 
We achieve this by showing that the detropicalization of $\mathcal S_{\w_0}$ behaves as the superpotential does when applying $\mathcal X$-mutation. 
Further, we show that if $\mu(\w)$ and $\w$ are reduced expressions of $w\in S_n$, then $\mu^*(W_{\mathcal S_{\mu(\w)}})=W_{\mathcal S_{\w}}$, where $\mu^*: \mathbb C[\X_{\mu(\w)}]\to\mathbb C[\X_{\w}]$ is the pull-back of the cluster mutation as in \eqref{eq: def pullback X-mut}.
This follows from Lemma~\ref{lem: mutation paths} and Lemma~\ref{lem:mutation wt area}.
Recall from Definition~\ref{defn:mutation pa} the mutation of pseudoline arrangements.
The core of this subsection is the case-by-case analysis of how mutation effects GP-paths.

In Figure~\ref{fig:loc.orient} we display locally around the mutable face $F=F_{(i,j)}$ (resp. $F'$) the orientations of $\pa(\w)$ (resp. $\pa(\mu_F(\w))$). 
The red arrows indicate which passages are forbidden in GP-paths. 
In Tables~\ref{tab:(lll) cases of path mut} to \ref{tab:(rrr) cases of path mut} we list in the second column all possibilities how a GP-path ${\p}$ locally looks around the face $F$. 
In the third column of each table is a complete list of how GP-paths look locally around the face $F'$ obtained from $F$ by mutation $\mu_F$.

\begin{figure}
\centering
\begin{center}
\begin{tikzpicture}[scale=0.75]

%%%%%%%%%%%%%%%%%%%%%%%
%% LRR
\begin{scope}[yshift=-6cm]
\node at (-2,1.25) {\small $i\le r$};
\node at (-2,.75) {\small $r+1\le j$};

\draw [fill] (.55,2) circle [radius=0.05];
\draw [fill] (.55,1) circle [radius=0.05];
\draw [fill] (.55,0) circle [radius=0.05];
\draw [fill] (6.5,2) circle [radius=0.05];
\draw [fill] (6.5,1) circle [radius=0.05];
\draw [fill] (6.5,0) circle [radius=0.05];
\node at (0,2) {$a_{k}$};
\node at (0,1) {$a_{j}$};
\node at (0,0) {$a_{i}$};
\node at (7,2) {$b_{i}$};
\node at (7,1) {$b_{j}$};
\node at (7,0) {$b_{k}$};

\draw[fill] (3.5,1.5) circle [radius=.05];
\draw[fill] (2.5,.5) circle [radius=.05];
\draw[fill] (4.5,.5) circle [radius=.05];

\node[right] at (3.5,1.5) {$v_{(i,k)}$};
\node[right] at (2.5,.5) {$v_{(i,j)}$};
\node[right] at (4.5,.5) {$v_{(j,k)}$};

    \draw[rounded corners] (0.5,0) --(1.25,0) -- (2,0)-- (3,1) --(4,2) -- (5.25,2);
    \draw[->, rounded corners] (1.5,0) -- (1.25,0);
    \draw[->, rounded corners] (3.5,1.5) -- (3,1);
    \draw[->, rounded corners] (6.5,2) -- (5.25,2);
\draw[rounded corners] (1.25,1) -- (2,1) -- (3,0) -- (3.5,0)-- (4,0) -- (5,1) -- (5.57,1)-- (6.5,1);
    \draw[->] (0.5,1) -- (1.25,1);
    \draw[->] (3.25,0) -- (3.5,0);
    \draw[->] (5.5,1) -- (5.75,1);
\draw[rounded corners] (1.5,2) -- (3,2) -- (4,1) -- (5,0) -- (5.75,0) -- (6.5,0);
    \draw[->] (0.5,2) -- (1.5,2);
    \draw[->] (3.5,1.5) -- (4,1);
    \draw[->] (5.5,0) -- (5.75,0);
    \draw[red, ->, ultra thick] (4.1,0.9) -- (4.9,0.1);
    
\begin{scope}[xshift=9cm]
\draw [fill] (.55,2) circle [radius=0.05];
\draw [fill] (.55,1) circle [radius=0.05];
\draw [fill] (.55,0) circle [radius=0.05];
\draw [fill] (6.5,2) circle [radius=0.05];
\draw [fill] (6.5,1) circle [radius=0.05];
\draw [fill] (6.5,0) circle [radius=0.05];
\node at (0,2) {$a'_{k}$};
\node at (0,1) {$a'_{j}$};
\node at (0,0) {$a'_{i}$};
\node at (7,2) {$b'_{i}$};
\node at (7,1) {$b'_{j}$};
\node at (7,0) {$b'_{k}$};

\draw[fill] (3.5,.5) circle [radius=.05];
\draw[fill] (2.5,1.5) circle [radius=.05];
\draw[fill] (4.5,1.5) circle [radius=.05];
\node[right] at (3.5,.5) {$v'_{(i,k)}$};
\node[right] at (2.5,1.5) {$v'_{(j,k)}$};
\node[right] at (4.5,1.5) {$v'_{(i,j)}$};

\draw[rounded corners] (0.5,0) --(1.25,0) -- (3,0)-- (4,1) -- (5,2) -- (5.25,2);
\draw[rounded corners] (0.5,1) -- (2,1) -- (3,2) -- (4,2) -- (5,1) -- (6.5,1);
\draw[rounded corners] (1.5,2) -- (2,2) -- (3,1)-- (4,0) -- (5.75,0) -- (6.5,0);
 \draw[->] (0.5,1) -- (1.25,1);
\draw[->, red , ultra thick]  (2.25,1.75) -- (3,1);
    \draw[->] (2.75,0) -- (1.25,0);
   \draw[->] (4.85,1.85) -- (4,1);
    \draw[->] (6.5,2) -- (5.25,2);
\draw[->, rounded corners] (1.25,1) -- (2,1) -- (2.95,1.95);
\draw[->, rounded corners] (3.5,2) -- (4,2) -- (4.75,1.25);
\draw[->, rounded corners] (5.75,1)-- (6,1);
    \draw[->] (0.5,2) -- (1.5,2);
    \draw[->] (4.25,0) -- (5.75,0);      

\end{scope}
\end{scope}
%%%%%%%%%%%%%%%%%%%%%%%%%%%%%%%%
%%% RRR
\begin{scope}[yshift=-9cm]

\node at (-2,1) {\small $r+1\le i$};

\draw [fill] (.55,2) circle [radius=0.05];
\draw [fill] (.55,1) circle [radius=0.05];
\draw [fill] (.55,0) circle [radius=0.05];
\draw [fill] (6.5,2) circle [radius=0.05];
\draw [fill] (6.5,1) circle [radius=0.05];
\draw [fill] (6.5,0) circle [radius=0.05];
\node at (0,2) {$a_{k}$};
\node at (0,1) {$a_{j}$};
\node at (0,0) {$a_{i}$};
\node at (7,2) {$b_{i}$};
\node at (7,1) {$b_{j}$};
\node at (7,0) {$b_{k}$};

\draw[fill] (3.5,1.5) circle [radius=.05];
\draw[fill] (2.5,.5) circle [radius=.05];
\draw[fill] (4.5,.5) circle [radius=.05];

\node[right] at (3.5,1.5) {$v_{(i,k)}$};
\node[right] at (2.5,.5) {$v_{(i,j)}$};
\node[right] at (4.5,.5) {$v_{(j,k)}$};

    \draw[rounded corners] (0.5,0) --(1.25,0) -- (2,0)-- (3,1) --(4,2) -- (5.25,2) -- (6.5,2);
    \draw[<-, rounded corners] (1.5,0) -- (1.25,0);
    \draw[<-, rounded corners] (3.1,1.1) -- (3,1);
    \draw[<-, rounded corners] (5.75,2) -- (5.25,2);
\draw[rounded corners] (1.25,1) -- (2,1) -- (3,0) -- (3.5,0)-- (4,0) -- (5,1) -- (5.57,1)-- (6.5,1);
    \draw[->] (0.5,1) -- (1.25,1);
    \draw[->] (3.25,0) -- (3.5,0);
    \draw[->] (5.5,1) -- (5.75,1);
\draw[rounded corners] (1.5,2) -- (3,2) -- (4,1) -- (5,0) -- (5.75,0) -- (6.5,0);
    \draw[->] (0.5,2) -- (1.5,2);
%    \draw[->] (3.5,1.5) -- (4,1);
    \draw[->] (5.5,0) -- (5.75,0);    
\draw[->,red, ultra thick] (2.1,0.9) -- (2.9,0.1);  
\draw[->,red, ultra thick] (3.1,1.9) -- (4,1);
\draw[red, ultra thick, ->] (4,1) -- (4.9,0.1);

\begin{scope}[xshift=9cm]

\draw [fill] (.55,2) circle [radius=0.05];
\draw [fill] (.55,1) circle [radius=0.05];
\draw [fill] (.55,0) circle [radius=0.05];
\draw [fill] (6.5,2) circle [radius=0.05];
\draw [fill] (6.5,1) circle [radius=0.05];
\draw [fill] (6.5,0) circle [radius=0.05];
\node at (0,2) {$a'_{k}$};
\node at (0,1) {$a'_{j}$};
\node at (0,0) {$a'_{i}$};
\node at (7,2) {$b'_{i}$};
\node at (7,1) {$b'_{j}$};
\node at (7,0) {$b'_{k}$};

\draw[fill] (3.5,.5) circle [radius=.05];
\draw[fill] (2.5,1.5) circle [radius=.05];
\draw[fill] (4.5,1.5) circle [radius=.05];
\node[right] at (3.5,.5) {$v'_{(i,k)}$};
\node[right] at (2.5,1.5) {$v'_{(j,k)}$};
\node[right] at (4.5,1.5) {$v'_{(i,j)}$};

\draw[rounded corners] (0.5,0) --(1.25,0) -- (3,0)-- (4,1) -- (5,2) -- (5.25,2);
\draw[rounded corners] (0.5,1) -- (2,1) -- (3,2) -- (4,2) -- (5,1) -- (6.5,1);
\draw[rounded corners] (1.5,2) -- (2,2) -- (3,1)-- (4,0) -- (5.75,0) -- (6.5,0);
    \draw[<-] (2.75,0) -- (1.25,0);
    \draw[<-] (4.85,1.85) -- (4,1);
    \draw (6.5,2) -- (5.25,2);
    \draw[->] (5.25,2) -- (6,2);
\draw[->, rounded corners] (1.25,1) -- (2,1) -- (2.95,1.95);
\draw[->, rounded corners] (5.75,1)-- (6,1);
\draw[->] (4.25,0) -- (5.75,0);   
    \draw[->] (0.5,2) -- (1.5,2); 
     \draw[->] (0.5,1) -- (1.25,1);
    \draw[->, red, ultra thick] (4.1,1.9) -- (4.9,1.1);
    \draw[->, red, ultra thick] (2.1,1.9) -- (3,1);
    \draw[red, ultra thick, ->] (3,1) -- (3.9,0.1);

\end{scope}
\end{scope}

%%% LLL
\begin{scope}[yshift=0cm]
\node at (3.5,3) {$\pa(\w)$};
\node at (8,3) {$\xrightarrow{\mu_{F}}$};

\node at (-2,1) {\small$k\le r$};

\draw [fill] (.55,2) circle [radius=0.05];
\draw [fill] (.55,1) circle [radius=0.05];
\draw [fill] (.55,0) circle [radius=0.05];
\draw [fill] (6.5,2) circle [radius=0.05];
\draw [fill] (6.5,1) circle [radius=0.05];
\draw [fill] (6.5,0) circle [radius=0.05];
\node at (0,2) {$a_{k}$};
\node at (0,1) {$a_{j}$};
\node at (0,0) {$a_{i}$};
\node at (7,2) {$b_{i}$};
\node at (7,1) {$b_{j}$};
\node at (7,0) {$b_{k}$};

\draw[fill] (3.5,1.5) circle [radius=.05];
\draw[fill] (2.5,.5) circle [radius=.05];
\draw[fill] (4.5,.5) circle [radius=.05];

\node[right] at (3.5,1.5) {$v_{(i,k)}$};
\node[right] at (2.5,.5) {$v_{(i,j)}$};
\node[right] at (4.5,.5) {$v_{(j,k)}$};

\draw[rounded corners] (0.5,0) --(1.25,0) -- (2,0)-- (3,1) --(4,2) -- (5.25,2);
    \draw[->, rounded corners] (1.5,0) -- (1.25,0);
    \draw[->, rounded corners] (3.5,1.5) -- (3,1);
    \draw[->, rounded corners] (6.5,2) -- (5.25,2);
\draw[rounded corners] (.5,1) -- (2,1) -- (3,0) -- (3.5,0)-- (4,0) -- (5,1) -- (5.57,1)-- (6.5,1);
    \draw[<-] (1,1) -- (1.25,1);
    \draw[<-] (3.25,0) -- (3.5,0);
    \draw[<-] (5.5,1) -- (5.75,1);
\draw[rounded corners] (.5,2) -- (3,2) -- (4,1) -- (5,0) -- (5.75,0) -- (6.5,0);
    \draw[<-] (1,2) -- (1.5,2);
    \draw[<-] (3.5,1.5) -- (4,1);
    \draw[<-] (5.5,0) -- (5.75,0);
    \draw[<-] (4.1,0.9) -- (4.9,0.1);
   
\draw[->,red, ultra thick] (3,1) -- (2.1,0.1);
\draw[->,red, ultra thick] (3.9,1.9) -- (3,1);
\draw[<-,red, ultra thick] (4.1,0.1) -- (4.9,.9);
    
\begin{scope}[xshift=9cm]
\node at (3.5,3) {$\pa(\mu_F(\w))$};

\draw [fill] (.55,2) circle [radius=0.05];
\draw [fill] (.55,1) circle [radius=0.05];
\draw [fill] (.55,0) circle [radius=0.05];
\draw [fill] (6.5,2) circle [radius=0.05];
\draw [fill] (6.5,1) circle [radius=0.05];
\draw [fill] (6.5,0) circle [radius=0.05];
\node at (0,2) {$a'_{k}$};
\node at (0,1) {$a'_{j}$};
\node at (0,0) {$a'_{i}$};
\node at (7,2) {$b'_{i}$};
\node at (7,1) {$b'_{j}$};
\node at (7,0) {$b'_{k}$};

\draw[fill] (3.5,.5) circle [radius=.05];
\draw[fill] (2.5,1.5) circle [radius=.05];
\draw[fill] (4.5,1.5) circle [radius=.05];
\node[right] at (3.5,.5) {$v'_{(i,k)}$};
\node[right] at (2.5,1.5) {$v'_{(j,k)}$};
\node[right] at (4.5,1.5) {$v'_{(i,j)}$};

\draw[rounded corners] (0.5,0) --(1.25,0) -- (3,0)-- (4,1) -- (5,2) -- (5.25,2);
\draw[rounded corners] (0.5,1) -- (2,1) -- (3,2) -- (4,2) -- (5,1) -- (5.75,1);
\draw[rounded corners] (1.5,2) -- (2,2) -- (3,1)-- (4,0) -- (5.75,0) -- (6.5,0);
    \draw[->] (2.75,0) -- (1.25,0);
    \draw[->,red, ultra thick] (4.9,1.9) -- (4,1);
    \draw[->] (6.5,2) -- (5.25,2);
\draw[<-, rounded corners] (1.25,1) -- (2,1) -- (2.75,1.75);
\draw[<-,red, ultra thick] (2,1) -- (2.9,1.9);
\draw[<-, rounded corners] (3.5,2) -- (4,2) -- (4.75,1.25);
\draw[<-, rounded corners] (5.57,1)-- (6.5,1);
    \draw[<-] (0.5,2) -- (1.5,2);
    \draw[<-] (2.25,1.75) -- (3,1);
    \draw[<-] (4.25,0) -- (5.75,0);

\end{scope}
\end{scope}

%%% LLR
\begin{scope}[yshift=-3cm]
\node at (-2,1.25) {\small$j\le r$};
\node at (-2,.75) {\small$r+1\le k$};
\draw [fill] (.55,2) circle [radius=0.05];
\draw [fill] (.55,1) circle [radius=0.05];
\draw [fill] (.55,0) circle [radius=0.05];
\draw [fill] (6.5,2) circle [radius=0.05];
\draw [fill] (6.5,1) circle [radius=0.05];
\draw [fill] (6.5,0) circle [radius=0.05];
\node at (0,2) {$a_{k}$};
\node at (0,1) {$a_{j}$};
\node at (0,0) {$a_{i}$};
\node at (7,2) {$b_{i}$};
\node at (7,1) {$b_{j}$};
\node at (7,0) {$b_{k}$};

\draw[fill] (3.5,1.5) circle [radius=.05];
\draw[fill] (2.5,.5) circle [radius=.05];
\draw[fill] (4.5,.5) circle [radius=.05];

\node[right] at (3.5,1.5) {$v_{(i,k)}$};
\node[right] at (2.5,.5) {$v_{(i,j)}$};
\node[right] at (4.5,.5) {$v_{(j,k)}$};

\draw[rounded corners] (0.5,0) --(1.25,0) -- (2,0)-- (3,1) --(4,2) -- (5.25,2);
    \draw[->, rounded corners] (1.5,0) -- (1.25,0);
    \draw[->, rounded corners] (3.5,1.5) -- (3,1);
    \draw[->, rounded corners] (6.5,2) -- (5.25,2);
\draw[rounded corners] (.5,1) -- (2,1) -- (3,0) -- (3.5,0)-- (4,0) -- (5,1) -- (5.57,1)-- (6.5,1);
    \draw[<-] (1,1) -- (1.25,1);
    \draw[<-] (3.25,0) -- (3.5,0);
    \draw[<-] (5.5,1) -- (5.75,1);
\draw[rounded corners] (1.5,2) -- (3,2) -- (4,1) -- (5,0) -- (5.75,0) -- (6.5,0);
    \draw[->] (0.5,2) -- (1.5,2);
    \draw[->] (3.5,1.5) -- (4,1);
    \draw[->] (5.5,0) -- (5.75,0);
    \draw[->] (4.1,0.9) -- (4.9,0.1);
\draw[<-,red, ultra thick] (2.1,0.1)-- (2.9,0.9);

\begin{scope}[xshift=9cm]
\draw [fill] (.55,2) circle [radius=0.05];
\draw [fill] (.55,1) circle [radius=0.05];
\draw [fill] (.55,0) circle [radius=0.05];
\draw [fill] (6.5,2) circle [radius=0.05];
\draw [fill] (6.5,1) circle [radius=0.05];
\draw [fill] (6.5,0) circle [radius=0.05];

\node at (0,2) {$a'_{k}$};
\node at (0,1) {$a'_{j}$};
\node at (0,0) {$a'_{i}$};
\node at (7,2) {$b'_{i}$};
\node at (7,1) {$b'_{j}$};
\node at (7,0) {$b'_{k}$};

\draw[fill] (3.5,.5) circle [radius=.05];
\draw[fill] (2.5,1.5) circle [radius=.05];
\draw[fill] (4.5,1.5) circle [radius=.05];
\node[right] at (3.5,.5) {$v'_{(i,k)}$};
\node[right] at (2.5,1.5) {$v'_{(j,k)}$};
\node[right] at (4.5,1.5) {$v'_{(i,j)}$};

\draw[rounded corners] (0.5,0) --(1.25,0) -- (3,0)-- (4,1) -- (5,2) -- (5.25,2);
\draw[rounded corners] (0.5,1) -- (2,1) -- (3,2) -- (4,2) -- (5,1) -- (5.75,1);
\draw[rounded corners] (1.5,2) -- (2,2) -- (3,1)-- (4,0) -- (5.75,0) -- (6.5,0);
    \draw[->] (2.75,0) -- (1.25,0);
    \draw[->,red, ultra thick] (4.9,1.9) -- (4,1);
    \draw[->] (6.5,2) -- (5.25,2);
\draw[<-, rounded corners] (1.25,1) -- (2,1) -- (2.75,1.75);
\draw[<-, rounded corners] (3.5,2) -- (4,2) -- (4.75,1.25);
\draw[<-, rounded corners] (5.57,1)-- (6.5,1);
    \draw[->] (0.5,2) -- (1.5,2);
    \draw[->] (2.25,1.75) -- (3,1);
    \draw[->] (4.25,0) -- (5.75,0);
\end{scope}
\end{scope}

\end{tikzpicture}
\end{center}
\caption{The pseudoline arrangement $\pa(\w)$ (resp. $\pa(\mu_F(\w))$) locally around the face $F=F_{(i,j)}$ (resp. $F'=F'_{(j,k)}$) bounded by lines $l_i,l_j,l_k$ with $i<j<k$ and orientations $(l_r,l_{r+1})$ for all possible $r$.
The red arrows are those forbidden in GP-paths.}\label{fig:loc.orient}
\end{figure}
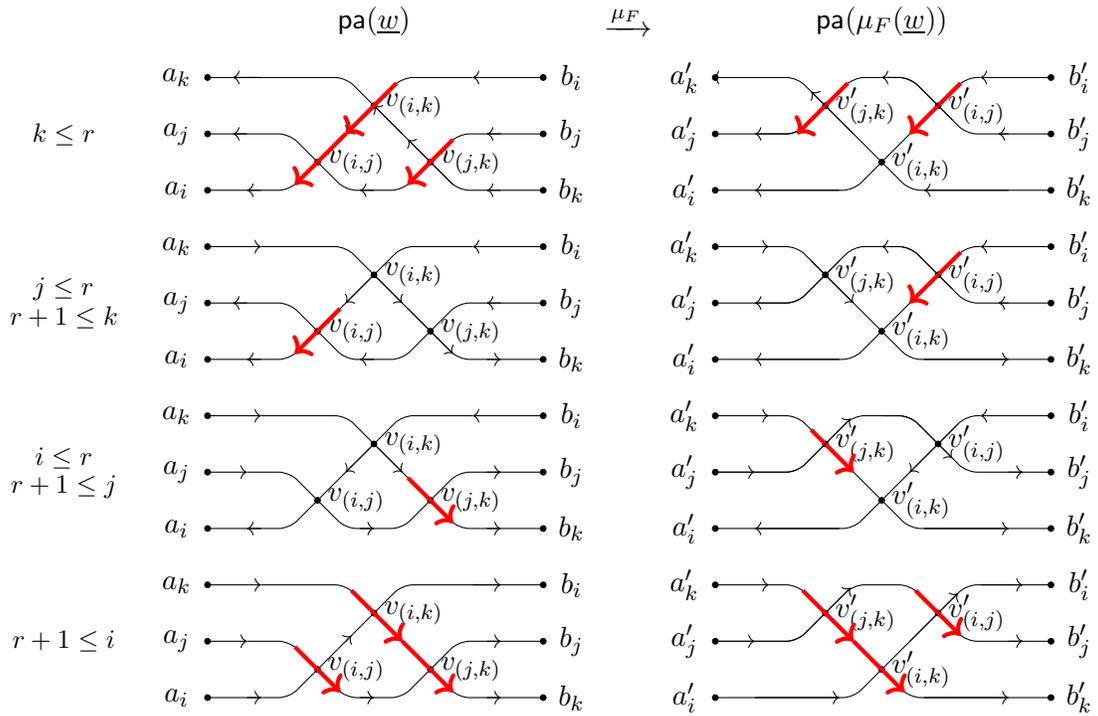

Recall the arrows for the quiver corresponding to $\pa(\w)$ and $\pa(\mu_F(\w))$ from Figure~\ref{fig:pseudo.mut}.
We call a face $E$ \emph{incoming} (resp. \emph{outgoing}) with respect to $F$ in $\pa(\w)$, if there is an arrow in the quiver $Q_{\w}$ from (resp. to) the vertex corresponding to $E$ to (resp. from) the vertex corresponding to $F$. 
We denote by $\text{In}_{F}$ the union of all incoming faces and by $\text{Out}_{F}$ the union of all outgoing faces.
See for example, Figure~\ref{fig:pseudo.mut}.

\vbox{
\begin{definition}\label{def:local type}
Let $\p\in\mathcal P_{\w}$ for $\w\in S_n$ and consider a mutable face $F$ of $\pa(\w)$. 
Set $\delta_{F\subset \area_{\p}}:=1$ if $F\in \area_{\p}$ and zero otherwise.
Then we define the \emph{$F$-local type} of $\p$ as the triple
\[
F(\p):=(i_{F,{\p}},x_{F,{\p}},o_{F,{\p}}):=(\#\{\text{In}_{F}\cap \area_{\p}\}, 
\delta_{F\in \area_{\p}},
\#\{\text{Out}_{F}\cap \area_{\p}\}).
\]

\end{definition}}
For example, if $\area_{\p}$ in Figure~\ref{fig:pseudo.mut} contains the faces $F,F_{\init_1}$ and $F_{\text{out}_2}$ but not $F_{\init_1}$ and $F_{\text{out}_1}$, then the $F$-local type of $\p$ is $(1,1,1)$.
The following lemma is a crucial observation on the $F$-local type of GP-paths.

    %%%%%%%%%%%%%%%%%%%%%%%%%%%
    
    \begin{table}[]
    \centering %\{.7}{
    \scalebox{.8}{
    \begin{tabular}{|c|c|c|c|} \hline
$F$-local type of $\p$ & ${\p}$ in $\pa(\w)$ & 
    $\p'=\mut_{F}({\p})$ in $\pa(\mu_F(\w))$ & $F'$-local type of $\p'$\\ \hline\hline
$(2,1,2)$ &
    $b_i\to v_{(i,k)}\to a_k$ &
    $b'_i\to v'_{(i,j)}\to v'_{(j,k)} \to a'_k$ &
     $(2,1,2)$ \\ \hline

$(1,1,2)$ &
    $b_{j}\to v_{(j,k)}\to v_{(i,k)}\to a_k$ &
    \begin{tabular}{c} 
        $b'_j\to v'_{(i,j)}\to v'_{(j,k)}\to a'_k$ \\ \hline
        $b'_j\to v'_{(i,j)}\to v'_{(i,k)}\to v'_{(j,k)}\to a'_k$ \end{tabular} & 
    \begin{tabular}{c}
        $(2,1,1)$ \\ \hline
        $(2,0,1)$
    \end{tabular}   \\ \hline 

 $(1,1,1)$&        
    $ b_j\to v_{(j,k)}\to v_{(i,k)}\to v_{(i,j)}\to a_j$ &
    $ b'_j\to v'_{(i,j)}\to v'_{(i,k)}\to v'_{(j,k)}\to a'_j$ &
     $(1,0,1)$ \\ \hline
    
 $(1,1,1)$ &
        $b_k \to v_{(j,k)}\to v_{(i,k)}\to a_k $   &
        $b'_k\to v'_{(i,k)}\to v'_{(j,k)}\to a'_k$     &
          $(1,0,1)$   \\ \hline
    
\begin{tabular}{c}
      $(1,1,0)$ \\ \hline
      $(1,0,0)$ \end{tabular} &    
    \begin{tabular}{c}
         $b_k\to v_{(j,k)}\to v_{(i,k)}\to v_{(i,j)}\to a_j$  \\ \hline
          $b_k\to v_{(j,k)}\to v_{(i,j)}\to a_j$
    \end{tabular} &
    $ b'_k\to v'_{(i,k)}\to v'_{(j,k)}\to a'_j$ &
          $(0,0,1)$ \\ \hline
  
 $(0,0,0)$ &
    $b_k\to v_{(j,k)}\to v_{(i,j)}\to a_i$ &
    $b'_k\to v'_{(i,k)}\to a'_i$ &
          $(0,0,0)$ \\ \hline 
    
        \end{tabular}}
    \caption{Shapes of paths locally around $F$ (resp. $F'$) in $\mathcal P_{\w}$ (resp. $\mathcal P_{\mu_F(\w)}$) for orientation $(l_r,l_{r+1})$ with $i<j<k\le r$ (see Figure~\protect{\ref{fig:loc.orient}}) and how they are mapped onto each other by $\mut_F$.}
    \label{tab:(lll) cases of path mut}
\end{table}

\vbox{
\begin{lemma}\label{lem:possible F-local types}
Let $\p\in\mathcal P_{\w}$ for $\w\in S_n$ and consider a mutable face $F$ of $\pa(\w)$. Then the following are all possible $F$-local types $\p$ can have:
\begin{itemize}
    \item[$i_{F,\p}=o_{F,\p}$:] then $F(\p) \in \{(0,0,0),(1,0,1),(1,1,1),(2,1,2)\}$;
    \item[$i_{F,\p}<o_{F,\p}$:] then $F(\p) \in \{(1,1,2),(0,0,1)\}$;
    \item[$i_{F,\p}>o_{F,\p}$:] then $F(\p) \in \{(1,0,0),(1,1,0),(2,0,1),(2,1,1)\}$.
\end{itemize}
Moreover, the $F$-local types of $\p$ with $i_{F,\p}>o_{F,\p}$ come in pairs as $((1,0,0),(1,1,0))$ or $((2,0,1),(2,1,1))$. Meaning that if a path of one type exists for a fixed orientation then so does a path of the corresponding other type for the same orientation.
\end{lemma}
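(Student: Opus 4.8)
The statement is a purely combinatorial classification, so the plan is to proceed by a careful case analysis exploiting the strong geometric constraints that the rigorous-path condition places on a GP-path as it passes near a mutable face $F = F_{(i,j)}$. Recall from Definition~\ref{defn:mutation pa} that a mutable face $F$ corresponds to a local pattern $s_r s_{r+1} s_r$ in $\underline w$, so that $F$ together with its two incoming and two outgoing faces involves exactly three lines $l_i, l_j, l_k$ with $i<j<k$ crossing pairwise in $\pa(\underline w)$ at the vertices $v_{(i,j)}, v_{(i,k)}, v_{(j,k)}$. The key point is that for a fixed orientation $(l_r,l_{r+1})$, the orientations of $l_i, l_j, l_k$ are entirely determined by which of the three lies in $\{l_1,\dots,l_r\}$ and which in $\{l_{r+1},\dots,l_n\}$, giving exactly four sub-cases (as displayed in Figure~\ref{fig:loc.orient}, labelled by $k\le r$, $j\le r<k$, $i\le r<j$, and $r<i$). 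Within each sub-case, the ``red arrows forbidden in rigorous paths'' (Figure~\ref{fig:rigorous}) together with the constraint from Proposition~\ref{prop:in stream} — that $\p$ stays in the strip between $l_r$ and $l_{r+1}$ and does not cross those two lines — drastically limit how $\p$ can traverse the local picture near $F$.

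First I would fix the orientation sub-case and enumerate all the local ``traversal patterns'' of $\p$ through the triangle formed by $v_{(i,j)}, v_{(i,k)}, v_{(j,k)}$: a GP-path either avoids all three vertices, touches one of them (possibly following a line through it, possibly turning at it), or passes through two or all three of them. Each such traversal pattern, read off the local picture, determines exactly which of the faces $F$, $\text{In}_F$ (the two incoming faces), $\text{Out}_F$ (the two outgoing faces) lie in $\area_{\p}$, i.e. it determines the triple $F(\p) = (i_{F,\p}, x_{F,\p}, o_{F,\p})$. I would record for each of the four orientation sub-cases a short table of (admissible traversal pattern) $\mapsto$ ($F$-local type); this is exactly the content of the tables already referenced in the excerpt (Tables~\ref{tab:(lll) cases of path mut}–\ref{tab:(rrr) cases of path mut}). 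Collating the second column of these four tables yields precisely the list of $F$-local types in the statement, and one checks directly that the three cases $i_{F,\p}=o_{F,\p}$, $i_{F,\p}<o_{F,\p}$, $i_{F,\p}>o_{F,\p}$ partition this list as claimed — e.g. $(2,1,2)$, $(1,1,1)$, $(1,0,1)$, $(0,0,0)$ have $i_{F,\p}=o_{F,\p}$; $(1,1,2)$ and $(0,0,1)$ have $i_{F,\p}<o_{F,\p}$; and $(1,0,0)$, $(1,1,0)$, $(2,0,1)$, $(2,1,1)$ have $i_{F,\p}>o_{F,\p}$.

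For the ``moreover'' part about pairs, I would argue as follows. Suppose a path $\p$ of $F$-local type $(1,1,0)$ exists for the orientation $(l_r,l_{r+1})$; then near $F$ it traverses the triangle in one of the patterns listed in the tables (concretely, entering along $l_k$, turning at $v_{(j,k)}$ and $v_{(i,k)}$, and leaving at $v_{(i,j)}$). Modifying $\p$ only locally — changing the traversal pattern so that the path enters along $l_k$, bypasses $v_{(i,k)}$, and leaves directly at $v_{(i,j)}$ without ever touching the face $F$ — produces a path whose behaviour outside the triangle is unchanged, hence still a legal GP-path with the same source and sink, but now of $F$-local type $(1,0,0)$; and symmetrically from a $(1,0,0)$-path one recovers a $(1,1,0)$-path. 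The same local surgery argument pairs $(2,0,1)$ with $(2,1,1)$. The point is that the two members of a pair differ only in whether the path ``dips into'' $F$ itself while keeping identical incidence with $\text{In}_F$ and $\text{Out}_F$, and such a dip is always available (or always removable) given the orientation is fixed.

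\textbf{Main obstacle.} The real work — and the place where errors creep in — is the bookkeeping of the case analysis: there are four orientation sub-cases, and within each one must enumerate every admissible traversal of the local triangle and correctly compute which faces among $\{F\}\cup\text{In}_F\cup\text{Out}_F$ fall to the left of the path. One must be scrupulous about the direction conventions (which side is ``left'' depends on the orientation of the line the path is currently on) and about the subtle distinction between a path following a line \emph{through} a crossing versus \emph{turning} at it, since these give different contributions to $\area_{\p}$. I expect the cleanest way to control this is to do the enumeration in tandem with the mutated picture $\pa(\mu_F(\underline w))$ — i.e. prove this lemma simultaneously with the description of $\mut_F(\p)$ in Tables~\ref{tab:(lll) cases of path mut}–\ref{tab:(rrr) cases of path mut} — so that the consistency of the two columns provides a built-in check, but the lemma as stated requires only the left column, read off Figure~\ref{fig:loc.orient}.
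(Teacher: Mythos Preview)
Your proposal is correct and follows essentially the same approach as the paper: a case-by-case enumeration over the four orientation sub-cases (and the two shapes of $F$), listing all admissible local traversals and reading off the resulting $F$-local types from the tables. Your local-surgery argument for the ``moreover'' clause is slightly more explicit than the paper's, which simply notes that the pairing is visible by inspecting the same tables, but the content is the same.
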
}

\begin{proof}
The lemma follows from case-by-case consideration of all possible shapes of $\p\in\mathcal P_{\w}$ around a mutable face $F$ of $\pa(\w)$. First observe, that $F$ can have two different shapes, depending on whether it is defined by simple reflections $s_ms_{m+1}s_m$ (as on the left in Figure~\ref{fig:pseudo.mut}) or by $s_{m+1}s_ms_{m+1}$ (as on the right in Figure~\ref{fig:pseudo.mut}).
We endow $\pa(\w)$ for either case of $F$ with all possible orientations $(l_r,l_{r+1})$. Then locally at $F$, there are four cases of orientation depending on $r$ and $r+1$ in relation to $i,j,k$ (see Figure~\ref{fig:loc.orient}).
We consider all possibilities for the path $\p$ to pass $F$ for each case of orientation and shape of $F$.
These are listed in Tables~\ref{tab:(lll) cases of path mut} to \ref{tab:(rrr) cases of path mut}, in the second column for $F$ as on the left of Figure~\ref{fig:loc.orient} and in the third for $F$ as on the right of Figure~\ref{fig:loc.orient}.
In the first and last columns of these tables we indicate the corresponding $F$-local type.
Observe, that the list in the claim of the lemma covers all occurring $F$-local types.

Regarding the second part of the claim, this also follows as an observation from Tables~\ref{tab:(lll) cases of path mut} to \ref{tab:(rrr) cases of path mut}.
\end{proof}

\begin{table}[]
    \centering 
    \scalebox{.8}{
    \begin{tabular}{|c|c|c|c|} \hline
		$F$-local type of $\p$ & ${\p}$ in $\pa(\w)$ & $\p'=\mut_{F}({\p})$ in $\pa(\mu_F(\w))$ & $F'$-local type of $\p'$\\ \hline\hline	
		$(1,0,1)$ &   $b_i\to v_{(i,k)}\to v_{(j,k)}\to v_{(i,j)}\to a_i $ &  $b'_i\to v'_{(i,j)} \to v'_{(j,k)} \to v'_{(i,k)}\to a'_i$ & $(1,1,1)$ \\ \hline

		\begin{tabular}{c}
			$(2,1,1)$ \\ \hline
			$(2,0,1)$
		\end{tabular} &
		\begin{tabular}{c}
			$b_i\to v_{(i,k)}\to v_{(i,j)}\to a_j$   \\ \hline 
			$b_i\to v_{(i,k)}\to v_{(j,k)}\to v_{(i,j)}\to a_j$
		\end{tabular} &
		$ b'_i\to v'_{(i,j)}\to v'_{(j,k)}\to a'_j$ & $(1,1,2)$ \\ \hline

		$(1,0,1)$  & $b_i\to v_{(i,k)}\to v_{(j,k)}\to b_k$ & $ b'_i\to v'_{(i,j)}\to v'_{(j,k)}\to v'_{(i,k)}\to b'_k$ & $(1,1,1)$  \\ \hline
    
		$(0,0,1)$  & $b_j\to v_{(j,k)}\to v_{(i,j)}\to a_i$ &
		\begin{tabular}{c}
			$b'_j \to v'_{(i,j)} \to v'_{(j,k)} \to v'_{(i,k)} \to a'_i$ \\
			$b'_i \to v'_{(i,j)} \to v'_{(i,k)} \to a'_i$
		\end{tabular} &
		\begin{tabular}{c}
			$(1,1,0)$  \\ \hline
			$(1,0,0)$
		\end{tabular} \\ \hline    

		$(1,0,1)$  & $b_j\to v_{(j,k)}\to v_{(i,j)}\to a_j $ & $b'_j\to v'_{(i,j)}\to v'_{(j,k)}\to a'_j $ &  $(1,1,1)$  \\ \hline    
    
		$(0,0,1)$  &	$b_{j}\to v_{(j,k)}\to b_k  $ &
		\begin{tabular}{c}
			$b'_j\to v'_{(i,j)} \to v'_{(j,k)}\to v'_{(i,k)}\to b'_k $  \\ \hline 
			$b'_j\to v'_{(i,j)}\to v'_{(i,k)}\to b'_k$
		\end{tabular} &
		\begin{tabular}{c}
			$(1,1,0)$ \\ \hline
			$(1,0,0)$ 
		\end{tabular} \\ \hline      

		$(1,0,1)$   & $a_k\to v_{(i,k)}\to v_{(j,k)}\to v_{(i,j)}\to a_i $ & $ a'_k\to v'_{(j,k)}\to v'_{(i,k)}\to a'_i $ &  $(1,1,1)$ \\ \hline  

		\begin{tabular}{c}
			$(2,1,1)$  \\ \hline
			$(2,0,1)$ 
		\end{tabular}&
		\begin{tabular}{c}
			$a_k\to v_{(i,k)}\to v_{(i,j)}\to a_j $ \\ \hline 
			$a_k\to v_{(i,k)}\to v_{(j,k)}\to v_{(i,j)}\to a_j $
		\end{tabular} &
		$ a'_k\to v'_{(j,k)}\to a'_j $ & $(1,1,2)$ \\ \hline      
    
		$(1,0,1)$ & $a_k\to v_{(i,k)}\to v_{(j,k)}\to b_k$ & $ a'_k\to v'_{(j,k)}\to v'_{(i,k)}\to b'_k$ &  $(1,1,1)$  \\ \hline  

   \end{tabular}}
    \caption{Shapes of paths locally around $F$ (resp. $F'$) in $\mathcal P_{\w}$ (resp. $\mathcal P_{\mu_F(\w)}$) for orientation $(l_r,l_{r+1})$ with $i<j\le r$ and $r+1\le k$ (see Figure~\protect{\ref{fig:loc.orient}}) and how they are mapped onto each other by $\mut_F$.}
    \label{tab:(llr) cases of path mut}
\end{table}

With notation as in the lemma, if $\p_1,\p_2$ are paths with $i_{F,\p_j}>o_{F,\p_j}, j=1,2$ such that $((i_{F,\p_1},x_{F,\p_1},o_{F,\p_1}),(i_{F,\p_2},x_{F,\p_2},o_{F,\p_2}))$ is one of the pairs, then we denote by $\p_1\oplus\p_2$ their formal sum.
If $\p_1$ and $\p_2$ are equal away from $F$, we denote this by $\p_1/F=\p_2/F$.
Observe, that this is the case here.
With this notation we define the following set of paths, respectively formal sums of paths.
\begin{eqnarray}\label{eq: def P_w,F}
\widehat{\mathcal{P}}_{\w,F} := \left\{ \begin{matrix} \p,\\ \p_1\oplus\p_2\end{matrix} \right\vert 
\left.\begin{matrix}
\p\in\mathcal{P}_{\w} \text{ with }  i_{F,\p}=o_{F,\p} \text{ or } i_{F,\p}<o_{F,\p},\\
 \p_1,\p_2\in\mathcal P_{\w} \text{ with } i_{F,\p_j}>o_{F,\p_j}\text{ for } j=1,2
\end{matrix}\right\}.
\end{eqnarray}

Note that for every mutable face $F$ of $\pa(\w)$ every path in $\mathcal{P}_{\w}$ appears in $\widehat{\mathcal{P}}_{\w,F}$ either on its own or as a formal summand. This additional structure on $\mathcal{P}_{\w}$ allows us to define \emph{mutation} on it.

\begin{definition}\label{def:mut path}
Let $w\in S_n$ with reduced expressions $\w$ and $\mu_F(\w)$, where $F$ is a mutable face in $\pa(\w)$. Denote by $F'$ the corresponding face in $\pa(\mu_F(\w))$. We define  $\mut_F:\widehat{\mathcal{P}}_{\w,F}\to \widehat{\mathcal{P}}_{\mu_F(\w),F'}$ depending on the $F$-local type by 
\begin{itemize}
    \item[$i_{F,\p}=o_{F,\p}$:] $\mut_F(\p)=\p'$ with $\p/ F=\p'/ F'$, where for $F(\p)\in\{(0,0,0),(2,1,2)\}$ we have $F(\p)=F'(\p')$, and for $F(\p)\in\{(1,0,1),(1,1,1)\}$ we have $F'(\p')=(i_{F,\p},\vert x_{F,\p}-1\vert,o_{F,\p})$;
    \item[$i_{F,\p}<o_{F,\p}$:] $\mut_F(\p)=\p_1'\oplus\p_2'$ with $\p/F=\p'_1/F'=\p'_2/F'$, for $F(\p)\in\{(0,0,1),(1,1,2)\}$ with $F'(\p_1')=(o_{F,\p},x_{F,\p},i_{F,\p})$ and $F'(\p'_2)=(o_{F,\p},\vert x_{F,\p}-1\vert,i_{F,\p})$;
    \item[$i_{F,\p}>o_{F,\p}$:] $\mut_F(\p_1\oplus \p_2)=\p'$ with $\p_1/F=\p_2/F=\p'/F'$, for $(F(\p_1),F(\p_2))$ either $((1,0,0),(1,1,0))$ or $((2,1,1),(2,0,1))\}$ with $F'(\p')=(o_{F,\p_1},x_{F,\p_1},i_{F,\p_1})$.
\end{itemize}
\end{definition}

Consider the torus $\mathcal X_{\w}$ corresponding to the seed (associated with) $\pa(\w)$. 
For the lattice $N_{\w}$ we have the basis $\{e_E\}_{E \text{ face of }\pa(\w)}$. 
Then $e_{\p}\in N$ for ${\p}\in \mathcal{P}_{\w,F}$ is an expression in this basis and  $z^{e_{\p}}$ a function on $\mathcal X_{\w}$. 
To extend our definition of $e_{\p}$ in \eqref{eq:def area ineq} for $\p\in \mathcal P_{\w}$ to $\p\in\widehat{\mathcal{P}}_{\w,F}$, we set $z^{e_{{\p}_1\oplus {\p}_2}}:=z^{e_{{\p}_1}}+z^{e_{{\p}_2}}$.
Then for every mutable face $F$ of $\pa(\w)$ we have
\[
\left\{\mathbf x\in \mathbb R^{\ell(w)}\left\vert (\sum_{\p\in\widehat{\mathcal{P}}_{\w,F}}z^{e_{\p}})^{\trop}(\mathbf x)\ge 0\right.\right\}=S_{\w}.
\]
The following is the key lemma of this section.

\begin{table}[]
    \centering
    \scalebox{.8}{
    \begin{tabular}{|c|c|c|c|} \hline
$F$-local type of $\p$ & ${\p}$ in $\pa(\w)$
    & $\p'=\mut_{F}({\p})$ in $\pa(\mu_F(\w))$ & $F'$-local type of $\p'$\\ \hline\hline
 $(1,1,1)$ &
    $b_i\to v_{(i,k)}\to v_{(i,j)}\to a_i $ &
    $ b'_i\to v'_{(i,j)}\to v'_{(i,k)}\to a'_i $ &
     $(1,0,1)$ \\ \hline
    
\begin{tabular}{c} 
     $(1,0,0)$ \\ \hline
     $(1,1,0)$ 
    \end{tabular} &
    \begin{tabular}{c}
         $b_i\to v_{(i,k)}\to v_{(j,k)}\to b_{j} $  \\ \hline
         $ b_i\to v_{(i,k)}\to v_{(i,j)}\to v_{(j,k)}\to b_{j} $
    \end{tabular} &
    $b'_i\to v'_{(i,j)}\to b_{k} $ &
     $(0,0,1)$ \\ \hline
    
 $(1,1,1)$ &
    $b_i\to v_{(i,k)}\to v_{(i,j)}\to v_{(j,k)}\to b_k $ &
    $b'_i\to v'_{(i,j)}\to v'_{(i,k)}\to b'_k $ &
     $(1,0,1)$ \\ \hline    
    
 $(1,1,2)$ &
    $ a_j\to v_{(i,j)}\to  a_i $ &
    \begin{tabular}{c}
         $ a'_j\to v'_{(j,k)}\to v'_{(i,k)}\to a'_i $  \\
         $a'_j\to v'_{(j,k)}\to v'_{(i,j)}\to v'_{(i,k)}\to a'_i $ 
    \end{tabular} &
    \begin{tabular}{c} 
     $(2,1,1)$ \\ \hline
     $(2,0,1)$ 
    \end{tabular} \\ \hline

 $(1,1,1)$ &
    $ a_j\to v_{(i,j)}\to v_{(j,k)}\to b_j  $ &
    $ a'_j\to v'_{(j,k)}\to v'_{(i,j)}\to b'_j $ &
     $(1,0,1)$ \\ \hline 

 $(1,1,2)$ &
    $ a_j\to v_{(i,j)}\to v_{(j,k)}\to b_k $ &
    \begin{tabular}{c}
         $ a'_j\to v'_{(j,k)}\to v'_{(i,k)}\to b'_k $  \\
         $ a'_j\to v'_{(j,k)}\to v'_{(i,j)}\to v'_{(i,k)}\to b'_k $ 
    \end{tabular}  &
    \begin{tabular}{c} 
     $(2,1,1)$ \\ \hline
     $(2,0,1)$ 
    \end{tabular}\\ \hline 

 $(1,1,1)$  &
     $ a_k\to v_{(i,k)}\to v_{(i,j)}\to a_i $   &
    $  a'_k\to v'_{(j,k)}\to v'_{(i,j)}\to v'_{(i,k)}\to a'_i $ &
     $(1,0,1)$ \\ \hline  
    
    \begin{tabular}{c} 
     $(1,0,0)$ \\ \hline
     $(1,1,0)$ 
    \end{tabular} &
    \begin{tabular}{c}
         $a_k\to v_{(i,k)}\to v_{(j,k)}\to b_j $  \\
         $a_k\to v_{(i,k)}\to v_{(i,j)}\to v_{(j,k)}\to b_j $ 
    \end{tabular}  &
    $ a'_k\to v'_{(j,k)}\to v'_{(i,j)}\to b'_j $ &
     $(0,0,1)$ \\ \hline  

 $(1,1,1)$  &
    $ a_k\to v_{(i,k)}\to v_{(i,j)}\to  v_{(j,k)}\to b_k $ &
    $  a'_k\to v'_{(j,k)}\to v'_{(i,j)}\to v'_{(i,k)}\to b'_k $ &
     $(1,0,1)$ \\ \hline  
    \end{tabular}}
    \caption{Shapes of paths locally around $F$ (resp. $F'$) in $\mathcal P_{\w}$ (resp. $\mathcal P_{\mu_F(\w)}$) for orientation $(l_r,l_{r+1})$ with $i\le r$ and $r+1\le j<k$ (see Figure~\protect{\ref{fig:loc.orient}}) and how they are mapped onto each other by $\mut_F$.}
    \label{tab:(lrr) cases of path mut}
\end{table}

\vbox{
\begin{lemma}\label{lem: mutation paths}
Let $w\in S_n$ with reduced expressions $\w$ and $\mu_F(\w)$, where $F$ is a mutable face of $\pa(\w)$ and $F'$ the corresponding face of $\pa(\mu_F(\w))$ (i.e. $\mu_{F'}(\mu_F(\w))=\w$).
Let $\{e_E\}_E$ denote the basis for $N_{\w}$ and $\{e'_E\}_E$ the basis for $N_{\mu_F(\w)}$. Then for ${\p}\in \widehat{\mathcal P}_{\w,F}$ we have
\[
\mu_{F'}^*(z^{e_{\p}})=z^{e'_{\mut_F(\p)}}.
\]
\end{lemma}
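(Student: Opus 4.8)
The statement is a compatibility between the $\mathcal X$-mutation pullback $\mu_{F'}^*$ and the path-mutation operation $\mut_F$ on the (formal sums of) GP-paths. The plan is to reduce the claim to a purely local computation at the mutable face $F$, using the fact that away from $F$ nothing changes. First I would recall the explicit formula for the $\mathcal X$-mutation pullback from \eqref{eq: def pullback X-mut}: for $n\in N_{\w}$, $\mu_{F'}^*(z^n)=z^n(1+z^{e_{F,\w}})^{-\{n,e_{F,\w}\}_{\w}}$, where $\{\cdot,\cdot\}_{\w}$ is the bilinear form induced by the exchange matrix of $Q_{\w}$. The key point is that the exponent $\{e_{\p},e_{F,\w}\}_{\w}$ is exactly $i_{F,\p}-o_{F,\p}$: the form pairs $e_{\p}$ with $e_F$ by counting, with signs according to the arrows of the quiver, the faces of $\area_{\p}$ adjacent to $F$ through a quiver arrow — incoming faces contribute one sign, outgoing faces the other. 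So $\{e_{\p},e_{F,\w}\}_{\w} = i_{F,\p} - o_{F,\p}$ (up to a global sign convention that I would fix once and for all at the start). This is where the trichotomy of Lemma~\ref{lem:possible F-local types} enters.

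Next I would split into the three cases of the $F$-local type, matching them with the three cases in Definition~\ref{def:mut path}:

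\emph{Case $i_{F,\p}=o_{F,\p}$.} Here the exponent of $(1+z^{e_{F,\w}})$ is zero, so $\mu_{F'}^*(z^{e_{\p}})=z^{\phi(e_{\p})}$ where $\phi:N_{\w}\to N_{\mu_F(\w)}$ is the lattice isomorphism $e_{E}\mapsto e'_{E}$ for $E\neq F$ and $e_F\mapsto -e'_{F}$ (the mutation of the lattice basis \eqref{eq: def mutation lattice basis}, noting that all the $\max\{\epsilon_{ik},0\}$ corrections also vanish here since they contribute only when a face is incoming/outgoing with unequal multiplicities). One then checks directly from Definition~\ref{def:mut path} and the explicit tables (Tables~\ref{tab:(lll) cases of path mut}--\ref{tab:(rrr) cases of path mut}) that $e'_{\mut_F(\p)}=\phi(e_{\p})$: when $F\notin\area_{\p}$ the areas agree face-by-face, and when $F\in\area_{\p}$ (the $(1,1,1)$ or $(2,1,2)$ sub-cases) the sign flip on $e_F$ is absorbed exactly as in the $F$-local type change $(i,1,o)\mapsto(i,0,o)$ recorded in the tables.

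\emph{Case $i_{F,\p}<o_{F,\p}$.} By Lemma~\ref{lem:possible F-local types} the only local types are $(0,0,1)$ and $(1,1,2)$, so $o_{F,\p}-i_{F,\p}=1$ and $\mu_{F'}^*(z^{e_{\p}})=z^{\phi_0(e_{\p})}(1+z^{e'_{F}})$ where $\phi_0$ is the part of the lattice map not involving the $\max$-corrections; expanding, this is $z^{\phi_0(e_{\p})}+z^{\phi_0(e_{\p})+e'_F}$, and I would identify these two monomials with $z^{e'_{\p_1'}}$ and $z^{e'_{\p_2'}}$ where $\mut_F(\p)=\p_1'\oplus\p_2'$, using that $F'(\p_1')=(o,x,i)$ and $F'(\p_2')=(o,|x-1|,i)$ differ precisely by whether $F'$ lies in the area — again readable off the tables. \emph{Case $i_{F,\p}>o_{F,\p}$} is formally dual: $\mut_F$ is applied to $\p_1\oplus\p_2$ and $\mu_{F'}^*(z^{e_{\p_1}}+z^{e_{\p_2}})$ must collapse to a single $z^{e'_{\mut_F(\p_1\oplus\p_2)}}$, which happens because $\p_1,\p_2$ differ only in whether $F\in\area$ and the factor $(1+z^{e_{F,\w}})^{\pm1}$ with the appropriate sign telescopes the sum; here one must be careful about the direction of the mutation, i.e. that $\mu_{F'}$ going the other way contributes the correcting $\max$-terms in the lattice basis change.

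\textbf{Main obstacle.} The conceptual content is light, but the genuine difficulty is bookkeeping: one must verify the identity $\{e_{\p},e_{F,\w}\}_{\w}=\pm(i_{F,\p}-o_{F,\p})$ with the correct global sign, and then check that in each of the many rows of Tables~\ref{tab:(lll) cases of path mut}--\ref{tab:(rrr) cases of path mut} the lattice-basis mutation \eqref{eq: def mutation lattice basis} (including the $\max\{\epsilon_{ik},0\}e_k$ corrections, which depend on the detailed quiver arrows around $F$ shown in Figure~\ref{fig:pseudo.mut}) transports $e_{\p}$ to $e_{\mut_F(\p)}$ face by face. The cleanest way to organize this is probably to observe that both sides are supported on faces within a bounded neighbourhood of $F$, reduce to that finite picture, and invoke the tables as the enumeration of all local configurations — so that the proof becomes a finite, if tedious, case check rather than anything requiring new ideas.
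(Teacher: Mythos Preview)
Your plan is correct and is essentially the paper's own approach: decompose $e_{\p}=n_{\p}+m_{\p}$ into a local part supported on $\{F\}\cup\text{In}_F\cup\text{Out}_F$ and a far-away part $m_{\p}$ unchanged by mutation, then run a case analysis on the $F$-local type using Lemma~\ref{lem:possible F-local types}. Your observation that the exponent of $(1+z^{e_F})$ in the pullback formula equals $\pm(i_{F,\p}-o_{F,\p})$ is exactly the organizing principle behind the trichotomy, and the paper uses it implicitly without stating it.

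One concrete slip to fix when you write the details: in the balanced case $i_{F,\p}=o_{F,\p}$, the $\max\{\epsilon_{EF},0\}$ corrections in the basis change \eqref{eq: def mutation lattice basis} do \emph{not} vanish. Each incoming face $E$ contributes $e_E=e'_E+e'_{F}$ regardless of the global balance. What actually happens is that these $e'_F$ terms either cancel against $-e_F=e'_F$ (in types $(1,1,1)$ and $(2,1,2)$) or survive and flip the middle coordinate of the $F$-local type (sending $(1,0,1)$ to $(1,1,1)$), exactly as Definition~\ref{def:mut path} records. So your claim ``when $F\notin\area_{\p}$ the areas agree face-by-face'' is false for $(1,0,1)$, and $(2,1,2)$ maps to $(2,1,2)$, not to $(2,0,2)$. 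The paper handles this by simply writing out $n_{\p}$ explicitly in each of the sub-cases $i_{F,\p}\in\{0,1,2\}$ and converting bases term by term; you should do the same rather than appeal to a uniform map $\phi$.
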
}

\begin{proof}
We prove the claim case-by-case depending on the $F$-local type of $\p$ as in Lemma~\ref{lem:possible F-local types}.
As notation we use $n\in N_{\w}$ (resp. $n'\in N_{\mu_F(\w)}$) referring to an expression of $n$ is the basis $\{e_E\}_{E\text{ face of }\pa(\w)}$ (resp. $\{e'_E\}_{E\text{ face of }\pa(\mu_F(\w))}$).
Consider ${\p}\in \mathcal P_{\w}$, then 
\[
e_{\p}=-\sum_{E\subset \area_{\p}}e_E=-\sum_{E\subset ( \text{In}_{F}\cup \text{Out}_{F}) \cap \area_{\p}} e_E - \sum_{E\not \subset ( \text{In}_{F}\cup \text{Out}_{F}) \cap \area_{\p}} e_E=: n_{\p}+m_{\p}
\]
As by definition $\mut_F$ effects a path only locally around $F$, we have $m_{\p}=m_{\mut_F(\p)}$ (resp. $m_{\p}=m_{\p'_1}=m_{\p'_2}$ if $\mut_F(\p)=\p'_1\oplus\p'_2\in\widehat{\mathcal P}_{\mu_F(\w),F'}$).
Both have the same expressions in bases $\{e_E\}_E$ and $\{e'_E\}_E$ as the corresponding basis elements are not effected by mutation: 
only basis elements corresponding to vertices (i.e. faces of $\pa(\w)$) adjacent to $F$ (i.e. in $\text{In}_F\cup \text{Out}_F$) are changed by mutation in \eqref{eq: def mutation lattice basis}.
We use this fact throughout the proof.
Denote basis elements associated with faces $F_{\init},F_{\init_1},F_{\init_2}\in \text{In}_F$ by $e_{\init},e_{\init_1},e_{\init_2}$ and similarly for $e_{\text{out}}$. After mutation, $e'_{\init}$ is associated with the face $F'_{\init}\in \text{Out}_{F'}$ in $\pa(\mu_F(\w))$. 

\begin{table}[]
    \centering
    \scalebox{.8}{
    \begin{tabular}{|c|c|c|c|} \hline
$F$-local type of $\p$ & ${\p}$ in $\pa(\w)$
    & $\p'=\mut_{F}({\p})$ in $\pa(\mu_F(\w))$ & $F'$-local type of $\p'$\\ \hline\hline
 $(1,0,1)$ &
    $ a_i\to v_{(i,j)}\to v_{(i,k)}\to b_i $ &
    $ a'_i\to v'_{(i,k)}\to v'_{(i,j)}\to b'_i $ &
     $(1,1,1)$  \\ \hline      

    \begin{tabular}{c} 
     $(2,0,1)$   \\ \hline
     $(2,1,1)$ \end{tabular}&
    \begin{tabular}{c}
         $ a_i\to v_{(i,j)}\to v_{(i,k)}\to v_{(j,k)}\to b_j  $  \\
         $ a_i\to v_{(i,j)}\to v_{(j,k)}\to b_j $ 
    \end{tabular}   &
    $ a'_i\to v'_{(i,k)}\to v'_{(i,j)}\to b'_j $ &
     $(1,1,2)$ \\ \hline 

 $(2,1,2)$  &
    $ a_i\to v_{(i,j)}\to v_{(j,k)}\to b_k $ &
    $ a'_i\to v'_{(i,k)}\to b'_k $ &
     $(2,1,2)$ \\ \hline 

 $(1,0,1)$ &
    $ a_j\to v_{(i,j)}\to v_{(i,k)}\to v_{(j,k)} \to b_j  $ &
    $ a'_j\to v'_{(j,k)}\to v'_{(i,k)}\to v'_{(i,j)}\to b'_j $ &
     $(1,1,1)$ \\ \hline  
    
 $(0,0,0)$  &
    $ a_k\to v_{(i,k)}\to b_i $ &
    $ a'_k\to v'_{(j,k)}\to v'_{(i,j)}\to b'_i $ &
     $(0,0,0)$ \\ \hline  

 $(0,0,1)$  &
    $ a_j\to v_{(i,j)}\to v_{(i,k)}\to b_i $ &
    \begin{tabular}{c}
         $ a'_j\to v'_{(j,k)}\to v'_{(i,j)}\to b'_i $  \\
         $ a'_j\to v'_{(j,k)}\to v'_{(i,k)}\to v'_{(i,j)}\to b'_i $ 
    \end{tabular}   &
    \begin{tabular}{c} 
     $(1,0,0)$ \\ \hline
     $(1,1,0)$ \end{tabular} \\ \hline    

    \end{tabular}}
    \caption{Shapes of paths locally around $F$ (resp. $F'$) in $\mathcal P_{\w}$ (resp. $\mathcal P_{\mu_F(\w)}$) for orientation $(l_r,l_{r+1})$ with $r+1\le i<j<k$ (see Figure~\protect{\ref{fig:loc.orient}}) and how they are mapped onto each other by $\mut_F$.}
    \label{tab:(rrr) cases of path mut}
\end{table}

We distinguish the cases as in Lemma~\ref{lem:possible F-local types}.
\begin{itemize}
    \item[$i_{F,\p}<o_{F,\p}$] From Lemma~\ref{lem:possible F-local types} we know that in this case $n_{\p}=-e_{\init}-e_F-e_{\text{out}_1}-e_{\text{out}_2}$ (resp. $n_{\p}=-e_{\text{out}}$) and $\mut_F({\p})={\p'}_1\oplus {\p'}_2$ with $\p_1',\p_2'$ as in Definition~\ref{def:mut path}.
    Then $n'_{\p'_1}=-e'_{\init}-e'_F-e'_{\text{out}_1}-e'_{\text{out}_2}$ (resp. $n'_{\p'_1}=-e'_{\text{out}}-e_F$) and $n'_{\p'_2}=-e'_{\init}-e'_{\text{out}_1}-e'_{\text{out}_2}$ (resp. $n'_{\p'_1}=-e'_{\text{out}}$). 
    We compute using formulas \eqref{eq: def mutation lattice basis}, \eqref{eq: def pullback X-mut} and the observation that $m'_{\p'_1}=m'_{\p'_2}$:
    \begin{eqnarray*}
    \mu_{F'}^*(z^{n_{\p}+m_{\p}}) &=& z^{-e_{\init}-e_F-e_{\text{out}_1}-e_{\text{out}_2}+m_{\p}}(1+z^{e_F}) \\
    &=& z^{-e'_{\init}-e'_{\text{out}_1}-e'_{\text{out}_2}+m'_{\p'}}(1+z^{-e'_F})\\
    &=& z^{n'_{\p'_1}+m'_{\p'_1}}+z^{n'_{\p'_2}+m'_{\p'_2}} 
    =z^{e'_{{\p'}_1}}+z^{e'_{{\p'}_2}}\\
    &\stackrel{\text{(by def.)}}{=}& z^{e'_{{\p'}_1\oplus {\p'}_2}}=
    z^{e'_{\mut_F(\p)}}\\
    (\text{resp. }  \mu_{F'}^*(z^{n_{\p}+m_{\p}}) &=& z^{-e_{\text{out}}+m_{\p}}(1+z^{e_F})
    = z^{-e'_{\text{out}}+m'_{\p'}}(1+z^{-e'_F})\\
    &=& z^{n'_{\p'_1}+m'_{\p'_1}}+z^{n'_{\p'_2}+m'_{\p'_2}} = z^{e'_{\mut_F(\p)}}).
    \end{eqnarray*}
    \item[${i_{F,{\p}}=o_{F,{\p}}}$] In this case $\mut_F({\p})={\p'}\in\widehat{\mathcal P}_{\mu_F(\w),F'}$ as in Definition~\ref{def:mut path}.
    We divide into three cases: $i_{F,{\p}}\in\{0,1,2\}$. 
    If $i_{F,{\p}}=0$, consider $\area_{\p}=F_1\cup\dots \cup F_r$ then $\area_{{\p}'}=F'_1\cup \dots \cup F'_r$. Further, 
    \begin{eqnarray*}
    \mu_{F'}^*(z^{e_{\p}})=\mu_{F'}^*(z^{m_{\p}})=z^{m_{\p}}=z^{m_{{\p}'}}=z^{e'_{{\p}'}}=z^{e'_{\mut_F({\p})}}.
    \end{eqnarray*}
    If $i_{F,{\p}}=1$ we have $n_{\p}= -e_F-e_{\init}-e_{\text{out}}$ (resp. $n_{\p}= -e_{\init}-e_{\text{out}}$).
    We have $n'_{\p'}= -e'_{\init}-e'_{\text{out}}$ (resp. $n'_{\p'}= -e'_F-e'_{\init}-e'_{\text{out}}$) and compute
    \begin{eqnarray*}
    \mu_{F'}^*(z^{n_{\p}})&=& z^{-e_F-e_{\init}-e_{\text{out}}} = z^{e'_F-(e'_{\init}+e'_F)-e'_{\text{out}}} = z^{-e'_{\init}-e'_{\text{out}}}=z^{n'_{\p'}}\\
    (\text{resp. }    \mu_{F'}^*(z^{n_{\p}})&=& z^{-e_{\init}-e_{\text{out}}} = z^{-(e'_{\init}+e'_F)-e'_{\text{out}}} = z^{-e'_F-e'_{\init}-e'_{\text{out}}}=z^{n'_{\p'}}).
    \end{eqnarray*}
    If $i_{F,{\p}}=2$ we have $n_{\p}=-e_{\init_1}-e_{\init_2}-e_F-e_{\text{out}_1}-e_{\text{out}_2}$. Now $n'_{{\p}'}=-e'_{\init_1}-e'_{\init_2}-e'_F-e'_{\text{out}_1}-e'_{\text{out}_2}$ and we compute
    \begin{eqnarray*}
    \mu_{F'} ^*(z^{n_{\p}})&=& z^{-e_{\init_1}-e_{\init_2}-e_F-e_{\text{out}_1}-e_{\text{out}_2}} = z^{-(e'_{\init_1}+e'_F) - (e'_{\init_2}+e'_F)-(-e'_F)-e'_{\text{out}_1}-e'_{\text{out}_2}}\\
    &=& z^{-e'_{\init_1}-e'_{\init_2}-e'_F-e'_{\text{out}_1}-e'_{\text{out}_2}} = z^{n'_{\p'}}.
    \end{eqnarray*}
    In all three cases the claim follows from the computation.
    \item[$i_{F,{\p}}>o_{F,{\p}}$] In this case by Lemma~\ref{lem:possible F-local types} there are paths $\p_1,\p_2\in\mathcal P_{\w}$ with $\p_1\oplus \p_2\in\widehat{\mathcal{P}}_{\w,F}$ and $\mut_{F}(\p_1\oplus\p_2)=\p'\in\widehat{\mathcal P}_{\mu_F({\w}),F'}$ as in Definition~\ref{def:mut path}. 
    We have $n_{\p_1}=-e_{\init}$ and $n_{\p_2}=-e_{\init}-e_F$ (resp. $n_{\p_1}=-e_{\init_1}-e_{\init_2}-e_{\text{out}}$ and $n_{\p_2}=-e_{\init_1}-e_{\init_2}-e_F-e_{\text{out}}$). For $\p'$ we have $n'_{\p'}=-e'_{\init}$ (resp. $n'_{\p'}=-e'_{\init_1}-e'_{\init_2}-e'_F-e'_{\text{out}}$). We compute
    \begin{eqnarray*}
    \mu_{F'}^*(z^{n_{\p_1}}+z^{n_{\p_2}}) &=&
    z^{-e_{\init}}(1+z^{e_F})^{-1}+z^{-e_{\init}-e_F}(1+z^{e_F})^{-1} \\
    &=& (z^{-e'_{\init}-e'_F}+z^{-e'_{\init}})(1+z^{-e'_F})^{-1} = z^{-e'_{\init}} = z^{n'_{\p'}}\\
    (\text{resp. } \mu_{F'}^*(z^{n_{\p_1}}+z^{n_{\p_2}}) &=&
    z^{-e_{\init_1}-e_{\init_2}-e_{\text{out}}}(1+z^{e_F})^{-1}+z^{-e_{\init_1}-e_{\init_2}-e_F-e_{\text{out}}} (1+z^{e_F})^{-1} \\
    &=& (z^{-e'_{\init_1}-e'_{\init_2}-2e'_F-e'_{\text{out}}} + z^{-e'_{\init_1}-e'_{\init_2}-e'_F-e'_{\text{out}}})(1+z^{-e'_F})^{-1} \\
    &=& z^{-e'_{\init_1}-e'_{\init_2}-e'_F-e'_{\text{out}}} = z^{n'_{\p'}}).
    \end{eqnarray*}
    In both cases the claim follows.
\end{itemize}
\end{proof}

Before proving a generalization of Proposition~\ref{prop: superpot area GT} we have to show that also the normal vectors associated to the weight inequalities $e_{[i:k]}$ \eqref{eq:def area wt ineq} mutate as expected.
We use the notation as in Lemma~\ref{lem: mutation paths} and its proof.
Recall the normal vectors of the weight inequalities for $\mathcal S_{\w}$ from \eqref{eq:def area wt ineq}.
For $i\in[n-1]$ let $e_{[i:0]},\dots,e_{[i:n_i]}$ be those for $\w$ as expressions in $\{e_E\}_E$ and $e'_{[i:0]},\dots,e'_{[i,n'_i]}$ those for $\mu_F(\w)$ as expressions in $\{e'_E\}_E$.

\begin{lemma}\label{lem:mutation wt area}
With notation as above we have for every $i\in[n-1]$
\[
\mu_{F'}^*\left(\sum_{k=0}^{n_i}z^{e_{[i:k]}}\right)=\sum_{k'=0}^{n'_i}z^{e'_{[i:k']}}.
\]
\end{lemma}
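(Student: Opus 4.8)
The plan is to mimic the structure of the proof of Lemma~\ref{lem: mutation paths}: work level-by-level on $i\in[n-1]$ and track how the weight area $\area_i$ and the intermediate vectors $e_{[i:k]}$ behave under the mutation $\mu_F$ of the pseudoline arrangement. The key observation to record first is that the weight area $\area_i = F_i\cup\bigcup_{j\in J(i)}F_{i_j}$ consists of the unique face unbounded to the left at level $i$ together with all faces bounded to the left by a crossing at level $i$, and these are precisely the faces through which the $i$-th pseudoline passes \emph{on level} $i$ — in the sense of their position relative to $l_i$ and $l_{i+1}$ — so $\area_i$ is an honest union of faces lying in a single horizontal strip. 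The relevant case analysis is which of the faces involved in the mutation square of $F$ lie in this strip. Since mutation only alters the square around $F$ (three faces on one side, three on the other, plus the mutable face itself), only finitely many local configurations arise.

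First I would reduce to the case where $F$ is adjacent to level $i$, i.e. where at least one of the faces in the mutation square belongs to $\area_i$; if no face of the square lies in $\area_i$, then the basis vectors $e_E$ for $E\subset\area_i$ are untouched by the lattice transformation \eqref{eq: def mutation lattice basis} (only basis vectors for faces adjacent to $F$ change), $n_i^{\w}=n_i^{\mu_F(\w)}$, and $e_{[i:k]}=e'_{[i:k]}$ for all $k$, so the claim is immediate with each summand preserved on the nose. In the remaining cases one inspects Figure~\ref{fig:loc.orient} (or rather the mutation picture Figure~\ref{fig:pseudo.mut}) to see exactly how the faces of $\area_i$ are permuted/relabelled and whether the mutable face $F$ itself enters or leaves $\area_i$. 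The point is that $\area_i$ as a \emph{set of faces} is preserved by mutation — the crossings at level $i$ are the same before and after, so $n_i^{\w}=n_i^{\mu_F(\w)}$ — but the \emph{ordering} of the faces $F_{i_{j_1}},\dots,F_{i_{j_{n_i}}}$ along level $i$ may change when $F$ sits among them, and correspondingly the partial sums $e_{[i:k]}$ get reshuffled.

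The heart of the argument is then a short computation in each local case, entirely parallel to the three-case split ($i_{F,\p}<o_{F,\p}$, $=$, $>$) in Lemma~\ref{lem: mutation paths}, except that here instead of a GP-path we are feeding the telescoping family $e_{[i:0]},e_{[i:1]},\dots,e_{[i:n_i]}$, whose \emph{sum} $\sum_k z^{e_{[i:k]}}$ is by Proposition~\ref{prop: superpot area GT}'s logic exactly $\vartheta_i\vert_{\X}$ in the initial seed. Because $e_{[i:k]}-e_{[i:k-1]}=-e_{F_{i_{j_k}}}$ by \eqref{eq:def area wt ineq}, the family $\{e_{[i:k]}\}_k$ is determined by the single vector $e_{[i:0]}=-e_{F_i}$ together with the ordered list of level-$i$ faces; applying $\mu_{F'}^*$ and using \eqref{eq: def pullback X-mut}, \eqref{eq: def mutation lattice basis} exactly as in the previous proof (the geometric series $\sum_k z^{-e_F\cdot(\dots)}(1+z^{e_F})^{\pm1}$ collapses, splits, or merges terms), one checks that $\mu_{F'}^*(\sum_k z^{e_{[i:k]}})$ is again a telescoping sum of the same shape built from $e'_{[i:0]}=-e'_{F_i}$ and the reordered level-$i$ faces, i.e. $\sum_{k'} z^{e'_{[i:k']}}$.

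The main obstacle I anticipate is purely bookkeeping: correctly identifying, for each of the local mutation configurations in Figure~\ref{fig:loc.orient}, which summand $e_{[i:k]}$ contains the mutable face $F$ and which adjacent faces get their lattice coordinates shifted, and checking that the resulting geometric-series manipulation lands on the correctly-indexed family $\{e'_{[i:k']}\}$. This is genuinely the same kind of case-by-case verification as in Tables~\ref{tab:(lll) cases of path mut}--\ref{tab:(rrr) cases of path mut}, so once the correspondence of faces is set up the algebra is routine; I would present it by treating the three cases $F\notin\area_i$ (trivial), $F\in\area_i$ with $F$ interior to the strip, and $F\in\area_i$ on the boundary of the strip, and in each giving the one-line computation. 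No serious new idea is needed beyond Lemma~\ref{lem: mutation paths}; this lemma is essentially its ``weight'' counterpart.
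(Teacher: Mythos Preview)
Your overall strategy---reduce to levels touched by the mutation square, then do a short local computation using \eqref{eq: def mutation lattice basis} and \eqref{eq: def pullback X-mut}---is exactly what the paper does. But there is a genuine error in your setup that would derail the computation.

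You assert that ``$\area_i$ as a set of faces is preserved by mutation --- the crossings at level $i$ are the same before and after, so $n_i^{\w}=n_i^{\mu_F(\w)}$.'' This is false. The mutation replaces a substring $s_l s_{l+1} s_l$ by $s_{l+1} s_l s_{l+1}$ (or vice versa), so the number of occurrences of $s_l$ in the word drops by one and the number of occurrences of $s_{l+1}$ goes up by one. Equivalently, the mutable face $F$ sits at level $l$ in $\pa(\w)$, while $F'$ sits at level $l+1$ in $\pa(\mu_F(\w))$; hence $n_l$ decreases by $1$ and $n_{l+1}$ increases by $1$. The weight area $\area_i$ is \emph{not} preserved for $i\in\{l,l+1\}$, and the two sums you are comparing do not even have the same number of terms.

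Consequently your trichotomy ``$F\notin\area_i$ / $F\in\area_i$ interior / $F\in\area_i$ boundary'' misses the essential case $i=l+1$, where $F\notin\area_{l+1}$ but adjacent faces of the square \emph{are} in $\area_{l+1}$, and where after mutation $F'$ has joined $\area_{l+1}$. The paper's proof splits instead into $i\notin\{l,l+1\}$ (trivial), $i=l$, and $i=l+1$. For $i=l$ one finds that three consecutive summands $z^{e_{[l:k-1]}}+z^{e_{[l:k]}}+z^{e_{[l:k+1]}}$ collapse under $\mu_{F'}^*$ to two summands (accounting for the lost level-$l$ face); for $i=l+1$ two summands expand to three (accounting for the new level-$(l{+}1)$ face $F'$). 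Once you correct the premise about $n_i$, the computations are indeed the routine geometric-series manipulations you anticipate.
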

\begin{proof}
We treat the case where $F$ is of level $l$ and $F'$ of level $l+1,$ with $ l\in[n-2]$ (the proof of the other case is similar).
Recall that $e_{[i:k]}=-e_{F_i}-e_{F_{j_1}}-\dots-e_{F_{j_k}}$, where $k\in[0,n_i]$, $s_{j_1},\dots,s_{j_{n_i}}=s_i$ in $\w$, and $F_{j_k}$ is bounded to the left by the crossing in $\pa(\w)$ induced by $s_{j_k}$.
Let $F'_i,F'_{j_1},\dots,F'_{j_{n'_i}}$ be the corresponding faces in $\pa(\mu_F(\w))$.
In particular, if $i\not\in\{l,l+1\}$ we have 
\[
\mu_{F'}^*(z^{-e_{F_i}-e_{F_{j_1}}-\dots-e_{F_{j_k}}})=z^{-e'_{F'_i}-e'_{F'_{j_1}}-\dots-e'_{F'_{j_k}}}.
\]
We therefore focus on the cases $i\in\{l,l+1\}$.
\begin{itemize}
    \item[$i=l$] As $F$ is of level $l$ we have $F=F_{j_k}$ for one $k\in[n_l],s_{j_k}=s_l$. 
    By \eqref{eq: def mutation lattice basis} we have $\mu_F(e_{F_l})=e'_{F'_l}$ and $\mu_{F}(e_{F_{j_r}})=e'_{F'_{j_r}}$ for $r\in[k-1]$, hence
    \[
    \mu_{F'}^*(z^{-e_{F_l}-e_{F_{j_1}}-\dots-e_{F_{j_r}}})=z^{-e'_{F'_l}-e'_{F'_{j_1}}-\dots-e'_{F'_{j_r}}}.
    \]
    Still by \eqref{eq: def mutation lattice basis} we have $\mu_F(e_{F_{k-1}})=e'_{F'_{j_{k-1}}}+e'_{F'_{j_k}}, \mu_F(e_{F_{j_k}})=-e'_{F'_{j_k}}$ and $\mu_F(e_{F_{j_s}})=e'_{F'_{j_s}}$ for $s\in[k+1,n_l]$. Plugging in to \eqref{eq: def pullback X-mut} we obtain
    \begin{eqnarray*}
    \mu_{F'}^*(z^{e_{[l:k-1]}}+z^{e_{[l:k]}}+z^{e_{[l:k+1]}})&= & z^{-e_{F_l}-\dots-e_{F_{j_{k-1}}}}(1+z^{e_{F_{j_k}}})^{-1}\\ &+& z^{-e_{F_l}-\dots-e_{F_{j_k}}}(1+z^{e_{F_{j_k}}})^{-1} + z^{-e_{F_l}-\dots-e_{F_{j_{k+1}}}}\\
    &=& z^{-e'_{F'_l}-\dots-e'_{F'_{j_{k-1}}}} + z^{-e'_{F'_l}-\dots-e'_{F'_{j_{k+1}}}}\\
    &=& z^{e'_{[l:k-1]}}+z^{e_{[l:k]}}.
    \end{eqnarray*}
    Note that the index shift in the last equality comes from the fact that $\pa(\mu_F(\w))$ has one less face of level $l$ than $\pa(\w)$ as $F'$ is of level $l+1$. So the claim follows for level $l$.
    \item[$i=l+1$] Let $F_{j_r}$ be the face of level $l+1$ in $\text{Out}_F$ and $F_{j_{r+1}}$ the one in $\text{In}_F$. Then we compute with notation as above
    \begin{eqnarray*}
    \mu_{F'}^*(z^{e_{[l:r]}}+z^{e_{[l:r+1]}})&=& z^{-e_{F_{l+1}}-\dots-e_{F_{j_r}}}(1+z^{e_F})+z^{-e_{F_{l+1}}-\dots-e_{F_{j_{r+1}}}}\\
    &=& z^{-e'_{F_{l+1}}-\dots-e'_{F'_{j_r}}} + z^{-e'_{F'_{l+1}}-\dots-e'_{F'_{j_r}}-e'_{F'}} + z^{-e'_{F'_{l+1}}-\dots-e'_{F'_{j_r}}-e'_{F'}-e'_{F'_{j_{r+1}}}}\\
    &=& z^{e'_{[l+1:r]}}+z^{e'_{[l+1:r+1]}}+z^{e_{[l+1:r+2]}}.
    \end{eqnarray*}
    As before the index shift occurs because $\pa(\mu_F(\w))$ has additionally the face $F'$ of level $l+1$ in comparison to $\pa(\w)$.
\end{itemize}
\end{proof}

We can now prove the following theorem.

\vbox{
\begin{theorem}\label{thm: superpot and area potential}
Let $\w_0$ be an arbitrary reduced expression of $w_0\in S_n$. Then the superpotential expressed in the seed given by $\pa(\w_0)$ satisfies $W\vert_{\X_{\w_0}}=W_{\mathcal S_{\w_0}}$. In particular,
\[
W\vert_{\X_{\w_0}}=\sum_{\p\in\mathcal P_{\w_0}}z^{e_{\p}}+\sum_{i\in[n-1],0\le k\le n_i}z^{e_{[i:k]}}.
\]
\end{theorem}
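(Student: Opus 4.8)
The plan is to deduce this from Proposition~\ref{prop: superpot area GT}, which establishes the identity for the particular reduced expression $\hat\w_0 = s_1 s_2 s_1 \cdots s_{n-1} s_{n-2} \cdots s_2 s_1$, by induction over mutations of pseudoline arrangements. Two transformation rules drive the induction. First, since the superpotential $W = \sum_f \vartheta_f$ is a well-defined function on the $\X$-cluster variety dual to $G^{e,w_0}$ (using that the full Fock--Goncharov conjecture holds for $SL_n/U$ by Magee's \cite[Corollary~3]{Mag15}, and Theorem~\cite[Theorem~2.10]{BFZ05} identifying the seeds $s_{\w_0}$ coming from pseudoline arrangements with seeds of this cluster variety), expressing $W$ in two seeds related by the pseudoline mutation at a mutable face $F$ of $\pa(\w_0)$ amounts to pulling back along the mutation birational map: writing $F'$ for the face of $\pa(\mu_F(\w_0))$ with $\mu_{F'}(\mu_F(\w_0)) = \w_0$ and $\mu_{F'}\colon \X_{\mu_F(\w_0)} \to \X_{\w_0}$ for the corresponding $\X$-mutation, one has $W|_{\X_{\mu_F(\w_0)}} = \mu_{F'}^*\bigl(W|_{\X_{\w_0}}\bigr)$. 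Second, the detropicalization transforms in exactly the same way: Lemma~\ref{lem: mutation paths} and Lemma~\ref{lem:mutation wt area} give $\mu_{F'}^*(z^{e_\p}) = z^{e'_{\mut_F(\p)}}$ for every $\p \in \widehat{\mathcal P}_{\w_0,F}$ and $\mu_{F'}^*\bigl(\sum_k z^{e_{[i:k]}}\bigr) = \sum_{k'} z^{e'_{[i:k']}}$ for every level $i$, so that
\[
\mu_{F'}^*\bigl(W_{\mathcal S_{\w_0}}\bigr) = W_{\mathcal S_{\mu_F(\w_0)}}.
\]
In spelling this out I would use that $\mut_F\colon \widehat{\mathcal P}_{\w_0,F} \to \widehat{\mathcal P}_{\mu_F(\w_0),F'}$ is a bijection (the tables in \S\ref{subsubsec: path and area mut} exhibit the correspondence and its inverse) and that, by the remark following \eqref{eq: def P_w,F} together with the convention $z^{e_{\p_1 \oplus \p_2}} := z^{e_{\p_1}} + z^{e_{\p_2}}$, we have $\sum_{\p \in \widehat{\mathcal P}_{\w_0,F}} z^{e_\p} = \sum_{\p \in \mathcal P_{\w_0}} z^{e_\p}$ (and likewise on $\pa(\mu_F(\w_0))$), so that the reindexed sum really is $W_{\mathcal S_{\mu_F(\w_0)}}$.

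With these two rules in hand, the property $W|_{\X_{\w_0}} = W_{\mathcal S_{\w_0}}$ is stable under a pseudoline mutation: if it holds for $\w_0$, applying $\mu_{F'}^*$ to both sides gives $W|_{\X_{\mu_F(\w_0)}} = \mu_{F'}^*(W|_{\X_{\w_0}}) = \mu_{F'}^*(W_{\mathcal S_{\w_0}}) = W_{\mathcal S_{\mu_F(\w_0)}}$, and conversely by the involutivity of mutation. A commutation move $s_i s_j \leftrightarrow s_j s_i$ with $|i - j| \ge 2$ modifies $\pa(\w_0)$ only by sliding two crossings on four disjoint lines past each other; this is an isotopy leaving the faces, the quiver $Q_{\w_0}$, the GP-paths, the weight areas, the seed $s_{\w_0}$ and the torus $\X_{\w_0}$ unchanged, hence leaves both $W|_{\X_{\w_0}}$ and $W_{\mathcal S_{\w_0}}$ literally unchanged. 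Since by Tits'/Matsumoto's theorem every reduced expression of $w_0$ is connected to $\hat\w_0$ by a sequence of commutation and braid moves, and braid moves are precisely the pseudoline mutations of Definition~\ref{defn:mutation pa}, the identity $W|_{\X_{\w_0}} = W_{\mathcal S_{\w_0}}$ propagates from the base case of Proposition~\ref{prop: superpot area GT} to all of them. Expanding the definition of $W_{\mathcal S_{\w_0}}$ (Definition~\ref{def:detrop area cone}) then yields the displayed formula.

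I expect the difficulty to be organizational rather than conceptual: the genuinely delicate part, tracking crossing by crossing and orientation by orientation how $\X$-mutation acts on the path monomials $z^{e_\p}$ and the weight monomials $z^{e_{[i:k]}}$, has already been carried out in Lemmas~\ref{lem: mutation paths}--\ref{lem:mutation wt area} and their case analysis. The points that still need care when assembling the argument are (i) the justification that $W$ is intrinsic to $\X$, so that a change of seed is exactly pullback along the mutation map (this is where \cite{Mag15} and the cluster-variety review of \S\ref{sec: prep cluster} enter), (ii) keeping the direction of $\mu_{F'}^*$ consistent between the two transformation rules, and (iii) the reindexing identifying the sum over $\widehat{\mathcal P}_{\w_0,F}$ with the sum over $\mathcal P_{\w_0}$ on both sides of a mutation. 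None of these is serious, so the theorem is a clean induction once the preceding lemmas are available.
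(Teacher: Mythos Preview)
Your proposal is correct and follows essentially the same approach as the paper: the base case is Proposition~\ref{prop: superpot area GT}, and Lemmata~\ref{lem: mutation paths} and~\ref{lem:mutation wt area} propagate the identity along mutations to all reduced expressions of $w_0$. You are in fact slightly more careful than the paper's own proof, which invokes only ``mutation as defined in Definition~\ref{defn:mutation pa}'' (braid moves) without explicitly remarking that commutation moves leave $\pa(\w_0)$, the quiver, and hence both sides unchanged; your treatment of this point and of the intrinsic nature of $W$ on $\X$ fills in details the paper leaves implicit.
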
}

\begin{proof}
By Proposition~\ref{prop: superpot area GT} the claim is true for the seed $s_0$ with $\w_0=s_1s_2s_1\cdots s_{n-1}\cdots s_2s_1$. 
Now Lemmata~\ref{lem: mutation paths} and \ref{lem:mutation wt area} imply that the claim holds for all seeds that are related to $s_0$ by a finite sequence of mutations.
As there are only finitely many reduced expressions for $w_0$ and they are all related by mutation as defined in Definition~\ref{defn:mutation pa} the claim is true for all $\w_0$.
\end{proof}

\vbox{
\begin{corollary}\label{cor:area is trop super}
For every reduced expression $\w_0\in S_n$ the following polyhedral objects coincide
\begin{enumerate}[(i)]
    \item $\mathcal S_{\w_0}=\Xi_{\w_0}$,
    \item $S_{\w_0}=\mathsf \Xi_{\w_0}$,
    \item $\mathcal S_{\w_0}(\lambda)=\Xi_{\w_0}(\lambda)$ for $\lambda\in\mathbb R^{n-1}$.
\end{enumerate}
\end{corollary}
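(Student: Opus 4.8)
The plan is to deduce this corollary directly from Theorem~\ref{thm: superpot and area potential} together with the definitions of the tropicalized objects. The main observation is that $W_{\mathcal{S}_{\w_0}}$ is a Laurent polynomial all of whose coefficients are $1$, so that its tropicalization is simply the minimum of the linear forms $x\mapsto (e_{\p})^t(x)$ for $\p\in\mathcal{P}_{\w_0}$ and $x\mapsto (e_{[i:k]})^t(x)$ for $i\in[n-1]$, $0\le k\le n_i$. Indeed, recall that for a Laurent polynomial $f=\sum a_{\bu}z^{\bu}$ with all $a_{\bu}\ne 0$ one has $f^{\trop}(x)=\min\{ \bu^t(x)\mid a_{\bu}\ne 0\}$ (Definition~\ref{def: trop fct}), and here the exponent vectors are exactly the $e_{\p}$ and the $e_{[i:k]}$.

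For part (i), I would argue as follows. By definition (see the remark below Definition~\ref{def:detrop area cone}) we have
\[
\mathcal{S}_{\w_0}=\{x\in\mathbb{R}^{\ell(w_0)+n-1}\mid W_{\mathcal{S}_{\w_0}}^{\trop}(x)\ge 0\}.
\]
By Theorem~\ref{thm: superpot and area potential}, $W\vert_{\X_{\w_0}}=W_{\mathcal{S}_{\w_0}}$ as functions on $\X_{\w_0}$, hence $W\vert_{\X_{\w_0}}^{\trop}=W_{\mathcal{S}_{\w_0}}^{\trop}$ as functions on $\mathbb{R}^{\ell(w_0)+n-1}=\mathbb{R}^{N+n-1}$. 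Comparing with Definition~\ref{def:super cone}, which sets $\Xi_{\w_0}=\{x\in\mathbb{R}^{N+n-1}\mid W\vert_{\X_{\w_0}}^{\trop}(x)\ge 0\}$, we conclude $\mathcal{S}_{\w_0}=\Xi_{\w_0}$. For part (ii), I would note that $S_{\w_0}$ is cut out in $\mathbb{R}^{\ell(w_0)}=\mathbb{R}^N$ by the inequalities $(e_{\p})^t(x)\ge 0$ for $\p\in\mathcal{P}_{\w_0}$, i.e. by $(\sum_{\p\in\mathcal{P}_{\w_0}}z^{e_{\p}})^{\trop}(x)\ge 0$; on the other hand, by the computation in Example~\ref{exp:areaGT} together with the mutation compatibility established in Lemma~\ref{lem: mutation paths}, the part of the superpotential coming from the $\vartheta_{(i,n)}$ is exactly $\sum_{\p\in\mathcal{P}_{\w_0}}z^{e_{\p}}$ in the seed $\w_0$ (the $e_{\p}$-exponents are precisely the monomials of $\sum_i \vartheta_{(i,n)}\vert_{\X_{\w_0}}$, with no contribution from the weight terms $e_{[i:k]}$, since those belong to the $\vartheta_i$). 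Hence the defining inequality of $S_{\w_0}$ is $(\sum_i\vartheta_{(i,n)}\vert_{\X_{\w_0}})^{\trop}(x)\ge 0$, which is exactly the definition of $\mathsf{\Xi}_{\w_0}$.

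For part (iii), I would observe that $\mathcal{S}_{\w_0}(\lambda)=\mathcal{S}_{\w_0}\cap\tau_{\w_0}^{-1}(\lambda)$ by Definition~\ref{def: polytopes from cones}, and $\Xi_{\w_0}(\lambda)=\Xi_{\w_0}\cap\tau_{\w_0}^{-1}(\lambda)$ by Definition~\ref{def:super cone}, so (iii) is immediate from (i) by intersecting both sides with the common affine subspace $\tau_{\w_0}^{-1}(\lambda)$. I do not anticipate a genuine obstacle here: all the hard work — the case-by-case mutation analysis showing $W\vert_{\X_{\w_0}}=W_{\mathcal{S}_{\w_0}}$ — has already been carried out in Theorem~\ref{thm: superpot and area potential}, and what remains is bookkeeping. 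The one point requiring a little care is making sure the $e_{\p}$-monomials and the $e_{[i:k]}$-monomials of $W_{\mathcal{S}_{\w_0}}$ are pairwise distinct (so that no cancellation occurs in the tropicalization and $W^{\trop}$ really is the minimum over all of them); for part (ii) one additionally needs that the $\vartheta_{(i,n)}$-part of $W\vert_{\X_{\w_0}}$ is precisely $\sum_{\p}z^{e_{\p}}$ and involves none of the weight monomials, which follows from tracking the two families $\{\vartheta_i\}$ and $\{\vartheta_{(i,n)}\}$ separately through the mutation identities of Lemmata~\ref{lem: mutation paths} and \ref{lem:mutation wt area}.
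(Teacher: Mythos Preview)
Your proposal is correct and follows essentially the same approach as the paper: part (i) by tropicalizing Theorem~\ref{thm: superpot and area potential}, part (iii) from (i) by intersecting with $\tau_{\w_0}^{-1}(\lambda)$, and part (ii) by tracking the $\vartheta_{(i,n)}$-summands separately through the mutation argument of Lemma~\ref{lem: mutation paths} starting from the initial seed (the paper cites Proposition~\ref{prop: superpot area GT} rather than Example~\ref{exp:areaGT}, but the content is the same). Your caution about pairwise distinctness of the monomials is harmless but unnecessary here, since all coefficients are $+1$ and hence even coinciding exponents would not cancel.
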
}

\begin{proof}
The claim in (i) follows immediately from Theorem~\ref{thm: superpot and area potential} by tropicalizing. 
Then (iii) follows by definition as we intersect both cones with the same collection of hyperplanes.
To see (ii), recall from the proof of Proposition~\ref{prop: superpot area GT} that for the initial seed $s_0$ the $\vartheta$-functions $\vartheta_{(i,n)}$ correspond to GP-paths.
Then the claim follows by Lemma~\ref{lem: mutation paths} and the proof of Theorem~\ref{thm: superpot and area potential}.
\end{proof}

%%%%%%%%%%%%%%%%%%%%%%%%%%%%%%%% MUTATION AREA END

\subsection{Applications of Theorem~\ref{thm:unimod}}\label{subsec:apply}

We have seen in the last two subsections how the cones and polytopes defined in \S\ref{sec:pa and gp} arise from a representation theoretic point of view and in the context of cluster varieties.
The following theorem is the main combinatorial result of this section. We obtain it as an application of the unimodular equivalences in Theorem~\ref{thm:unimod}.

\vbox{
\begin{theorem}\label{thm:application unimod w_0}
Let $\w_0$ be an arbitrary reduced expression of $w_0\in S_n$. Then the following polyhedral objects are unimodularly equivalent
\begin{enumerate}[(i)]
    \item $\mathcal Q_{\w_0} \cong \Xi_{\w_0}$ via $\Psi_{\w_0}$,
    \item $Q_{\w_0}\cong \mathsf \Xi_{\w_0}$ via $\Psi_{\w_0}\vert_{\mathbb R^N}$,
    \item $\mathcal Q_{\w_0}(\lambda)\cong \Xi_{\w_0}(\lambda)$ for $\lambda\in\mathbb R^{n-1}$ via $\Psi_{\w_0}$.
\end{enumerate}
\end{theorem}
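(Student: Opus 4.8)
The plan is to deduce Theorem~\ref{thm:application unimod w_0} directly by combining the three main results established in the preceding subsections, exactly as the numbering of the three statements suggests. The key observation is that all three polyhedral objects appearing on the left (the weighted string cone $\mathcal Q_{\w_0}$, the string cone $Q_{\w_0}$, and the string polytope $\mathcal Q_{\w_0}(\lambda)$) have already been identified with the weighted GP-cone $\mathcal C_{\w_0}$, the GP-cone $C_{\w_0}$, and the polytope $\mathcal C_{\w_0}(\lambda)$ respectively, via Theorem~\ref{thm:wt GP is wt string}; and all three objects on the right (the tropicalized superpotential cone $\Xi_{\w_0}$, the cone $\mathsf\Xi_{\w_0}$ coming from the $\vartheta$-functions attached to the right boundary, and the slice $\Xi_{\w_0}(\lambda)$) have been identified with the weighted area cone $\mathcal S_{\w_0}$, the area cone $S_{\w_0}$, and $\mathcal S_{\w_0}(\lambda)$ respectively, via Corollary~\ref{cor:area is trop super}. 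The bridge between the GP-side and the area-side is precisely the unimodular equivalence furnished by $\Psi_{\w_0}$ in Theorem~\ref{thm:unimod}.

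Concretely, for part (i) I would write the chain $\mathcal Q_{\w_0} = \mathcal C_{\w_0}$ (Theorem~\ref{thm:wt GP is wt string}(i)), then $\mathcal C_{\w_0} \cong \mathcal S_{\w_0}$ via $\Psi_{\w_0}$ (Theorem~\ref{thm:unimod}(i), noting that $\Psi_{\w_0}$ is an invertible integral affine map so the equivalence goes in either direction), then $\mathcal S_{\w_0} = \Xi_{\w_0}$ (Corollary~\ref{cor:area is trop super}(i)). Composing, $\mathcal Q_{\w_0}\cong\Xi_{\w_0}$ via $\Psi_{\w_0}$ (or its inverse, depending on the chosen direction; since $\Psi_{\w_0}\in GL_{N+n-1}(\mathbb Z)$ by Lemma~\ref{lem: psi lattice}, both directions are unimodular, so one may state the equivalence via $\Psi_{\w_0}$ as the map sending the area side to the GP side). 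For part (ii) the identical argument applies with $S_{\w_0}$, $C_{\w_0}$, $Q_{\w_0}$, $\mathsf\Xi_{\w_0}$ in place of their weighted analogues, using Theorem~\ref{thm:wt GP is wt string}(ii), Theorem~\ref{thm:unimod}(ii) (the restriction $\Psi_{\w_0}\vert_{\mathbb R^N}$, which lies in $GL_N(\mathbb Z)$ by Corollary~\ref{cor: res psi lattice}), and Corollary~\ref{cor:area is trop super}(ii). For part (iii) one uses the slices: Theorem~\ref{thm:wt GP is wt string}(iii) gives $\mathcal Q_{\w_0}(\lambda) = \mathcal C_{\w_0}(\lambda)$, Theorem~\ref{thm:unimod}(iii) gives $\mathcal C_{\w_0}(\lambda)\cong\mathcal S_{\w_0}(\lambda)$ via $\Psi_{\w_0}$, and Corollary~\ref{cor:area is trop super}(iii) gives $\mathcal S_{\w_0}(\lambda) = \Xi_{\w_0}(\lambda)$; compose.

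The only genuinely careful point — and the place I would spend a sentence or two being explicit — is bookkeeping about the direction of the unimodular map and about which coordinate space is in play. In Theorem~\ref{thm:unimod} the map $\Psi_{\w_0}$ is presented as sending the area cone $\mathcal S_{\w_0}$ onto the GP-cone $\mathcal C_{\w_0}$ (it maps normal vectors $e_{\mathbf p}\mapsto c_{\mathbf p}$ and $e_{[i:k]}\mapsto c_{[i:k+1]}$, with the last weight normal wrapping around to $c_{[i:0]}$). Since the statement of Theorem~\ref{thm:application unimod w_0} records the equivalence ``via $\Psi_{\w_0}$'', I would simply orient the composition so that the named map is exactly $\Psi_{\w_0}$: first apply Corollary~\ref{cor:area is trop super} to replace $\Xi_{\w_0}$ by $\mathcal S_{\w_0}$, then $\Psi_{\w_0}$ to reach $\mathcal C_{\w_0}$, then Theorem~\ref{thm:wt GP is wt string} to rename $\mathcal C_{\w_0}$ as $\mathcal Q_{\w_0}$; thus $\Psi_{\w_0}(\Xi_{\w_0})=\mathcal Q_{\w_0}$, which is the asserted equivalence via $\Psi_{\w_0}$. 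The same orientation works verbatim for (ii) (using $\Psi_{\w_0}\vert_{\mathbb R^N}$, with the projections onto the first $N$ coordinates being compatible by Corollary~\ref{cor: res psi lattice}) and for (iii) (the slices are cut out by hyperplanes $(c_i)^t(\mathbf x)=\lambda_i$ on the GP side and $(e_{[i:n_i]})^t(\mathbf x)=\lambda_i$ on the area side, and $\Psi_{\w_0}$ maps one family of hyperplanes to the other by Proposition~\ref{prop:wunimod}, so it restricts to a unimodular equivalence of the slices). No new estimates or constructions are needed; the entire proof is a composition of already-proven equalities and unimodular equivalences, and I expect it to occupy only a few lines.
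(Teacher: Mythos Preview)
Your proposal is correct and matches the paper's proof exactly: the paper simply writes ``Combine Theorem~\ref{thm:unimod} with Theorem~\ref{thm:wt GP is wt string} and Corollary~\ref{cor:area is trop super}.'' Your more explicit bookkeeping about the direction of $\Psi_{\w_0}$ and the matching of hyperplane families is helpful elaboration but not additional content.
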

}

\begin{proof}
Combine Theorem~\ref{thm:unimod} with Theorem~\ref{thm:wt GP is wt string} and Corollary~\ref{cor:area is trop super}.
\end{proof}

\begin{remark}
For the special case of the initial seed $s_0$ the theorem can also be proved by combining results of Magee and Littelmann. 
In \cite{Lit98} Littelmann shows that the string polytope $\mathcal Q_{\w_0}(\lambda)$ for $\w_0=s_1s_2s_1\cdots s_{n-1}s_{n-2}\cdots s_2s_1$ is unimodularly equivalent to the \emph{Gelfand-Tsetlin polytope} defined in \cite{GT50}.
Magee shows in \cite{Mag15} that $\Xi_{s_0}$ (resp. $\Xi_{s_0}(\lambda)$) is unimodularly equivalent to the Gelfand-Tsetlin cone (resp. polytope). 
Combining both, one obtains Theorem~\ref{thm:application unimod w_0} for $s_0$.
In fact, to understand Magee's result was driving motivation behind this project.
\end{remark}

By the construction of toric varieties associated to polytopes as in \cite[\S2.1 and \S2.3]{CLS11} and the toric degenerations of Caldero \cite{Cal02} and Gross-Hacking-Keel-Kontsevich \cite{GHKK14} we obtain the following corollary from Theorem~\ref{thm:application unimod w_0} relating these toric varieties.
It is the main result regarding toric degenerations of flag varieties in this section and an answer to Question~\ref{Q:cluster} in the introduction.

\begin{corollary}
Let $\w_0$ be an arbitrary reduced expression of $w_0\in S_n$ and $\lambda\in\mathbb Z_{>0}^{n-1}$. We have an induced isomorphism of the following toric varieties that are degenerations (resp. normalizations of such) of $SL_n/B$
\[
X_{\mathcal Q_{\w_0}}(\lambda)\cong X_{\Xi_{\w_0}}(\lambda).
\]
\end{corollary}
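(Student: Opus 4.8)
The plan is to deduce the isomorphism directly from the unimodular equivalence of polytopes established in Theorem~\ref{thm:application unimod w_0}(iii), together with the functoriality of the polytope-to-toric-variety construction recorded in \eqref{eq: unimod for toric}. Fix $\lambda\in\mathbb Z_{>0}^{n-1}$, identified with a regular dominant weight in $\Lambda^{++}$. By Theorem~\ref{thm:application unimod w_0}(iii) the string polytope $\mathcal Q_{\w_0}(\lambda)\subset\mathbb R^{N+n-1}$ and the superpotential polytope $\Xi_{\w_0}(\lambda)\subset\mathbb R^{N+n-1}$ are related by the affine lattice map $\Psi_{\w_0}$, which by Lemma~\ref{lem: psi lattice} is represented by a matrix in $GL_{N+n-1}(\mathbb Z)$; hence $\mathcal Q_{\w_0}(\lambda)\cong\Xi_{\w_0}(\lambda)$ in the sense of Definition~\ref{def:unimod equiv}. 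Applying \eqref{eq: unimod for toric} to this unimodular equivalence immediately yields an isomorphism of the associated projective toric varieties $X_{\mathcal Q_{\w_0}(\lambda)}\cong X_{\Xi_{\w_0}(\lambda)}$.

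It then remains only to record why these two toric varieties are (normalizations of) toric degenerations of $SL_n/B$, which is where I would cite the two input constructions rather than reprove anything. On the string side, Caldero's construction \cite{Cal02}, reviewed in \S\ref{subsec:string}, produces a flat family over $\mathbb A^1$ of Rees-algebra type \eqref{eq: def Rees} whose generic fibre is $SL_n/B$ embedded by $L_\lambda$ and whose special fibre is a (possibly non-normal) toric variety whose normalization is $X_{\mathcal Q_{\w_0}(\lambda)}$. On the superpotential side, the $\mathcal{A}_{\mathrm{prin}}$-construction of Gross--Hacking--Keel--Kontsevich applied to the double Bruhat cell $G^{e,w_0}$ gives, for $\lambda\in\mathbb Z_{>0}^{n-1}$, a flat family one of whose generic fibres is $SL_n/B$ and whose central fibre is the toric variety associated to $\Xi_{\w_0}(\lambda)$ by \cite[Theorem~8.39]{GHKK14}, that is, $X_{\Xi_{\w_0}(\lambda)}$. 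Combining this with the previous paragraph gives the corollary.

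The only genuinely delicate point is the bookkeeping around normality and the precise identification of the two central fibres with the polytopal toric varieties of \cite[\S2.1 and \S2.3]{CLS11}: Caldero's special fibre need not be normal, so one works throughout with its normalization $X_{\mathcal Q_{\w_0}(\lambda)}$, and one should check that the normalization of the GHKK central fibre is likewise $X_{\Xi_{\w_0}(\lambda)}$ with the same lattice conventions, so that the $GL_{N+n-1}(\mathbb Z)$-equivalence $\Psi_{\w_0}$ transports one to the other. Once these conventions are aligned — which is routine given the explicit description of $\Psi_{\w_0}$ in Definition~\ref{def: psi_w} — everything else is formal. I do not expect any further obstacle, since the substantive work (the identifications $\mathcal Q_{\w_0}=\mathcal C_{\w_0}$ and $\mathcal S_{\w_0}=\Xi_{\w_0}$, and the unimodular equivalence $\mathcal C_{\w_0}\cong\mathcal S_{\w_0}$) has already been carried out in \S\ref{subsec:string}--\S\ref{subsec:apply}.
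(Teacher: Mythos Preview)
Your proposal is correct and follows essentially the same approach as the paper: the corollary is stated there as an immediate consequence of Theorem~\ref{thm:application unimod w_0}(iii) via the polytope-to-toric-variety construction of \cite[\S2.1 and \S2.3]{CLS11} (which is exactly \eqref{eq: unimod for toric}), together with the degeneration results of Caldero and Gross--Hacking--Keel--Kontsevich. Your write-up is a faithful and somewhat more detailed unpacking of precisely this logic, including the appropriate caveat about normalization on the Caldero side.
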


In order to achieve a similar result for Schubert varieties, we study the restriction of the superpotential in the following subsection.

\subsubsection*{Restricted Superpotential and Schubert varieties}
Caldero's degeneration works more generally for Schubert varieties. 
As we have seen above, he uses the degeneration for the flag variety and by a quotient construction on the level of rings he obtains a family for the Schubert variety.
For the cones, taking this quotient corresponds to setting certain variables to zero, or equivalently, restricting the defining GP-paths as in Definition~\ref{def: res path}.
In a similar fashion we want to proceed with the superpotential.
We show how the polytopes defining toric degenerations of Schubert varieties arise in the setting of \cite{GHKK14}.

Consider $w\in S_n$ with reduced expression $\w$ and extension $\w_0=\w s_{i_{\ell(w)+1}}\cdots s_{i_N}$. 
Recall that for a seed corresponding to $\w_0$ we have a basis $\{e_F\mid F \text{ face of }\pa(\w_0)\}$ for $N_{\w_0}$ and further $\mathbb C[\X_{\w_0}]=\mathbb C[z^{\pm e_F}\mid F\text{ face of }\pa(\w_0)]$.
Then $\{e_F\mid F\text{ face of }\pa(\w)\}$ generates a sublattice in $N_{\w_0}$, which we denote by $N_{\w}$ with dual lattice $M_{\w}$ a quotient of $M_{\w_0}$.
We have the torus $\X_{\w}=T_{M_{\w}}=\Spec(\mathbb C[N_{\w}])$ associated with $M_{\w}$ as in \eqref{def: seed tori}.
In particular, $\mathbb C[\X_{\w}]=\mathbb C[z^{\pm e_{F}}\mid F\text{ face of }\pa(\w)]$ and we have a restriction morphism between the Laurent polynomial rings
\[
\res_{\w}:\mathbb C[\X_{\w_0}]\to\mathbb C[\X_{\w}], \ \ f\mapsto f\vert_{\X_{\w}}.
\]
%define the ideal $I_{\w}^{\w_0}\subset \mathbb C[\X_{\w_0}]$ by
%\begin{eqnarray}\label{eq: def res ideal}
%I_{\w}^{\w_0}:=\left\langle z^{e_F}\left\vert \begin{matrix} F \text{ face of }\pa(\w_0)\\
%\text{but not face of }\pa(\w)\end{matrix}\right.\right\rangle.
%\end{eqnarray}

We are interested in the restrictions to $\X_{\w}$ of the superpotential $W\vert_{\X_{\w_0}}$ and the detropicalization $W_{\mathcal S_{\w_0}}$ of $\mathcal S_{\w_0}$ (they are equal by Theorem~\ref{thm: superpot and area potential}). We want to show that they coincide with the detropicalization of $\mathcal S_{\w}$.
In analogy with Definiton~\ref{def:super cone} for $w_0$ we consider for arbitrary $w$ the following polyhedral objects.

\begin{definition}\label{def: res super cone}
For $w\in S_n$ with reduced expression $\w$ and an extension $\w_0=\w s_{i_{\ell(w)+1}}\cdots s_{i_N}$  polyhedral objects by tropicalizing the restriction of a sum of $\vartheta$-functions resp. the superpotential: 
\begin{eqnarray*}
\res_{\w}(\Xi_{\w_0}) &:=& \{ \mathbf x\in \mathbb R^{\ell(w)+n-1}\mid \res_{\w}(W\vert_{\X_{\w_0}})^{\trop}(\mathbf x)\ge 0 \},\\
\res_{\w}(\mathsf{\Xi}_{\w_0}) &:=& \{ \mathbf x\in\mathbb R^{\ell(w)}\mid \res_{\w}(\sum_{i=1}^{n-1}\vartheta_{(i,n)}\vert_{\X_{\w_0}})^{\trop}(\mathbf x)\ge 0 \},\\
\res_{\w}(\Xi_{\w_0}(\lambda)) &:=&  \res_{\w}(\Xi_{\w_0})  \cap\tau_{\w}^{-1}(\lambda) \text{ for } \lambda\in\mathbb R^{n-1}.
\end{eqnarray*}    
\end{definition}

\begin{center}
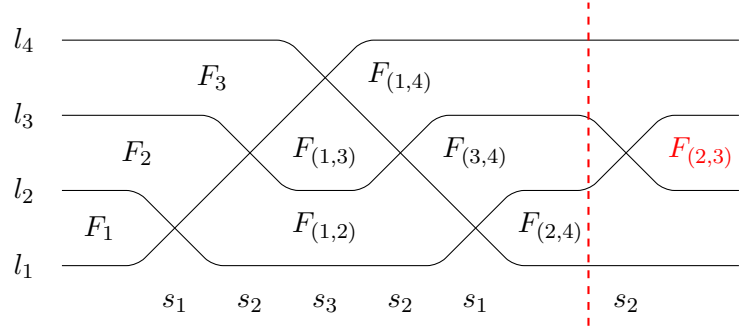
\begin{figure}
\centering
\begin{tikzpicture}{scale=.5}
\draw[rounded corners] (0,0) -- (1,0) -- (4,3) -- (9,3);
\draw[rounded corners]  (0,1) -- (1,1) -- (2,0) -- (5,0) -- (6,1) -- (7,1) -- (8,2) -- (9,2);
\draw[rounded corners]  (0,2) -- (2,2) -- (3,1) -- (4,1) -- (5,2) -- (7,2) -- (8,1) -- (9,1);
\draw[rounded corners]  (0,3) -- (3,3) -- (6,0) -- (9,0);

\node at (-.5,0) {$l_1$};
\node at (-.5,1) {$l_2$};
\node at (-.5,2) {$l_3$};
\node at (-.5,3) {$l_4$};

\node at (.5,.5) {$F_{1}$};
\node at (1,1.5) {$F_{2}$};
\node at (2,2.5) {$F_{3}$};
\node at (3.5,0.5) {$F_{(1,2)}$};
\node at (3.5,1.5) {$F_{(1,3)}$};
\node at (5.5,1.5) {$F_{(3,4)}$};
\node at (6.5,.5) {$F_{(2,4)}$};
\node at (4.5,2.5) {$F_{(1,4)}$};
\node[red] at (8.5,1.5) {$F_{(2,3)}$};

\draw[red, dashed, thick] (7,-.8) -- (7,3.5);

\node at (1.5,-.5) {$s_1$};
\node at (2.5,-.5) {$s_2$};
\node at (3.5,-.5) {$s_3$};
\node at (4.5,-.5) {$s_2$};
\node at (5.5,-.5) {$s_1$};
\node at (7.5,-.5) {$s_2$};
\end{tikzpicture}
\caption{Restriction/Extension of a pseudoline arrangement.}\label{fig: restr.pseudo}
\end{figure}
\end{center}

\begin{example}\label{exp: res superpot}
Consider $\w=s_1s_2s_3s_2s_1\in S_4$ with extension $\w_0=\w s_2$. 
We compute the superpotential in $W{\vert}_{\X_{\w_0}}\in\mathbb C[\X_{\w_0}]$.
\begin{eqnarray*}
W\vert_{\X_{\w_0}}&=& (z^{-e_3}+z^{-e_3-e_{(1,4)}})+ (z^{-e_2} + z^{-e_2-e_{(1,3)}} + z^{-e_2-e_{(1,3)}-e_{(3,4)}})\\
&+& (z^{-e_1}+z^{-e_1-e_{(1,2)}}+z^{-e_1-e_{(1,2)}-e_{(2,4)}}) + (z^{-e_{(2,4)}}+z^{-e_{(2,4)}-e_{(3,4)}}) + (z^{-e_{2,3}})\\
&+& (z^{-e_{(1,4)}}+z^{-e_{(1,4)}-e_{(1,3)}}+z^{-e_{(1,4)}-e_{(1,3)}-e_{(3,4)}}+z^{-e_{(1,4)}-e_{(1,3)}-e_{(3,4)}-e_{(1,2)}}).
\end{eqnarray*}
From Figure~\ref{fig: restr.pseudo} we see that $F_{(2,3)}$ is a face of $\pa(\w_0)$, but not of $\pa(\w)$. Hence,
\begin{eqnarray*}
{\res}_{\w}(W\vert_{\X_{\w_0}}) &=&  (z^{-e_3}+z^{-e_3-e_{(1,4)}})+ (z^{-e_2} + z^{-e_2-e_{(1,3)}} + z^{-e_2-e_{(1,3)}-e_{(3,4)}})\\
&+& (z^{-e_1}+z^{-e_1-e_{(1,2)}}+z^{-e_1-e_{(1,2)}-e_{(2,4)}}) + (z^{-e_{(2,4)}}+z^{-e_{(2,4)}-e_{(3,4)}}) \\
&+& (z^{-e_{(1,4)}}+z^{-e_{(1,4)}-e_{(1,3)}}+z^{-e_{(1,4)}-e_{(1,3)}-e_{(3,4)}}+z^{-e_{(1,4)}-e_{(1,3)}-e_{(3,4)}-e_{(1,2)}}).
\end{eqnarray*}
\end{example}

\begin{proposition}\label{prop: res superpt}
Let $w\in S_n$ and consider a reduced expression $\w$ with an extension to $\w_0=\w s_{i_{\ell(w)+1}}\cdots s_{i_N}$. Then
\[
\res_{\w}(W\vert_{\X_{\w_0}})=W_{\mathcal S_{\w}}.
\]
\end{proposition}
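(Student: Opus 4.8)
The statement $\res_{\w}(W\vert_{\X_{\w_0}})=W_{\mathcal S_{\w}}$ is the Schubert analogue of Theorem~\ref{thm: superpot and area potential}, which handled the case $w=w_0$. The plan is to combine that theorem with the restricted-path machinery of \S\ref{subsec:string} (Definition~\ref{def: res path}, Propositions~\ref{prop:respath} and~\ref{prop:restrict}). First I would invoke Theorem~\ref{thm: superpot and area potential} to rewrite the object we are restricting:
\[
W\vert_{\X_{\w_0}}=W_{\mathcal S_{\w_0}}=\sum_{\p_{\w_0}\in\mathcal P_{\w_0}}z^{e_{\p_{\w_0}}}+\sum_{i\in[n-1]}\sum_{0\le k\le n_i^{\w_0}}z^{e_{[i:k]}}.
\]
Now apply $\res_{\w}$, which is the ring map $\mathbb C[\X_{\w_0}]\to\mathbb C[\X_{\w}]$ sending $z^{e_F}\mapsto z^{e_F}$ for $F$ a face of $\pa(\w)$ and $z^{e_F}\mapsto 1$ for the faces $F$ that get erased by passing from $\pa(\w_0)$ to $\pa(\w)$ (the faces bounded to the left by a crossing $v_{(i,j)}$ with $w(\alpha_{i,j-1})>0$). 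The effect on a monomial $z^{e_{\p_{\w_0}}}=z^{-\sum_{F\subset\area_{\p_{\w_0}}}e_F}$ is precisely to delete the contributions of the erased faces, i.e. to keep only those faces of $\area_{\p_{\w_0}}$ that survive in $\pa(\w)$.

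The key geometric observation — which I would state and prove as the heart of the argument — is that under restriction the set of monomials $\{z^{e_{\p_{\w_0}}}\mid \p_{\w_0}\in\mathcal P_{\w_0}\}$ maps \emph{onto} the set $\{z^{e_{\p_{\w}}}\mid \p_{\w}\in\mathcal P_{\w}\}$, and likewise the weight monomials $z^{e_{[i:k]}}$ for $\w_0$ map onto those for $\w$ (with the extra terms for $k>n_i^{\w}$ all collapsing to $z^{e_i}=z^{e_{[i:0]}}$, already present). For the path part, one direction follows from Proposition~\ref{prop:respath}: restricting $\p_{\w_0}$ yields either the empty path or a disjoint union of GP-paths in $\pa(\w)$, and one checks that the monomial $\res_{\w}(z^{e_{\p_{\w_0}}})$ equals $z^{e_{\res_{\w}(\p_{\w_0})}}$ — here I would use that $\area_{\p_{\w_0}}$ restricted to $\pa(\w)$ is exactly $\area_{\res_{\w}(\p_{\w_0})}$, which is visible from the cut picture in Figure~\ref{fig: res path} (the region to the left of a path, intersected with the left half-diagram, is the region to the left of the restricted path). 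The reverse surjectivity is Proposition~\ref{prop:restrict}: every $\p_{\w}\in\mathcal P_{\w}$ is $\res_{\w}(\ind_{\w_0}(\p_{\w}))$ for the induced path of Algorithm~\ref{alg:induced path}, and $\res_{\w}(z^{e_{\ind_{\w_0}(\p_{\w})}})=z^{e_{\p_{\w}}}$ by the same area bookkeeping. For the weight-inequality part, I would argue directly from the formula \eqref{eq:def area wt ineq}: $e_{[i:k]}=-e_{F_i}-\sum_{j\in J(i)^{\w_0},\,j\le j_k}e_{F_{i_j}}$, and restriction kills exactly the summands $e_{F_{i_j}}$ with $i_j\in\{i_{\ell(w)+1},\dots,i_N\}$; since the surviving crossings of level $i$ in $\pa(\w)$ are a prefix of those in $\pa(\w_0)$ (the extension appends reflections on the right), the restricted vectors run exactly through $e_{[i:0]},\dots,e_{[i:n_i^{\w}]}$, each hit, with the tail $k>n_i^{\w}$ all restricting to $-e_{F_i}=e_{[i:0]}$.

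Putting these together, $\res_{\w}(W\vert_{\X_{\w_0}})$ is a sum over $\mathcal P_{\w}$ of $z^{e_{\p_{\w}}}$ plus a sum of the $z^{e_{[i:k]}}$, $0\le k\le n_i^{\w}$, which is by definition $W_{\mathcal S_{\w}}$ (Definition~\ref{def:detrop area cone}); multiplicities do not cause trouble because $\res_{\w}$ is a ring homomorphism and we only need equality of the resulting Laurent polynomials, but in fact one can check the natural bijections above are multiplicity-free after collapsing. The main obstacle I anticipate is the careful verification that $\area$ behaves correctly under the cut — i.e. that restriction of the left-region really is the left-region of the restricted path and does not accidentally pick up or drop a sliver of face along the cutting line — together with the mild combinatorial check that the level-$i$ crossings of $\pa(\w)$ form an initial segment of those of $\pa(\w_0)$ so that the weight terms line up. Both are genuinely elementary once the cut picture of Figure~\ref{fig: res path} is pinned down, but they require the same case inspection as in the proofs of Propositions~\ref{prop:respath} and~\ref{prop:restrict}, so I would lean on those results rather than redo the analysis.
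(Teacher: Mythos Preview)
Your proposal is correct and follows essentially the same approach as the paper: invoke Theorem~\ref{thm: superpot and area potential} to write $W\vert_{\X_{\w_0}}$ explicitly, then use Propositions~\ref{prop:respath} and~\ref{prop:restrict} to identify $\res_{\w}(\mathcal P_{\w_0})=\mathcal P_{\w}$ and handle the weight terms by the prefix observation. The paper's own proof is terser and does not spell out the area bookkeeping or the multiplicity check you flag, but the structure is identical.
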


\begin{proof}
Recall the restriction of GP-paths defined in Definition~\ref{def: res path}. 
By Propositions~\ref{prop:respath} and \ref{prop:restrict} we have seen $\res_{\w}(\mathcal P_{\w_0})=\mathcal P_{\w}$. 
To avoid confusion we denote as before for $i\in[n-1]$ by $n_i^{\w}:=\#\{j\mid s_{i_j}=s_i \text{ in }\w\}$ and $n_i^{\w_0}:=\#\{j\mid s_{i_j}=s_i\text{ in }\w_0\}$.
Using Theorem~\ref{thm: superpot and area potential} we compute
\begin{eqnarray*}
\res_{\w}(W\vert_{\X_{\w_0}}) &=& \sum_{\p\in\mathcal P_{\w_0}}z^{e_{\p}}\vert_{\X_{\w}}+\sum_{i\in[n-1],0\le k\le n_i^{\w_0}}z^{e_{[i:k]}}\vert_{\X_{\w}}\\
&=& \sum_{\p\in\res_{\w}(\mathcal P_{\w_0})}z^{e_{\p}}+\sum_{i\in[n-1],0\le k\le n_i^{\w}}z^{e_{[i:k]}}\\
&=& W_{\mathcal S_{\w}}.
\end{eqnarray*}
\end{proof}

The last proposition enables us to formulate a theorem similar to Theorem~\ref{thm:application unimod w_0} for arbitrary $w\in S_n$.

\vbox{
\begin{theorem}\label{thm: application unimod w}
Let $w\in S_n$ and consider a reduced expression $\w$ with an extension to $\w_0=\w s_{i_{\ell(w)+1}}\cdots s_{i_N}$. Then the following polyhedral objects are unimodularly equivalent
\begin{enumerate}[(i)]
    \item $\mathcal Q_{\w}\cong \res_{\w}(\Xi_{\w_0})$ via $\Psi_{\w}$,
    \item $Q_{\w}\cong \res_{\w}(\mathsf\Xi_{\w_0})$ via $\Psi_{\w}\vert_{\mathbb R^{\ell(w)}}$,
    \item $\mathcal Q_{\w}(\lambda)\cong  \res_{\w}(\Xi_{\w_0}(\lambda))$ for $\lambda\in\mathbb R^{n-1}$ via $\Psi_{\w}$. 
\end{enumerate}
\end{theorem}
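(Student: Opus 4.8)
The strategy is to reduce Theorem~\ref{thm: application unimod w} for an arbitrary $w\in S_n$ to the combinatorial identities already established for cones attached to pseudoline arrangements, exactly mirroring how Theorem~\ref{thm:application unimod w_0} was obtained for $w_0$. The three statements (i), (ii), (iii) are parallel, so I would prove (i) first and then note that (ii) follows by restricting the ambient space to the first $\ell(w)$ coordinates and (iii) follows by intersecting both sides with the same collection of weight hyperplanes (using the remark below Definition~\ref{def: polytopes from cones} that $\mathcal C_{\w}$ and $\mathcal C_{\w}(\lambda)$ are cut out by the same normal vectors, together with the analogous statement for $\mathcal S_{\w}$).

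For (i), the plan is to assemble three ingredients. First, Theorem~\ref{thm:unimod}(i) gives the unimodular equivalence $\mathcal S_{\w}\cong\mathcal C_{\w}$ via $\Psi_{\w}$, valid for \emph{every} reduced expression $\w$ of \emph{every} $w\in S_n$, not just $w_0$. Second, Theorem~\ref{thm:wt GP is wt string}(i) identifies $\mathcal C_{\w}=\mathcal Q_{\w}$, again for arbitrary $w$ and arbitrary reduced expression. Third, and this is the genuinely new input needed here, Proposition~\ref{prop: res superpt} gives $\res_{\w}(W\vert_{\X_{\w_0}})=W_{\mathcal S_{\w}}$, and tropicalizing this equality of Laurent polynomials yields
\[
\res_{\w}(\Xi_{\w_0})=\{\mathbf x\in\mathbb R^{\ell(w)+n-1}\mid W_{\mathcal S_{\w}}^{\trop}(\mathbf x)\ge 0\}=\mathcal S_{\w},
\]
where the last equality is the observation recorded right after Definition~\ref{def:detrop area cone}. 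Chaining these: $\mathcal Q_{\w}=\mathcal C_{\w}\cong\mathcal S_{\w}=\res_{\w}(\Xi_{\w_0})$, with the unimodular equivalence realized by $\Psi_{\w}$. One point to check carefully is that the $\vartheta_{(i,n)}$ correspond under restriction to the GP-paths rather than the weight inequalities, which is needed for statement (ii) and which is exactly the content of the proof of Corollary~\ref{cor:area is trop super}(ii) combined with Proposition~\ref{prop:respath} and Proposition~\ref{prop:restrict}; I would cite that chain of reasoning to obtain $Q_{\w}=\res_{\w}(\mathsf\Xi_{\w_0})$ before applying Theorem~\ref{thm:unimod}(ii).

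The main obstacle is essentially bookkeeping rather than a conceptual leap: one must ensure that all of Theorem~\ref{thm:unimod}, Theorem~\ref{thm:wt GP is wt string}, and Proposition~\ref{prop: res superpt} are genuinely stated for arbitrary $w$ (they are) and that the restriction operation $\res_{\w}$ is compatible on the nose with passing between $\mathcal P_{\w_0}$ and $\mathcal P_{\w}$ and between the two sets of weight-inequality normal vectors $e_{[i:k]}$ for $\w_0$ versus $\w$ — i.e.\ that the index ranges $0\le k\le n_i^{\w}$ versus $0\le k\le n_i^{\w_0}$ match up after restriction. This is already handled inside the proof of Proposition~\ref{prop: res superpt} (via $\res_{\w}(\mathcal P_{\w_0})=\mathcal P_{\w}$ from Propositions~\ref{prop:respath} and \ref{prop:restrict}), so the proof of the theorem reduces to a short paragraph combining these cited results. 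Hence the plan concludes: state (i) by the chain above, deduce (ii) by restricting $\Psi_{\w}$ to $\mathbb R^{\ell(w)}$ and invoking Corollary~\ref{cor: res psi lattice}, and deduce (iii) by intersecting with $\pi^{-1}(\lambda)=\tau_{\w}^{-1}(\lambda)$-type hyperplanes, each of which is mapped to its counterpart by $\Psi_{\w}$ as in the proof of Theorem~\ref{thm:unimod}(iii).
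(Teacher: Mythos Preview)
Your proposal is correct and follows essentially the same approach as the paper: combine Proposition~\ref{prop: res superpt} (giving $\res_{\w}(\Xi_{\w_0})=\mathcal S_{\w}$ after tropicalization) with Theorem~\ref{thm:unimod} and Theorem~\ref{thm:wt GP is wt string} for (i), deduce (iii) directly, and handle (ii) by tracking that the $\vartheta_{(i,n)}$ correspond to GP-paths under restriction. The paper cites Lemma~\ref{lem: mutation paths} and the proof of Proposition~\ref{prop: superpot area GT} directly for (ii) rather than going through Corollary~\ref{cor:area is trop super}, but this is the same content.
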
}

\begin{proof}
For (i) combine Proposition~\ref{prop: res superpt} with Theorem~\ref{thm:unimod} and Theorem~\ref{thm:wt GP is wt string}, which directly implies (iii).
To see (ii), recall that by Lemma~\ref{lem: mutation paths} and the proof of Proposition~\ref{prop: superpot area GT} we have
\[
\sum_{\p\in\mathcal P_{\w_0}} z^{e_{\p}} = \sum_{i\in[n-1]}\vartheta_{(i,n)}\vert_{\X_{\w_0}}.
\]
By the proof of Proposition~\ref{prop: res superpt} the same equality when replacing $\w_0$ by $\w$. Then the claim follows by Theorem~\ref{thm:unimod} and Theorem~\ref{thm:wt GP is wt string}.
\end{proof}

For the following corollary relating the toric degenerations of Schubert varieties by Caldero \cite{Cal02} to the toric degenerations of flag varieties by Gross-Hacking-Keel-Kontsevich \cite{GHKK14} we briefly remind you about the \emph{Orbit-Cone-Correspondence} for toric varieties (see \cite[\S3.2]{CLS11}).

For a (full-dimensional) polytope $P\subset \mathbb R^n$ denote by $\Sigma_P\subset\mathbb R^n$ its \emph{normal fan} (see \cite[Remark~2.3.3]{CLS11}).
Every cone $\sigma\in\Sigma_P$ corresponds to a torus orbits in $X_{\Sigma_{P}}$ of dimension $n-\dim \sigma$ (\cite[Theorem~3.2.6]{CLS11}).
The closure of each torus orbit is a toric variety.
For a face $Q$ of $P$ let $\sigma_Q\in\Sigma_P$ be the cone in $\Sigma_P$ spanned by the normal vectors of all facets of $P$ containing $Q$.
Then by \cite[Proposition~3.2.9]{CLS11} the toric variety $X_Q$ is isomorphic to the closure of the torus orbit corresponding to the cone $\sigma_Q\in\Sigma_P$.

Consider an arbitrary $w\in S_n$ with a reduced expression $\w$ and an extension $\w_0=\w s_{i_{\ell(w)+1}}\cdots s_{i_N}$. 
For every $\lambda\in\Lambda^{++}$ recall that the toric variety $X_{\mathcal Q_{\w}(\lambda)}$ is (the normalization of) a toric degeneration of $X_w$ by \cite{Cal02}. Similarly, $X_{\Xi_{\w_0}(\lambda)}$ is a flat degeneration of $SL_n/B$ by \cite{GHKK14}. 
We can now formulate the geometric version of our main result on toric degenrations of Schubert varieties.

\begin{corollary}
The toric variety $X_{\mathcal Q_{\w}(\lambda)}$ is isomorphic to a subvariety of $X_{\Xi_{\w_0}(\lambda)}$.
More precisely, we have
\[
X_{\mathcal Q_{\w}(\lambda)}\cong X_{\res_{\w}(\Xi_{\w_0}(\lambda))},
\]
where $ X_{\res_{\w}(\Xi_{\w_0}(\lambda))}$ is the closure of the torus orbit corresponding to the cone $\sigma_{\res_{\w}(\Xi_{\w_0}(\lambda))}\in \Sigma_{\Xi_{\w_0}(\lambda)}$.
\end{corollary}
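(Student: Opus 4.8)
The plan is to combine Theorem~\ref{thm: application unimod w} with the toric degeneration results of Caldero \cite{Cal02} and Gross-Hacking-Keel-Kontsevich \cite{GHKK14}, together with the Orbit-Cone-Correspondence recalled just above. First I would recall the two pieces of geometry: by \cite{Cal02} (as summarized in \S\ref{subsec:string}) the toric variety $X_{\mathcal Q_{\w}(\lambda)}$ is (the normalization of) a toric degeneration of the Schubert variety $X_w$, and by \cite[Theorem~8.39]{GHKK14} the toric variety $X_{\Xi_{\w_0}(\lambda)}$ is the central fibre of a flat degeneration of $SL_n/B$ whose generic fibre is $SL_n/B$, for $\lambda\in\mathbb Z_{>0}^{n-1}$.

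Next, the combinatorial heart: by Theorem~\ref{thm: application unimod w}(iii) the polytope $\mathcal Q_{\w}(\lambda)$ is unimodularly equivalent to $\res_{\w}(\Xi_{\w_0}(\lambda))$ via the lattice isomorphism $\Psi_{\w}$. By \eqref{eq: unimod for toric}, unimodular equivalence of polytopes implies isomorphism of the associated projective toric varieties, so $X_{\mathcal Q_{\w}(\lambda)}\cong X_{\res_{\w}(\Xi_{\w_0}(\lambda))}$. It then remains to identify $\res_{\w}(\Xi_{\w_0}(\lambda))$ as a face of $\Xi_{\w_0}(\lambda)$ and to invoke the Orbit-Cone-Correspondence. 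Here one uses that, by Definition~\ref{def: res super cone} and Proposition~\ref{prop: res superpt} (equivalently Corollary~\ref{cor:area is trop super} via $\mathcal S_{\w}$), the restricted cone $\res_{\w}(\Xi_{\w_0})$ is obtained from $\Xi_{\w_0}$ by intersecting with the coordinate hyperplanes $\{x_{(i,k)}=0\}$ for those crossings $(i,k)$ with $w(\alpha_{i,k-1})>0$ — i.e.\ the coordinates indexed by faces of $\pa(\w_0)$ that are not faces of $\pa(\w)$. Since these are among the defining inequalities $x_{(i,k)}\geq 0$ of $\Xi_{\w_0}$ (the frozen-variable coordinates are non-negative on the superpotential cone, compare Caldero's Lemma and \cite[Lemma~3.3]{Cal02}), $\res_{\w}(\Xi_{\w_0}(\lambda))$ is a face $Q$ of $\Xi_{\w_0}(\lambda)$. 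By \cite[Proposition~3.2.9]{CLS11} the toric variety $X_Q$ is isomorphic to the closure of the torus orbit in $X_{\Xi_{\w_0}(\lambda)}$ corresponding to the cone $\sigma_{Q}\in\Sigma_{\Xi_{\w_0}(\lambda)}$ spanned by the normal vectors of the facets containing $Q$. Chaining the isomorphisms gives
\[
X_{\mathcal Q_{\w}(\lambda)}\cong X_{\res_{\w}(\Xi_{\w_0}(\lambda))}\cong \overline{O(\sigma_{\res_{\w}(\Xi_{\w_0}(\lambda))})},
\]
which is precisely the asserted subvariety of $X_{\Xi_{\w_0}(\lambda)}$.

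The step I expect to be the main obstacle is verifying cleanly that $\res_{\w}(\Xi_{\w_0}(\lambda))$ is genuinely a \emph{face} of $\Xi_{\w_0}(\lambda)$ and not merely a subpolytope cut out by hyperplanes that happen not to support a face. This requires knowing that each hyperplane $\{x_{(i,k)}=0\}$ used in the restriction supports a facet (or at least a face) of $\Xi_{\w_0}(\lambda)$, equivalently that $x_{(i,k)}\geq 0$ holds on all of $\Xi_{\w_0}(\lambda)$ with equality achieved — this is exactly the content hidden in Caldero's realization of $\mathcal Q_{\w}$ as a face of $\mathcal Q_{\w_0}$ combined with Theorem~\ref{thm:wt GP is wt string}. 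One must also take a little care with dimensions: $X_Q$ has dimension equal to $\dim Q = \ell(w)+n-1-(n-1)-(\text{number of vanishing coordinates}) = \dim X_w$ after projecting to weight-$\lambda$ slices, matching $\dim X_{\mathcal Q_{\w}(\lambda)}$, which is the needed consistency check. The remaining bookkeeping — identifying the defined normal fan $\Sigma_{\Xi_{\w_0}(\lambda)}$ and the cone $\sigma_{\res_{\w}(\Xi_{\w_0}(\lambda))}$ — is routine given the Orbit-Cone-Correspondence as stated above.
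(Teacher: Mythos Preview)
Your overall architecture is the same as the paper's: invoke Theorem~\ref{thm: application unimod w}(iii) for the unimodular equivalence $\mathcal Q_{\w}(\lambda)\cong\res_{\w}(\Xi_{\w_0}(\lambda))$, deduce $X_{\mathcal Q_{\w}(\lambda)}\cong X_{\res_{\w}(\Xi_{\w_0}(\lambda))}$ via \eqref{eq: unimod for toric}, and then apply \cite[Proposition~3.2.9]{CLS11} once you know $\res_{\w}(\Xi_{\w_0}(\lambda))$ is a face of $\Xi_{\w_0}(\lambda)$. The difference is entirely in how that last fact is established.

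You try to show it directly by arguing that $\res_{\w}(\Xi_{\w_0}(\lambda))$ is cut out of $\Xi_{\w_0}(\lambda)$ by coordinate hyperplanes $\{x_{(i,k)}=0\}$ that support facets. Two remarks. First, there is a coordinate mismatch: the variables $x_{(i,k)}$ indexed by crossings live on the $\mathcal C_{\w_0}$ side, where Caldero's lemma applies; on the $\Xi_{\w_0}=\mathcal S_{\w_0}$ side the coordinates are indexed by faces $F$ of $\pa(\w_0)$, and the defining inequalities $(e_{\p})^t(x)\ge 0$, $(e_{[i:k]})^t(x)\ge 0$ are sums of negated face-coordinates, so individual coordinate hyperplanes are not a priori facet-supporting. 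You would need to transport Caldero's statement through $\Psi_{\w_0}$, which does not send coordinate hyperplanes to coordinate hyperplanes. Second, and more to the point, the paper bypasses this entirely: it observes that by construction $\res_{\w}(\Xi_{\w_0}(\lambda))$ is a \emph{union of faces} of $\Xi_{\w_0}(\lambda)$, and then uses Theorem~\ref{thm: application unimod w}(iii) a second time---since $\res_{\w}(\Xi_{\w_0}(\lambda))$ is unimodularly equivalent to the polytope $\mathcal Q_{\w}(\lambda)$, it is itself convex, and a convex union of faces of a polytope is a single face. This is exactly the obstacle you flagged, resolved in one line. Your proof is correct modulo the coordinate issue, but the paper's route is both shorter and avoids the facet-checking you anticipated as difficult.
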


\begin{proof}
By definiton $\res_{\w}(\Xi_{\w_0}(\lambda))$ is a union of faces of $\Xi_{\w_0}(\lambda)$. Theorem~\ref{thm: application unimod w}(iii) implies in particular, that $\res_{\w}(\Xi_{\w_0}(\lambda))$ is a polytope itself, hence a face of $\Xi_{\w_0}(\lambda)$.
Further, the unimodular equivalence $Q_{\w}(\lambda)\cong \res_{\w}(\Xi_{\w_0}(\lambda))$ induces an isomorphism of toric varieties $X_{Q_{\w}(\lambda)}\cong X_{\res_{\w}(\Xi_{\w_0}(\lambda))}$.
Then the Corollary follows by \cite[Proposition~3.2.9]{CLS11}.
\end{proof}

\paragraph{Restriction vs. superpotential for $G^{e,w}$} We conclude with an example that shows how $\res_{\w}(W\vert_{\X_{\w_0}})$ is essentially different from a function one would obtain from applying Algorithm~\ref{alg:superpot via opt seeds} to the quiver $Q_{\w}$

\begin{example}
Let $s = s_{\w}$ be the seed of the reduced expression $\w=s_1s_2s_3s_2s_1 \in S_4$ as in Figure~\ref{fig: restr.pseudo}. The corresponding quiver is pictured in Figure~\ref{restr.quiver}. We apply Algorithm~\ref{alg:superpot via opt seeds} and compute optimized seeds for all frozen vertices in $Q_{\w}$.
As $w_{3}$ and $w_{(2,4)}$ are sinks in $Q_{\w}$ we set $\vartheta_{3}\vert_{\X_s}=z^{-e_{3}}$ and $\vartheta_{(2,4)}\vert_{\X_s}=z^{-e_{(2,4)}}$, where $\{e_1,e_2,e_3,e_{(1,2)},e_{(1,3)},e_{(1,4)},e_{(2,4)},e_{(3,4)}\}$ is the lattice basis associated to $s$.

\begin{center}
\begin{figure}[h]
\centering
\begin{tikzpicture}

\node at (0,0) {\tiny$w_{1}$};
    \draw (-.35,-.15) rectangle (.35,0.2);
\node at (3,0) {\tiny $w_{(2,4)}$};
    \draw (2.55,-.15) rectangle (3.45,0.25);
\node at (1.5,0.5) {\tiny $w_{(1,2)}$};
\node at (0,1) {\tiny $w_{2}$};
    \draw (-.35,.85) rectangle (.35,1.2);
\node at (3,1) {\tiny $w_{(3,4)}$};
        \draw (2.55,.85) rectangle (3.45,1.25);
\node at (1.5,1.5) {\tiny $w_{(1,3)}$};
\node at (0,2) {\tiny $w_{3}$};
    \draw (-.35,1.85) rectangle (.35,2.2);
\node at (3,2) { \tiny $w_{(1,4)}$};
        \draw (2.55,1.85) rectangle (3.45,2.25);

%% von und zu 23        
\draw[->, thick] (1.15,1.65) -- (0.45,2);
\draw[->, thick] (2.55,2) -- (1.85,1.65);
\draw[->, thick] (0.45,1.1) -- (1.2,1.45);
\draw[->, thick] (1.85,1.35) -- (2.55,1.1);

%% von und zu 13
\draw[->, thick] (1.15,0.65) -- (0.45,1);
\draw[->, thick] (2.55,1) -- (1.85,.65);
\draw[->, thick] (0.45,.1) -- (1.2,.45);
\draw[->, thick] (1.85,.35) -- (2.55,.1);
\node at (1.5,-.5) {$Q_{\w}$};
\begin{scope}[xshift=5cm]

\node at (0,0) {\tiny $w_{1}$};
    \draw (-.35,-.15) rectangle (.35,0.2);
\node at (3,0) {\tiny $w_{(2,4)}$};
    \draw (2.55,-.15) rectangle (3.45,0.25);
\node at (1.5,0.5) {\tiny $w_{(1,2)}$};
\node at (0,1) {\tiny $w_{2}$};
    \draw (-.35,.85) rectangle (.35,1.2);
\node at (3,1) {\tiny $w_{(3,4)}$};
        \draw (2.55,.85) rectangle (3.45,1.25);
\node at (1.5,1.5) {\tiny $w_{(1,3)}$};
\node at (0,2) {\tiny $w_{3}$};
    \draw (-.35,1.85) rectangle (.35,2.2);
\node at (3,2) {\tiny $w_{(1,4)}$};
        \draw (2.55,1.85) rectangle (3.45,2.25);

%% von und zu 23        
\draw[<-, thick] (1.15,1.65) -- (0.45,2);
\draw[<-, thick] (2.55,2) -- (1.85,1.65);
\draw[<-, thick] (0.45,1.1) -- (1.2,1.45);
\draw[<-, thick] (1.85,1.35) -- (2.55,1.1);

%% von und zu 13
\draw[->, thick] (1.15,0.65) -- (0.45,1);
\draw[->, thick] (2.55,1) -- (1.85,.65);
\draw[->, thick] (0.45,.1) -- (1.2,.45);
\draw[->, thick] (1.85,.35) -- (2.55,.1);

\node at (1.5,-.5) {$\mu_{(1,3)}(Q_{\w})$};

\begin{scope}[xshift=5cm]
\node at (0,0) {\tiny $w_{1}$};
    \draw (-.35,-.15) rectangle (.35,0.2);
\node at (3,0) {\tiny $w_{(2,4)}$};
    \draw (2.55,-.15) rectangle (3.45,0.25);
\node at (1.5,0.5) {\tiny $w_{(1,2)}$};
\node at (0,1) {\tiny $w_{2}$};
    \draw (-.35,.85) rectangle (.35,1.2);
\node at (3,1) {\tiny $w_{(3,4)}$};
        \draw (2.55,.85) rectangle (3.45,1.25);
\node at (1.5,1.5) {\tiny $w_{(1,3)}$};
\node at (0,2) {\tiny $w_{3}$};
    \draw (-.35,1.85) rectangle (.35,2.2);
\node at (3,2) {\tiny $w_{(1,4)}$};
        \draw (2.55,1.85) rectangle (3.45,2.25);
        
%% von und zu 23        
\draw[->, thick] (1.15,1.65) -- (0.45,2);
\draw[->, thick] (2.55,2) -- (1.85,1.65);
\draw[->, thick] (0.45,1.1) -- (1.2,1.45);
\draw[->, thick] (1.85,1.35) -- (2.55,1.1);

%% von und zu 13
\draw[<-, thick] (1.15,0.65) -- (0.45,1);
\draw[<-, thick] (2.55,1) -- (1.85,.65);
\draw[<-, thick] (0.45,.1) -- (1.2,.45);
\draw[<-, thick] (1.85,.35) -- (2.55,.1);

\node at (1.5,-.5) {$\mu_{(1,2)}(Q_{\w})$};
\end{scope}
\end{scope}

\end{tikzpicture}
\caption{The quivers $Q_{\w}$, $\mu_{(1,3)}(Q_{\w})$ and $\mu_{(1,2)}(Q_{\w})$ for $\w=s_1s_2s_3s_2s_1$. The boxes denote frozen variables.}\label{restr.quiver}
\end{figure}
\end{center}
\vspace{-.5cm}

For the other variables we have to find a mutation sequence to an optimized seed. Mutation at $w_{(1,3)}$ (resp. $w_{(1,2)}$) yields the quiver $\mu_{(1,3)}(Q_{\w})$ (resp. $\mu_{(1,2)}(Q_{\w})$) in Figure~\ref{restr.quiver}. 
The seed $\mu_{(1,3)}(s)$ is optimized for $w_{(1,4)}$ and $w_{2}$, so $\vartheta_{(1,4)}\vert_{\X_{\mu_{(1,3)}(\w)}}=z^{-e'_{(1,4)}}$ and $\vartheta_{2}\vert_{\X_{\mu_{(1,3)}(\w)}}=z^{-e'_{2}}$. 
In $\X_{\w}$ we obtain $\vartheta_{(1,4)}\vert_{\X_s}=z^{-e_{(1,4)}}+z^{-e_{(1,4)}-e_{(1,3)}}$ and $\vartheta_{2}\vert_{\X_{s}}=z^{-e_{2}}+z^{-e_{2}-e_{(1,3)}}$. 
Proceeding analogously with $\mu_{(1,2)}(s)$, optimized for $w_{(3,4)}$ and $w_{1}$, we obtain a function on $\X_{\w}$
\begin{eqnarray*}
F &:=& (z^{-e_3}) + (z^{-e_{2}}+z^{-e_2-e_{(1,3)}})
+ (z^{-e_1}+ z^{-e_1-e_{(1,2)}}) + (z^{-e_{(2,4)}})\\
&+& (z^{-e_{(3,4)}}+z^{-e_{(3,4)}-e_{(1,2)}}) + (z^{-e_{(1,4)}}+z^{-e_{(1,4)}-e_{(1,3)}}).
\end{eqnarray*}

Comparing to Example~\ref{exp: res superpot} where $\w_0=\w s_2$ we observe that $F\not = \res_{\w}(W\vert_{\X_{\w_0}})$. 
Tropicalizing $\res_{\w}(W\vert_{\X_{\w_0}})$ we get the following set of inequalities defining the cone $\mathcal S_{\w}\subset \mathbb R^8$
\begin{eqnarray*}
&-x_3\ge 0, -x_3-x_{(1,4)}\ge 0, &\\
&-x_2\ge 0,-x_{2}-x_{(1,3)}\ge 0, -x_2-x_{(1,3)}-x_{(3,4)}\ge 0&\\
& -x_1\ge 0, -x_{1}-x_{(1,2)}\ge 0, -x_{1}-x_{(1,2)}-x_{(2,4)}\ge 0, &\\
&-x_{(2,4)}\ge 0, -x_{(2,4)}-x_{(3,4)}\ge 0,&\\
&-x_{(1,4)}\ge 0,-x_{(1,4)}-x_{(1,3)}\ge 0,-x_{(1,4)}-x_{(1,3)}-x_{(3,4)}\ge 0,&\\
&-x_{(1,4)}-x_{(1,3)}-x_{(3,4)}-x_{(1,2)}\ge0.&
\end{eqnarray*}
From $F^{\trop}$ we get inequalities defining a cone $\mathcal D_{F}\subset\mathbb R^{8}$:
\begin{eqnarray*}
&-x_3\ge 0,&\\
&-x_2\ge 0, -x_2-x_{(1,3)}\ge 0,&\\
&-x_1\ge 0, -x_1-x_{(1,2)}\ge 0,&\\
&-x_{(2,4)}\ge 0,&\\
&-x_{(3,4)}\ge 0,-x_{(3,4)}-x_{(1,2)}\ge 0,&\\
&-x_{(1,4)}\ge 0, -x_{(1,4)}-x_{(1,3)}\ge 0.&
\end{eqnarray*}

Observe that $\mathcal D_{F}\subset \mathcal S_{\w}$. We compute the polytopes $\mathcal S_{\w}(\lambda)$ and $\mathcal D_{F}\cap \tau_{\w}^{-1}(\lambda)$ for $\lambda=(1,1,1)$ and their lattice points using \emph{polymake}\cite{GJ00}. The outcome is
\[
\vert \mathcal S_{\w}(\lambda)\cap \mathbb Z^8\vert = 49 = \dim_{\mathbb C}H^0(X_w,L_\lambda) > \vert \mathcal D_{F}\cap \tau_{\w}^{-1}(\lambda)\cap\mathbb Z^{8}\vert =30.
\]

In particular, the toric variety $X_{\mathcal D_{F}\cap \tau_{\w}^{-1}(\lambda)}$ can not be a flat degeneration of the Schubert variety $X_w$.
However, this observation is not too surprising from a geometric point of view, as the restricted superpotential and the function $F$ correspond to different partial compactifications of the $\A$-cluster variety $G^{e,w}$ associated with $\mathcal Y(s_{\w})$.

When considering the restricted superpotential, the cluster variety we are dealing with is $G^{e,w_0}$ and its compactification $\bar G^{e,w_0}$ with boundary divisors
\[
\{\bar p_1=0\},\{\bar p_{12}=0\},\{\bar p_{123}=0\},\{\bar p_{4}=0\},\{\bar p_{34}=0\},\{\bar p_{234}=0\}.
\]
Recall that $G^{e,w_0}$ is $SL_4/U$ up to codimension 2.
The Schubert variety of our interest is $X_{w}$ with $s_1s_2s_3s_2s_1=w$. 
It is given by $\{\bar p_{34}=0\}$ as a subvariety $SL_4/B$. 
Note that in fact, whenever we have a reduced expression $\w$ and an extension $\w_0=\w s_{i_{\ell(w)+1}}\cdots s_{i_N}$, then the Plücker coordinates that appear as $\A$-cluster variables for faces of $\pa(\w_0)$ that are not faces of $\pa(\w)$ vanish identically on $X_w$.
When restricting the superpotential, we consider the divisor of $\bar G^{e,w_0}$ (resp. $SL_4/U$) given by $\{\bar p_{34}=0\}$, which is closely related to $X_w$.

The function $F$ on the other hand corresponds to the $\A-$cluster variety $G^{e,w}$ and its partial compactification $\bar G^{e,w}$ with boundary divisors
\[
\{\bar p_{1}=0\},\{\bar p_{12}=0\},\{\bar p_{123}=0\},\{\bar p_4=0\},\{\bar p_{24}=0\},\{\bar p_{234}=0\}.
\]
In this case, the defining equation for $X_w$ in $SL_4/B$ is not part of the boundary, so there is no reason to expect information for the Schubert variety from the potential $F$ encoding this boundary.
\end{example}

\newpage
\section{Computing toric degenerations of flag varieties}\label{sec:BLMM}

\noindent
In this section we compute toric degenerations arising from the tropicalization of the full flag varieties $\Flag_4$ and $\Flag_5$ embedded in a product of Grassmannians.
For $\Flag_4$ and $\Flag_5$ we compare toric degenerations arising from string polytopes and the FFLV polytope with those obtained from the tropicalization of the flag varieties.
We also present a general procedure to find toric degenerations  in the cases where the initial ideal arising from a cone of the tropicalization of a variety is not prime.\footnote{Based on joint work with Sara Lamboglia, Kalina Mincheva, and Fatemeh Mohammadi in \cite{BLMM}.}

\bigskip

This project was initialized during the Apprenticeship Program at the
Fields Institute, held 21 August–3 September 2016.
The solutions to the following questions posed during the program can be found in Theorem~\ref{flag4} (see \cite[Problem~5\&6 on Grassmannians]{Stu16}).
\begin{enumerate}
    \item[$5$.] \emph{The complete flag variety for $SL_4$ is a six-dimensional subvariety of $\mathbb P^3\times \mathbb P^4\times \mathbb P^{3}$. Compute its ideal and determine its tropicalization.}
    \item[$6$.] \emph{Classify all toric ideals that arises as initial ideals for the flag variety above. For each such toric degeneration, compute the Newton-Okounkov polytope.}
\end{enumerate}
This section is structured as follows.
We study the tropicalization of the flag varieties $\Flag_n$ for $n=4,5$ and the induced toric degenerations in \S\ref{sec:3}.

In \S\ref{sec:string&FFLV} we recall the definition of the FFLV polytope for regular dominant integral weights. We compute for $\Flag_4$ and $\Flag_5$ all string polytopes and the FFLV polytope for the weight $\rho\in\Lambda^{++}$, the sum of all fundamental weights. 
Moreover, in \S\ref{string:weight} for every string cone we construct a weight vector ${\bf w}_{\underline w_0}$ contained in the tropicalization of the flag variety  in order to further explore the connection between these two different approaches. The construction is inspired by Caldero \cite{Cal02}.
Our work is closely related to \cite{KM16}. We were particularly curious about \cite[Problem~1]{KM16}:
\begin{itemize}
    \item[] \emph{Given a projective variety $Y$, find an embedding of this variety into a projective toric variety so that the resulting tropicalization contains a prime cone of maximal dimension.}
\end{itemize}
In \S\ref{Algorithmic approach} we give an algorithmic approach (see Procedure~\ref{alg:Kh_Basis}) to solving this problem for a subvariety $X$ of a toric variety $Y$ when each cone in $\trop(X)$ has multiplicity one. 
Procedure~\ref{alg:Kh_Basis} aims at computing a new embedding $X'$ of $X$ in case $\trop(X)$ has some non-prime cones. 
Once we have such an embedding, we explain how to get new toric degenerations of $X$. We apply the procedure to $\Flag_4$. 
Furthermore, we explain how to interpret the procedure in terms of finding valuations with finite  Khovanskii basis on the algebra given by the homogeneous coordinate ring of $X$.

\subsection{Tropicalizing \texorpdfstring{$\Flag_n$}{}}\label{sec:3}

In this section we study the tropicalization of $\Flag_4 $ and $\Flag_5$. We analyze the  Gr\"obner toric degenerations arising from $\trop(\Flag_4)$ and $\trop(\Flag_5)$, and we compute the polytopes associated to their normalizations. In Proposition~\ref{6config} we describe the \textit{tropical configurations} arising from the maximal cones of $\trop(\Flag_4)$. These are configurations of a point on a tropical line in a tropical plane corresponding to the points in the relative interior of a maximal cone.

We are interested in finding distinct polytopes up to unimodular equivalence (recall Definition~\ref{def:unimod equiv}) as they give rise to non-isomorphic toric varieties. 
Often it is only possible to determine combinatorial equivalence (see \cite[\S 2.2]{Cox}). 

\begin{definition}\label{def: comb equiv}
Consider two polytopes $P$ and $Q$ in $\mathbb R^n$. Then $P$ and $Q$ are \emph{combinatorially equivalent}, if there exists a bijection
\[
\{\text{faces of }P\}\leftrightarrow \{\text{faces of }Q\}.
\]
\end{definition}
Note that in particular, when $P$ and $Q$ are unimodularly equivalent (see Definition~\ref{def:unimod equiv}) then $P$ and $Q$ are combinatorially equivalent. Hence, if they are not combinatorially equivalent $P$ and $Q$ yield non-isomorphic toric varieties.
We use this fact throughout the section.
 
\begin{theorem}\label{flag4}
The tropical variety $\trop(\Flag_4)$ is a $6$-dimensional rational fan in $\mathbb{R}^{14}/\mathbb R^3$ with a $3$-dimensional lineality space. It consists of 78 maximal cones, 72 of which are prime. They are organized in five $S_4\rtimes \mathbb Z_2$-orbits, four of which contain  prime cones. The prime cones give rise to four 
non-isomorphic toric degenerations.
\end{theorem}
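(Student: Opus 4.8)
\textbf{Proof strategy for Theorem~\ref{flag4}.}
The plan is to carry out a direct computation in \emph{Macaulay2} and \emph{gfan}, and then reduce the output modulo the symmetry group. First I would compute the ideal $I_4 = \ker(\varphi_4)$ via the map in \eqref{eq:def ideal flag}, obtaining the explicit list of generators displayed in Example~\ref{exp: ideal flag4}; one then verifies (again computationally) that $\mathbb C[p_J]/I_4$ has Krull dimension $9$, so that $\trop(\Flag_4)\subset\mathbb R^{14}/\mathbb R^{3}$ is $6$-dimensional with a $3$-dimensional lineality space (the lineality space being spanned by the rows of the degree matrix $D$, since $I_4$ is $\mathbb Z^3$-homogeneous with respect to the grading coming from the embedding into $\mathbb P^3\times\mathbb P^5\times\mathbb P^3$). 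Then I would run \texttt{gfan\_tropicalbasis} and \texttt{gfan\_tropicaltraverse} to enumerate the maximal cones of $\trop(\Flag_4)$, using the Structure Theorem (\cite[Theorem~3.3.5]{M-S}) as a consistency check that the fan is pure of dimension $6$ and connected in codimension $1$. This yields the count of $78$ maximal cones.

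Next I would classify the maximal cones up to the action of $S_4\rtimes\mathbb Z_2$ described in \S\ref{sec:pre flag} (the symmetric group permuting columns of $M$, the $\mathbb Z_2$ sending a flag to its complement). Concretely, $S_4$ acts on the coordinates via the $S_n$-action on Plücker variables (as in \S\ref{sec:pre_trop}, extended to all $p_J$), and $\mathbb Z_2$ acts by $p_J \mapsto \pm p_{[n]\setminus J}$; one computes the orbit of each maximal cone under the induced action on $\mathbb R^{14}/\mathbb R^3$ and partitions the $78$ cones into orbits. The computation should produce five orbits. For each orbit representative $C$, I would compute $\init_C(I_4)$ and test primality in \emph{Macaulay2} (e.g. via \texttt{isPrime} or by examining the minimal primary decomposition as in Definition~\ref{def:mult cone}); the outcome is that $72$ cones (four of the five orbits) are prime and the remaining orbit of $6$ cones is not. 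Finally, to show the four prime cones give \emph{non-isomorphic} toric degenerations, I would compute for each prime $\init_C(I_4)$ the associated toric variety $V(\init_C(I_4))$ — equivalently, by Lemma~\ref{toric:lemma}, the lattice $L = \ker(W_C)$ and the semigroup it defines — and compare the corresponding polytopes (the Newton–Okounkov polytopes attached to the Kaveh–Manon valuations, cf. Theorem~\ref{thm:blabla}). It suffices to check that these four polytopes are pairwise non-isomorphic, and in fact pairwise not \emph{combinatorially} equivalent in the sense of Definition~\ref{def: comb equiv} (comparing $f$-vectors, numbers of vertices/facets, etc.), which by \eqref{eq: unimod for toric} forces the toric varieties to be non-isomorphic.

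The main obstacle is computational rather than conceptual: the Gröbner fan traversal of $I_4$ is large, and primality testing of the $\init_C(I_4)$ over $\mathbb Q$ (or a suitable finite field, with a lift-and-verify step) can be delicate — one must be careful that ``prime'' is tested over $\mathbb C$, i.e. that the ideal is geometrically prime, not merely prime over $\mathbb Q$. The reduction modulo $S_4\rtimes\mathbb Z_2$ also requires care: one must verify that the $\mathbb Z_2$-action genuinely descends to $\trop(\Flag_4)$ compatibly with the lineality space, and that cones in the same orbit really do yield isomorphic initial ideals (which follows from the fact that the group acts by monomial ring automorphisms sending $\init_C(I_4)$ to $\init_{\sigma(C)}(I_4)$, exactly as in the $\Gr(2,\mathbb C^n)$ case). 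The comparison of the four Newton–Okounkov polytopes is then a finite, if slightly tedious, check of combinatorial invariants.
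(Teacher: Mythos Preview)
Your proposal is correct and follows essentially the same computational approach as the paper: compute $I_4$, tropicalize via \emph{gfan}, organize the $78$ maximal cones into $S_4\rtimes\mathbb Z_2$-orbits, test primality of the initial ideals, and distinguish the four prime orbits by showing their associated polytopes are pairwise combinatorially inequivalent. The paper's only refinement worth noting is its concrete resolution of the $\mathbb Q$-versus-$\mathbb C$ primality issue you flag: rather than a lift-and-verify step, it checks directly that $\init_C(I_4)=I(W_C)$ (the lattice ideal of Lemma~\ref{toric:lemma}), which is automatically geometrically prime, and it constructs the polytope explicitly as the degree-$(1,1,1)$ slice of the cone $\mathbb Z_{\ge 0}\mathcal B_C$ rather than invoking the Kaveh--Manon Newton--Okounkov body.
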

\begin{proof}
The theorem is proved by explicit computations. We developed a   $\emph{Macaulay2}$ package called $\mathtt{ToricDegenerations}$ containing all the functions we use. The package and the  data needed for this proof are available at 
\[
\mathtt{https://github.com/ToricDegenerations}.
\]
The flag variety $\Flag_4$ is a subvariety of $\Gr(1,4)\times \Gr(2,4)\times \Gr(3,4)$, which makes it using the Plücker embedding of Grassmannians a $6$-dimensional subvarity of
$\mathbb{P}^3\times \mathbb{P}^5 \times \mathbb{P}^3$.
The ideal $I_4$ is the kernel of the map $\varphi_4$ defined in \eqref{eq:def ideal flag}. 
It is contained in the total coordinate ring $R$ of $\mathbb{P}^3\times \mathbb{P}^5 \times \mathbb{P}^3$, a $\mathbb C$-polynomial ring in the Plücker variables
\[
p_1,p_2,p_3,p_4,p_{12},p_{13},p_{14},p_{23},p_{24},p_{34},p_{123},p_{124},p_{134},p_{234}.
\]
The (multi-)grading on $R$ is given by the matrix \begin{equation}\label{degree}
\setcounter{MaxMatrixCols}{14}
D:=\bigg(\begin{smallmatrix}
 1 & 1 & 1 & 1 & 0 & 0 & 0 & 0 & 0 & 0 & 0 & 0 & 0 & 0 \\
 0 & 0& 0 &0 &1 &1&1&1&1&1&0&0&0&0\\
 0&0&0&0&0&0&0&0&0&0&1&1&1&1
\end{smallmatrix}\bigg).
\end{equation}

The explicit form of $I_4$ can be found in \cite[page 276]{MS05}. 
As we have seen in \S\ref{sec:pre_trop} the tropicalization of $\Flag_4$ is contained in $\mathbb R^{14}/H$, where $H$ is the subspace of $\mathbb R^{14}$ spanned by the rows of $D$.

We use the $\emph{Macaulay2}$ \cite{M2} interface to $\emph{Gfan}$ \cite{Gfan} to compute $\trop(\Flag_4)$. The given input is the ideal $I_4$ and the $S_4\rtimes \mathbb Z_2$-action (see \cite[\S3.1.1]{GfanM}). The output is a subfan $F$ of the Gr\"obner fan of dimension  $9$. We quotient it by $H$ to get $\trop(\Flag_4)$ as a $6$-dimensional fan contained in $\mathbb R^{14}/H\cong \mathbb R^{14}/\mathbb {R}^3$. 

Firstly, the function \texttt{computeWeightVectors} computes a list of vectors.
There is one for every maximal cone of $\trop(\Flag_4)$ and it is contained in the relative interior of the corresponding cone. 
Then  \texttt{groebnerToricDegenerations} computes all the initial ideals and  checks if they are binomial and prime over $\mathbb Q$. 
These are organized in a hash table, which is the output of the function.

All 78 initial ideals are binomial and all maximal cones have multiplicity one. 
In order to check primeness over $\mathbb C$, we consider for every cone $C$ with $\init_C(I_4)$ prime over $\mathbb Q$ the ideal $I(W_C)$ as defined in \eqref{eq: def I(W_C)}.
We verify $\init_C(I_4)=I(W_C)$ computationally using \emph{Macaulay2}\cite{M2}.
%This can be done by computing the degrees of $V(\init_{C}(I_4))$ and $V(I(W_C))$ as subvarieties of $\mathbb P^{13}$. 
%If these are equal, then there are no non-toric ideals in the primary decomposition of $\init_{C}(I_4)$. 
%Note that the degree of $V(I(W_C))$ equals the degree of $V(I_L)$, where $L$ and $I_L$ can be computed from $W_C$ as in the proof of Lemma~\ref{toric:lemma}. 

We consider the orbits of the $S_4\ltimes \mathbb Z_2$-action on the set of initial ideals. These correspond to the orbits of maximal cones of $F$ and hence of $\trop(\Flag_4)$. There is one orbit of non-prime initial ideals and four orbits of prime initial ideals. 
The varieties corresponding to initial ideals contained in the same orbit are isomorphic. 
Therefore, for each orbit we choose a representative of the form $\init_{C}(I_4)=I(W_C)$ for some cone $C$ in the orbit. 

We now compute for each of the four prime orbits, the polytope of the normalization of the associated toric varieties. We use the \emph{Macaulay2}-package \emph{Polyhedra}~\cite{Poly2} for the following computations. 
 
The lattice $M$ associated to $S/I(W_C)$ is generated over $\ZZ$ by the columns of $W_C$. 
To use \emph{Polyhedra} we want to have a lattice with index $1$ in $\mathbb Z^{9}$.
If the index of $M$ in $\mathbb Z^9$ is different from $1$, we consider $M$ as the lattice generated by the columns of the matrix  $(\ker( (\ker( W_C))^t)^{t}$.
Here, for a matrix $A$ we consider $\ker(A)$ to be the matrix whose columns minimally generate the kernel of the map $\mathbb Z^{14}\to\mathbb Z^9$ defined by $A$.
We denote the set of generators of $M$ by $\mathcal B_C=\{{\bf b}_1,\ldots ,{\bf b}_{14}\}$ so that 
$M=\mathbb Z\mathcal B_C$.

The toric variety $\mathbb P^3\times \mathbb P^5 \times \mathbb P^3$ can be seen as $\Proj(\oplus_{\ell} R_{\ell(1,1,1)})$ and $I(W_C)$
as an ideal in $\oplus_{\ell} R_{\ell(1,1,1)}$ (see \cite[Chapter 10]{MS05}). The associated toric variety is $\Proj(\oplus_{\ell} \mathbb C[\mathbb Z_{\ge 0} \mathcal B_C]_{\ell(1,1,1)})$.
The polytope $P$ of the normalization is given as the convex hull of those lattice points in $\mathbb Z_{\ge 0} \mathcal B_C$ corresponding to degree $(1,1,1)$-monomials in $\mathbb C[\mathbb Z_{\ge 0} \mathcal B_C]$.

These can be found in the following way. We order the rows of the matrix $({\bf b}_1,\ldots,{\bf b}_{14})$ associated to $\mathcal B_C$ so that the first three rows give the matrix $D$ from \eqref{degree}. Now the matrix $({\bf b}_1,\ldots,{\bf b}_{14})$ represents a map $\ZZ^{14}\to\ZZ^3\oplus\ZZ^6$, where $\ZZ^3\oplus\ZZ^6$ is the lattice $M$ and the $\ZZ^3$ part gives the degree of the monomials associated to each lattice point on $M$.
The lattice points, whose convex hull give the polytope $P$, are those with the first three coordinates being $1$. 
In other words, we have obtained $P$ by applying the reverse procedure of constructing a toric variety from a polytope (see \cite[\S 2.1-\S 2.2]{CLS11}). 
Note that the difference from the procedure given in \cite[\S 2.1-\S 2.2]{CLS11} is the $\mathbb Z^3$-grading and because of that we do not consider  the convex hull of $\mathcal B_C$, but the intersection of $\mathbb Z_{\ge 0} \mathcal B_C$ with these hyperplanes.

In Table~\ref{fig:num} there are the numerical invariants of the initial ideals and their corresponding  polytopes.
Using \emph{polymake} \cite{GJ00} we first obtain that there is no combinatorial equivalence between each pair of polytopes. 
This means that there is no unimodular equivalence between the corresponding normal fans, 
hence the normalization of the toric varieties associated to these toric degenerations are not isomorphic. 
This implies that we obtain four non-isomorphic  toric degenerations.  
\end{proof}

\begin{table}%\label{fig:num}
\begin{center}\scalebox{.9}{
\begin{tabular}{|l|l|l|l|l|l|l|}
 \hline
 Orbit&Size&Cohen-Macaulay&Prime&$\#$Generators&F-vector of associated polytope \\
 \hline
1&  24 &  Yes& Yes &10&(42, 141, 202, 153, 63, 13)\\
2&  12 &  Yes& Yes &10&(40, 132, 186, 139, 57, 12)\\
3&  12 &  Yes & Yes &10&(42, 141, 202, 153, 63, 13)\\
4&    24 & Yes & Yes &10&(43, 146, 212, 163, 68, 14)\\
5&    6 & Yes & No &10& Not applicable\\
\hline
\end{tabular}}
\end{center}
\caption{The tropical variety $\trop(\Flag_4)$ has 78 maximal cones organized in five $S_4\rtimes \mathbb Z_2$-orbits. The algebraic invariants of the initial ideals associated to these cones and the F-vectors of their associated polytopes are listed here.}\label{fig:num} 
\end{table}

\begin{proposition}\label{6config}
There are six tropical configurations up to symmetry (depicted in Figure~\ref{figure:1}) arising from the maximal cones of $\trop(\Flag_4)$. They are further organized in five $S_4\rtimes \mathbb Z_2$-orbits.
\end{proposition}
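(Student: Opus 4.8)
The plan is to extract the geometric content of each maximal cone of $\trop(\Flag_4)$ directly from the computation in Theorem~\ref{flag4}, and then to read off the configuration it encodes. Recall that a point in the relative interior of a maximal cone $C$ of $\trop(\Flag_4) \subset \mathbb R^{14}/\mathbb R^3$ is a weight vector $\mathbf w$ whose coordinates are indexed by the Pl\"ucker variables $p_1,\dots,p_4,p_{12},\dots,p_{34},p_{123},\dots,p_{234}$. Splitting these coordinates according to the three Grassmannian factors, the $\Gr(1,4)$-part of $\mathbf w$ is (up to the lineality directions) a tropicalized point of $\mathbb P^3$, the $\Gr(2,4)$-part is a point of $\trop(\Gr(2,\mathbb C^4))$ — i.e.\ a tropical line in tropical $3$-space, by Speyer--Sturmfels \cite{SS04} — and the $\Gr(3,4)$-part is a tropicalized hyperplane, equivalently a tropical plane in tropical $3$-space. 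The incidence relations $V_1 \subset V_2 \subset V_3$, tropicalized via the initial ideal $\init_C(I_4)$, force the tropical point to lie on the tropical line and the tropical line to lie in the tropical plane. So each maximal cone gives a \emph{tropical configuration}: a point on a tropical line inside a tropical plane.

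First I would use the list of $78$ weight vectors produced by \texttt{computeWeightVectors} in the proof of Theorem~\ref{flag4}: for each representative weight vector $\mathbf w$, restrict to the $\binom{[4]}{2}$-coordinates to obtain the labelled trivalent tree $T$ (equivalently, the combinatorial type of the tropical line in $\mathbb{TP}^3$) via Definition~\ref{def:treedeg}; restrict to the $\binom{[4]}{3}$-coordinates to read off the combinatorial type of the tropical plane; and restrict to the $\binom{[4]}{1}$-coordinates to locate the tropical point. The possible tropical lines in $\mathbb{TP}^3$ are well understood — a tree with four leaves is either a generic ``caterpillar'' with one bounded edge (three combinatorial types depending on the split) or the degenerate tree with no bounded edge — and the tropical planes in $\mathbb{TP}^3$ likewise come in finitely many types. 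Combining these finitely many options with the position of the point relative to the vertices/edges of the line, I would enumerate, by direct inspection of the $78$ cones, which configurations actually occur; the claim is that exactly six distinct ones appear. Each configuration is pictured in Figure~\ref{figure:1}.

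Next I would organize the six configurations under the $S_4 \rtimes \mathbb Z_2$-action. The $S_4$-part permutes the leaf labels $1,2,3,4$ of the tree (and acts compatibly on the Pl\"ucker variables of all three factors), while the $\mathbb Z_2$-part is the duality sending a flag to its orthogonal complement, which interchanges the $\Gr(1,4)$ and $\Gr(3,4)$ data and reverses the $\Gr(2,4)$ data; this is exactly the action already implemented in \texttt{Gfan} for the computation of Theorem~\ref{flag4}. Since Theorem~\ref{flag4} already establishes that the $78$ maximal cones fall into five $S_4 \rtimes \mathbb Z_2$-orbits, and since the tropical configuration is an invariant of the cone that is constant on orbits, the six configurations must be distributed among these five orbits — so one orbit carries two distinct configurations (necessarily the non-prime orbit, of size $6$, or one of the prime orbits whose configuration is not rigid; a quick look at which orbit contains two configuration-types settles this). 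I would simply record, for each of the five orbit representatives, which configuration(s) its member cones realize, thereby proving both that there are six configurations up to symmetry and that they are organized into the five orbits.

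The main obstacle is purely bookkeeping rather than conceptual: one must correctly decode each of the $78$ weight vectors into the triple (tropical point, tropical line, tropical plane) and verify that two cones in the same orbit can nonetheless give combinatorially non-isomorphic pictures only in the one exceptional orbit — equivalently, one must check that the ``configuration type'' is indeed an orbit invariant everywhere except that one orbit, and see why that orbit splits. The subtle point is that the configuration type sees only the combinatorial incidence data, which can be coarser than the full cone, so two cones in one orbit may genuinely yield the same picture while the cones themselves are related by a symmetry; conversely the splitting in the exceptional orbit happens because the position of the tropical point degenerates (lands on a vertex versus in the interior of an edge) in a way that the symmetry group does not see. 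Making this precise just requires carefully reading off the point's location from the $\binom{[4]}{1}$-coordinates of each weight vector, which is a finite check carried out with the same \emph{Macaulay2} package used for Theorem~\ref{flag4}.
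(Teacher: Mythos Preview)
Your overall strategy matches the paper's: project each weight vector onto its three Grassmannian factors, read off the tropical point, line, and plane, and compare against the known $S_4\rtimes\mathbb Z_2$-orbit decomposition of the maximal cones. The paper does exactly this (working with one representative per orbit rather than all $78$ cones) and notes at the outset that there is only \emph{one} combinatorial type of tropical plane in $\mathbb{TP}^3$, so all the variation comes from the line type and the position of the point relative to the line and to the vertex/rays of the plane.

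There is, however, a genuine confusion in your last two paragraphs. You assert that the configuration type is ``an invariant of the cone that is constant on orbits'' and then immediately conclude that one orbit must carry two distinct configurations. These two claims are incompatible: if the configuration were an $S_4\rtimes\mathbb Z_2$-orbit invariant there could be at most five types, not six. What actually happens is that the configuration type is invariant under the $S_4$-action (relabelling), but \emph{not} under the $\mathbb Z_2$-duality, because that involution swaps the $\Gr(1,4)$- and $\Gr(3,4)$-data (point $\leftrightarrow$ plane) and can send one picture to a combinatorially different one. The six configurations in the statement are thus counted as unlabelled pictures, and the extra $\mathbb Z_2$ collapses two of them into a single orbit.

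Consequently your guess about which orbit carries two configurations, and why, is wrong. It is not the non-prime orbit of size $6$; it is one of the prime orbits of size $24$ (Orbit~1 in the paper's Table~\ref{fig:num}), and the two configurations in it are exchanged precisely by the $\mathbb Z_2$-action. Your proposed mechanism---that the point ``degenerates onto a vertex in a way the symmetry group does not see''---is not what is going on; the splitting is entirely an artifact of the duality swapping the roles of point and plane. Once you correct this, the argument goes through along the lines you describe.
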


\begin{proof}
The tropical variety $\trop(\Flag_4)$ is contained in 
\[
\trop(\Gr(1,\mathbb C^4))\times\trop(\Gr(2,\mathbb C^4))\times\trop(\Gr(3,\mathbb C^4)).
\]
Each tropical Grassmannian parametrizes tropicalized linear spaces (see \cite[Theorem 4.3.17]{M-S}). This implies that  every point $p$ in $\trop(\Flag_4)$ corresponds to a  chain of tropical linear subspaces given by a point on a tropical line contained in a tropical plane. All  tropical chains are \textit{realizable}, meaning that they are the tropicalization of the classical chains of linear spaces of $\Bbbk^4$ corresponding to a point $q$ in $\Flag_4(\Bbbk)$ such that $\val(q)=p$, where $\Bbbk=\mathbb C\!\{\!\{t\!\}\!\}$ is the field of Piusseux series and $\val$ is the natural valuation on it (see \cite[Part (3) of Theorem 3.2.3]{M-S}).

In this case, there is only one combinatorial type for the tropical plane and four possible types  for the lines up to symmetry (see \cite[Example 4.4.9]{M-S}). The plane consists of six $2$-dimensional cones positively spanned by all possible pairs of vectors  $(1,0,0)^{t},(0,1,0)^{t},(0,0,1)^{t}$, and $(-1,-1,-1)^{t}$. The combinatorial types of the tropical lines are shown in Figure \ref{Comb types}. The leaves of these graphs represent the rays of the tropical line labeled $1$ up to $4$ corresponding to the positive hull of each of the vectors 
$(1,0,0)^{t},(0,1,0)^{t},(0,0,1)^{t}$, and $(-1,-1,-1)^{t}$.

\begin{figure}
    \begin{center}
    \begin{tikzpicture}[scale=.4]
\draw (-2,0) -- (-1,1) -- (1,1) -- (2,0);
\draw (-2,2) -- (-1,1);
\draw (1,1) -- (2,2);
\node at (-2.2,0) {2};
\node at (-2.2,2) {1};
\node at (2.2,0) {4};
\node at (2.2,2) {3};

\draw (3.5,0) -- (4.5,1) -- (6.5,1) -- (7.5,0);
\draw (3.5,2) -- (4.5,1);
\draw (6.5,1) -- (7.5,2);
\node at (3.3,0) {3};
\node at (3.3,2) {1};
\node at (7.7,0) {4};
\node at (7.7,2) {2};

\draw (9,0) -- (10,1) -- (12,1) -- (13,0);
\draw (9,2) -- (10,1);
\draw (12,1) -- (13,2);
\node at (8.7,0) {4};
\node at (8.7,2) {1};
\node at (13.2,0) {3};
\node at (13.2,2) {2};

\draw (15.3,0) -- (16.3,1);
\draw (15.3,2) -- (16.3,1);
\draw (16.3,1) -- (17.3,2);
\draw (16.3,1) -- (17.3,0);
\node at (15,0) {4};
\node at (15,2) {1};
\node at (17.6,0) {3};
\node at (17.6,2) {2};

\end{tikzpicture}
\caption{Combinatorial types of tropical lines in $\mathbb R^4/ \mathbb R\bf{1}$.}\label{Comb types}%TYPO CORRECTED R^4/R NOT R^3
\end{center}
\end{figure}
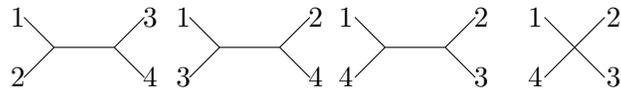

Consider the $S_4\rtimes \mathbb Z_2$-orbits of maximal cones of $\trop(\Flag_4)$. If we compute the chain of tropical linear spaces corresponding to an element in each orbit, we get the configurations in  Figure~\ref{figure:1}. Note that we do not include the labeling since up to symmetry we can get all possibilities.
The point on the line is the black dot. In case the intersection of the line with the rays of the plane is the vertex of the plane then we denote this with a  hollow dot. A vertex of the line is colored in gray if it   lies on a ray of the plane.
For example in orbit 2,  label the rays $1$ to $4$ anti-clockwise starting from the top left edge. We have rays $1$ and $2$ 
in the $2$-dimensional positive hull of $(1,0,0)^{t}$ and $(0,1,0)^{t}$. The vector associated to the internal edge is $(1,1,0)^{t}$. 
The gray point is the origin and the black point has coordinates $(a,1,0)^{t}$ for $a>1$.

Orbits 1 and 4 in Figure~\ref{figure:1} have size $24$, orbits 2 and 3 have  size $12$ and orbit 5 has size $6$. Note that orbit 5 corresponds to non-prime initial ideals. Orbit 1 contains two combinatorial types of tropical configurations and one is sent to the other by the $\mathbb Z_2$-action on the tropical variety.
The orbits $2$ and $3$ differ from the fact that for each combinatorial type of line the gray dot can lie on one of the four rays of the tropical plane. These possibilities are grouped in two pairs, one is in orbit $2$ and the other in orbit $3$. 
\end{proof}

\begin{figure}
    \begin{center}
    \begin{tikzpicture}[scale=.7]
\begin{scope}[xshift=-.8cm]
% CLASS 1
\node at (-1.5,1) { Orbit 1};
\draw (0,0) -- (1,1) -- (3,1) -- (4,0);
\draw (0,2) -- (1,1);
\draw (3,1) -- (4,2);

\draw [fill] (3.5,1.5) circle [radius=0.15];
\draw [fill, white] (1.5,1) circle [radius=0.15];
    \draw [gray, ultra thick] (1.5,1) circle [radius=0.
    15];
   
\begin{scope}[xshift=5.5cm]
\draw (0,0) -- (1,1) -- (3,1) -- (4,0);
\draw (0,2) -- (1,1);
\draw (3,1) -- (4,2);

\draw [fill, gray] (3,1) circle [radius=0.15];
\draw [fill] (1.5,1) circle [radius=0.15];    
\end{scope}   
\end{scope}
\begin{scope}[yshift=-3cm, xshift=-.75cm]
% CLASS 2
\node at (-1.5,1) {Orbit 2};
\draw (0,0) -- (1,1) -- (3,1) -- (4,0);
\draw (0,2) -- (1,1);
\draw (3,1) -- (4,2);

\draw [fill] (.5,1.5) circle [radius=0.15];
\draw [fill, gray] (3,1) circle [radius=0.15];

\begin{scope}[xshift=8cm]
% CLASS 3
\node at (-1.5,1) { Orbit 3};
\draw (0,0) -- (1,1) -- (3,1) -- (4,0);
\draw (0,2) -- (1,1);
\draw (3,1) -- (4,2);

\draw [fill] (3.5,1.5) circle [radius=0.15];
\draw [fill, gray] (1,1) circle [radius=0.15];
\end{scope} 

\begin{scope}[yshift=-3cm]
% CLASS 4
\node at (-1.5,1) {Orbit 4};
\draw (0,0) -- (1,1) -- (3,1) -- (4,0);
\draw (0,2) -- (1,1);
\draw (3,1) -- (4,2);

\draw [fill] (3.5,1.5) circle [radius=0.15];
\draw [fill, gray] (3,1) circle [radius=0.15];

\begin{scope}[xshift=8cm]
% CLASS 5
\node at (-1.5,1) {Orbit 5};
\node at (-1.5,0.5) {(non-prime)};
\draw (0,0) -- (1,1) -- (3,1) -- (4,0);
\draw (0,2) -- (1,1);
\draw (3,1) -- (4,2);

\draw [fill] (2.5,1) circle [radius=0.15];
\draw [fill, white] (1.5,1) circle [radius=0.15];
    \draw [gray, ultra thick] (1.5,1) circle [radius=0.15];    
\end{scope}
\end{scope}   
\end{scope}
\begin{scope}[yshift=-8.5cm]
\draw [fill] (3,0) circle [radius=0.15];
\draw [fill, gray] (3,.5) circle [radius=0.15];
\draw [fill, white] (3,1) circle [radius=0.15];
    \draw [gray, ultra thick] (3,1) circle [radius=0.15]; 
\node at (4.65,0) {the point};  
\node at (7,.5) { a point on a ray of the plane};
\node at (6.3,1) {the vertex of the plane};
\end{scope}
\end{tikzpicture}
    \end{center}
    \caption{The list of all tropical configurations up to symmetry that arise in $\Flag_4$. The hollow and the full gray dot denote whether that vertex of the line is the vertex of the plane  or it is contained in a ray of the plane. The black dot is the position of the point on the line.}
   \label{figure:1} 
\end{figure}

\begin{theorem}\label{flag5}
The tropical variety $\trop(\Flag_5)$ is a $10$-dimensional fan in $\mathbb{R}^{30}/\mathbb {R}^4$ with a $4$-dimensional lineality space. It consists of $69780$ maximal cones which are grouped in $536$ $S_5\rtimes \mathbb Z_2$-orbits.
These give rise to $531$ orbits of binomial initial ideals and among these $180$ are prime. They correspond to $180$ non-isomorphic toric degenerations.  
\end{theorem}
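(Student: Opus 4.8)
The plan is to proceed exactly as in the proof of Theorem~\ref{flag4}, scaling the explicit computation up with the \texttt{ToricDegenerations} package together with \emph{Gfan}~\cite{Gfan}, \emph{Macaulay2}~\cite{M2}, the package \emph{Polyhedra}~\cite{Poly2} and \emph{polymake}~\cite{GJ00}. First I would realize $\Flag_5$, via the Plücker embeddings of the Grassmannians $\Gr(i,\mathbb C^5)$ for $i=1,\dots,4$, as a $10$-dimensional subvariety of $\mathbb P^4\times\mathbb P^9\times\mathbb P^9\times\mathbb P^4$, whose homogeneous coordinate ring is the quotient of the total coordinate ring $S$ in the $30$ Plücker variables $p_J$, $\varnothing\neq J\subsetneq[5]$, by the ideal $I_5=\ker(\varphi_5)$ from \eqref{eq:def ideal flag}. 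The $\mathbb Z^4$-grading on $S$ is given by the analogue of the matrix $D$ in \eqref{degree}, so that $\trop(\Flag_5)\subset\mathbb R^{30}/H$ with $H$ the $4$-dimensional space spanned by the rows of $D$; since the Krull dimension of $S/I_5$ is $10+4=14$, the lifted subfan $F$ of the Gröbner fan is $14$-dimensional and $\trop(\Flag_5)=F/H$ is $10$-dimensional with $4$-dimensional lineality space $H$.

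Next I would feed $I_5$ and the $S_5\rtimes\mathbb Z_2$-action to \emph{Gfan} to compute $\trop(\Flag_5)$ as a subfan of the Gröbner fan up to symmetry; this produces the $536$ orbits of maximal cones (totalling $69780$ cones), and \texttt{computeWeightVectors} returns one weight vector in the relative interior of a representative cone per orbit. For each orbit representative $C$, the function \texttt{groebnerToricDegenerations} computes $\init_C(I_5)$ and tests whether it is binomial and prime over $\mathbb Q$; this yields the $531$ binomial orbits and, by Lemma~\ref{lem:multiplicity}, forces multiplicity one on every cone whose initial ideal is toric. To upgrade primeness from $\mathbb Q$ to $\mathbb C$, for each such $C$ I would build the matrix $W_C$ from a set of rays spanning the corresponding cone in $F$, form the toric ideal $I(W_C)$ of \eqref{eq: def I(W_C)}, and verify computationally that $\init_C(I_5)=I(W_C)$, exactly as in the $\Flag_4$ case; this isolates the $180$ prime orbits. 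Orbits that are binomial but not prime, or not even binomial, are precisely the phenomena described in Remark~\ref{rem:from lemma1}: they may have multiplicity one while carrying embedded or monomial-containing associated primes, so they are recorded but not counted among the toric degenerations.

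Finally, for each of the $180$ prime orbits I would compute the polytope of the normalization of the associated toric variety by the reverse toric-variety construction used in the proof of Theorem~\ref{flag4}: choose a $\mathbb Z$-basis $\mathcal B_C$ of the lattice $M=\mathbb Z W_C$ (replacing $W_C$ by $(\ker((\ker W_C)^t))^t$ if the index of $M$ in $\mathbb Z^{14}$ is not one), arrange the first four rows to equal $D$, and take the convex hull of those lattice points of $\mathbb Z_{\ge 0}\mathcal B_C$ whose first four coordinates equal $(1,1,1,1)$. Then I would use \emph{polymake} to check that no two of these $180$ polytopes are combinatorially equivalent in the sense of Definition~\ref{def: comb equiv}; this prevents any unimodular equivalence of their normal fans, so by \eqref{eq: unimod for toric} the corresponding normalizations of toric varieties are pairwise non-isomorphic, and since initial ideals in one $S_5\rtimes\mathbb Z_2$-orbit yield isomorphic toric varieties, we obtain exactly $180$ non-isomorphic toric degenerations of $\Flag_5$.

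The main obstacle is the sheer computational scale: $\trop(\Flag_5)$ has $69780$ maximal cones, so the symmetry reduction to $536$ orbits is indispensable and must be carried out carefully inside \emph{Gfan}; computing and certifying $531$ initial ideals (binomiality, the test $\init_C(I_5)=I(W_C)$ over $\mathbb C$, and the Cohen--Macaulay checks) is heavy Gröbner-basis work; and the concluding step --- the pairwise combinatorial classification of $180$ polytopes living in $\mathbb R^{14}$ --- is the delicate part, since it is what makes the number $180$ an \emph{exact} count of non-isomorphic toric degenerations rather than merely a bound. A secondary subtlety, again by Remark~\ref{rem:from lemma1}, is ensuring the non-prime binomial cones and the genuinely non-binomial cones are separated correctly, because multiplicity one by itself does not detect a toric initial ideal.
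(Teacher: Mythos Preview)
Your proposal is correct and follows essentially the same approach as the paper's proof, which explicitly says it proceeds ``similar to the proof of Theorem~\ref{flag4}'' with the symmetry reduction being indispensable. The one substantive difference is that the paper does \emph{not} upgrade primeness from $\mathbb Q$ to $\mathbb C$ for $\Flag_5$: it states that they ``only verify primeness of the initial ideals over $\mathbb Q$ using the \emph{primdec} library in \emph{Singular},'' omitting the $\init_C(I_5)=I(W_C)$ verification you propose. So your plan is slightly stronger than what the paper actually carries out; whether that extra step is computationally feasible at this scale is not addressed in the paper.
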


\begin{proof}
The flag variety $\Flag_5$ is a $10$-dimensional variety defined by 66
quadratic polynomials in the total coordinate ring of $\mathbb P^4\times \mathbb{P}^{9}\times \mathbb{P}^{9}\times \mathbb P^4$. These are of the form $\sum_{j\in J\backslash I}(-1)^{l_j}p_{I\cup\{j\}}p_{J\backslash\{j\}}$, where $J,I\subset \{1,\ldots,5\}$ and $l_j=\#\{k\in J\mid j<k\}+\#\{i\in I\mid i<j\}$. 

The proof is similar to the proof of Theorem~\ref{flag4}.
The only difference is that  the action of $S_5\rtimes \mathbb Z_2$  on $\Flag_5$ is crucial for the computations. In fact, without exploiting the symmetries the calculations to get the tropicalization would not terminate. Moreover, we only verify primeness  of the initial ideals over $\mathbb Q$ using the \emph{primdec} library \cite{Primedec} in \emph{Singular}
\cite{DGPS}. We compute the polytopes associated to the normalization of the $180$  toric varieties in the same way as Theorem~\ref{flag4}, only changing the matrix of the grading, which is now given by
\begin{equation}\label{degree2}
\setcounter{MaxMatrixCols}{30}
D:=\bigg(\begin{smallmatrix}
 1 & 1 & 1 & 1 &1& 0 & 0 & 0 & 0 & 0 & 0 & 0 & 0 & 0 & 0 &0 & 0 & 0 & 0 & 0 & 0 & 0 & 0 & 0 & 0 &0&0&0&0&0\\
 0 & 0& 0 &0 &0&1 &1&1&1&1&1&1&1&1&1&0 & 0 & 0 & 0 & 0 & 0 & 0 & 0 & 0 & 0&0&0&0&0&0\\
 0 & 0& 0 &0 &0 &0 & 0 & 0 & 0 & 0 & 0 & 0 & 0 & 0 & 0 &1 &1&1&1&1&1&1&1&1&1&0&0&0&0&0\\
  0&0 & 0& 0 &0 &0 &0 & 0 & 0 & 0 & 0 & 0 & 0 & 0 & 0 & 0 & 0 & 0 & 0 & 0 & 0 & 0 & 0 & 0 & 0 &1 & 1 & 1 & 1 &1
\end{smallmatrix}\bigg).
\end{equation}

Since there are no combinatorial equivalences among the normal fans to these polytopes, we deduce that the obtained toric degenerations are pairwise non-isomorphic. More information on the non-prime initial ideals is available in Table~\ref{nonprime} in the appendix.
\end{proof}

\subsection{String\&FFLV polytopes and the tropical flag variety}\label{sec:string&FFLV}
This section provides an introduction to FFLV polytope and explicit computations of the FFLV polytope and the string polytopes for $\Flag_4$ and $\Flag_5$. 
FFLV stands for Feigin, Fourier, and Littelmann, 
who defined this polytope in \cite{FFL11}, and Vinberg who conjectured its existence in a special case. 
We have already seen how string polytopes can be used to construct toric degenerations of the flag variety following Caldero \cite{Cal02}. 
The same is true for the FFLV polytope.
Recall the definition of string polytopes from \S\ref{subsec:string} and the parametrization given in \S\ref{sec:pa and gp}.    

Consider the weight $\rho\in\Lambda^{++}$. 
The string polytope $\mathcal Q_{\underline w_0}(\rho)$ is in general \emph{not} the Minkowski sum of string polytopes $\mathcal Q_{\underline w_0}(\omega_1),\dots,\mathcal Q_{\underline w_0}(\omega_{n-1})$, which motivates the following definition. 

\begin{definition}\label{def:mp}
A string cone has the \emph{weak Minkowski property} (MP), if for every lattice point $p\in \mathcal Q_{\underline w_0} (\rho)$ there exist lattice points $p_{\omega_i}\in \mathcal Q_{\underline w_0}(\omega_i)$ such that
\[
p=p_{\omega_1}+p_{\omega_2}+\dots + p_{\omega_{n-1}}.
\]
\end{definition}

\begin{remark}
Note that the (non-weak) Minkowski property would require the above condition on lattice points to be true for arbitrary weights $\lambda\in \Lambda^{++}$. Further, note that if $\mathcal Q_{\underline w_0}(\rho)$ is the Minkowski sum of the fundamental string polytopes $\mathcal Q_{\underline w_0}(\omega_i)$, then MP is satisfied.
\end{remark}

\begin{center}
\begin{table}
\scalebox{.75}{
\begin{tabular}{| l|  l|  l|  l|  l|  l|}

\hline
$\underline w_0$ & Normal & MP & Weight vector $-{\bf w}_{\underline w_0}$& Prime  & Tropical cone \\
\hline

\begin{tabular}{ l}
String 1: \\
121321 \\
212321 \\
232123 \\
323123
\end{tabular} &
\begin{tabular}{ l}
 \\
yes \\
yes \\
yes \\
yes
\end{tabular}
&
\begin{tabular}{ l}
 \\
yes \\
yes \\
yes \\
yes
\end{tabular}
&
\begin{tabular}{ l}
 \\
$(0, 32, 24, 7, 0, 16, 6, 48, 38, 30, 0, 4, 20, 52)$ \\
$(0, 16, 48, 7, 0, 32, 6, 24, 22, 54, 0, 4, 36, 28)$ \\
$(0, 4, 36, 28, 0, 32, 24, 6, 22, 54, 0, 16, 48, 7)$ \\
$(0, 4, 20, 52, 0, 16, 48, 6, 38, 30, 0, 32, 24, 7)$
\end{tabular}
&
\begin{tabular}{ l}
 \\
yes \\
yes \\
yes \\
yes
\end{tabular} &
\begin{tabular}{ l}
 \\
rays $\{10, 18, 19\}$, cone 71 \\
rays $\{6, 10, 19\}$, cone 44 \\
rays $\{0, 3, 6\}$, cone 3 \\
rays $\{0, 1, 3\}$, cone 1
\end{tabular} \\
\hline

\begin{tabular}{ l}
String 2: \\
123212 \\
321232
\end{tabular} &
\begin{tabular}{ l}
  \\
yes \\
yes
\end{tabular}&
\begin{tabular}{ l}
  \\
yes \\
yes
\end{tabular}&
\begin{tabular}{ l}
  \\
$(0, 32, 18, 14, 0, 16, 12, 48, 44, 27, 0, 8, 24, 56)$ \\
$(0, 8, 24, 56, 0, 16, 48, 12, 44, 27, 0, 32, 18, 14)$
\end{tabular}
&
\begin{tabular}{ l}
  \\
yes \\
yes
\end{tabular}
&
\begin{tabular}{ l}
  \\
rays $\{2, 10, 18\}$, cone 36 \\
rays $\{0, 1, 2\}$, cone 0
\end{tabular} \\
\hline

\begin{tabular}{ l}
String 3: \\
213231
\end{tabular}
&\begin{tabular}{ l}
  \\
yes
\end{tabular}
&
\begin{tabular}{ l}
  \\
yes
\end{tabular}
&
\begin{tabular}{ l}
  \\
$(0, 16, 48, 13, 0, 32, 12, 20, 28, 60, 0, 8, 40, 22)$
\end{tabular}&
\begin{tabular}{ l}
  \\
yes
\end{tabular}
&
\begin{tabular}{ l}
  \\
rays $\{3, 6, 19\}$, cone 24
\end{tabular}\\
\hline

\begin{tabular}{ l}
String 4: \\
132312 %%%CHANGE: there was a typo here: it said 132321
\end{tabular}&
\begin{tabular}{ l}
  \\
yes
\end{tabular}
&
\begin{tabular}{ l}
  \\
no
\end{tabular}
&
\begin{tabular}{ l}
  \\
$(0, 16, 12, 44, 0, 8, 40, 24, 56, 15, 0, 32, 10, 26)$
\end{tabular}&
\begin{tabular}{ l}
  \\
no
\end{tabular}
&
\begin{tabular}{ l}
  \\
rays  $\{1, 2, 17\}$, cone 17
\end{tabular}\\
\hline
\ FFLV & \ yes & \ yes & 

\begin{tabular}{l }
$\mathbf w^{\text{min}}=(0,2,2,1,0,1,1,2,1,2,0,1,1,1)$\\
$\mathbf w^{\text{reg}}=(0,3,4,3,0,2,2,4,3,5,0,1,2,3)$
\end{tabular}
 & \ yes & 
\begin{tabular}{l}
rays $\{9, 11, 12\}$, cone 56\\
rays $\{9, 11, 12\}$, cone 56
\end{tabular}
 \\
\hline
\end{tabular}}
\caption{Isomorphism classes of string polytopes for $n=4$ and $\rho$ depending on $\underline w_0$, normality, the weak Minkowsky property, the weight vectors ${\bf w}_{\underline w_0}$ constructed in \S\ref{string:weight}, primeness of the binomial initial ideals $\init_{{\bf w}_{\underline w_0}}(I_4)$, and the corresponding tropical cones with their spanning rays as they appear %in the file \texttt{Flag4.txt} 
at \protect{\url{http://www.mi.uni-koeln.de/~lbossing/tropflag/tropflag4.html
}} .}  \label{tab:string4}
\end{table}
\end{center}

\begin{proposition}\label{thm:stringpoly}
For $\Flag_4$ there are four string polytopes in $\mathbb R^{10}$ up to unimodular equivalence 
and three of them satisfy MP. For $\Flag_5$ there are $28$ string polytopes in $\mathbb R^{14}$ up to unimodular equivalence and $14$ of them satisfy MP.
\end{proposition}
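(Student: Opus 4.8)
The plan is to establish Proposition~\ref{thm:stringpoly} by direct computation, in the same computational spirit as Theorems~\ref{flag4} and \ref{flag5}, so this proof is essentially a verification combined with the appropriate classification of string cones up to the relevant symmetry. First I would recall from \S\ref{subsec:string} and \S\ref{sec:pa and gp} that the string cone $Q_{\underline w_0}$ depends on the reduced expression $\underline w_0$ of $w_0\in S_n$, and that by Theorem~\ref{thm:wt GP is wt string} it coincides with the GP-cone $C_{\underline w_0}$, whose inequalities are given explicitly by the vectors $c_{\mathbf p}$ for $\mathbf p\in\mathcal P_{\underline w_0}$ (this gives a non-recursive, implementable description). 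Since mutations of $\underline w_0$ as in Definition~\ref{defn:mutation pa} connect all reduced expressions and at most change the cone by a unimodular transformation $\Psi_{\underline w_0}$ (Theorem~\ref{thm:unimod}), the number of reduced expressions gives an upper bound on the number of string polytopes, and the actual count of equivalence classes must be extracted computationally: $S_4$ has $16$ reduced expressions for $w_0$ and $S_5$ has $768$, and after collapsing by the $S_n$-action and by unimodular equivalence of the resulting polytopes $\mathcal Q_{\underline w_0}(\rho)$ one obtains $4$ classes for $n=4$ and $28$ classes for $n=5$ (the representatives for $n=4$ are exactly those listed in Table~\ref{tab:string4}).

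The steps I would carry out, in order, are as follows. (1) Enumerate all reduced expressions of $w_0$ in $S_n$ for $n=4,5$; for each, build the pseudoline arrangement $\mathrm{pa}(\underline w_0)$, list $\mathcal P_{\underline w_0}$, and form the string cone $Q_{\underline w_0}\subset\mathbb R^{\ell(w_0)}$ via the GP-inequalities; intersect with the weight hyperplanes $\pi^{-1}(\rho)$ to obtain the string polytope $\mathcal Q_{\underline w_0}(\rho)\subset\mathbb R^{N+n-1}$. (2) Compute the vertices and face lattice of each $\mathcal Q_{\underline w_0}(\rho)$ (e.g.\ with \emph{polymake}) and partition the polytopes into combinatorial-equivalence classes; within each combinatorial class test for unimodular equivalence (or argue, as in the proofs of Theorems~\ref{flag4} and \ref{flag5}, that distinct combinatorial types already suffice for ``non-equivalence'' and that coincidences inside a class are witnessed by an explicit $\mathrm{GL}$-matrix, e.g.\ from a common mutation/$\Psi_{\underline w_0}$). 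This yields the counts $4$ and $28$. (3) For each equivalence class pick a representative $\underline w_0$, compute the lattice points of $\mathcal Q_{\underline w_0}(\rho)$ and of the fundamental string polytopes $\mathcal Q_{\underline w_0}(\omega_i)$, and check Definition~\ref{def:mp}: decide whether every lattice point of $\mathcal Q_{\underline w_0}(\rho)$ decomposes as a sum of one lattice point from each $\mathcal Q_{\underline w_0}(\omega_i)$. Tallying the representatives for which this holds gives $3$ out of $4$ for $n=4$ (the failure being ``String~4'', $\underline w_0 = 132312$, as recorded in Table~\ref{tab:string4}) and $14$ out of $28$ for $n=5$.

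The point that needs the most care is step (2): correctly counting equivalence classes of string polytopes, since the weak Minkowski property is not a unimodular invariant in an obvious way and one must be sure that the $28$ (resp.\ $4$) representatives are genuinely pairwise non-equivalent \emph{and} that the list is complete. The completeness is guaranteed by exhausting all reduced expressions (a finite, explicit enumeration), and pairwise non-equivalence is certified by comparing $f$-vectors / face lattices, exactly as in the proof of Theorem~\ref{flag4} where \emph{polymake} shows there is no combinatorial equivalence among the candidate polytopes. A secondary subtlety is that the MP-check in step (3) is a priori an exponential search over decompositions, but it is tamed by the fact that each $\mathcal Q_{\underline w_0}(\omega_i)$ has few lattice points, so the verification terminates quickly; the honest content of the proposition is then simply the output of this verification, and I would present the $n=4$ case in full (Table~\ref{tab:string4}) and cite the analogous machine computation for $n=5$, with the package and data available as in the proof of Theorem~\ref{flag5}.
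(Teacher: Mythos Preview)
Your computational plan in steps (1)--(3) is essentially the paper's approach and would work, but your preamble contains a genuine misstatement that you should remove. You write that ``mutations of $\underline w_0$ \ldots at most change the cone by a unimodular transformation $\Psi_{\underline w_0}$ (Theorem~\ref{thm:unimod})''. This is false on two counts: Theorem~\ref{thm:unimod} compares $\mathcal S_{\underline w}$ and $\mathcal C_{\underline w}$ for a \emph{single} reduced expression $\underline w$, it says nothing about how string cones for different $\underline w_0$ are related; and braid mutations $s_is_{i+1}s_i\leftrightarrow s_{i+1}s_is_{i+1}$ do \emph{not} in general preserve the string polytope up to unimodular equivalence---if they did, the count would be $1$, not $4$ or $28$. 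The moves that genuinely leave the string polytope unchanged are the commutation moves $s_is_j=s_js_i$ for $|i-j|>1$, and the paper exploits exactly this to cut the enumeration from $16$ to $8$ reduced expressions for $n=4$ and from $768$ to $62$ for $n=5$; for $n=5$ it further halves to $31$ using the Dynkin involution $L(s_i)=s_{n-i}$, which induces a unimodular equivalence $\mathcal Q_{\underline w_0}(\lambda)\cong\mathcal Q_{L(\underline w_0)}(\lambda)$.

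The paper's MP check is also simpler than your decomposition search: since one always has the containment $\mathcal Q_{\underline w_0}(\omega_1)+\cdots+\mathcal Q_{\underline w_0}(\omega_{n-1})\subseteq\mathcal Q_{\underline w_0}(\rho)$ and $|\mathcal Q_{\underline w_0}(\rho)\cap\mathbb Z^N|=\dim V(\rho)$, it suffices to count lattice points in the Minkowski sum and compare to $\dim V(\rho)$; equality certifies MP, strict inequality refutes it. Your direct decomposability check is correct but unnecessary.
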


\begin{proof}
We first consider $\Flag_4$. There are 16 reduced expressions for $w_0$. Simple transpositions $s_i$ and $s_j$ with $1\le i<i+1<j< n$ commute and are also called \emph{orthogonal}. We consider reduced expressions up to changing those, so there are eight symmetry classes. 
We fix the weight $\rho=\omega_1+\omega_2+\omega_3$ in $\Lambda^{++}$. The string polytopes are organized in four classes up to unimodular equivalence. 
See Table~\ref{tab:string4}, in which $121321$ denotes the reduced expression $\underline w_0=s_1s_2s_1s_3s_2s_1$. 
Hence, they give four different toric degenerations for the embedding $\Flag_4\hookrightarrow \PP(V(\rho))$. 

In order to verify whether the weak Minkowski property holds or not, we proceed as follows. 
We fix $\underline w_0$ to compute the string polytope $\mathcal Q_{\underline w_0}(\rho)$ using \emph{polymake}. 
The number of lattice points in $\mathcal Q_{\underline w_0}(\rho)$ is $\dim(V(\rho))=64$.
Then we compute the polytopes $\mathcal Q_{\underline w_0}(\omega_1),\mathcal Q_{\underline w_0}(\omega_2),\mathcal Q_{\underline w_0}(\omega_3)$ and set $P_{\w_0}:=\mathcal Q_{\w_0}(\omega_1)+\mathcal Q_{\w_0}(\omega_2)+\mathcal Q_{\w_0}(\omega_3)\subset \mathbb R^{9}$. 
If $\vert P_{\w_0}\cap \mathbb Z^{9}\vert <64$, then there exists a lattice point in $\mathcal Q_{\w_0}(\rho)$, that can not be expressed as $p_1+p_2+p_3$ for $p_i\in \mathcal Q_{\w_0}(\omega_i)$. 
For $\underline w_0=s_1s_3s_2s_3s_1s_2$, we compute
\[
\vert P_{\w_0}\cap \mathbb Z^9 \vert =62<64.
\]
Hence, polytopes in the class String 4 do not satisfy MP. 
For polytopes in the classes String 1, 2, and 3 equality holds and MP is satisfied. 

Now consider $\Flag_5$. There are 62 reduced expressions $\underline w_0$ up to changing orthogonal transpositions. 
The map $L:S_5\to S_5$ given on simple reflections by $L(s_i)=s_{4-i+1}$ induces a symmetry on the pseudoline arrangements. 
Further, for a fixed $\lambda \in P^{++}$ is induces a unimodular equivalence between $\mathcal Q_{\w_0}(\lambda)$ and $\mathcal Q_{L(\w_0)}(\lambda)$. 
Exploiting this symmetry, we compute 31 string polytopes for $\rho$. 
They are organized in 28 unimodular equivalence classes, that arise from further symmetries of the underlying pseudoline arrangements. 
Table~\ref{tab:stringweight5} shows which reduced expressions belong to string polytopes within one unimodular equivalence class, and which string cones satisfy MP. Proceeding as for $\Flag_4$, we observe that 14 out of 28 classes satisfy MP.
\end{proof}

We now turn to the FFLV polytope. It is defined in \cite{FFL11} by Feigin, Fourier, and Littelmann to describe bases of irreducible highest weight representations $V(\lambda)$. 
In \cite{FeFL16} they give a construction of a flat degeneration of the flag variety into the toric variety associated to the FFLV polytope. 
It is also an example of the more general setup of birational sequences presented in \cite{FFL15}. 
We recall the definition and compute the FFLV polytopes for $\Flag_4$ and $\Flag_5$ for $\rho$. 
Recall, that $\alpha_i=\epsilon_i-\epsilon_{i+1}\in\mathbb R^n$ for $1\le i< n$ are the simple roots of $\mathfrak{sl}_n$, and  $\alpha_{p,q}$ is the positive root $\alpha_p+\alpha_{p+1}+\dots+\alpha_{q}$ for $1\le p\le q<n$.

\begin{definition}\label{def:dyck}
A \emph{Dyck path} for $\mathfrak{sl}_n$ is a sequence of positive roots 
${\bf d}=(\beta_0,\ldots,\beta_k)$ with $ k\ge0$
satisfying the following conditions
\begin{enumerate}
\item if $k=0$ then ${\bf d}=(\alpha_i)$ for $1\le i\le n-1$,
\item if $k\ge 1$ then \begin{enumerate}
    \item the first and the last roots are simple, i.e. $\beta_0=\alpha_i$, $\beta_k=\alpha_j$ for $1\le i<j\le n-1$, 
    \item if $\beta_s=\alpha_{p,q}$ then $\beta_{s+1}$ is either $\alpha_{p,q+1}$ or $\alpha_{p+1,q}$.
 \end{enumerate}
\end{enumerate}
Denote by $\mathcal D$ the set of all Dyck paths. We choose the positive roots $\alpha>0$ as an indexing set for a basis of $\mathbb R^N$.
\end{definition}

\begin{definition}\label{def:ffl}
The \emph{FFLV polytope} $P(\lambda)\subset \mathbb R^{N}_{\ge 0}$ for a weight $\lambda=\sum_{i=1}^{n-1}m_i\omega_i\in \Lambda^{++}$ is defined as
\begin{eqnarray}\label{eq: def FFLV poly}
P(\lambda)=\left\{ (r_{\alpha})_{\alpha>0}\in \mathbb R^N_{\ge 0}\left|  \begin{matrix} \forall {\bf d}\in \mathcal D:\text{ if }\beta_0=\alpha_{i} \text{ and } \beta_k=\alpha_{j} \\
r_{\beta_0}+ \dots +r_{\beta_k}\le m_{i}+\dots+m_{j}\end{matrix}\right.\right\}.
\end{eqnarray}
\end{definition}

\begin{example}
Consider $\Flag_4$. Then the Dyck paths are
\begin{eqnarray*}
&(\alpha_1),(\alpha_2),(\alpha_3),&\\
&(\alpha_1,\alpha_{1,2},\alpha_2),(\alpha_2,\alpha_{2,3},\alpha_3),&\\
&(\alpha_1,\alpha_{1,2},\alpha_{2},\alpha_{2,3},\alpha_3)
\text{ and } (\alpha_1,\alpha_{1,2},\alpha_{1,3},\alpha_{2,3},\alpha_3)& 
\end{eqnarray*} For our favorite choice of weight $\lambda=\rho=\omega_1+\omega_2+\omega_3$ we obtain the FFLV polytope
\begin{eqnarray*}
P(\rho)=\left\{ (r_{\alpha})_{\alpha>0}\left| \begin{matrix} 
r_{\alpha_1}\le 1,r_{\alpha_2}\le 1,r_{\alpha_3}\le 1,\\
r_{\alpha_1}+r_{\alpha_{1,2}}+r_{\alpha_2}\le 2, r_{\alpha_2}+r_{\alpha_{2,3}}+r_{\alpha_3}\le 2,\\
r_{\alpha_1}+r_{\alpha_{1,2}}+r_{\alpha_2}+r_{\alpha_{2,3}}+r_{\alpha_3}\le 3,\\
r_{\alpha_1}+r_{\alpha_{1,2}}+r_{\alpha_{1,3}}+r_{\alpha_{2,3}}+r_{\alpha_3}\le 3
\end{matrix}\right.\right\}\subset \mathbb R^{6}_{\ge 0}.
\end{eqnarray*}
\end{example}

The following is a corollary of \cite[Proposition~11.6]{FFL11}, which says that a strong version of the Minkowski property is satisfied by the FFLV polytope for $\Flag_n$. It can alternatively be shown for $n=4,5$ using the methods in the proof of Proposition~\ref{thm:stringpoly}.

\begin{corollary}
The FFLV polytope $P(\rho)$ satisfies the weak Minkowski property.
\end{corollary}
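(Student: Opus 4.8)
The statement to prove is that the FFLV polytope $P(\rho)$ satisfies the weak Minkowski property. The plan is to deduce this from the general Minkowski-type result \cite[Proposition~11.6]{FFL11} already cited in the excerpt, which asserts a strong form of the Minkowski property for the FFLV polytopes of $\Flag_n$, namely that $P(\lambda+\mu) = P(\lambda) + P(\mu)$ as polytopes for all $\lambda,\mu\in\Lambda^+$, together with the fact that the FFLV polytopes are lattice polytopes all of whose sums remain normal/integrally closed.

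First I would recall the precise meaning of the weak Minkowski property from Definition~\ref{def:mp} in our setting: every lattice point $p\in P(\rho)\cap\mathbb Z^N$ must decompose as $p = p_{\omega_1} + \dots + p_{\omega_{n-1}}$ with $p_{\omega_i}\in P(\omega_i)\cap\mathbb Z^N$. Since $\rho = \omega_1 + \dots + \omega_{n-1}$, the Minkowski sum identity $P(\rho) = P(\omega_1) + \dots + P(\omega_{n-1})$ (obtained by iterating \cite[Proposition~11.6]{FFL11}) shows that $p = q_1 + \dots + q_{n-1}$ for some \emph{real} points $q_i \in P(\omega_i)$; the content of the weak Minkowski property is that one can choose these summands to be lattice points.

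The key step, then, is the integral decomposition: I would invoke the fact that the FFLV polytopes $P(\omega_i)$ are \emph{normal} (this follows from \cite[Proposition~11.6]{FFL11} and the fact that each $P(\omega_i)$ is a lattice polytope, or can be extracted directly from the explicit inequality description in Definition~\ref{def:ffl} together with the fact that the matrix of Dyck-path inequalities is totally unimodular in these small cases), so that every lattice point of a Minkowski sum $P(\lambda) + P(\mu)$ of two of them is a sum of a lattice point of $P(\lambda)$ and a lattice point of $P(\mu)$. Iterating this over the $n-1$ fundamental summands yields the required integral decomposition of any $p\in P(\rho)\cap\mathbb Z^N$, which is exactly the weak Minkowski property. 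For the cases $n=4,5$ that we actually need, I would alternatively note (as the corollary's statement already suggests) that the claim can be verified by the computational method used in the proof of Proposition~\ref{thm:stringpoly}: compute $P(\omega_1),\dots,P(\omega_{n-1})$ and their Minkowski sum with \emph{polymake}, count its lattice points, and check that this count equals $\dim_{\mathbb C} V(\rho) = |P(\rho)\cap\mathbb Z^N|$, so that the natural surjection of lattice points forces integral decomposability.

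The main obstacle is the normality/integral-decomposition step: \cite[Proposition~11.6]{FFL11} as stated in the excerpt gives the strong Minkowski \emph{sum} identity at the level of polytopes (and implicitly at the level of lattice points, since the FFLV bases are indexed by lattice points), but one must be careful that it delivers the decomposition of lattice points and not merely of the underlying convex bodies. I expect this to be immediate from the representation-theoretic interpretation --- the lattice points of $P(\lambda)$ index a basis of $V(\lambda)$, the product of bases corresponds to lattice-point sums under the FFLV degeneration, and $V(\rho)$ is a quotient of $V(\omega_1)\otimes\cdots\otimes V(\omega_{n-1})$ --- but spelling out that the graded pieces match up so that no lattice point of $P(\rho)$ is "lost" is the only genuinely non-formal point. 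For $n=4,5$ the computational check sidesteps this entirely and is the route I would present.
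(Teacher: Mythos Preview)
Your approach matches the paper's: the corollary is deduced directly from \cite[Proposition~11.6]{FFL11}, with the computational route from Proposition~\ref{thm:stringpoly} mentioned as an alternative for $n=4,5$. Your ``main obstacle'' is not one: the cited result in \cite{FFL11} is already formulated at the level of lattice points (it states that $S(\lambda+\mu)=S(\lambda)+S(\mu)$ for the sets $S(\lambda)=P(\lambda)\cap\mathbb Z^N$ of integral points), so the weak Minkowski property for $\rho=\omega_1+\dots+\omega_{n-1}$ follows by a single iteration without any separate normality or total-unimodularity argument.
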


\begin{remark}
The FFLV polytope is in general not a string polytope. A computation in \emph{polymake} 
shows that $P(\rho)$ for $\Flag_5$ is not combinatorially equivalent to 
any string polytope for $\rho$.
\end{remark}

\subsubsection*{String cones and points in $\trop(\Flag_n)$}\label{string:weight}

We have seen in \S\ref{sec:pre trop} how to obtain toric degenerations from  maximal prime cones of the tropicalization of a variety. 
We compare the degenerations arising from $\trop(\Flag_n)$ with those from string polytopes and the FFLV polytope. 
Moreover, applying \cite[Lemma~3.2]{Cal02} (see \S\ref{sec:pre val}) we construct a weight vector from a string cone, which allows us to apply Theorem~\ref{thm: val and quasi val with wt matrix} from \S\ref{sec:val and quasival}. 
Computational evidence for $\Flag_4$ and $\Flag_5$ shows that each constructed weight vector lies in the relative interior of a maximal cone in $\trop(\Flag_n)$. 
A similar idea for a more general case is carried out in \cite[\S7]{KM16}. 
For the FFLV polytope we compute weight vectors for $\Flag_n$ with $n=4,5$ (see Example~\ref{exp.ffl}) following a construction given in \cite{FFR15}. 

\medskip
We now prove the result in Theorem~\ref{ts-comparison} by analyzing the  polytopes associated to the different  toric degenerations of  $\Flag_n$ for $n=4,5$.

\begin{table}
\begin{center}
\begin{tabular}{ c |l}

Orbit  & Combinatorially equivalent polytopes\\
\hline
1  &  String 2 \\[-1.5ex]
2 &  String 1 (Gelfand-Tsetlin)\\[-1.5ex]
3  & String 3 and FFLV \\[-1.5ex]
4 & - \\

\end{tabular}
\end{center}
\caption{Combinatorial equivalences among the polytopes obtained from prime cones in $\trop(\Flag_4)$ and string polytopes resp. the FFLV polytope.}\label{tab:f-vector4}
\end{table}

\begin{proof}[Proof of Theorem~\ref{ts-comparison}]
In order to distinguish the different toric degenerations, we consider the normalizations of the toric varieties associated to their special fibers. 
Two projective normal toric varieties are isomorphic, if their corresponding polytopes are unimodularly equivalent. 
For this reason we first look for combinatorial equivalences between the polytopes.
If they are not combinatorially equivalent then they can not be unimodularly equivalent, hence they define non-isomorphic toric varieties. 
We use \emph{polymake}~\cite{GJ00} for computations with polytopes.

From Table~\ref{tab:f-vector4} one can see that for $\Flag_4$ there is one toric degeneration, whose associated polytope is not combinatorially 
equivalent to any string polytope or the FFLV polytope for $\rho$.
Hence, its corresponding normal toric variety is not isomorphic to any toric variety associated to these polytopes. 
For the toric varieties associated to the other polytopes we can not exclude isomorphisms since there might be a unimodular equivalences.

For $\Flag_5$, Table~\ref{tab:f-vector5} in the appendix shows that there are 168 polytopes obtained from prime cones of $\trop(\Flag_5)$ that are not combinatorially equivalent to any string polytope or the FFLV polytope for $\rho$. 
\end{proof}

\begin{remark}
There are also string polytopes, which are not combinatorially equivalent to any polytope from prime cones in $\trop(\Flag_n)$ for $n=4,5$. These are exactly those not satisfying MP, i.e. one string polytope for $\Flag_4$ and 14 for $\Flag_5$. See also Table~\ref{tab:stringweight5}.
\end{remark}

From now on, we fix a reduced expression $\underline w_0=s_{i_1}\ldots s_{i_N}$ and we consider the birational sequence of simple roots $S:=(\alpha_{i_1},\dots, \alpha_{i_N})$.
As we have seen in Example~\ref{exp: birat seq, PBW and string}.2 for Grassmannains, the same is true here: $S$ is a birational sequence. 
In \cite{FFL15} (see also \cite{Kav15}) they realize string polytopes as Newton-Okounkov polytopes associated to the valuation from this birational sequence.
Another necessary ingredient to obtain such a valuation on $\mathbb C[\Flag_n]$ was the choice of total order on $\mathbb Z^{N}$.
Recall therefore the definition of the $\Psi$-weighted reverse lexicographic order $\prec_\Psi$ from \eqref{eq: def psi wt order} and the definition of $\val_S$ from \eqref{eq:valseq}.
For our choice of $S$ from a reduced expression $\w_0$ we denote $\val_{\w_0}:=\val_S$.

Then $\val_{\w_0}$ can be computed explicitly on Plücker coordinates. We have seen this for $\Gr(2,n)$ in \eqref{eq: val seq on plucker}, but more generally we have by \cite[Proposition~2]{FFL15} for $\{j_1,\dots,j_k\}\subset [n]$
\[
\val_S(\bar p_{j_1,\dots,j_k})=\min{}_{\prec_\Psi}\{\bm \in\mathbb Z^{N}_{\ge 0}\mid {\bf f^m}(e_{1}\wedge\dots\wedge e_{k})=e_{j_1}\wedge\dots\wedge e_{j_k}\},
\]
where ${\bf f^m}=f_{\alpha_{i_1}}^{m_1}\cdots f_{\alpha_{i_N}}^{m_N}\in U(\mathfrak{n}^-)$.

\begin{example}\label{ex:exterioralg}
For $\Flag_4$ three root vectors in $\mathfrak n^-$ are 
\[
f_{\alpha_1}=\bigg(\begin{smallmatrix} 0 & 0 & 0 & 0\\ 1 & 0 & 0 & 0\\ 0 & 0 & 0 & 0\\ 0 & 0 & 0 & 0\end{smallmatrix}\bigg),\quad 
f_{\alpha_2}=\bigg(\begin{smallmatrix} 0 & 0 & 0 & 0\\ 0 & 0 & 0 & 0\\ 0 & 1 & 0 & 0\\ 0 & 0 & 0 & 0\end{smallmatrix}\bigg),
\quad\text{ and }\quad f_{\alpha_3}=\bigg(\begin{smallmatrix} 0 & 0 & 0 & 0\\ 0 & 0 & 0 & 0\\ 0 & 0 & 0 & 0\\ 0 & 0 & 1 & 0\end{smallmatrix}\bigg).
\]
Consider $V=\bigwedge^2\mathbb C^4$. As we have seen above the action of $\mathfrak n^-$ on $\mathbb C^4$ is given by $f_{\alpha_i}(e_i)=e_{i+1}$ and $f_{\alpha_i}(e_j)=0$ for $j\not = i$. 
On $V$ the $\mathfrak n^-$-action is given by
\[
f_{\alpha_i}(e_j\wedge e_k)=f_{\alpha_i}(e_j)\wedge e_k + e_{j}\wedge f_{\alpha_i}(e_k).
\]
The Pl\"ucker coordinate $\bar p_{13}\in \mathbb C[\Flag_4]$ is of degree $(0,1,0)$, i.e. contained in $\mathbb C[\Flag_4]_{(0,1,0)}\cong (\bigwedge^2\mathbb C^4)^*$.
As we have seen before $\bar p_{13}=(e_1\wedge e_3)^*$, and so we consider $e_1\wedge e_3\in V$. 
Then $f_{\alpha_2}(e_1\wedge e_2)=e_1\wedge e_3$.  
We fix $\hat\w_0=s_1s_2s_1s_3s_2s_1\in S_4$, then as seen in \eqref{eq: PBW for demazure} we have
$U(\mathfrak n^-)\cdot (e_1\wedge e_2)=\langle f_{\alpha_1}^{m_1}f_{\alpha_2}^{m_2}f_{\alpha_1}^{m_3}f_{\alpha_3}^{m_4}f_{\alpha_2}^{m_5}f_{\alpha_1}^{m_6}\cdots (e_1\wedge e_2)\mid m_i\in \mathbb Z_{\ge 0}\rangle$. Hence,
\[
{\bf f}^{(0,1,0,0,0,0)}(e_1\wedge e_2)={\bf f}^{(0,0,0,0,1,0)}(e_1\wedge e_2)=e_1\wedge e_3.
\]
The minimal $\bm\in(\mathbb Z^{6},\prec_{\Psi})$ satisfying ${\bf f^m}(e_1\wedge e_2)=e_1\wedge e_3$ is ${(0,1,0,0,0,0)}$, so we have $\val_{\hat\w_0}(\bar p_{13})=(0,1,0,0,0,0)$.
\end{example}

We want to apply the results from \S\ref{sec:val and quasival} to the given valuations of form $\val_{\w_0}:\mathbb C[\Flag_n]\setminus\{0\}\to (\mathbb Z^N,\prec_\Psi)$.
Let therefore $M_{\w_0}:=(\val_{\w_0}(\bar p_J))_{0\not=J\subsetneq [n]}\in\mathbb Z^{N\times\binom{n}{1}+\dots+\binom{n}{n-1}}$ be the matrix whose columns are given by the images of Plücker coordinates under $\val_{\w_0}$.
We define a linear form $e:\mathbb Z^N\to\mathbb Z$ by
\[
-e({\bf m}):=2^{N-1}m_1+2^{N-2}m_2+\ldots +2m_{N-1}+m_N.
\]
By \cite[Proof of Lemma 3.2]{Cal02} and our choice of total order $\prec_\Psi$, it satisfies $\val_{\w_0}(\bar p_I)\prec_\Psi \val_{\w_0}(\bar p_J)$ implies $e(\val_{\w_0}(\bar p_I))<e(\val_{\w_0}(\bar p_J))$ for $0\not=I,J\subsetneq [n]$.

\begin{definition}\label{str.wt.vec}
For a fixed reduced expression ${\underline w_0}$ the \emph{weight} of the Pl\"ucker variable $p_J$ is $e(\val_{\w_0}(\bar p_J))$. We define the \emph{weight vector} ${\bf w}_{\underline w_0}$ in $\mathbb R^{{\binom{n}{1}}+{\binom{n}{2}}+\cdots+{\binom{n}{n-1}}}$ by 
\[
{\bf w}_{\underline w_0}:=e(M_{\w_0})=(e(\val_{\w_0}(\bar p_J)))_{0\not =J\subsetneq [n]},
\]
where we order the subsets $0\not =J\subsetneq [n]$ lexicographically from $[1]$ to $[2,n]$.
\end{definition}

\begin{example}
We continue as in Example~\ref{ex:exterioralg} with the reduced expression $\hat\w_0\in S_4$.
As $\val_{\hat\w_0}(\bar p_{13})=(0,1,0,0,0,0)$ the weight of $\bar p_{13}$ is $e(0,1,0,0,0,0)=-(1\cdot 2^4)=-16$. 
Similarly, we obtain weights for all Pl\"ucker coordinates and
\[
-{\bf w}_{\underline w_0}=(0,32,24,7,0,16,6,48,38,30,0,4,20,52).
\]
Table~\ref{tab:string4} contains all weight vectors (up to sign) for $\Flag_4$ constructed in the way just described.
\end{example}

\begin{proposition}\label{thm:stringweight}
Consider $\Flag_n$ with $n=4,5$. The above construction produces a weight vector ${\bf w}_{\underline w_0}$ for every string cone. This weight vector lies in the relative interior of a maximal cone of $\trop(\Flag_n)$.
If further the string cone satisfies MP, then ${\bf w}_{\underline w_0}$ lies in the relative interior of a prime cone whose associated polytope is combinatorially equivalent to $\mathcal Q_{\w_0}(\rho)$.
\end{proposition}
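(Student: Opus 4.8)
The plan is to verify the statement computationally for $n=4,5$, following the strategy used in the proofs of Theorem~\ref{flag4} and Theorem~\ref{flag5}, and then to explain conceptually why the weight vector $\bw_{\underline w_0}$ lands where it does. First I would recall that, by construction, the columns of $M_{\w_0}$ are the images $\val_{\w_0}(\bar p_J)$ of the Pl\"ucker coordinates, and that $\bw_{\w_0} = e(M_{\w_0})$ for the linear form $e$ chosen so that $\val_{\w_0}(\bar p_I)\prec_\Psi \val_{\w_0}(\bar p_J)$ implies $e(\val_{\w_0}(\bar p_I)) < e(\val_{\w_0}(\bar p_J))$ (this is exactly the situation of Lemma~\ref{lem: init wM vs init M}, so that $\init_{\bw_{\w_0}}(I_n) = \init_{e(M_{\w_0})}(I_n) = \init_{M_{\w_0}}(I_n)$). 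The key point is then that $\init_{M_{\w_0}}(I_n) = \gr_{\val_{\w_0}}(\mathbb C[\Flag_n])$ up to the identification of Lemma~4.4 of \cite{KM16}, combined with the fact that the string valuation $\val_{\w_0}$ from the birational sequence $S = (\alpha_{i_1},\dots,\alpha_{i_N})$ is a full-rank valuation whose value semigroup is generated by the images of Pl\"ucker coordinates (this is the content of the FFL realization of string polytopes as Newton--Okounkov bodies, \cite{Kav15}, \cite{FFL15}). Since the value semigroup is finitely generated and the associated graded ring is a domain, $\init_{M_{\w_0}}(I_n)$ is a monomial-free (in fact prime) ideal, hence $\bw_{\w_0}$ lies in $\trop(\Flag_n)$ by the Fundamental Theorem of Tropical Geometry; and since $\init_{M_{\w_0}}(I_n)$ is constant on the relative interior of the cone it determines, $\bw_{\w_0}$ lies in the relative interior of a maximal cone.

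For the explicit $n=4$ and $n=5$ cases I would run the same \texttt{ToricDegenerations} machinery: compute $\val_{\w_0}(\bar p_J)$ for all $J$ by acting with the PBW monomials $\bff^{\bm}$ on the highest weight vectors $e_1\wedge\cdots\wedge e_k$ of the fundamental representations $\bigwedge^k\mathbb C^n$ and taking the $\prec_\Psi$-minimum, assemble $M_{\w_0}$, apply $e$ to get $\bw_{\w_0}$, and then locate which maximal cone of $\trop(\Flag_n)$ contains $\bw_{\w_0}$ in its relative interior. The tables in \S\ref{sec:string&FFLV} (Table~\ref{tab:string4} for $\Flag_4$, Table~\ref{tab:stringweight5} for $\Flag_5$) record the output: for each reduced expression $\w_0$ the vector $-\bw_{\w_0}$ together with the spanning rays of the cone it lies in, and whether that initial ideal is prime. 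This verifies the first two assertions directly.

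For the last assertion — if the string cone $\mathcal Q_{\w_0}$ satisfies MP, then $\bw_{\w_0}$ lies in a prime cone whose associated Newton--Okounkov polytope is combinatorially equivalent to $\mathcal Q_{\w_0}(\rho)$ — the argument is that under MP the value semigroup $S(\mathbb C[\Flag_n],\val_{\w_0})$ is generated in degree $\le \rho$ by the images of the Pl\"ucker coordinates, so by the general principle of Theorem~\ref{thm: val and quasi val with wt matrix} (applied with $A = \mathbb C[\Flag_n]$, $I = I_n$ homogeneous with respect to the multigrading of \eqref{degree}, resp. \eqref{degree2}, and generated in multidegrees $>\varepsilon_i$) the value semigroup equals $S(\mathbb C[\Flag_n],\val_{M_{\w_0}})$, the associated graded ring is $\mathbb C[p_J]_J/\init_{\bw_{\w_0}}(I_n)$, this ideal is prime, and the Newton--Okounkov polytope is $\conv(\val_{\w_0}(\bar p_J))$ — which is precisely the string polytope $\mathcal Q_{\w_0}(\rho)$ by the FFL description. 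Combinatorial equivalence then follows because the Newton--Okounkov polytope of the toric degeneration associated to a prime cone is (the normalization's polytope, which here is) unimodularly, hence combinatorially, equivalent to $\mathcal Q_{\w_0}(\rho)$; alternatively one reads this off the F-vector comparison in Table~\ref{tab:f-vector4} and Table~\ref{tab:f-vector5}. The main obstacle is the reliance on MP to guarantee that the Pl\"ucker images already generate the full value semigroup: without MP the convex hull of the $\val_{\w_0}(\bar p_J)$ can fail to be the whole Newton--Okounkov polytope, the initial ideal $\init_{\bw_{\w_0}}(I_n)$ need not be prime (as happens for String~4 in $\Flag_4$ and 14 classes in $\Flag_5$), and one only gets that $\bw_{\w_0}$ lies in some maximal cone, not necessarily a prime one with the expected polytope; so making the MP hypothesis do exactly this job — bridging $k\,\es_S(\omega) = \es_S(k\omega)$ type statements to primeness of the initial ideal via Theorem~\ref{thm: val and quasi val with wt matrix} — is the delicate step, and the rest is a finite (if sizeable) computation.
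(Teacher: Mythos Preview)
Your computational outline is correct and matches what the paper actually does: the proof in the paper is purely a finite verification — compute each $\bw_{\w_0}$, check in \emph{Macaulay2} that $\init_{\bw_{\w_0}}(I_n)$ is binomial (hence $\bw_{\w_0}$ lies in the relative interior of a maximal cone), and when MP holds check in \emph{polymake} that the polytope of the prime cone is combinatorially equivalent to $\mathcal Q_{\w_0}(\rho)$. Your fallback to the tables is exactly this.

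However, the conceptual argument you wrap around the computation has two genuine gaps. First, the assertion that ``the string valuation $\val_{\w_0}$ \dots\ is a full-rank valuation whose value semigroup is generated by the images of Pl\"ucker coordinates'' is not a general fact about the FFL/Kaveh realization; it is precisely what fails when MP fails (String~4 for $\Flag_4$, and fourteen classes for $\Flag_5$). What \cite{FFL15} and \cite{Kav15} give you is that the Newton--Okounkov body equals the string polytope, not that the semigroup is generated in degree one. Relatedly, Lemma~4.4 of \cite{KM16} identifies $\mathbb C[p_J]_J/\init_{M_{\w_0}}(I_n)$ with $\gr_{M_{\w_0}}(A)$ for the \emph{quasi}-valuation $\val_{M_{\w_0}}$, not with $\gr_{\val_{\w_0}}(A)$; these agree only once you already know $\init_{M_{\w_0}}(I_n)$ is prime. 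So your argument that $\init_{M_{\w_0}}(I_n)$ is automatically prime (or even monomial-free) is circular.

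Second, in the MP case you invoke Theorem~\ref{thm: val and quasi val with wt matrix} to deduce primeness of $\init_{\bw_{\w_0}}(I_n)$ from the fact that the Pl\"ucker images generate the semigroup. But that theorem runs in the opposite direction: it \emph{assumes} $\init_{M_\val}(I)$ is prime and \emph{concludes} that the Pl\"ucker coordinates form a Khovanskii basis. The paper proves exactly this implication (Corollary~\ref{cor: prime implies MP}: prime $\Rightarrow$ MP) and explicitly says the converse — showing $\ker(\pi)=\init_{\bw_{\w_0}}(I_n)$ when MP holds — is not established; it is the content of Conjecture~\ref{conjecture}. So your ``delicate step'' is not merely delicate but open, and for $n=4,5$ the proposition is obtained by direct computation, not by the argument you sketch.
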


\begin{proof}
The constructed weight vectors ${\bf w}_{\underline w_0}$ can be found in Table~\ref{tab:string4} for $\Flag_4$ and Table~\ref{tab:stringweight5} in the appendix for $\Flag_5$. A computation in \emph{Macaulay2} shows that all initial ideals $\init_{{\bf w}_{\underline w_0}}(I_n)$ for $n=4,5$ are binomial, hence in the relative interiors of maximal cones of $\trop(\Flag_n)$.

Moreover, if MP is satisfied  we check using \emph{polymake} that  the polytope constructed from the maximal prime cone $C^{\underline w_0}\subset \trop(\Flag_n)$ with ${\textbf w}_{\underline w_0}$ in its relative interior is combinatorially equivalent to the string polytope $\mathcal Q_{\w_0}(\rho)$. 
See Table~\ref{tab:string4} and Table~\ref{tab:stringweight5}. 
\end{proof}

This computational outcome can actually be explained by Theorem~\ref{thm: val and quasi val with wt matrix}. In this context in can be stated as follows.

\begin{theorem}\label{thm: quasival for string}
Let $\w_0$ be a reduced expression of $w_0\in S_n$ and consider $\bw_{\w_0}\in\mathbb R^{\binom{n}{1}+\dots+\binom{n}{n-1}}$. 
If $\init_{\bw_{\w_0}}(I_n)$ is prime, then $S(A_n,\val_{\w_0})$ is generated by $\{\val_{\w_0}(\bar p_J)\mid 0\not=J\subsetneq [n]\}$. In particular, 
\[
\mathcal Q_{\w_0}(\rho)=\conv(\val_{\w_0}(\bar p_J)\mid 0\not=J\subsetneq [n]),
\]
and the Plücker coordinates form a Khovanskii basis for $\val_{\w_0}$.
\end{theorem}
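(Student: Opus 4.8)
The plan is to deduce Theorem~\ref{thm: quasival for string} as a direct application of the general Theorem~\ref{thm: val and quasi val with wt matrix} from \S\ref{sec:val and quasival}, exactly as was done for iterated sequences in \S\ref{sec:Bos} (Theorem~\ref{thm:main birat}) and for the Rietsch--Williams valuation in \S\ref{sec:BFFHL} (Theorem~\ref{thm: RW val and wt vect}). So first I would set up the dictionary: the algebra is $A_n=\mathbb C[\Flag_n]$, presented as $\mathbb C[p_J\mid 0\neq J\subsetneq [n]]/I_n$ via the embedding of $\Flag_n$ into a product of projective spaces from \S\ref{sec:pre flag}, with $b_J:=\pi(p_J)=\bar p_J$ the images of the Plücker variables. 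The valuation in question is $\val_{\w_0}:A_n\setminus\{0\}\to(\mathbb Z^N,\prec_\Psi)$, and I need to verify it is a full-rank valuation: it has rank $N=\dim\Flag_n$ because it comes from the birational sequence of simple roots associated to $\w_0$ (Example~\ref{exp: birat seq, PBW and string}.2), so $U_{-\alpha_{i_1}}\times\cdots\times U_{-\alpha_{i_N}}\to U^-$ is birational and the image semigroup $\bigcup_{r\geq 1}\es_S(r\rho)$ spans a rank-$N$ sublattice. By Definition~\ref{def: wt matrix from valuation}, the weighting matrix $M_{\val_{\w_0}}$ has columns $\val_{\w_0}(\bar p_J)$, which is precisely the matrix $M_{\w_0}$ defined just above the theorem.

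The second step is to check the homogeneity hypothesis of Theorem~\ref{thm: val and quasi val with wt matrix}: $I_n$ must be homogeneous with respect to a $\mathbb Z_{\geq 0}^s$-grading and generated by elements $f$ with $\deg f > \varepsilon_i$ for all $i$. Here $s=n-1$, the grading is the multigrading by the matrix $D$ (as in \eqref{degree} for $n=4$ and \eqref{degree2} for $n=5$), and the generators of $I_n$ are the quadratic Plücker-type relations listed in Example~\ref{exp: ideal flag4}; each such relation is a sum of products of two Plücker variables, hence has multidegree that is a sum of two standard basis vectors, which is strictly larger (in the partial order $>$ on $\mathbb Z^s$ defined before Lemma~\ref{lem: val vs val_Mval}) than every $\varepsilon_i$. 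So the hypothesis holds. The third step is the key hypothesis: $\init_{M_{\w_0}}(I_n)$ must be prime. By Definition~\ref{str.wt.vec} the weight vector is $\bw_{\w_0}=e(M_{\w_0})$ for the explicit linear form $e$, and by Lemma~\ref{lem: init wM vs init M} we have $\init_{M_{\w_0}}(I_n)=\init_{e(M_{\w_0})}(I_n)=\init_{\bw_{\w_0}}(I_n)$; one must only check that $e$ is a valid choice in the sense of \cite[Lemma~3.2]{Cal02}, i.e. that $\val_{\w_0}(\bar p_I)\prec_\Psi\val_{\w_0}(\bar p_J)$ implies $e(\val_{\w_0}(\bar p_I))<e(\val_{\w_0}(\bar p_J))$, which is exactly the property recorded just above Definition~\ref{str.wt.vec} (following \cite[Proof of Lemma~3.2]{Cal02} together with the definition of $\prec_\Psi$). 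Hence the hypothesis ``$\init_{\bw_{\w_0}}(I_n)$ prime'' of the present theorem is literally the hypothesis ``$\init_{M_\val}(I)$ prime'' of Theorem~\ref{thm: val and quasi val with wt matrix}.

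Having assembled these ingredients, I would invoke Theorem~\ref{thm: val and quasi val with wt matrix} directly: it gives that $S(A_n,\val_{\w_0})$ is generated by $\{\val_{\w_0}(\bar p_J)\mid 0\neq J\subsetneq[n]\}$, that $\Delta(A_n,\val_{\w_0})=\conv(\val_{\w_0}(\bar p_J)\mid 0\neq J\subsetneq[n])$, and that $\{\bar p_J\}$ is a Khovanskii basis for $(A_n,\val_{\w_0})$. The last thing to do is identify the Newton--Okounkov body with the string polytope: by the work of \cite{FFL15} and \cite{Kav15} recalled in \S\ref{subsec:string}, the Newton--Okounkov polytope of the valuation $\val_{\w_0}$ associated to the birational sequence of simple roots coming from $\w_0$, taken in the degree corresponding to $\rho$, is the string polytope $\mathcal Q_{\w_0}(\rho)$; combined with the description $\Delta(A_n,\val_{\w_0})=\conv(\val_{\w_0}(\bar p_J))$ just obtained, this yields $\mathcal Q_{\w_0}(\rho)=\conv(\val_{\w_0}(\bar p_J)\mid 0\neq J\subsetneq[n])$.

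The only genuinely nontrivial point — and the one I would spell out most carefully — is the rank/one-dimensional-leaves claim for $\val_{\w_0}$, since Theorem~\ref{thm: val and quasi val with wt matrix} requires a full-rank valuation; this follows because the value semigroup $\bigcup_{r\geq 1}\es_S(r\rho)$ has full rank $N$ (the essential sets $\es_S(r\rho)$ span, as $r\to\infty$, a full-dimensional cone since $\rho$ is regular and the string polytope $\mathcal Q_{\w_0}(\rho)$ is $N$-dimensional), so by \cite[Theorem~2.3]{KM16} the valuation has one-dimensional leaves and we are in the hypotheses of the general theorem. Everything else is a matter of matching notation, so I do not expect any serious obstacle beyond this bookkeeping; one should, however, take care that the grading conventions (the degree map on $S$ via $D$, the ordering of the subsets $J$) are consistent between \S\ref{sec:val and quasival} and the present section so that the homogeneity hypothesis is correctly verified.
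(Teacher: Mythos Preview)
Your proposal is correct and follows essentially the same route as the paper's proof: verify the degree hypothesis on the generators of $I_n$, use Lemma~\ref{lem: init wM vs init M} to identify $\init_{\bw_{\w_0}}(I_n)$ with $\init_{M_{\w_0}}(I_n)$, apply Theorem~\ref{thm: val and quasi val with wt matrix} to the full-rank valuation $\val_{\w_0}$, and then invoke \cite{FFL15} (and \cite{Kav15}) to identify the resulting Newton--Okounkov body with $\mathcal Q_{\w_0}(\rho)$. The paper is terser about the full-rank check and cites \cite[Theorem~14.6]{MS05} for the degree condition on the generators rather than reading it off from the explicit quadratic relations, but the argument is the same.
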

\begin{proof}
First note, that by \cite[Theorem~14.6]{MS05} the ideal $I_n$ is generated by elements $f$ satisfying $\deg f>\varepsilon_i$ for all $i\in[n-1]$.
Further, by Lemma~\ref{lem: init wM vs init M} $\init_{\bw_{\w_0}}(I_n)=\init_{M_{\w_0}}(I_n)$ and so $\init_{M_{\w_0}}(I_n)$ is prime by assumption.
As $\val_{\w_0}$ is of full rank, we can apply Theorem~\ref{thm: val and quasi val with wt matrix}.
If follows that $S(A_n,\val_{\w_0})$ is generated by $\{\val_{\w_0}(\bar p_J)\mid 0\not=J\subsetneq [n]\}$ and that
\[
\Delta(A_n,\val_{\w_0})=\conv(\val_{\w_0}(\bar p_J)\mid 0\not=J\subsetneq [n]).
\]
As by \cite[\S11]{FFL15} $\mathcal Q_{\w_0}(\rho)$ is the Newton-Okounkov body of the valuation $\val_{\w_0}$ the claim follows. 
%Recall that the Newton-Okounkov body can be obtained from the extended valuation $\hat\val_{\w_0}:\mathbb C[\Flag_n]\to \mathbb Z^{n-1}_{\ge 0}\times (\mathbb Z^N,\prec_\Psi)$ with $\hat\val_{\w_0}(f)=(\deg f,\val_{\w_0}(f))$.
%Then $\Delta(A_n,\val_{\w_0})=C(A_n,\hat\val_{\w_0})\cap \{(1,\dots,1)\}\times \mathbb R^N$.
%It follows from \cite[Theorem~1]{FFL15} by identifying $\Lambda^+$ with $\mathbb Z^{n-1}_{\ge 0}$ using the fundamental weights that $\mathcal Q_{\w_0}(\rho)=C(A_n,\hat\val_{\w_0})\cap \{(1,\dots,1)\}\times \mathbb R^N$.
\end{proof}

\begin{corollary}\label{cor: prime implies MP}
With assumptions as in Theorem~\ref{thm: quasival for string}, we have: 
\[
\init_{\bw_{\w_0}}(I_n)\ \text{ is prime } \Rightarrow \ \mathcal Q_{\w_0} \ \text{ has the (strong) Minkowski property}.
\]
\end{corollary}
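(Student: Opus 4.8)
The plan is to deduce Corollary~\ref{cor: prime implies MP} as a direct consequence of Theorem~\ref{thm: quasival for string} together with the fundamental theorem on Newton--Okounkov bodies (\cite{KK12}, restated in \S\ref{sec:pre val}) and the known realization of string polytopes as Newton--Okounkov bodies from \cite{FFL15}. The key point is that primeness of $\init_{\bw_{\w_0}}(I_n)$ forces the value semigroup $S(A_n,\val_{\w_0})$ to be generated in degree one (by the images of the Plücker coordinates), and a finitely generated value semigroup that is generated in degree one is automatically "saturated enough" to make the associated Newton--Okounkov polytope decompose as a Minkowski sum of the fundamental polytopes.

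More precisely, first I would recall that $A_n=\mathbb C[\Flag_n]=\bigoplus_{\lambda\in\Lambda^+}V(\lambda)^*$ carries the $\mathbb Z_{\ge0}^{n-1}$-grading by $\lambda$, and that the valuation $\val_{\w_0}$ is compatible with this grading in the sense of \eqref{eq: def extended val}: writing $\hat\val_{\w_0}(f)=(\deg f,\val_{\w_0}(f))$ one has for each $\lambda\in\Lambda^+$ that $P_{\val_{\w_0}}(\lambda)=C(A_n,\hat\val_{\w_0})\cap(\{\lambda\}\times\mathbb R^N)$ is exactly the string polytope $\mathcal Q_{\w_0}(\lambda)$ by \cite[\S11]{FFL15}. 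In particular $P_{\val_{\w_0}}(\omega_i)=\mathcal Q_{\w_0}(\omega_i)$ and $P_{\val_{\w_0}}(\rho)=\mathcal Q_{\w_0}(\rho)$. Then I would invoke Theorem~\ref{thm: quasival for string}: under the hypothesis that $\init_{\bw_{\w_0}}(I_n)$ is prime, the semigroup $S(A_n,\hat\val_{\w_0})$ is generated by the images $\hat\val_{\w_0}(\bar p_J)$, which all sit in degrees $\varepsilon_1,\dots,\varepsilon_{n-1}$. Hence every element of $S(A_n,\hat\val_{\w_0})$ of degree $\lambda=\sum m_i\omega_i$ is a sum of generators lying in the degrees $\varepsilon_i$, each appearing with multiplicity according to $\lambda$; dually, for the value semigroup of the Plücker coordinates this says precisely that $S(A_n,\hat\val_{\w_0})_\lambda = \sum_i (S(A_n,\hat\val_{\w_0})_{\omega_i})^{+m_i}$ as subsets of $\mathbb Z^N$.

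From here the Minkowski property is immediate: a lattice point of $\mathcal Q_{\w_0}(\lambda)$ corresponds (via the bijection of \cite[Remark~2.19]{KM16}, valid since $\val_{\w_0}$ has full rank hence one-dimensional leaves) to an element of $S(A_n,\hat\val_{\w_0})_\lambda$; expressing it through the degree-one generators as above writes it as $p_{\omega_1}+\dots+p_{\omega_{n-1}}$ with $p_{\omega_i}$ a sum of $m_i$ lattice points of $\mathcal Q_{\w_0}(\omega_i)$, hence $p_{\omega_i}\in m_i\mathcal Q_{\w_0}(\omega_i)=\mathcal Q_{\w_0}(m_i\omega_i)$ is a lattice point (the dilate of a lattice polytope, generated in degree one, has all its lattice points as sums of the generating lattice points — this again uses that $S$ is generated in degree one). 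This is exactly Definition~\ref{def:mp} for arbitrary $\lambda\in\Lambda^{++}$, i.e.\ the strong Minkowski property.

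The main obstacle I anticipate is bookkeeping the passage between the semigroup language and the polytope language cleanly: one must be careful that "generated in degree one" for the graded semigroup $S(A_n,\hat\val_{\w_0})$ really does yield, for each fixed $\lambda$, a decomposition of lattice points of $\mathcal Q_{\w_0}(\lambda)$ as sums of lattice points of the $\mathcal Q_{\w_0}(m_i\omega_i)$ (and not merely as sums of rational points or lattice points of a coarser normalization). This is where the one-dimensional-leaves bijection and the fact that the $\omega_i$-graded pieces of the generating set are exactly $\{\val_{\w_0}(\bar p_J)\mid |J|=i\}$ — as established in \cite{FFL15} via essential sets, cf.\ the discussion after \cite[Proposition~2]{FFL15} — do the real work. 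Everything else is a formal unwinding of Theorem~\ref{thm: quasival for string} and the Newton--Okounkov dictionary already set up in \S\ref{sec:pre val}.
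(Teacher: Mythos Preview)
Your proposal is correct and follows essentially the same approach as the paper: both use Theorem~\ref{thm: quasival for string} to conclude that $S(A_n,\hat\val_{\w_0})$ is generated by the Pl\"ucker images (which are precisely the lattice points of the fundamental string polytopes $\mathcal Q_{\w_0}(\omega_i)$), and then use the $\Lambda^+$-grading to decompose any degree-$\lambda$ semigroup element as a sum of degree-$\omega_i$ generators. The only cosmetic difference is that the paper packages the final step as a lattice-point count showing $\sum_i a_i\mathcal Q_{\w_0}(\omega_i)=\mathcal Q_{\w_0}(\lambda)$, whereas you decompose each lattice point directly; both arguments rest on the same identification $\mathcal Q_{\w_0}(\lambda)\cap\mathbb Z^N = S(A_n,\hat\val_{\w_0})_\lambda$ coming from \cite{FFL15}.
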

\begin{proof}
By Theorem~\ref{thm: quasival for string} the value semigroup $S(A_n,\val_{\w_0})$ is generated by $\{\val_{\w_0}(\bar p_J)\mid 0\not=J\subsetneq[n]\}$.

Recall that $\mathbb C[SL_n/U]\cong \bigoplus_{\lambda\in\Lambda^{+}} V(\lambda)$ from \S\ref{sec:pre flag}.
The algebra therefore has a natural multigrading given by $\lambda^+$.
Consider the valuation $\hat \val_{\w_0}:\mathbb C[SL_n/U]\setminus\{0\}\to \Lambda^{+}\times \mathbb Z_{\ge 0}^{N}$ given by $ \hat \val_{\w_0}(f)=(\deg f,\val_{\w_0}(f))$ as in \cite{FFL15}.
Then for every $\lambda\in\Lambda^{++}$ we have
\[
\mathcal Q_{\w_0}(\lambda)=C(A_n,\hat\val_{\w_0})\cap \{\lambda\}\times\mathbb R^N.
\]
If $\lambda=\sum_{i=1}^{n-1}a_i\omega_i$ we know (by an argument similar to \cite[Proposition~2]{FFL15}) that the Minkowski sum of the fundamental string polytopes satisfies
\[
a_1\mathcal{Q}_{\w_0}(\omega_1)+\dots+a_{n-1}\mathcal{Q}_{\w_0}(\omega_{n-1})\subseteq \mathcal{Q}_{\w_0}(\lambda).
\]
As the generators of $S(A_n,\val_{w_0})$ are the union of lattice point of all fundamental string polytopes, we count
\[
\vert (a_1\mathcal{Q}_{\w_0}(\omega_1)+\dots+a_{n-1}\mathcal{Q}_{\w_0}(\omega_{n-1}))\cap \mathbb Z^N\vert = \dim_{\mathbb C}V(\lambda)=\vert  \mathcal{Q}_{\w_0}(\lambda)\vert.
\]
Hence, the two polytopes are equal.
\end{proof}

Regarding the opposite implication to Corollary~\ref{cor: prime implies MP}, we know that if $\mathcal Q_{\w_0}$ satisfies the strong Minkowski property, then $S(A_n,\val_{\w_0})$ is generated by $\{\val_{\w_0}(p_J)\mid 0\not=J\subsetneq[n]\}$. 
Hence, $\gr_{\val_{\w_0}}(A_n)$ is generated by $\overline{p_J}$ for $ 0\not=J\subsetneq[n]$ and we have a surjective morphism $\pi:\mathbb C[p_J]_J\to \gr_{\val_{\w_0}}(A_n)$.
Then $\ker(\pi)\subset \mathbb C[p_J]_J$ is a prime ideal with $\gr_{\val_{\w_0}}(A_n)\cong \mathbb C[p_J]_J/\ker(\pi)$.
So far, we didn't manage to prove that $\ker(\pi)=\init_{\bw_{\w_0}}(I_n)$.

Computational evidence for $\Flag_4$ and $\Flag_5$ leads us to the following conjecture.

\begin{conjecture}\label{conjecture}
Let $n\geq 3$ be an arbitrary integer. For every reduced expression $\underline w_0$, the weight vector ${\bf w}_{\underline w_0}$ lies 
in the relative interior of a maximal cone in $\trop(\Flag_n)$. 

Moreover, if the string cone satisfies MP this vector lies in the relative interior of the prime cone $C\subset \trop(\Flag_n)$, whose associated polytope is combinatorially equivalent to the string polytope $\mathcal Q_{\w_0}(\rho)$.
\end{conjecture}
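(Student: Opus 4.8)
This statement is the general-$n$ form of Theorem~\ref{thm:stringweight} (proved by computer for $n=4,5$) and the flag-variety analogue of Theorem~\ref{thm:main birat}; it is also the converse companion of Corollary~\ref{cor: prime implies MP}. The plan is to route everything through the single ideal $\init_{\bw_{\w_0}}(I_n)$. By Lemma~\ref{lem: init wM vs init M} one has $\init_{\bw_{\w_0}}(I_n)=\init_{M_{\w_0}}(I_n)$, and, since $I_n$ is multi-homogeneous, this equals $\init_{\hat M_{\w_0}}(I_n)$, where $\hat M_{\w_0}$ is the weighting matrix of the full-rank extended valuation $\hat\val_{\w_0}(f)=(\deg f,\val_{\w_0}(f))$ on $A_n$ (full rank because the essential bases of \cite{FFL15} are adapted to it, so $\hat\val_{\w_0}$ has one-dimensional leaves, and the string polytopes $\mathcal Q_{\w_0}(\rho)$ are $N$-dimensional). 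By \cite[Lemma~4.4]{KM16} (restated below Definition~\ref{def: wt matrix from valuation}) one then has $\gr_{\hat M_{\w_0}}(A_n)\cong\mathbb C[p_J]_J/\init_{\bw_{\w_0}}(I_n)$. Membership $\bw_{\w_0}\in\trop(\Flag_n)$, maximality of the surrounding cone, and primeness under MP will each be extracted from $\gr_{\hat M_{\w_0}}(A_n)$; the polytope claim is then immediate from Theorem~\ref{thm: quasival for string}.

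First I would check that $\init_{\bw_{\w_0}}(I_n)$ is monomial-free. No Pl\"ucker monomial $p^{\alpha}$ can lie in it: in $\gr_{\hat M_{\w_0}}(A_n)$ its class is the image of $\bar p^{\alpha}\in A_n$, which is nonzero since $I_n$ is monomial-free, and this class sits in the graded piece at level $\val_{\hat M_{\w_0}}(\bar p^{\alpha})=\hat\val_{\w_0}(\bar p^{\alpha})$ (equality by the monomial case of Lemma~\ref{lem: val vs val_Mval} used in the proof of Theorem~\ref{thm: val and quasi val with wt matrix}); a nonzero element realizing its own level has nonzero class, so $p^{\alpha}\notin\init_{\bw_{\w_0}}(I_n)$, i.e. $\bw_{\w_0}\in\trop(\Flag_n)$. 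For the relative interior of a \emph{maximal} cone one needs $\gr_{\hat M_{\w_0}}(A_n)$ to carry a grading of rank equal to its Krull dimension, forcing $\trop(V(\init_{\bw_{\w_0}}(I_n)))$ to collapse to its lineality space; this is the Kaveh--Manon side of the correspondence between full-rank valuations and maximal cones. For MP sequences it will follow from the primeness proved in the next step; for arbitrary $\w_0$ it is the binomiality observed computationally in Theorem~\ref{thm:stringweight}, and I expect a clean general argument to require identifying $\init_{\bw_{\w_0}}(I_n)$ with a maximal cone of $\trop(\Flag_n)$ read off the pseudoline arrangement of $\w_0$, in the spirit of Proposition~\ref{prop: init M_S= init C_S}.

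For the second assertion, assume the string cone $\mathcal Q_{\w_0}$ satisfies MP (Definition~\ref{def:mp}). The key step is to upgrade weak MP at the regular weight $\rho$ to the statement that $S(A_n,\hat\val_{\w_0})$ is generated by $\{\hat\val_{\w_0}(\bar p_J)\mid 0\ne J\subsetneq[n]\}$ (equivalently, the strong Minkowski property, or saturation of the value semigroup). Granting this, $\gr_{\hat\val_{\w_0}}(A_n)$ is generated by the classes of Pl\"ucker coordinates, so there is a surjection $\pi\colon\mathbb C[p_J]_J\to\gr_{\hat\val_{\w_0}}(A_n)$ with prime kernel, as in the discussion following Corollary~\ref{cor: prime implies MP}. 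Moreover $\init_{\bw_{\w_0}}(I_n)=\init_{\hat M_{\w_0}}(I_n)\subseteq\ker\pi$: combining Lemma~\ref{lem: val vs val_Mval} with the standard-monomial formula \cite[Proposition~4.3]{KM16} yields $\val_{\hat M_{\w_0}}\preceq\hat\val_{\w_0}$ on all of $A_n$, hence an induced surjection $\gr_{\hat M_{\w_0}}(A_n)\twoheadrightarrow\gr_{\hat\val_{\w_0}}(A_n)$ compatible with the Pl\"ucker generators. Since both $\mathbb C[p_J]_J/\init_{\bw_{\w_0}}(I_n)$ and $\mathbb C[p_J]_J/\ker\pi$ carry the same multigraded Hilbert function as $A_n$ (each being the associated graded of a filtration of the finite-dimensional graded pieces of $A_n$), this surjection is an isomorphism, so $\init_{\bw_{\w_0}}(I_n)=\ker\pi$ is prime --- which also settles the identity $\ker\pi=\init_{\bw_{\w_0}}(I_n)$ left open in the excerpt. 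Theorem~\ref{thm: quasival for string} then yields $\mathcal Q_{\w_0}(\rho)=\conv(\val_{\w_0}(\bar p_J)\mid 0\ne J\subsetneq[n])$, which is unimodularly --- hence combinatorially --- equivalent to the polytope of the normalization of $V(\init_{C}(I_n))$ for the prime cone $C\ni\bw_{\w_0}$.

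The main obstacle is precisely the passage from weak MP at $\rho$ to generation of $S(A_n,\hat\val_{\w_0})$ by the Pl\"ucker images: weak MP at $\rho$ controls only the single graded piece of degree $\rho$, and propagating it to all dominant weights --- via regularity of $\rho$, a saturation argument for the value semigroup, or a direct combinatorial study of the Pl\"ucker relations in the pseudoline arrangement of $\w_0$ --- is where the genuine work lies; everything downstream is then assembled from Theorems~\ref{thm: val and quasi val with wt matrix} and~\ref{thm: quasival for string}, Corollary~\ref{cor: prime implies MP}, and the Hilbert-function comparison above. A secondary difficulty is the maximality of the cone for non-MP $\w_0$, which I also expect to reduce to pseudoline-arrangement combinatorics.
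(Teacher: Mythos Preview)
The statement you are attempting to prove is labelled as a \textbf{Conjecture} in the paper, not a theorem; there is no proof to compare against. The paper explicitly presents it as a general-$n$ extrapolation of the computational Proposition~\ref{thm:stringweight} (for $n=4,5$), and just before stating it the authors write that they ``didn't manage to prove that $\ker(\pi)=\init_{\bw_{\w_0}}(I_n)$''. So your proposal is not being measured against an existing argument but against an open problem.

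That said, your Hilbert-function step is a genuine advance on what the paper records. Under the hypothesis that $S(A_n,\hat\val_{\w_0})$ is generated by the Pl\"ucker images (strong MP), your argument that $\init_{\bw_{\w_0}}(I_n)\subseteq\ker\pi$ together with equality of multigraded Hilbert functions does force $\init_{\bw_{\w_0}}(I_n)=\ker\pi$, hence primeness. This closes exactly the gap the paper leaves open and yields the clean equivalence ``strong MP $\Leftrightarrow$ $\init_{\bw_{\w_0}}(I_n)$ prime'', sharpening Corollary~\ref{cor: prime implies MP}. The inclusion $\init_{M_{\w_0}}(I_n)\subseteq\ker\pi$ is indeed correct for the reason you indicate: for $f\in I_n$ the non-minimal-weight part of $\bar f$ lies in $F_{\succ m}$, so the minimal-weight part does too and vanishes in the graded.

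Your own diagnosis of the obstructions is accurate. The conjecture is stated for \emph{weak} MP (Definition~\ref{def:mp}, a condition only at $\rho$), and nothing in the paper or in your outline bridges weak MP at $\rho$ to generation of the full value semigroup; this is the essential missing idea. Likewise, for arbitrary $\w_0$ your argument for monomial-freeness of $\init_{\bw_{\w_0}}(I_n)$ is fine and places $\bw_{\w_0}$ in $\trop(\Flag_n)$, but maximality of the surrounding cone (binomiality of the initial ideal) is not established---in the paper this is observed only computationally. So your proposal is a correct and useful partial reduction, but it does not settle the conjecture, and you have correctly located where it stops.
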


The following example discusses a similar construction of weight vectors for the FFLV polytope.

\begin{example}\label{exp.ffl}
Consider for $\Flag_4$ the birational PBW-sequence with good ordering 
$S:=(\alpha_1+\alpha_2+\alpha_3,\alpha_1+\alpha_2,\alpha_2+\alpha_3,\alpha_1,\alpha_2,\alpha_3)$ (similar to Example~\ref{exp: birat seq, PBW and string}.1 for Grassmannians).
We choose as total order on $\mathbb Z^{N}$ the \emph{homogeneous right lexicographic order} (see \cite[Example~9]{FFL15}), i.e.
\[
\bm \succ_{\text{rlex}} \bn :\Leftrightarrow \sum_{i=1}^N m_i>  \sum_{i=1}^N n_i, \text{ or } \sum_i m_i=\sum_i n_i
 \text{ and } \bm>_{\text{rlex}} \bn,
\]
where $<_{\text{rlex}}$ denotes the right lexicographic order on $\mathbb Z^N$.
With these choices the associated Newton-Okounkov polytope to the valuation $\val_S$ is the FFLV polytope (see \cite[\S13]{FFL15} and also \cite{Kir15}).
Similar to the above we define (according to the degrees defined in \cite{FFR15}) linear forms $e^{\min},e^{\text{reg}}:\mathbb Z^{N}\to \mathbb Z$ by
\begin{eqnarray*}
e^{\min}({{\bf m}}):=m_1+2m_2+m_3+2m_4+m_5+m_6,\\
e^{\rm reg}({\bf m}):=3m_1+4m_2+2m_3+3m_4+2m_5+m_6.
\end{eqnarray*}
We obtain in analogy to Definition~\ref{str.wt.vec} the corresponding weight vectors in $\mathbb R^{\binom{n}{1}+\dots +\binom{n}{n-1}}$
\begin{eqnarray*}
{\bf w}^{\min}=(0,2,2,1,0,1,1,2,1,2,0,1,1,1),\\
{\bf w}^{\rm reg}=(0,3,4,3,0,2,2,4,3,5,0,1,2,3).
\end{eqnarray*}
A computation in \emph{Macaulay2} shows that $\init_{{\bf w}^{\min}}(I_4)=\init_{{\bf w}^{\rm reg}}(I_4)$ is a binomial prime ideal. 
Hence, ${\bf w}^{\min}$ and ${\bf w}^{\rm reg}$ lie in the relative interior of the same prime cone $C\subset \trop(\Flag_4)$. 
Using \emph{polymake}~\cite{GJ00} we verify that the polytope associated to $C$ is combinatorially equivalent to the FFLV polytope $P(\rho)$. 
We did the analogue of this computation for $\Flag_5$ and the outcome is the same, $\init_{{\bf w}^{\min}}(I_5)=\init_{{\bf w}^{\rm reg}}(I_5)=\init_C(I_5)$ with the polytope associated to $C$ being combinatorially equivalent to $P(\rho)$. 
The weight vectors ${\bf w}^{\min}$ and ${\bf w}^{\text{reg}}$ for $\Flag_5$ can be found in Table~\ref{tab:stringweight5} in the appendix.
In fact, in \cite{FFFM} Fang-Feigin-Fourier-Makhlin show that for arbitrary $n$ the vectors $\bw^{\min}$ and $\bw^{\text{reg}}$ lie in the relative interior of a maximal prime cone of $\trop(\Flag_n)$ and they give explicit inequalities to describe the cone.
\end{example}

\subsection{Toric degenerations from non-prime cones}\label{Algorithmic approach}

As we have seen in \S\ref{sec:3}, not all maximal cones in the tropicalization of a variety give rise to prime initial ideals and hence to toric degenerations. In fact, there may also be tropicalizations without prime cones (see Example \ref{ex:badCone}).
Let $X=V(I)$ be a subvariety of a toric variety $Y$. 
In this section, we give a recursive procedure (Procedure~\ref{alg:Kh_Basis}) to compute a new embedding $V(I')\subset Y'$ of $X$ in case $\trop(X)$ has non-prime cones. Let $C$ be a non-prime cone. If the procedure terminates, the new tropical variety $\trop(X')$ has more prime cones than $\trop(X)$ and at least one of them is projecting onto $C$. 
We apply this procedure to $\Flag_4$ and compare the new toric degenerations with those obtained so far (see Proposition~\ref{prop:output}). The procedure terminates for $\Flag_4$, but we are still investigating the conditions for which this is true in general.

\medskip

\begin{algorithm}[h]
\SetAlgorithmName{Procedure}{} 
\KwIn{\medskip {\bf Input:\ }  $A = \mathbb C[x_1,\ldots,x_n]/I$, where $\mathbb C[x_1,\ldots,x_n]$ is the total coordinate ring of the toric variety $Y$ and $I $ defines the subvariety $V(I)\subset Y$,
$C$ a non-prime cone of $\trop(V(I))$.
}
\BlankLine

{\bf Initialization:}\\
Compute the primary decomposition of $\init_C(I)$;\\
$I(W_C)=$ unique prime toric component in the decomposition;\\
$G=$ minimal generating set of $I(W_C)$.\\
Compute a list of binomials $L_C=\{f_{1},\ldots,f_{s}\}$ in $G$, which are not in $\init_C(I)$;\\
$A'=\mathbb C[x_1, \dots, x_n,y_{1},\ldots,y_{s}]/I'$ with $I'=I+\langle y_{1}-f_{1},\ldots,y_{s}-f_{s}\rangle $;\\
$V(I')$ subvariety of $Y'$ whose total coordinate ring is $\mathbb C[Y]:=\mathbb C[x_1, \dots, x_n,y_{1},\ldots,y_{s}]$.\\
Compute $\trop(V(I'))$;\\
\For {all  prime cones $C'\in\trop(V(I'))$} {

\If{    $\pi(C')$ is contained in the relative interior of $C$ }{{\bf Output:} The algebra $A'$   
and the ideal $\init_{C'}(I')$ of a  toric degeneration of $V(I')$.}\Else{
Apply the procedure again  to $A'$  and $C'$.}
}
\label{alg:Kh_Basis}
\caption{Computing  new embeddings of  the variety $X$ in case $\trop(X)$ contains non-prime cones}
\end{algorithm}

We explain Procedure~\ref{alg:Kh_Basis}: consider a toric variety $Y$ whose total coordinate ring with associated $\mathbb Z^k$-degree $\deg:\mathbb Z^{n}\to\mathbb Z^k$ is $\mathbb C[x_1, \dots, x_n]$. 
Let $X$ be the subvariety of $Y$ associated to an ideal $I\subset \mathbb C[x_1, \dots, x_n]$, where the Krull dimension of $A=\mathbb C[x_1, \dots, x_n]/I$ is $d$.
Denote by $\trop(V(I))$ the tropicalization of $X$ intersected with the torus of $Y$. 
Suppose there is a non-prime cone $C\subset \trop(V(I))$ with multiplicity one. 
By  Lemma~\ref{toric:lemma}, we have that $I(W_C)$ is the unique toric ideal in the primary decomposition of $\init_{C}(I)$, hence $\init_{C}(I)\subset I(W_C)$. 
We compute $I(W_C)$ using the function $\mathtt{primaryDecomposition}$ in \emph{Macaulay2}.
Fix a minimal binomial generating set $G$ of $I(W_C)$, and let $L_C=\{f_1, \ldots, f_{s}\}$ be the set consisting of binomials in $G$ that are not in $\init_{C}(I)$. 
By \emph{Hilbert's Basis Theorem} ${s}$ is a finite number. 
The absence of these binomials in $\init_C(I)$ is the reason why the initial ideal is not equal to $I(W_C)$, hence not prime.
We introduce new variables $\{y_{1},\ldots,y_{{s}}\}$  and consider the algebra $A' = \mathbb C[x_1, \ldots, x_n,y_{1},\ldots,y_{s}]/I'$, where 
\[
I'=I+\langle y_{1}-f_{1},\ldots,y_{s}-f_{s}\rangle.
\]  
The ideal $I'$  is a homogeneous ideal in $\mathbb{C}[x_1, \ldots, x_n, y_{1},\ldots,y_{s}]$ with respect to the grading 
\[
(\deg(x_1), \ldots, \deg(x_n),\deg(f_1),\ldots,\deg(f_s)).
\]
The new variety $V(I')$ is a subvariety of the toric variety $Y'$ with total coordinate  $\mathbb C[Y']:=\mathbb C[x_0, \ldots, x_n,y_{1},\ldots,y_{s}]$.
For example, if $V(I)$ is a subvariety of a projective space then $V(I')$ is contained in a weighted projective space.

Since the algebras $A$ and $A'$ are isomorphic as graded algebras, the varieties $V(I)$ and $V(I')$ are isomorphic. 
We have a monomial map 
\[
\pi: \mathbb C[x_1,\ldots,x_n]/I\to \mathbb C[x_0, \ldots, x_n,y_{1},\ldots,y_{s}]/I'
\]
inducing a surjective map $\trop(\pi):\trop(V(I'))\to \trop(V(I))$ (see \cite[Corollary 3.2.13]{M-S}). 
The map $\trop(\pi)$ is the projection onto the first $n$ coordinates.
Suppose there exists a prime cone $C'\subset \trop(V(I'))$, whose projection has a non-empty intersection with the relative interior of $C$. Then by construction we have $\init_C(I)\subset \init_{C'}(I') \cap \mathbb{C}[x_0,\ldots,x_n]$ and the procedure terminates. In this way we obtain a new initial ideal $\init_{C'}(I')$ which is toric and hence gives a new toric degeneration of the variety $V(I')\cong V(I)$.
If only non-prime cones are projecting to $C$ then run this procedure again with $A'$ and $ C'$, where the latter is a maximal cone of $\trop(V(I'))$, which projects to $C$.
We can then repeat the procedure starting from a different non-prime cone.

The  function to apply Procedure \ref{alg:Kh_Basis} is  \texttt{findNewToricDegenerations} and it is part of the package $\mathtt{ToricDegenerations}$. This computes only one re-embedding for each non-prime cone. It is possible to use \texttt{mapMaximalCones} to obtain the image of $\trop(V(I'))$ under the map $\pi$.

\begin{remark}
If $f_i$ is a polynomial in $\mathbb C[x_1,\ldots,x_n]$ with the standard grading and  $ \deg(f_i) > 1$, then we need to
homogenize the ideal $I'$ before computing the tropicalization with \emph{Gfan}. This is done by adding a new variable $h$. The homogenization of $I'$ with respect to $h$ is denoted by $I'_{proj}\subseteq \mathbb C[x_1,\ldots,x_n,y_1,\ldots,y_s,h]$. Then by \cite[Proposition 2.6.1]{M-S} for every $\bf {w}$ in $\mathbb {R}^{n+s+2}$ the ideal $\init_{\bf w}(I')$ is obtained from $\init_{(\bf{w},\text{0})}(I'_{proj})$ by setting $h=1$.
\end{remark}

If the cone $C$ is prime, we can construct a valuation $\val_C$ on $\Bbbk[x_1,\ldots,x_n]/I$ in the following way. 
Consider the matrix $W_C$ in Equation~(\ref{def:W}). 
For monomials $m_i = c {\bf x} ^{{\bf {\alpha}}_i}\in \mathbb C[x_1,\ldots x_n] $ define 
\begin{equation}
\val(m_i) = W_C {\bf \alpha}_i\quad \text{and}\quad \val (\sum_i m_i) = \min_i \{\val(m_i)\},
\end{equation} 
where the minimum on the right side is taken with respect to the lexicographic order on $(\ZZ^d, +)$. 
This is a valuation on $\mathbb C[x_1,\dots,x_n]$ of rank equal to the Krull dimension of $A$ for every cone $C$. 
Composing $\val $ with the quotient morphism $p:\mathbb C[x_1,\ldots,x_n]\to \mathbb C[x_1,\ldots,x_n]/I$ we obtain a map $\val_C$, which is a valuation if and only if the cone $C$ is prime. 
Moreover, in \cite{KM16} Kaveh and Manon prove that a cone $C$ in $\trop(V(I))$ is prime if and only if $A=\Bbbk[x_1,\ldots,x_n]/I$ has a  finite \textit{Khovanskii basis} for the valuation $\val_C$ constructed from the cone $C$. 

Procedure \ref{alg:Kh_Basis} can be interpreted as finding an extension $\val_{C'}$ of $\val_C$  so that $A'$ has finite Khovanskii basis with respect to $\val_{C'}$. The  Khovanskii basis is given by the images of $x_1, \dots, x_n,y_1, \dots ,y_s$ in $A'$. 
We illustrate the procedure in  the following  example. 

\begin{example}
\label{ex:badCone}
Consider the algebra $A = \mathbb C[x, y, z]/ \langle xy+xz+yz\rangle $. The tropicalization of $V(\langle xy+xz+yz\rangle)\subset \mathbb P^2$ has three maximal cones. The corresponding initial ideals are $\langle xz+yz\rangle,\langle xy+yz\rangle$ and $\langle xy+xz\rangle$, none of which is prime. Hence they do not give rise to toric degenerations. The matrices associated to each cone are
\[
{W_{C_1}} = \begin{pmatrix} 
0 & 0 & -1 \\
1 & 1 & 1  
\end{pmatrix},\quad
{W_{C_2}} = \begin{pmatrix} 
0 &-1 &  0 \\
1 & 1 & 1  
\end{pmatrix}\quad\text{and}\quad
{W_{C_3}} = \begin{pmatrix} 
-1 & 0 & 0 \\
 1 &  1 &  1  
\end{pmatrix}.
\]
We now apply Procedure \ref{alg:Kh_Basis} to the cone $C_1$. The initial ideal associated to $C_1$ is generated by $xz+yz$. In this case $\init_{C_1}(I)=\langle z \rangle \cdot  \langle x+y\rangle$ hence for the missing binomial $x+y$ we adjoin a new variable $u$ to $ \mathbb C[x, y, z]$  and the new relation $u-x-y$ to $I$. 
We have  
\[
I'=\langle xy+xz+yz,u-x-y\rangle\ \text{and}\ A' = \mathbb C[x, y,z,u]/I'
\]
with $V(I')$ a subvariety of $\mathbb P^3$.
After computing the tropicalization of $V(I')$ we see that there exists a prime cone $C'$ such that  $\pi(C')=C$. The matrix $W_{C'}$ associated to the cone $C'$ is
\[
{W'} = \begin{pmatrix} 
0 & 0 & -1 &  1\\
1 & 1 & 1 & 1
\end{pmatrix}.
\]
The initial ideal $\init_{C'}(I')$ gives a toric degeneration of $V(I')$. The image of the set $\{ x, y, z,u\}$ in $A'$ is a Khovanskii basis for $S(A', \val_{C'})$.
Repeating this process for the cones $C_2$ and $C_3$ of $\trop(V(xy+xz+yz))$, we get prime cones $C'_2$ and $C'_3$ whose projections are $C_2$ and $C_3$ respectively. 
Hence, there is a valuation with finite Khovanskii basis and a corresponding toric degeneration for every maximal cone.
\end{example}

\begin{figure}[h]
\begin{center}
\begin{tikzpicture}[scale=.7]
\node at (-4,4.5) {$\trop(V(I'))\supset C_1,C_2,C_3$};

\draw[-][dashed] (1.7,5.7) -- (4,6);
\draw[-][dashed] (3,5.7) -- (4,6);
\draw[-][dashed] (5,5) -- (4,6);
\draw[-] (3,3.5) -- (4,6);
\draw[fill= green,opacity=0.6] (3,3.5) -- (4,6) -- (5,5) --cycle;

\draw[fill= yellow,opacity=0.5] (1.7,5.7) -- (4,6) -- (3,3.5) --cycle;
\draw[fill= red,opacity=0.5] (3,5.7) -- (4,6) -- (3,3.5) --cycle;
\draw[->] (3,3) -- (3,2);
\node[right] at (3,2.5) {$\trop(\pi)$};
\node at (-4,1.1) {$\trop(\Flag_4)\supset C$};
\draw[fill=yellow] (1,0) -- (3,1.7) -- (3,0) --cycle;
\draw[fill= green] (5,0) -- (3,1.7) -- (3,0) --cycle;
\draw[-] (1,0) -- (3,1.7);
 \draw[-] (5,0) -- (3,1.7);
 \draw[-][ultra thick ,red] (3,0) -- (3,1.7);
\draw[-] (1,0) -- (5,0);
    \end{tikzpicture}
\caption{The three triangles above represent  the three cones in $\trop(V(I'))$ which project down to the non-prime cone $C$ in $\trop(\Flag_4)$.}\label{figure:2}
\end{center}
\end{figure}
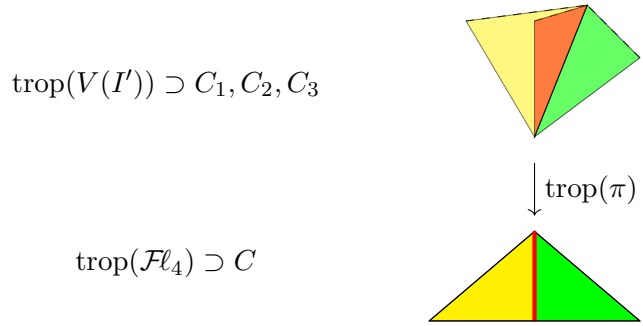

We now apply Procedure~\ref{alg:Kh_Basis}  to $\trop(\Flag_4)$.

\begin{proposition}\label{prop:output}
Each of the non-prime cones of $\trop(\Flag_4)$ gives rise to three toric degenerations, which are not isomorphic to any degeneration coming from the prime cones of $\trop(\Flag_4)$. Moreover, two of the three new polytopes are combinatorially equivalent to the previously missing string polytopes for $\rho$ in the class String 4.
\end{proposition}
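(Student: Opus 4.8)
The plan is to run Procedure~\ref{alg:Kh_Basis} explicitly on the single $S_4\rtimes\mathbb Z_2$-orbit of non-prime cones in $\trop(\Flag_4)$ (Orbit 5 in Table~\ref{fig:num}), and to read off the resulting toric degenerations together with their normal polytopes. First I would fix a representative non-prime cone $C$ in Orbit 5 and compute $\init_C(I_4)$ and its primary decomposition; since $C$ has multiplicity one (Theorem~\ref{flag4}), Lemma~\ref{toric:lemma} guarantees a unique toric component $I(W_C)$, and I would extract the binomial generators of $I(W_C)$ that are missing from $\init_C(I_4)$. Adjoining new variables $y_1,\dots,y_s$ and the relations $y_j-f_j$ produces the re-embedded algebra $A'$ and ideal $I'\subset\mathbb C[Y']$, whose tropicalization I would compute with \emph{gfan} through the \texttt{ToricDegenerations} package (exploiting the residual symmetry to keep the computation feasible). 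The key claim is then that exactly three maximal cones $C_1,C_2,C_3$ of $\trop(V(I'))$ project under $\trop(\pi)$ onto (the relative interior of) $C$, each of these $C_i$ is prime, and the three initial ideals $\init_{C_i}(I')$ are pairwise non-isomorphic — this is the picture sketched in Figure~\ref{figure:2}.

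Second I would compute, for each $C_i$, the normalization polytope exactly as in the proof of Theorem~\ref{flag4}: take the matrix $W_{C_i}$, reduce to an index-one sublattice, order the rows so the grading $D'$ of $\mathbb C[Y']$ appears on top, and intersect $\mathbb Z_{\ge 0}\mathcal B_{C_i}$ with the appropriate grading hyperplanes to recover the polytope of the associated toric variety. Using \emph{polymake} I would compute the $f$-vectors and check that none of the three new polytopes is combinatorially equivalent to any of the four polytopes from the prime cones of $\trop(\Flag_4)$ listed in Table~\ref{fig:num}; combinatorial inequivalence of normal fans rules out unimodular equivalence and hence isomorphism of the normalizations, so the three degenerations are genuinely new. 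Since $V(I')\cong V(I_4)=\Flag_4$ as graded varieties (the $y_j$ are defined polynomially in the old variables), these are bona fide toric degenerations of $\Flag_4$.

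Third I would compare two of the three new polytopes with the string polytope $\mathcal Q_{\w_0}(\rho)$ in the class String 4 (the reduced expression $132312$, which by Proposition~\ref{thm:stringpoly} fails MP and therefore — by Corollary~\ref{cor: prime implies MP} in contrapositive form, or directly by Proposition~\ref{thm:stringweight} — cannot come from a prime cone of $\trop(\Flag_4)$ itself). Here I would compute the $f$-vector of the String~4 polytope and match it against the $f$-vectors of the three new polytopes, then upgrade a combinatorial match to the statement in the proposition. The cleanest route is a direct \emph{polymake} combinatorial-equivalence test; alternatively one can exhibit an explicit lattice isomorphism, but for the stated conclusion (combinatorial equivalence) the former suffices.

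The main obstacle I anticipate is twofold. On the computational side, verifying that Procedure~\ref{alg:Kh_Basis} actually terminates for $C$ — i.e. that the re-embedded tropical variety $\trop(V(I'))$ already contains prime cones projecting onto $C$, rather than only further non-prime cones requiring another round of the procedure — is exactly the point flagged as open in general in \S\ref{Algorithmic approach}, so it must be checked by hand for this specific $C$ and not quoted as a theorem. On the conceptual side, the recursion could in principle produce many cones projecting to $C$, and pinning the count to precisely three (and proving each is prime over $\mathbb C$, not just over $\mathbb Q$, again via the $\init_{C_i}(I')=I(W_{C_i})$ check as in Theorem~\ref{flag4}) is where the real work lies; everything downstream — polytope extraction and the \emph{polymake} comparisons — is routine given the earlier machinery.
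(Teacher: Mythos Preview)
Your proposal is correct and follows essentially the same computational approach as the paper's proof: apply Procedure~\ref{alg:Kh_Basis} to one representative of the unique non-prime orbit, find the prime cones in $\trop(V(I'))$ projecting onto $C$, extract their polytopes, and compare via \emph{polymake} with both the prime-cone polytopes of Table~\ref{fig:num} and the String~4 polytope. The only refinement the paper's computation reveals is that there is just one missing binomial (so $s=1$, with $I'=I_4+\langle w-p_2p_{134}+p_1p_{234}\rangle$), and that $\trop(V(I'))$ has $105$ maximal cones of which $99$ are prime and three project to $C$; your cautionary remarks about termination and the exact count are well-placed but resolved by this explicit calculation.
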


\begin{proof}
By Theorem \ref{flag4} we know that $\trop(\Flag_4)$ has six non-prime cones forming one $S_4\rtimes\mathbb Z_2$-orbit. Hence, we apply Procedure~\ref{alg:Kh_Basis} to only one non-prime cone. The result for the other non-prime cones is the same up to symmetry. In particular, the obtained toric degenerations from one cone is isomorphic to those coming from another cone. 
We describe the results for the maximal cone $C$ associated to the  initial ideal $\init_C(I_4)$ defined by the following binomials: 
\begin{center}
\begin{tabular}{llll}

$p_4p_{123}-p_{3}p_{124},$ &$p_{24}p_{134}-p_{14}p_{234}$, &$p_{23}p_{134}-p_{13}p_{234}$,
&$p_{2}p_{14}-p_{1}p_{24}$, \\ $p_{2}p_{13}-p_{1}p_{23}$, &$p_{24}p_{123}-p_{23}p_{124}$,& $p_{14}p_{123}-p_{13}p_{124}$, &$p_{4}p_{23}-p_{3}p_{24}$\\ 
&$p_{4}p_{13}-p_{3}p_{14}$, \text{ and }&$p_{14}p_{23}-p_{13}p_{24}.$
\end{tabular}
\end{center}
We define the ideal $I'=I_4+\langle w-p_{2}p_{134}+p_{1}p_{234}\rangle$.  
The grading on the variables $p_1,\ldots,p_{234}$ and $w$  
is given by the matrix 
\setcounter{MaxMatrixCols}{15}
\[
D':=\bigg(\begin{smallmatrix}
 1 & 1 & 1 & 1 & 0 & 0 & 0 & 0 & 0 & 0 & 0 & 0 & 0 & 0 &1\\
 0 & 0& 0 &0 &1 &1&1&1&1&1&0&0&0&0&0\\
 0&0&0&0&0&0&0&0&0&0&1&1&1&1&1
\end{smallmatrix}\bigg).
\]
It extends the grading on the variables $p_1,\ldots,p_{234}$ given by the matrix $D$ in (\ref{degree}).
The tropical variety $\trop(V(I'))$ has $105$ maximal cones, $99$ of which are prime. Among them we can find three maximal prime cones, which are mapped to $C$ by $\trop(\pi)$ (see Figure \ref{figure:2}). We compute the polytopes associated to the normalization of  these three toric degenerations by applying the same methods as in Theorem \ref{flag4}. Using \emph{polymake} we check that two of them are combinatorially equivalent to the string polytopes for $\rho$ in the class String 4. Moreover, none of them is combinatorially equivalent to any polytope coming from prime cones of $\trop(\Flag_4)$, hence they define different toric degenerations.
\end{proof}

Proposition~\ref{prop:output} suggests that for $\w_0=s_1s_3s_2s_3s_1s_2\in$ String~4 the weighted string cone $\mathcal Q_{\w_0}$ does not satisfy MP because the element 
\[
\val_{\w_0}(\bar p_{2}\bar p_{134}+\bar p_{1}\bar p_{234})\succeq_{\Psi} \min{}_{\prec_\Psi}\{\val_{\w_0}(\bar p_2\bar p_{134}),\val_{\w_0}(\bar p_1\bar p_{234})\}
\]
is missing as a generator for $S(A_4,\val_{\w_0})$. As $\val_{\w_0}(\bar p_2\bar p_{134})=\val_{\w_0}(\bar p_1\bar p_{234})=(1,0,1,1,0,0)$, we deduce $\val_{\w_0}(\bar p_{2}\bar p_{134}+\bar p_{1}\bar p_{234})\succ_{\Psi} (1,0,1,1,0)$.
Hence, this element can not be obtained from the images of Plücker coordinates under $\val_{\w_0}$ and therefore $\val_{\w_0}(\bar p_{2}\bar p_{134}+\bar p_{1}\bar p_{234})$ has to be added as a generator for $S(A_4,\val_{\w_0})$.

\begin{remark}
Procedure~\ref{alg:Kh_Basis} could be applied also to $\Flag_5$ but we have not been able to do so. In fact, the tropicalization for $\trop(V(I_5'))$ did not terminate since the computation can not be simplified by symmetries.  \end{remark}

\begin{appendices}
\appendixpage
\noappendicestocpagenum
\addappheadtotoc

\chapter{Grassmannians}
\section{Plabic weight vectors for \texorpdfstring{$\Gr(3,\mathbb C^6)$}{}}\label{sec:app Gr}

Here are the computational findings on plabic weight vectors $\bw_\G$ defined in \S\ref{sec:BFFHL} in more detail for $\Gr(3,\mathbb C^6)$ (based on the joint work \cite{BFFHL}). The code can be found in \cite{M2c}.

There are 34 reduced plabic graphs for $\Gr(3,\mathbb C^{6})$, and they give rise to the weight vectors in Table~\ref{tab:matching}.
The first column indicates the unfrozen variables corresponding to a cluster
and determining a plabic graph;
the second column gives the corresponding weight vector
in the basis indexed by
\begin{multline*}
\{123, 124, 134, 234, 125, 135, 235,
145, 245, 345, 126, 136, 236,
146, 246, 346, 156, 236, 356, 456\}.
\end{multline*}
The third column gives the corresponding isomorphism class of a cone in the tropical Grassmannian as described  in \cite[after Lemma 5.3]{SS04} and, for $GG$ \cite[after Lemma 5.1]{SS04}; the fourth column gives the permutation
($\sigma = [a_1a_2a_3a_4a_5a_6]$ where $\sigma(i) = a_i$) that moves the initial ideal
of the weight vector in column 2 to the initial ideal of the corresponding cone in the tropical Grassmannian using the sample vectors given in \cite{SS04}.
The permutations were obtained using Macaulay 2 \cite{M2}, see \cite{M2c} for the code. The last column refers to the enumeration of cluster seeds from \cite{BCL}, where a combinatorial model for cluster algebras of type $D_4$ is studied. The 50 seeds are given by centrally symmetric pseudo-triangulations of a once punctured dirk with $8$ marked points. 
In the paper they analyze symmetries among the cluster seeds and associate each seed to a isomorphism class of maximal cones in $\trop(\Gr(3,\mathbb C^{6}))$. Although they consider all 50 cluster seeds, the outcome is similar to ours: they recover only six of the seven types of maximal cones, missing the cone of type $EEEE$.

\begin{center}
\begin{table}
\centering
\scalebox{0.76}{
\begin{tabular}{|c|c|c|c|c|}
\hline
mutable $\A$-cluster variables in seed $s_{\mathcal G}$ & plabic weight vector $\mathbf w_{\mathcal G}\in\mathbb R^{\binom{6}{3}}$ & type in \cite{SS04} & $\sigma$ & $\#$ of $s_{\mathcal G}$ in \cite{BCL}\\ 
\hline
 $p_{135}, p_{235}, p_{145}, p_{136}$&(0,0,1,1,1,1,1,1,1,4,1,1,1,1,1,4,4,4,5,5) & GG & 123456 & 49\\
\hline
$p_{124}, p_{246}, p_{346}, p_{256}$&(0,0,0,3,0,0,3,3,4,4,3,3,4,4,4,4,4,4,4,7) & GG & 134562& 23\\
\hline
$p_{125}, p_{235}, p_{245}, p_{256}$&(0,0,1,1,0,1,1,2,2,5,3,3,3,3,3,5,3,3,5,6) & EEFF1 & 124563&17\\
\hline
$p_{235}, p_{136}, p_{236}, p_{356}$&(0,0,0,0,0,0,0,1,1,2,0,0,0,1,1,2,2,2,3,5) & EEFF1 & 123456& 11\\
\hline
$p_{124}, p_{125}, p_{145}, p_{245}$&(0,0,2,3,1,2,3,2,3,6,4,4,4,4,4,6,5,5,6,6) &  EEFF1 & 135642& 15\\
\hline
$p_{124}, p_{125}, p_{245}, p_{256}$&(0,0,1,2,0,1,2,2,3,5,4,4,4,4,4,5,4,4,5,6) & EEFF1 & 134562& 27\\
\hline
$p_{235}, p_{236}, p_{256}, p_{356}$&(0,0,0,0,0,0,0,2,2,3,1,1,1,2,2,3,2,2,3,6) & EEFF1 & 123564& 29\\
\hline
$p_{136}, p_{236}, p_{346}, p_{356}$&(0,0,0,1,0,0,1,2,2,2,0,0,1,2,2,2,3,3,3,6) & EEFF1 & 312456&12\\
\hline
$p_{236}, p_{346}, p_{256}, p_{356}$&(0,0,0,1,0,0,1,3,3,3,1,1,2,3,3,3,3,3,3,7) & EEFF1 & 312564& 28\\
\hline
$p_{134}, p_{136}, p_{146}, p_{346}$&(0,0,0,3,1,1,3,2,3,3,1,1,3,2,3,3,5,5,5,6) & EEFF1 & 356421&9\\
\hline
$p_{134}, p_{145}, p_{136}, p_{146}$&(0,0,1,3,2,2,3,2,3,4,2,2,3,2,3,4,6,6,6,6) & EEFF1 & 345612&8\\
\hline
$p_{124}, p_{134}, p_{145}, p_{146}$&(0,0,1,4,2,2,4,2,4,5,3,3,4,3,4,5,6,6,6,6) & EEFF1 & 145632&32\\
\hline
$p_{125}, p_{235}, p_{145}, p_{245}$&(0,0,2,2,1,2,2,2,2,6,3,3,3,3,3,6,4,4,6,6) & EEFF1 & 125643&14\\
\hline
$p_{124}, p_{134}, p_{146}, p_{346}$&(0,0,0,4,1,1,4,2,4,4,2,2,4,3,4,4,5,5,5,6) & EEFF1 & 156432&31\\
\hline
$p_{125}, p_{235}, p_{256}, p_{356}$&(0,0,0,0,0,0,0,2,2,4,2,2,2,3,3,4,3,3,4,6) & EEFF2 & 125346&30\\
\hline
$p_{124}, p_{134}, p_{125}, p_{145}$&(0,0,1,3,1,1,3,1,3,5,3,3,4,3,4,5,5,5,5,5) & EEFF2 & 163452&33\\
\hline
$p_{134}, p_{136}, p_{346}, p_{356}$&(0,0,0,2,0,0,2,2,3,3,0,0,2,2,3,3,4,4,4,6) & EEFF2 & 512634&10\\
\hline
$p_{136}, p_{236}, p_{146}, p_{346}$&(0,0,0,2,1,1,2,2,2,2,1,1,2,2,2,2,4,4,4,6) & EEFF2 & 612534&13\\
\hline
$p_{124}, p_{145}, p_{245}, p_{146}$&(0,0,2,4,2,3,4,3,4,6,4,4,4,4,4,6,6,6,7,7) & EEFF2 & 153462&16\\
\hline
$p_{235}, p_{245}, p_{236}, p_{256}$ &(0,0,1,1,0,1,1,2,2,4,2,2,2,2,2,4,2,2,4,6) & EEFF2 & 126345&18\\
\hline
$p_{125}, p_{135}, p_{235}, p_{145}$&(0,0,1,1,1,1,1,1,1,5,2,2,2,2,2,5,4,4,5,5) & EFFG & 123456&43\\
\hline
$p_{135}, p_{235}, p_{136}, p_{356}$&(0,0,0,0,0,0,0,1,1,3,0,0,0,1,1,3,3,3,4,5) & EFFG & 345612 &45\\
\hline
$p_{236}, p_{246}, p_{346}, p_{256}$&(0,0,0,2,0,0,2,3,3,3,2,2,3,3,3,3,3,3,3,7) & EFFG & 612345&26\\
\hline
$p_{124}, p_{146}, p_{246}, p_{346}$&(0,0,0,4,1,1,4,3,4,4,3,3,4,4,4,4,5,5,5,7) & EFFG & 134562&21\\
\hline
$p_{134}, p_{135}, p_{145}, p_{136}$ &(0,0,1,2,1,1,2,1,2,4,1,1,2,1,2,4,5,5,5,5) & EFFG & 561234&47\\
\hline
$p_{124}, p_{245}, p_{246}, p_{256}$&(0,0,1,3,0,1,3,3,4,5,4,4,4,4,4,5,4,4,5,7) & EFFG & 356124 &24\\
\hline
$p_{245}, p_{236}, p_{146}, p_{246}$&(0,0,1,3,1,2,3,3,3,4,3,3,3,3,3,4,4,4,5,7) & EEEG & 265341&20\\
\hline
$p_{134}, p_{125}, p_{135}, p_{356}$&(0,0,0,1,0,0,1,1,2,4,1,1,2,2,3,4,4,4,4,5) & EEEG & 126534&48\\
\hline
$p_{125}, p_{135}, p_{235}, p_{356}$&(0,0,0,0,0,0,0,1,1,4,1,1,1,2,2,4,3,3,4,5) & EEFG & 342156 &42\\
\hline
$p_{134}, p_{125}, p_{135}, p_{145}$&(0,0,1,2,1,1,2,1,2,5,2,2,3,2,3,5,5,5,5,5) & EEFG & 563421&44\\
\hline
$p_{134}, p_{135}, p_{136}, p_{356}$&(0,0,0,1,0,0,1,1,2,3,0,0,1,1,2,3,4,4,4,5) & EEFG & 215634&46\\
\hline
$p_{245}, p_{236}, p_{246}, p_{256}$&(0,0,1,2,0,1,2,3,3,4,3,3,3,3,3,4,3,3,4,7) & EEFG & 156342&25\\
\hline
$p_{236}, p_{146}, p_{246}, p_{346}$&(0,0,0,3,1,1,3,3,3,3,2,2,3,3,3,3,4,4,4,7) & EEFG & 634215&19\\
\hline
$p_{124}, p_{245}, p_{146}, p_{246}$&(0,0,1,4,1,2,4,3,4,5,4,4,4,4,4,5,5,5,6,7) & EEFG & 321564&22\\
\hline
\end{tabular}}
\caption{Dictionary for the 34 plabic graphs.}\label{tab:matching}
\end{table}
\end{center}

\chapter{Flag varieties}
In this Appendix we provide numerical evidence of our computations in \S\ref{sec:BLMM}.  Table~\ref{nonprime}  contains data on the non-prime maximal cones of $\trop(\Flag_5)$. In Table~\ref{tab:f-vector5} there is information on the polytopes obtained from maximal prime cones of $\trop(\Flag_5)$. This includes the F-vectors, combinatiral equivalences among the polytopes, and between those and the string polytopes, resp. FFLV polytope, for $\rho$.
Lastly Table~\ref{tab:stringweight5} contains information on the string polytopes and FFLV polytope for $\Flag_5$, such as the weight vectors constructed in \S\ref{string:weight}, primeness of the initial ideals with respect to these vectors, and the MP property.  

\section{Algebraic and combinatorial invariants of \texorpdfstring{$\trop(\Flag_5)$}{}}\label{polyFlag5}

Below we collect in a table all the information about the non-prime initial ideals of $\Flag_5$ up to symmetry.

\begin{table}[h]
\begin{center}
\begin{tabular}{|l|l|l|l|l|l|}
 \hline
Number of Orbits & $\#$Generators\\
 \hline
30&   69\\[-1ex]
267  &66\\[-1ex]
37  &68\\[-1ex]
11  &70\\[-1ex]
10  &71\\[-1ex]
2&  73\\
\hline
\end{tabular}
\caption{Data for non-prime initial ideals of $\Flag_5$.}\label{nonprime}
\end{center}
\end{table}

The following table shows the F-vectors of the polytopes associated to maximal prime cones of $\trop(\Flag_5)$ for one representative in each orbit. The last column contains information on the existence of a combinatorial equivalence between these polytopes and the string polytopes resp. FFLV polytope for $\rho$. The initial ideals are all \textit{Cohen-Macaulay}.

\footnotesize
\begin{center}
\begin{longtable}{| l | l | l |}
\hline
Orbit & F-vector & Combinatorial equivalences \\
\hline \endhead
0  &  475 2956 8417 14241 15690 11643 5820 1899 374 37 & \\ 
\hline
1  &  456 2799 7843 13023 14038 10159 4938 1565 301 30 & \\ 
\hline
2  &  425 2573 7108 11626 12333 8779 4201 1316 253 26 & \\ 
\hline
3  &  393 2313 6200 9833 10125 7021 3297 1027 201 22 & \\ 
\hline
4  &  433 2621 7230 11796 12473 8847 4219 1318 253 26 & \\ 
\hline
5  &  435 2630 7246 11810 12479 8848 4219 1318 253 26 & \\ 
\hline
6  &  425 2553 6988 11317 11888 8388 3987 1245 240 25 & \\ 
\hline
7  &  450 2751 7677 12699 13648 9863 4800 1529 297 30 & \\ 
\hline
8  &  435 2630 7246 11810 12479 8848 4219 1318 253 26 & \\ 
\hline
9  &  419 2522 6922 11243 11842 8373 3985 1245 240 25 & \\ 
\hline
10  &  453 2785 7817 12999 14027 10157 4938 1565 301 30 & \\ 
\hline
11  &  463 2885 8237 13987 15474 11532 5788 1895 374 37 & \\ 
\hline
12  &  463 2852 8020 13365 14459 10501 5121 1627 313 31 & \\ 
\hline
13  &  457 2840 8078 13638 14954 10996 5413 1726 330 32 & \\ 
\hline
14  &  454 2819 8016 13540 14870 10968 5427 1744 337 33 & \\ 
\hline
15  &  445 2748 7770 13050 14254 10464 5161 1658 322 32 & \\ 
\hline
16  &  441 2681 7438 12228 13056 9369 4525 1430 276 28 & \\ 
\hline
17  &  440 2704 7602 12684 13752 10014 4897 1560 301 30 & \\ 
\hline
18  &  471 2923 8298 13995 15369 11369 5667 1845 363 36 & \\ 
\hline
19  &  464 2883 8200 13861 15258 11313 5651 1843 363 36 & \\ 
\hline
20  &  467 2911 8309 14097 15574 11586 5804 1897 374 37 & \\ 
\hline
21  &  461 2876 8225 13993 15509 11575 5814 1903 375 37 & \\ 
\hline
22  &  397 2363 6416 10313 10755 7536 3561 1109 215 23 & \\ 
\hline
23  &  437 2669 7447 12319 13236 9556 4642 1475 286 29 & \\ 
\hline
24  &  425 2553 6988 11317 11888 8388 3987 1245 240 25 & \\ 
\hline
25  &  415 2498 6861 11158 11772 8339 3976 1244 240 25 & \\ 
\hline
26  &  470 2942 8436 14377 15944 11889 5955 1939 379 37 & \\ 
\hline
27  &  460 2856 8109 13656 14929 10944 5374 1712 328 32 & \\ 
\hline
28  &  449 2741 7634 12594 13487 9702 4695 1486 287 29 & \\ 
\hline
29  &  427 2592 7181 11778 12523 8926 4270 1334 255 26 & \\ 
\hline
30  &  425 2573 7108 11626 12333 8779 4201 1316 253 26 & FFLV \\ 
\hline
31  &  443 2708 7557 12495 13411 9667 4686 1485 287 29 & \\ 
\hline
32  &  397 2363 6416 10313 10755 7536 3561 1109 215 23 &  S22\\ 
\hline
33  &  425 2553 6988 11317 11888 8388 3987 1245 240 25 & \\ 
\hline
34  &  419 2522 6922 11243 11842 8373 3985 1245 240 25 & \\ 
\hline
35  &  405 2407 6518 10442 10851 7578 3571 1110 215 23 & \\ 
\hline
36  &  401 2387 6477 10398 10825 7570 3570 1110 215 23 & \\ 
\hline
37  &  368 2154 5755 9111 9373 6497 3052 953 188 21 & S21\\ 
\hline
38  &  379 2214 5892 9280 9494 6547 3063 954 188 21 & S27, S28\\ 
\hline
39  &  393 2313 6200 9833 10125 7021 3297 1027 201 22 & \\ 
\hline
40  &  358 2069 5453 8516 8653 5941 2778 870 174 20 & S1, S18, S26, S29 (Gelfand-Tsetlin)\\ 
\hline
41  &  459 2851 8111 13720 15118 11223 5614 1834 362 36 & \\ 
\hline
42  &  467 2913 8322 14133 15629 11636 5831 1905 375 37 & \\ 
\hline
43  &  423 2562 7083 11596 12313 8772 4200 1316 253 26 & \\ 
\hline
44  &  425 2573 7108 11626 12333 8779 4201 1316 253 26 & S24 \\ 
\hline
45  &  397 2363 6416 10313 10755 7536 3561 1109 215 23 & S23 \\ 
\hline
46  &  461 2876 8225 13993 15509 11575 5814 1903 375 37 & \\ 
\hline
47  &  400 2366 6377 10175 10546 7363 3480 1089 213 23 & \\ 
\hline
48  &  393 2313 6200 9833 10125 7021 3297 1027 201 22 & \\ 
\hline
49  &  393 2313 6200 9833 10125 7021 3297 1027 201 22 & \\ 
\hline
50  &  379 2214 5892 9280 9494 6547 3063 954 188 21 & S2, S19\\ 
\hline
51  &  426 2599 7257 12034 12981 9420 4602 1470 286 29 & \\ 
\hline
52  &  428 2594 7176 11761 12514 8947 4307 1359 263 27 & \\ 
\hline
53  &  419 2522 6922 11243 11842 8373 3985 1245 240 25 & \\ 
\hline
54  &  466 2917 8371 14288 15879 11870 5960 1944 380 37 & \\ 
\hline
55  &  443 2729 7692 12867 13982 10197 4987 1585 304 30 & \\ 
\hline
56  &  453 2787 7826 13011 14021 10122 4895 1539 293 29 & \\ 
\hline
57  &  469 2926 8358 14188 15679 11663 5839 1906 375 37 & \\ 
\hline
58  &  458 2825 7958 13286 14398 10472 5113 1626 313 31 & \\ 
\hline
59  &  472 2949 8435 14335 15854 11796 5902 1923 377 37 & \\ 
\hline
60  &  440 2704 7602 12684 13752 10014 4897 1560 301 30 & \\ 
\hline
61  &  472 2967 8561 14720 16525 12526 6410 2144 432 43 & \\ 
\hline
62  &  457 2842 8099 13726 15153 11266 5640 1842 363 36 & \\ 
\hline
63  &  465 2902 8296 14096 15588 11594 5795 1884 368 36 & \\ 
\hline
64  &  459 2851 8111 13720 15118 11223 5614 1834 362 36 & \\ 
\hline
65  &  428 2608 7269 12028 12946 9377 4576 1462 285 29 & \\ 
\hline
66  &  441 2681 7438 12228 13056 9369 4525 1430 276 28 & \\ 
\hline
67  &  418 2510 6876 11157 11753 8321 3969 1243 240 25 & \\ 
\hline
68  &  406 2442 6713 10943 11587 8245 3950 1241 240 25 & \\ 
\hline
69  &  373 2199 5926 9474 9849 6897 3267 1024 201 22 & \\ 
\hline
70  &  427 2586 7144 11681 12383 8806 4209 1317 253 26 & \\ 
\hline
71  &  451 2781 7840 13111 14243 10390 5089 1623 313 31 & \\ 
\hline
72  &  440 2704 7602 12684 13752 10014 4897 1560 301 30 & \\ 
\hline
73  &  406 2442 6713 10943 11587 8245 3950 1241 240 25 & \\ 
\hline
74  &  448 2764 7800 13061 14208 10377 5087 1623 313 31 & \\ 
\hline
75  &  462 2873 8181 13846 15258 11321 5656 1844 363 36 & \\ 
\hline
76  &  457 2842 8099 13726 15153 11266 5640 1842 363 36 & \\ 
\hline
77  &  469 2927 8364 14203 15699 11678 5845 1907 375 37 & \\ 
\hline
78  &  454 2802 7903 13216 14348 10453 5110 1626 313 31 & \\ 
\hline
79  &  451 2787 7879 13221 14419 10565 5200 1667 323 32 & \\ 
\hline
80  &  441 2705 7584 12611 13622 9885 4823 1537 298 30 & \\ 
\hline
81  &  454 2803 7914 13263 14455 10598 5231 1687 330 33 & \\ 
\hline
82  &  441 2697 7532 12465 13391 9660 4685 1485 287 29 & \\ 
\hline
83  &  445 2721 7593 12550 13461 9694 4694 1486 287 29 & \\ 
\hline
84  &  441 2697 7532 12465 13391 9660 4685 1485 287 29 & \\ 
\hline
85  &  445 2725 7617 12611 13546 9764 4728 1495 288 29 & \\ 
\hline
86  &  397 2363 6416 10313 10755 7536 3561 1109 215 23 & \\ 
\hline
87  &  368 2154 5755 9111 9373 6497 3052 953 188 21 & S5, S31 \\ 
\hline
88  &  452 2801 7946 13385 14654 10771 5309 1699 327 32 & \\ 
\hline
89  &  430 2624 7318 12097 12974 9329 4497 1411 269 27 & \\ 
\hline
90  &  456 2834 8071 13670 15083 11210 5612 1834 362 36 & \\ 
\hline
91  &  432 2633 7332 12104 12975 9341 4521 1430 276 28 & \\ 
\hline
92  &  467 2919 8359 14230 15769 11756 5892 1922 377 37 & \\ 
\hline
93  &  456 2834 8071 13670 15083 11210 5612 1834 362 36 & \\ 
\hline
94  &  426 2597 7244 11998 12926 9370 4575 1462 285 29 & \\ 
\hline
95  &  440 2708 7630 12769 13898 10169 5001 1603 311 31 & \\ 
\hline
96  &  432 2633 7332 12104 12975 9341 4521 1430 276 28 & \\ 
\hline
97  &  412 2479 6810 11083 11707 8306 3967 1243 240 25 & \\ 
\hline
98  &  415 2511 6945 11391 12133 8679 4174 1313 253 26 & \\ 
\hline
99  &  458 2845 8092 13676 15042 11132 5543 1800 353 35 & \\ 
\hline
100  &  437 2669 7447 12319 13236 9556 4642 1475 286 29 & \\ 
\hline
101  &  441 2703 7569 12562 13531 9780 4746 1502 289 29 & \\ 
\hline
102  &  427 2586 7144 11681 12383 8806 4209 1317 253 26 & \\ 
\hline
103  &  419 2522 6922 11243 11842 8373 3985 1245 240 25 & \\ 
\hline
104  &  437 2669 7447 12319 13236 9556 4642 1475 286 29 & \\ 
\hline
105  &  411 2470 6776 11012 11617 8235 3933 1234 239 25 & \\ 
\hline
106  &  413 2483 6808 11043 11606 8177 3871 1201 230 24 & \\ 
\hline
107  &  425 2553 6988 11317 11888 8388 3987 1245 240 25 & \\ 
\hline
108  &  405 2407 6518 10442 10851 7578 3571 1110 215 23 & \\ 
\hline
109  &  405 2427 6638 10751 11296 7969 3785 1181 228 24 & S30 \\ 
\hline
110  &  465 2904 8312 14152 15700 11734 5907 1940 384 38 & \\ 
\hline
111  &  464 2902 8323 14204 15795 11828 5960 1956 386 38 & \\ 
\hline
112  &  438 2690 7559 12608 13667 9952 4868 1552 300 30 & \\ 
\hline
113  &  445 2725 7617 12611 13546 9764 4728 1495 288 29 & \\ 
\hline
114  &  437 2669 7447 12319 13236 9556 4642 1475 286 29 & \\ 
\hline
115  &  411 2470 6776 11012 11617 8235 3933 1234 239 25 & \\ 
\hline
116  &  424 2574 7139 11737 12529 8983 4332 1367 264 27 & \\ 
\hline
117  &  419 2522 6922 11243 11842 8373 3985 1245 240 25 & \\ 
\hline
118  &  401 2387 6477 10398 10825 7570 3570 1110 215 23 & \\ 
\hline
119  &  405 2427 6638 10751 11296 7969 3785 1181 228 24 & S6 \\ 
\hline
120  &  464 2893 8261 14019 15483 11503 5746 1869 366 36 & \\ 
\hline
121  &  454 2806 7928 13283 14448 10543 5159 1641 315 31 & \\ 
\hline
122  &  451 2794 7928 13370 14676 10840 5387 1746 342 34 & \\ 
\hline
123  &  444 2736 7715 12915 14053 10273 5044 1613 312 31 & \\ 
\hline
124  &  466 2909 8318 14138 15644 11650 5837 1906 375 37 & \\ 
\hline
125  &  456 2815 7939 13271 14398 10480 5118 1627 313 31 & \\ 
\hline
126  &  423 2561 7078 11586 12303 8767 4199 1316 253 26 & \\ 
\hline
127  &  429 2580 7064 11429 11972 8402 3959 1221 232 24 & \\ 
\hline
128  &  431 2626 7309 12058 12915 9290 4494 1422 275 28 & \\ 
\hline
129  &  428 2602 7224 11883 12684 9087 4375 1377 265 27 & \\ 
\hline
130  &  443 2727 7679 12831 13927 10147 4960 1577 303 30 & \\ 
\hline
131  &  432 2637 7354 12152 13024 9356 4505 1412 269 27 & \\ 
\hline
132  &  451 2793 7920 13342 14620 10770 5331 1718 334 33 & \\ 
\hline
133  &  434 2632 7273 11879 12557 8883 4210 1301 246 25 & \\ 
\hline
134  &  452 2781 7813 13004 14042 10171 4944 1566 301 30 & \\ 
\hline
135  &  453 2808 7969 13433 14725 10847 5366 1727 335 33 & \\ 
\hline
136  &  451 2794 7928 13370 14676 10840 5387 1746 342 34 & \\ 
\hline
137  &  433 2646 7390 12236 13150 9482 4589 1448 278 28 & \\ 
\hline
138  &  442 2715 7629 12727 13808 10076 4948 1587 309 31 & \\ 
\hline
139  &  432 2633 7332 12104 12975 9341 4521 1430 276 28 & \\ 
\hline
140  &  423 2564 7096 11632 12368 8822 4227 1324 254 26 & \\ 
\hline
141  &  413 2483 6808 11043 11606 8177 3871 1201 230 24 & \\ 
\hline
142  &  427 2594 7196 11827 12614 9031 4347 1369 264 27 & \\ 
\hline
143  &  431 2622 7281 11973 12769 9135 4390 1379 265 27 & \\ 
\hline
144  &  431 2626 7309 12058 12915 9290 4494 1422 275 28 & \\ 
\hline
145  &  410 2459 6725 10881 11411 8029 3802 1183 228 24 & \\ 
\hline
146  &  428 2594 7176 11761 12514 8947 4307 1359 263 27 & \\ 
\hline
147  &  419 2522 6922 11243 11842 8373 3985 1245 240 25 & \\ 
\hline
148  &  451 2781 7840 13111 14243 10390 5089 1623 313 31 & \\ 
\hline
149  &  464 2900 8310 14168 15740 11778 5933 1948 385 38 & \\ 
\hline
150  &  446 2750 7757 12985 14123 10315 5058 1615 312 31 & \\ 
\hline
151  &  420 2541 7021 11496 12218 8719 4184 1314 253 26 & \\ 
\hline
152  &  441 2705 7584 12611 13622 9885 4823 1537 298 30 & \\ 
\hline
153  &  425 2575 7119 11651 12363 8799 4208 1317 253 26 & \\ 
\hline
154  &  448 2764 7801 13067 14223 10397 5102 1629 314 31 & \\ 
\hline
155  &  444 2737 7724 12949 14124 10363 5115 1647 321 32 & \\ 
\hline
156  &  452 2772 7753 12830 13755 9876 4750 1486 282 28 & \\ 
\hline
157  &  442 2706 7565 12529 13460 9696 4684 1473 281 28 & \\ 
\hline
158  &  441 2708 7602 12655 13676 9915 4821 1525 292 29 & \\ 
\hline
159  &  427 2596 7207 11850 12633 9026 4324 1350 257 26 & \\ 
\hline
160  &  452 2781 7813 13004 14042 10171 4944 1566 301 30 & \\ 
\hline
161  &  427 2586 7144 11681 12383 8806 4209 1317 253 26 & \\ 
\hline
162  &  400 2382 6467 10388 10820 7569 3570 1110 215 23 & \\ 
\hline
163  &  448 2764 7800 13061 14208 10377 5087 1623 313 31 & \\ 
\hline
164  &  470 2943 8444 14405 16000 11959 6011 1967 387 38 & \\ 
\hline
165  &  460 2857 8117 13684 14985 11014 5430 1740 336 33 & \\ 
\hline
166  &  418 2530 6996 11466 12198 8712 4183 1314 253 26 & \\ 
\hline
167  &  434 2640 7325 12025 12788 9108 4348 1353 257 26 & \\ 
\hline
168  &  425 2577 7132 11687 12418 8849 4235 1325 254 26 & \\ 
\hline
169  &  425 2581 7160 11772 12564 9004 4339 1368 264 27 & \\ 
\hline
170  &  430 2614 7255 11928 12724 9109 4382 1378 265 27 & \\ 
\hline
171  &  422 2557 7075 11597 12333 8801 4220 1323 254 26 & \\ 
\hline
172  &  411 2470 6772 10988 11556 8150 3863 1200 230 24 &  S7 \\ 
\hline
173  &  427 2586 7144 11681 12383 8806 4209 1317 253 26 & \\ 
\hline
174  &  400 2382 6467 10388 10820 7569 3570 1110 215 23 & \\ 
\hline
175  &  464 2898 8295 14119 15649 11673 5856 1913 376 37 & \\ 
\hline
176  &  442 2718 7644 12754 13822 10056 4911 1562 301 30 & \\ 
\hline
177  &  440 2698 7563 12576 13587 9864 4816 1536 298 30 & \\ 
\hline
178  &  423 2562 7083 11596 12313 8772 4200 1316 253 26 & \\ 
\hline
179  &  452 2781 7813 13004 14042 10171 4944 1566 301 30 & \\ 
\hline
\caption{Orbits of maximal prime cones for $\Flag_5$, the F-vectors of the corresponding polytopes, and combinatorially equivalent string polytopes resp. FFLV polytope.}\label{tab:f-vector5}
\end{longtable}

\end{center}
\normalsize

\section{Algebraic invariants of the \texorpdfstring{$\Flag_5$}{} string polytopes}\label{app:string5}

The  table below contains information on the $\Flag_5$ string polytopes and the FFLV polytope for $\rho$. It shows the reduced expressions underlying the string polytopes, whether the polytopes satisfy the weak Minkowski property, the weight vectors constructed in \S\ref{string:weight}, and whether the corresponding initial ideal is prime. The last column contains information on unimodular equivalences among these polytopes. If there is no information in this column this means that there is no unimodular equivalence between this polytope and any other polytope in the table.

\footnotesize
\begin{center}
\begin{longtable}{| l|  l| l| l|  l| l| l| }
\hline
Class&$\underline w_0$ & Normal & MP & Weight vector $-\bw_{\w_0}$& Prime & Uni. Eq. 
 \endhead\hline
S1& 1213214321 & yes &yes &  \shortstack{$(0, 512, 384, 112, 0, 256, 96, 768, 608, 480,$\\$ 0, 64, 320, 832, 15, 14,
 526, 398, 126, 12, $\\$ 268, 108, 780, 620, 492, 0, 8, 72, 328, 840)$} & yes & \shortstack{S18, S26,\\ S29} \\ \hline
S2& 1213243212 & yes &yes &  \shortstack{$(0,512,384,98, 0,256,96, 768,608, 480,$\\$
0, 64,320, 832, 30,28,540, 412, 123, 24,$\\$
280,120,792, 632, 504, 0,16,80,336, 848)$} & yes & - \\ \hline
S3 & 1213432312 & yes &no &  \shortstack{$(0, 512, 384, 74, 0, 256, 72, 768, 584, 456,$\\$
0, 64, 320, 832, 58, 56, 568,440, 111, 48, $\\$
304, 108, 816, 620, 492, 0, 32, 96, 352, 864)$} & no & - \\ \hline
S4& 1214321432 & yes &no &  \shortstack{$(0, 512, 384, 56, 0, 256, 48, 768, 560, 432, $\\$
0, 32, 288, 800, 120, 112, 624, 496, 63, 96, $\\$
352, 54, 864, 566, 438, 0, 64, 36, 292, 804)$} & no & - \\ \hline
S5& 1232124321 & yes &yes &  \shortstack{$(0, 512, 288, 224, 0, 256, 192, 768, 704, 432, $\\$
0, 128, 384, 896, 15, 14, 526, 302, 238, 12,$\\$
268, 204, 780, 716, 444, 0, 8, 136, 392, 904)$} & yes & - \\ \hline
S6& 1232143213 & yes &yes &  \shortstack{$(0, 512, 288, 224, 0, 256, 192, 768, 704, 420, $\\$
0,128, 384, 896, 30, 28, 540, 316, 252, 24,$\\$
280, 216, 792, 728, 437, 0, 16, 144, 400, 912)$} & yes & - \\ \hline
S7& 1232432123 & yes &yes &  \shortstack{$(0, 512, 260, 196, 0, 256, 192, 768, 704, 390,  $\\$
0, 128, 384, 896, 60, 56, 568, 310, 246, 48,$\\$
304, 240, 816, 752, 423, 0, 32, 160, 416, 928)$} & yes & - \\ \hline       
S8& 1234321232 & yes &no &  \shortstack{$(0, 512, 264, 152, 0, 256, 144, 768, 656, 396,  $\\$
0, 128, 384, 896, 120, 112, 624, 364, 219, 96,$\\$
352, 210, 864, 722, 462, 0, 64, 192, 448, 960)$} & no & - \\ \hline
S9& 1234321323 & yes &no &  \shortstack{$(0, 512, 264, 152, 0, 256, 144, 768, 656, 394,  $\\$
0, 128, 384, 896, 120, 112, 624,362, 222, 96,$\\$
352, 212, 864, 724, 459, 0, 64, 192, 448, 960)$}& no & - \\	 \hline
S10& 1243212432 & yes &no &  \shortstack{$(0, 512, 272, 112, 0, 256, 96, 768, 608, 344,  $\\$
0, 64, 320, 832, 240, 224, 736, 472, 119, 192,$\\$
448, 102, 960, 614, 350, 0, 128, 68, 324, 836)$} & no & - \\ \hline           
      
S11& 1243214323 & yes &no &  \shortstack{$(0, 512, 272, 112, 0, 256, 96, 768, 608, 338,  $\\$
0, 64, 320, 832, 240, 224, 736, 466, 126, 192,$\\$
448, 108, 960, 620, 347, 0, 128, 72, 328, 840)$}  & no & - \\ \hline
S12& 1321324321 &yes & no &  \shortstack{$(0, 512, 192, 448, 0, 128, 384, 640, 896, 240,  $\\$
0, 256, 160, 672, 15, 14, 526, 206, 462, 12,$\\$
140, 396, 652, 908, 252, 0, 8, 264, 168, 680)$} & no & - \\ \hline    
S13& 1321343231 & yes &no &  \shortstack{$(0, 512, 192, 448, 0, 128, 384, 640, 896, 228, $\\$
0, 256, 160, 672, 29, 28, 540, 220, 476, 24,$\\$
152, 408, 664, 920, 246, 0, 16, 272, 176, 688)$} & no & - \\ \hline
S14& 1321432143 & yes &no &  \shortstack{$(0, 512, 192, 448, 0, 128, 384, 640, 896, 216,  $\\$
0, 256, 144, 656, 60, 56, 568, 248, 504, 48,$\\$
176, 432, 688, 944, 219, 0, 32, 288, 146, 658)$} & no & - \\ \hline
S15& 1323432123 & yes &no &  \shortstack{$(0, 512, 132, 388, 0, 128, 384, 640, 896, 198,  $\\$
0, 256, 192, 704, 60, 56, 568, 182, 438, 48,$\\$
176, 432, 688, 944, 231, 0, 32, 288, 224, 736)$} & no & - \\ \hline
S16& 1324321243 &yes & no &  \shortstack{$(0, 512, 136, 392, 0, 128, 384, 640, 896, 172,  $\\$
0, 256, 160, 672, 120, 112, 624, 236, 492, 96,$\\$
224, 480, 736, 992, 175, 0, 64, 320, 162, 674)$} & no & - \\ \hline
S17& 1343231243 & yes &no &  \shortstack{$(0, 512, 48, 304, 0, 32, 288, 544, 800, 60,  $\\$
0, 256, 40, 552, 240, 224, 736, 188, 444, 192,$\\$
168, 424, 680, 936, 63, 0, 128, 384, 42, 554)$} & no & - \\ \hline
S18& 2123214321 & yes &yes &  \shortstack{$(0, 256, 768, 112, 0, 512, 96, 384, 352, 864, $\\$
0, 64, 576, 448, 15, 14, 270, 782,126, 12, 524,$\\$
108, 396, 364, 876, 0, 8, 72, 584, 456)$} & yes & \shortstack{S1, S26, \\ S29}  \\ \hline
S19& 2123243212 & yes &yes &  \shortstack{$(0, 256, 768, 98, 0, 512, 96, 384, 352, 864, $\\$
0, 64, 576, 448, 30, 28, 284, 796, 123, 24, 536,$\\$
      120, 408, 376, 888, 0, 16, 80, 592, 464)$} & yes & - \\ \hline
S20& 2123432132 & yes &no &  \shortstack{$(0, 256, 768, 76, 0, 512, 72, 384, 328, 840, $\\$
0, 64, 576, 448, 60, 56, 312, 824,111, 48, 560,$\\$
      106, 432, 362, 874, 0, 32, 96, 608, 480)$} & no & - \\ \hline
S21& 2132134321 &yes & yes &  \shortstack{$(0, 256, 768, 224, 0, 512, 192, 320, 448, 960, $\\$
0, 128, 640, 336, 15, 14, 270,782, 238, 12,$\\$
      524, 204, 332, 460, 972, 0, 8, 136, 648, 344)$} & yes & - \\ \hline
S22& 2132143214 &yes & yes &  \shortstack{$(0, 256, 768, 224, 0, 512, 192, 320, 448, 960, $\\$
0, 128, 640, 328, 30, 28, 284,796, 252, 24,$\\$
      536, 216, 344, 472, 984, 0, 16, 144, 656, 329)$} &yes & - \\ \hline
S23& 2132343212 & yes &yes &  \shortstack{$(0, 256, 768, 194, 0, 512, 192, 320, 448, 960, $\\$
0, 128, 640, 352, 30, 28, 284,796, 219, 24,$\\$
      536, 216, 344, 472, 984, 0, 16, 144, 656, 368)$} & yes & - \\ \hline
S24& 2132432124 & yes &yes &  \shortstack{$(0, 256, 768, 196, 0, 512, 192, 320, 448, 960, $\\$
0, 128, 640, 336, 60, 56, 312, 824, 246, 48,$\\$
      560, 240, 368, 496, 1008, 0, 32, 160, 672, 337)$} & yes & - \\ \hline
S25& 2134321324 &yes & no &  \shortstack{$(0, 256, 768, 152, 0, 512, 144, 272, 400, 912, $\\$
0, 128, 640, 276, 120, 112, 368, 880, 222, 96,$\\$
      608, 212, 340, 468, 980, 0, 64, 192, 704, 277)$} & no & - \\ \hline    
S26& 2321234321 &yes & yes &  \shortstack{$(0, 64, 576, 448, 0, 512, 384, 96, 352, 864, $\\$
0, 256, 768, 112, 15, 14, 78,590, 462, 12, 524,$\\$
      396, 108, 364, 876, 0, 8, 264, 776, 120)$} & yes & \shortstack{S1, S18, \\ S29} \\  \hline
S27& 2321243214 & yes &yes &  \shortstack{$(0, 64, 576, 448, 0, 512, 384, 96, 352, 864, $\\$
0, 256, 768, 104, 30, 28, 92,604, 476, 24, 536,$\\$
      408, 120, 376, 888, 0, 16, 272, 784, 105)$} & yes & - \\ \hline
S28& 2321432134 & yes &yes &  \shortstack{$(0, 64, 576, 448, 0, 512, 384, 72, 328, 840, $\\$
0, 256, 768, 74, 60, 56, 120, 632, 504, 48, 560,$\\$
      432, 106, 362, 874, 0, 32, 288, 800, 75)$} & yes & - \\ \hline
S29& 2324321234 & yes &yes &  \shortstack{$(0, 8, 520, 392, 0, 512, 384, 12, 268, 780, $\\$
0, 256, 768, 14, 120, 112, 108,620, 492, 96, 608,$\\$
      480, 78, 334, 846, 0, 64, 320, 832, 15)$} & yes &  \shortstack{S1, S18,\\ S26}\\ \hline
S30& 2343212324 &yes & yes &  \shortstack{$(0, 16, 528, 304, 0, 512, 288, 24, 280, 792, $\\$
0, 256, 768, 28, 240, 224, 216, 728, 438, 192,$\\$
      704, 420, 156, 412, 924, 0, 128, 384, 896, 29)$} & yes & - \\ \hline
S31& 2343213234 &yes & yes &  \shortstack{$(0, 16, 528, 304, 0, 512, 288, 20, 276, 788, $\\$ 
0, 256, 768, 22, 240, 224, 212, 724, 444, 192,$\\$
      704, 424, 150, 406, 918, 0, 128, 384, 896, 23)$} & yes & - \\ \hline
FFLV & - & yes &yes &  \shortstack{$w^{\text{reg}}=(0,4,6,6,0,3,4,6,6,9,0,2,4,6,4,$\\$ 3,4,7,8,2,3,5,4,6,8,0,1,2,3,4) $\\$ 
w^{\text{min}}=(0,3,4,3,0,2,2,4,3,5,0,1,2,3,1,$\\$ 1,1,3,3,1,1,2,1,2,3,0,1,1,1,1)$}  &\shortstack{yes\\ {}\\ yes} &-\\
\hline
\caption{String polytopes depending on $\underline w_0$ and the FFLV polytope for $\Flag_5$ and $\rho$, their normality, the weak Minkowski property, the weight vectors constructed in \S\protect{\ref{string:weight}}, primeness of the binomial initial ideals, and unimodular equivalences among the polytopes.}\label{tab:stringweight5}
\end{longtable}
\end{center}
\normalsize
\end{appendices}

\newpage

\thispagestyle{empty}

Ich   versichere,   dass   ich   die   von   mir   
vorgelegte   Dissertation   selbständig   
angefertigt,  die  benutzten  Quellen  und  
Hilfsmittel  vollständig  angegeben  und  die  
Stellen  der  Arbeit  
- einschließlich  Tabellen,  Karten  und  Abbildungen  -,  die  
anderen  Werken  im  Wortlaut  oder  dem  Sinn  nach  entnommen  sind,  in  jedem  
Einzelfall  als  Entlehnung  kenntlich  gemacht  habe;  dass  diese  Dissertation  noch  
keiner  anderen  Fakultät  oder  Universität  zur  Prüfung  vorgelegen  hat;  dass  sie  - abgesehen  von  unten  angegebenen  Teilpublikationen  -
  noch  nicht  veröffentlicht  
worden   ist   sowie,   dass   ich   eine   solche  Veröffentlichung  vor  Abschluss  des  
Promotionsverfahrens     nicht     vornehmen     werde.     Die     Bestimmungen     der     
Promotionsordnung  sind  mir  bekannt.  Die  
von  mir  vorgelegte  Dissertation  ist  von  
Prof. Dr. Peter Littelmann betreut worden.

\newpage


\begin{thebibliography}{10}

\bibitem{AB04}
Valery Alexeev and Michel Brion.
\newblock Toric degenerations of spherical varieties.
\newblock {\em Selecta Mathematica}, 10(4):453--478, 2005.

\bibitem{An13}
Dave Anderson.
\newblock Okounkov bodies and toric degenerations.
\newblock {\em Mathematische Annalen}, 356(3):1183--1202, 2013.

\bibitem{BFZ96}
Arkady Berenstein, Sergey Fomin, and Andrei Zelevinsky.
\newblock Parametrizations of canonical bases and totally positive matrices.
\newblock {\em Adv. Math.}, 122(1):49--149, 1996.

\bibitem{BFZ05}
Arkady Berenstein, Sergey Fomin, and Andrei Zelevinsky.
\newblock Cluster algebras. {III}. {U}pper bounds and double {B}ruhat cells.
\newblock {\em Duke Math. J.}, 126(1):1--52, 2005.

\bibitem{BZ01}
Arkady Berenstein and Andrei Zelevinsky.
\newblock Tensor product multiplicities, canonical bases and totally positive
  varieties.
\newblock {\em Inventiones mathematicae}, 143(1):77--128, 2001.

\bibitem{BHV01}
Louis~J. Billera, Susan~P. Holmes, and Karen Vogtmann.
\newblock Geometry of the space of phylogenetic trees.
\newblock {\em Adv. in Appl. Math.}, 27(4):733--767, 2001.

\bibitem{Poly2}
Ren\'e Birkner.
\newblock Polyhedra.m2 -- {A} {M}acaulay2 package for computations with convex
  polyhedra, cones, and fans.

\bibitem{BFFHL}
Lara Bossinger, Xin Fang, Ghislain Fourier, Milena Hering, and Martina Lanini.
\newblock Toric degenerations of {$Gr(2,n)$} and {$Gr(3,6)$} via plabic graphs.
\newblock {\em arXiv preprint arXiv:1612.03838}, 2016.

\bibitem{BLMM}
Lara Bossinger, Sara Lamboglia, Kalina Mincheva, and Fatemeh Mohammadi.
\newblock Computing toric degenerations of flag varieties.
\newblock In {\em Combinatorial algebraic geometry}, volume~80 of {\em Fields
  Inst. Commun.}, pages 247--281. Fields Inst. Res. Math. Sci., Toronto, ON,
  2017.

\bibitem{BCL}
Sarah Brodsky, Cesar Ceballos, and Jean-Philippe Labb\'e.
\newblock Cluster algebras of type d4, tropical planes, and the positive
  tropical {G}rassmannian.
\newblock {\em arXiv preprint arXiv:1511.02699}.

\bibitem{BG09}
Winfried Bruns and Joseph Gubeladze.
\newblock {\em Polytopes, rings, and {$K$}-theory}.
\newblock Springer Monographs in Mathematics. Springer, Dordrecht, 2009.

\bibitem{BW07}
Weronika Buczy\'nska and Jaros\l aw~A. Wi\'sniewski.
\newblock On geometry of binary symmetric models of phylogenetic trees.
\newblock {\em J. Eur. Math. Soc. (JEMS)}, 9(3):609--635, 2007.

\bibitem{Cal02}
Philippe Caldero.
\newblock Toric degenerations of {S}chubert varieties.
\newblock {\em Transformation groups}, 7(1):51--60, 2002.

\bibitem{Cal03}
Philippe Caldero.
\newblock A multiplicative property of quantum flag minors.
\newblock {\em Representation Theory of the American Mathematical Society},
  7(6):164--176, 2003.

\bibitem{Cox}
David Cox, John Little, and Donal O'Shea.
\newblock {\em Ideals, varieties, and algorithms}.
\newblock Undergraduate Texts in Mathematics. Springer-Verlag, New York, 1992.
\newblock An introduction to computational algebraic geometry and commutative
  algebra.

\bibitem{CLS11}
David~A. Cox, John~B. Little, and Henry~K. Schenck.
\newblock {\em Toric varieties}.
\newblock American Mathematical Soc., 2011.

\bibitem{DGPS}
Wolfram Decker, Gert-Martin Greuel, Gerhard Pfister, and Hans Sch\"onemann.
\newblock {\sc Singular} {4-1-0} --- {A} computer algebra system for polynomial
  computations.
\newblock \url{http://www.singular.uni-kl.de}, 2016.

\bibitem{Eisenbud}
David Eisenbud.
\newblock {\em Commutative Algebra: with a view toward algebraic geometry},
  volume 150.
\newblock Springer Science \& Business Media, 2013.

\bibitem{FFFM}
X.~Fang, E.~Feigin, G.~Fourier, and I.~Makhlin.
\newblock Weighted pbw degenerations and tropical flag varieties.
\newblock 2017.

\bibitem{FFL15}
Xin Fang, Ghislain Fourier, and Peter Littelmann.
\newblock Essential bases and toric degenerations arising from birational
  sequences.
\newblock {\em Adv. Math.}, 312:107--149, 2017.

\bibitem{FFR15}
Xin Fang, Ghislain Fourier, and Markus Reineke.
\newblock {PBW}-type filtration on quantum groups of type {$A_n$}.
\newblock {\em Journal of Algebra}, 449:321--345, 2016.

\bibitem{Fe12}
Evgeny Feigin.
\newblock {$\mathbb{G}_a^M$} degeneration of flag varieties.
\newblock {\em Selecta Math. (N.S.)}, 18(3):513--537, 2012.

\bibitem{FFL11}
Evgeny Feigin, Ghislain Fourier, and Peter Littelmann.
\newblock {PBW} filtration and bases for irreducible modules in type {$A_n$}.
\newblock {\em Transformation Groups}, 16(1):71--89, 2011.

\bibitem{FeFL16}
Evgeny Feigin, Ghislain Fourier, and Peter Littelmann.
\newblock Favourable modules: filtrations, polytopes, {N}ewton--{O}kounkov
  bodies and flat degenerations.
\newblock {\em Transformation Groups}, pages 1--32, 2016.

\bibitem{FG06}
V.~V. Fock and A.~B. Goncharov.
\newblock Cluster {$\mathcal X$}-varieties, amalgamation, and {P}oisson-{L}ie
  groups.
\newblock In {\em Algebraic geometry and number theory}, volume 253 of {\em
  Progr. Math.}, pages 27--68. Birkh\"auser Boston, Boston, MA, 2006.

\bibitem{FZ02}
Sergey Fomin and Andrei Zelevinsky.
\newblock Cluster algebras. {I}. {F}oundations.
\newblock {\em J. Amer. Math. Soc.}, 15(2):497--529, 2002.

\bibitem{GJ00}
Ewgenij Gawrilow and Michael Joswig.
\newblock Polymake: a framework for analyzing convex polytopes.
\newblock In {\em Polytopes-combinatorics and computation}, pages 43--73.
  Springer, 2000.

\bibitem{GLS13}
Christof Geiss, Bernard Leclerc, and Jan Schr\"oer.
\newblock Cluster algebras in algebraic {L}ie theory.
\newblock {\em Transform. Groups}, 18(1):149--178, 2013.

\bibitem{GT50}
I.~M. Gelfand and M.~L. Tsetlin.
\newblock Finite-dimensional representations of the group of unimodular
  matrices.
\newblock {\em Doklady Akad. Nauk SSSR (N.S.)}, 71:825--828, 1950.

\bibitem{GKS}
Volker Genz, Gleb Koshevoy, and Bea Schumann.
\newblock Combinatorics of canonical bases revisited: Type {A}.
\newblock {\em arXiv preprint arXiv:1611.03465}, 2016.

\bibitem{GKS2}
Volker Genz, Gleb Koshevoy, and Bea Schumann.
\newblock Polyhedral parametrizations of canonical bases \& cluster duality.
\newblock {\em arXiv preprint arXiv:1711.07176}, 2017.

\bibitem{GP00}
Oleg Gleizer and Alexander Postnikov.
\newblock Littlewood-{R}ichardson coefficients via {Y}ang-{B}axter equation.
\newblock {\em International Mathematics Research Notices}, 2000(14):741--774,
  2000.

\bibitem{GS15}
Alexander Goncharov and Linhui Shen.
\newblock Geometry of canonical bases and mirror symmetry.
\newblock {\em Invent. Math.}, 202(2):487--633, 2015.

\bibitem{GL96}
Nicolae Gonciulea and Venkatramani Lakshmibai.
\newblock Degenerations of flag and {S}chubert varieties to toric varieties.
\newblock {\em Transformation Groups}, 1(3):215--248, 1996.

\bibitem{M2}
Daniel~R. Grayson and Michael~E. Stillman.
\newblock Macaulay2, a software system for research in algebraic geometry.
\newblock Available at \url{http://www.math.uiuc.edu/Macaulay2/}.

\bibitem{GHK15}
Mark Gross, Paul Hacking, and Sean Keel.
\newblock Birational geometry of cluster algebras.
\newblock {\em Algebr. Geom.}, 2(2):137--175, 2015.

\bibitem{GHKK14}
Mark Gross, Paul Hacking, Sean Keel, and Maxim Kontsevich.
\newblock Canonical bases for cluster algebras.
\newblock {\em J. Amer. Math. Soc.}, 31(2):497--608, 2018.

\bibitem{M2c}
Milena Hering.
\newblock Macaulay 2 computation comparing toric degenerations of gr(3,6)
  arising from plabic graphs with the tropical grassmannian trop gr(3,6).
\newblock Available at
  \url{http://www.maths.ed.ac.uk/~mhering/Papers/PlabicGraphDegens/PlabicGraphDegensGr36.m2}.

\bibitem{HJJS}
Sven Herrmann, Anders Jensen, Michael Joswig, and Bernd Sturmfels.
\newblock How to draw tropical planes.
\newblock {\em Electron. J. Combin}, 16(2):R6, 2009.

\bibitem{HL16}
Takayuki Hibi and Nan Li.
\newblock Unimodular equivalence of order and chain polytopes.
\newblock {\em Math. Scand.}, 118(1):5--12, 2016.

\bibitem{Gfan}
Anders~N. Jensen.
\newblock Gfan, a software system for {G}r\"obner fans and tropical varieties.
\newblock Available at
  \url{http://home.imf.au.dk/jensen/software/gfan/gfan.html}.

\bibitem{GfanM}
Anders~N. Jensen.
\newblock Gfan version 0.5: A user’s manual.
\newblock Available at
  \url{http://home.math.au.dk/jensen/software/gfan/gfanmanual0.5.pdf}.

\bibitem{Kav15}
Kiumars Kaveh.
\newblock Crystal bases and {N}ewton--{O}kounkov bodies.
\newblock {\em Duke Mathematical Journal}, 164(13):2461--2506, 2015.

\bibitem{KK12}
Kiumars Kaveh and A.~G. Khovanskii.
\newblock Newton-{O}kounkov bodies, semigroups of integral points, graded
  algebras and intersection theory.
\newblock {\em Ann. of Math. (2)}, 176(2):925--978, 2012.

\bibitem{KM16}
Kiumars Kaveh and Christopher Manon.
\newblock Khovanskii bases, higher rank valuations and tropical geometry.
\newblock {\em arXiv preprint arXiv:1610.00298}, 2016.

\bibitem{QuiverApp}
Bernhard Keller.
\newblock Quiver mutation in javascript and java.
\newblock Available at
  \url{https://webusers.imj-prg.fr/~bernhard.keller/quivermutation/}.

\bibitem{Kir15}
Valentina Kiritchenko.
\newblock {N}ewton--{O}kounkov polytopes of flag varieties.
\newblock {\em Transformation Groups}, pages 1--16, 2015.

\bibitem{KW14}
Yuji Kodama and Lauren Williams.
\newblock K{P} solitons and total positivity for the {G}rassmannian.
\newblock {\em Invent. Math.}, 198(3):637--699, 2014.

\bibitem{KM05}
Mikhail Kogan and Ezra Miller.
\newblock Toric degeneration of {S}chubert varieties and {G}elfand--{T}setlin
  polytopes.
\newblock {\em Advances in Mathematics}, 193(1):1--17, 2005.

\bibitem{LM09}
Robert Lazarsfeld and Mircea Musta\c{t}\u{a}.
\newblock Convex bodies associated to linear series.
\newblock {\em Ann. Sci. \'Ec. Norm. Sup\'er. (4)}, 42(5):783--835, 2009.

\bibitem{Lec16}
B.~Leclerc.
\newblock Cluster structures on strata of flag varieties.
\newblock {\em Adv. Math.}, 300:190--228, 2016.

\bibitem{Lit98}
Peter Littelmann.
\newblock Cones, crystals, and patterns.
\newblock {\em Transformation groups}, 3(2):145--179, 1998.

\bibitem{M-S}
Diane Maclagan and Bernd Sturmfels.
\newblock {\em Introduction to tropical geometry}, volume 161 of {\em Graduate
  Studies in Mathematics}.
\newblock American Mathematical Society, Providence, RI, 2015.

\bibitem{Mag15}
Timothy Magee.
\newblock Fock-{G}oncharov conjecture and polyhedral cones for ${U}\subset
  {S}{L}_n$ and base affine space ${S}{L}_n/{U}$.
\newblock {\em arXiv preprint arXiv:1502.03769}, 2015.

\bibitem{MR04}
R.~J. Marsh and K.~Rietsch.
\newblock Parametrizations of flag varieties.
\newblock {\em Represent. Theory}, 8:212--242, 2004.

\bibitem{MS05}
Ezra Miller and Bernd Sturmfels.
\newblock {\em Combinatorial commutative algebra}, volume 227 of {\em Graduate
  Texts in Mathematics}.
\newblock Springer-Verlag, New York, 2005.

\bibitem{MoSh}
Fatemeh Mohammadi and Kristin Shaw.
\newblock Toric degenerations of {G}rassmannians from matching fields.
\newblock {\em 30th International Conference on Formal Power Series and
  Algebraic Combinatorics (FPSAC 2018), Séminaire Lotharingien Combinatoire},
  2018.

\bibitem{Oko98}
Andre\u\i{} Okounkov.
\newblock Multiplicities and {N}ewton polytopes.
\newblock In {\em Kirillov's seminar on representation theory}, volume 181 of
  {\em Amer. Math. Soc. Transl. Ser. 2}, pages 231--244. Amer. Math. Soc.,
  Providence, RI, 1998.

\bibitem{Primedec}
Gerhard Pfister, Wolfram Decker, Hans Schoenemann, and Santiago Laplagne.
\newblock Primdec.lib.-- {A} {S}ingular {4-1-0} library for computing the
  primary decomposition and radical of ideals.

\bibitem{Pos06}
Alexander Postnikov.
\newblock Total positivity, {G}rassmannians, and networks.
\newblock {\em arXiv preprint arXiv:math/0609764}, 2006.

\bibitem{PSW09}
Alexander Postnikov, David Speyer, and Lauren Williams.
\newblock Matching polytopes, toric geometry, and the totally non-negative
  {G}rassmannian.
\newblock {\em J. Algebraic Combin.}, 30(2):173--191, 2009.

\bibitem{RW17}
Konstanze Rietsch and Lauren Williams.
\newblock Newton-{O}kounkov bodies, cluster duality, and mirror symmetry for
  {G}rassmannians.
\newblock {\em arXiv preprint arXiv:1712.00447}, 2017.

\bibitem{Sco06}
Joshua~S. Scott.
\newblock Grassmannians and cluster algebras.
\newblock {\em Proc. London Math. Soc. (3)}, 92(2):345--380, 2006.

\bibitem{SS04}
David Speyer and Bernd Sturmfels.
\newblock The tropical {G}rassmannian.
\newblock {\em Advances in Geometry}, 4(3):389--411, 2004.

\bibitem{SW05}
David Speyer and Lauren Williams.
\newblock The tropical totally positive {G}rassmannian.
\newblock {\em J. Algebraic Combin.}, 22(2):189--210, 2005.

\bibitem{Stu16}
Bernd Sturmfels.
\newblock Fitness, apprenticeship, and polynomials.
\newblock {\em arXiv preprint arXiv:1612.03539}.

\bibitem{Stu96}
Bernd Sturmfels.
\newblock {\em Gr{\"o}bner bases and convex polytopes}, volume~8 of {\em
  American Mathematical Soc.}
\newblock 1996.

\bibitem{T03}
Bernard Teissier.
\newblock Valuations, deformations, and toric geometry.
\newblock In {\em Valuation theory and its applications, {V}ol. {II}
  ({S}askatoon, {SK}, 1999)}, volume~33 of {\em Fields Inst. Commun.}, pages
  361--459. Amer. Math. Soc., Providence, RI, 2003.

\bibitem{Wil14}
Lauren~K. Williams.
\newblock Cluster algebras: an introduction.
\newblock {\em Bull. Amer. Math. Soc. (N.S.)}, 51(1):1--26, 2014.

\end{thebibliography}
\end{document}